\def\fl{
\xymatrix@C15pt{\ar[r]&}
}
\newtheorem{theorem}{Theorem}[section]
\newtheorem{lemma}[theorem]{Lemma}
\newtheorem{corollary}[theorem]{Corollary}
\newtheorem{proposition}[theorem]{Proposition}
\newtheorem*{coro*}{Corollary}
\theoremstyle{definition}}
\theoremstyle{definition}\newtheorem{example}[theorem]{Example}}
\theoremstyle{definition}
\newtheorem{definition}[theorem]{Definition}
\newtheorem{question}[theorem]{Question}
\newtheorem{fact}[theorem]{Fact}
\newtheorem{claim}[theorem]{Claim}
\theoremstyle{definition}\newtheorem{remark}[theorem]{Remark}}
\def\K{\ensuremath{\mathbb K}}
\def\T{\ensuremath{\mathbb T}}
\def\R{\ensuremath{\mathbb R}}
\def\Z{\ensuremath{\mathbb Z}}
\def\C{\ensuremath{\mathbb C}}
\def\Q{\ensuremath{\mathbb Q}}
\def\N{\ensuremath{\mathbb N}}
\def\P{\ensuremath{\mathbb P}}
\newcommand{\hy}{hypercyclic}
\newcommand{\fhy}{frequently hypercyclic}
\newcommand{\ops}{operators}
\newcommand{\op}{operator}
\newcommand{\nt}{non-trivial}
\newcommand{\inv}{invariant}
\newcommand{\erg}{ergodic}
\newcommand{\mea}{measure}
\newcommand{\eva}{eigenvalue}
\newcommand{\ga}{Gaussian}
\newcommand{\pss}[2]{\ensuremath{{\langle #1,#2\rangle}}}
\newcommand{\gn}{n\ge 1}
\newcommand{\gk}{k\ge 1}
\newcommand{\ds}{\displaystyle}
\newcommand{\ba}[1]{\overline{#1}}
\newcommand{\ti}[1]{\widetilde{#1}}
\newcommand{\h}{\mathcal H}
\newcommand{\cct}{C-type}
\newcommand{\ccut}{C$_{2}$-type}
\newcommand{\cpt}{C$_{+}$-type}
\newcommand{\cput}{C$_{+,1}$-type}
\newcommand{\cpdt}{C$_{+,2}$-type}
\newcommand{\tvw}{T_{v,\,w,\,\varphi,\, b\,}}
\newcommand{\bh}{\mathfrak{B}(\h)}
\newcommand{\La}{\pmb{\Lambda }}
\newcommand{\la}{\pmb{\lambda }}
\newcommand{\Om}{\pmb{\Omega }}
\newcommand{\om}{\pmb{\omega }}
\newcommand{\te}{\pmb{\theta }}
\newcommand{\gd}{G_{\delta }}
\newcommand{\ffb}{\mathcal{F}_{B}}
\newcommand{\cmh}{\textrm{CH}_{M}(\h)}
\newcommand{\bmh}{\mathfrak{B}_{M}(\h)}
\newcommand{\hch}{\textrm{HC}(\h)}
\newcommand{\hcmh}{\textrm{HC}_{M}(\h)}
\newcommand{\sot}{\texttt{SOT}}
\newcommand{\sote}{\texttt{SOT}^{*}}
\newcommand{\invh}{\textrm{INV}(\h)}
\newcommand{\invoh}{\textrm{INV}_{f}(\h)}
\newcommand{\ginvoh}{\textrm{G-INV}_{f}(\h)}
\newcommand{\ergh}{\textrm{ERG}(\h)}
\newcommand{\gergh}{\textrm{G-ERG}(\h)}
\newcommand{\smxh}{\textrm{MIX}(\h)}
\newcommand{\gsmxh}{\textrm{G-MIX}(\h)}
\newcommand{\fhch}{\textrm{FHC}(\h)}
\newcommand{\ufhch}{\textrm{UFHC}(\h)}
\newcommand{\nevh}{\textrm{NEV}(\h)}
\newcommand{\nevmh}{\textrm{NEV}_{M}(\h)}
\newcommand{\cevh}{\textrm{CEV}(\h)}
\newcommand{\sph}{\textrm{SPAN}(\h)}
\newcommand{\psph}{\textrm{PSPAN}(\h)}
\newcommand{\cch}{\textrm{CH}(\h)}
\newcommand{\tmxh}{\textrm{TMIX}(\h)}
\newcommand{\dch}{\textrm{DCH}(\h)}
\newcommand{\ddch}{\textrm{DDCH}(\h)}
\newcommand{\ct}{c(T)}
\newcommand{\tth}{\mathfrak{T}(\h)}
\newcommand{\toh}{\mathfrak{T}_{0}(\h)}
\newcommand{\tindh}{\mathfrak{T}_{\textrm{ind}}(\h)}
\newcommand{\ttmh}{\mathfrak{T}_{M}(\h)}
\newcommand{\tomh}{\mathfrak{T}_{0,M}(\h)}
\newcommand{\tindmh}{ \mathfrak{T}_{\textrm{ind},M}(\h)}
\newcommand{\cmaxh}{c\textrm{MAX}(\h)}
\newcommand{\hct}{\textrm{HC}(T)}
\newcommand{\psmh}{\textrm{PSPAN}_{M}(H)}
\author{S. Grivaux}
\address{CNRS, Laboratoire Ami\'enois de Math\'ematique Fondamentale et Appliqu\'ee, UMR 7352\\
Universit\'{e} de Picardie Jules Verne\\
33, rue Saint Leu\\
80039 Amiens Cedex 1\\
France}
\email{sophie.grivaux@u-picardie.fr}
\author{\'{E}. Matheron}
\address{Laboratoire de Math\'{e}matiques de Lens\\
Universit\'{e} d'Artois\\
Rue Jean Souvraz SP 18\\
62307 Lens\\
France}
\email{etienne.matheron@univ-artois.fr}
\author{Q. Menet}
\address{Laboratoire de Math\'{e}matiques de Lens\\
Universit\'{e} d'Artois\\
Rue Jean Souvraz SP 18\\
62307 Lens\\
France}
\email{quentin.menet@univ-artois.fr}
\title[Linear dynamical systems on Hilbert spaces]{Linear dynamical 
systems 
on Hilbert spaces: typical properties and explicit examples}
\date{\today}
\thanks{The work of the first-named author is supported in part by EU IRSES grant AOS (PIRSES-GA-2012-318910).}
\begin{document}

\begin{abstract} We solve a number of questions pertaining to the dynamics of linear operators on Hilbert spaces, sometimes 
by using Baire category arguments and sometimes by constructing explicit examples. In particular, we prove the following results.
\begin{enumerate}
\item[\rm (i)] A typical hypercyclic operator  is not topologically mixing, has no eigenvalues and admits no non-trivial invariant measure, but is densely
distributionally chaotic.
\item[\rm (ii)] A typical {upper-triangular} operator is ergodic in the Gaussian sense, whereas a typical operator of the form ``diagonal plus backward unilateral weighted shift" 
is ergodic but has only countably many unimodular eigenvalues; in particular, it is ergodic but {not} ergodic in the Gaussian sense.
\item[\rm (iii)] There exist Hilbert space operators which are chaotic and $\mathcal U$-frequently hypercyclic but not frequently hypercyclic, 
Hilbert space operators which are {chaotic and} frequently hypercyclic but not ergodic, and Hilbert space operators which are chaotic and topologically mixing but not $\mathcal U$-frequently hypercyclic.
\end{enumerate}

\noindent
We complement our results by investigating the descriptive complexity of some natural classes of operators defined by dynamical properties.
\end{abstract}

\keywords{Linear dynamical systems; Hilbert spaces; Baire category; frequent and $\mathcal U$-frequent hypercyclicity; ergodicity; chaos; topological mixing.}
\subjclass{47A16, 37A05, 54E52, 54H05}
\maketitle

\tableofcontents

\section{Introduction}\label{Section 1-a}

\subsection{General overview}\label{Subsection 1.a.1} This paper is a contribution to the study of linear dynamical systems 
on Hilbert spaces. In other words, we are interested in the behavior of orbits of bounded linear operators defined on a 
Hilbert space. The symbol $\h$ will always designate a complex separable infinite-dimensional Hilbert 
space, and we denote by $\bh$ the algebra of all bounded linear operators on $\h$. 
\par\smallskip
We refer to the books \cite{BM} or \cite{GEP} for background on linear dynamics, and to the papers \cite{GW} or \cite{Gr} for a glimpse at the richness of the class of linear dynamical systems 
and its potential usefulness in general ergodic theory. A quick 
review of a number of definitions will be given in the next subsection. Let us just recall here that an operator $T\in\bh$ is said to be \emph{hypercyclic} if it 
admits at least one dense orbit (and hence a dense $G_\delta$ set of such orbits). The class of all hypercyclic operators 
on $\h$ will be denoted by $\hch$. Recall also that an operator $T\in\bh$ is said to be \emph{chaotic} if it is hypercyclic and admits a dense set 
of periodic vectors, and \emph{frequently hypercyclic} (resp. $\mathcal{U}$-\emph{frequently hypercyclic}) if there exists at least one vector $x\in\h$ whose orbit visits frequently every non-empty open set $V\subseteq\h$, 
in the sense that 
the set of integers $i\in\N$ such that $T^ix$ belongs to $ V$ has positive lower (resp. upper) density.
\par\smallskip
Obviously, the dynamical properties of linear operators can be studied (and {have} been studied extensively) in a very general setting -- arbitrary  
Banach spaces, or even arbitrary topological vector spaces. However, there are good reasons for focusing on Hilbert spaces. The first  that may come to mind 
is perhaps 
that ``this is the natural setting for doing operator theory" -- which is of course a highly questionable statement. 
Less subjectively,  the richness of the Hilbert space structure allows for the construction of many interesting examples. Also, in some parts of linear dynamics, especially all that concerns \emph{ergodic-theoretic} 
properties of linear operators, the general picture is neater on Hilbert spaces 
than on arbitrary Banach spaces. Finally, some natural questions in the area have been solved recently for operators  
on general Banach space but \emph{not} in the Hilbertian case. This should not seem paradoxical, if one compares for example with the 
current state of affairs regarding the famous Invariant  Subspace Problem.
\par\smallskip
The questions we are considering in this paper are quite basic. In very general terms, they are of the following type: given two interesting dynamical 
properties, do there exist linear operators on $\h$ satisfying one of them but not the other? 
\par\smallskip
Perhaps the most famous question of this type is Herrero's 
``$T\oplus T$ problem", stated in \cite{Her} and asking whether there exist hypercyclic operators which are not \emph{topologically weakly mixing}. As should be expected 
since this is trivially true for dynamical systems on compact spaces, the answer is ``Yes": this  was shown by De La Rosa and Read in \cite{Manuel},
and then in \cite{BM3} for operators on $\ell_p$ spaces, in particular for Hilbert space operators. However, the proof is surprisingly non-trivial. To give just one more example, 
the third named author was recently able to solve  another 
well-known problem in the area by constructing in \cite{Me} operators on $\ell_p$ spaces which are {chaotic}  but 
not {frequently hypercyclic} -- actually, not even $\mathcal{U}$-{frequently hypercyclic}. 
In the present paper, we will be especially interested in the following questions:
\par\smallskip
\begin{itemize}
\item[-] are there operators which are frequently hypercyclic but not \emph{ergodic}, \mbox{\it i.e.} do not admit an ergodic measure with full support? 
\item[-] are there operators which are {$\mathcal U$-frequently hypercyclic} but not frequently hypercyclic? 
\item[-] are there operators 
which are ergodic but do not admit 
a \emph{Gaussian} ergodic measure with full support?
\end{itemize}
\par\smallskip
Note that the answer to the first two questions is known to be ``Yes" for operators on the Banach space $c_0$ (see \cite{BR} and \cite{GM}, as well as 
\cite{BGE} for further developments); 
but the Hilbertian case
had not been settled. As for the third question, we are not aware of any previous answer, on any Banach space.
\par\smallskip
One can attack questions of this type 
from two complementary points of view (of course, there are other strategies as well).
\par\smallskip
\begin{itemize}
\item[-] ``Collective" point of view: among the properties one is interested in, one may try to determine which ones 
are \emph{generic} and which ones are not, in a {Baire category} sense. Once this is done, one may be able to distinguish 
two properties because one of them is generic and the other one is not. 
 \par\smallskip
\item[-] ``Individual" point of view: when it is unclear how a Baire category approach could work, one may still try 
to construct explicit examples of operators satisfying (or not) such or such a property. 
\end{itemize}
\par\smallskip
Note that it is quite natural to present the two viewpoints in this order, because the indirect, Baire category approach is in some sense simpler.  Indeed, it is usually not too difficult to guess when it should work, and 
in this case the technical details are likely to be rather soft. On the other hand, one may reasonably expect to face technical difficulties when attempting 
a direct construction (and perhaps quite serious ones if Baire category is unefficient).
\par\smallskip
To give meaning to the ``collective" viewpoint, one has to fix an appropriate topological setting. We will in fact consider \emph{two} 
natural 
topologies on $\bh$: the \emph{Strong Operator Topology} (denoted by $\texttt{SOT}$), and the 
\emph{Strong$^{\,*}$ Operator Topology} (denoted by $\texttt{SOT}^*$).
Restricted to any closed ball $\bmh:=\{ T\in\bh;\; \Vert T\Vert\leq M\}$, these topologies are \emph{Polish} (separable and completely metrizable). 
Moreover, $\textrm{HC}(\h)$ is easily seen to be $\texttt{SOT}$-$G_\delta$ in $\bh$, so that for any $M>1$, the family 
$\textrm{HC}_M(\h):=\{ T\in\hch;\; \Vert T\Vert\leq M\}$ is itself a Polish space  in its own right with respect to both $\texttt{SOT}$ and $\texttt{SOT}^*$. 
This opens the way to Baire category arguments in $\bmh$ or $\hcmh$. Of course, there is nothing new in this observation: Baire category methods have already proved as useful in operator theory as 
anywhere else; see in particular \cite{EM}.
\par\smallskip
 Recall that a subset of a Polish space $X$ is said to be \emph{meager} in $X$ if it can be covered by countably many 
closed sets with empty interior, and \emph{comeager} if its complement is meager (equivalently, 
if it contains a dense $G_\delta$ subset of $X$). Following a well-established terminology, we will say that a property of elements of $X$ is \emph{typical} if the set of all $x\in X$ satisfying it 
is comeager in $X$, and that a property is \emph{atypical} if its negation is typical.
 In this paper, we obtain among other things the following results, for any $M>1$.
 \par\smallskip
 \begin{itemize}
 \item[-] An $\texttt{SOT}^*$-typical $T\in\bmh$ is (topologically) weakly mixing but not mixing. 
  \item[-] An $\texttt{SOT}^*$-typical $T\in\bmh$ is densely distributionally chaotic.
 \item[-] An $\texttt{SOT}^*$-typical $T\in\hcmh$ has no  eigenvalues and admits no non-trivial invariant measure. 
 In particular, chaoticity and $\mathcal U$-frequent hypercyclicity are atypical properties of hypercyclic operators (in the $\sote$ sense).
 \end{itemize}
\par\smallskip
Admittedly, there results are not {that} surprising (except, perhaps, the one concerning $\mathcal U$-frequent hypercyclicity). Still, 
they do sketch the landscape, and they explain in some sense why some results in the area turn out to be (or are likely to be) harder to prove than 
some others. For example, the fact that topological weak mixing is a 
typical property seems to prevent the use of ``soft" arguments for establishing the existence of hypercyclic operators which are not
weakly mixing; and indeed, the proofs in \cite{Manuel} and \cite{BM3} rely on rather technical arguments. 
\par\smallskip
On the other hand, something more unexpected happens if one restricts oneself to the class of operators of the form $T=D+B$, where 
$D$ is a diagonal operator and $B$ is a weighted unilateral backward shift (with respect to some fixed orthonormal basis of $\h$): 
\par\smallskip
\begin{itemize}
\item[-] 
 {in the $\texttt{SOT}$ sense, most ``diagonal plus shift" hypercyclic operators are ergodic and yet have only countably many unimodular eigenvalues. In particular, there exist operators on $\h$ which are ergodic but admit no {Gaussian} ergodic measure with full support.} 
 \end{itemize}

\smallskip\noindent
 This solves a rather intriguing question, which seems to go back to Flytzanis' paper~\cite{Fl} and has been very much in the air in the last few years.
\par\medskip
As for the ``individual" viewpoint, we will elaborate on the kind of operators constructed by the third named author in \cite{Me}. Recall that 
the main result of \cite{Me} is the existence of chaotic operators on $\ell_{p}$ spaces which are not {$\mathcal{U}$-}frequently hypercyclic. In the present paper, we describe 
a general scheme allowing to produce, among other things, 
\par\smallskip
\begin{itemize}
\item[-] operators on $\mathcal H$ which {are {chaotic and} frequently hypercyclic but not ergodic};
\item[-] operators on $\h$ which are {{chaotic and} $\mathcal U$-frequently hypercyclic but not frequently hypercyclic}.
\end{itemize}  
\par\smallskip
These are the first examples of such kinds of operators living on a Hilbert space, even if one dispenses with the chaoticity assumption: no examples of frequently hypercyclic non-ergodic operators, nor of $\mathcal{U}$-frequently hypercyclic non frequently hypercyclic operators living on a Hilbert space were known before. As already mentioned, up to now the only available examples were 
operators acting on $c_0$.
\par\smallskip
Moreover, our constructions will also enable us to improve the main result of  \cite{Me} by showing that there exist

\smallskip
\begin{itemize}
\item[-] operators on $\h$ which are {chaotic and \emph{topologically mixing} but not $\mathcal U$-frequently hypercyclic}.
\end{itemize}
\par\smallskip
One point is especially worth mentioning regarding these results: it is not at all accidental that all the operators we construct turn out to be chaotic. On the contrary, our proofs rely heavily on new criteria for frequent 
hypercyclicity and $\mathcal U$-frequent hypercyclicity (stronger than the usual ones), in which the periodic points play a central role.
\par\smallskip
In this part of the paper, our constructions and arguments are rather technical, and 
it is not at all clear that they could be by-passed by suitable Baire category arguments. In fact, as already suggested above, 
there are  reasons to believe 
that technicalities are unavoidable here: since $\mathcal U$-frequent hypercyclicity, frequent hypercyclicity and 
ergodicity are atypical properties, it seems difficult to distinguish them using simply the Baire category theorem; unless one
restricts oneself to some cleverly chosen special class of operators, which has  yet to be found.

\medskip The basic questions we address in this paper can also be considered from a third point of view,  which is that of \emph{descriptive set theory}. Indeed, once it is known (by any argument) 
that two properties of linear operators are not the same, \textit{i.e.} 
that two classes of operators are distinct, it is natural to wonder if a stronger conclusion might hold true, namely that these classes do not have the same \emph{complexity} in the sense of 
descriptive set theory. More generally, it can be a quite interesting problem to determine the exact descriptive complexity of a given class of operators. In this paper, we do so for chaotic operators and for 
topologically mixing operators. We also obtain a partial result for $\mathcal U$-frequently hypercyclic operators, whose proof relies on our general scheme for constructing operators with special properties.

\par\medskip
We finish this overview by pointing out that the difficulty of the existence results presented in this paper is specifically connected with the linear setting, and that the corresponding results are essentially trivial if one moves over to the broader setting of \emph{Polish dynamical systems} (\mbox{\it i.e.} dynamical systems of the form $(X,T)$, where $T$ is a continuous self-map of a Polish space $X$; see for instance the book \cite{Gl}, as well as \cite{GlDa} for more on linear systems as special cases of Polish systems). 
Indeed, extending the definitions of frequent and $\mathcal{U}$-frequent hypercyclicity to the Polish setting in the obvious way, it is not difficult to see that frequently hypercyclic non-ergodic Polish dynamical systems do exist, as well as $\mathcal{U}$-frequently hypercyclic Polish dynamical systems which are not frequently hypercyclic. Here are two simple examples.

First, consider an irrational rotation $R$ of the unit circle $\T$, and denote by $m$ the normalized Lebesgue \mea\ on $\T$. Then $(\T,\mathcal{B}, m; R)$ is an \erg\  dynamical system for which all points are \fhy. Let $C$ be a compact subset of $\T$ which has empty interior and is such that $m(C)>0$, and consider the set $X:=\T\setminus\bigcup_{n\in\N}R^{-n}(C)$. Then $X$ is a dense $G_{\delta }$ subset of $\T$ which is $R$-\inv, and $(X,R)$ is thus a \fhy\ Polish dynamical system. But, as $m(X)=0$ (by ergodicity) and $R$ is uniquely \erg, $(X,R)$ admits no \inv\ \mea\ at all. This shows in particular the existence of frequently hypercyclic Polish dynamical systems which are not ergodic.
This example is due to B. Weiss (private communication).

Now, consider a \fhy\ operator $T$ on $\h$. Then, the set $\textrm{UFHC}(T)$ of all $\mathcal{U}$-frequently hypercyclic vectors for $T$ is {comeager} in $\h$ (\cite{BR}, see also \cite{BGE}), whereas the set $\textrm{FHC}(T)$ of frequently hypercyclic vectors for $T$ is {meager} (\cite{Moo},  \cite{BR}). Hence $\textrm{UFHC}(T)\setminus \textrm{FHC}(T)$ is comeager in $\h$. Let $G$ be a dense $G_{\delta}$ subset of $\h$ contained in $\textrm{UFHC}(T)\setminus \textrm{FHC}(T)$, and set $X:=\bigcap_{n\ge 0}T^{-n}(G)$. Then $X$ is a dense $G_{\delta}$ subset of $\h$ which is $T$-invariant, so that $(X,T)$ is a Polish dynamical system. Since $X$ is still contained in $\textrm{UFHC}(T)\setminus \textrm{FHC}(T)$, all points of $X$ are $\mathcal{U}$-frequently hypercyclic for $T$ but none of them is frequently hypercyclic. In particular, the Polish dynamical system $(X,T)$ is $\mathcal{U}$-frequently hypercyclic but not frequently hypercyclic.
\par\smallskip

\subsection{Background and notations} 
In this subsection, we recall some well-known definitions, referring to \cite{BM} or \cite{GEP} for more details. We also fix some 
notations that will be used throughout the paper.
\par\smallskip

\subsubsection{Transitivity, mixing and weak mixing} If $T\in\bh$, we set, for any subsets $A,B$ of $\h$, 
\[\mathcal N_T(A,B):=\{ i\in\N;\; T^i(A)\cap B\neq\emptyset\}.\]
\par\smallskip
It is well-known that $T$ is hypercyclic if and only if it is \emph{topologically transitive}, \mbox{\it i.e.} $\mathcal N_T(U,V)\neq\emptyset$ 
for all open sets $U,V\neq\emptyset$. A stronger property is \emph{topological mixing}: $T$ is topologically mixing if all sets 
$\mathcal N_T(U,V)$ are \emph{cofinite} rather than just non-empty; that is, for any pair $(U,V)$ of non-empty 
open sets in $\h$, one has $T^i(U)\cap V\neq\emptyset$ for all but finitely many $i\in\N$. In between transitivity and mixing is 
\emph{topological weak mixing}: an operator $T\in\bh$ is {topologically weakly mixing} if $T\times T$ is hypercyclic on $\h\times\h$; 
in other words, if $\mathcal N_T(U_1,V_1)\cap\mathcal N_T(U_2,V_2)\neq\emptyset$ for all non-empty open sets 
$U_1,V_1,U_2,V_2\subseteq\h$. We set 
\begin{align*}
\textrm{TWMIX}(\h)&:=\{ \hbox{topologically weakly mixing operators on $\h$}\},\\
\tmxh&:=\{ \hbox{topologically mixing operators on $\h$}\};
\end{align*}
so that
\[\tmxh\subseteq \textrm{TWMIX}(\h)\subseteq\hch.\]
\par\smallskip
It is easy to see (for example by considering weighted backward shifts) that the inclusion $\tmxh\subseteq \textrm{TWMIX}(\h)$ is proper. 
As already mentioned, the inclusion $\textrm{TWMIX} \subseteq \textrm{HC}$ is also proper: this was first proved by De La Rosa and Read in \cite{Manuel} 
for operators living on a suitably manufactured Banach space, and then in \cite{BM3} for Hilbert space operators. Recall also that, according to a nice result of B\`es and Peris \cite{BP}, 
the topologically weakly mixing operators are exactly those satisfying the so-called \emph{Hypercyclicity Criterion}.
\par\smallskip

\subsubsection{Chaos} An operator $T\in\bh$ is said to be \emph{chaotic} in the sense of Devaney if it is hypercyclic 
and its periodic points are dense in $\h$ (in the linear setting, hypercyclicity automatically implies sensitive dependence on the initial conditions).
We set
\[\cch:=\{\hbox{chaotic operators on $\h$}\}.\]
\par\smallskip
It is not completely obvious, but nonetheless true, that chaotic operators are topologically weakly mixing (see \mbox{e.g.} \cite[Ch. 4]{BM}). In other words, 
\[\cch\subseteq \textrm{TWMIX}(\h).\]
\par\smallskip

\subsubsection{Ergodic-theoretic properties} In this paper, the word ``measure" will always mean 
``Borel probability measure". A measure $\mu$ on $\h$ is \emph{invariant} for some operator $T\in\bh$ if $\mu\circ T^{-1}=\mu$; 
and an invariant measure $\mu$ for $T$ is \emph{non-trivial} if $\mu\neq\delta_0$ (note that the point mass $\delta_0$ is invariant for 
any $T\in\bh$ since $T(0)=0$). Also, a measure $\mu$ on $\h$ is said to have \emph{full support} if $\mu(V)>0$ for every non-empty open set $V$. This
means exactly that the topological support of $\mu$ is equal to the whole space $\h$. 
\par\smallskip
An operator $T\in\bh$ is said to be \emph{ergodic} if it admits an invariant measure $\mu$ with full support with respect to which it is ergodic, \textit{i.e.} such that for every Borel subset $B$ of $\h$ satisfying $T^{-1}B=B$, we have $\mu(B)=0$ or $1$. 
If the measure $\mu$ can be taken to be Gaussian, we say that $T$ is \emph{ergodic in the Gaussian sense}, or \emph{$G$-ergodic}. The operator $T$ is said to be
 \emph{mixing} if it admits an invariant measure $\mu$ with full support with respect to which it is strongly mixing, \textit{i.e.} such that $\mu(A\cap T^{-n}(B))\to \mu(A)\mu(B)$ for any Borel sets $A,B\subseteq\h$. 
 Mixing \emph{in the Gaussian sense} is defined as expected. The corresponding notations are the following:
\begin{align*}
\ergh&:=\bigl\{ \hbox{ergodic operators on $\h$}\bigr\};\\
\gergh&:=\bigl\{ \hbox{operators on $\h$ which are ergodic in the Gaussian sense}\bigr\};\\
\smxh&:=\bigl\{ \hbox{mixing operators on $\h$}\bigr\};\\
\gsmxh&:=\bigl\{\hbox{operators on $\h$ which are mixing in the Gaussian sense}\bigr\}.
\end{align*}
Since the measures involved are required to have full support, it is obvious by definition that ergodic operators are hypercyclic and that mixing 
operators are topologically mixing. That is, 
\[\ergh\subseteq\hch\qquad{\rm and}\qquad \smxh\subseteq\tmxh.\]
\par\smallskip
These inclusions are proper: for example, if $B$ is any compact weighted backward shift on the Hilbert space $\ell_{2}(\N)$, the operator $T=I+B$ is topologically 
mixing (see \cite{BM} or \cite{GEP}) but not ergodic: its spectrum is reduced to the point $\{ 1\}$, so that it is by \cite{S2} not even $\mathcal{U}$-frequently hypercyclic.
We also set 
\begin{align*}
\invh&:=\bigl\{ T\in\bh\ \hbox{admitting a non-trivial invariant measure}\bigr\};\\
\invoh&:=\bigl\{T\in\bh\ \hbox{admitting an invariant measure with full support}\bigl\};\\
\ginvoh&:=\bigl\{T\in\bh\ \hbox{admitting a Gaussian invariant measure with full support}\bigr\}.
\end{align*}
Thus, for example, any operator admitting a non-zero periodic point belongs to $\invh$, and any chaotic operator lies in $\invoh$. So we have
\[\cch\subseteq\invoh\cap \hch.\]
This inclusion cannot be reversed: there exist even {$G$-ergodic} operators on $\h$ which are not chaotic (see \cite{BayBer} or \cite[Section 6.5]{BM}).
\par\smallskip 
At this point, we would like to stress that in the present paper, we will mostly focus 
on invariant measures which 
are \emph{not required to be Gaussian}. This is a true change of 
point of view compared with earlier works on the ergodic theory of linear dynamical 
systems (see for instance, among other works, \cite{Fl}, \cite{BG2}, \cite{BG3}, \cite{BM} \dots), where 
Gaussian measures play a central role. But in retrospect, this is in fact quite natural: as we shall see in Section~\ref{UPPERTRIANG}, it turns out that within 
a certain natural class of upper-triangular operators on $\h$, the ergodic operators which are not ergodic in the Gaussian sense
are typical. Note also that non-Gaussian measures 
 already played an essential role in such works as \cite{MP}, 
\cite{GW} or \cite{Gr} for instance. 
\par\smallskip

\subsubsection{{Frequent hypercyclicity} and {$\mathcal U$-frequent hypercyclicity}} Let $T\in\bh$. 
For any $x\in \h$ and $B\subseteq\h$, set
\[\mathcal N_T(x,B):=\{ i\in\N;\; T^ix\in B\}.\]
The operator $T$ is said to be \emph{frequently hypercyclic} if there exists some $x\in \h$ such that, for any open set 
$V\neq\emptyset$, the set $\mathcal N_T(x,V)$ has positive lower density; and $T$ is 
\emph{$\mathcal U$-frequently hypercyclic} if there is some $x\in\h$ such that all sets $\mathcal N_T(x,V)$ has 
positive upper density.
\par\smallskip
Frequent hypercyclicity was introduced in \cite{BG2} and rather extensively studied since then 
(for instance in \cite{BR}, \cite{BoGR}, \cite{G}, \cite{GM}, \cite{Me}, \cite{S2} \dots). The study of upper-frequent hypercyclicity is more recent. This notion was introduced by Shkarin in \cite{S2}. Until the last 
few years, it has perhaps been unfairly considered as somehow ``less interesting", probably because of a lack of examples or results exhibiting truly different behaviors between frequently and 
$\mathcal{U}$-frequently hypercyclic operators. 
However, despite a formal similarity in the definitions, there are some important differences between frequent 
and $\mathcal U$-frequent hypercyclicity. For example, the set of frequently hypercyclic vectors for a frequently hypercyclic operator $T$ on $\h$ is 
always \emph{meager} in $\h$ (\cite{Moo},  \cite{BR}) whereas if $T$ is a $\mathcal{U}$-frequently hypercyclic operator
on $\h$,  the set of all $\mathcal U$-frequently hypercyclic 
vectors for $T$ is \emph{comeager} in $\h$ (\cite{BR}; see also \cite{BGE} for more along these lines). This may lead to believe that 
$\mathcal U$-frequent hypercyclicity is a typical property while frequent hypercyclicity is not; which is in fact a wrong intuition: as we shall see, 
both properties are atypical. Yet, these properties are not equivalent: Bayart and Rusza exhibited in \cite{BR} examples of $\mathcal{U}$-frequently hypercyclic operators on $c_{0}$ which are not frequently hypercyclic. The notion of $\mathcal{U}$-frequent hypercyclicity was further studied by Bonilla and Grosse-Erdmann in \cite{BGE}, and it plays an important role in the present paper.
\par\smallskip
One can go one step further and study \emph{$\mathcal F$-hypercyclic} operators, for a given family 
$\mathcal F$ of subsets of $\N$ (see in particular \cite{BMPP} and \cite{BGE}); but we will not follow this quite interesting route here. 
As for the notations, we set 
\begin{align*}
\fhch&:=\{\hbox{frequently hypercyclic operators on $\h$}\},\\
\ufhch&:=\{\hbox{$\mathcal U$-frequently hypercyclic operators on $\h$}\}.
\end{align*}
\par\smallskip
Although the definitions of frequent hypercyclicity and $\mathcal U$-frequent hypercyclicity make no explicit reference 
to measures, they have a partly ``metrical" flavour; and indeed, invariant measures are quite relevant here. First, it follows 
easily from the pointwise ergodic theorem that ergodic operators are frequently hypercyclic. Moreover, it is shown in
 \cite{GM} that $\mathcal U$-frequently hypercyclic operators on $\h$ admit invariant measures with full support 
 (this is in fact true for operators living on any {reflexive} Banach space). So we have
\[\ergh\subseteq\fhch\subseteq\ufhch\subseteq\hch\cap\invoh.\]
\par\smallskip
The inclusion $\ufhch\subseteq\hch\cap\invoh$ is proper: as already mentioned, it is proved in \cite{Me} that there exist even {chaotic} 
operators on $\h$ which are not $\mathcal U$-frequently hypercyclic. As we shall see in Section \ref{SPECIAL} (Theorems \ref{Theorem 54} and 
\ref{Theorem 58}), 
the other two inclusions are also proper.
\par\smallskip

\subsubsection{Properties related to {eigenvalues}} If $T\in\bh$, we denote by $\mathcal E(T)$ 
the set of all \emph{unimodular eigenvectors} of $T$, \mbox{\it i.e.} eigenvectors 
$x$ of $T$ whose associated eigenvalue $\lambda(x)$ has modulus $1$. We say that $T$ has a \emph{spanning set of unimodular eigenvectors} 
if $\overline{\rm span}\;\mathcal E(T)=\h$; and that $T$ has a \emph{perfectly  spanning set of unimodular eigenvectors} if for 
any countable set $D\subseteq\mathbb T$, where $\T$ denotes the unit circle in $\C$, we have $\overline{\rm span}\,\{x\in\mathcal E(T);\;\lambda(x)\not\in D\}=\h$. 
The notations are as follows: 
\begin{align*} 
\sph&:=\bigl\{T\in\bh\ \hbox{with spanning unimodular 
eigenvectors}\bigr\};\\
\psph&:=\bigl\{T\in\bh\ \hbox{with perfectly spanning unimodular 
eigenvectors}\bigr\}.
\end{align*}
We also define
\begin{align*}
 \nevh&:=\bigl\{T\in\bh\ \textrm{with no eigenvalues}\bigr\};\\
\cevh&:=\bigl\{T\in\bh\ \textrm{with only countably many  eigenvalues}\bigr\}.
\end{align*}
Properties of unimodular eigenvectors of an operator $T\in\bh$ are closely related to those of 
invariant \emph{Gaussian} measures for $T$ (see for instance 
\cite{BG2}, \cite{BG3}, 
\cite{BM}): indeed, it turns out that in fact 
\[\sph=\ginvoh\qquad{\rm and}\qquad \psph=\gergh;\]
so that there was in fact no need for introducing new notations. However, things are not that neat on arbitrary 
Banach spaces: see \cite{BG3}, \cite{BM} or \cite{BM2}. This is one of the advantages of working on Hilbert 
spaces, or at least on ``nice" Banach spaces.
\par\smallskip

\subsubsection{Distributional chaos} Distributional chaos was first defined by Schweitzer and Sm\'ital in \cite{SS}, under the name \emph{strong chaos}, 
for self-maps of a compact interval, and 
several definitions have been proposed afterwards in the context of general metric spaces. 
There is no need to recall these definitions here, because things simplify greatly in the linear setting: as shown in \cite{BBMP}, 
an operator $T\in\bh$ is distributionally chaotic if and only if it admits a \emph{distributionally irregular vector}, 
\mbox{\it i.e.} a vector $x\in\h$ for 
which 
there 
exist two sets $A$ and $B$ of integers, both of upper density $1$ in $\N$, such 
that
\[
\lim 
_{\genfrac{}{}{0pt}{1}{i\to\infty }{i\in A}}\|T^{i}
x\|=0\quad\textrm { and } \quad
\lim_{\genfrac{}{}{0pt}{1}{i\to\infty }{i\in 
B}}\|T^{i}x\|=\infty.
\]
The corresponding notation is the following one:
\[
\dch:=\bigl\{ \hbox{distributionally chaotic operators on $\h$}\bigr\}.
\]

A related class is that of \emph{densely distributionally chaotic operators}, \mbox{\it i.e.} of operators which admit a dense set of distributionally irregular vectors. We use the following notation for this set:
\[
\ddch:=\bigl\{ \hbox{densely distributionally chaotic operators on $\h$}\bigr\}.
\]
Obviously, $\ddch\subseteq\dch$.
\par\smallskip

\subsubsection{The parameter $c(T)$}
Ergodicity and distributional chaos are closely related to a natural parameter introduced in 
\cite{GM}. This quantity $\ct\in[0,1]$ is associated to any {hypercyclic} operator 
$T$, and essentially represents the maximal frequency with which 
the 
orbit of a hypercyclic vector for $T$ can visit a ball in $\h$ centered 
at $0$. The precise definition is as follows: for any $\alpha >0$, we have
\[ c(T)=\sup_{x\in HC(T)}\,\overline{\rm dens}\,\mathcal N_T\bigl(x,B(0,\alpha)\bigr).\]

It is shown in \cite{GM} that in fact $\overline{\rm dens}\,\mathcal N_T\bigl(x,B(0,\alpha)\bigr)=c(T)$ for a comeager set of vectors $x\in HC(T)$; so we have in particular (for any $\alpha>0$) 
\[ c(T)=\sup\,\left\{ c\geq 0;\; \overline{\rm dens}\Bigl(\mathcal N_T\bigl(x, B(0,\alpha)\bigr)\Bigr)\geq c\quad 
\hbox{for a comeager set of $x\in HC(T)$}\right\}.
\]

Note also that $c(T)>0$ if the operator $T$ is $\mathcal U$-frequently hypercyclic.
\par\smallskip
The last class of operators which we introduce is that of operators 
$T\in\hch$ such that $c(T)$ is maximal:
\[
\cmaxh:=\bigl\{T\in\hch\,;\,\ct=1 \bigr\}.
\]

Since, as proved in \cite{BBMP}, the set of all distributionally irregular vectors for a given operator $T\in\ddch$ is comeager in $\h$, it 
is clear that $\ddch\cap\textrm{HC}(\h)\subseteq \cmaxh$. Moreover, it is shown in \cite{GM} (more accurately, half in \cite{BR} and half in \cite{GM}) that ergodic operators are densely
distributionally chaotic: $\ergh\subseteq\ddch$. So we have
\[\ergh\subseteq \ddch\cap\textrm{HC}(\h)\subseteq\cmaxh.
\]

It is left as an open question in \cite{GM} to determine whether $\hch\cap\invoh\subseteq\cmaxh$, or whether at least 
$\fhch\subseteq\cmaxh$. It follows from the proof of the main result of \cite{Me} that the first inclusion does 
{not} hold true: the chaotic operator $T$ constructed there belongs to $\hch\cap\invoh$ (as does any chaotic operator on $\h$) 
but satisfies $c(T)=0$. However this operator is not frequently hypercyclic, and so the examples of \cite{Me} do not disprove the second inclusion. 
We will nonetheless show in the present paper that this inclusion does not hold true either (Theorem \ref{Theorem 54}).
\par\smallskip

\subsubsection{A last notation} If $\Gamma(\h)$ is a class of operators on $\h$, then, 
for any $M>0$, we set
\[\Gamma_M(\h):=\{ T\in\Gamma(\h);\; \Vert T\Vert\leq M\}.\]
\par\smallskip

\subsection{{Organization of the paper}} In Section \ref{TYPICAL}, we recall a few basic facts concerning the topologies $\texttt{SOT}$ and $\texttt{SOT}^*$, and then  prove
some ``typicality" results in the spaces $\bmh$ and $\hcmh$, $M>1$ with respect to the Strong$^*$ Operator Topology $\sote$. These results can be summarized as follows: for any $M>1$, an $\texttt{SOT}^*$-typical $T\in \hcmh$ 
is topologically weakly mixing but not mixing (Propositions \ref{WM-dense} and \ref{Proposition 13}), 
has no eigenvalues (Corollary \ref{Prop5coro}), admits no non-trivial invariant measure (Theorem \ref{Theorem 9}), but is densely distributionally chaotic (Proposition \ref{Theorem 15}). 
\par\smallskip
Section \ref{COMPLEXITY} is a digression, in which we discuss the {descriptive complexity} of some of the families of operators introduced above. We show that for any $M>1$, 
 $\textrm{TMIX}_M(\h)$ and $\cmh$ are Borel subsets of $(\bmh, \texttt{SOT}^*)$ of class exactly 
$\mathbf\Pi_3^0$, aka $F_{\sigma\delta}$ (Propositions~\ref{Prop en plus} and~\ref{Proposition 8}), 
whereas $\textrm{UFHC}_M(\h)$ is Borel of class at most $\mathbf\Pi_4^0$, and neither $\mathbf\Pi_3^0$ nor $\mathbf\Sigma_3^0$ (Corollary \ref{Proposition 9.4}). We also show that some rather natural classes of operators defined by dynamical properties  are \emph{non-Borel} 
in $\bmh$; for example, the family of all operators $T\in\bmh$ admitting a hypercyclic restriction to an invariant subspace is non-Borel (Proposition \ref{restrictions}), as well as the family of distributionally chaotic operators $T\in\bmh$ (Proposition \ref{Proposition 3.23}). In contrast, the class of {densely} distributionally chaotic operators $T\in\bmh$ is $G_{\delta}$
(Proposition \ref{Theorem 15}).
\par\smallskip
In Section \ref{UPPERTRIANG}, we consider ergodicity properties of \emph{upper triangular} operators, this time with respect to the Strong Operator Topology $\texttt{SOT}$.  We show first that for any $M>1$, an $\texttt{SOT}$-typical upper triangular 
operator $T\in\bmh$ is ergodic in the Gaussian sense (Proposition \ref{Proposition 26}). On the other hand, we essentially show in Theorem \ref{Theorem 28} that an $\texttt{SOT}$-typical operator $T\in\bmh$ 
of the form ``diagonal + backward shift" is ergodic but admits only countably many eigenvalues (and hence is ergodic but not ergodic in the Gaussian sense). 
\par\smallskip
 In Section \ref{CRITERIA}, we prove several criteria for an operator $T$ to be $\mathcal U$-frequently hypercyclic or frequently hypercyclic. 
 These criteria are rather different in spirit from the by now classical Frequent Hypercyclicity Criterion (\cite{BoGR}) or from the more recent criteria which
 can be found in \cite{BMPP} or \cite{BGE}, since they rely explicitly on the existence of many \emph{periodic} (or almost periodic) vectors for the operator $T$. 
 However, our criterion for frequent hypercyclicity turns out to be stronger than the classical one: indeed, any operator satisfying the so-called 
 \emph{Operator Specification Property} also satisfies the assumptions of our criterion, whereas it is known \cite{BMPe2} that operators satisfying the assumptions 
 of the classical Frequent Hypercyclicity Criterion have the Operator Specification Property (but not conversely, see \cite{BMPe2}).
In the present paper, these criteria for $\mathcal U$-frequent hypercyclicity and frequent hypercyclicity are instrumental: we  use them in order to simplify the proofs of several later
results (more precisely, Corollary \ref{Proposition 36}, Theorem \ref{Theorem 39} and Theorem \ref{Theorem 41} will be used in the proofs of the main results of Section \ref{SPECIAL}).
However, we believe that these criteria might be useful in other situations as well; and for this reason we have stated them in the 
setting of general Banach spaces.
\par\smallskip
In Section \ref{SPECIAL}, which is by far the the most technical part of the paper, we develop a general machinery for producing hypercyclic 
operators with special properties. This machinery is very much inspired from the construction of \cite{Me}, but things are done here in greater
generality. Again, we hope that this approach could be useful to solve other questions as well. The operators we construct depend on a number of 
parameters, and we are able to determine in a rather precise way how the parameters influence on $\mathcal U$-frequent or frequent hypercyclicity, ergodicity or topological 
mixing. This
allows us to produce the examples we are looking for.  
The main results we obtain are  the existence of chaotic and frequently hypercyclic operators which are not in $c\textrm{MAX}(\h)$ and hence
are not ergodic (Theorem \ref{Theorem 54}), the existence 
of chaotic and $\mathcal U$-frequently hypercyclic operators which are not frequently hypercyclic (Theorem \ref{Theorem 58}), and the existence of chaotic and topologically 
mixing operators which are not $\mathcal U$-frequently hypercyclic (Theorem \ref{ex mix}).
\par\smallskip
We conclude the paper (Section \ref{QUESTIONS}) with a short list of possibly interesting questions.
\par\medskip
\textbf{Acknowledgements:} We are grateful to Alfred Peris for interesting discussions, and also for kindly allowing us to reproduce here his proof of Theorem \ref{Theorem 42}.
\par\smallskip

\section{Typical properties of hypercyclic operators}\label{TYPICAL}

\subsection{The strong and strong$^{\,*}$ topologies} The \emph{Strong Operator Topology} ($\texttt{SOT}$) on 
$\bh$ is defined as follows: any $T_{0}\in\bh$ 
has a neighborhood basis consisting of sets of the form
\[
\mathfrak U_{T_0; x_{1},\dots,x_{s},\varepsilon 
}:=\bigl\{T\in\bh\,;\, 
\|(T-T_{0})x_{i}\|<\varepsilon\quad\hbox{for $i=1,\dots ,s$} \bigr\} 
\]
where $x_{1},\dots,x_{s}\in \h$ and $\varepsilon>0$. Thus, a net $(T_{i})\subseteq\bh$ 
tends to $T_{0}\in\bh$ with respect to $\texttt{SOT}$ if and only if 
$T_ix\to T_0x$ in norm for every $x\in H$. 
\par\smallskip
The 
second (and perhaps a little less well-known) topology we will use is the \emph{Strong$^{\,*}$ Operator Topology} ($\texttt{SOT}^*$), which is the ``self-adjoint" 
version of $\sot$.  
A basis of $\texttt{SOT}^*$-neighborhoods of $T_0\in\bh$ is provided by the sets of the form
\[
\mathfrak V_{T_0; x_{1},\dots,x_{s},\varepsilon 
}:=\bigl\{T\in\bh\,;\,
\|(T-T_{0})x_{i}\|<\varepsilon\ \textrm{and}\
\|(T-T_{0})^{*}x_{i}\|<\varepsilon\quad\hbox{for $i=1,\dots ,s$}
\bigr\} 
\]
where $x_{1},\dots,x_{s}\in \h$ and $\varepsilon>0$. In other words, a net 
$(T_{i})$ tends to $T_{0}\in\bh$ with respect to $\texttt{SOT}^*$ if and only if $T_i\xrightarrow{\texttt{SOT}}T_0$ and $T_i^*\xrightarrow{\texttt{SOT}}T_0^*$. 
 Obviously, $\sot$ is coarser than $\sote$, which 
is in turn coarser than the norm topology. 
The topologies 
induced by $\sot$ and $\sote$ on any closed ball $\bmh$ are easily seen to be {Polish}, \mbox{\it i.e.} separable and completely metrizable (see \mbox{e.g.} \cite[Section 4.6.2]{Ped}). 
This will be of primary importance for us.
\par\smallskip
The following simple fact will be used repeatedly, sometimes without explicit mention.

\begin{fact}\label{power} Let $M>0$. If $\mathfrak B(\h)$ is endowed with either $\sot$ or $\sote$, then the map $(T,S)\mapsto TS$ is continuous from $\bmh\times\bmh$ into $\mathfrak B(\h)$. 
Consequently, for any fixed integer $n\ge 1$, the map $T\mapsto T^n$ is $(\texttt{SOT},\texttt{SOT})$-continuous and 
$(\texttt{SOT}^*,\texttt{SOT}^*)$-continuous from $\bmh$ into $\bh$.
\end{fact}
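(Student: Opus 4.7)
The plan is to prove joint continuity of multiplication on bounded balls by the standard ``add-and-subtract'' argument, and then derive the statement about $T \mapsto T^n$ by iteration.

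\medskip
\noindent\textbf{Step 1: Joint continuity of multiplication for $\sot$.} Fix $(T_{0},S_{0})\in\bmh\times\bmh$, a vector $x\in\h$, and $\varepsilon>0$. I would write, for any $(T,S)\in\bmh\times\bmh$,
\[
(TS-T_{0}S_{0})x \;=\; T(S-S_{0})x \;+\; (T-T_{0})(S_{0}x).
\]
Using $\|T\|\le M$, this gives
\[
\|(TS-T_{0}S_{0})x\| \;\le\; M\,\|(S-S_{0})x\|\,+\,\|(T-T_{0})(S_{0}x)\|.
\]
Hence the basic $\sot$-neighborhood of $T_{0}S_{0}$ at $x$ of radius $\varepsilon$ contains the set of $(T,S)$ with
$\|(S-S_{0})x\|<\varepsilon/2M$ and $\|(T-T_{0})(S_{0}x)\|<\varepsilon/2$,
which is an $\sot$-neighborhood of $(T_{0},S_{0})$ in $\bmh\times\bmh$. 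This proves $(\sot,\sot)$-continuity.

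\medskip
\noindent\textbf{Step 2: Joint continuity for $\sote$.} Here the product of two operators of norm at most $M$ lies in $\mathfrak{B}_{M^{2}}(\h)$, so it suffices to check that $(T,S)\mapsto TS$ is continuous into $\mathfrak{B}(\h)$ equipped with $\sote$. By Step 1, $(T,S)\mapsto TS$ is already $(\sote,\sot)$-continuous (since $\sote$ is finer than $\sot$). For the adjoint part, I would simply observe that $(TS)^{*}=S^{*}T^{*}$, that $\|T^{*}\|=\|T\|\le M$ and $\|S^{*}\|=\|S\|\le M$, and that $(T,S)\mapsto(S^{*},T^{*})$ is $(\sote,\sote)$-continuous (continuity of $T\mapsto T^{*}$ is built into the definition of $\sote$). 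Composing with the $\sot$-continuous product map from Step 1 applied to the pair $(S^{*},T^{*})\in\bmh\times\bmh$ yields that $(T,S)\mapsto(TS)^{*}$ is $(\sote,\sot)$-continuous, which is exactly what is needed.

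\medskip
\noindent\textbf{Step 3: From product to powers.} For the second assertion, I would argue by induction on $n$. The case $n=1$ is trivial. Assuming the map $T\mapsto T^{n-1}$ is $(\sot,\sot)$-continuous from $\bmh$ into $\mathfrak{B}(\h)$, the image lies in the closed ball $\mathfrak{B}_{M^{n-1}}(\h)$ since $\|T^{n-1}\|\le M^{n-1}$. The map $T\mapsto(T,T^{n-1})$ is therefore continuous from $\bmh$ into $\bmh\times\mathfrak{B}_{M^{n-1}}(\h)$, and composing with the product map from Step 1 (applied with $M$ replaced by $M^{n-1}$, which is harmless) gives $(\sot,\sot)$-continuity of $T\mapsto T^{n}$. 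The same induction, with $\sote$ in place of $\sot$ and using Step 2, handles the $\sote$ case.

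\medskip
\noindent\textbf{Main obstacle.} The only subtle point is that multiplication is \emph{not} jointly $\sot$-continuous on the whole of $\mathfrak{B}(\h)\times\mathfrak{B}(\h)$: the uniform bound on one of the factors is essential in controlling the term $T(S-S_{0})x$ in Step~1. This is why the statement is restricted to the ball $\bmh$, and why the induction in Step~3 must be tracked carefully through growing bounded balls $\mathfrak{B}_{M^{k}}(\h)$.
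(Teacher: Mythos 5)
Your proof is correct and follows essentially the same approach as the paper: the add-and-subtract decomposition with the uniform bound $\|T\|\le M$ is exactly the key estimate used there, and the $\sote$ case is likewise handled via $(TS)^*=S^*T^*$. The paper's write-up is slightly terser (it factors the argument through continuity of $(T,u)\mapsto Tu$ on $\bmh\times\h$, and leaves the $\sote$ adaptation and the inductive passage to powers implicit), but there is no material difference.
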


\begin{proof} It is enough to check the statement involving the 
 $\sot$ topology; so we have to show that for any fixed vector $x\in\h$, the map $(T,S)\mapsto TSx$ is continuous from $\bmh\times\bmh$ into $\h$. Since the map $(T,S)\mapsto (T,Sx)$ 
is continuous from $\bmh\times\bmh$ into $\bmh\times\h$, it suffices to check that the map $(T,u)\mapsto Tu$ is continuous from $\bmh\times\h$ into $\h$. Now, for any operators $T,T_0\in\bmh$ and any vectors $u,u_0\in\h$, we have 
\[\Vert Tu-T_0u_0\Vert=\Vert T(u-u_0)+(T-T_0)u_0\Vert\leq M\Vert u-u_0\Vert +\Vert (T-T_0)u_0\Vert.
\]
The result follows immediately.
\end{proof}

It is easy to deduce from this fact that $\hcmh$ is $\texttt{SOT}$-$G_\delta$ in $\bmh$ for any $M>1$. Indeed, it follows from 
Fact \ref{power} that if $A$ and $V$ are two non-empty subsets of $\h$ with $V$ open, then, for any fixed $n\ge 1$, the set 
\[\mathfrak O_{n; A,V}:=\{ T\in\bmh;\; T^{n}(A)\cap V\neq\emptyset\} \]
is $\texttt{SOT}$-open in $\bmh$. 
Now, fix a countable basis of non-empty open sets $(V_p)_{p\geq 1}$ of $\h$, and observe that
\[
\hcmh=\bigcap_{p,\,q\ge 1}\bigcup_{n\ge 1}\big\{T\in\bmh\,;\, T^{n}(V_{p})\cap V_{q}\neq\emptyset\bigr\}
=\bigcap_{p,\,q\ge 1}\bigcup_{n\ge 1} \mathfrak O_{n; V_p,V_q}.
\]
This proves that $\hcmh$ is $G_\delta$ in $\bmh$ with respect to $\texttt{SOT}$. 
Since the topology $\texttt{SOT}^*$ is finer than the topology $\texttt{SOT}$, $\hcmh$ is also $G_\delta$ in $\bmh$ with respect to $\texttt{SOT}^*$. 
Thus, we may state:

\begin{fact}\label{Fact1}
 For any $M>1$, $\hcmh$ is $\texttt{SOT}$-$G_\delta$ and $\texttt{SOT}^*$-$G_\delta$ in $\bmh$. 
 Hence $(\hcmh,\sot)$ and $(\hcmh,\sote)$ are Polish 
spaces.
\end{fact}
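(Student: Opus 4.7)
The plan is essentially to combine the Birkhoff transitivity characterization of hypercyclicity with the joint continuity of the product on bounded balls (Fact~\ref{power}), and then invoke the standard fact that a $G_\delta$ subset of a Polish space is itself Polish.

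First I would argue that for any non-empty subset $A\subseteq\h$, any non-empty open set $V\subseteq\h$, and any fixed integer $n\ge 1$, the set
\[
\mathfrak O_{n;A,V}:=\bigl\{T\in\bmh\,;\, T^{n}(A)\cap V\neq\emptyset\bigr\}
\]
is $\sot$-open in $\bmh$. This is where Fact~\ref{power} enters: it gives that $T\mapsto T^{n}$ is $(\sot,\sot)$-continuous on $\bmh$, and for each fixed $x\in A$ the evaluation $T\mapsto T^{n}x$ is then continuous from $(\bmh,\sot)$ into $\h$, so $\{T\in\bmh\,;\,T^{n}x\in V\}$ is $\sot$-open. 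Taking the union over $x\in A$ yields the openness of $\mathfrak O_{n;A,V}$.

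Next I would fix a countable basis $(V_{p})_{p\ge 1}$ of non-empty open subsets of $\h$ (this exists since $\h$ is separable) and use the Birkhoff transitivity theorem to rewrite
\[
\hcmh=\bigcap_{p,\,q\ge 1}\bigcup_{n\ge 1}\mathfrak O_{n;V_{p},V_{q}},
\]
since for an operator on a separable Banach space, hypercyclicity is equivalent to topological transitivity. The right-hand side is a countable intersection of countable unions of $\sot$-open sets, hence $\sot$-$G_{\delta}$ in $\bmh$. Because $\sote$ is finer than $\sot$, every $\sot$-open set is $\sote$-open, so the same expression also exhibits $\hcmh$ as $\sote$-$G_{\delta}$ in $\bmh$.

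Finally, to conclude that $(\hcmh,\sot)$ and $(\hcmh,\sote)$ are Polish, I would quote the fact already recorded in the preamble to this subsection, namely that $(\bmh,\sot)$ and $(\bmh,\sote)$ are Polish spaces (cf.~\cite[Section 4.6.2]{Ped}), together with Alexandrov's theorem: a $G_{\delta}$ subset of a Polish space, endowed with the induced topology, is Polish. There is no real obstacle here; the only point worth being careful about is the joint continuity step, which is exactly what Fact~\ref{power} is designed to provide (without a norm bound on $T$, the map $T\mapsto T^{n}$ fails to be $\sot$-continuous, so working inside $\bmh$ is essential).
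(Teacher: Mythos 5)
Your proof is correct and follows essentially the same route as the paper: show each $\mathfrak O_{n;A,V}$ is $\sot$-open via Fact~\ref{power}, express $\hcmh$ as a countable intersection of countable unions of such sets using Birkhoff transitivity, note that $\sote$ refines $\sot$, and conclude Polishness by Alexandrov's theorem.
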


\par\smallskip
We now state a less immediate fact, which will be proved in Corollary \ref{Proposition 4} below. The corresponding (weaker) $\texttt{SOT}$ statement 
can be found in \cite{C?}, and its $\texttt{SOT}^*$ analogue undoubtedly would have been proved there if there had been any need to do so.

\begin{fact}\label{comeager} For any $M>1$, $\hcmh$ is dense in $(\bmh,\sote)$. Hence, $\hcmh$ is comeager 
in $\bmh$, both for the $\texttt{SOT}$ and the $\texttt{SOT}^*$ topology.
\end{fact}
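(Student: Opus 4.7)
I would first reduce the statement to the $\sote$-density of $\hcmh$ in $\bmh$. By Fact~\ref{Fact1}, $\hcmh$ is $G_\delta$ in both Polish spaces $(\bmh,\sot)$ and $(\bmh,\sote)$; and since $\sot$ is coarser than $\sote$, every $\sote$-dense subset of $\bmh$ is automatically $\sot$-dense. So once $\sote$-density is established, $\hcmh$ is a dense $G_\delta$ in both topologies, hence $\sot$-comeager and $\sote$-comeager by the Baire category theorem.

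For the density, I would adapt Chan's $\sot$-density argument from \cite{C?} so as to control $T$ and $T^*$ simultaneously. Fix $T_0\in\bmh$ and a basic $\sote$-neighborhood $\mathfrak V:=\mathfrak V_{T_0;x_1,\dots,x_s,\varepsilon}$. Pick $\lambda$ with $1<\lambda\le M$ (this is where the hypothesis $M>1$ enters), so that the Rolewicz-type shift $\lambda B$ is topologically mixing of norm $\lambda$. Choose an orthonormal basis $(e_n)_{n\ge 1}$ of $\h$ and a large integer $N$ such that $x_1,\dots,x_s\in E_N:=\mathrm{span}(e_1,\dots,e_N)$ and
\[
\|(I-P_N)T_0 x_i\|<\varepsilon/3,\qquad \|(I-P_N)T_0^* x_i\|<\varepsilon/3\qquad(1\le i\le s),
\]
which is possible since $P_N\to I$ strongly. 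A natural template for $T$ is then the block upper-triangular operator
\[
T=\begin{pmatrix} A & C\\ 0 & S\end{pmatrix}
\]
on $\h=E_N\oplus E_N^\perp$, with $A:=P_N T_0|_{E_N}$, $S:=\lambda B$ a Rolewicz shift on $E_N^\perp$ (relative to $(e_{N+k})_{k\ge 1}$), and $C\colon E_N^\perp\to E_N$ a finite-rank coupling. A direct computation using $A^*=P_N T_0^*|_{E_N}$ then gives $Tx_i=P_N T_0 x_i$ and $T^*x_i=P_N T_0^*x_i+C^*x_i$, so to have $T\in\mathfrak V$ it suffices to arrange $\|C^*x_i\|<\varepsilon/3$ for all $i$, which can be achieved by imposing $\mathrm{range}(C)\subseteq E_N\ominus\mathrm{span}(x_1,\dots,x_s)$ (a nonzero subspace as soon as $N>s$).

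The main obstacle is to choose $C$ so that $T$ is in fact hypercyclic while keeping $\|T\|\le M$. Taking $C=0$ does not work: a direct sum $A\oplus S$ with $A$ acting on a finite-dimensional space is in general not hypercyclic, since the $E_N$-component of any $T^n$-orbit remains governed by the confined dynamics of $A$. The role of $C$ is to funnel the mixing dynamics of $S$ into $E_N$: the $E_N$-component of $T^n(x,y)$ equals $A^n x+\sum_{k=0}^{n-1}A^{n-1-k}C S^k y$, and one wants this to be dense in $E_N$ for a well-chosen pair $(x,y)$. Ensuring density typically forces the smallest closed $A$-invariant subspace of $E_N$ containing $\mathrm{range}(C)$ to equal $E_N$, and may in addition require a small preliminary adjustment of $A$ within $E_N$ (absorbed into the remaining $\varepsilon/3$ slack, and compatible with the orthogonality constraint on $\mathrm{range}(C)$). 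Once $C$ is so chosen, one verifies the Hypercyclicity Criterion for the block operator $T$, using the topological mixing of $S$ on $E_N^\perp$ to reach arbitrary targets there and the coupling $C$ to push them into $E_N$. The norm bound $\|T\|\le M$ is preserved by taking $\|C\|$ sufficiently small, which is feasible thanks to the genuine slack between $\lambda$ and $M$. Simultaneously balancing the norm bound, the adjoint approximation and hypercyclicity is the real technical content of the proof.
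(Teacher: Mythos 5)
Your construction is structurally the same block-triangular template that the paper uses: relative to $\h=\h_r\oplus\h_r^\perp$, the operator $B_{A,\pmb\omega}$ of Proposition~\ref{Proposition 1} and Corollary~\ref{Proposition 4} is exactly $\bigl(\begin{smallmatrix}A & C\\ 0 & S\end{smallmatrix}\bigr)$ with $C$ a finite-rank coupling and $S$ an $r$-step Rolewicz-type backward shift. But the crux of the argument is left unproved. You close with ``one verifies the Hypercyclicity Criterion'' and then concede that ``simultaneously balancing the norm bound, the adjoint approximation and hypercyclicity is the real technical content of the proof.'' That concession \emph{is} the gap: the whole difficulty is to show that some such coupled operator is actually hypercyclic while all constraints hold, and no argument for this is offered. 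The paper's route here is concrete and quite different: rather than attempting the Hypercyclicity Criterion, it solves the eigenvalue equation $B_{A,\pmb\omega}x=\lambda x$ explicitly, obtains a spanning holomorphic eigenvector field $E_y(\lambda)$ on a disk containing $\T$, and invokes Fact~\ref{holo}. This yields the far stronger conclusion that $B_{A,\pmb\omega}$ is mixing in the Gaussian sense and chaotic, so that even $\textrm{G-MIX}_M(\h)\cap\textrm{CH}_M(\h)$ is $\sote$-dense in $\bmh$ --- the key step your sketch does not supply.

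A secondary issue: imposing $\mathrm{range}(C)\subseteq E_N\ominus\mathrm{span}(x_1,\dots,x_s)$ is unnecessary and counterproductive. Since $\|C\|$ must be small anyway for the norm bound, the estimate $\|C^*x_i\|\le\|C\|\,\|x_i\|<\varepsilon/3$ comes for free, with no orthogonality condition. It is precisely this extra constraint that creates your worry that the $A$-invariant closure of $\mathrm{range}(C)$ might be a proper subspace of $E_N$, and that triggers the further proposal to perturb $A$. The paper sidesteps this entirely by taking $Ce_{r+l}=\delta e_l$ for $1\le l\le r$, so $\mathrm{range}(C)=\h_r$ and the spanning condition is trivial. (Also, the appeal to ``genuine slack between $\lambda$ and $M$'' is not needed: the paper takes the shift weight equal to $M$ and still has $\|B_{A,\pmb\omega}\|=M$, the only slack required being $\|A\|<M$.)
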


Note that this result is indeed not completely trivial. For example, since the map $T\mapsto T^*$ is a homeomorphism of $(\bmh,\texttt{SOT}^*)$, it immediately implies 
that a typical operator $T\in\bmh$ is {hypercyclic and has a hypercyclic adjoint}. In particular, this argument proves that \emph{there exist} hypercyclic operators whose adjoint is also hypercyclic; 
which is a classical result of Salas obtained via an explicit construction in \cite{Sal}.
\par\smallskip
We deduce immediately  from Fact \ref{comeager} that the word ``typical" has the same meaning in the whole of $\bmh$ or in the 
subclass $\hcmh$ of hypercyclic operators in $\bmh$. This will be used repeatedly below, sometimes without explicit mention. 

\begin{fact}\label{comeagerbis} Let $\Gamma(\h)$ be a class of operators on $\h$, and let $M>1$. If $\Gamma_M(\h)$
 is a dense $G_\delta$ subset of $(\bmh,\tau)$, where 
$\tau$ is either $\texttt{SOT}$ or $\texttt{SOT}^*$, then 
$\Gamma_M(\h)\cap\hch$ is a dense $G_\delta$ subset of  $(\hcmh,\tau)$). Conversely, if $\Gamma(\h)\subseteq\hch$ 
and if $\Gamma_M(\h)$ is dense $G_\delta$ (resp. comeager)  in $(\hcmh,\tau)$, then $\Gamma_M(\h)$ is dense $G_\delta$ (resp. comeager) in $(\bmh,\tau)$. 
\end{fact}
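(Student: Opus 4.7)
The plan is to use Fact~\ref{comeager} as the engine: since $\hcmh$ is a dense $G_\delta$ subset of $(\bmh,\tau)$ (for $\tau$ being either $\sot$ or $\sote$), typicality transfers cleanly between $(\bmh,\tau)$ and its subspace $(\hcmh,\tau)$. Throughout, I would rely on the standard subspace-topology fact that the $G_\delta$ subsets of $\hcmh$ are exactly the traces $G\cap\hcmh$ with $G$ a $G_\delta$ subset of $\bmh$, together with the observation that any non-empty open subset of $\hcmh$ is the trace of a non-empty open subset of $\bmh$.

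For the forward direction, I would take $\Gamma_M(\h)$ dense $G_\delta$ in $(\bmh,\tau)$ and set $A:=\Gamma_M(\h)\cap\hch=\Gamma_M(\h)\cap\hcmh$. Then $A$ is $G_\delta$ in $\hcmh$ by the above subspace remark. For density in $\hcmh$, both $\Gamma_M(\h)$ and $\hcmh$ are dense $G_\delta$ subsets of the Polish space $(\bmh,\tau)$, so by the Baire category theorem their intersection $A$ is dense in $\bmh$; since $A\subseteq\hcmh$, any non-empty open subset of $\hcmh$ meets $A$, so $A$ is dense in $\hcmh$.

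For the converse, I would assume $\Gamma(\h)\subseteq\hch$, so that $\Gamma_M(\h)\subseteq\hcmh$, and handle the two versions in parallel. For the dense $G_\delta$ statement: writing $\Gamma_M(\h)=G\cap\hcmh$ with $G$ a $G_\delta$ subset of $\bmh$, the fact that $\hcmh$ is itself $G_\delta$ in $\bmh$ makes $\Gamma_M(\h)$ $G_\delta$ in $\bmh$; density in $\bmh$ then follows from density of $\Gamma_M(\h)$ in $\hcmh$ combined with density of $\hcmh$ in $\bmh$. For the comeager statement, I would lean on the equivalence ``comeager $\Leftrightarrow$ contains a dense $G_\delta$'': any dense $G_\delta$ subset of $\hcmh$ contained in $\Gamma_M(\h)$ is, by the dense $G_\delta$ case just handled, also a dense $G_\delta$ of $\bmh$ contained in $\Gamma_M(\h)$, so $\Gamma_M(\h)$ is comeager in $\bmh$.

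There is essentially no obstacle: the statement is a purely topological transfer lemma whose proof amounts to unwinding definitions, leaning on the subspace characterization of $G_\delta$ sets and the Baire category theorem in $(\bmh,\tau)$. The only point that requires some care is to avoid working with meager sets directly in the comeager half of the converse; going through the ``contains a dense $G_\delta$'' characterization bypasses the need to check that a set nowhere dense in $\hcmh$ remains nowhere dense in $\bmh$, which would otherwise be the main nuisance.
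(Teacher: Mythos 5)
Your proof is correct and follows essentially the same route as the paper's (very terse) argument: both invoke that $\hcmh$ is a dense $G_\delta$ subset of $(\bmh,\tau)$ and the Baire category theorem to transfer dense $G_\delta$-ness between the two spaces, and both deduce the comeager statement from the $G_\delta$ statement. You simply spell out the subspace-topology bookkeeping that the paper leaves implicit under the word ``obvious.''
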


\begin{proof} This is obvious: if $\Gamma_M(\h)$ is dense $G_\delta$ in $(\bmh,\tau)$ then, by Fact~\ref{Fact1}, 
Fact~\ref{comeager} and the Baire category theorem, $\Gamma_M(\h)\cap\hch$ is dense $G_\delta$ in $\bmh$, 
and hence in $\hcmh$; and likewise for the converse. The ``comeager" case follows from the ``$G_\delta$" case.
\end{proof}

Here is a last fact concerning the topology $\texttt{SOT}^*$ that will be quite useful for us. Note that the corresponding statement
for $\texttt{SOT}$ is \emph{false}. This is an important difference between the two topologies, which explains in particular why we will encounter some  subsets of $\bmh$ which
are $G_\delta$ with respect to $\texttt{SOT}^*$ and not with respect to $\texttt{SOT}$.

\begin{fact}\label{jointcontinuity} 
Let us denote by $w$ the weak topology of $\h$. If $B$ is a bounded subset of $\h$, 
then the map $(T,x)\mapsto Tx$ is continuous from $(\bmh,\texttt{SOT}^*)\times (B,w)$ into $(\h,w)$.
\end{fact}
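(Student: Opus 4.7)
The plan is to verify joint continuity by testing against arbitrary linear functionals, exploiting the Hilbert structure $\h = \h^*$. Since weak continuity of a map into $\h$ amounts to continuity of each scalar map $z\mapsto \langle z,y\rangle$ for $y\in\h$, it suffices to show that for every fixed $y\in\h$, the map
\[
\Phi_y: (\bmh,\sote)\times (B,w)\to\C,\qquad (T,x)\mapsto \langle Tx,y\rangle
\]
is continuous. I will check this at an arbitrary point $(T_0,x_0)$ using nets.

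Fix such a point and consider a net $(T_i,x_i)\to(T_0,x_0)$ in the product topology. The key trick is to split
\[
\langle T_i x_i,y\rangle-\langle T_0 x_0,y\rangle
= \langle x_i,(T_i-T_0)^{*}y\rangle+\langle x_i-x_0,T_0^{*}y\rangle.
\]
The second summand tends to $0$ because $x_i\xrightarrow{w}x_0$ in $\h$ and $T_0^{*}y$ is a fixed vector of $\h$. For the first summand, apply Cauchy--Schwarz to get
\[
\bigl|\langle x_i,(T_i-T_0)^{*}y\rangle\bigr|\le \|x_i\|\cdot \|(T_i-T_0)^{*}y\|.
\]
Here $\|x_i\|$ stays bounded since the net lies in the bounded set $B$ (this is the only place the boundedness hypothesis on $B$ is used), while $\|(T_i-T_0)^{*}y\|\to 0$ by the very definition of $\sote$-convergence $T_i\to T_0$. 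Hence the first summand also tends to $0$, and $\Phi_y(T_i,x_i)\to\Phi_y(T_0,x_0)$.

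There is no real obstacle here; the proof is essentially one line once the decomposition is written down. The only point worth emphasizing is that the argument genuinely needs the $\sote$ topology rather than just $\sot$: one requires norm-convergence of $(T_i-T_0)^{*}y$, which is precisely what $\sot$ convergence of the adjoints supplies and what distinguishes $\sote$ from $\sot$. This is also why, as remarked in the text, the analogous statement with $\sot$ in place of $\sote$ fails, and accounts for the qualitative difference between the two topologies that will be used later.
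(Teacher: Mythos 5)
Your proof is correct and is essentially the same as the paper's: both move $T$ to the adjoint via $\langle Tx,y\rangle=\langle x,T^*y\rangle$, decompose the difference into the same two terms, bound the first by Cauchy--Schwarz using boundedness of $B$ and $\sote$-convergence of the adjoints, and dispose of the second by weak convergence of the $x_i$. Nothing to add.
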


\begin{proof} We have to show that for any fixed vector $e\in\h$, the map $(T,x)\mapsto \pss{Tx}{e}$ is continuous on $(\bmh,\texttt{SOT}^*)\times (B,w)$.
The key point is that one can separate $T$ from $x$ by writing $\pss{Tx}{e}=\pss{x}{T^*e}$ (this trick would be useless, of course, 
if we were working with the $\texttt{SOT}$ topology).
\par\smallskip
Let $(T_i,x_i)$ be a net in $\bh\times B$ converging to some element $(T,x)\in\bh\times B$; that is, 
$T_i\xrightarrow{\texttt{SOT}^*}T$ and $x_i\xrightarrow{w}x$. Then
\begin{align*}
\bigl\vert \pss{T_ix_i}{e}-\pss{Tx}{e}\bigr\vert&= \bigl\vert \pss{x_i}{T_i^*e}-\pss{x}{T^*e} \bigr\vert\\
&\leq \bigl\vert \pss{x_i}{T_i^*e-T^*e}\bigr\vert+\bigl\vert\pss{x_i-x}{T^*e} \bigr\vert.
\end{align*}
Since the net $(x_i)$ is bounded in norm and $x_i\xrightarrow{w}x$, this shows that $\pss{T_ix_i}{e}\to\pss{Tx}{e}$.
\end{proof}
\par\smallskip

\subsubsection{Why \emph{$\texttt{SOT}$} and \emph{$\texttt{SOT}^*$}?} 
There are other natural topologies on $\bh$, most notably the operator norm topology, of course,  
and the \emph{Weak Operator Topology} ($\texttt{WOT}$). The norm topology is not very well-suited for 
Baire category arguments, mainly because it is much too strong; in particular, the lack of separability 
seems 
unacceptable. The topology $\texttt{WOT}$ is better behaved
 in this respect, being Polish on any closed ball $\bmh$. However, since we are interested in typical properties of 
 \emph{hypercyclic} operators,  it seems better to consider topologies with respect to which $\hcmh$ is comeager 
 in $\bmh$ for any $M>1$. This is definitely not the case for $\texttt{WOT}$. Indeed, it is proved in \cite{E} that a typical element of
$(\mathfrak{B}_{1}(\h),\texttt{WOT})$ is \emph{unitary}. It follows that a $\texttt{WOT}$-typical $T\in\bmh$ is a multiple of a unitary operator 
and hence not hypercyclic. 
Incidentally, the operator norm topology has the 
same ``drawback", in an even stronger way: the hypercyclic operators (actually, even the {cyclic} operators) are in fact 
\emph{nowhere dense} 
in $\bh$; see \cite[Section 2.5]{BM}. 
\par\smallskip
So we will consider neither the operator norm topology nor the Weak Operator Topology in this paper. 
Actually, when working in the whole of $\bmh$ or $\hcmh$, $M>1$, we will always use $\texttt{SOT}^*$ rather than $\texttt{SOT}$. 
Indeed, with respect to $\texttt{SOT}$, a result from \cite{EM} gives a rather complete picture as far as typical properties are concerned: \emph{a typical 
element of $(\mathfrak{B}_{1}(\h),\emph{\sot})$ is unitarily equivalent to the 
operator $B^{(\infty )}$\!, the countable direct $\ell_2$-sum of the 
unilateral backward shift $B$ on $\ell_2(\N)$.} It follows that for every $M>1$, the class of operators $T\in\hcmh$ 
which are unitarily equivalent to $MB^{(\infty )}$ is comeager in $(\hcmh,\texttt{SOT})$. Now, the dynamical properties of 
the operators $MB^{(\infty )}$, $M>1$ 
are quite strong and very well understood: these operators are mixing in the Gaussian sense (and hence ergodic and topologically mixing), densely
distributionally chaotic, and they have nearly any other strong dynamical 
property one might think of. This explains why, when trying to determine which properties are typical within the
 class of \emph{all} hypercyclic operators, we will use 
$\texttt{SOT}^*$ rather than $\texttt{SOT}$. The situation in this setting is more involved, and thus leads to more interesting results.
\par\smallskip
On the other hand, we will see that within specific subclasses of $\hch$ consisting of upper triangular operators, the topology 
$\texttt{SOT}$ becomes much more useful. This is not really surprising, since triangularity is not exactly a self-adjoint property.
\par\smallskip

\subsection{How to prove density results} 
For future reference, we state here a simple criterion for a class of operators 
to be dense in $(\bmh, \texttt{SOT})$ or $(\bmh,\sote)$, $M>0$. We will use it repeatedly in the 
rest 
of the paper. 

\begin{lemma}\label{Lemma 4 bis}
 Let $\Gamma (\h)$ be a class of operators on $\h$, and let $M>0$. Let also $(e_{k})_{\gk}$ be an orthonormal basis of $\h$, 
 and for each $r\ge 1$, denote 
 by $\h_{r}$ the finite-dimensional subspace 
\emph{$\textrm{span}\,[e_{k}\,;\,1\le k\le r]$} of $\h$. Suppose that the following 
property 
holds true:
\par\smallskip 
\noindent{for every $r\ge 1$, every operator $A\in\mathfrak{B}(\h_{r})$ 
with $
\|A\|<M$ and every $\varepsilon >0$, there exists an operator 
$T\in\Gamma_M (\h) $ such that}
\begin{equation}\label{Equation une etoile}
 \|(T-A)e_{k}\|<\varepsilon \qquad\hbox{for $k=1,\dots ,r$.}\tag{$*$}
\end{equation}
Then $\Gamma_M (\h)$ is dense in \emph{$(\mathfrak{B}_{M}(\h),\sot)$}. If \emph{(\ref{Equation une etoile})} is replaced by its self-adjoint version
\begin{equation}\label{Equation deux etoiles}
 \|(T-A)e_{k}\|<\varepsilon\quad{\rm and}\quad 
 \|(T-A)^{*}e_{k}\|<\varepsilon \qquad\hbox{for $k=1,\dots ,r$},\tag{$**$}
\end{equation}
then $\Gamma_{M} (\h)$ is dense in \emph{$(\mathfrak{B}_{M}(\h),\sote)$}.
 \end{lemma}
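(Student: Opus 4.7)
The plan is to verify the density assertion directly from the defining base of $\sot$-neighborhoods, by first reducing closeness of an arbitrary $T\in\bmh$ to $T_0$ on finitely many vectors of $\h$ to closeness on the basis vectors $e_1,\dots,e_N$, and then feeding a suitably damped finite-rank truncation of $T_0$ into the hypothesis.

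Fix $T_0\in\bmh$ and a basic $\sot$-neighborhood $\mathfrak U_{T_0;x_1,\dots,x_s,\delta}$ of $T_0$. Expanding each $x_i=\sum_k\pss{x_i}{e_k}e_k$ and using the trivial bound $\|T-T_0\|\le 2M$ valid on $\bmh$, I can choose $N\ge 1$ large enough that $\|(I-P_N)x_i\|<\delta/(4M)$ for every $i$, and then $\eta>0$ small enough that the single condition
\[\|(T-T_0)e_k\|<\eta\qquad\hbox{for }k=1,\dots,N\]
forces $T\in \mathfrak U_{T_0;x_1,\dots,x_s,\delta}$, uniformly in $T\in\bmh$. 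Here $P_N$ is the orthogonal projection onto $\h_N$, and the point is that on the finite-dimensional part $P_Nx_i$ the difference $(T-T_0)x_i$ is controlled by a linear combination of the $(T-T_0)e_k$'s, while the tail $(I-P_N)x_i$ is controlled by $\|T-T_0\|\le 2M$. It therefore suffices to produce $T\in\Gamma_M(\h)$ satisfying the displayed condition.

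The main step is to build an approximant $A$ that meets the hypothesis. For parameters $r\ge N$ and $\alpha\in(0,1)$ to be chosen, set
\[A:=\alpha\, P_r T_0 P_r\big|_{\h_r}\in\mathfrak{B}(\h_r).\]
Then $\|A\|\le\alpha\|T_0\|\le\alpha M<M$, which is precisely how the strict inequality required in the hypothesis is accommodated. For $k\le N\le r$ one has $Ae_k=\alpha P_rT_0e_k$, so
\[\|Ae_k-T_0e_k\|\le (1-\alpha)\,\|P_rT_0 e_k\|+\|(I-P_r)T_0e_k\|,\]
and since $\|(I-P_r)T_0 e_k\|\to 0$ as $r\to\infty$ for each fixed $k$, choosing first $r$ large and then $\alpha$ close enough to $1$ makes the right-hand side less than $\eta/2$ for every $k\le N$. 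Applying \eqref{Equation une etoile} to this $A$ with $\varepsilon:=\eta/2$ yields $T\in\Gamma_M(\h)$ with $\|(T-A)e_k\|<\eta/2$ for $k=1,\dots,r$, and the triangle inequality then gives $\|(T-T_0)e_k\|<\eta$ for every $k\le N$, proving the first assertion.

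For the self-adjoint version I use the same $A$; a direct computation gives $A^*=\alpha P_rT_0^*P_r|_{\h_r}$, so enlarging $r$ if necessary to also have $\|(I-P_r)T_0^*e_k\|$ small for $k\le N$ ensures $\|A^*e_k-T_0^*e_k\|<\eta/2$ as well. Invoking \eqref{Equation deux etoiles} then provides $T\in\Gamma_M(\h)$ with both $\|(T-A)e_k\|<\eta/2$ and $\|(T-A)^*e_k\|<\eta/2$ for $k\le r$, and the same reduction to basis vectors, applied now to both $T-T_0$ and $(T-T_0)^*$, shows that $T$ lies in any prescribed $\sote$-neighborhood of $T_0$. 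The only real subtlety in the whole argument is the strict norm inequality $\|A\|<M$; it is handled by the damping factor $\alpha<1$, and everything else is routine manipulation with finite-rank truncations.
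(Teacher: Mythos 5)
Your proof is correct and follows the same basic strategy as the paper's: reduce the SOT-neighborhood condition to finitely many basis vectors, then feed the finite-rank truncation $P_rT_0P_r$ into the hypothesis. The one genuine (if small) difference is in how you secure the strict inequality $\|A\|<M$. The paper simply declares ``without loss of generality, $\|T_0\|<M$'' (which implicitly uses the density of $\{T:\|T\|<M\}$ in $\bmh$) and then takes $A_r=P_rT_0P_r$; you instead keep $T_0$ arbitrary and damp the truncation by a factor $\alpha<1$, setting $A=\alpha P_rT_0P_r$, with the extra term $(1-\alpha)\|P_rT_0e_k\|$ absorbed by taking $\alpha$ close to $1$ after $r$ has been fixed. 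Your variant is slightly more self-contained since it avoids the unproved WLOG reduction, at the cost of one more parameter to track; both routes are equally valid and the rest of the argument, including the $\sote$ case via $A^*=\alpha P_rT_0^*P_r$, coincides with the paper's.
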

 
 Note that there is a slight abuse of notation in the statement of Lemma \ref{Lemma 4 bis}: we consider the operator 
$A\in\mathfrak B(\h_r)$ as an operator on $\h$ by 
identifying it  with $P_r^*AP_r$, where $P_r:\h\to\h_r$ is the orthogonal projection of $\h$ onto $\h_r$.

\begin{proof} 
 We will prove the assertion concerning the $\sote$-topology, the 
proof of the $\texttt{SOT}$ statement being exactly the same. Fix $T_{0}\in\bmh$, 
$\varepsilon >0$, and $x_{1},\dots,x_{s}\in\h$. Without loss of 
generality, we can suppose that $\|T_{0}\|< M$. We 
are looking for an operator $T\in\Gamma_M (\h) $ such that 
\begin{equation}\label{Equation 3}
 \max_{1\le j\le s}\max\bigl(\|(T-T_{0})x_{j}\|,
 \|(T-T_{0})^{*}x_{j}\|\bigr)<\varepsilon.
\end{equation}
\par\smallskip
Since $\|T\|\le M$ for every $T\in\Gamma_M (\h)$, and since every vector $x_j$ can be approximated by a finite linear combination of the basis vectors 
$e_k$, there exists an integer $r_{0}\ge 1$ sufficiently large such that (\ref{Equation 3}) above holds 
true as soon as
\[
\max_{1\le k\le 
r_{0}}\max\,\bigl(\|(T-T_{0})e_{k}\|,
 \|(T-T_{0})^{*}e_{k}\|\bigr)<\dfrac{\varepsilon 
}{2}\cdot 
\]
For each $r\ge r_{0}$, consider the 
operator $A_{r}=P_{r}T_{0}P_{r}$
Observe that 
$\|A_{r}\|\le\|T_{0}\|
<M$. By our assumption, there exists $T\in\Gamma (\h) $ such that
\[
\max_{1\le k\le r}\max\,\bigl(\|(T-A_{r})e_{k}\|,
 \|(T-A_{r})^{*}e_{k}\|\bigr)<\dfrac{\varepsilon 
}{2}\cdot 
\]
Now 
\[
\|(T-T_{0})e_{k}\|\le
\|(T-A_{r})e_{k}\|+\|
(P_{r}T_{0}P_{r}-T_{0})e_{k}\|<\dfrac{\varepsilon 
}{2}+\|(P_{r}-I)T_{0}e_{k}
\|
\]
and
\[
\|(T-T_{0})^{*}e_{k}\|<\dfrac{\varepsilon }{2}+
\|(P_{r}-I)T_{0}^{*}e_{k}\|
\]
for every $1\le k\le r$. Since $\lim_{r\to\infty }P_{r}=I$ for the $\sot$ 
topology, one can choose $r$ so large 
that 
\[
\max_{1\le k\le 
r_{0}}\max\,\bigl(\|(P_{r}-I)T_{0}e_{k}\|,
\|(P_{r}-I)T_{0}^{*}e_{k}\|\bigr)<\varepsilon, 
\]
from which the conclusion of Lemma \ref{Lemma 4 bis} follows.
\end{proof}

\begin{remark}\label{remarque en plus}
It is sometimes more convenient to endow $\h$ with an orthonormal basis $(f_{k})_{k\in\Z}$ 
indexed by $\Z$ rather than by $\N$. In this case, the corresponding version of Lemma \ref{Lemma 4 bis} reads as follows.
\par\smallskip\noindent
\it{For each $r\ge 0$, denote 
 by $\h_{r}$ the finite-dimensional subspace 
$\textrm{span}\,[f_{k}\,;\,-r\le k\le r]$ of $\h$. Suppose that the following 
property 
holds true:
\par\smallskip 
\noindent{for every $r\ge 0$, every operator $A\in\mathfrak{B}(\h_{r})$ 
with $
\|A\|<M$ and every $\varepsilon >0$, there exists an operator 
$T\in\Gamma_M (\h) $ such that}
\begin{equation}\label{Equation une etoile bis}
 \|(T-A)f_{k}\|<\varepsilon \qquad\hbox{for $k=-r,\dots ,r$.}\tag{$*$}
\end{equation}
Then $\Gamma_M (\h)$ is dense in \emph{$(\mathfrak{B}_{M}(\h),\sot)$}. If \emph{(\ref{Equation une etoile})} is replaced by its self-adjoint version
\begin{equation}\label{Equation deux etoiles bis}
 \|(T-A)f_{k}\|<\varepsilon\quad{\rm and}\quad 
 \|(T-A)^{*}f_{k}\|<\varepsilon \qquad\hbox{for $k=-r,\dots ,r$},\tag{$**$}
\end{equation}
then $\Gamma_{M} (\h)$ is dense in \emph{$(\mathfrak{B}_{M}(\h),\sote)$}.}
\end{remark}
\par\smallskip

\subsection{Construction of mixing operators, and density of ${\gsmxh}$}\label{Subsection 1.b.1}
In order to show that a property is typical, we first need to prove the density of the set of operators satisfying it. In this subsection, we show 
that the class $\textrm{G-MIX}_{M}(\h)$ is dense in $(\hcmh,\sote)$ for any $M>1$. This will be achieved by considering some perturbations of weighted unilateral or 
bilateral weighted shifts with respect to some orthonormal basis of $\h$. We 
show that these operators admit spanning eigenvector fields which are analytic in a 
neighborhood of the unit circle, and hence are mixing in the Gaussian sense. The precise statement we will use is the following.

\begin{fact}\label{holo} Let $T\in\bh$. Assume that there exists a connected open set $\Omega\subseteq\C$ with $\Omega\cap \T\neq\emptyset$ and a family $(E_i)_{i\in I}$ of holomorphic 
maps, $E_i:\Omega\to\h$, such that $TE_i(\lambda)=\lambda E_i(\lambda)$ for every $i\in I$ and every $\lambda\in\Omega$, and 
$\overline{\rm span}\, \{ E_i(\lambda);\; i\in I\,,\; \lambda\in\Omega\}=\h$. Then $T$ is mixing in the Gaussian sense. 
\end{fact}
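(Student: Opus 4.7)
The plan is to produce a Gaussian probability measure on $\h$ that is $T$-invariant, has full topological support, and with respect to which $T$ is strongly mixing; this is the classical route of Bayart and Grivaux for turning holomorphic eigenvector fields into Gaussian ergodic theory. First, since $\Omega$ is open and connected with $\Omega\cap\T\neq\emptyset$, I would fix a non-trivial open sub-arc $\Gamma\subseteq\Omega\cap\T$, parametrized as $\{e^{i\theta}:\theta\in J\}$ for some open interval $J\subseteq\R$. Each map $\theta\mapsto E_i(e^{i\theta})$ is then real-analytic on $J$ and satisfies the eigenvalue identity $TE_i(e^{i\theta})=e^{i\theta}E_i(e^{i\theta})$.

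The key preliminary step is to upgrade the bare spanning hypothesis to the \emph{perfect spanning} property: for every countable set $D\subseteq\T$,
\[
\overline{\textrm{span}}\,\{E_i(\lambda):i\in I,\ \lambda\in\Omega\setminus D\}=\h.
\]
The argument is a standard use of the identity principle for holomorphic functions: if $y\in\h$ is orthogonal to $E_i(\lambda)$ for every $i\in I$ and every $\lambda\in\Omega\setminus D$, then each holomorphic function $\lambda\mapsto \langle E_i(\lambda),y\rangle$ on the connected open set $\Omega$ vanishes on $\Omega\setminus D$, which has accumulation points in $\Omega$ since $D$ is countable. Hence this function vanishes identically on $\Omega$, and the original spanning assumption forces $y=0$.

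With perfect spanning in hand, I would invoke the Bayart--Grivaux theorem (see \cite{BG2}, \cite{BG3}, \cite{BM}) that an operator on a separable complex Hilbert space admitting a perfectly spanning family of $\T$-eigenvector fields that is holomorphic on a neighborhood of an open arc of $\T$ is mixing in the Gaussian sense. Concretely, the invariant Gaussian measure $\mu$ is realized as the distribution on $\h$ of a vector of the form
\[
X=\sum_{k\ge 1}c_k\int_J \xi_k(\theta)\,E_{i_k}(e^{i\theta})\,d\theta,
\]
where $(E_{i_k})_{k\ge 1}$ is a countable sub-family dense in the closed linear span of $(E_i)_{i\in I}$, $(c_k)$ is a rapidly decreasing positive sequence ensuring $L^2$-convergence of the series, and the $\xi_k$ are independent standard complex Gaussian white noises on $J$. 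Invariance $\mu\circ T^{-1}=\mu$ and strong mixing of $(\h,\mu,T)$ follow from the eigenvalue identity, which turns $T$ into multiplication by $e^{i\theta}$ on the spectral factor, combined with the Riemann--Lebesgue lemma applied to the spectral measure (which is absolutely continuous on $\Gamma$ by construction); full topological support of $\mu$ follows from the perfect spanning established above.

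The main technical obstacle is the careful construction of the stochastic integrals and the verification that the resulting covariance operator yields a $T$-mixing measure with full support. Rather than redoing this from scratch, my proof would reduce to checking that the data $(\Omega,(E_i)_{i\in I})$ satisfy the hypotheses of the Bayart--Grivaux machinery in \cite{BG2}, \cite{BG3} and \cite[Chapter 5]{BM}, which is immediate once perfect spanning has been extracted.
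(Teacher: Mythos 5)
Your proof is correct in substance, but it takes a longer route than the paper's and is a bit imprecise at the point where you invoke a theorem. The paper proves the fact in one line of citation plus one identity-principle argument: it invokes \cite[Th.\ 3.29]{BG2}, which characterizes Gaussian mixing on a Hilbert space by the condition that $\overline{\rm span}\,\{x\in\mathcal E(T);\; \lambda(x)\not\in D\}=\h$ for every Borel set $D\subseteq\T$ of \emph{Lebesgue measure zero}, and then checks this by exactly the identity-principle argument you sketch (a Lebesgue-null subset of $\T$ cannot prevent $(\Omega\cap\T)\setminus D$ from accumulating in $\Omega$). You, on the other hand, extract only the weaker \emph{countable}-exceptional-set spanning property (perfect spanning, which characterizes $G$-ergodicity, not $G$-mixing), and then compensate by reconstructing the Gaussian measure explicitly via stochastic integrals over an arc and arguing absolute continuity plus Riemann--Lebesgue to upgrade ergodicity to mixing. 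That route works — the holomorphicity of the eigenvector fields does indeed guarantee that the spectral measures built into the construction are absolutely continuous, hence Rajchman — but it re-derives the Bayart--Grivaux machinery rather than citing its end product, and the theorem you state (``perfectly spanning $\T$-eigenvector field holomorphic on a neighborhood of an arc $\Rightarrow$ $G$-mixing'') is not quite the clean form in which it is stated in the literature; the measure-zero-exceptional-set criterion of \cite[Th.\ 3.29]{BG2} is the right thing to cite, and your identity-principle argument verifies it directly with no extra work, since the argument does not actually need $D$ to be countable — only that $\Omega\setminus D$ have an accumulation point in $\Omega$, which holds as soon as $D\cap\T$ is Lebesgue-null.
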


\begin{proof} Recall that we denote by $\mathcal E(T)$ the set of all unimodular eigenvectors of $T$, and by $\lambda(x)$ the 
eigenvalue associated to $x\in\mathcal E(T)$. By \cite[Th. 3.29]{BG2}, it is enough to show that for any Borel set $D\subseteq\T$ of Lebesgue measure $0$, we have 
$\overline{\rm span}\, \{x\in\mathcal E(T);\; \lambda(x)\not\in D\}=\h$.
Let $y\in\h$ be orthogonal to the set
$\{ x\in\mathcal E(T);\; \lambda(x)\not\in D\}$. Then $\pss{y}{E_i(\lambda)}=0$ for every $i\in I$ and every $\lambda\in (\Omega\cap\T)\setminus D$. Since the 
functions $\pss{y}{E_i(\,\cdot\,)}$ are holomorphic on $\Omega$, and since $(\Omega\cap\T)\setminus D$ certainly has an accumulation point in $\Omega$ (because 
$D$ has Lebesgue measure $0$), it follows that $y$ is orthogonal to all vectors $E_i(\lambda)$, $i\in I$, $\lambda\in \Omega$, and hence that $y=0$. This concludes the proof.
\end{proof}

\begin{remark}\label{chaos en plus} The assumptions of Fact \ref{holo} imply that the operator $T$ is also {chaotic}: 
since the roots of unity contained in $\Omega$ have an accumulation point in $\Omega$, this follows as above from the identity principle for holomorphic functions.
\end{remark}

Let us first consider perturbations of unilateral weighted shifts. Let $(e_{k})_{\gk}$ be an 
orthonormal basis of the Hilbert space $\h$, and let $\pmb{\omega} =(\omega 
_{k})_{\gk}$ be a \emph{unilateral weight sequence}, \mbox{\it i.e.} a bounded sequence of positive real numbers. Let $r\ge 1$ be an 
integer, and let $A$ be an operator acting on the finite-dimensional space
$\h_{r}=\textrm{span}[e_{k}\,;\,1\le k\le r]$. We define a bounded operator 
 $B_{A,\pmb{\omega} }$ on $\h$ by setting
 \[
B_{A,\pmb{\omega} }e_{k}=
\begin{cases}
 Ae_{k}&\textrm{for every}\ 1\le k\le r\\
 \omega _{k-r}e_{k-r}&\textrm{for every}\ k>r.
\end{cases}
\]
\begin{proposition}\label{Proposition 1}
 Let $\pmb{\omega} $ be a unilateral weight sequence, $r\ge 1$, and $A\in\mathfrak{B}
 (\h_{r})$. Suppose that for every  $1\le l\le r$,
\[
R_{l}:=\liminf_{p\to\infty }\bigl(\omega _{pr+l}\cdots\omega_{r+l}\omega _{l} 
\bigr)^{1/p}>\max(1,\Vert A\Vert).
\]
Then the operator 
$B_{A,\pmb{\omega} }$ is mixing in the Gaussian sense. Besides, $B_{A,\om}$ is also chaotic.
\end{proposition}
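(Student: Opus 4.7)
The plan is to apply Fact~\ref{holo}, together with Remark~\ref{chaos en plus} for chaoticity, to $T=B_{A,\pmb{\omega}}$. Set $R:=\min_{1\le l\le r}R_l$, so that $R>\max(1,\|A\|)\ge 1$ by hypothesis, and take $\Omega:=\{\lambda\in\C\,:\,|\lambda|<R\}$; this is a connected open set containing the closed unit disk, hence meeting $\T$. The goal is to exhibit an $r$-parameter family of holomorphic eigenvector fields $E_l:\Omega\to\h$ whose union spans a dense subspace of $\h$.

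First I would construct the fields $E_l$ explicitly. Decompose $\h=\bigoplus_{q\ge 1}H^{(q)}$ with $H^{(q)}:=\textrm{span}\,[e_{(q-1)r+1},\dots,e_{qr}]\cong\h_r$. In this decomposition $B_{A,\pmb{\omega}}$ acts as $A$ on $H^{(1)}$ and as the diagonal block $D_q:=\textrm{diag}(\omega_{(q-1)r+1},\dots,\omega_{(q-1)r+r}):H^{(q+1)}\to H^{(q)}$. Writing $x=\sum_q x^{(q)}$, the eigenvector equation $B_{A,\pmb{\omega}}x=\lambda x$ reduces to $Ax^{(1)}+D_1 x^{(2)}=\lambda x^{(1)}$ together with $D_q x^{(q+1)}=\lambda x^{(q)}$ for $q\ge 2$, which solves iteratively as
\[
x^{(q+1)}=\lambda^{q-1}\,\Pi_q^{-1}(\lambda-A)\,x^{(1)}\qquad(q\ge 1),\quad\Pi_q:=D_1 D_2\cdots D_q.
\]
Defining $E_l(\lambda)$ to be the vector with $x^{(1)}=e_l$, each coordinate is polynomial in $\lambda$. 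The hypothesis on $R_l$ translates into $\|\Pi_q^{-1}\|\le(R-\varepsilon)^{-(q-1)}$ for $q$ large (any $\varepsilon>0$), which gives $\|x^{(q+1)}\|=O((|\lambda|/(R-\varepsilon))^{q-1})$ and hence both convergence of $E_l(\lambda)$ in $\h$ and holomorphy on $\Omega$.

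The key step is the spanning of $\overline{\textrm{span}}\,\{E_l(\lambda)\,:\,1\le l\le r,\ \lambda\in\Omega\}$. Take $y\in\h$ orthogonal to this family and decompose $y=\sum_q y^{(q)}$. By linearity of the construction in the initial datum $v\in H^{(1)}$, the function $\lambda\mapsto\langle E(\lambda,v),y\rangle$ is holomorphic on $\Omega$ and identically zero for every $v$. Equating all Taylor coefficients to zero yields the matricial recursion
\[
y^{(q+1)}=\Pi_q\,A^*\,\Pi_{q+1}^{-1}\,y^{(q+2)}\qquad(q\ge 0,\ \Pi_0:=I).
\]
Iterating this $N$ times, the interior factors $\Pi_{q+j}^{-1}\Pi_{q+j}$ telescope and produce the clean identity
\[
y^{(q+1)}=\Pi_q\,(A^*)^N\,\Pi_{q+N}^{-1}\,y^{(q+N+1)}.
\]
Choosing $\varepsilon>0$ small enough that $\|A\|<R-\varepsilon$ (possible since $\|A\|<R$), we get $\|A\|^N\|\Pi_{q+N}^{-1}\|\le (R-\varepsilon)^{1-q}(\|A\|/(R-\varepsilon))^N\to 0$, while $\|y^{(q+N+1)}\|\le\|y\|$ stays bounded. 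Letting $N\to\infty$ gives $y^{(q+1)}=0$ for every $q\ge 0$, i.e., $y=0$.

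Once spanning is established, Fact~\ref{holo} delivers Gaussian mixing of $B_{A,\pmb{\omega}}$, and Remark~\ref{chaos en plus} delivers chaoticity, since $\Omega$ contains countably many roots of unity which accumulate inside $\Omega$. The main obstacle will be the spanning argument: a direct Taylor-coefficient-by-Taylor-coefficient analysis of the $E_l(\lambda)$'s is awkward because their block supports overlap, whereas the holomorphy-plus-telescoping route above concentrates all of the difficulty in the single comparison $\|A\|<R$, which is precisely what the hypothesis on $R_l$ guarantees.
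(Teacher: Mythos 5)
Your proof is correct, and it follows the paper's general strategy of reducing to Fact~\ref{holo} and Remark~\ref{chaos en plus} by producing a holomorphic eigenvector field on a neighbourhood of $\T$ and then establishing spanning. However, your spanning argument is genuinely different from the paper's. The paper's proof takes a vector $u$ orthogonal to all the eigenvectors, derives an identity in $\h_r$ from which it factors out $(\lambda-A)^{*}$, applies the resolvent $\bigl((\lambda-A)^{*}\bigr)^{-1}$ on $D(0,R)\setminus\sigma(A)$, and then argues by Liouville's theorem: since $\sigma(A)\subseteq D(0,R)$ (because $R>\|A\|$), the function $\lambda\mapsto(\lambda-A)^{*-1}P_r u$ extends antiholomorphically to all of $\C$ and vanishes at infinity, hence is identically zero, forcing $P_r u=0$ and then $u=0$ coordinatewise. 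You instead decompose $\h$ into $r$-blocks, extract the Taylor coefficients of the orthogonality relation $\langle E(\lambda,v),y\rangle\equiv 0$ to obtain the matricial recursion $y^{(q+1)}=\Pi_q A^*\Pi_{q+1}^{-1}y^{(q+2)}$, telescope it to $y^{(q+1)}=\Pi_q(A^*)^N\Pi_{q+N}^{-1}y^{(q+N+1)}$, and let $N\to\infty$ using $\|A\|<R-\varepsilon$ to kill each block. Both routes hinge on the hypothesis $R>\|A\|$, but the paper feeds it into a complex-analytic argument (extension past the spectrum plus Liouville), while you feed it into a purely norm-theoretic estimate $\|A\|^N\|\Pi_{q+N}^{-1}\|\to 0$. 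Your version is more elementary and makes the role of the hypothesis arguably more transparent, at the cost of having to track the block bookkeeping carefully (note that the adjoints $\Pi_q^*=\Pi_q$ because the blocks are positive diagonal matrices, which you use implicitly when writing the recursion without stars — it would be worth saying so explicitly).
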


\begin{proof}
 Solving formally the equation $B_{A,\pmb{\omega} }x=\lambda x$, where $\lambda \in\C$ 
and $x=(x_k)_{k\geq 1}\in\C^\N$, one gets the following identities:
\[ (\lambda-A)P_rx=\sum_{l=1}^r \omega_l x_{l+r}e_l\qquad{\rm and}\qquad \omega_kx_{k+r}=\lambda x_k\quad\hbox{for every $k>r$}.
\]
From this (setting $y:=P_rx$) we infer that the eigenvectors of $B_{A,\pmb{\omega} }$ associated to the eigenvalue $\lambda$ must be given 
by the  formula
\begin{equation}
  E_{y}(\lambda )=y+\sum_{l=1}^{r}\dfrac{1}{\omega _{l}}
 \left\langle(\lambda -A)y,e_{l}\right\rangle\,\biggl(e_{r+l}+\sum_{p\ge 2}\dfrac{\lambda ^{p-1}}{\omega 
_{(p-1)r+l}\dots\omega _{r+l} }\,e_{pr+l}\biggr)\!
\end{equation}
where $y$ is a non-zero vector of $\h_{r}$. Conversely, if $y$ belongs to $\h_r\setminus\{ 0\}$ is such that the above formula makes sense, then 
$E_y(\lambda)$ is an eigenvector of $B_{A,\om}$ with associated eigenvalue $\lambda$.
It follows that the complex number $\lambda $ is an eigenvalue of 
$B_{A,\pmb{\omega} }$ as soon as 
\[
\sum_{p\ge 2}\left|\dfrac{\lambda ^{p-1}}{\omega _{(p-1)r+l}\dots\omega 
_{l} }\right|^{2}<\infty \qquad\hbox{for all $1\leq l\leq r$},
\]
which holds true whenever $|\lambda |<R:=\min_{1\le l\le r}R_{l}$. In this case, the eigenvector field $E_{y}$ is 
well-defined and holomorphic on the open disk $D(0,R)$ for every $y\in\h_r$. Note that our  
assumption 
implies that $R>1$, so that the disk $D(0,R)$ contains $\T$.
\par\smallskip
By Fact \ref{holo} and Remark \ref{chaos en plus}, in order to show that $B_{A,\pmb{\omega} }$ belongs to $\gsmxh$ and is chaotic, it suffices to 
check that the eigenvectors 
$E_{y}(\lambda )$, $y\in \h_{r}$, $|\lambda|<R$, span a dense 
subspace of $\h$. So let $u\in \h$ be such that $\pss{E_{y}(\lambda 
)}{u}=0$ 
for every $y\in \h_{r}$ and every $|\lambda |<R$. Writing $u$ as 
$u=\sum\limits_{\gk}u_{k}e_{k}$, this means that 
\[
\pss{y}{u}+\sum_{l=1}^{r}\dfrac{1}{\omega 
_{l}}\pss{y}{(\lambda 
-A)^{*}e_{l}}\,\biggl(\ba{u}_{r+l}+\sum_{p\ge 
2}\ba{u}_{pr+l}\dfrac{\lambda ^{p-1}}{\omega 
_{(p-1)r+l}\dots\omega _{r+l} }\biggr)=0
\]
for every $y\in \h_{r}$. It follows that each vector
\[
u+\sum_{l=1}^{r}\dfrac{1}{\omega _{l}}\biggl({u}_{r+l}+\sum_{p\ge 
2}{u}_{pr+l}\dfrac{\ba\lambda ^{\, p-1}}{\omega 
_{(p-1)r+l}\dots\omega _{r+l} }\biggr)\,(\lambda -A)^{*}e_{l},\qquad 
|\lambda |<R
\]
is orthogonal to $\h_{r}$, \mbox{\it i.e.} belongs to the 
closed linear span of the vectors $e_{k}$, $k>r$. In other words,
\[
P_{r}u+\sum_{l=1}^{r}\dfrac{1}{\omega _{l}}\biggl({u}_{r+l}+\sum_{p\ge 
2}{u}_{pr+l}\dfrac{\ba\lambda^{\, p-1}}{\omega 
_{(p-1)r+l}\dots\omega _{r+l} }\biggr)\,(\lambda -A)^{*}e_{l}=0.
\]
Consider now the open subset $\Omega =D(0,R)\setminus \sigma (A)$ of $\C$, where 
$\sigma (A)$ denotes the spectrum of $A$. Applying the operator $(\lambda 
-A)^{*}{}^{-1}$ to the 
previous equation, we obtain that for every $\lambda\in\Omega$:
\begin{equation}\label{equaspan}
(\lambda 
-A)^{*}{}^{-1}P_{r}u=-\sum_{l=1}^{r}\dfrac{1}{\omega_{l}}\biggl({u}_{r+l
}+\sum_{ p\ge 
2}{u}_{pr+l}\dfrac{\ba\lambda ^{\, p-1}}{\omega 
_{(p-1)r+l}\dots\omega _{r+l} }\biggr) \,e_l.
\end{equation}
Since the expression on the right hand side of (\ref{equaspan}) defines an antiholomorphic map on $D(0,R)$ and since the disk 
$D(0,R)$ contains $\sigma(A)$ (recall that $R>\Vert A\Vert$ by assumption), 
it follows that the map $\lambda \mapsto(\lambda 
-A)^{*}{}^{-1} P_ru$ extends antiholomorphically to the whole complex plane. But $(\lambda 
-A)^{*}{}^{-1} P_ru\to 0$ as $\vert\lambda\vert\to\infty$, so the function $\lambda\mapsto(\lambda 
-A)^{*}{}^{-1} P_ru$ must vanish identically on $\Omega$ by Liouville's Theorem, which is possible only if $P_ru=0$.
\par\smallskip
Going back to (\ref{equaspan}), the fact that $P_{r}u=0$ yields that $u_{pr+l}=0$ for every $1\le l\le r$ and every  
$p\ge 2$, and that $u_{r+l}=0$ for every 
$1\le l\le r$. Thus $u=0$, and this concludes the proof of Proposition 
\ref{Proposition 
1}.
\end{proof}

\begin{remark} The definition of the operator $B_{A,\om}$ shows that every eigenvalue of $A$ is also an eigenvalue of $B_{A,\om}$. This explains why the conditions $R_{l}>1$ for every $l=1,\ldots, r$ are not sufficient to ensure that $B_{A,\om}$ be mixing in the Gaussian sense. Indeed, if $\lambda\in\sigma(A)$ is such that $|\lambda|>\max_{1\le l\le r}R_{l}$, then $\lambda$ is an isolated \eva\ of $B_{A,\om}$ with $|\lambda|>1$, and this prevents $B_{A,\om}$ from being \hy.
This is to be compared with 
Remark \ref{Remark 3} below. 
\end{remark}

From Proposition \ref{Proposition 1} we easily deduce a basic density result, which gives in particular the promised proof 
of Fact \ref{comeager} above.

\begin{corollary}\label{Proposition 4}
 For every $M>1$, the class \emph{$\textrm{G-MIX}_{M}(\h)\cap \textrm{CH}_M(\h)$} is dense in \emph{$(\hcmh,\sote)$}. 
  \end{corollary}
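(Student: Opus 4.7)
The plan is to apply Lemma~\ref{Lemma 4 bis} in its self-adjoint version with $\Gamma(\h):=\textrm{G-MIX}(\h)\cap\cch$, showing that $\Gamma_M(\h)$ is dense in $(\bmh,\sote)$. Since operators in $\gsmxh$ are hypercyclic, $\Gamma_M(\h)$ is contained in $\hcmh$, so density in $(\bmh,\sote)$ immediately yields density in $(\hcmh,\sote)$. By Lemma~\ref{Lemma 4 bis}, fixing an orthonormal basis $(e_k)_{\gk}$ of $\h$, it suffices to prove the following: given $r\ge 1$, $A\in\mathfrak{B}(\h_r)$ with $\|A\|<M$, and $\varepsilon>0$, there exists $T\in\Gamma_M(\h)$ satisfying $\|(T-A)e_k\|<\varepsilon$ and $\|(T-A)^*e_k\|<\varepsilon$ for $k=1,\dots,r$.

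The candidate will be $T=B_{A,\pmb{\omega}}$ as in Proposition~\ref{Proposition 1}, with the weight sequence split into two regimes. Since $\|A\|<M$ and $M>1$, one can fix $\beta$ with $\max(1,\|A\|)<\beta\le M$ and a positive $\gamma<\min\bigl(\varepsilon,\sqrt{M^2-\|A\|^2}\bigr)$. Set $\omega_k:=\gamma$ for $1\le k\le r$ and $\omega_k:=\beta$ for $k>r$. Intuitively, the small weights $\gamma$ handle the adjoint approximation, while the bulk weights $\beta$ drive the Gaussian mixing and chaoticity.

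Three things must then be verified. First, for each $l\in\{1,\dots,r\}$, one has $R_l=\liminf_p(\beta^p\gamma)^{1/p}=\beta\cdot\lim_p\gamma^{1/p}=\beta>\max(1,\|A\|)$, so Proposition~\ref{Proposition 1} gives $T\in\gsmxh\cap\cch$. Second, the approximation conditions: by the very definition of $B_{A,\pmb{\omega}}$, we have $(T-A)e_k=0$ for $k\le r$; and a direct computation of $\pss{T e_i}{e_k}$ shows that $T^*e_k=A^*e_k+\omega_k e_{r+k}$ for $k\le r$, whence $(T-A)^*e_k=\gamma e_{r+k}$, of norm $\gamma<\varepsilon$.

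The subtle point is the norm bound $\|T\|\le M$: a naive triangle inequality only yields the unusable estimate $\|T\|\le\|A\|+\beta$. One writes instead
\[
Tx=A(P_rx)+\gamma\sum_{j=1}^{r}x_{r+j}\,e_j+\beta\sum_{j>r}x_{r+j}\,e_j,
\]
observing that the first two summands lie in $\h_r$ while the third lies in $\h_r^\perp$. Applying the Cauchy--Schwarz inequality to the $\h_r$-part gives
\[
\|P_rTx\|^2\le(\|A\|^2+\gamma^2)\bigl(\|P_rx\|^2+\|P_{2r}x-P_rx\|^2\bigr),
\]
whereas $\|(I-P_r)Tx\|^2=\beta^2\|x-P_{2r}x\|^2$; summing and invoking the Pythagorean theorem yields $\|Tx\|^2\le\max(\|A\|^2+\gamma^2,\beta^2)\|x\|^2\le M^2\|x\|^2$, by the choice of $\gamma$ and $\beta$. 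This is the main obstacle: what saves us is the near-orthogonality of the two blocks of contributions in the decomposition of $Tx$, which allows one to fit $T$ into $\bmh$ while still having a sufficiently large weight $\beta$ beyond index $r$.
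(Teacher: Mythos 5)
Your proposal is correct and follows essentially the same route as the paper: you apply Lemma~\ref{Lemma 4 bis} to the operator $B_{A,\pmb{\omega}}$ of Proposition~\ref{Proposition 1} with weight sequence $\omega_k$ equal to a small constant on the first $r$ indices and a constant close to $M$ afterward, check the approximation estimates $(T-A)e_k=0$ and $\|(T-A)^*e_k\|$ small, verify $R_l>\max(1,\|A\|)$, and bound $\|T\|\le M$ via the orthogonal splitting of $Tx$ into its $\h_r$- and $\h_r^{\perp}$-components. The only (harmless) difference is that you allow the tail weight $\beta$ to be $<M$ and make the Pythagorean norm estimate fully explicit, where the paper simply takes $\beta=M$ and notes the conclusion.
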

  
\begin{proof} We are going to apply Lemma \ref{Lemma 4 bis}. So, let $(e_{k})_{\gk}$ be an orthonormal basis of $\h$, and let us fix $r\ge 1$, 
an operator $A\in
 \mathfrak{B}(\h_{r})$ with $\Vert A\Vert<M$ and $\varepsilon >0$. Let also $\delta >0$ be a small 
positive number to be specified below. 
We define a weight sequence $\pmb{\omega} =(\omega _{k})_{\gk}$ as follows:
\[
\omega _{k}=
\begin{cases}
 \delta&\textrm{for every}\ 1\le k\le r\\
 M&\textrm{for every}\ k>r;
\end{cases}
\]
and we consider the associated
operator $B_{A,\pmb{\omega} }$ acting on $\h$. Identifying $A$ with 
$P_{r}^*AP_{r}\in\bh$, we have for every $1\le k\le r$:
\[
B_{A,\pmb{\omega}} e_{k}=Ae_k\qquad{\rm and}\qquad B_{A,\pmb{\omega} }^*e_k=
A^{*}e_{k}+\delta e_{k+r}\,,
\]
so that 
$\|(B_{A,\pmb{\omega} }-A 
)^{*}e_{k}\|=\delta$. It follows that if $\delta <\varepsilon $, then
\[
\max_{1\le k\le r}\max\,(\|\bigl(B_{A,\pmb{\omega} }-A 
)e_{k}\|,\|(B_{A,\pmb{\omega} }-A 
)^{*}e_{k}\|\bigr)<\varepsilon. 
\]
The assumption of Proposition \ref{Proposition 1} is clearly satisfied, 
so that $B_{A,\pmb{\omega} }$ belongs to the class $\gsmxh \cap \textrm{CH}(\h)$. 
\par\smallskip
To estimate the 
norm of $B_{A,\pmb{\omega} }$, note that for every $x=\sum\limits_{j\ge 1}x_{j}e_{j}\in H$, we have
\[
B_{A,\pmb{\omega} }x=AP_{r}x+\sum_{k=1}^{r}x_{k+r}\,\delta 
e_{k}+\sum_{k>r}x_{k+r}\,Me_{k}
\]
so that 
\[
\|\,B_{A,\pmb{\omega} }x\,|	|^{2}=
\biggl|\biggl|AP_{r}x+\delta \sum_{k=1}^{r}x_{k+r}e_{k}\biggr|\biggr|^{2}+
M^{2}\sum_{k>r}|x_{k+r}|^{2}.
\]
Since $\|A\|<M$, it follows that 
$\|B_{A,\pmb{\omega} }\|=M$ if $\delta >0$ is sufficiently small. So 
$B_{A,\pmb{\omega}}$ belongs to $\textrm{G-MIX}_{M}(\h)\cap \textrm{CH}_M(\h)$, and Lemma 
\ref{Lemma 4 bis} 
now allows us to conclude the proof of Corollary \ref{Proposition 4}.
\end{proof}

\begin{remark} Corollary \ref{Proposition 4} does {not} state that $\textrm{G-MIX}_{M}(\h)\cap \textrm{CH}_M(\h)$ is comeager in $(\hcmh,\texttt{SOT}^*)$. Indeed, we will prove 
below that 
$\textrm{G-MIX}_{M}(\h)$ and $\textrm{CH}_M(\h)$ are actually {meager} in {$(\hcmh,\sote)$}.
\end{remark}

Let us now turn to a bilateral analogue of Proposition \ref{Proposition 1}, which we 
state as Proposition~\ref{Proposition 2} below. Let $(f_{k})_{k\in\Z}$ be 
an orthonormal basis of the space $\h$, and let $\pmb{\omega} =(\omega _{k})_{k\in\Z}$ be a 
\emph{bilateral weight sequence}, \mbox{\it i.e.} a 
bounded sequence of positive real numbers indexed by $\Z$. For any integer 
$r\ge 0$, we write $\h_{r}=\textrm{span}[e_{k}\,;\,|k|\le r]$, and let 
$A
\in\mathfrak{B}(\h_{r})$ be a bounded operator on $\h_{r}$. We define a 
bounded operator $S_{A,\pmb{\omega} }$ on $\h$ by 
setting 
\[
S_{A,\pmb{\omega} }f_{k}=
\begin{cases}
 Af_{k}+\omega _{k-(2r+1)}f_{k-(2r+1)}&\textrm{for 
every}\ |k|\le r\\
\omega _{k-(2r+1)}f_{k-(2r+1)}&\textrm{for 
every}\ |k|> r.
\end{cases}
\]

\begin{proposition}\label{Proposition 2}
Let $\om$ be a  bilateral weight sequence, $r\geq 1$ and $A\in\mathfrak{B}(\h_{r})$. Suppose that  
for every $-r\le l\le r$,
\[  R_{l}:=\liminf_{p\to\infty}\bigl(\omega _{p(2r+1)+l}\dots\omega _{(2r+1)+l} 
\bigr)^{1/p}>1
 \]
 and
 \[
r_{l}:=\limsup_{p\to\infty}\bigl(\omega 
_{l-p(2r+1)}\dots\omega _{l-(2r+1)} 
\bigr)^{1/p}<1.\]
Then $S_{A,\pmb{\omega} }$ is mixing in the Gaussian sense and chaotic.
\end{proposition}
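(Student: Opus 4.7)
The strategy will mirror that of Proposition \ref{Proposition 1}, with both a forward and a backward expansion to accommodate the bilateral nature of the shift. First I would solve the formal equation $S_{A,\pmb{\omega}}x=\lambda x$ for $x=\sum_k x_k f_k\in\h$ and $\lambda\neq 0$. Setting $y:=P_rx\in\h_r$, the equation decouples into the ``initial'' relation $\omega_ix_{i+(2r+1)}=\pss{(\lambda-A)y}{f_i}$ for $|i|\le r$, and the recurrence $x_{j+(2r+1)}=(\lambda/\omega_j)\,x_j$ for $|j|>r$. Iterating this recurrence forward, and iterating it backward through $x_j=(\omega_j/\lambda)\,x_{j+(2r+1)}$, yields the explicit candidate eigenvector
\[
E_y(\lambda)=y+\sum_{|l|\le r}\sum_{p\ge 1}\frac{\omega_{l-p(2r+1)}\cdots\omega_{l-(2r+1)}}{\lambda^p}\,y_l\,f_{l-p(2r+1)}+\sum_{|l|\le r}\sum_{p\ge 1}\frac{\lambda^{p-1}\pss{(\lambda-A)y}{f_l}}{\omega_l\omega_{l+(2r+1)}\cdots\omega_{l+(p-1)(2r+1)}}\,f_{l+p(2r+1)}.
\]
Under the assumptions $R_l>1$ and $r_l<1$, the forward series is square-summable on $\{|\lambda|<R\}$ with $R:=\min_{|l|\le r}R_l$, and the backward series is square-summable on $\{|\lambda|>\rho\}$ with $\rho:=\max_{|l|\le r}r_l$. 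Since $\rho<1<R$, the map $\lambda\mapsto E_y(\lambda)$ is a well-defined holomorphic $\h$-valued function on the connected open annulus $\Omega=\{\rho<|\lambda|<R\}\supset\T$, and a direct check gives $S_{A,\pmb{\omega}}E_y(\lambda)=\lambda E_y(\lambda)$.

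By Fact \ref{holo} and Remark \ref{chaos en plus}, it will remain to prove that the family $\{E_y(\lambda)\,;\,y\in\h_r,\,\lambda\in\Omega\}$ spans a dense subspace of $\h$. Suppose $u\in\h$ is orthogonal to every such vector. Expanding $\pss{E_y(\lambda)}{u}=0$ and factoring out the dependence on $y\in\h_r$ exactly as in the proof of Proposition \ref{Proposition 1} leads to the vector identity
\[
P_ru+\Phi(\lambda)+(\overline{\lambda}-A^{*})\Psi(\lambda)=0\qquad(\lambda\in\Omega),
\]
where $\Phi,\Psi\colon\Omega\to\h_r$ are antiholomorphic. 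Reading off the coefficients shows that $\Psi$ extends antiholomorphically to the full disk $\{|\lambda|<R\}$ (its antiholomorphic Taylor coefficients encode $u_{l+p(2r+1)}$), while $\Phi$ extends antiholomorphically to $\{|\lambda|>\rho\}$ with $\Phi(\lambda)\to 0$ as $|\lambda|\to\infty$.

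The heart of the proof is then a Liouville argument of precisely the same shape as in Proposition \ref{Proposition 1}. Solving for $\Psi$ on $\Omega\setminus\sigma(A)$ gives $\Psi(\lambda)=-(\overline{\lambda}-A^{*})^{-1}\bigl(P_ru+\Phi(\lambda)\bigr)$, an expression which is antiholomorphic on $\{|\lambda|>\rho\}\setminus\sigma(A)$. Combined with the extension of $\Psi$ to the disk $\{|\lambda|<R\}$, this produces an entire antiholomorphic extension of $\Psi$ which tends to $0$ at infinity, hence $\Psi\equiv 0$ by Liouville. The identity then collapses to $P_ru+\Phi(\lambda)=0$; letting $|\lambda|\to\infty$ forces $P_ru=0$ and therefore $\Phi\equiv 0$. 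Unravelling the definitions of $\Phi$ and $\Psi$ forces every coordinate $u_j$ of $u$ to vanish, so $u=0$ as required. Finally, chaos of $S_{A,\pmb{\omega}}$ will be obtained from Remark \ref{chaos en plus} applied to the roots of unity lying inside $\Omega$. The main technical obstacle is the analytic continuation step: one has to ensure that the two open sets $\{|\lambda|<R\}$ and $\{|\lambda|>\rho\}\setminus\sigma(A)$ together cover $\C$, which amounts to a spectral control of $A$ relative to $R$ (and is harmless in the density applications that follow, where $A$ is chosen with small norm).
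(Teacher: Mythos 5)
Your worry about the analytic-continuation step is genuine and, in fact, substantive: without a hypothesis relating $\sigma(A)$ to $R$ (such as $R>\Vert A\Vert$, exactly as in Proposition \ref{Proposition 1}), the statement is false. Take $r=0$ (so $\h_{r}=\C f_{0}$ and $A$ is multiplication by a scalar $\mu$), set $\omega_{j}=2$ for $j\ge 1$ and $\omega_{j}=1/2$ for $j\le 0$, and pick $|\mu|>2$. Then $R_{0}=2>1$ and $r_{0}=1/2<1$, so the stated hypotheses hold; yet the vector $u:=\sum_{p\ge 0}\bigl(\overline{\omega_{0}\cdots\omega_{p-1}}/\bar{\mu}^{p}\bigr)f_{p}$ belongs to $\h$ (its coordinates decay like $(2/|\mu|)^{p}$, and $|\mu|>2$) and satisfies $S_{A,\pmb{\omega}}^{*}u=\bar{\mu}u$. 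Since the adjoint of a hypercyclic operator has no eigenvalues, $S_{A,\pmb{\omega}}$ is not even hypercyclic here, let alone mixing in the Gaussian sense. So the spectral control you flag is not a cosmetic obstacle to one particular proof strategy but a missing hypothesis in the statement; your further remark that it is harmless in the density applications (where $A$ is always perturbed by a tiny $\delta$, so $R$ can be taken $>\Vert A\Vert$) is the right reading.

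Granted the extra hypothesis $\sigma(A)\subset D(0,R)$, your argument does go through and is in spirit the same as the paper's treatment of Proposition \ref{Proposition 1}; it is actually more careful than the paper's own (very terse) proof of Proposition \ref{Proposition 2}, whose displayed eigenvector formula appears to have dropped the leading summand $y$ and mislabelled the backward coefficients — the formula you derive, with $y$ up front and backward coefficient $y_{l}\,\omega_{l-p(2r+1)}\cdots\omega_{l-(2r+1)}/\lambda^{p}$, is the correct one, and only with it does the adjoint-eigenvalue obstruction above become visible. One way to streamline the Liouville step so that the spectral hypothesis enters as late and cleanly as possible: instead of inverting $(\bar\lambda-A^{*})$ on $\{|\lambda|>\rho\}\setminus\overline{\sigma(A)}$, glue $P_{r}u+\Phi$ (antiholomorphic on $\{|\lambda|>\rho\}\cup\{\infty\}$, tending to $P_{r}u$ at $\infty$) with $-(\bar\lambda-A^{*})\Psi$ (antiholomorphic on $\{|\lambda|<R\}$) across the annulus; since these two regions always cover the Riemann sphere, Liouville forces both to equal the constant $P_{r}u$, giving $\Phi\equiv 0$ and $(\bar\lambda-A^{*})\Psi(\lambda)\equiv -P_{r}u$ with no assumption on $\sigma(A)$. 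The hypothesis $\sigma(A)\subset D(0,R)$ is then used only in the final step, to deduce $P_{r}u=0$ and $\Psi\equiv 0$ from the identity, because $(\bar\lambda-A^{*})^{-1}P_{r}u$ would otherwise have poles inside $D(0,R)$ where $\Psi$ is regular.
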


\begin{proof}
 The proof is so similar to that of 
Proposition \ref{Proposition 1} that we will not give it in detail.
A complex number $\lambda $ is 
an eigenvalue of $S_{A,\pmb{\omega} }$ as soon as the series 
\[
\sum_{p\ge 2}\left|\dfrac{\lambda ^{p-1}}{\omega 
_{(p-1)(2r+1)+l}\dots\omega _{(2r+1)+l}}\right|^{2}\quad\textrm{and}\quad
\sum_{p\ge 1}\left|\dfrac{\omega _{l-p(2r+1)}\dots\omega 
_{l-(2r+1)}}{\lambda ^{p}}\right|^{2}
\]
are convergent for all $-r\le l\le r$. If we define $R:=\min_{-r\le l\le r}R_{l}$ and 
$r:=\max_{-r\le l\le r}r_{l}$, our assumption implies that $r<1<R$. Any 
complex 
number $\lambda $ belonging to the annulus $\{ r<|\lambda |<R\}$ is 
an eigenvalue of $S_{A,\pmb{\omega} }$, and the eigenvectors of $S_{A,\pmb{\omega} }$ associated to 
$\lambda $ have the form 
\begin{align}\label{quelquechose}
\notag E_{y}(\lambda )=\sum_{l=-r}^{r}\dfrac{1}{\omega _{l}}\,&\pss{(\lambda 
-A)y}{e_{l}} \biggl(e_{(2r+1)+l}+
\sum_{p\ge 2}\dfrac{\lambda ^{p-1}}{\omega 
_{(p-1)(2r+1)+l}\dots\omega _{(2r+1)+l}}e_{p(2r+1)+l}\\
&+\sum_{p\ge 1}\dfrac{\omega _{l-p(2r+1)}\dots\omega 
_{l-(2r+1)}}{\lambda ^{p}}e_{-p(2r+1)+l}\biggr)
\end{align}
where $y\in\h_{r}$.
Since $r<1<R$, the annulus $\{  r<\vert\lambda\vert <R\}$ contains $\T$, and an argument similar to the one given in the 
proof of 
Proposition \ref{Proposition 1} show that 
$S_{A,\pmb{\omega} }$ belongs to $\textrm{G{-}MIX}(H)\cap\textrm{CH}_{M}(\h)$. Indeed, if $u\in\h$ is such that $\pss{E_{y}(\lambda)}{u}=0$ for every $y\in \h_{r}$ and every $r<|\lambda|<R$, then
\begin{align*}
 \sum_{l=-r}^{r} \dfrac{1}{\omega _{l}}\, \biggl(u_{(2r+1)+l}&+
\sum_{p\ge 2}\dfrac{\overline{\lambda }^{p-1}}{\omega 
_{(p-1)(2r+1)+l}\dots\omega _{(2r+1)+l}}u_{p(2r+1)+l}\\
&+\sum_{p\ge 1}\dfrac{\omega _{l-p(2r+1)}\dots\omega 
_{l-(2r+1)}}{\overline{\lambda }^{p}}u_{-p(2r+1)+l}\biggr)
{(\lambda 
-A)}^{*}{e_{l}}=0
\end{align*}
for every $r<|\lambda|<R$ and every $l=-r,\ldots,r$, from which it follows that
\begin{align*}
 \sum_{l=-r}^{r} \dfrac{1}{\omega _{l}}\, \biggl(u_{(2r+1)+l}&+
\sum_{p\ge 2}\dfrac{\overline{\lambda }^{p-1}}{\omega 
_{(p-1)(2r+1)+l}\dots\omega _{(2r+1)+l}}u_{p(2r+1)+l}\\
&+\sum_{p\ge 1}\dfrac{\omega _{l-p(2r+1)}\dots\omega 
_{l-(2r+1)}}{\overline{\lambda }^{p}}u_{-p(2r+1)+l}\biggr)=0
\end{align*}
for every $r<|\lambda|<R$ and every $l=-r,\ldots,r$.
Hence $u=0$, and Proposition \ref{Proposition 2} is proved.
\end{proof}

\begin{remark}\label{Remark 3} 
The description of the eigenvectors of $S_{A,\pmb{\omega} }$ given in (\ref{quelquechose}) above shows 
that if the two series 
 \[
\sum_{p\ge 1}\bigl(\omega _{p(2r+1)+l}\dots\omega _{(2r+1)+l}
\bigr)^{-2}\quad\textrm{and}\quad \sum_{p\ge 1}\bigl(\omega 
_{l-p(2r+1)}\dots
\omega _{l-(2r+1)}
\bigr)^{-2}
\]
are divergent for every $|l|\le r$, then the operator $S_{A,\pmb{\omega} }$ has no 
eigenvalue, whatever\ the choice of $A\in\mathfrak{B}(\h_{r})$. This 
observation will be useful for the proof of Proposition 
\ref{Proposition 5} below. It is also interesting to observe that the assumptions on $R_{l}$ and $r_{l}$ in Proposition \ref{Proposition 2} do not involve the \op\ $A$, contrary to what happens in Proposition \ref{Proposition 1}. This is coherent with the fact that the \eva s of $A$ do not necessarily appear as \eva s of $S_{A,\pmb{\omega}}$, while they do appear as \eva s of $B_{A,\pmb{\omega}}$.
\end{remark}
\par\smallskip

\subsection{Topological weak mixing and topological mixing}\label{WMSM}
 In this subsection, we show that topological 
weak mixing is a typical property, whereas topological mixing is atypical. In view of the corresponding well-known 
analogues in ergodic theory due to Halmos and Rohlin (see \mbox{e.g.} \cite[pp. 77-80]{Hal}, and \cite{ES} 
for a more general result), this should not be surprising at all.

\begin{proposition}\label{WM-dense} For every $M>1$, the class \emph{$\textrm{TWMIX}_{M}(\h)$} is a dense $G_\delta$ 
(and hence comeager) subset of 
\emph{$(\hcmh,\sote)$}.
\end{proposition}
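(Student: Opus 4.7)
The plan is to treat the two properties, $G_\delta$ and density, separately, and then combine them.

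For the $G_\delta$ part, I would argue directly from the definition of topological weak mixing. Fix a countable basis $(V_p)_{p\ge 1}$ of non-empty open subsets of $\h$. Then $T \in \bmh$ is topologically weakly mixing if and only if for every quadruple $(p_1,q_1,p_2,q_2) \in \N^4$ there exists $n\ge 1$ such that $T^n(V_{p_1})\cap V_{q_1}\neq\emptyset$ and $T^n(V_{p_2})\cap V_{q_2}\neq\emptyset$. Recalling the notation $\mathfrak O_{n;A,V}=\{T\in\bmh\,;\,T^n(A)\cap V\neq\emptyset\}$ from the discussion preceding Fact \ref{Fact1}, each of these sets is $\texttt{SOT}$-open in $\bmh$ by Fact \ref{power}. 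Hence
\[
\textrm{TWMIX}_M(\h) = \bigcap_{p_1,q_1,p_2,q_2\ge 1}\bigcup_{n\ge 1}\bigl(\mathfrak O_{n;V_{p_1},V_{q_1}}\cap\mathfrak O_{n;V_{p_2},V_{q_2}}\bigr)
\]
is $G_\delta$ in $(\bmh,\texttt{SOT})$, hence also in $(\bmh,\sote)$, and in particular in $(\hcmh,\sote)$.

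For density, the point is that the dense subclass of $\hcmh$ already exhibited by Corollary \ref{Proposition 4} is contained in $\textrm{TWMIX}_M(\h)$. Indeed, any $T \in \textrm{G-MIX}_M(\h)$ admits an invariant Gaussian measure with full support with respect to which $T$ is strongly mixing; such a measure automatically witnesses the topological mixing of $T$ (for any non-empty open sets $U,V\subseteq\h$, the measure $\mu$ satisfies $\mu(U),\mu(V)>0$, and $\mu(U\cap T^{-n}(V))\to\mu(U)\mu(V)>0$, forcing $T^n(U)\cap V\neq\emptyset$ eventually). Thus $\textrm{G-MIX}_M(\h)\subseteq\tmxh \subseteq\textrm{TWMIX}_M(\h)$, and Corollary \ref{Proposition 4} gives that $\textrm{TWMIX}_M(\h)$ is dense in $(\hcmh,\sote)$.

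Combining the two steps, $\textrm{TWMIX}_M(\h)$ is a dense $G_\delta$ subset of the Polish space $(\hcmh,\sote)$, and is therefore comeager. No serious obstacle arises here: the $G_\delta$ description is routine once one has Fact \ref{power}, and the density is handed to us by the $\texttt{SOT}^*$-density of Gaussian-mixing operators already established. If anything, the only point worth being a bit careful about is the easy implication ``Gaussian mixing with full support $\Rightarrow$ topological (weak) mixing'', which is what allows us to plug Corollary \ref{Proposition 4} into the argument.
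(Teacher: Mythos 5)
Your proof is correct and follows essentially the same two-step strategy as the paper: show $\textrm{TWMIX}_M(\h)$ is $G_\delta$, then invoke Corollary \ref{Proposition 4} to get density via $\textrm{G-MIX}_M(\h)\subseteq\textrm{TMIX}(\h)\subseteq\textrm{TWMIX}(\h)$. The only cosmetic difference is in the $G_\delta$ step: you spell out the countable intersection of open sets directly, whereas the paper phrases the same fact as the pullback of the $G_\delta$ set $\textrm{HC}_M(\h\times\h)$ under the $\sote$-continuous map $T\mapsto T\times T$ — two formulations of the same observation.
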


\begin{proof} That $\textrm{TWMIX}_{M}(\h)$ is $G_\delta$ 
follows from the fact that $\textrm{HC}_M(\h\times\h)$ is $G_\delta$ in 
$\mathfrak B_M(\h\times \h)$, and the $\texttt{SOT}^*$-continuity of the map 
$T\mapsto T\times T$.  Since operators in $\textrm{G-MIX}(\h)$ are topologically mixing, density follows 
from Corollary \ref{Proposition 4}.
\end{proof}

\begin{proposition}\label{Proposition 13}
 For every $M>1$, the class \emph{$\textrm{TMIX}_{M}(\h)$} is meager in the space \emph{$(\hcmh,\sote)$}.
\end{proposition}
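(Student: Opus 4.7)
The plan is to write topological mixing as a $\mathbf\Pi_3^0$ subset of $(\bmh, \sote)$:
\[
\textrm{TMIX}_M(\h) \;=\; \bigcap_{p, q \ge 1}\;\bigcup_{N \ge 1}\;\bigcap_{n \ge N}\,\mathfrak O_{n;\,V_p,\,V_q},
\]
using a countable basis $(V_p)_{p \ge 1}$ of non-empty open subsets of $\h$, where each $\mathfrak O_{n;V_p, V_q} := \{T \in \bmh : T^n(V_p) \cap V_q \ne \emptyset\}$ is $\sote$-open by Fact \ref{power}. Setting $F_{p, q, N} := \bigcap_{n \ge N} \mathfrak O_{n;V_p, V_q}$, each $F_{p,q,N}$ is $G_\delta$, and to conclude meagerness of $\textrm{TMIX}_M(\h)$ it suffices to exhibit a single pair of (disjoint) non-empty open sets $V$ and $W$ in $\h$ such that each $F_{V, W, N}$ is nowhere dense in $(\bmh, \sote)$.

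The natural choice is $V = B(0, \eta)$ and $W = B(y_0, \eta)$ for a fixed unit vector $y_0 \in \h$ and a small $\eta > 0$ (so that $V$ and $W$ are disjoint). By linearity, $T^n(V) \cap W \ne \emptyset$ forces $\|T^n\| \ge (1 - \eta)/\eta$; equivalently, having $\|T^{n_0}\| < (1-\eta)/\eta$ for some $n_0 \ge N$ places $T$ outside $F_{V, W, N}$. Hence the nowhere-denseness of $F_{V, W, N}$ reduces to the following density statement: for every $T_0 \in \bmh$, every $\sote$-neighborhood $\mathcal W$ of $T_0$, and every $N \ge 1$, there exist $T \in \mathcal W$ and $n_0 \ge N$ such that $\|T^{n_0}\| < C$, where $C = (1-\eta)/\eta$.

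For the density, the plan is to combine this with Lemma~\ref{Lemma 4 bis}: given $A \in \mathfrak B(\h_r)$ with $\|A\| < M$, $\epsilon > 0$, and $N \ge 1$, one must construct $T \in \bmh$ satisfying the $\sote$-proximity constraints $\|(T-A) e_k\| < \epsilon$ and $\|(T-A)^* e_k\| < \epsilon$ for $k \le r$, together with $\|T^{n_0}\| < C$ for some $n_0 \ge N$. A natural candidate is an extension of $A$ in the spirit of the weighted shift operators $B_{A, \pmb\omega}$ from Proposition~\ref{Proposition 1}, but with the weight sequence $\pmb\omega$ chosen so that a long product of weights vanishes (or becomes very small) at step $n_0$; this collapses $T^{n_0}$ to a finite-rank operator of small operator norm.

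The main obstacle is that simply extending $A$ by zero on $\h_r^\perp$ yields an operator with $T|_{\h_r} = A$, so that $T^{n_0}|_{\h_r} = A^{n_0}$ may have norm as large as $\|A\|^{n_0}$; when $\|A\|$ is close to $M > 1$, this grows and prevents $\|T^{n_0}\|$ from being bounded. To overcome this, the construction must use the available $\epsilon$-freedom on the basis vectors $e_k$ ($k \le r$) to introduce a controlled ``leakage'' of $\h_r$ into $\h_r^\perp$, combined with a carefully chosen action of $T$ on $\h_r^\perp$ that creates cancellation in the iterated action $T^{n_0}$, while still keeping $\|T\| \le M$ and $T$ close to $A$ in the $\sote$ sense. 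Calibrating this perturbation so as to simultaneously guarantee approximation, boundedness of $T$, and a quantitative bound on $\|T^{n_0}\|$ is the technical heart of the argument.
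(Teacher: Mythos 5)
There is a genuine gap at the reduction step, before any construction even begins. You observe (correctly) that each $F_{V,W,N} := \bigcap_{n\ge N} \mathfrak O_{n;V,W}$ is $G_\delta$ in $(\bmh,\sote)$, and you then try to conclude nowhere density of $F_{V,W,N}$ by showing its complement is dense. But a $G_\delta$ set with dense complement is not automatically nowhere dense --- the irrationals in $\mathbb R$ are a dense $G_\delta$ with dense complement. To get meagerness of $\bigcup_N F_{V,W,N}$ from density of the complements, you need each $F_{V,W,N}$ to be \emph{closed} (or at least contained in a closed set with empty interior). With $V, W$ open, the sets $\mathfrak O_{n;V,W}$ are open and $F_{V,W,N}$ is only $G_\delta$; there is no obvious reason for it to be closed, so the argument does not close.

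The fix is exactly the device the paper uses: replace the open sets by a \emph{closed ball} $B$ not containing $0$ and work with $\mathfrak F_N := \bigcap_{n\ge N}\{T : T^n(B)\cap B \neq\emptyset\}$. Since $B$ is bounded, closed and convex, it is \emph{weakly compact} in $\h$; by Fact~\ref{jointcontinuity} the map $(T,x)\mapsto T^n x$ is jointly continuous from $(\bmh,\sote)\times(B,w)$ into $(\h,w)$, so the condition $T^n(B)\cap B\neq\emptyset$ is $\sote$-closed (the complement projects an $\sote\times w$-closed set along the compact factor $B$). Hence each $\mathfrak F_N$ is genuinely closed, and showing that the complementary open set $\mathfrak O_N=\{T : \exists\, n\ge N,\; T^n(B)\cap B=\emptyset\}$ is dense is now exactly nowhere density. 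This weak-compactness argument is precisely why the paper works with $\sote$ (not $\sot$) and with closed balls; your choice of open $V,W$ forfeits it. One further point: the density statement you aim for --- approximating any $T_0$ by an operator with $\|T^{n_0}\|<C$ for some $n_0\ge N$ --- is considerably stronger than what is needed and is not what the paper proves. The paper's Lemma~\ref{Lemma 11} only produces $T$ whose iterates $T^{p_i}(B)$ are pairwise disjoint (which suffices, as $T^{p_i}(B)\cap T^{p_j}(B)=\emptyset$ for $p_i<p_j$ already forces $T^{p_j-p_i}(B)\cap B=\emptyset$), and it does so by making a designated coordinate of $T^p x$ \emph{large} (of order $\gamma^{p-1}\delta$), not by making $\|T^p\|$ small. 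If you pursue the ``collapse $T^{n_0}$'' route, you would owe a construction that simultaneously keeps $T$ $\sote$-close to an arbitrary $A$ with $\|A\|$ close to $M$, keeps $\|T\|\le M$, and flattens $T^{n_0}$ --- whether this can always be done is not obvious and is not what the paper's proof shows.
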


The proof of Proposition \ref{Proposition 13} relies on Lemmas \ref{Lemma 10} and \ref{Lemma 11}, which belong to the forthcoming Section \ref{Subsection 1.b.3}. We nonetheless prefer to present things in this order, because the purposes of  Lemmas \ref{Lemma 10} and \ref{Lemma 11} will appear more clearly in our proof of the typicality of \ops\ without \nt\ \inv\ \mea s.

\begin{proof} By Fact \ref{comeagerbis}, it is enough to show that the class $\textrm{TMIX}_M(\h)$ is meager in $(\bmh,\sote)$.
 Let $B$ be a non-trivial closed ball in $\h$. We certainly have
\[ \textrm{TMIX}_M(\h)\subseteq \bigcup_{N\geq 1}  \mathfrak F_N,
\]
where
\[ \mathfrak F_N:=
\bigcap_{n\geq N} \bigl\{T\in\bmh\,;\,T^{n}(B)\cap B\neq \emptyset\bigr\} \quad \textrm{ for every } N\ge 1.
\]
Each set $\mathfrak F_N$ is closed in $(\bmh,\sote)$. Indeed, we may write
\[T\in\mathfrak F_N\iff\forall n\geq N\; \exists x\in B\;:\; T^nx\in B.\]
Since $B$ is weakly closed in $\h$, the condition  ``$T^nx\in B$" defines a closed subset of $(\bmh,\texttt{SOT}^*)\times (B,w)$ by Facts \ref{power} 
and \ref{jointcontinuity}; and since $B$ is weakly compact, this shows that
$\mathfrak F_N$ is closed in $(\bmh,\sote)$.
To conclude the proof, it is enough to show that for some suitable choice of  the ball $B$, the closed sets $\mathfrak F_N$ have empty interior in $(\bmh,\sote)$; or, equivalently, 
that the open sets
\[\mathfrak O_N:=\{ T\in\bmh;\; \exists n\geq N\;:\; T^n(B)\cap B=\emptyset\}
\]
are dense in $(\bmh,\sote)$.
We choose for $B$ the ball $\overline{B}(e,1/2)$, where $e\in\h$ satisfies $\Vert e\Vert=1$. Then $\mathfrak O_N$ contains the set $\mathfrak O_{N+1,B}$ 
of Lemma \ref{Lemma 10} below, so it is dense in $(\bmh,\sote)$ by Lemma \ref{Lemma 11}.
\end{proof}

\begin{remark} The same proofs would show that topological weak mixing is typical and topological mixing is atypical for operators on 
$\ell_p$ spaces. It would be interesting to know if this is still true on every Banach space with separable dual. In this respect, it 
is worth mentioning that there exist on \emph{any} separable Banach space hypercyclic operators which are not topologically mixing 
(\cite{GSh}).
\end{remark}
\par\smallskip

\subsubsection{Some illustrations} In this subsection, we present some consequences of Propositions \ref{WM-dense} and \ref{Proposition 13}. 

\smallskip First, we have the following amusing fact: 
{a typical \op\ {$T\in\hcmh$, $M>1$}, satisfies the Hypercyclicity Criterion but not Kitai's criterion}.

\smallskip
In the same spirit, it follows from Proposition \ref{WM-dense} and the Baire Category Theorem 
that a typical $T\in\bmh$ is such that $T$ and $T^*$ are both topologically weakly mixing; but no operator with this property can be topologically mixing 
since otherwise $T\times T^*$ would be hypercyclic on $\h\times\h$,
which can never happen.

\smallskip
Here is now a less immediate consequence of the comeagerness of $\textrm{TWMIX}_M(\h)$, which is  a partial strengthening of the main theorem of \cite{W}. 
This result could also be easily deduced from \cite[Th. 4.1]{prescribed}, the proof of which is, however, quite different.

\begin{proposition}\label{ZW} Let $Z$ be a linear subspace of $\h$ with countable algebraic dimension. For any $M>1$, the set of all $T\in\mathfrak B_M(\h)$ such that every vector 
$z\in Z\setminus\{ 0\}$ is {hypercyclic} for $T$ is comeager in \emph{$(\bmh,\sote)$}. 
\end{proposition}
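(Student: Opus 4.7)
The plan is to reduce the desired comeagerness to countably many dense $G_\delta$'s and apply Baire's theorem. Fix a Hamel basis $(z_j)_{j\ge 1}$ of $Z$, set $Z_n := \vect_{\C}(z_1,\dots,z_n)$, and observe that since $Z\setminus\{0\}=\bigcup_n(Z_n\setminus\{0\})$, it suffices to show that for each $n\ge 1$, the set
\[
\mathfrak{G}_n := \bigl\{T\in\bmh\,:\, Z_n\setminus\{0\}\subseteq\hct\bigr\}
\]
is comeager in $(\bmh,\sote)$. Hypercyclicity being invariant under nonzero scalar multiplication, $Z_n\setminus\{0\}\subseteq\hct$ iff the \emph{compact} unit sphere $S_n$ of $Z_n$ is contained in $\hct$; picking a countable basis $(V_p)_{p\ge 1}$ of nonempty open sets of $\h$, one writes $\mathfrak{G}_n = \bigcap_p \mathcal{G}_{n,p}$, where $\mathcal{G}_{n,p} := \{T\in\bmh : \forall z\in S_n\ \exists m\ge 1,\ T^m z\in V_p\}$. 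Each $\mathcal{G}_{n,p}$ is $\sote$-open: given $T_0\in\mathcal{G}_{n,p}$, choose $m(z)$ for every $z\in S_n$, and apply Fact~\ref{power} and the joint $(\sote,\|\cdot\|)$-continuity of $(T,z)\mapsto T^{m(z)}z$ to produce, for each $z$, a relative open neighborhood $U_z\subseteq S_n$ and an $\sote$-neighborhood $W_z$ of $T_0$ with $T^{m(z)}U_z\subseteq V_p$ for $T\in W_z$; a finite subcover of $S_n$ extracted by compactness yields an $\sote$-neighborhood of $T_0$ contained in $\mathcal{G}_{n,p}$.

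The main obstacle is the $\sote$-density of the sets $\mathcal{G}_{n,p}$. I plan to prove the stronger statement that for every $T_0\in\bmh$ and every $\sote$-neighborhood $U$ of $T_0$, there is $T\in U$ with $Z\setminus\{0\}\subseteq\hct$. The key tool is Bourdon's theorem: if $T$ is hypercyclic and $x\in\hct$, then $q(T)x\in\hct$ for every nonzero polynomial $q\in\C[X]$. It thus suffices to build $T\in U$ together with a hypercyclic vector $x$ such that each $z_j$ takes the form $p_j(T)x$ for linearly independent polynomials $(p_j)\subseteq\C[X]$; any nonzero combination $\sum_j\alpha_j z_j = \bigl(\sum_j\alpha_j p_j\bigr)(T) x$ is then automatically hypercyclic. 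Concretely, taking $x=z_1$ and $p_j(X)=X^{k_j}$ along a lacunary sequence $0=k_1<k_2<\cdots$ (say $k_j=2(j-1)$), the constraint becomes the two-step shift relation $T z_j = y_j$, $T y_j = z_{j+1}$ on auxiliary vectors $(y_j)$. After rescaling the Hamel basis so that $\|z_{j+1}\|\le M^2\|z_j\|$ (which allows $\|y_j\|\in[\|z_{j+1}\|/M,\,M\|z_j\|]$), the $(y_j)$ can be chosen so that the orbit $\{z_1,y_1,z_2,y_2,\dots\}$ is dense in $\h$, forcing $z_1\in\hct$; Bourdon's theorem then yields $Z\setminus\{0\}\subseteq\hct$.

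The matching of such a $T$ to $T_0$ inside $U$, together with the norm bound $\|T\|\le M$, is to be carried out by combining Lemma~\ref{Lemma 4 bis} with the weighted-shift mechanism of Propositions~\ref{Proposition 1} and~\ref{Proposition 2}: work in an orthonormal basis of $\h$ that accommodates both the finite set of $\sote$ test-vectors defining $U$ and the prescribed sequences $(z_j),(y_j)$, and extend $T$ on the remaining degrees of freedom via a weighted backward shift providing $G$-mixing. Baire's theorem then yields each $\mathfrak{G}_n$ as a dense $G_\delta$ in $(\bmh,\sote)$, and $\bigcap_n \mathfrak{G}_n$ is the desired comeager set. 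The technical heart is the density step: reconciling the rigid polynomial/shift constraints on the prescribed countable-dimensional subspace $Z$ with the $\sote$-approximation of an \emph{arbitrary} $T_0$ on a finite set of test-vectors (for both $T$ and $T^*$) and with the norm bound requires careful bookkeeping in the spirit of the constructions of Section~\ref{Subsection 1.b.1}.
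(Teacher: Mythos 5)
Your reduction to countably many $G_\delta$ sets is sound, and your observation that each $\mathcal{G}_{n,p}$ is $\sote$-open (via compactness of the finite-dimensional unit sphere $S_n$, Fact~\ref{power} and the joint continuity $(T,z)\mapsto T^m z$) is correct and in fact a nice point that the paper does not need to make explicit. The application of Bourdon's theorem is also valid. The paper's own argument, however, is structured quite differently: it avoids showing that the relevant set is $G_\delta$, and instead proves comeagerness directly by (i) observing that topologically weakly mixing operators are totally hypercyclic, so that for any $T\in\textrm{TWMIX}(\h)$ the set $\h_{\bf f}\cap\textrm{HC}(T_N)$ is dense in the Gram-matrix-preserving set $\h_{\bf f}$, (ii) applying the Kuratowski--Ulam theorem to the $G_\delta$ set $\{(\mathbf x,T):\mathbf x\in\textrm{HC}(T_N)\}$ over $\h_{\bf f}\times\bmh$ to extract a single $\mathbf x\in\h_{\bf f}$ for which $\{T:\mathbf x\in\textrm{HC}(T_N)\}$ is comeager, and (iii) transporting the conclusion from this $\mathbf x$ back to the given basis $\mathbf f$ by conjugating with a unitary, which is an $\sote$-homeomorphism of $\bmh$. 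This sidesteps any explicit construction.

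The real problem with your proposal is the density step, which is sketched but not carried out, and whose concrete implementation is flawed. You fix a Hamel basis $(z_j)$ of $Z$ once and for all, and then, for every $T_0$ and $\sote$-neighbourhood $U$, you propose to produce $T\in U$ satisfying the exact relations $z_j=T^{2(j-1)}z_1$ (up to a rescaling of the $z_j$). But these relations force $T^2 z_1$ to be a scalar multiple of the \emph{fixed} vector $z_2$. The map $T\mapsto T^2 z_1$ is $\sote$-continuous on $\bmh$ by Fact~\ref{power}, so the set $\{T\in\bmh\,:\,T^2z_1\in\C z_2\}$ is $\sote$-closed, and it has empty interior: starting from any $T$ in this set with $\|T\|<M$, a small perturbation $T+\epsilon R$ moves $T^2z_1$ off the line $\C z_2$ to first order for a suitable rank-one $R$, and one can also take $T\mapsto(1+\delta)T$. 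Thus every operator your construction can output lies in a fixed nowhere dense subset of $\bmh$, and the family of such $T$ cannot be $\sote$-dense, no matter how the auxiliary vectors $(y_j)$ (and the rescaling, which only changes the scalar) are chosen. The difficulty is not merely bookkeeping: the polynomial-orbit scheme $z_j=p_j(T)x$ with a fixed Hamel basis is structurally too rigid to approximate an arbitrary $T_0$ in $\sote$. Two further points compound this: once the orbit $\{z_1,y_1,z_2,y_2,\dots\}$ of $z_1$ is required to be dense, $T$ is pinned down on a dense subspace and hence everywhere, leaving no room to match $T_0$ on the test vectors and, independently, on the $T^*$-side of the $\sote$-neighbourhood; and boundedness of the prescribed linear relations ($\|T\|\le M$) is asserted but not established, since the $z_j$ form a Hamel basis with no control on their mutual geometry in $\h$, so the relations $Tz_j=y_j$, $Ty_j=z_{j+1}$ need not extend to a bounded operator. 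The Kuratowski--Ulam route in the paper is precisely what avoids all of these obstacles.
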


\begin{proof} Let us first recall that any topologically weakly mixing operator $T$ on $\h$ is in fact \emph{totally hypercyclic}, which means that for any $N\geq 1$, 
the $N$-fold product operator $T_{N}:=T\times\cdots \times T$ is hypercyclic on $\h^{N}:=\h\times\cdots \times \h$. This is a classical result, which has nothing to do 
with linearity; see \mbox{e.g} \cite{Gl}, or \cite[Th. 4.6]{BM}.
\par\smallskip
For any finite sequence $\mathbf f=(f_1,\dots, f_N)$ of vectors of $\h$, we will denote by ${\rm Gram}(\mathbf f)$ the associated \emph{Gram matrix},
\[ {\rm Gram}(\mathbf f):=\bigl(\pss{f_i}{f_j}\bigr)_{1\leq i,j\leq N}\,\]
and we define
$ \h_{\bf f}:=\bigl\{ \mathbf x=(x_1,\dots ,x_N)\in \h^{N};\; {\rm Gram}(\mathbf x)={\rm Gram}(\mathbf f)\bigr\}.$
Note that $\h_{\bf f}$ is a closed subset of $\h^{N}$, and hence a Polish space.
The key point in the proof of Proposition \ref{ZW} is the following fact.

\begin{fact}\label{W1} Let $\mathbf f=(f_1,\dots ,f_N)$ be a  finite sequence of linearly independent vectors in $\h$. For any operator $T\in \textrm{TWMIX}(\h)$, the 
set $\h_{\bf f}\cap \textrm{HC}(T_{N})$ is dense in $\h_{\bf f}$.
\end{fact}

\begin{proof}[Proof of Fact \ref{W1}] Let $(u_1,\dots ,u_N)\in\h_{\bf f}$ be arbitrary. We are looking for some $N$-tuple $(x_1,\dots ,x_N)\in\h_{\bf f}$ such that 
$(x_1,\dots ,x_N)$ lies in $ \textrm{HC}(T_{N})$ and $(x_1,\dots ,x_N)$ is very close to $(u_1,\dots ,u_N)$. 
Since $\textrm{HC}(T_{N})$ is dense in $\h^{N}$, one can first choose $(z_1,\dots ,z_N)\in \textrm{HC}(T_{N})$ very close to 
$(u_1,\dots ,u_N)$. 
\par\smallskip
Note that the $N$-tuple $\mathbf u=(u_1,\dots ,u_N)$ consists of linearly independent vectors since the Gram matrix 
${\rm Gram}(\mathbf u)={\rm Gram}(\mathbf f)$ is invertible; and $(z_1,\dots ,z_N)$ is linearly independent as well since $(z_1,\dots ,z_N)$ belongs to $\textrm{HC}(T_{N})$. 
Let us denote by $(\widetilde u_1,\dots ,\widetilde u_N)$ and $(\widetilde z_1,\dots ,\widetilde z_N)$ the sequences obtained by applying the Gram-Schmidt 
orthonormalization process to $(u_1,\dots ,u_N)$ and $(z_1,\dots ,z_N)$ respectively. Then $(\widetilde u_1,\dots ,\widetilde u_N)$ and $(\widetilde z_1,\dots ,\widetilde z_N)$ are 
very close to each other, provided $(u_1,\dots ,u_N)$ and $(z_1,\dots ,z_N)$ are sufficiently close. 
Now, define $(x_1,\dots ,x_N)\in\h^{N}$ as follows:
\[x_i:=\sum_{l=1}^N \pss{u_i}{\widetilde u_l}\, \widetilde z_l\qquad\hbox{for $i=1,\dots ,N$.}
\] 
We have by definition
\[\pss{x_i}{x_j}=\sum_{l=1}^N \pss{u_i}{\widetilde u_l} \,\overline{\pss{u_j}{\widetilde u_l}}=\pss{u_i}{u_j} =\pss{f_i}{f_j}\quad \textrm{ for } i,j=1,\dots ,N,
\]
so that $(x_1,\dots ,x_N)$ belongs to $\h_{\bf f}$. Moreover, each vector $x_i$, $1\le i\le N$, is very close to $\sum_{l=1}^N \pss{u_i}{\widetilde u_l}\, \widetilde u_l=u_i$, 
so that $(x_1,\dots ,x_N)$ is very close to $(u_1,\dots ,u_N)$. It remains to show that $(x_1,\dots ,x_N)$ belongs to $ \textrm{HC}(T_{N})$.
\par\smallskip
Since the vectors $x_i$ are linearly independent (because $(x_1,\dots ,x_N)$ belongs to $ \h_{\bf f}$) and belong to 
${\rm span}\,[\widetilde z_1,\dots ,\widetilde z_N]={\rm span}\,[z_1,\dots ,z_N]$, they form a basis of 
${\rm span}\,[z_1,\dots ,z_N]$. So we may write each vector $x_{i}$ as
\[x_i=\sum_{j=1}^N c_{i,j}\, z_j,
\]
where the matrix $(c_{i,j})_{1\le i,j\le N}$ is invertible.
Now, let $V_1,\dots ,V_N$ be non-empty open sets in $\h$, and define
\[\mathcal V:=\Bigl\{ (y_1,\dots ,y_N)\in\h^{N};\; \sum_{j=1}^N c_{i,j}\, y_j\in V_i\quad\hbox{for $i=1,\dots ,N$}\Bigr\}.
\]
Since the matrix $(c_{ij})_{1\le i,j\le N}$ is invertible, this is a \emph{non-empty} open subset of $\h^{N}$. As $(z_1,\dots ,z_N)\in \textrm{HC}(T_{N})$, 
one can find an integer $n$ such that $(T^nz_1,\dots ,T^nz_N)\in\mathcal V$, which means that $(T^nx_1,\dots ,T^nx_N)\in V_1\times\cdots\times V_N$ 
by the definition of $\mathcal V$. This shows that $(x_1,\dots ,x_N)$ belongs to $ \textrm{HC}(T_{N})$.
\end{proof}

From Fact \ref{W1}, we now deduce:

\begin{fact}\label{W2} Let $M>1$. For any finite family  $\mathbf f=(f_1,\dots ,f_N)$ of linearly independent vectors in $\h$, the set of all $T\in\mathfrak B_M(\h)$ such that 
$(f_1,\dots ,f_N)$ belongs to $ \textrm{HC}(T_{N})$ is comeager in {$(\bmh,\sote)$}. 
\end{fact}

\begin{proof}[Proof of Fact \ref{W2}] Let us consider the set 
\[\mathcal G:=\Bigl\{ (x_1,\dots ,x_N, T)\in \h_{\bf f}\times\bmh;\; (x_1,\dots ,x_N)\in \textrm{HC}(T_{N})\Bigr\}.
\]
This is a $G_\delta$ subset of $\h_{\bf f}\times (\bmh,\sote) $. Moreover, Fact \ref{W1} asserts that for any $T\in \textrm{TWMIX}_M(\h)$, the $T$-section 
of $\mathcal G$ is dense in $\h_{\bf f}$, and hence  comeager in $\h_{\bf f}$ since this is a $G_\delta$ set. Since $\textrm{TWMIX}_M(\h)$ is comeager 
in {$(\bmh,\sote)$}, it follows, by the Kuratowski-Ulam Theorem (see for instance \cite[Section 8.K]{Ke}), that there exists at least one (in fact, comeager many) $\mathbf x=(x_1,\dots ,x_N)\in\h_{\bf f}$ 
such that the set 
\[ \mathfrak G_{\bf x}:=\{ T\in \bmh;\; (x_1,\dots ,x_N)\in \textrm{HC}(T_{N})\}
\] 
is comeager in {$(\bmh,\sote)$}. 
\par\smallskip
Now, since $(x_1,\dots ,x_N)$ belongs to $\h_{\bf f}$, \mbox{\it i.e.} $\pss{x_i}{x_j}=\pss{f_i}{f_j}$ for every $i,j=1\dots ,N$, 
one can find a unitary operator $U:\h\to\h$ such that $Uf_i=x_i$ for every $i=1,\dots ,N$. Then the map $T\mapsto U^{-1}TU$ 
maps $\bmh$ bijectively onto itself because $U$ is unitary, and is a homeomorphism with respect to the topology 
$\texttt{SOT}^*$. Therefore, the set $\mathfrak G_{\bf f}:=U^{-1}\mathfrak G_{\bf x}U$ is comeager in {$(\bmh,\sote)$}. Since by definition
$(f_1,\dots ,f_N)$ belongs to $ \textrm{HC}(T_{N})$ if $T$ belongs to $ \mathfrak G_{\mathbf f}$,  this concludes the proof of Fact \ref{W2}.
\end{proof}

We are now ready to conclude the proof of Proposition \ref{ZW}. Let $(f_i)_{i\geq 1}$ be an algebraic basis of $Z$. By Fact \ref{W2}, the set 
\[ \mathfrak G:=\{ T\in\mathfrak B_M(\h);\; \forall N\geq 1\;:\; (f_1,\dots ,f_N)\in \textrm{HC}(T_{N})\}
\]
is comeager in {$(\bmh,\sote)$}. So it is enough to show that that if $T$ belongs to $\mathfrak G$, every non-zero vector $z\in Z$ is 
a hypercyclic vector for $T$.
Let $V$ be a non-empty open set in $\h$. Write $z$ as $z=\sum_{i=1}^N z_i f_i$, and consider the set 
\[\mathcal V_z:=\Bigl\{ (x_1,\dots ,x_N)\in\h^{N};\; \sum_{i=1}^N z_i x_i\in V\Bigr\}.
\]
Since $z\neq 0$, this is a {non-empty} open set in $\h^{N}$. As $T$ belongs to $\mathfrak G$, it follows that there exists $n\ge 1$ such that $(T^nf_1,\dots ,T^nf_N)$ belongs to $\mathcal V_z$, which means exactly that 
$T^nz$ belongs to $ V$. Hence $z$ is indeed a hypercyclic vector for $T$.
\end{proof}
\par\smallskip

\subsection{Hypercyclic operators without eigenvalues}\label{Subsection 1.b.2} 
The following result shows that operators without eigenvalues are typical.
\begin{proposition}\label{Proposition 5}
 For any $M>1$, the class \emph{$\nevmh$} is a dense $\gd $ subset of 
 \emph{$(\bmh,\sote)$}. 
\end{proposition}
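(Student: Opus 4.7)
The plan is to prove density and the $G_\delta$ property separately. For density, I will apply the self-adjoint version of Lemma~\ref{Lemma 4 bis} (in the bilateral form of Remark~\ref{remarque en plus}) using the perturbed bilateral weighted shifts $S_{A,\om}$ from Proposition~\ref{Proposition 2}, in the spirit of Corollary~\ref{Proposition 4}. Given $r\geq 0$, $A\in\mathfrak B(\h_r)$ with $\|A\|<M$, and $\varepsilon>0$, I would set $\om_j:=\varepsilon$ for $j\in\{-3r-1,\dots,r\}$ (so that $(S_{A,\om}-A)f_k=\om_{k-(2r+1)}f_{k-(2r+1)}$ and $(S_{A,\om}-A)^*f_k=\om_k f_{k+(2r+1)}$ have norm $\varepsilon$ for $|k|\leq r$) and $\om_j:=c:=\min(1,M-\|A\|)$ elsewhere. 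Decomposing $S_{A,\om}=AP_r+T_\om$ with $T_\om$ a weighted bilateral shift of norm $\sup_j\om_j=c$ gives $\|S_{A,\om}\|\leq\|A\|+c\leq M$. Since $c\leq 1$, the products $\om_{l-(2r+1)}\cdots\om_{l-p(2r+1)}$ and $\om_{(2r+1)+l}\cdots\om_{p(2r+1)+l}$ stay bounded in $p$ for every $|l|\leq r$, so both series of Remark~\ref{Remark 3} diverge and $S_{A,\om}$ has no eigenvalue.

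For the $G_\delta$ part, I will show that $\mathcal E_M:=\{T\in\bmh\,:\,T\text{ has an eigenvalue}\}$ is $\sote$-$F_\sigma$. Fix an orthonormal basis $(e_k)_{k\geq 1}$ of $\h$, and let $P_n$ denote the orthogonal projection onto $\h_n:=\textrm{span}\,[e_1,\dots,e_n]$. Define, for each $n\geq 1$,
\[
\mathcal C_n:=\bigl\{T\in\bmh\,:\,\exists\lambda\in\overline D(0,M),\ \exists x\in\h,\ \|x\|=1,\ Tx=\lambda x,\ \|P_nx\|\geq 1/2\bigr\}.
\]
Every unit eigenvector $x$ satisfies $\|P_nx\|\to 1$ as $n\to\infty$, so $\mathcal E_M=\bigcup_{n\geq 1}\mathcal C_n$; it suffices to check each $\mathcal C_n$ is $\sote$-closed. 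Let $(T_k)\subseteq\mathcal C_n$ with $T_k\xrightarrow{\sote}T$ (the topology is metrizable on $\bmh$) and data $(x_k,\lambda_k)$. Using compactness of $\overline D(0,M)$ and metrizable weak compactness of $\overline B(0,1)$ (recall $\h$ is separable), I extract a subsequence with $\lambda_k\to\lambda\in\overline D(0,M)$ and $x_k\xrightarrow{w}x\in\overline B(0,1)$. Fact~\ref{jointcontinuity} gives $T_kx_k\xrightarrow{w}Tx$ while $\lambda_kx_k\xrightarrow{w}\lambda x$, so passing to the limit in $T_kx_k=\lambda_kx_k$ yields $Tx=\lambda x$. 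Because $P_n$ has finite-dimensional range, $P_nx_k\to P_nx$ in norm, hence $\|P_nx\|=\lim\|P_nx_k\|\geq 1/2$, and in particular $x\neq 0$. Then $\tilde x:=x/\|x\|$ is a unit eigenvector of $T$ for the eigenvalue $\lambda$ with $\|P_n\tilde x\|=\|P_nx\|/\|x\|\geq\|P_nx\|\geq 1/2$ (since $\|x\|\leq 1$), so $T\in\mathcal C_n$.

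The main obstacle is what one might call the ``weak escape to zero'' phenomenon: the bare set of operators admitting an eigenvalue is not $\sote$-closed. Indeed, a suitable rank-one perturbation of the forward shift $F$ making $e_k$ an eigenvector converges $\sote$ to $F$, yet $F$ has no eigenvalue since $e_k\xrightarrow{w}0$. Slicing $\mathcal E_M$ by the quantitative thresholds $\|P_nx\|\geq 1/2$ and exhausting by $n\to\infty$ is exactly what forbids this collapse, and the argument rests crucially on the joint continuity supplied by Fact~\ref{jointcontinuity}, a feature of $\sote$ that fails for $\sot$.
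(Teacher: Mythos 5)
Your proof is correct. The density argument follows the paper's route exactly: Lemma~\ref{Lemma 4 bis} in its bilateral form (Remark~\ref{remarque en plus}), the operators $S_{A,\om}$, and Remark~\ref{Remark 3} to rule out eigenvalues, with a slightly different but equally valid choice of weight sequence. For the $G_\delta$ statement you use a genuinely different decomposition of the complement $\bmh\setminus\nevmh$. The paper exhausts $\h\setminus\{0\}$ by a countable family of closed balls $B_q$ avoiding the origin and writes $\bmh\setminus\nevmh=\bigcup_q \mathfrak M_{B_q}$, each $\mathfrak M_{B_q}$ being $F_\sigma$ as the projection, along the $K_\sigma$ factor $\C\times(B_q,w)$, of a set which is closed by Fact~\ref{jointcontinuity}. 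You instead normalize eigenvectors to the unit sphere, replace $\C$ by the compact disk $\overline D(0,M)$ (a harmless restriction since any eigenvalue of a $T\in\bmh$ lies there), and localize via the threshold $\|P_n x\|\geq 1/2$; this buys you that each slice $\mathcal C_n$ is actually $\sote$-\emph{closed}, not merely $F_\sigma$, and the extraction step is handled cleanly by metrizability of $(\bmh,\sote)$ and of the weak topology on the ball. Both approaches hinge on the same combination of Fact~\ref{jointcontinuity} and weak compactness, and both guard against the ``weak escape to zero'' obstruction you describe at the end: your quantitative localization $\|P_n x\|\geq 1/2$ plays precisely the role that the balls $B_q$ bounded away from $0$ play in the paper.
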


From this and Fact \ref{comeagerbis}, we obtain
\begin{corollary}\label{Prop5coro} For any $M>1$, a typical \op\ $T\in \emph{$(\hcmh,\sote)$}$ 
has 
no eigenvalues.
\end{corollary}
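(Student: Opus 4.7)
The plan is to prove the two assertions separately: that $\nevmh$ is $G_\delta$ in $(\bmh,\sote)$, and that it is dense in $(\bmh,\sote)$.

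\smallskip

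For density, I would invoke the bilateral version of Lemma \ref{Lemma 4 bis} stated in Remark \ref{remarque en plus}, and show that for every $r\ge 0$, every $A\in\mathfrak{B}(\h_r)$ with $\|A\|<M$, and every $\varepsilon>0$, there is an operator of the form $S_{A,\om}$ from Proposition \ref{Proposition 2} which lies in $\nevmh$ and approximates $A$ on $\h_r$ in the $\sote$ sense. The recipe is to choose $\delta\in(0,\varepsilon)$ small enough that $\|A\|^2+\delta^2\le M^2$, and define a bilateral weight sequence by $\om_j=\delta$ on the ``interaction window'' $-(3r+1)\le j\le r$ and $\om_j=1$ elsewhere. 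Three routine checks are then required: (i)~$\|S_{A,\om}\|\le M$, which follows from the block-diagonal structure of $S_{A,\om}^*S_{A,\om}$ computed exactly as in the proof of Corollary~\ref{Proposition 4}; (ii)~$\|(S_{A,\om}-A)f_k\|<\varepsilon$ and $\|(S_{A,\om}-A)^*f_k\|<\varepsilon$ for $|k|\le r$, both reducing by direct inspection to the value of $\om$ at indices inside the window; (iii)~$S_{A,\om}$ has no eigenvalue, by Remark \ref{Remark 3}, since for every $|l|\le r$ each partial product appearing in the two series of that remark contains at most one factor $\delta$ and otherwise only $1$'s, so that the series diverge.

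\smallskip

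For $G_\delta$-ness, the strategy is to write $\textrm{EV}_M(\h):=\bmh\setminus\nevmh$ as a countable union of $\sote$-closed sets. I would fix a countable norm-dense subset $D$ of the unit sphere of $\h$, denote by $B$ the closed unit ball of $\h$, and, for each $e\in D$ and each $N\ge 1$, set
\[
F_{e,N}:=\Bigl\{T\in\bmh\,;\,\exists\,(x,\lambda)\in B\times\overline{D}(0,M)\text{ with }Tx=\lambda x\text{ and }|\pss{x}{e}|\ge 1/N\Bigr\}.
\]
The identity $\textrm{EV}_M(\h)=\bigcup_{e\in D,\,N\ge 1} F_{e,N}$ is immediate: the constraint $|\pss{x}{e}|\ge 1/N>0$ forces $x\ne 0$, and any unit eigenvector of any $T\in\textrm{EV}_M(\h)$ is within distance $<1/2$ of some $e\in D$, giving $|\pss{x}{e}|>1/2$. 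The real content is that each $F_{e,N}$ is $\sote$-closed. Given a convergent net $T_i\xrightarrow{\sote}T$ with witnesses $(x_i,\lambda_i)$, weak compactness of $B$ and compactness of $\overline{D}(0,M)$ extract a subnet along which $x_i\xrightarrow{w}x\in B$ and $\lambda_i\to\lambda$; the constraint $|\pss{x_i}{e}|\ge 1/N$ passes to the limit, so $x\ne 0$; and Fact \ref{jointcontinuity} allows one to pass to the weak limit in $T_ix_i=\lambda_i x_i$, yielding $Tx=\lambda x$.

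\smallskip

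The main obstacle lies precisely in this closure argument: without the auxiliary parameter $e$, the weak limit of a sequence of unit eigenvectors could be the zero vector, trivialising the eigenvalue relation. This is also where the choice of topology is decisive: Fact \ref{jointcontinuity} is a specifically $\sote$ phenomenon, and the strict analogue of the statement would fail if one replaced $\sote$ by $\sot$.
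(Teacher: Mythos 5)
Your proposal correctly proves Proposition~\ref{Proposition 5}, namely that $\nevmh$ is a dense $G_\delta$ subset of $(\bmh,\sote)$, but the statement of Corollary~\ref{Prop5coro} concerns comeagerness in $(\hcmh,\sote)$. The missing step is the invocation of Fact~\ref{comeagerbis}: since $\hcmh$ is $\sote$-$G_\delta$ and $\sote$-comeager in $\bmh$ (Facts~\ref{Fact1} and~\ref{comeager}), the dense $G_\delta$ set $\nevmh$ meets it in a comeager subset of $(\hcmh,\sote)$ by the Baire category theorem. This reduction is a one-liner, but Fact~\ref{comeager} itself is nontrivial (it rests on Corollary~\ref{Proposition 4}), so the transition should be stated explicitly rather than left implicit.

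For the $G_\delta$-ness you take a genuinely different route from the paper's Fact~\ref{Lemma 5 bis}. The paper covers $\h\setminus\{0\}$ by countably many closed balls $B_q$ avoiding $0$, observes that $\{(T,\lambda,x): Tx=\lambda x\}$ is closed in $(\bmh,\sote)\times\C\times(B_q,w)$, and concludes that each $\mathfrak M_{B_q}$ is $F_\sigma$ by projecting along the $\sigma$-compact factor $\C\times(B_q,w)$. You instead parameterize by a countable dense subset of the unit sphere together with a threshold $|\pss{x}{e}|\ge 1/N$ to forbid the weak limit of eigenvectors from collapsing to $0$, and you restrict $\lambda$ to $\overline{D}(0,M)$ (legitimate, since any eigenvalue satisfies $|\lambda|\le\|T\|\le M$), which turns each $F_{e,N}$ into an honest $\sote$-closed set and bypasses the $\sigma$-compactness/projection step. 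Both arguments hinge on the same two phenomena: joint weak-$\sote$ continuity (Fact~\ref{jointcontinuity}, where $\sote$ is essential) and a mechanism preventing degeneration of the limit eigenvector. Your density argument, in contrast, is essentially the paper's proof of Fact~\ref{Lemma 5 ter}: same operators $S_{A,\om}$, same localization of $\om=\delta$ to a window (and $[-3r-1,r]$ is indeed the minimal window covering the two ranges $[-3r-1,-r-1]$ and $[-r,r]$ that govern the $\sote$-approximation of $A$), same appeal to Remark~\ref{Remark 3}, with the explicit choice $\omega_j=1$ outside the window making both divergence criteria immediate. One small imprecision: the norm bound does not come from a genuine block-diagonal structure of $S_{A,\om}^*S_{A,\om}$ -- the $\h_r$-component of $S_{A,\om}x$ picks up a contribution from coordinates of $x$ outside $\h_r$; what saves the estimate, exactly as in Corollary~\ref{Proposition 4}, is that this cross-term carries a factor of $\delta$ and so vanishes as $\delta\to 0$.
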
 

As a matter of fact, Proposition \ref{Proposition 5} is 
already proved  in \cite{EM}, where typical 
properties of {contraction operators} are studied for various 
topologies (see also \cite{E}).  
However, since the proof is not that complicated, 
and in order to keep the paper as self-contained as possible, we outline 
it below. 

\begin{proof}[Proof of Proposition \ref{Proposition 5}] We divide the 
proof into two steps. In what follows, we fix $M>0$.

\begin{fact}\label{Lemma 5 bis} The set
$\nevmh$ is a $\gd$ subset 
of 
$(\bmh,\sote)$.
\end{fact}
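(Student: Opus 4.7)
The plan is to show that the complement $\text{EV}_M := \bmh \setminus \nevmh$, the set of operators in $\bmh$ having at least one eigenvalue, is $F_\sigma$ in $(\bmh, \sote)$.

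First I would set up a countable parametrization of eigenvectors. Fix an orthonormal basis $(e_k)_{k \ge 1}$ of $\h$, and set $B := \{x \in \h : \|x\| \le 1\}$. Any unit eigenvector $x_0$ of $T$ has some non-zero coefficient $c := \pss{x_0}{e_k}$ (since $x_0 \neq 0$), so the rescaled vector $x := x_0/c$ still satisfies $Tx \in \C x$, $\pss{x}{e_k} = 1$ and $\|x\| = 1/|c|$. This gives the reformulation
\[
\text{EV}_M = \bigcup_{k, n \ge 1} \pi(Z_{k,n}), \quad Z_{k,n} := \bigl\{(T,x) \in \bmh \times nB \,:\, \pss{x}{e_k} = 1,\ x \text{ and } Tx \text{ linearly dependent}\bigr\},
\]
where $\pi: \bmh \times nB \to \bmh$ is the canonical projection. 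The reverse inclusion is immediate since $\pss{x}{e_k} = 1$ forces $x \neq 0$, so the linear dependence of $x$ and $Tx$ yields an eigenvector.

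Next I would endow $nB$ with its weak topology, which is compact and metrizable by Banach--Alaoglu and separability of $\h$, and prove that each $Z_{k,n}$ is closed in $(\bmh,\sote) \times (nB, w)$. The hyperplane condition $\pss{x}{e_k} = 1$ is weakly closed. The linear-dependence condition is encoded by the vanishing of the Gram determinant
\[
F(T,x) := \|Tx\|^2\|x\|^2 - |\pss{Tx}{x}|^2 = \sum_{j < l} |\psi_{j,l}(T,x)|^2, \quad \psi_{j,l}(T,x) := \pss{x}{e_j}\pss{Tx}{e_l} - \pss{x}{e_l}\pss{Tx}{e_j}.
\]
By Fact~\ref{jointcontinuity}, $(T,x) \mapsto Tx$ is jointly continuous from $(\bmh,\sote) \times (nB, w)$ into $(\h, w)$, so each coordinate $\pss{Tx}{e_l}$ is jointly continuous; combined with the weak continuity of $\pss{x}{e_j}$, this makes each $\psi_{j,l}$ jointly continuous. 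Hence $F$ is the supremum of its continuous partial sums, thus jointly lower semi-continuous, and $\{F = 0\}$ is closed. So $Z_{k,n}$ is closed.

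Finally, since $nB$ is weakly compact, the projection $\pi$ is a closed map, and each $\pi(Z_{k,n})$ is closed in $(\bmh,\sote)$; therefore $\text{EV}_M$ is $F_\sigma$ as a countable union. The main delicate point is arranging compactness of the search space: a naive lower norm bound ``$\|x\| \ge 1/n$'' fails, because $\{\|x\| \ge 1/n\}$ is not weakly closed in $B$ (for instance $e_k/n \to 0$ weakly). The affine-hyperplane normalization $\pss{x}{e_k} = 1$ coupled with the norm cap $\|x\| \le n$ is precisely the trick that turns the open condition ``$x \neq 0$'' into a weakly closed one while still capturing every eigenvector up to rescaling.
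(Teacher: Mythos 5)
Your argument is correct, and it differs from the paper's proof in two genuine ways. First, the paper keeps the eigenvalue $\lambda$ as an explicit coordinate: it introduces $\mathcal F_B=\{(T,\lambda,x)\in\bmh\times\C\times B\,;\,Tx=\lambda x\}$, observes that this set is closed in $(\bmh,\sote)\times\C\times(B,w)$ by Fact~\ref{jointcontinuity}, and then projects along the $K_\sigma$ factor $\C\times(B,w)$ to get an $F_\sigma$ set. You instead eliminate $\lambda$ entirely by encoding the linear dependence of $x$ and $Tx$ through the Gram expression $F(T,x)=\|x\|^2\|Tx\|^2-|\pss{Tx}{x}|^2$ and the Lagrange identity, which (being a supremum of non-negative continuous partial sums) is lower semicontinuous, so that $\{F=0\}=\{F\le 0\}$ is closed. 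This buys you closed sets $\pi(Z_{k,n})$ rather than just $F_\sigma$ sets at the intermediate stage, since you are projecting along a genuinely compact factor. Second, the paper handles the ``$x\neq 0$'' problem by covering $\h\setminus\{0\}$ with a countable family of closed balls $B_q$ \emph{not containing the origin} (each weakly compact and avoiding $0$), whereas you achieve the same effect by fixing an orthonormal basis and normalizing on the affine slice $\pss{x}{e_k}=1$ inside the ball $nB$ --- your closing remark about the failure of the naive condition $\|x\|\ge 1/n$ is exactly the point the paper's choice of off-center balls is also designed to sidestep. Both routes rest on the same two pillars (Fact~\ref{jointcontinuity} and weak compactness of bounded, weakly closed sets), so they are equally robust; yours is marginally more self-contained in that it never needs the $\sigma$-compactness of $\C$.
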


\begin{proof}[Proof of Fact \ref{Lemma 5 bis}]
To any closed ball $B\subseteq\h$, we associate the following
subset of $\bmh$:
\[
\mathfrak M_B=\bigl\{T\in\bmh\,;\,\exists\,\lambda \in\C,\ \exists\,x\in 
B\ \textrm{with}\ Tx=\lambda x\bigr\}.
\]
Let us show that this set $\mathfrak M_B$ is $F_{\sigma }$ in $(\bmh,\sote)$. To 
this aim, we endow $B$ with the weak topology, and introduce the set
\[
\ffb=\bigl\{(T,\lambda ,x)\in\bmh\times \C\times B\,;\,Tx=\lambda x\bigr\}.
\]
Then $\mathfrak M_B$ is the projection of $\ffb$ on the first coordinate. Moreover, the set 
$\mathcal F_B$ is closed in $(\bmh,\sote)\times\C\times (B,w)$ by Fact \ref{jointcontinuity}. Since the space 
$\C\times (B,w)$ is $K_\sigma$ (because $(B,w)$ is compact), it follows that $\mathfrak M_B$ is $F_\sigma$.
\par\smallskip
Let now $(B_{q})_{q\ge 1}$ be a sequence of closed balls of $\h$ not containing the point $0$, 
such that  
$\bigcup_{q\ge 1}B_{q}=H\setminus\{0\}$. Then 
$\bmh\setminus \nevmh=\bigcup_{q\ge 1}\mathfrak{M}_{B_{q}}$ is an $F_{\sigma }$ 
set, so that $\nevmh$ is a $\gd$ set in $(\bmh,\sote)$. 
\end{proof}

\begin{fact}\label{Lemma 5 ter} The set
$\nevmh$ is dense in $(\bmh,\sote)$.
\end{fact}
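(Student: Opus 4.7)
The plan is to apply the bilateral version of the density criterion of Lemma \ref{Lemma 4 bis} (see Remark \ref{remarque en plus}), by producing, for any finite-rank operator to be approximated, a bilateral weighted shift perturbation $S_{A,\pmb{\omega}}$ of the kind introduced just before Proposition \ref{Proposition 2}, with weights tuned so as to rule out eigenvalues via Remark \ref{Remark 3}.

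Fix an orthonormal basis $(f_k)_{k\in\Z}$ of $\h$, set $\h_r:=\vect[f_k\,;\,|k|\le r]$, and let $r\ge 0$, $A\in\mathfrak{B}(\h_r)$ with $\|A\|<M$, and $\varepsilon>0$ be given. Choose $\delta$ with $0<\delta\le\min(1,\,\varepsilon,\,M-\|A\|)$, and take the constant weight sequence $\pmb{\omega}=(\omega_k)_{k\in\Z}$ defined by $\omega_k=\delta$. Consider the associated operator $T:=S_{A,\pmb{\omega}}\in\bh$. Writing $T=P_r^{*}AP_r+\delta W$, where $W$ denotes the bilateral left shift by $2r+1$ (an isometry of $\h$), one immediately gets $\|T\|\le\|A\|+\delta\le M$.

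A direct inspection of the defining formula for $S_{A,\pmb{\omega}}$ gives, for $|k|\le r$, that $Tf_k=Af_k+\delta f_{k-(2r+1)}$; hence $\|(T-A)f_k\|=\delta<\varepsilon$. Computing the adjoint yields $T^{*}f_k=A^{*}f_k+\delta f_{k+(2r+1)}$ for $|k|\le r$, so that $\|(T-A)^{*}f_k\|=\delta<\varepsilon$ as well. This is exactly the $\sote$-approximation required in condition $(**)$ of Remark \ref{remarque en plus}. Finally, since $\omega_k=\delta\le 1$ for every $k$, one has, for each $|l|\le r$,
\[\sum_{p\ge 1}\bigl(\omega_{p(2r+1)+l}\cdots\omega_{(2r+1)+l}\bigr)^{-2}=\sum_{p\ge 1}\delta^{-2p}=+\infty,\]
and the corresponding ``negative'' series diverges by the same computation. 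Remark \ref{Remark 3} then ensures that $T$ has no eigenvalue, that is, $T\in\nevmh$.

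There is no real obstacle here: the construction is explicit and the estimates are elementary. The only points requiring care are (i) choosing $\delta$ small enough to beat $\varepsilon$, to stay below $M-\|A\|$, and to lie in $(0,1]$ simultaneously, and (ii) correctly computing the ``off-diagonal'' shift contribution to both $T$ and $T^{*}$, which is precisely where the $\sote$ topology (rather than the coarser $\sot$) comes into play.
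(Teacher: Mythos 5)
Your proof is correct and follows essentially the same route as the paper's: both invoke the bilateral version of the density criterion (Remark \ref{remarque en plus}), approximate $A$ by a perturbed bilateral weighted shift $S_{A,\pmb{\omega}}$, and use Remark \ref{Remark 3} (divergence of the two associated series) to rule out eigenvalues. The only cosmetic difference is that you fix the constant weight $\omega_k\equiv\delta$ everywhere, while the paper only pins $\omega_k=\delta$ on the finitely many indices needed for the $\sote$-estimate and then freely tunes the remaining weights to force divergence; your choice satisfies both requirements simultaneously and is, if anything, a touch cleaner.
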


\begin{proof}[Proof of Fact \ref{Lemma 5 ter}]
Let $(f_{k})_{k\in\Z}$ be an 
orthonormal basis of $\h$. For each $r\ge 
0$, we set $\h_{r}:=\textrm{span}
[f_{k},\,;\,|k|\le r]$ and we denote by $P_{r} $ the orthogonal projection of $\h$ onto $\h_{r}$. By 
Lemma \ref{Lemma 4 bis} and Remark \ref{remarque en plus}, it suffices to show that for every 
$r\ge 0$, every $A\in\mathfrak{B}(\h_{r})$, with $\|A\|<M$, and every 
$\varepsilon >0$, there exists an operator $T\in\nevmh$ such that 
\[
\|(T-A)f_{k}\|<\varepsilon\quad{\rm and}\quad \|(T-A)^{*}f_{k}\| 
<\varepsilon\qquad\hbox{for every $k=-r,\dots ,r$.}
\]
Let $\pmb{\omega} =(\omega_{k})_{k\in\Z}$ be a bilateral weight sequence with $0<\omega _{k}\le M$ for every 
 $k\in\Z$ and $\omega _{k}=\delta $ for every index $k$ with
 $|k+r|\le 2r+1$. As in the proof of Corollary \ref{Proposition 4}, one 
easily checks that if $\delta >0$ is sufficiently small, the bilateral weighted shift  operator 
$S_{A,\pmb{\omega} }$ (defined with respect to the basis 
$(f_{k})_{k\in\Z}$) satisfies 
\[
\|(S_{A,\pmb{\omega}}-A)f_{k}\|<\varepsilon\quad{\rm and}\quad 
\|(S_{A,\pmb{\omega} }-A)^{*}f_{k}\| <\varepsilon
\qquad\hbox{for every $k=-r,\dots ,r$.}
\]
Moreover, if the weight sequence
$\pmb{\omega} $ is chosen in such a way that the series
\[
\sum_{p\ge 1}\,\bigl(\omega _{p(2r+1)+l}\dots\omega _{(2r+1)+l}
\bigr)^{-2}\quad \textrm{and}\quad
\sum_{p\ge 1}\,\bigl(\omega _{l-p(2r+1)}\dots\omega _{l-(2r+1)}
\bigr)^{-2}
\]
are divergent for every $|l|\le r$, then $S_{A,\pmb{\omega} }$ has no eigenvalue by 
Remark \ref{Remark 3}. So  $T:=S_{A,\pmb{\omega} }$ satisfies the required assumptions for a suitable choice of the weight $\pmb\omega$.
\end{proof}

The two facts above complete the proof of
Proposition \ref{Proposition 5}.
\end{proof}

\begin{remark}\label{Remark 6}
 The fact that we are using the topology $\sote$ is crucial in the proof of Fact  
\ref{Lemma 
5 bis} in order to obtain that the sets $\ffb$ above are closed in 
$(\bmh,\sote)\times\C\times (B,w)$. The situation turns out to be completely different if one considers the topology 
$\texttt{SOT}$ instead of $\texttt{SOT}^*$. Indeed, it is proved in 
\cite{EM} that an $\texttt{SOT}$-typical 
$T\in\mathfrak{B}_{1}(\h)$ has the property that \emph{every $\lambda 
\in\C$ with $|\lambda |<1$ is an eigenvalue of $T$}. More precisely, a 
typical $T\in\mathfrak{B}_{1}(\h)$ is unitarily equivalent to the 
infinite-dimensional backward unilateral shift operator. So a typical $T\in(\bmh,\sot)$ has the whole disk $D(0,M)$ 
within the set of its eigenvalues.
\end{remark}

\begin{remark}\label{Remark 6 bis}
 The proof of Corollary
\ref{Prop5coro} is a good example of the usefulness of Fact \ref{comeagerbis} for simplifying arguments of 
this kind: although it is easy to construct the weight sequence $\pmb{\omega} $ above in such a 
way that the operator $S_{A,\pmb{\omega} }$ is $\texttt{SOT}^*$-close to $A$ and has no eigenvalue, it is technically much less obvious to 
ensure that $S_{A,\pmb{\omega} }$ is additionally hypercyclic. In other words, it would be less easy to prove Corollary 
\ref{Prop5coro} directly, without using Fact \ref{comeagerbis}.
\end{remark}

One of the main consequences of Corollary \ref{Prop5coro} is the 
following result concerning 
chaotic operators.

\begin{corollary}\label{Corollary 7}
 For every $M>1$, \emph{$\cmh$} is meager in \emph{$(\hcmh,\sote)$}. In other words, a typical hypercyclic operator on 
 $\h$ is not chaotic.
\end{corollary}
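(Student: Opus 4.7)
The plan is very short, because the work has essentially been done already: the statement reduces immediately to Corollary \ref{Prop5coro}.

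The key observation is that every chaotic operator automatically has eigenvalues. Indeed, by definition a chaotic operator $T$ admits a dense set of periodic points, so in particular there exists a non-zero vector $x\in\h$ and an integer $n\ge 1$ such that $T^n x=x$. This means that $\ker(T^n-I)\neq \{0\}$. Factoring $T^n-I=\prod_{k=0}^{n-1}(T-\omega^k I)$ where $\omega=e^{2i\pi/n}$, at least one of the factors $T-\omega^k I$ must have a non-trivial kernel, so $T$ has an $n$-th root of unity as an eigenvalue. Hence
\[\cmh\subseteq \hcmh\setminus \nevmh.\]

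By Corollary \ref{Prop5coro}, the set $\nevmh\cap\hcmh$ is comeager in $(\hcmh,\sote)$, so its complement $\hcmh\setminus\nevmh$ in $\hcmh$ is meager in $(\hcmh,\sote)$. Therefore $\cmh$ is meager in $(\hcmh,\sote)$, which is exactly the claim. No obstacle is expected since everything reduces to the already-established typicality of operators without eigenvalues, combined with the trivial fact that periodic points produce eigenvectors.
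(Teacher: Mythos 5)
Your proof is correct and is essentially the paper's argument: the paper also dismisses this corollary as immediate from Corollary \ref{Prop5coro}, remarking that chaotic operators on a complex Hilbert space have plenty of eigenvalues since periodic points are linear combinations of eigenvectors with root-of-unity eigenvalues. Your factorization of $T^n-I$ just spells out that step in detail.
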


This is indeed obvious since chaotic operators have plenty of eigenvalues. Note that we are using here the fact that $\h$ is a \emph{complex} 
Hilbert space, so that periodic points are linear combinations of eigenvectors whose associated eigenvalues are roots of unity. Nonetheless, Corollary~\ref{Corollary 7}
holds true on real Hilbert spaces as well; see Remark~\ref{periodicmeager} below.
\par\smallskip

\subsection{Hypercyclic operators without invariant measures}\label{Subsection 1.b.3} 
It follows easily from Proposition \ref{Proposition 5} that for any $M>1$, the operators in $\hcmh$ admitting a non-trivial invariant measure with a 
second-order moment form a meager class in $(\hcmh,\sote)$. Indeed, if 
$T\in\hcmh$ admits an invariant measure $\mu\neq\delta_0$ such that 
$\int_{H}\|x\|^{2}\,
d\mu (x)<\infty $, then the Gaussian measure $m$ whose covariance operator is 
given by the formula
\[
\pss{Rx}{y}=\int_{H}\pss{x}{z}\,\ba{\pss{y}{z}}\,d\mu (z),\quad
x,y\in H
\]
is $T$-invariant. Its support is the closed linear span of the support 
of 
$\mu $, and hence is non-trivial. This closed subspace is spanned by 
unimodular eigenvectors of $T$ (see \cite{BG2} or \cite{BM} for 
details), 
from which it follows that $T$ does not belong to $\nevmh$. 
\par\smallskip
The main result of this subsection is that the $\texttt{SOT}^*$-typical 
operator $T\in\hcmh$ actually admits no non-trivial invariant measure at all.

\begin{theorem}\label{Theorem 9}
 For every $M>1$, the set \emph{$\hcmh\setminus\invh$} is comeager in the 
space \emph{$(\hcmh,\sote)$}.
\end{theorem}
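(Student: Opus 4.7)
By Fact~\ref{comeagerbis} it suffices to show that $\invh\cap\bmh$ is meager in $(\bmh,\sote)$. The strategy is in two stages: first use Birkhoff's pointwise ergodic theorem to reduce the existence of a non-trivial invariant measure to the existence of a \emph{single} orbit that visits some fixed set bounded away from $0$ with positive upper density, then prove that this recurrence condition defines a meager set via a uniform-in-$x$ strengthening of the escape statement of Lemma~\ref{Lemma 11}.

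Fix a countable cover $(B_q)_{q\ge 1}$ of $\h\setminus\{0\}$ by closed balls avoiding $0$. If $T\in\bmh$ admits a non-trivial invariant probability measure $\mu$, then $\mu(B_q)>0$ for some $q$; the Birkhoff theorem applied to $\mathbf{1}_{B_q}$ shows that on a set of positive $\mu$-measure the averages $\frac1N\sum_{n=0}^{N-1}\mathbf{1}_{B_q}(T^nx)$ converge to a limit $\ge\mu(B_q)$, and tightness of $\mu$ places such an $x$ inside some closed ball $\overline{B}(0,R)$. Choosing $k\ge 1$ with $1/k\le\mu(B_q)$ yields
\[
\invh\cap\bmh\;\subseteq\;\bigcup_{k,q,R\ge 1}\mathfrak{A}_{k,q,R},\qquad
\mathfrak{A}_{k,q,R}:=\bigl\{T\in\bmh\,:\,\exists\,x\in\overline{B}(0,R),\ \udens\{n:T^nx\in B_q\}\ge 1/k\bigr\}.
\]
Since this is a countable union, it is enough to prove that each $\mathfrak{A}_{k,q,R}$ is meager.

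Fix $(k,q,R)$ and write $B:=B_q$, $B':=\overline{B}(0,R)$. For $N_0\ge 2k$ and $N\ge N_0$, set
\[
\mathfrak{G}_{N_0,N}:=\bigl\{T\in\bmh\,:\,\exists\,x\in B',\ \#\{0\le n<N:T^nx\in B\}\ge N/(2k)\bigr\}.
\]
Iterating Fact~\ref{jointcontinuity} gives joint weak-continuity of $(T,x)\mapsto T^nx$ from $(\bmh,\sote)\times(B',w)$ into $(\h,w)$ for each fixed $n$; since $B$ is weakly closed, the condition $\#\{0\le n<N:T^nx\in B\}\ge N/(2k)$ defines a closed subset of $(\bmh,\sote)\times(B',w)$, and weak compactness of $B'$ (via Kuratowski's projection theorem) makes $\mathfrak{G}_{N_0,N}$ closed in $(\bmh,\sote)$. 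A direct unwinding of the $\limsup$ defining $\udens$ shows that $\mathfrak{A}_{k,q,R}\subseteq\bigcap_{N_0\ge 2k}\bigcup_{N\ge N_0}\mathfrak{G}_{N_0,N}$; consequently $\mathfrak{A}_{k,q,R}$ will be meager as soon as we find some $N_0$ for which every $\mathfrak{G}_{N_0,N}$, $N\ge N_0$, has empty interior in $(\bmh,\sote)$.

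This last step is the technical heart of the argument: one must show that the open set
\[
\mathfrak{E}_{N_0,N}:=\bigl\{T\in\bmh\,:\,\forall\,x\in B',\ \#\{0\le n<N:T^nx\in B\}<N/(2k)\bigr\}
\]
is $\sote$-dense. This is a uniform-in-$x$ strengthening of the escape property that underlies Lemma~\ref{Lemma 11}, and should be obtained by the same method: given $T_0\in\bmh$ and a basic $\sote$-neighbourhood, one approximates $T_0$ by a weighted-shift operator of the type built in Section~\ref{Subsection 1.b.1}, matching $T_0$ on a finite-dimensional subspace large enough to absorb the constraints of the neighbourhood, while on its complement choosing rapidly expanding weights which push every vector into the far tail and hence drive the first $N$ iterates of every $x\in B'$ out of $B$ for at least a proportion $1-1/(2k)$ of the times. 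The main obstacle is precisely the word ``every'': in Proposition~\ref{Proposition 13} only a single starting vector needs to escape, whereas here the escape must occur simultaneously for the entire bounded set $B'$. The way around this is that weak compactness of $B'$ (combined once more with Fact~\ref{jointcontinuity}) reduces the uniform statement to a finite $\varepsilon$-net of test vectors, after which the explicit constructions of Propositions~\ref{Proposition 1} and~\ref{Proposition 2} impose only finitely many constraints and can be arranged to produce the desired uniform escape.
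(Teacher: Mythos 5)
There is a genuine gap, and it is structural rather than merely technical: the sets $\mathfrak G_{N_0,N}$ you introduce \emph{cannot} have empty interior in $(\bmh,\sote)$, so the plan of exhibiting $\mathfrak A_{k,q,R}$ inside a countable union of closed nowhere dense sets fails before the density argument even begins. Indeed, suppose $R$ is large enough that $B_q\cap\overline B(0,R)$ has nonempty interior (which is forced on you since the decomposition must range over all $R$), and pick $x_0$ in this interior with $\eta>0$ such that $\overline B(x_0,\eta)\subseteq B_q$. For any $T\in\bmh$ with $\Vert(T-I)x_0\Vert<\eta(M-1)/M^N$ one has, by a straightforward induction, $\Vert T^nx_0-x_0\Vert\le\frac{M^n-1}{M-1}\Vert(T-I)x_0\Vert<\eta$ for every $0\le n<N$; hence $T^nx_0\in B_q$ for all $0\le n<N$ while $x_0\in\overline B(0,R)$, so $T\in\mathfrak G_{N_0,N}$. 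The condition $\Vert(T-I)x_0\Vert<\eta(M-1)/M^N$ is $\sot$-open, hence $\sote$-open, so $\mathfrak G_{N_0,N}$ contains an $\sote$-open neighbourhood of the identity for \emph{every} $N_0$ and $N$. Consequently $\mathfrak E_{N_0,N}$ is never dense, and no amount of weighted-shift engineering or weak-compactness net reduction can repair this: the statement ``some $x\in B'$ stays in $B_q$ for the first $N$ iterates'' is an \emph{open} condition on $T$, so operators with a fixed point (or an almost-periodic point) inside $B_q\cap B'$ stubbornly carry neighbourhoods of recurrence.

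The paper's proof sidesteps the Birkhoff reduction entirely and works with a different escape condition that \emph{is} generic. For a closed ball $B$ avoiding $0$, it considers the sets $\mathfrak O_{n,B}$ of operators for which $B$ admits $n$ pairwise disjoint forward images $T^{p_1}(B),\dots,T^{p_n}(B)$ (Lemma~\ref{Lemma 10}): these are open in $(\bmh,\sote)$ because one quantifies existentially over the times $p_i$ rather than universally over the starting points, and Lemma~\ref{Lemma 11} shows they are dense. If $T\in\bigcap_n\mathfrak O_{n,B}$ and $m$ is any $T$-invariant probability measure, then $m(B)\le m(T^{p_i}(B))$ for each $i$ (by invariance and $B\subseteq T^{-p_i}(T^{p_i}(B))$), and since the images are disjoint, $n\,m(B)\le 1$ for all $n$, so $m(B)=0$; running over a countable family of such balls covering $B(0,2)\setminus\{0\}$ and then rescaling yields $m=\delta_0$. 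Note the crucial asymmetry with your approach: the disjoint-iterates property constrains the forward dynamics of a ball $B$ that stays away from $0$, whereas the Birkhoff route forces you to control the orbits of \emph{all} starting points in a ball $B'$ containing $0$, and the latter is the wrong kind of statement to be dense.
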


The proof of Theorem \ref{Theorem 9} relies on the next two lemmas.

\begin{lemma}\label{Lemma 10} Let $B$ be a closed ball of $\h$ not containing the point $0$. 
For any integer $n\geq 1$, the set
\begin{align*}
 {\mathfrak O} _{n,B}=\bigl\{T\in\bmh\,;\,\textrm{there}\ &\textrm{exist}\ n\ 
\textrm{distinct integers}\ p_{1},\dots,p_{n}\ \textrm{such that}\\
&T^{p_{i}}(B)\cap T^{p_{j}}(B)=\emptyset\ \textrm{for every}\ 
i\neq j,\ 1\le i,j\le n\bigr\}
\end{align*}
is open in \emph{$(\bmh,\sote)$}. Consequently, the set 
\begin{align*}
  \mathfrak G_{B}=\bigl\{T\in\bmh\,;\,\textrm{for every}&\ \gn,\ \textrm{there 
exist}\ n\ \textrm{iterates of}\ B\\
&\textrm{under the action of}\ T\ \textrm{which are pairwise 
disjoint}\bigr\}
\end{align*}
is $\gd$ in \emph{$(\bmh,\sote)$}.
\end{lemma}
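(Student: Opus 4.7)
The plan is to reduce everything to Fact \ref{jointcontinuity} together with the weak compactness of the ball $B$. The key preliminary observation is that for every fixed integer $k\ge 0$, the map $(T,x)\mapsto T^{k}x$ is continuous from $(\bmh,\sote)\times(B,w)$ into $(\h,w)$. This will be proved by a short induction on $k$: for $k=0,1$ it is trivial or given by Fact \ref{jointcontinuity}, and the induction step uses that if $x_i\in B$ then $T_i x_i$ stays in the bounded (hence weakly relatively compact) set $M\cdot B$, so applying Fact \ref{jointcontinuity} once more to the bounded set $M^{k-1}B$ gives continuity for $k$.

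Given this, fix distinct integers $p,q\in\Z_{\ge 0}$ and consider
\[
\widetilde{\mathfrak C}_{p,q}:=\bigl\{(T,x,y)\in\bmh\times B\times B\,;\,T^{p}x=T^{q}y\bigr\}.
\]
By the preliminary observation and the fact that $(\h,w)$ is Hausdorff, $\widetilde{\mathfrak C}_{p,q}$ is closed in $(\bmh,\sote)\times(B,w)\times(B,w)$. Since $B$ is a closed ball in $\h$, it is weakly compact, so $(B,w)\times(B,w)$ is compact. The projection of a closed set along a compact fiber is closed, hence
\[
\mathfrak C_{p,q}:=\bigl\{T\in\bmh\,;\,T^{p}(B)\cap T^{q}(B)\neq\emptyset\bigr\}
\]
is $\sote$-closed in $\bmh$.

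From this the openness of $\mathfrak O_{n,B}$ follows easily. Indeed, for any $n$-tuple $(p_{1},\dots,p_{n})$ of distinct non-negative integers, the set
\[
\mathfrak O_{n,B;\,p_{1},\dots,p_{n}}:=\bmh\setminus\bigcup_{1\le i<j\le n}\mathfrak C_{p_{i},p_{j}}
\]
is open as the complement of a \emph{finite} union of closed sets, and $\mathfrak O_{n,B}$ is simply the union of these sets over all admissible tuples $(p_{1},\dots,p_{n})$, hence open. Finally, $\mathfrak G_{B}=\bigcap_{n\ge 1}\mathfrak O_{n,B}$ is a countable intersection of open sets, so it is $\gd$ in $(\bmh,\sote)$.

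The only mildly delicate point in all this is the joint continuity of $(T,x)\mapsto T^{k}x$ on $(\bmh,\sote)\times(B,w)$; the use of $\sote$ rather than $\sot$ is crucial there, as noted in the discussion of Fact \ref{jointcontinuity}. Everything else is a purely topological argument using weak compactness of $B$ to eliminate the auxiliary variables $x,y$.
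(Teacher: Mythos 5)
Your proof is correct and takes essentially the same route as the paper: reduce openness of $\mathfrak O_{n,B}$ to the $\sote$-closedness of each set $\{T:\,T^{p}(B)\cap T^{q}(B)\neq\emptyset\}$, establish the latter via joint continuity of $(T,x)\mapsto T^{k}x$ on $(\bmh,\sote)\times(B,w)$ and weak compactness of $B\times B$, and then take a countable intersection for the $\gd$ statement. The only cosmetic difference is how the joint continuity of $(T,x)\mapsto T^{k}x$ is obtained: you run an induction on $k$ using Fact \ref{jointcontinuity} applied to the bounded set $M^{k-1}B$, whereas the paper instead combines Fact \ref{power} (the $\sote$-to-$\sote$ continuity of $T\mapsto T^{k}$) with a single application of Fact \ref{jointcontinuity}; both work and amount to the same thing.
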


\begin{proof}[Proof of Lemma \ref{Lemma 10}] The second part of the lemma follows immediately from the first, since 
$ \mathfrak G_{B}= \bigcap_{\gn}{\mathfrak O} _{n,B}$.
To derive the first part, it is enough to show that if we fix $p,q\ge 1$, then the set 
\[{\mathfrak O}:=\{ T\in\bmh;\; T^p(B)\cap T^q(B)=\emptyset\}
\]
is $\texttt{SOT}^*$-open in $\bmh$. 
If $T\in\bmh$, we may write
\[
T\in\mathfrak O\iff \forall x,y\in B\; :\;T^p x\neq T^qy.
\]
Since the map $(T,u)\mapsto{T^nu}$ is continuous on $(\bmh, \texttt{SOT}^*)\times (B,w)$ for any $n\ge 1$ by 
Facts \ref{power} and \ref{jointcontinuity}, 
the condition ``$T^px\neq T^qy$" defines an open subset of $(\bmh,\texttt{SOT}^*)\times (B,w)\times (B,w)$. 
Since $B$ is weakly compact, it follows that 
$\mathfrak O$ is indeed 
$\texttt{SOT}^*$-open in $\bmh$, its complement being the projection of a closed 
subset of $\bmh\times B\times B$ along the compact factor $B\times B$.
\end{proof}

\par\smallskip
\begin{lemma}\label{Lemma 11} Let $e\in\h$ with $\Vert e\Vert=1$, and let $0<\rho <1$. Denote by $B$ the closed ball
 $\ba{B}(e,\rho )$. Let also $M>1$. For any $n\geq 1$, the open set ${\mathfrak O} _{n,B}$ is dense in \emph{$(\bmh,\sote)$}.
\end{lemma}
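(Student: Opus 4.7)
The plan is to apply Lemma \ref{Lemma 4 bis} in its bilateral form (Remark \ref{remarque en plus}), with an orthonormal basis $(f_k)_{k\in\Z}$ of $\h$ chosen so that $f_0 = e$. It then suffices to show that for every $r\geq 0$, every $A\in\mathfrak{B}(\h_r)$ with $\|A\|<M$, and every $\varepsilon>0$, there is some $T\in\mathfrak{O}_{n,B}\cap\bmh$ satisfying $\|(T-A)f_k\|<\varepsilon$ and $\|(T-A)^*f_k\|<\varepsilon$ for all $|k|\leq r$.

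I would construct $T$ by choosing a small parameter $\delta\in(0,\varepsilon)$ with $\|A\|^2+\delta^2\leq M^2$ and setting $Tf_0 := Af_0 + \delta f_{r+1}$, $Tf_k := Af_k$ for $0\neq|k|\leq r$, $Tf_k := Mf_{k+1}$ for $k>r$, and $Tf_k := Mf_{k-1}$ for $k<-r$. Writing any $v\in\h$ as $v=u+w$ with $u\in\h_r$, $w\in\h_r^\perp$, one has $Tv = Au + \pss{u}{f_0}\delta f_{r+1} + Sw$ with $Sw\in\vect[f_k : |k|\geq r+2]$, so $\delta f_{r+1}\perp Sw$ and $\|Tv\|^2 = \|Au\|^2+|\pss{u}{f_0}|^2\delta^2+M^2\|w\|^2\leq M^2\|v\|^2$, giving $\|T\|\leq M$. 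The identities $(T-A)f_k=0$ for $0\neq|k|\leq r$ and $(T-A)f_0=\delta f_{r+1}$ are clear, and a direct check that $\pss{Tf_j}{f_k}=\pss{Af_j}{f_k}$ for all $j\in\Z$ and $|k|\leq r$ (the shift contributions vanish since $|k|\leq r$) yields $T^*f_k=A^*f_k$, so $(T-A)^*f_k=0$ as well. Thus $T$ lies in the prescribed $\sote$-neighborhood of $A$.

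The dynamical key is the formula, proved by induction on $p\geq 1$:
\[
T^p f_0 = A^p f_0 + \sum_{j=1}^{p} \pss{A^{p-j}f_0}{f_0}\,\delta M^{j-1} f_{r+j};
\]
combined with linearity, this shows that for any $v = f_0 + u + w \in B$ (with $u\in\h_r$, $w\in\h_r^\perp$, $\|u+w\|\leq\rho$), the coefficient of $T^p v$ on the "detecting coordinate" $f_{r+p}$ equals $(1+\pss{u}{f_0})\,\delta M^{p-1}$. Assume now that $T^p v = T^{p'} v'$ with $v, v' \in B$ and $0 \leq p' < p$. Expanding $T^{p'}v'$, the contribution to $f_{r+p}$ comes only from the scaled shift acting on the $\h_r^\perp$-part of $v'$ and equals $M^{p'}c$ for some $|c|\leq\rho$, because the $A^{p'}$- and perturbation-contributions populate only $\h_r\cup\vect[f_{r+1},\dots,f_{r+p'}]$, which misses $f_{r+p}$. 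Equating coefficients gives $(1-\rho)\delta M^{p-1}\leq M^{p'}\rho$, i.e.\ $M^{p-p'}\leq M\rho/((1-\rho)\delta)$. Fixing an integer $L > 1+\log_M\!\bigl(\rho/((1-\rho)\delta)\bigr)$ and taking $p_k := kL$ for $k=1,\dots,n$, this last inequality is violated whenever $|p_j-p_i|\geq L$, hence $T^{p_i}(B)\cap T^{p_j}(B)=\emptyset$ for all $i\neq j$, i.e.\ $T\in\mathfrak{O}_{n,B}$.

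The main technical hurdle is the identification of the detecting coordinate $f_{r+p}$: the $\delta$-perturbation injected at $f_0$ gets amplified through iteration of $T$ into a coefficient of exact order $\delta M^{p-1}$ on $f_{r+p}$, whereas any vector of $B$ reaching $f_{r+p}$ via an iterate $T^{p'}$ with $p'<p$ can only carry a coefficient of order at most $M^{p'}\rho$ there (and only through the shift applied to its $\h_r^\perp$-part, the perturbation contributions living strictly below the index $r+p$). The exponential gap between these two scales is precisely what forces pairwise disjointness once the gaps $p_j - p_i$ are taken large enough.
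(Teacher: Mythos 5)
Your proof is correct, and it follows the same underlying strategy as the paper's: perturb $A$ by a small forward-shift term carrying a $\delta$-coefficient, let the weighted shift amplify that coefficient through iteration, and read it off at a ``detecting coordinate'' whose index grows with the iterate, so that two iterates of $B$ at times far apart cannot coincide. The concrete construction, though, is genuinely different in its details and a bit leaner than the paper's. The paper (unilateral basis, Lemma~\ref{Lemma 4 bis} as stated) builds a finite matrix $C_N$ on $\h_N$ that shifts \emph{by $r$} with an auxiliary weight $\gamma\in(\max(1,\|A\|),M)$, adds the perturbation $\delta e_{k+r}$ on \emph{all} of $\h_r$, then truncates via $T=P_N^*C_NP_N$ and must take $N$ large enough to fit $n$ well-spaced iterates (detecting coordinate $e_{pr+1}$, Fact~\ref{Fact 12}). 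You instead use the bilateral form of Lemma~\ref{Lemma 4 bis}, define a genuine infinite-dimensional operator directly (bilateral weighted shift of weight $M$ outside $\h_r$ plus a single rank-one bump $\delta f_{r+1}$ at $f_0=e$), and track the coefficient on $f_{r+p}$; this removes the truncation parameter $N$, makes the perturbation rank one, makes $(T-A)^*$ vanish identically on the $f_k$, $|k|\leq r$, and collapses the three weight parameters of the paper to a single $\delta$. The resulting estimate --- lower bound $(1-\rho)\delta M^{p-1}$ on the detecting coordinate for $T^p(B)$ versus upper bound $M^{p'}\rho$ for $T^{p'}(B)$, forcing $M^{p-p'}\leq M\rho/((1-\rho)\delta)$ --- is the same in spirit and even sharper in form, since there is no auxiliary $\gamma$. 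One microscopic point: the value $1+\log_M(\rho/((1-\rho)\delta))$ may be nonpositive if $\delta$ is not small, so one should take $L$ to be $\max$ of $1$ and that quantity; this is harmless and does not affect the argument.
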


\begin{proof}[Proof of Lemma \ref{Lemma 11}] Let us fix an orthonormal basis 
 $(e_{k})_{\gk}$ of $\h$ with $e_1=e$. 
Our aim being to apply Lemma \ref{Lemma 4 bis}, we fix $r\ge 
1$, an operator
$A\in\mathfrak{B}(\h_{r})$ such that $\|A\|<M$, and $\varepsilon >0$. We are looking 
for an operator $T\in {\mathfrak O} _{n,B}$ such that 
\[
\|(T-A)e_{k}\|<\varepsilon\quad{\rm and}\quad \|(T-A)^{*}e_{k}\|<\varepsilon\qquad\hbox{for $k=1,\dots ,r.$} 
\]
\par\smallskip
We will define a sequence $(C_{N})_{N>2r}$ of \ops, with $C_{N}\in\mathfrak{B}(\h_{N})$ for every $N>2r$, and show that if $N$ is sufficiently large, the \op\  $P_{N}^*C_{N}P_{N}\in\bh$ belongs 
to  ${\mathfrak O}_{n,B} $ and satisfies the above estimates. Here $P_{N}$ denotes as usual the canonical projection of $\h$ onto $\h_{N}$.
\par\smallskip
Let $\delta>0$ and $\gamma >1$, to be fixed later on in the proof. For each $N> 2r$, consider the operator 
$C_N\in\mathfrak{B}(\h_{N})$ defined in the following way:
\[C_{N}e_{k}=
\begin{cases}
Ae_{k}+\delta e_{k+r}&\textrm{for every}\ 1\le k\le r\\
\gamma e_{k+r}&\textrm{for every}\ r<k\le N-r\\
0&\textrm{for every}\ N-r<k\le N.
\end{cases}
\]
Thus, in matrix form, 
\par\smallskip
\[C_N=
\left(
\begin{array}{ccc}
\begin{array}{|c|}
\hline \\
\begin{array}{ccc}
& {\hbox{ \fontsize{24.88}{24.88}\selectfont A}}& \\
& &
\end{array}\\
\hline
\begin{array}{ccc}
\delta& & \\
 &\ddots&\\
 & &\delta
 \end{array}\\
\hline
\end{array}

&

\begin{array}{c}
{\hbox{ \fontsize{24.88}{24.88}\selectfont 0}}

\end{array}

&

\begin{array}{c}
{\hbox{ \fontsize{24.88}{24.88}\selectfont 0}}
\end{array}

\\

\begin{array}{c}
{\hbox{ \fontsize{24.88}{24.88}\selectfont 0}}
\end{array}

&

\begin{array}{|cccc|}
\hline
\gamma& & & \\
 &\ddots&&\\
 & &\ddots&\\
 & & &\gamma\\
 \hline
\end{array} 

&

\begin{array}{c}
\phantom{bla}{\hbox{ \fontsize{24.88}{24.88}\selectfont 0}}
\phantom{bla}
\end{array}

\end{array}
\right)
\]
\par\medskip
We note that if $\|A\|<\gamma <M$ and if $\delta$ is sufficiently small, then $\|C_{N}\|=\gamma <M$. Moreover, if $\delta $ is 
sufficiently small, then
\begin{equation}\label{Equation 1}
 \|(C_{N}-A)e_{k}\|<\varepsilon\quad{\rm and}\quad \|(C_{N}-A)^{*}e_{k}\| <\varepsilon \qquad\hbox{for every $k=1,\dots ,r$}
\end{equation}
whatever the choice of the integer $N> 2r$. The key of the 
proof lies in the following simple computation.

\begin{fact}\label{Fact 12}
 For every $N>2r$, every $p\ge 1$ such that $pr+1\le N$, and every 
 $x\in\h$,
 \[
\pss{C_{N}^{p}x}{e_{pr+1}}=\gamma ^{p-1}\delta \pss{x}{e_{1}}.
\]
\end{fact}

\begin{proof}[Proof of Fact \ref{Fact 12}]
Clearly $\pss{C_{N}x}{e_{r+1}}=
\delta \pss{x}{e_{1}}$; so we may write $C_{N}x$ as
\[\smash[b]{C_{N}x=\delta \pss{x}{e_{1}}e_{r+1}+\ds\sum
_{\genfrac{}{}{0pt}{}{k=1}{k\neq r+1}}^{N}\pss{C_{N}x}{e_{k}}e_{k}}.
\]
Hence
\[
C_{N}^{p-1}x=\delta \gamma ^{p-1}\pss{x}{e_{1}}e_{pr+1}+
\sum
_{\genfrac{}{}{0pt}{1}{k=1}{k\neq r+1}}^{N}\pss{C_{N}x}{e_{k}}C_{N}^{p-1}e
_{k}.
\]
Now $C_{N}^{p-1}e_{k}$ belongs to the closed linear span of the vectors 
$e_{j}$, $1\le j\le N$, $j\neq pr+1$, for every $1\le k\le N$ with $k\neq 
r+1$. Indeed, 
\begin{enumerate}
 \item [-] if $1\le k\le r$, a straightforward induction shows that 
$C_{N}^{p-1}e_{k}\in\textrm{span}[e_{j}\,;\,1\le j\le pr]$;
\item[-] if $r+1<k\le N-(p-1)r$, then $C_{N}^{p-1}e_{k}=\gamma ^{p-1}
e_{k+(p-1)r}$;
\item[-] if $N-(p-1)r<k\le N$, we have $C_{N}^{p-1}e_{k}=0$.
\end{enumerate}
Thus $\pss{C_{N}^{p-1}x}{e_{pr+1}}=\delta \gamma ^{p-1}\pss{x}{e_{1}}$, 
which is the claim of Fact \ref{Fact 12}.
\end{proof}

From Fact \ref{Fact 12}, it is not hard to deduce

\begin{fact}\label{factdisjoint}
Let $1\le p,q\le N$ be such that $pr+1\le N$. If $\gamma ^{p-q-1}\delta >\frac{1+\rho }{1-\rho }$, then 
$C_N^p(P_NB)\cap C_N^q(P_NB)=\emptyset$.
\end{fact}

\begin{proof}[Proof of Fact \ref{factdisjoint}]
If $x,y\in P_{N}B$, then by Fact \ref{Fact 12}
\[
\|C_{N}^{p}y-C_{N}^{q}x\|\ge \bigl|\pss{C_{N}^{p}x-C_{N}^{q}y}
{e_{pr+1}}\bigr|\ge \gamma ^{p-1}\delta|\pss{x}{e_{1}}|-
\|C_{N}^{q}\|\, \|y\|.
\]
Moreover, since $x$ and $y$ belong to $P_{N}B\subseteq B$, we have $|\pss{x}{e_{1}}|\ge 
1-\rho $
and $\|y\|\le 1+\rho $. Hence 
\[
\|C_{N}^{p}y-C_{N}^{q}x\|\ge \gamma ^{p-1}\delta (1-\rho )-\gamma 
^{q}(1+\rho ).
\]
Thus $C_{N}^{q}(P_{N}B)\cap C_{N}^{p}(P_{N}(B))$ is empty as soon as 
$\gamma ^{p-1}\delta (1-\rho )-\gamma ^{q}(1+\rho )>0$, which proves our claim.
\end{proof}

We now choose the various parameters in this construction in the following order: first 
we choose $\gamma $ such that $\max(1,\|A\|)<\gamma <M$. Then we choose 
$\delta>0 $ so small that (\ref{Equation 1}) holds true for every $N\ge 
1$. 
Lastly, we choose $N>2r$ so large that there exist $n$ distinct integers 
$1\le p_{1}<p_{2}<\dots<p_{n}\le N$ with $p_{n}r+1\le N$, such that the 
gaps between two consecutive integers $p_{j}$ are so large that 
$\gamma ^{p_{j}-p_{i}-1}\delta >(1-\rho )/(1+\rho )$ for every $1\le 
i<j\le n$. 
The operator $C_{N}$ then satisfies $C_{N}^{p_{j}}(P_{N}B)\cap 
C_{N}^{p_{i}}(P_{N}B)=\emptyset$ for every $1\le i<j\le n$. So the operator 
$T:=P_{N}^*C_{N}P_{N}\in\bmh$ is such that
$T^{p_{j}}(B)\cap T^{p_{i}}(B)=\emptyset$
for every $1\le i<j\le n$, that is,
$T$ belongs to $ {\mathfrak O}_{n,B}$; and by (\ref{Equation 1}) we also have 
$\|(T-A)e_{k}\|<\varepsilon$ and $\|(T-A)^{*}e_{k}\|<\varepsilon$ {for every $k=1,\dots ,r.$} 
\end{proof}

\begin{proof}[Proof of Theorem \ref{Theorem 9}]
Combining Lemmas \ref{Lemma 10} and \ref{Lemma 11}, we obtain that $\mathfrak G_{B}$ 
is a dense $\gd$ subset of $(\bmh,\sote)$ for every ball 
$B=\ba{B}(e,\rho )$, where $\Vert e\Vert=1$ and 
$0<\rho <1$. 
By Fact \ref{comeagerbis}, it follows that $\mathfrak G_{B}\cap \hcmh$ is a dense $\gd$ subset of 
$(\hcmh,\sote)$ for each such ball $B$.
Let $(B_{q})_{q\ge 1}$ be a 
countable family of such balls with the property that 
\[\bigcup_{q\ge 1}B_{q}=B(0,2)\setminus\{0\}.
\]
Then 
$ \mathfrak G:=\bigl(\bigcap_{q\ge 1} \mathfrak G_{B_{q}} \bigr)\cap \hcmh$ is a dense $\gd$ 
subset of $(\hcmh,\sote)$. 
\par\smallskip
Every element $T$ of $ \mathfrak G$ enjoys the property that for any $q\ge 1$ and 
any 
 $\gn$, there exist $n$ distinct iterates $T^{j}(B_{q})$ of $B_{q}$ which  
are pairwise distinct. It follows that if $m$ is any invariant (probability) measure for $T$, 
then $m(B_{q})=0$ for every $q\ge 1$ and hence 
$m(B(0,2)\setminus\{0\})=0$. 
\par\smallskip
Suppose now that $T\in  \mathfrak G$ admits an invariant measure $m\neq \delta _{0}$. Then one 
can find a closed ball $B'$ not containing $0$ such that $m(B')>0$. 
Consider for any $R>0$ the measure $m_{R}$ on $\h$ defined by $m_{R}(C)=
m(RC)$ for any Borel subset $C$ of $\h$. Each such measure $m_{R}$ is an 
invariant probability measure on $\h$. Moreover, if $R$ is sufficiently large then 
$R^{-1}B'\subseteq B(0,2)\setminus \{0\}$, so that $m_{R}(B(0,2)\setminus 
\{0\})>0$, which is a contradiction. We have thus proved that any operator $T\in \mathfrak G$ 
admits no invariant measure except $\delta _{0}$, and hence that 
$\hcmh\setminus \invh$ is comeager in $(\hcmh,\sote)$.
\end{proof}

\begin{remark}\label{periodicmeager} 
The above proof does not use the fact that $\h$ is a 
\emph{complex} Hilbert space; so Theorem \ref{Theorem 9} holds true as well
for  real Hilbert spaces. This shows in particular that in the real setting also, 
the chaotic operators form a meager subset of $\bmh$; more precisely, 
a typical operator $T\in\bmh$ has no periodic point except $0$. Indeed, 
any periodic point $x\neq 0$ for an operator $T$ gives rise to a ``canonical" 
invariant measure supported on the orbit of $x$, namely $m:=\frac{1}N\sum_{i=0}^{N-1} \delta_{T^ix}$, 
where $N \ge 1$ is a period of $x$.
\end{remark}

\begin{remark} Theorem \ref{Theorem 9} implies a weak form of Proposition \ref{Proposition 5}, namely that operators without any eigenvalue
\emph{of modulus $1$} are typical. Indeed, if $T$ admits a unimodular eigenvalue and if $x$ is an associated eigenvector, 
then there is a canonical invariant measure $m\neq \delta_0$ supported on $\T\cdot x$, 
namely the image of the Lebesgue measure on $\T$ under the map $\lambda\mapsto\lambda x$. 
\par\smallskip
Regarding the eigenvalues, one may also note that if $m$ 
is any non-trivial invariant measure for an operator $T\in\bh$, then $m\bigl(\ker(T-\lambda)\bigr)=0$ 
for every complex number $\lambda$ such that 
$\vert\lambda\vert\neq 1$. Indeed, if $m\neq\delta_0$ is a measure such that 
$m\bigl(\ker(T-\lambda)\bigr)>0$,  one can find a ball $B$ not containing $0$ such that
$m(B_\lambda)\neq 0$, where $B_\lambda=B\cap \ker(T-\lambda)$. Since $\vert\lambda\vert\neq 1$, 
it is easily checked that $B_\lambda$ has infinitely many pairwise
disjoint iterates under $T$, which is not possible since the measure $m$ is $T$-invariant.
\end{remark}

Since $\mathcal{U}$-frequently hypercyclic operators on a Hilbert space 
always admit an invariant measure with full support by \cite{GM}, Theorem \ref{Theorem 9} 
immediately implies:

\begin{corollary}\label{Corollary 9 bis}
 For every $M>1$, $\emph{\textrm{UFHC}}_{M}(\h)$ is meager in 
\emph{$(\hcmh,\sote)$}. In other words, a typical hypercyclic operator on $\h$ is not 
 $\mathcal U$-frequently hypercyclic.
\end{corollary}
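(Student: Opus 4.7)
The proof plan is essentially a one-line deduction, as the word ``immediately'' in the lead-in suggests. I would argue as follows. By the result of Grivaux--Matheron cited as \cite{GM} in the excerpt, every $\mathcal U$-frequently hypercyclic operator on the Hilbert space $\h$ admits an invariant probability measure with full support. Such a measure is automatically non-trivial (in fact its support is the whole of $\h$, which contains vectors other than $0$), so one obtains the inclusion
\[
\textrm{UFHC}(\h)\subseteq \invh.
\]
Intersecting with the ball $\bmh$ yields $\textrm{UFHC}_M(\h)\subseteq \invh\cap\hcmh$.

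Now I would invoke Theorem \ref{Theorem 9}, which asserts that $\hcmh\setminus\invh$ is comeager in $(\hcmh,\sote)$; equivalently, $\invh\cap\hcmh$ is meager in $(\hcmh,\sote)$. Combined with the inclusion above, this gives that $\textrm{UFHC}_M(\h)$ is meager in $(\hcmh,\sote)$, which is the required conclusion. Translated back into the language of typicality, a typical $T\in(\hcmh,\sote)$ fails to be $\mathcal U$-frequently hypercyclic.

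There is essentially no obstacle here, since all the work has been done: the existence of an invariant measure with full support for $\mathcal U$-frequently hypercyclic operators is imported directly from \cite{GM}, and the meagerness of $\invh\cap \hcmh$ is Theorem \ref{Theorem 9}, whose (non-trivial) proof was just completed via Lemmas \ref{Lemma 10} and \ref{Lemma 11}. The only small point worth mentioning explicitly is that the invariant measure provided by \cite{GM} differs from $\delta_0$ (otherwise its support would reduce to $\{0\}$ and not be all of $\h$), so the inclusion $\textrm{UFHC}(\h)\subseteq\invh$ does hold with $\invh$ as defined earlier in the excerpt (non-trivial invariant measures).
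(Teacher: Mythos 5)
Your argument is exactly the one the paper has in mind: use the result from \cite{GM} that $\mathcal U$-frequently hypercyclic operators on $\h$ admit an invariant measure with full support (hence non-trivial), so $\textrm{UFHC}_M(\h)\subseteq\invh\cap\hcmh$, and then apply Theorem~\ref{Theorem 9}. Your remark about the measure being non-trivial because its support is all of $\h$ is a correct and appropriately brief justification of the inclusion.
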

\par\smallskip

\subsection{Densely distributionally chaotic operators}\label{Subsection 1.b.4}
In this subsection, our aim is to show that, for any $M>1$, the class $\textrm{DDCH}_{M}(\h)$ of densely
distributionally chaotic operators in $\bmh$ is a dense $G_{\delta}$ subset of 
$(\bmh,\sote)$, from which it follows that the class $\textrm{DCH}_{M}(\h)$ of 
distributionally chaotic operators in $\bmh$ is a comeager subset of 
$(\bmh,\sote)$.

\begin{proposition}\label{Theorem 15}
 For any $M>1$, the set \emph{$\textrm{DDCH}_{M}(\h)$} of densely distributionally chaotic operators in $\bmh$ is a dense $G_{\delta }$ subset of \emph{$(\bmh,\sote)$}.
\end{proposition}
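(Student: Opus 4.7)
The plan is to establish density and the $\gd$ property separately. Density will follow at once from previous work: by Corollary~\ref{Proposition 4}, $\textrm{G-MIX}_M(\h)\cap\textrm{CH}_M(\h)$ is dense in $(\bmh,\sote)$, and since Gaussian mixing forces ergodicity while ergodic operators are densely distributionally chaotic ($\ergh\subseteq\ddch$, as recalled above), this already gives $\textrm{G-MIX}_M(\h)\subseteq\textrm{DDCH}_M(\h)$. The genuinely nontrivial part is therefore to show that $\textrm{DDCH}_M(\h)$ is $\gd$.

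To this end, I would first express distributional irregularity as a joint $\gd$ condition on the pair $(T,x)$. For integers $k,K\geq 1$, set
\begin{align*}
\Omega^{0}_{k,K}&:=\bigl\{(T,x)\in\bmh\times\h\,;\,\exists\,n\geq K,\ \tfrac{1}{n}\bigl|\{1\leq i\leq n\,;\,\|T^{i}x\|<1/k\}\bigr|>1-1/k\bigr\},\\
\Omega^{\infty}_{k,K}&:=\bigl\{(T,x)\in\bmh\times\h\,;\,\exists\,n\geq K,\ \tfrac{1}{n}\bigl|\{1\leq i\leq n\,;\,\|T^{i}x\|>k\}\bigr|>1-1/k\bigr\}.
\end{align*}
The map $(T,x)\mapsto T^{i}x$ is continuous from $(\bmh,\sote)\times(\h,\|\cdot\|)$ to $\h$ for every $i\geq 0$: this follows by iteration from Fact~\ref{power} together with the estimate $\|Tx-T_{0}x_{0}\|\leq M\|x-x_{0}\|+\|(T-T_{0})x_{0}\|$. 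Consequently each condition ``$\|T^{i}x\|<1/k$'' or ``$\|T^{i}x\|>k$'' defines an open subset of $\bmh\times\h$, and by writing $\Omega^{\bullet}_{k,K}$ as a countable union, over $n\geq K$ and over subsets $I\subseteq\{1,\dots,n\}$ with $|I|/n>1-1/k$, of finite intersections of such sets, one sees that every $\Omega^{\bullet}_{k,K}$ is open in $\bmh\times\h$. The set $\mathcal{G}:=\bigcap_{k,K\geq 1}(\Omega^{0}_{k,K}\cap\Omega^{\infty}_{k,K})$ is thus $\gd$, and the usual equivalence between $\udens A=1$ and ``for every $\delta<1$, the ratio $|A\cap[1,n]|/n$ exceeds $\delta$ for infinitely many $n$'' shows that $(T,x)\in\mathcal{G}$ precisely when $x$ is a distributionally irregular vector for $T$.

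Fixing a countable base $(V_{p})_{p\geq 1}$ of non-empty open subsets of $\h$, the $T$-section $\mathcal{G}_{T}$ is the intersection of the open sections $\Omega^{\bullet,T}_{k,K}:=\{x\,;\,(T,x)\in\Omega^{\bullet}_{k,K}\}$. By the Baire category theorem applied in the Polish space $\h$, $\mathcal{G}_{T}$ is dense in $\h$ if and only if each $\Omega^{\bullet,T}_{k,K}$ is dense in $\h$: the forward implication is trivial since $\mathcal{G}_{T}\subseteq\Omega^{\bullet,T}_{k,K}$, while the converse is precisely the statement that a countable intersection of dense open sets in a Polish space is dense. Hence
\[
\textrm{DDCH}_{M}(\h)=\bigcap_{k,K,p,\,\bullet}\bigl\{T\in\bmh\,;\,\exists\,x\in V_{p}\ \textrm{with}\ (T,x)\in\Omega^{\bullet}_{k,K}\bigr\},
\]
and since projections of open sets from $\bmh\times\h$ onto $\bmh$ are always open, this exhibits $\textrm{DDCH}_{M}(\h)$ as a countable intersection of $\sote$-open subsets of $\bmh$, which is the desired $\gd$ conclusion.

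The delicate step is the Baire reduction in the last paragraph. A naive attempt to encode ``$\mathcal{G}_{T}$ is dense'' directly yields a $\Sigma^{1}_{1}$ quantifier of the form ``$\exists x\in V_{p}$ distributionally irregular'', which is \emph{a priori} too complex to be $\gd$; rephrasing this as ``each of the countably many open sections $\Omega^{\bullet,T}_{k,K}$ is dense'' before projecting is what preserves the Borel class.
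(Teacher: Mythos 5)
Your proof is correct and follows essentially the same route as the paper's: both decompose the set of distributionally irregular vectors for $T$ as a countable intersection of open sets $\Omega^{\bullet,T}_{k,K}$ (resp.\ $G_{T,\varepsilon,N}$ in the paper), invoke Baire in $\h$ to convert ``dense $G_\delta$'' into the countable conjunction ``each open piece is dense'', and thereby write $\textrm{DDCH}_M(\h)$ as a countable intersection of sets which are unions (equivalently, open projections) of open conditions, with density supplied by $\textrm{G-MIX}_M(\h)\subseteq\textrm{ERG}_M(\h)\subseteq\textrm{DDCH}_M(\h)$ and Corollary~\ref{Proposition 4}. The only cosmetic difference is that you phrase the openness as joint openness in $\bmh\times\h$ followed by projection, while the paper fixes $x$ and proves $\sot$-openness in $T$ alone (yielding the formally sharper $\sot$-$G_\delta$ conclusion); the substance and the crucial Baire reduction are identical.
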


\begin{proof}
 Recall that an operator $T\in \bh$ is densely distributionally chaotic if and only 
if it admits a dense set of distributionally irregular vectors, \mbox{\it i.e.} of vectors $x\in \h$ for 
which there exist two sets of integers $A,B\subseteq \N$ with upper density $1$ such that 
$ T^{i}x \to 0$ as $i\to\infty$ along $A$ and 
$\Vert T^{i}x\Vert \to \infty$ as $i\to\infty$ along $B$.
 Since the set of distributionally irregular vectors for $T$ can be written as
\[
G_{T}=\bigcap_{\varepsilon \in\Q^{+*}}\bigcap_{N\ge 1} G_{T,\varepsilon ,N}
\]
where
\begin{align*}
 G_{T,\varepsilon ,N}:=\Bigl \{ x\in \h\,;\,\exists\,m,n\ge N&\ :\ 
 \#\{1\le i\le m\,;\,\Vert T^{i}x\Vert <\varepsilon \}\ge m(1-\varepsilon )\\
  &\textrm{and}\ \#\{1\le i\le n\,;\, \Vert T^{i}x\Vert >1/\varepsilon\} \ge n(1-\varepsilon )\Bigr\}, 
\end{align*}
it follows that $G_{T}$ is a dense $G_{\delta }$ subset of $\h$ whenever $T$ is densely 
distributionally chaotic. Denoting by $(V_{p})_{p\ge 1}$ a countable basis of non-empty open 
subsets of $\h$, we infer from this observation that an operator $T\in \bmh$ belongs to 
$\textrm{DDCH}_{M}(\h)$ if and only if 
\begin{align*}
 \forall\,\varepsilon \in\Q^{+*}\; \forall\,N\ge 1\; \forall\,p\ge 1\;\; \exists x\in V_p\;\;\exists n,m\geq N\
 &\#\{1\le i\le m\,;\,\Vert T^{i}x\Vert <\varepsilon \}\ge m(1-\varepsilon )\\
  \textrm{and}\ &\#\{1\le i\le n\,;\, \Vert T^{i}x\Vert >1/\varepsilon\} \ge n(1-\varepsilon ).
\end{align*}

Using this, we can now prove

\begin{fact}\label{FactUn}
 The set $\textrm{DDCH}_{M}(\h)$ is a $G_{\delta }$ subset of $(\bmh,\sot)$.
\end{fact}

\begin{proof}
It suffices to show that for every $\varepsilon \in\Q^{+*}$, $N\ge 1$, $p\ge 1$,  $m,n\ge 
N$ and $x\in\h$, the set 
\begin{align*}
 \Bigl\{ 
 T\in \bmh\,;\ &\#\{1\le i\le m\,;\,\Vert T^{i}x\Vert <\varepsilon \}\ge m(1-\varepsilon )\\
  \textrm{and}\ &\#\{1\le i\le n\,;\, \Vert T^{i}x\Vert >1/\varepsilon\} \ge n(1-\varepsilon )
 \Bigr \}
\end{align*}
is $\textrm{SOT}$-open. So let $T_{0}\in \bmh$ belong to this set, and let $r\ge 
m(1-\varepsilon)$ be an integer such that there exist $r$ indices $1\le i_{1}<\dots<i_{r}\le 
m$ such that $\Vert T^{i_{j}}x\Vert <\varepsilon $ for every $1\le j\le r$. If $T\in \bmh$ is sufficiently 
close to $T_{0}$ for the $\textrm{SOT}$-topology, we still have $\Vert T^{i_{j}}x\Vert <\varepsilon $ for every 
$1\le j\le r$. Hence $\#\{1\le i\le m\,;\,\Vert T^{i}x\Vert <\varepsilon \}\ge m(1-\varepsilon )$. In 
the same way, the set of operators $T$ such that $\#\{1\le i\le n\,;\, 
\Vert T^{i}x\Vert >1/\varepsilon\} \ge n(1-\varepsilon )$ is $\textrm{SOT}$-open in $\bmh$. This proves 
Fact \ref{FactUn}.
\end{proof}

The last step of the proof of Proposition \ref{Theorem 15} is the following fact.

\begin{fact}\label{FactDeux}
 The set $\textrm{DDCH}_{M}(\h)$ is dense in $(\bmh,\sote)$.
\end{fact}

\begin{proof}
By a result of \cite{GM}, every ergodic operator on $\h$ is densely distributionally chaotic. 
Since $\textrm{G-MIX}(\h)$ is dense in $(\bmh,\sote)$ by Corollary \ref{Proposition 4}, it immediately 
follows that $\textrm{DDCH}_{M}(\h)$ is dense in $(\bmh,\sote)$.
\end{proof}

By Facts \ref{FactUn} and \ref{FactDeux}, the proof of Proposition \ref{Theorem 15} is now complete.
\end{proof}

Since $\textrm{DDCH}(\h)\subseteq \textrm{DCH}(\h)$, and since any operator $T\in \textrm{DDCH}(\h) \cap \textrm{HC}(\h)$ satisfies $c(T)=1$, 
Proposition \ref{Theorem 15} has the following immediate consequences:

\begin{corollary}\label{Corollary 16}
 For any $M>1$, the set \emph{$\textrm{DCH}_{M}(\h)$} of distributionally chaotic operators in \emph{$\bmh$} is 
comeager in \emph{$(\bmh,\sote)$}.
\end{corollary}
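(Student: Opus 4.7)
The plan is to derive the statement as a direct consequence of Proposition \ref{Theorem 15}, with essentially no additional work. The key observation is the inclusion $\textrm{DDCH}(\h)\subseteq \textrm{DCH}(\h)$, which is immediate from the definitions: every densely distributionally chaotic operator is, in particular, distributionally chaotic since it admits a (dense, hence non-empty) set of distributionally irregular vectors. Intersecting with the ball $\bmh$, we obtain $\textrm{DDCH}_M(\h)\subseteq \textrm{DCH}_M(\h)$.

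Now, Proposition \ref{Theorem 15} asserts that $\textrm{DDCH}_M(\h)$ is a dense $G_\delta$ subset of $(\bmh,\sote)$. In particular, $\textrm{DDCH}_M(\h)$ is comeager in $(\bmh,\sote)$. Since any superset of a comeager set is comeager, and since $\textrm{DDCH}_M(\h)\subseteq \textrm{DCH}_M(\h)\subseteq \bmh$, we conclude that $\textrm{DCH}_M(\h)$ is comeager in $(\bmh,\sote)$.

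There is really no obstacle here: the entire content lies in Proposition \ref{Theorem 15}, whose proof combines Corollary \ref{Proposition 4} (density of $\textrm{G-MIX}(\h)$ in $(\bmh,\sote)$) with the result from \cite{GM} that ergodic operators are densely distributionally chaotic, together with the $\sot$-$G_\delta$ description of $\textrm{DDCH}_M(\h)$ established in Fact \ref{FactUn}. Once Proposition \ref{Theorem 15} is in hand, Corollary \ref{Corollary 16} is a one-line consequence of the trivial inclusion between the two classes.
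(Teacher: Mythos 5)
Your proof is correct and takes exactly the route the paper intends: Corollary \ref{Corollary 16} follows immediately from Proposition \ref{Theorem 15} via the trivial inclusion $\textrm{DDCH}_{M}(\h)\subseteq \textrm{DCH}_{M}(\h)$, since a superset of a comeager set is comeager. Nothing more is needed.
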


\begin{corollary}\label{Corollary 17}
 For any $M>1$, the set \emph{$\textrm{cMAX}_{M}(\h)$} of operators \emph{$T\in {\hcmh}$} with $c(T)=1$ is 
comeager in \emph{$(\hcmh,\sote)$}.
\end{corollary}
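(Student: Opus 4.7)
The plan is to deduce this corollary directly from Proposition \ref{Theorem 15} together with the already-recalled relationship between dense distributional chaos and the parameter $c(T)$. The argument proceeds in three short steps.

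First, I would invoke Proposition \ref{Theorem 15}, which asserts that $\textrm{DDCH}_M(\h)$ is a dense $G_\delta$ subset of $(\bmh,\sote)$, in particular comeager. Combining this with Fact \ref{Fact1} and Fact \ref{comeager}, we know that $\hcmh$ is itself a dense $G_\delta$ subset of $(\bmh,\sote)$. Therefore, by the Baire category theorem applied inside the Polish space $(\bmh,\sote)$, the intersection $\textrm{DDCH}_M(\h)\cap\hcmh$ is still comeager in $(\bmh,\sote)$, and hence also comeager in $(\hcmh,\sote)$ (this is exactly the content of Fact \ref{comeagerbis}).

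Second, I would use the implication already pointed out in the paragraph preceding the statement: any operator $T\in\textrm{DDCH}(\h)\cap\textrm{HC}(\h)$ satisfies $c(T)=1$. This follows because the set of distributionally irregular vectors of such a $T$ is comeager in $\h$ (by \cite{BBMP}), so in particular one can find such vectors lying in any prescribed ball, giving orbits that return to $B(0,\alpha)$ along a set of upper density $1$. Consequently, $\textrm{DDCH}_M(\h)\cap\hcmh\subseteq\cmaxh\cap\hcmh=\textrm{cMAX}_M(\h)$.

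Finally, since $\textrm{cMAX}_M(\h)$ contains a comeager subset of $(\hcmh,\sote)$, it is itself comeager in $(\hcmh,\sote)$, as required. There is no real obstacle here: all the work has been done in Proposition \ref{Theorem 15} and in the earlier discussion linking dense distributional chaos to $c(T)=1$; the corollary is simply a matter of assembling these facts via Fact \ref{comeagerbis}.
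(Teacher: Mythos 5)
Your proof is correct and follows exactly the paper's intended argument: the paper presents Corollary \ref{Corollary 17} as an immediate consequence of Proposition \ref{Theorem 15} together with the already-noted inclusion $\textrm{DDCH}(\h)\cap\textrm{HC}(\h)\subseteq\cmaxh$, and you have simply spelled out the routine Baire-category bookkeeping (via Facts \ref{Fact1}, \ref{comeager}, and \ref{comeagerbis}) that the paper leaves implicit.
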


\smallskip
There is an alternative approach for proving the comeagerness of $\textrm{DDCH}_M(\h)$, which relies 
on the next proposition, combined with the Kuratowski-Ulam Theorem.

\begin{proposition}\label{Proposition 14}
 Let $M>1$. For every vector $x\in\h\setminus\{ 0\}$, the set 
 \[
\mathfrak G^{x}=\bigl\{T\in \hbox{\emph{$\bmh$}}\,;\,x\ \textrm{is a distributionally 
irregular vector for}\ T\bigr\}
\]
is comeager in \emph{$(\hcmh,\sote)$}.
\end{proposition}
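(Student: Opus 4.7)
The plan is to show that $\mathfrak G^x$ is $G_\delta$ in $(\bmh, \sote)$ and then to deduce comeagerness by a Kuratowski--Ulam argument followed by an invariance observation that transfers the result from some ``good'' nonzero vector to every nonzero vector. By Fact \ref{comeagerbis} combined with Fact \ref{comeager}, it will suffice to prove that $\mathfrak G^x$ is comeager in $(\bmh, \sote)$ for every $x \neq 0$.

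For the $G_\delta$ part, one writes $\mathfrak G^x = \bigcap_{\varepsilon \in \Q^{+*}} \bigcap_{N \ge 1} \mathfrak G^x_{\varepsilon, N}$ exactly as in Fact \ref{FactUn}, where $\mathfrak G^x_{\varepsilon, N}$ is the set of $T \in \bmh$ for which some $m, n \ge N$ realize the two counting conditions. For fixed $x$, $\varepsilon$, and $i \ge 1$, the sets $\{T \in \bmh : \|T^i x\| < \varepsilon\}$ and $\{T \in \bmh : \|T^i x\| > 1/\varepsilon\}$ are $\sot$-open by Fact \ref{power}, hence $\sote$-open; the same finite-union argument as in Fact \ref{FactUn} then gives that each $\mathfrak G^x_{\varepsilon, N}$ is open.

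For comeagerness, I would consider the set
\[
\mathcal F := \bigl\{(T, x) \in \bmh \times \h : x \text{ is distributionally irregular for } T \bigr\},
\]
which by the same decomposition, together with the joint continuity of $(T, x) \mapsto T^i x$ from $(\bmh, \sot) \times (\h, \|\cdot\|)$ into $(\h, \|\cdot\|)$ (an immediate consequence of Fact \ref{power} and of the bound $\|T^i x - T_0^i x_0\| \le M^i \|x - x_0\| + \|(T^i - T_0^i) x_0\|$), is $G_\delta$ in $(\bmh, \sote) \times \h$. By Proposition \ref{Theorem 15}, $\textrm{DDCH}_M(\h)$ is comeager in $(\bmh, \sote)$, and for every such $T$ the $T$-section $\mathcal F_T$ is a dense $G_\delta$ subset of $\h$ (as established at the very beginning of the proof of Proposition \ref{Theorem 15}). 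A first application of Kuratowski--Ulam will then yield that $\mathcal F$ is comeager in $\bmh \times \h$, and a second application in the other coordinate will give that the set
\[
E := \bigl\{x \in \h : \mathfrak G^x \text{ is comeager in } (\bmh, \sote) \bigr\}
\]
is comeager in $\h$; in particular, $E$ is nonempty.

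It only remains to upgrade ``$E$ nonempty'' to ``$E \supseteq \h \setminus \{0\}$'', which I would do by symmetry. Since $\|T^i(cx)\| = |c|\,\|T^i x\|$, distributional irregularity is invariant under nonzero scaling, so $E$ is stable under multiplication by nonzero scalars. The identity $(UTU^{-1})^i(Ux) = U(T^i x)$ for any unitary $U$ on $\h$ yields $\mathfrak G^{Ux} = U\mathfrak G^x U^{-1}$; since $T \mapsto UTU^{-1}$ is a homeomorphism of $(\bmh, \sote)$, $E$ is also invariant under the action of the unitary group. As any two nonzero vectors of $\h$ are related by a scalar and a unitary, $E$ must be either empty or all of $\h \setminus \{0\}$, and the previous step finishes the argument. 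The main delicate point will be ensuring that the hypotheses of Kuratowski--Ulam are genuinely satisfied in both directions, which reduces to the $G_\delta$ (hence Baire-measurable) nature of $\mathcal F$ verified above.
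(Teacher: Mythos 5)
Your proof is correct, but it takes a genuinely different route from the paper's. The paper proves Proposition \ref{Proposition 14} directly: it decomposes $\mathfrak G^x\supseteq \mathfrak G^x_\infty\cap\mathfrak G^x_0$, where $\mathfrak G^x_\infty$ (resp.\ $\mathfrak G^x_0$) is the set of $T$ for which the visit set $\mathcal N_T(x,\overline B(0,R))$ (resp.\ $\mathcal N_T(x,\h\setminus\overline B(0,r))$) has lower density $0$ for all $R$ (resp.\ $r$), writes each piece as a countable intersection of sets $\mathfrak H^x_{\varepsilon,R}$, and establishes the density of each $\mathfrak H^x_{\varepsilon,R}$ by a hands-on construction: pick a Gaussian-mixing $T$ near the given operator, rescale its ergodic measure via Lemma \ref{Lemma 15}, apply Birkhoff to find a dense set of ``escaping'' vectors, and conjugate $T$ by a small perturbation of the identity sending $x$ into that dense set. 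You instead import the comeagerness of $\textrm{DDCH}_M(\h)$ from Proposition \ref{Theorem 15}, pass it through two applications of Kuratowski--Ulam to get that $\mathfrak G^x$ is comeager for comeager-many $x$, and then upgrade to every $x\neq 0$ by observing that the set of ``good'' $x$ is invariant under nonzero scalings and under conjugation by unitaries, and that this group acts transitively on $\h\setminus\{0\}$. Both routes are logically sound; yours is shorter, but note that it inverts the paper's intended logical flow. The remark immediately following Proposition \ref{Proposition 14} uses that proposition together with Kuratowski--Ulam to give an \emph{alternative} proof of Proposition \ref{Theorem 15}; your proof, by relying on Proposition \ref{Theorem 15}, makes that alternative proof moot. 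This is not a circularity, since the paper proves Proposition \ref{Theorem 15} independently via Facts \ref{FactUn} and \ref{FactDeux}, but it does mean you get less: the paper's argument produces a direct density proof and the explicit family $\mathfrak H^x_{\varepsilon,R}$ with estimates, while yours extracts the statement abstractly from an already-known typicality result. One feature worth crediting: the homogeneity observation (scaling plus unitaries act transitively on $\h\setminus\{0\}$, both actions preserving the ``comeager $\mathfrak G^x$'' property) is a clean way to pass from ``comeager-many $x$ work'' to ``all nonzero $x$ work,'' and is a genuinely useful trick that the paper does not use here.
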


\smallskip
By the Kuratowski-Ulam Theorem, Proposition \ref{Proposition 14} implies that the 
set
\[\mathfrak G:=\bigl\{(T,x)\in \hbox{\emph{$\bmh$}}\times\h\,;\,x\ \textrm{is a distributionally 
irregular vector for}\ T\bigr\}\] is comeager in $(\bmh, \sote)\times\h$; and this, in turn, implies that the set
\[
\bigl\{T\in\bmh\,;\,\forall^{*}x\in\h\ \textrm{is distributionally 
irregular for}\ T\bigr\}
\]
is comeager in $(\bmh,\sote)$. Here, ``$\forall^* x\in\h$" means ``for quasi-all $x\in\h$ in the Baire category sense".  This shows that 
$\textrm{DDCH}_M(\h)$ is comeager 
in $\bmh$. 
\par\medskip
For the proof of Proposition \ref{Proposition 14}, we will need the following lemma.
\begin{lemma}\label{Lemma 15}
 Let $T$ be an ergodic operator on $\h$, and let $m$ be a 
$T$-invariant  
ergodic measure with full support for $T$. For every $\varepsilon $, 
$R>0$,  one can find two other ergodic measures $\mu $ and $\nu $ for $T$, both with full support, 
 such that $\mu (\ba{B}(0,R))>1-\varepsilon$ and 
 $\nu (\ba{B}(0,R))<\varepsilon$ respectively.
\end{lemma}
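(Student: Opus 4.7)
The natural plan is to build both $\mu$ and $\nu$ as push-forwards of $m$ under the scaling maps $S_{\lambda}:x\mapsto\lambda x$, for appropriate choices of $\lambda>0$, and simply tune $\lambda$ so as to concentrate (or de-concentrate) the mass near the origin.

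First I would set $m_{\lambda}:=(S_{\lambda})_{*}m$ for $\lambda>0$, so that $m_{\lambda}(A)=m(\lambda^{-1}A)$. Since $T$ is linear, $S_{\lambda}$ commutes with $T$, which gives immediately $m_{\lambda}\circ T^{-1}=m_{\lambda}$. Ergodicity transfers too: if $T^{-1}B=B$, then $T^{-1}(\lambda^{-1}B)=\lambda^{-1}T^{-1}(B)=\lambda^{-1}B$, hence by ergodicity of $m$ one has $m_{\lambda}(B)=m(\lambda^{-1}B)\in\{0,1\}$. Finally, $m_{\lambda}$ has full support because $S_{\lambda}$ is a homeomorphism of $\h$ and $m$ has full support.

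Next I would compute, for any $R>0$,
\[
 m_{\lambda}\bigl(\ba{B}(0,R)\bigr)=m\bigl(\ba{B}(0,R/\lambda)\bigr).
\]
Taking $\lambda$ small makes $R/\lambda$ large, and by $\sigma$-additivity of the probability measure $m$ we have $m(\ba{B}(0,R/\lambda))\to 1$ as $\lambda\to 0^{+}$. So setting $\mu=m_{\lambda}$ for $\lambda>0$ sufficiently small gives $\mu(\ba{B}(0,R))>1-\varepsilon$, as required.

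For $\nu$ the mirror argument works provided we know that $m(\ba{B}(0,R/\lambda))\to 0$ as $\lambda\to\infty$; since $\bigcap_{r>0}\ba{B}(0,r)=\{0\}$, this reduces to the claim $m(\{0\})=0$. This is the only step that needs a small argument rather than a mechanical check: $\{0\}$ is $T$-invariant since $T(0)=0$, so by ergodicity $m(\{0\})\in\{0,1\}$; but $m(\{0\})=1$ would force $m=\delta_{0}$, contradicting that $m$ has full support in the (infinite-dimensional) Hilbert space $\h$. Hence $m(\{0\})=0$, and taking $\nu=m_{\lambda}$ for $\lambda>0$ sufficiently large yields $\nu(\ba{B}(0,R))<\varepsilon$. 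The main (and only) obstacle is this last triviality about $m(\{0\})$; the rest is just the linearity of $T$ intertwining $T$ with the dilations $S_{\lambda}$.
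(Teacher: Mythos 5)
Your proof is correct and follows essentially the same route as the paper: both use the dilation trick $m_\lambda(A)=m(\lambda^{-1}A)$, observe that these push-forwards remain ergodic with full support, and handle the $\nu$ direction by the observation that ergodicity plus full support forces $m(\{0\})=0$. Your spelled-out verification that $m_\lambda$ stays ergodic is a detail the paper states without proof, but the argument is identical in substance.
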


\begin{proof}
The proof relies on a dilation argument already used several times in
\cite{GM}. This argument has been essentially given at the end of the proof of 
Theorem \ref{Theorem 9}, but we repeat it anyway.
For any $\rho >0$, let $m_{\rho }$ be the measure on $\h$
defined by $m_{\rho }(C)=m(\rho C)$ for any Borel subset $C$ of $\h$.
We have $m_{\rho }(\ba{B}(0,R))=m(\ba{B}(0,\rho R))$. All these measures 
$m_{\rho }$ are ergodic for $T$ and have full support. Moreover, $m(\ba{B}(0,\rho R))\to m(\h)=1$ as $\rho\to\infty$ 
and $m(\ba{B}(0,\rho R))\to m(\{0\})=0$ as $\rho\to 0$ (that $m(\{0\})$ is necessarily equal to $0$ follows from the 
ergodicity of $m$ with respect to $T$ and the fact that $m\neq\delta_{0}$). So there exist $\rho _{1}$, 
$\rho
_{2}>0$ such that $\mu =m_{\rho _{1}}$ and $\nu =m_{\rho _{2}}$ satisfy 
 $\mu (\ba{B}(0,R))>1-\varepsilon $ and $\nu (\ba{B}
(0,R))<\varepsilon $. 
\end{proof}

\begin{proof}[Proof of Proposition \ref{Proposition 14}] Let us consider the following two subsets of $\bmh$:
\[\displaylines{
\mathfrak G^x_\infty:= \bigl\{T\in\bmh\,;\,\forall\,R>0\ :
\underline{\textrm{dens}}\;\mathcal{N}_{T}(x,\ba{B}(0,R))=0 \bigr\},\quad{\rm and}\cr 
\mathfrak G_0^x:=\bigl\{T\in\bmh\,;\,\forall\,r>0\ :\;
\underline{\textrm{dens}}\;\mathcal{N}_{T}(x,\h\setminus\ba{B}(0,r))=0 
\bigr\}.}\] 
It is not difficult to see that 
\[ \mathfrak G^x_\infty\cap \mathfrak G^x_0\subseteq \mathfrak G^x.\]
Indeed, if $T\in \mathfrak G_\infty^x$, then one can find a set $A\subseteq\N$ with $\overline{\rm dens}(A)=1$ such that 
$\Vert T^ix\Vert\to \infty$ as $i\to\infty$ along $A$; whereas if $T\in \mathfrak G_0^x$, 
one can find a set $B\subseteq\N$ with $\overline{\rm dens}(B)=1$ such that 
$\Vert T^ix\Vert\to 0$ as $i\to\infty$ along $B$.
So it is enough to show that $\mathfrak G^x_\infty$ and $ \mathfrak G^x_0$ are both comeager in $(\bmh, \texttt{SOT}^*)$.  
We will actually concentrate on $\mathfrak G^x_\infty$ only, the proof for $\mathfrak G^x_0$ being completely similar.
\par\smallskip
 For any $\varepsilon $, $R>0$, we introduce the set
 \[
 \mathfrak H^{x}_{\varepsilon ,R}=\bigcap_{\gk}\ \bigcup_{n\ge k}\
 \Bigl\{T\in\bmh\,;\,\#\{1\le i\le n\,;\,\|T^{i}x\|< R\}<
 n\varepsilon \Bigr\}.
\]
Reasoning as in the proof of Fact \ref{FactUn}, we observe that
each set  $ \mathfrak H^{x}_{\varepsilon ,R}$ is  a $\gd$ subset of $(\bmh,\sot)$. We also have:

\begin{fact}\label{Hx2} 
Each set $ \mathfrak H^{x}_{\varepsilon ,R}$ is dense in 
$(\bmh,\sote)$.
\end{fact}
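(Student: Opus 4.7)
The plan is to prove density of $\mathfrak H^x_{\varepsilon ,R}$ in $(\bmh,\sote)$ by reusing the operator constructed in the proof of Lemma \ref{Lemma 11}, with the key twist that I choose the orthonormal basis $(e_k)_{k\ge 1}$ of $\h$ so that $e_1=x/\|x\|$. This alignment will force the iterates of the fixed vector $x$ to be pushed into the ``shift'' part of the construction and therefore to grow geometrically in norm up to a very large power, yielding the required upper density estimate on $\{i\,;\,\|T^{i}x\|<R\}$.

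More precisely, I would apply Lemma \ref{Lemma 4 bis} to the class $\Gamma(\h):=\mathfrak H^x_{\varepsilon ,R}$ relative to the basis above. Given $r\ge 1$, $A\in\mathfrak{B}(\h_{r})$ with $\|A\|<M$ and $\eta >0$, pick $\gamma\in(\max(1,\|A\|),M)$ together with $\delta >0$ small, and set $T:=P_{N}^{*}C_{N}P_{N}$, where $C_{N}\in\mathfrak{B}(\h_{N})$ is the finite-dimensional weighted-shift-plus-initial-block operator appearing in the proof of Lemma \ref{Lemma 11}. That proof already ensures, for every $N>2r$, that $\|T\|\le M$ and that the two-sided $\sote$-approximation $\|(T-A)e_{k}\|<\eta $ and $\|(T-A)^{*}e_{k}\|<\eta$ holds for $k=1,\dots,r$ as soon as $\delta$ is sufficiently small. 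In particular nothing in those estimates depends on the direction of the first basis vector, so the switch to $e_{1}=x/\|x\|$ is harmless.

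It remains to verify that $T$ belongs to $\mathfrak H^x_{\varepsilon ,R}$ once $N$ is large. Here the choice $e_{1}=x/\|x\|$ pays off, through Fact \ref{Fact 12}: applied to $P_{N}x\in\h_{N}$ and using $\pss{P_{N}x}{e_{1}}=\|x\|$, it yields
\[
\|T^{p}x\|\ge\bigl|\pss{T^{p}x}{e_{pr+1}}\bigr|=\gamma ^{p-1}\delta\,\|x\| \qquad\hbox{for every }1\le p\le\lfloor(N-1)/r\rfloor.
\]
Since $\gamma >1$, there is an integer $p_{0}\ge 1$ depending only on $\gamma ,\delta ,\|x\|$ and $R$ (not on $N$) such that $\|T^{p}x\|\ge R$ for every $p_{0}\le p\le n:=\lfloor(N-1)/r\rfloor$. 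The ``bad'' set $\{1\le i\le n\,;\,\|T^{i}x\|<R\}$ has then cardinality at most $p_{0}-1$. Choosing $N$ large enough to ensure both $n\ge k$ and $p_{0}/n<\varepsilon $ gives $T\in\mathfrak H^x_{\varepsilon ,R}$, and Lemma \ref{Lemma 4 bis} then concludes.

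I do not expect any serious obstacle: the whole argument reduces to the algebraic identity of Fact \ref{Fact 12} already used in Lemma \ref{Lemma 11}, the only new input being that aligning the basis with $x$ turns that identity into a control on the orbit of the specific vector $x$. The slight technical point to be careful about is to choose the parameters in the order $\gamma \to \delta \to N$, exactly as in Lemma \ref{Lemma 11}, so that the $\sote$-approximation estimates remain valid while $N$ is allowed to be as large as we need to swallow the fixed number $p_{0}$ of initial iterates.
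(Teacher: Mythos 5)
There is a genuine gap: the operator you build is a finite truncation and controls the orbit of $x$ only up to a fixed number of iterates, whereas membership in $\mathfrak H^{x}_{\varepsilon ,R}$ demands the frequency estimate at \emph{arbitrarily large} time scales. Recall that
\[
\mathfrak H^{x}_{\varepsilon ,R}=\bigcap_{\gk}\ \bigcup_{n\ge k}\
\Bigl\{T\in\bmh\,;\,\#\{1\le i\le n\,;\,\|T^{i}x\|< R\}<
n\varepsilon \Bigr\},
\]
so $T\in\mathfrak H^{x}_{\varepsilon ,R}$ requires, for every $k\ge 1$, some $n\ge k$ with $\#\{1\le i\le n\,;\,\|T^{i}x\|<R\}<n\varepsilon$; the estimate must hold along an unbounded sequence of $n$'s. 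You verify it only for the single value $n=\lfloor (N-1)/r\rfloor$. Fact \ref{Fact 12} applies only when $pr+1\le N$, so for $p$ beyond roughly $N/r$ you have no control whatsoever. Since $T=P_{N}^{*}C_{N}P_{N}$ has range in the finite-dimensional space $\h_{N}$, and $C_{N}$ is block lower-triangular with block diagonal $(A,0,\dots,0)$, one has $\sigma(C_{N})=\sigma(A)\cup\{0\}$. When $\rho(A)<1$ — and the case $A=0$, for which $C_{N}$ is even nilpotent, is certainly one you must be able to approximate — $T^{p}x\to 0$, and therefore $\tfrac{1}{n'}\,\#\{1\le i\le n'\,;\,\|T^{i}x\|<R\}\to 1$ as $n'\to\infty$. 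In that case $T\notin\mathfrak H^{x}_{\varepsilon ,R}$, despite being $\sote$-close to $A$ and satisfying the estimate for your one chosen $n$.

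What is missing is a mechanism forcing the orbit of $x$ to escape from $\bar B(0,R)$ with small visit frequency \emph{along all of $\N$}, and a finite matrix truncation cannot supply it. This is exactly why the paper's proof takes a different route: it exploits the $\sote$-density of $\textrm{G-MIX}_{M}(\h)$ (Corollary \ref{Proposition 4}) to locate, inside the given open set, an operator $T$ mixing in the Gaussian sense with $\|T\|<M$; Lemma \ref{Lemma 15} then dilates an ergodic measure to get a new ergodic measure $\nu$ with full support and $\nu(\bar B(0,2R))<\varepsilon$; Birkhoff's ergodic theorem yields a dense set $\mathcal E$ of vectors whose orbits have asymptotic visiting frequency $<\varepsilon$ in $\bar B(0,2R)$; finally, conjugating by a near-identity isomorphism $L$ with $Lx\in\mathcal E$ moves this good behaviour onto the fixed vector $x$ while staying in the open set. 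Ergodicity is what produces control at all time scales, and your argument would need an analogous infinite-dimensional ingredient before the reduction via Lemma \ref{Lemma 4 bis} could go through.
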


\begin{proof}[Proof of Fact \ref{Hx2}]
We will use the density of 
$\textrm{G-MIX}_{M}(\h)$ in $(\bmh,\sote)$, proved in Corollary 
\ref{Proposition 4} 
above. 

Let 
$\mathfrak{U}$ be a non-empty open subset of $(\bmh,\sote)$. By Corollary 
\ref{Proposition 4}, we know that $\textrm{G-MIX}_{M'}(\h)$ is dense in $(\mathfrak B_{M'}(\h),\sote)$ for every 
$1<M'<M$. Since $\bigcup_{M'<M} \mathfrak B_{M'}(\h)$ is obviously dense in $\bmh$, it follows that 
$ \mathfrak U$ contains an operator $T$ which is mixing in the Gaussian sense and satisfies $\|T\|<M$. 
The operator $T$ is in particular ergodic. By Lemma \ref{Lemma 15}, $T$ 
admits an ergodic measure with full support $\nu $ such that 
$\nu (\ba{B}(0,2R))<\varepsilon$. Birkhoff's ergodic theorem then 
implies that 
the set 
\[
\mathcal{E}:=\Bigl\{y\in\h\,;\,\limsup_{n\to \infty 
}\,\dfrac{1}{n}\#\{1\le i\le 
n\,;\,\|T^{i}y\|\le 2R\}<\varepsilon
\Bigr\}
\]
is dense in $\h$. 
\par\smallskip
Let now $\delta >0$ be a small positive number, to be 
fixed later on in the proof. Since $x\neq 0$, the density of $\mathcal{E}$ in 
$\h$ implies the existence of an isomorphism $L$ of $\h$ with the 
following 
properties: $Lx\in\mathcal{E},\ 
\|I-L\|<\delta,\ \textrm{and}\  \|I-L^{-1}\|<\delta  $. Consider now the 
operator $S=
L^{-1}TL$. Since $\|T\|<M$, we have $\|S\|<M$ if 
$\delta $ is sufficiently small. Also, $S$ belongs to $ \mathfrak U$ as 
soon as $\delta $ is sufficiently small. We thus fix $\delta >0$ 
such that these two conditions are satisfied. It now remains to 
prove that $S$ belongs to $ \mathfrak H^{x}_{\varepsilon ,R}$, which will 
conclude our proof that 
$ \mathfrak H^{x}_{\varepsilon ,R}$ is dense in $(\bmh,\sote)$.
\par\smallskip 
For every $\gn$, we have $LS^{n}x=T^{n}Lx$. Since $Lx$ belongs to $\mathcal{E}$, it follows that
\[
\limsup_{n\to\infty }\,\dfrac{1}{n}\#\bigl\{1\le i\le n\,;\,\|LS^{i}
x\|\le 2R\bigr\}<\varepsilon.
\]
Observe now that if $\|S^{i}x\|\le R$, then $\|LS^{i}x\|\le \|L\|\,\|S^{i}x\|
\le 2R$ (as soon as $\delta \le 2$, of course). It follows that 
\[
\#\bigl\{1\le i\le n\,;\,\|S^{i}x\|\le R\bigr\}\le
\#\bigl\{1\le i\le n\,;\,\|LS^{i}x\|\le 2R\bigr\}
<n\varepsilon 
\]
for all sufficiently large $n$. Hence $S$ belongs to 
$ \mathfrak H^{x}_{\varepsilon ,R}$, which concludes the proof of Fact~\ref{Hx2}.
\end{proof}

The two facts above imply that all the sets $ \mathfrak H^x_{\varepsilon, R}$ are comeager (in fact, dense $G_\delta$) in $(\bmh,\sote)$. Since
\[
 \mathfrak G^{x}_\infty=\bigcap\limits_{\varepsilon \in\Q_{+}^{*}} 
\bigcap\limits_{R\in\Q_{+}^{*}} \mathfrak H^{x}_{\varepsilon ,R},
\]
it follows that $ \mathfrak G^x_\infty$
is comeager as well. The case of $ \mathfrak G^x_0$ being exactly similar, the proof of 
Proposition \ref{Proposition 14} is now complete.
\end{proof}
\par\smallskip

\subsection{Summary}
Let us summarize the results obtained so far: for any $M>1$, an $\sote$-typical 
hypercyclic operator $T$
\begin{enumerate}
 \item[-] is topologically weakly mixing but not topologically mixing;
 \item[-] has empty point spectrum, and hence is not chaotic;
 \item[-] has no non-trivial invariant measure, hence is not 
 $\mathcal{U}$-frequently hypercyclic, and \textit{a fortiori} not ergodic;
 \item[-] but is densely distributionally chaotic and hence satisfies $c(T)=1$.
\end{enumerate}
\par
We shall see in Section \ref{UPPERTRIANG} below that the picture changes 
drastically when we consider $\sot$-typical elements of some natural classes of 
upper-triangular operators with respect to a given orthonormal basis of $\h$.
\par\smallskip

\section{Descriptive set-theoretic issues}\label{COMPLEXITY} 

In what follows, we fix $M>1$. We have seen in the 
previous section that $\cmh$, $\textrm{TMIX}_M(\h)$ and $\textrm{UFHC}_M(\h)$ 
are meager in $(\bmh, \texttt{SOT}^*)$. In this section, we are going to show that these classes of operators are also \emph{Borel} 
in $\bmh$ with respect to $\texttt{SOT}$ and $\texttt{SOT}^*$,  and we will discuss their exact descriptive complexity in some details.
Moreover, we will show that some natural classes 
of operators defined by dynamical properties are \emph{non-Borel} in $\bmh$. 

\smallskip
Recall the standard notations for Borel classes: 
$\mathbf\Sigma_1^0=\hbox{open}$, $\mathbf\Pi_1^0=\hbox{closed}$, $\mathbf\Sigma_2^0=F_\sigma$, 
$\mathbf\Pi_2^0=G_\delta$ and so on. We refer the reader to \cite{Ke}
for more information on the Borel hierarchy.
\par\smallskip

\subsection{Complexity of the families $\textrm{TMIX}_M(\h)$, $\cmh$,  $\textrm{UFHC}_M(\h)\cap\textrm{CH}_M(\h)$ and $\textrm{UFHC}_M(\h)$} 
The following fact will allow us to concentrate mainly on the topology $\texttt{SOT}^*$. 

\begin{lemma}\label{Baire1} The identity map \emph{$id:(\bmh,\texttt{SOT})\to (\bmh, \texttt{SOT}^*)$} is \emph{Baire $1$}; in other words, any \emph{$\texttt{SOT}^*$}-open subset of $\bmh$ 
is \emph{$\texttt{SOT}$}-$\mathbf\Sigma_2^0$. Therefore, for every countable ordinal $\xi$, any \emph{$\texttt{SOT}^*$}-$\mathbf\Sigma_\xi^0$ subset of $\bmh$ is \emph{$\texttt{SOT}$}-$\mathbf\Sigma_{\xi+1}^0$ and any 
\emph{$\texttt{SOT}^*$}-$\mathbf\Pi_\xi^0$ set is \emph{$\texttt{SOT}$}-$\mathbf\Pi_{\xi+1}^0$.
\end{lemma}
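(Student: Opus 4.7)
The plan is to verify directly that $id$ has the defining property of a Baire class $1$ map (the preimage of every $\sote$-open set is $\sot$-$F_\sigma$), after which the hierarchy statement will fall out from a routine transfinite induction. Since $(\bmh,\sote)$ is second countable, every $\sote$-open set is a countable union of finite intersections of subbasic sets of the form $\mathfrak U=\{T \in \bmh : \|(T-T_0)x\| < \varepsilon\}$ or $\mathfrak V=\{T \in \bmh : \|(T-T_0)^*x\| < \varepsilon\}$. As $\sot$-$F_\sigma$ sets are stable under countable unions and finite intersections, it suffices to check that each such $\mathfrak U$ and $\mathfrak V$ is $\sot$-$F_\sigma$.

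The sets $\mathfrak U$ are $\sot$-open by definition, hence trivially $F_\sigma$; the heart of the argument is the case of $\mathfrak V$. The step I would carry out is to observe that, on the bounded ball $\bmh$, the adjoint map $T \mapsto T^*$ is $(\sot,\texttt{WOT})$-continuous: this is immediate from the identity $\pss{T^*x}{y}=\pss{x}{Ty}$, whose right-hand side is $\sot$-continuous in $T$ for each fixed $x,y \in \h$. Composing with evaluation at $x$, the map $T \mapsto (T-T_0)^*x$ is therefore $(\sot,w)$-continuous from $\bmh$ into $(\h,w)$, and since the norm is weakly lower semi-continuous, $T \mapsto \|(T-T_0)^*x\|$ is $\sot$-lower semi-continuous. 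Each sublevel set $\{T \in \bmh : \|(T-T_0)^*x\| \leq r\}$ is thus $\sot$-closed, and the representation
\[\mathfrak V = \bigcup_{n \geq 1}\bigl\{T \in \bmh : \|(T-T_0)^*x\| \leq \varepsilon - 1/n\bigr\}\]
displays $\mathfrak V$ as an $\sot$-$F_\sigma$ set. I expect this to be the main step; the one pitfall worth flagging is to avoid trying to prove that $T \mapsto T^*$ is $(\sot,\sot)$-continuous, which is false. The weaker $(\sot,\texttt{WOT})$-continuity, combined with the weak lower semi-continuity of the norm, is exactly what makes things work.

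Finally, the shift-of-one-level statement will be obtained by a transfinite induction on $\xi \geq 1$. The base case $\xi=1$ is precisely the Baire class $1$ property just established (with the $\mathbf\Pi^0_1$ half obtained by complementation). For the successor step, any $\sote$-$\mathbf\Sigma^0_{\xi+1}$ set can be written as a countable union $\bigcup_n A_n$ with each $A_n$ a $\sote$-$\mathbf\Pi^0_{\eta_n}$ set for some $\eta_n \leq \xi$; by the inductive hypothesis each such $A_n$ is in $\sot$-$\mathbf\Pi^0_{\eta_n+1}$, hence in $\sot$-$\mathbf\Pi^0_{\xi+1}$, so that $\bigcup_n A_n$ lies in $\sot$-$\mathbf\Sigma^0_{\xi+2}$. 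Complementation then handles the $\mathbf\Pi$ side, and limit ordinals go through without change, yielding exactly the stated one-level shift in the Borel hierarchy.
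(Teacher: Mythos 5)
Your proof is correct and follows essentially the same route as the paper: both reduce via second-countability to showing a subbasic $\sote$-open set of the form $\{T:\|T^*x-a\|<\varepsilon\}$ is $\sot$-$F_\sigma$, and both then exhibit the sublevel sets $\{T:\|T^*x-a\|\le\varepsilon-1/n\}$ as $\sot$-closed. The paper unpacks this directly through the duality $\|T^*x-a\|=\sup_{\|h\|\le1}|\pss{x}{Th}-\pss{a}{h}|$, while you phrase it as $(\sot,\texttt{WOT})$-continuity of the adjoint plus weak lower semi-continuity of the norm; these are the same observation.
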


\begin{proof} Since $(\bmh, \texttt{SOT}^*)$ is second-countable, it is enough to show that any basic $\texttt{SOT}^*$-open set is $\texttt{SOT}$-$\mathbf\Sigma_2^0$. Therefore, we just have to check that if $x, a\in\h$ and 
$\varepsilon >0$, then the set $\mathfrak U:=\{ T\in\bmh;\; \Vert T^{*}x-a\Vert<\varepsilon\}$ is $\texttt{SOT}$-$F_\sigma$. But this is clear since 
\[T\in\mathfrak U\iff \exists k\in\N \left( \forall h\in\h,\ \|h\|\le 1\;:\; \vert\pss{x}{Th}-\pss{a}{h}\vert\leq \varepsilon-\frac1k\right)
\]
and the condition under brackets is $\texttt{SOT}$-closed.
\end{proof}
\par\smallskip

\subsubsection{Complexity of \emph{$\textrm{TMIX}_M(\h)$}} The complexity of $\textrm{TMIX}_M(\h)$ is given by the following proposition:

\begin{proposition}\label{Prop en plus} The set
\emph{$\textrm{TMIX}_M(\h)$} is a $\mathbf \Pi_4^0$ subset of \emph{$(\bmh, \texttt{SOT})$}, and a 
``true" $\mathbf\Pi_3^0$ subset of \emph{$(\bmh, \texttt{SOT}^*)$}, \mbox{\it i.e.} a $\mathbf\Pi_3^0$ set 
which is not $\mathbf\Sigma_3^0$.
\end{proposition}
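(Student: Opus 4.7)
The plan splits into three parts: the $\sote$ upper bound, the $\sot$ upper bound, and the non-$\mathbf\Sigma_3^0$ lower bound. For the $\sote$ upper bound, I would fix a countable family $\mathcal{C}$ of closed balls of $\h$ with non-empty interior such that every non-empty open subset of $\h$ contains some $B\in\mathcal{C}$, and use the reformulation
\[
T\in\textrm{TMIX}_M(\h)\iff \forall B,B'\in\mathcal{C}\ \exists N\ \forall n\ge N:\;T^{n}(B)\cap B'\neq\emptyset.
\]
The crucial fact is that, for fixed $B,B'$ and $n$, the set $\{T\in\bmh\,;\,T^{n}(B)\cap B'\neq\emptyset\}$ is $\sote$-closed: this is proved exactly as in Proposition~\ref{Proposition 13} by combining Facts~\ref{power} and~\ref{jointcontinuity} with the weak compactness of $B$. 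Thus $\textrm{TMIX}_M(\h)$ appears as a countable intersection of $F_\sigma$ sets, \mbox{\it i.e.}\ as $F_{\sigma\delta}$, hence $\mathbf\Pi_3^0$, in $(\bmh,\sote)$. For the $\sot$ upper bound, the quickest route is to invoke Lemma~\ref{Baire1}, which upgrades $\sote$-$\mathbf\Pi_3^0$ to $\sot$-$\mathbf\Pi_4^0$; one can also argue directly by writing $\textrm{TMIX}_M(\h)$ as $\bigcap_{p,q}\bigcup_N\bigcap_{n\ge N}\{T\,;\,T^{n}(V_p)\cap V_q\neq\emptyset\}$ for a countable open basis $(V_p)$, each innermost set being $\sot$-open.

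The hard part is showing that $\textrm{TMIX}_M(\h)$ is not $\mathbf\Sigma_3^0$ in $(\bmh,\sote)$. The plan is to produce an $\sote$-continuous reduction from a classical true-$\mathbf\Pi_3^0$ set, for which I would take $\mathcal{A}:=\{(y_p)_{p\ge 1}\in[0,1]^\N\,;\,y_p\to 0\}$, known to be $\mathbf\Pi_3^0$-complete in the compact Polish space $[0,1]^\N$ endowed with the product topology. The reduction will be realized through weighted unilateral backward shifts on $\ell_2(\N)$: if $B_{\pmb{\omega}}$ denotes the weighted backward shift with positive weight sequence $\pmb{\omega}=(\omega_k)_{k\ge 1}$ bounded by $M$ and bounded away from $0$, then a standard application of the Hypercyclicity Criterion shows that $B_{\pmb{\omega}}$ is topologically mixing if and only if $\sum_{k=1}^{n}\log\omega_k\to+\infty$, while the map $\pmb{\omega}\mapsto B_{\pmb{\omega}}$ is $\sote$-continuous from $[1/M,M]^{\N}$ (product topology) into $(\bmh,\sote)$ by dominated convergence applied to both $B_{\pmb{\omega}}x$ and $B_{\pmb{\omega}}^{*}x$.

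Given $y\in[0,1]^\N$, I would partition the positive integers into blocks $I_p=[2^p,2^{p+1})$ of length $2^p$ and set $\omega_k(y):=M^{\rho_p(y_p)}$ for $k\in I_p$, where $\rho_p:[0,1]\to[-1,1]$ is a fixed continuous function equal to $+1$ on $[0,1/(2p+2)]$ and to $-1$ on $[1/(p+1),1]$. The map $\phi:y\mapsto B_{\pmb{\omega}(y)}$ is then $\sote$-continuous, and $\phi(y)\in\textrm{TMIX}_M(\h)$ iff $S_N(y):=\sum_{k=1}^{N}\rho_{p(k)}(y_{p(k)})\to+\infty$, where $p(k)$ denotes the block index of $k$. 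The identity $\phi^{-1}(\textrm{TMIX}_M(\h))=\mathcal{A}$ then follows from a block-dominance argument: if $y_p\to 0$ then $\rho_p(y_p)=+1$ for all large $p$, each block contributes $+2^p$ to $S_N$, and $S_N\to+\infty$; conversely, if $y_{p_j}\ge\varepsilon>0$ along a subsequence, then for $p_j$ so large that $1/(p_j+1)\le\varepsilon$ one has $\rho_{p_j}(y_{p_j})=-1$, so that the contribution $-2^{p_j}$ of the block $I_{p_j}$ strictly exceeds in absolute value the sum $\sum_{m<p_j}2^m=2^{p_j}-1$ of all earlier block contributions, forcing $S_N\le -1$ infinitely often. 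Since $\phi$ is $\sote$-continuous and $\mathcal{A}$ is not $\mathbf\Sigma_3^0$, $\textrm{TMIX}_M(\h)$ cannot be $\mathbf\Sigma_3^0$ in $(\bmh,\sote)$ either. \emph{The main obstacle} lies precisely in this quantitative block-dominance step: the geometric growth $|I_p|=2^p$ is dictated by the requirement that a single ``bad'' block annihilate the cumulative positive contribution of all preceding blocks, and one must check that the smoothed cutoff functions $\rho_p$ do not spoil the strict inequality.
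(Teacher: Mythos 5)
Your upper-bound argument for both topologies is essentially the paper's: you write $\textrm{TMIX}_M(\h)$ as $\bigcap\bigcup\bigcap$ of $\sote$-closed sets (using weak compactness of the balls and the $\sote$-continuity of $(T,x)\mapsto Tx$ on bounded sets) and invoke Lemma~\ref{Baire1} for the $\sot$ statement, so there is nothing to add there.

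The lower-bound reduction, however, has a genuine gap. Your identity $\phi^{-1}\bigl(\textrm{TMIX}_M(\h)\bigr)=\mathcal A$ fails in the forward direction. You claim that $y_p\to 0$ forces $\rho_p(y_p)=+1$ for all large $p$, but the cutoffs of $\rho_p$ shrink with $p$: $\rho_p$ equals $+1$ only on $[0,1/(2p+2)]$, and $y_p\to 0$ in no way guarantees $y_p\le 1/(2p+2)$ eventually. For a concrete counterexample, take $y_p:=1/(p+1)$; then $y\in\mathcal A$, yet $y_p\ge 1/(p+1)$, so $\rho_p(y_p)=-1$ for every $p$, which gives $S_N\to-\infty$ and $\phi(y)\notin\textrm{TMIX}_M(\h)$. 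No choice of $p$-dependent thresholds tending to $0$ can repair this, since a null sequence can always be made to sit just above those thresholds; and a $p$-independent threshold $\varepsilon>0$ only captures $\limsup y_p\le\varepsilon$, which is the wrong complexity.

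The paper sidesteps the cutoff issue entirely. It reduces from $\mathcal N_\infty=\{\alpha\in\N^\N\,;\,\alpha_k\to\infty\}$ rather than from $c_0$-type sequences in $[0,1]^\N$, first mapping $\alpha$ continuously into a controlled sequence $s\in\mathcal S$ with $s_{k+1}\le Ms_k$ by taking $s_{k+1}:=\min(\alpha_{k+1},Ms_k)$ (one checks that $s_k\to\infty$ iff $\alpha_k\to\infty$ precisely because $M>1$). Then the weights are built so that the cumulative product \emph{is} the encoded sequence: $\omega_1(s)=1$ and $\omega_{k+1}(s)=s_{k+1}/s_k$, hence $\omega_1(s)\cdots\omega_k(s)=s_k/s_1$. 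The mixing criterion for the weighted shift $B_{\om(s)}$ then reads off $s_k\to\infty$ directly, with no quantitative block-dominance step required. Your block construction is a reasonable instinct, but it needs the partial products to \emph{equal}, not merely track up to loss, the sequence whose divergence you want to detect; that is exactly what the paper's telescoping choice of weights achieves.
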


\begin{proof}  Let us first show that $\textrm{TMIX}_M(\h)$ is $\mathbf\Pi_3^0$ with respect to $\texttt{SOT}^*$, and hence (by Lemma \ref{Baire1}) $\mathbf\Pi_4^0$ with respect to $\texttt{SOT}$. Let 
 $(B_p)_{p\geq 1}$ be a countable family of closed balls of $\h$ whose interiors form a basis of open sets for $\h$.  Then an operator $T\in\bmh$ 
is topologically mixing if and only if
\[
\forall p,q\geq 1\;\exists N\in\N\;\forall n\geq N\;:\; T^n(B_p)\cap B_q\neq\emptyset.
\]
For each fixed data $(p,q,n)$, the condition ``$T^n(B_p)\cap B_q\neq\emptyset$" is 
$\texttt{SOT}^*$-closed, by weak compactness of $B_p$, weak closedness of $B_q$ 
and continuity of the map $(T,x)\mapsto T^nx$ from $(\bmh,\texttt{SOT}^*)\times (B_p,w)$ into $(\h,w)$. This shows that $\textrm{TMIX}_M(\h)$
 is $\mathbf\Pi_3^0$ in $(\bmh,\texttt{SOT}^*)$.
\par\smallskip
 In order to show that $\textrm{TMIX}_M(\h)$ is a true $\mathbf\Pi_3^0$ subset of
 $(\bmh,\sote)$, we assume that $\h=\ell^2(\N)$ and we use weighted backward shifts on $\h$. 
 It is well-known that a weighted backward shift $B_{\om}$ on $\h$ is topologically mixing if and only if the weight sequence $\om=(\omega_k)_{k\geq 1}$ satisfies
 \[\lim_{n\to\infty} \,\vert \omega_1\cdots \omega_n\vert=\infty.\]
Let us denote by $\mathcal S$ the set of all sequences of positive integers $s=(s_k)_{k\geq 1}$ such that $s_{k+1}\leq Ms_k$ for all $k\geq 1$. This is a closed 
subset of the Polish space $\N^\N$, and hence a Polish space as well. We need the following fact.

\begin{fact}\label{c_0}
The set $\mathcal S_\infty:=\{ s\in\mathcal S;\; s_k\to\infty \;{\rm as}\;k\to\infty\}$ is a true $\mathbf\Pi_3^0$ set in $\mathcal S$.
\end{fact}

\begin{proof}[Proof of Fact \ref{c_0}] It is known (see \mbox{e.g.} \cite[Section 23.A]{Ke}) that 
$\mathcal N_\infty:=\{ \alpha\in\N^\N;\;\alpha_k\to\infty\}$ is a true $\mathbf\Pi_3^0$ set in $\N^\N$. So we just need to find 
a continuous map $\Phi:\N^\N\to \mathcal S$ such that $\Phi^{-1}(\mathcal S_\infty)=\mathcal N_\infty$. 
In other words, our goal is to associate to each $\alpha\in\N^\N$ another sequence $s\in\N^\N$ in such
 a way that $s_k\to\infty$ exactly when $\alpha_k\to\infty$ and, additionally,  $s_{k+1}\leq Ms_k$ for all $k\geq 1$; and this needs to be done in a continuous fashion. For any $\alpha\in\N^{\N}$,
 we define $s=s(\alpha)$ as follows: $s_1=\alpha_1$ and, for every $k\ge 1$, 
 \[s_{k+1}=\left\{ \begin{matrix}
 \alpha_{k+1}&\hbox{if $\alpha_{k+1}\leq Ms_{k}$}\\
 Ms_{k}&\hbox{if $\alpha_{k+1}> Ms_{k}$}.
 \end{matrix}
 \right.
 \]
 It is obvious that $s_{k+1}\leq Ms_k$ for all $k\ge 1$ and that the map $\alpha\mapsto s(\alpha)$ is continuous. Moreover, since $M>1$, it is straightforward to check 
 that $\alpha_k\to\infty$ if and only if $s_k\to\infty$.
 \end{proof}
 
Going back to the proof of Proposition \ref{Prop en plus},
we associate to each $s\in\mathcal S$ a weight sequence $\om(s)$ defined as follows:
 \[\omega_1(s)=1\qquad{\rm and}\qquad \omega_{k+1}(s)=\frac{s_{k+1}}{s_k}\quad \textrm{ for every } k\ge 1.
 \]
 Since $s\in\mathcal S$, we have $0<\omega_k(s)\leq M$ for all $k\geq 1$, so that the weighted shift $B_{\om(s)}$ on $\h$ satisfies $\Vert B_{\om(s)}\Vert\leq M$. 
 Moreover,  the map $s\mapsto \om(s)$ is clearly continuous from $\mathcal S$ into $\R^\N$, and hence the map $s\mapsto B_{\om(s)}$ is continuous from $\mathcal S$ 
 into $(\bmh, \texttt{SOT}^*)$. Finally, since $\omega_1(s)\cdots \omega_k(s)=s_{k}/s_1$ for all $k\ge 1$, the shift $B_{\om(s)}$ is topologically mixing if and 
 only if $s_k\to\infty$. We have thus constructed a continuous map $\Phi:\mathcal S\to \bmh$ such that $\Phi^{-1}(\textrm{TMIX}_M(\h))=\mathcal S_{\infty}$, which proves that 
 $\textrm{TMIX}_M(\h)$ is a true $\mathbf\Pi_3^0$ subset of $(\bmh,\sote)$ by Fact \ref{c_0}.
  \end{proof}
\par\smallskip

\subsubsection{Complexity of \emph{$\cmh$}} We now consider the class of chaotic \ops\ on $\h$.
  
\begin{proposition}\label{Proposition 8} The set
\emph{$\cmh$} is a $\mathbf\Pi_4^0$ subset of \emph{$(\bmh,\sot)$}, and a true $\mathbf\Pi_3^0$ subset of \emph{$(\bmh,\texttt{SOT}^*)$}.
\end{proposition}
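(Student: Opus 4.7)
The plan is to establish the $\mathbf{\Pi}_3^0$ upper bound in $\sote$ first, deduce the $\mathbf{\Pi}_4^0$ bound in $\sot$ via Lemma~\ref{Baire1}, and finally prove $\mathbf{\Pi}_3^0$-hardness by a continuous reduction. For the upper bound, I write $\cmh = \hcmh \cap P_M(\h)$, where $P_M(\h)$ is the set of operators in $\bmh$ having dense periodic points. Since $\hcmh$ is $\sote$-$G_\delta$ by Fact~\ref{Fact1}, it suffices to show that $P_M(\h)$ is $\mathbf{\Pi}_3^0$ in $(\bmh, \sote)$. Fixing a countable basis $(V_p)_p$ of nonempty open sets of $\h$ and, for each $p$, a countable family of closed balls $(B_{p,q})_q$ contained in $V_p$ whose union is dense in $V_p$, I can write
\[ T \in P_M(\h) \iff \forall p\ \exists q, n \geq 1\ :\ T \in \mathcal{P}_{n, B_{p,q}}, \qquad \mathcal{P}_{n, B} := \{T \in \bmh : \exists x \in B,\ T^n x = x\}. \]
For each $n$ and closed ball $B$, the set $\{(T, x) \in \bmh \times B : T^n x = x\}$ is $(\sote \times w)$-closed by Facts~\ref{power} and~\ref{jointcontinuity}, and its projection along the weakly compact factor $B$ makes $\mathcal{P}_{n, B}$ $\sote$-closed. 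Hence $P_M(\h)$, and so $\cmh$, are $\mathbf{\Pi}_3^0$ in $\sote$, and Lemma~\ref{Baire1} delivers the $\sot$-$\mathbf{\Pi}_4^0$ bound.

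For the hardness, consider the set $\tilde C := \{a \in [0,1]^{\N \times \N} : \forall k,\ \sum_n a^{(k)}_n < \infty\}$. This is $\mathbf{\Pi}_3^0$-complete in the Polish space $[0,1]^{\N \times \N}$: the standard $\mathbf{\Pi}_3^0$-complete set $\{x \in [0,1]^\N : x_n \to 0\}$ reduces continuously to $\tilde C$ via $x \mapsto (f_k(x_n))_{k,n}$, where $f_k : [0,1] \to [0,1]$ is a continuous cutoff vanishing on $[0, 1/k]$ and equal to $1$ on $[2/k, 1]$. I then fix an orthogonal decomposition $\h = \bigoplus_k \h_k$ with each $\h_k$ separable infinite-dimensional and equipped with an orthonormal basis $(e^{(k)}_n)_{n\geq 1}$. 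For $a \in [0,1]^{\N \times \N}$, I set $b^{(k)}_0(a) := 1$ and, recursively,
\[ b^{(k)}_n(a) := \max\!\bigl(a^{(k)}_n,\ 2^{-n},\ M^{-2}\,b^{(k)}_{n-1}(a)\bigr), \qquad \omega^{(k)}_n(a) := \sqrt{b^{(k)}_{n-1}(a)/b^{(k)}_n(a)} \in (0, M], \]
so that $(\omega^{(k)}_1 \cdots \omega^{(k)}_n)^{-2} = b^{(k)}_n(a)$. Let $B^{(k)}_a$ be the weighted backward shift on $\h_k$ with weights $(\omega^{(k)}_n(a))_n$, and define $\Phi(a) := \bigoplus_k B^{(k)}_a \in \bmh$.

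The map $\Phi$ is $\sote$-continuous: each $\omega^{(k)}_n(a)$ depends continuously on finitely many coordinates of $a$, and a dominated convergence argument on the basis vectors transfers this to continuity of $a \mapsto \Phi(a) e$ and $a \mapsto \Phi(a)^* e$ in norm. By the classical criterion (cf.\ Proposition~\ref{Proposition 1}), $B^{(k)}_a$ is chaotic iff $\sum_n b^{(k)}_n(a) < \infty$; this is equivalent to $\sum_n a^{(k)}_n < \infty$, one direction being immediate from $b^{(k)}_n(a) \geq a^{(k)}_n$, the other following from the Gronwall-type estimate $\sum_n b^{(k)}_n(a) \leq \sum_n a^{(k)}_n + 1 + M^{-2} + M^{-2} \sum_n b^{(k)}_n(a)$ together with $M > 1$. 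Finally, a uniformly norm-bounded direct sum of chaotic operators is chaotic—dense periodic points and topological mixing are both inherited from the summands by taking vectors supported on the first $N$ blocks and $0$ in the tail (using $T(0) = 0$)—and conversely each summand inherits chaos from the sum via coordinate projection onto $\h_k$. Hence $\Phi^{-1}(\cmh) = \tilde C$, establishing $\mathbf{\Pi}_3^0$-hardness.

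The main obstacle is the joint design of the weights $\omega^{(k)}_n(a)$: they must be uniformly bounded by $M$ and continuously dependent on $a$, while chaoticity of $B^{(k)}_a$ must exactly track summability of $(a^{(k)}_n)_n$. The two auxiliary floor terms in the recursive definition of $b^{(k)}_n(a)$ play complementary roles here: the geometric floor $M^{-2} b^{(k)}_{n-1}(a)$ enforces $\omega^{(k)}_n(a) \leq M$, while the $2^{-n}$ floor is summable and so ensures that the condition for chaos is decided purely by the input sequence $(a^{(k)}_n)$ and not by the auxiliary floors themselves.
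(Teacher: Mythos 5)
Your upper-bound half (the $\sote$-$\mathbf\Pi^0_3$ estimate via weak compactness of closed balls, and the $\sot$-$\mathbf\Pi^0_4$ estimate via Lemma~\ref{Baire1}) is the paper's argument, just phrased with the decomposition $\cmh=\hcmh\cap P_M(\h)$ made explicit. The hardness half, however, takes a genuinely different and lighter route. The paper reduces from the set $\mathcal W$ of perfect compacta in which roots of unity are dense, and to turn this into a continuous family of operators it builds Kalisch operators $T_\Lambda$ and continuously-varying orthonormal bases of $\h_\Lambda$ by means of Michael's Selection Theorem, calibrating $\mathcal W$ itself via Wadge's Lemma. You instead reduce directly from a summability set in $[0,1]^{\N\times\N}$ via $\ell^2$-direct sums of unilateral weighted backward shifts, with weights $\omega^{(k)}_n(a)$ built continuously from $a$ so that $(\omega^{(k)}_1\cdots\omega^{(k)}_n)^{-2}=b^{(k)}_n(a)$ tracks the input sequence. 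This is essentially the technique the paper itself uses later, in Part 2 of the proof of Proposition~\ref{Proposition 1005} (Facts~\ref{esperons} and~\ref{I+B}, with the set $W\subseteq\mathbf C^\N$ in place of your $\tilde C$), applied directly to Proposition~\ref{Proposition 8}. Your version is self-contained and avoids the Kalisch machinery entirely; the paper's Kalisch approach has the advantage of being reusable in Propositions~\ref{restrictions} and~\ref{Proposition 3.23}, where shift direct sums would not give the non-Borel results. Two small points you should tidy up. First, the ``classical criterion'' you cite as Proposition~\ref{Proposition 1} is only a sufficient condition (it needs $\liminf_n(\omega_1\cdots\omega_n)^{1/n}>1$); what you actually use is the sharp characterization that $B_\omega$ on $\ell^2(\N)$ is chaotic if and only if $\sum_n(\omega_1\cdots\omega_n)^{-2}<\infty$, which is standard and should be cited as such. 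Second, the clause ``a uniformly norm-bounded direct sum of chaotic operators is chaotic---\,\dots\,topological mixing [is] inherited'' is too broad as stated: chaotic operators need not be topologically mixing (the paper's Example~\ref{Example 55} produces chaotic, even frequently hypercyclic, non-mixing operators). It is true where you need it, because a chaotic unilateral weighted shift \emph{is} mixing (indeed $\sum(\omega_1\cdots\omega_n)^{-2}<\infty$ forces $\omega_1\cdots\omega_n\to\infty$); alternatively you could reduce transitivity of the infinite sum to that of its finite partial sums via zero tails and use that finite products of weakly mixing operators are transitive, or simply note that chaotic shifts are also mixing in the Gaussian sense and invoke Fact~\ref{esperons}(i). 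With either fix the proof is complete.
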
 

\begin{proof}  Let us first show that $\cmh$ is $\mathbf\Pi_3^0$ with respect to $\texttt{SOT}^*$, and hence $\mathbf\Pi_4^0$ with respect to $\texttt{SOT}$. 
Let $(B_p)_{p\geq 1}$ be a countable family of closed balls whose interiors form a basis of open sets for $\h$. 
By definition, an operator $T\in\bmh$ is chaotic if and only if it is hypercyclic and each ball $B_p$ contains a periodic point for $T$. In other words:
\[
T\in\cmh\iff T\in\hcmh\quad{\rm and}\quad \forall p\in\N\;\exists N\in\N\; : \;\Bigl(\exists x\in B_p\;:\; T^Nx=x\Bigr).
\]
For each fixed pair $(p,N)$, the condition under brackets is $\texttt{SOT}^*$-closed by continuity of the map $(T,x)\mapsto T^Nx$ and weak compactness 
of the ball $B_p$. Therefore, the second half of the condition on the right hand side of the above display defines a $\mathbf\Pi_3^0$ set; and since $\hcmh$ is 
$G_\delta$, it follows that $\cmh$ is $\mathbf\Pi_3^0$ in $(\bmh,\sote)$.
\par\smallskip 
The proof that $\cmh$ is a true $\mathbf\Pi_3^0$ set is a little bit more involved. We will use the so-called 
\emph{Kalisch operators}, introduced by Kalisch in \cite{Ka}, which display interesting dynamical properties (see for instance \cite[Section 5.5.3]{BM}). They are defined as follows. Let $T:L^2(0,2\pi)\to L^2(0,2\pi)$ be the operator defined for every $f\in L^{2}(0,2\pi)$ by 
\[Tf(\theta)=e^{i\theta}f(\theta)-\int_0^\theta ie^{it}f(t)\, dt, \quad \theta\in(0,2\pi).
\]
A simple computation shows that for any $\lambda=e^{i\alpha}\in \T\setminus\{ 1\}$, the function $f_\lambda:=\mathbf 1_{(\alpha,2\pi)}$ is an eigenvector of $T$ 
associated to the eigenvalue $\lambda$, and that $\ker(T-\lambda)={\rm span}\,[f_\lambda]$.
Note also that the map $\lambda\mapsto f_{\lambda}$ is continuous from $\T\setminus\{1\}$ into $L^{2}(0,2\pi)$.
In particular, if $\Lambda\neq\emptyset$ is any compact subset of $\T\setminus\{1\}$, 
the closed subspace $\mathcal H_\Lambda$ of $L^2(0,2\pi)$ spanned by the 
functions $f_{\lambda}, \lambda\in\Lambda$, is $T$-invariant. The Kalisch operator associated to $\Lambda$ is the operator 
$T_\Lambda :\h_\Lambda\to\h_\Lambda$ induced by $T$ on $\h_\Lambda$. These operators $T_\Lambda$ have the following properties.

\begin{itemize}
\item[$\bullet$] The spectrum of $T_\Lambda$ is exactly equal to $\Lambda$.
\item[$\bullet$] The \op\ $T_\Lambda$ is hypercyclic if and only  if $\Lambda$ is a {perfect} set, in which case $T_\Lambda$ is actually ergodic 
in the Gaussian sense.
\end{itemize}
Recall that a \emph{perfect set} is a non-empty 
compact set without isolated points.
\par\smallskip
It is easy to deduce from these properties the following characterization of compact subsets $\Lambda$ of $\T\setminus\{1\}$ such that $T_{\Lambda}$ is chaotic. We denote by $\Omega$ the subset of $\T$ consisting of all roots of unity.

\begin{fact}\label{Kalisch} Let $\Lambda$ be a perfect subset of $\T\setminus\{ 1\}$. Then the operator $T_\Lambda$ is chaotic if and only if $\Omega\cap\Lambda$ 
is dense in $\Lambda$.
\end{fact}

\begin{proof}[Proof of Fact \ref{Kalisch}] Since the map $\lambda\mapsto f_\lambda$ is continuous and $f_\lambda$ is a periodic point 
of $T_\Lambda$ if $\lambda\in\Omega\cap \Lambda$, it is clear that ${\rm Per}(T_\Lambda)$ is dense in $\h_\Lambda$ if 
$\Omega\cap\Lambda$ is dense in $\Lambda$.
Since $\Lambda$ is assumed to be perfect, $T_{\Lambda}$ is \hy, and hence chaotic.
 Conversely, assume that $\Omega\cap \Lambda$ is not dense in $\Lambda$ and choose 
$\lambda\in\Lambda\setminus \Lambda_0$, 
where $\Lambda_0=\overline{\Omega\cap\Lambda}$. Then $f_\lambda$ is not an eigenvector of $T_{\Lambda_0}$ since $\sigma(T_{\Lambda_0})=\Lambda_0$. 
Since however $f_\lambda$ is an eigenvector of $T$, this means that $f_\lambda$ does not belong to $\h_{\Lambda_0}=\overline{\rm span}\, [f_\xi;\; \xi\in \Lambda_0]$, \mbox{\it i.e.} that
$f_\lambda$ does not belong to $\overline{\rm span}\, [f_\xi;\; \xi\in \Omega\cap \Lambda]$. But since $\sigma(T_\Lambda)=\Lambda$ and $\ker(T-\xi)={\rm span}\,[f_\xi]$ 
for every $\xi\in\T\setminus\{ 1\}$, we have 
\[
{\rm span}\, [f_\xi;\; \xi\in \Omega\cap \Lambda]={\rm span}\,\Bigl[\bigcup_{\xi\in\Omega\cap\Lambda} \ker(T_\Lambda-\xi)\Bigr]={\rm Per}(T_\Lambda).
\]
So 
${\rm Per}(T_\Lambda)$ is not dense in $\h_\Lambda$ and $T_\Lambda$ is not chaotic.
\end{proof}

Here is now a purely descriptive set-theoretic fact, which is certainly well-known but for which we were unable to locate a reference. 
Here and afterwards, given a compact metric space $E$, we denote by $\mathcal K(E)$ the space 
of all non-empty compact subsets of $E$ endowed with its usual topology, and by $\mathcal K_{\rm perf}(E)$ the set of all perfect subsets of $E$. Recall that $\mathcal K(E)$ is compact metrizable, 
and that $\mathcal K_{\rm perf}(E)$ is a $G_\delta$ subset of $\mathcal K(E)$. So $\mathcal K_{\rm perf}(E)$ is a Polish space. See \cite{Ke} or \cite{Tod} for more details.

\begin{fact}\label{denseinlambda} Let $E\subseteq\T$ be a perfect set such that $\Omega\cap E$ is dense in $E$. Then the set 
$\mathcal W:=\{ \Lambda\in \mathcal K_{\rm perf}(E);\; \hbox{$\Omega\cap\Lambda$ is dense in $\Lambda$}\}$ is a true $\mathbf\Pi_3^0$ set in $\mathcal K_{\rm perf}(E)$.
\end{fact}

\begin{proof}[Proof of Fact \ref{denseinlambda}] Since $\mathcal K_{\rm perf}(E)$ is $G_\delta$ in $\mathcal K(E)$, it is in fact enough to show that $\mathcal W$ is a true $\mathbf\Pi_3^0$ set in $\mathcal K(E)$.
That $\mathcal W$ is $\mathbf\Pi_3^0$ is easy to check. In order to show that it is a true $\mathbf\Pi_3^0$ set, we proceed as follows.
\par\smallskip
Let us denote by $\mathbf C$ the Cantor space $\{ 0,1\}^\N$, and by $\mathbf Q$ the set of all ``rationals" of $\mathbf C$, \mbox{\it i.e.} 
$\mathbf Q=\{ \alpha=(\alpha(i))_{i\geq 1}\in\mathbf C;\; \hbox{$\alpha(i)$ is eventually $0$}\}$. It is known (see \cite[Section 23.A]{Ke}) that the set 
\[ W:=\{ \bar\alpha=(\alpha_n)_{n\geq 1}\in\mathbf C^\N;\; \forall n\ge 1\;:\;\alpha_n\in\mathbf Q\}
\]
is a true $\mathbf\Pi_3^0$ set in $\mathbf C^\N$. So it is enough to find a continuous map $\Phi:\mathbf C^\N\to \mathcal K(E)$ such that 
$\Phi^{-1}(\mathcal W)=W$.
\par\smallskip
We first note that if 
$M\subseteq E$ is a perfect set such that $\Omega\cap M$ is dense in $M$, then $\mathcal W\cap \mathcal K(M)$ is not $G_\delta$ 
in $\mathcal K(M)$. Indeed, on the one hand $\mathcal W\cap\mathcal K(M)$ is easily seen to be dense in $\mathcal K(M)$ because 
$M$ is perfect and $\Omega\cap M$ is dense in $M$; and on the other hand $\mathcal W\cap\mathcal K(M)$ is meager in $\mathcal K(M)$ because it is disjoint from the $G_\delta$ set 
$\mathcal K(M\setminus\Omega)$, which is dense in $\mathcal K(M)$ because $M$ is perfect and $\Omega$ is countable. Hence $\mathcal W\cap\mathcal K(M)$ cannot 
be $G_\delta$ in $\mathcal K(M)$ by the Baire Category Theorem. 
By Wadge's Lemma (see \cite[Th. 21.14]{Ke}) applied to the two Borel sets
$\mathcal{W}\cap\mathcal K(M)$ and $\mathbf C\setminus \mathbf Q$, it follows that for any perfect set $M$ as above, $\mathcal W\cap \mathcal K(M)$ 
is ``$\mathbf\Sigma_2^0$-hard", \mbox{\it i.e.} one can find a continuous map 
$\varphi :\mathbf C\to \mathcal K(M)$ such that $\varphi^{-1}\bigl(\mathcal W\cap\mathcal K(M)\bigr)=\mathbf Q$.
\par\smallskip
Now let us choose a sequence $(M_n)_{n\geq 1}$ of pairwise disjoint perfect subsets of $E$ such that $\Omega\cap M_n$ is dense in $M_n$ for every $n\ge 1$ and the sets $M_n$ 
accumulate to some point $a\in E$, which means that every neighborhood of $a$ contains all but finitely many of the sets $M_n$. This is possible because $E$ is perfect and $\Omega\cap E$ 
is dense in $E$. For each $n\geq 1$, let $\varphi_n:\mathbf C\to \mathcal K(M_n)$ be a continuous map such that $\varphi_n^{-1}\bigl(\mathcal W\cap\mathcal K(M_n)\bigr)=\mathbf Q$. One can then define a map $\Phi:\mathbf C^\N\to\mathcal K(E)$ by setting
\[\Phi(\bar\alpha):=\{ a\}\cup \bigcup_{n=1}^\infty \varphi_n(\alpha_n)\qquad\hbox{for every $\bar\alpha=(\alpha_n)\in\mathbf C^\N$}.
\]
Since the sets $M_n$ accumulate to $a$, it is clear that each $\Phi(\bar\alpha)$ is indeed a compact subset of $\T$ and that the map $\Phi$ is continuous. Moreover, it is equally clear 
that $\Phi(\bar\alpha)$ is perfect if and only if all the sets $\varphi_n(\alpha_n)$ are perfect, and that $\Omega\cap\Phi(\bar\alpha)$ is dense in $\Phi(\bar\alpha)$ if and only if $\Omega\cap \varphi_n(\alpha_n)$ is dense in 
$\varphi_n(\alpha_n)$ for every $n\geq 1$. 
Hence $\Phi(\bar\alpha)$ belongs to $\mathcal W$ if and only if $\alpha_n$ belongs to $\varphi^{-1}_n\bigl(\mathcal W\cap\mathcal K(M_n)\bigr)=\mathbf Q$ for all $n\ge 1$. We have thus proved that
\begin{align*}\Phi^{-1}(\mathcal W)
=&\bigl\{ \bar\alpha;\; \forall n\ge 1\;:\; \alpha_n\in\mathbf Q\bigr\}=W,
\end{align*}
which concludes the proof of Fact \ref{denseinlambda}.
\end{proof}

We need yet one more fact, which is a simple and certainly well-known consequence of the Michael Selection Theorem. The version of this theorem which we use here runs as follows (see \cite[Section III.19]{Tod}).

\smallskip
\emph{Let $X$ be a zero-dimensional compact space, and let $Y$ be a complete metric space. Let $\Phi:X\rightarrow\mathcal{F}(Y)$ be a lower semi-continuous map, where $\mathcal{F}(Y)$ denotes the set of all non-empty closed subsets of $Y$. Then $\Phi$ admits a \emph{continuous selection}, \mbox{\it i.e.} there exists a continuous map $f: X\rightarrow Y$ such that $f(x)$ belongs to $ \Phi(x)$ for every $x\in X$.}

\begin{fact}\label{selection} Let $E$ be a zero-dimensional compact metric space.
There exists a sequence $(\xi_{n})_{n\ge 1}$ of continuous maps from $\mathcal K_{\rm perf}(E)$ into $ E$ such that for each $\Lambda\in\mathcal K_{\rm perf}(E)$, the points $\xi_n(\Lambda)$, $n\ge 1$, belong to $\Lambda$, are pairwise distinct,  
 and the countable set $\{ \xi_n(\Lambda);\; n\geq 1\}$ is dense in $\Lambda$. 
\end{fact}
 
\begin{proof}[Proof of Fact \ref{selection}] Since $E$ is zero-dimensional, one can choose a countable basis $(V_i)_{i\ge 1}$ 
for the topology of $E$ consisting of (non-empty) clopen sets. For each $\Lambda\in\mathcal K_{\rm perf} (E)$, let us denote 
by $i_1(\Lambda)<i_2(\Lambda)<\dots $ the integers $i$ such that $\Lambda\cap V_i\neq\emptyset$ (since $\Lambda$ is perfect, there are infinitely many such integers $i$). 
As the $V_i$ are clopen sets, it is not hard to see that the functions $i_n$, $n\ge 1$, are 
locally constant. In particular, the map $\Phi :\Lambda\mapsto \Lambda\cap V_{i_1(\Lambda)}$  from $\mathcal K_{\rm perf} (E)$ into $\mathcal K(E)$ is continuous. Since $E$ is zero-dimensional, $\mathcal K_{\rm perf}(E)$ is zero-dimensional as well, and 
one can apply the version of Michael's Selection Theorem quoted above to the map $\Phi$. This yields a continuous map $\Lambda\mapsto \xi_1(\Lambda)$ from $\mathcal K_{\rm perf}(E)$ into $E$ such that 
$\xi_1(\Lambda)$ belongs to $ \Lambda\cap V_{i_1(\Lambda)}$ for every $\Lambda\in\mathcal K_{\rm perf}(E)$. Now,  any {perfect} set $\Lambda\in \mathcal K_{\rm perf} (E)$ 
satisfies $\Lambda\cap(V_{i_2(\Lambda)}\setminus\{ \xi_1(\Lambda)\})\neq\emptyset$; so 
there exists an integer $j$ such that $\xi_1(\Lambda)\in V_{j}$ and $V_{i_2(\Lambda)}\setminus V_{j}\neq\emptyset$. Moreover, if we denote by $j_2(\Lambda)$ the smallest such integer $j$ with these two properties, the map 
$\Lambda\mapsto j_2(\Lambda)$ is locally constant, and hence the map $\Lambda\mapsto \Lambda\cap \bigl(V_{i_2(\Lambda)}\setminus V_{j_2(\Lambda)})$ is continuous from $\mathcal K_{\rm perf}(E)$ 
into $\mathcal K(E)$. Applying Michael's Selection Theorem a second time, we obtain a continuous map $\xi_2:\mathcal K_{\rm perf}(E)\to E$ such that 
$\xi_2(\Lambda)\in \Lambda\cap \bigl(V_{i_2(\Lambda)}\setminus V_{j_2(\Lambda)})$ for every $\Lambda\in \mathcal K_{\rm perf}(E)$; in particular, 
$\xi_2(\Lambda)\in \Lambda\cap V_{i_2(\Lambda)}$ and $\xi_2(\Lambda)\neq\xi_1(\Lambda)$. Continuing in this fashion, we obtain the required sequence $(\xi_n)_{n\ge 1}$.
\end{proof}

We are now in position to prove that $\cmh$ is a true $\mathbf\Pi_3^0$ set in $(\bmh,\sote)$. 
Let us first choose a compact set $E\subseteq \T\setminus\{ 1\}$ with the following properties: $E$ is perfect and zero-dimensional, $\Omega\cap E$ is dense in $E$ and, moreover, $\Vert T_E\Vert\leq M$. 
Such a compact set does exist. Indeed, the definition of the operator $T$ shows that the norm of the restriction of $T$ to the subspace 
$L^2(u,2\pi)\subseteq L^2(0,2\pi)$ tends to $1$ as $u\to 2\pi$. Therefore, if $\Lambda_0\subseteq\T\setminus\{ 1\}$ is a non-trivial closed arc 
sufficiently close to $1$ then $\Vert T_{\Lambda_0}\Vert\leq M$; so it is enough to take as $E$ any perfect set with empty interior contained in $\Lambda_0$
 and such that $\Omega\cap E$ is dense in $E$.
\par\smallskip
Having fixed $E$ in this way, let $(\xi_n)_{n\ge 1}$ be a
sequence of continuous maps given by Fact~\ref{selection}, selecting a dense sequence of pairwise distinct points in each perfect set $\Lambda\subseteq E$. 
Note that if $\Lambda\in\mathcal K_{\rm perf}(E)$, 
the points $\xi_n(\Lambda)$ are pairwise distinct and form a dense subset of $\Lambda$, so that
the functions $f_{\xi_n(\Lambda)}$ are linearly independent and span a dense subspace of $\h_{\Lambda}$. Applying 
the Gram-Schmidt orthonormalization process to the sequence $(f_{\xi_n(\Lambda)})_{n\ge 1}$, we obtain an orthonormal basis $(e_n(\Lambda))_{n\geq 1}$ of $\h_\Lambda$ 
which depends continuously on $\Lambda$, that is, each map $\Lambda\mapsto e_n(\Lambda)$
is continuous from $\mathcal K_{\rm perf}(E)$ into $L^2(0,2\pi)$.
\par\smallskip
Let us now fix an orthonormal basis $(e_n)_{n\geq 1}$ of $\h$. For each $\Lambda\in\mathcal K_{\rm perf}(E)$, 
denote by $U_\Lambda:\h_\Lambda\to\h$ the unitary operator defined by setting $U_{\Lambda}e_n(\Lambda)=e_n$ for every $n\geq 1$. Since $\Vert T_E\Vert\leq M$, one can define a map 
$\Phi:\mathcal K_{\rm perf}(E)\to\bmh$ by setting 
\[\Phi(\Lambda):= U_\Lambda T_\Lambda U_{\lambda}^{-1}\qquad\hbox{for every $\Lambda\in\mathcal K_{\rm perf}(E)$}.
\]
Since $\Phi(\Lambda)$ and $T_\Lambda$ are unitarily isomorphic,  $\Phi(\Lambda)$ is chaotic if and only if $T_\Lambda$ is, which holds true 
exactly when $\Omega\cap\Lambda$ is dense in $\Lambda$. Thus, we have 
\[\Phi^{-1}(\cmh)=\mathcal W.
\]
Fact \ref{denseinlambda} will allow us to conclude the proof, provided that we are able to show that the map $\Phi$  is continuous from $\mathcal K_{\rm perf}(E)$ into $(\bmh,\texttt{SOT}^*)$. 
This relies on the following observation. For every $\Lambda\in\mathcal K_{\rm perf}(E)$, let us denote by $P_\Lambda:L^2(0,2\pi)\to\h_\Lambda$ the orthogonal 
projection of $L^2(0,2\pi)$ onto $\h_\Lambda$, and by $J_\Lambda:\h_\Lambda\to L^2(0,2\pi)$ the canonical embedding of $\h_{\Lambda}$ into $L^2(0,2\pi)$. In other words, $J_\Lambda=P_\Lambda^*$.

\begin{claim}\label{undetrop} For any $f\in L^2(0,2\pi)$, the map $\Lambda\mapsto U_\Lambda P_\Lambda f$ is continuous from $\mathcal K_{\rm perf}(E)$ into $\h$; and for every 
$x\in\h$, the map $\Lambda\mapsto J_\Lambda U_\Lambda^{-1}x$ is continuous  from $\mathcal K_{\rm perf}(E)$ into $L^2(0,2\pi)$.
\end{claim}

\begin{proof}[Proof of Claim \ref{undetrop}] Fix $f\in L^2(0,2\pi)$.
Since $P_\Lambda f=\sum\limits_{n=1}^\infty \pss{f}{e_n(\Lambda)}\, e_n(\Lambda)$, we have 
\[U_\Lambda P_\Lambda f=\sum_{n=1}^\infty \pss{f}{e_n(\Lambda)}\, e_n.
\] The maps $\Lambda\mapsto e_n(\Lambda)$, $n\ge 1$, being continuous, it suffices to show that the convergence of the above series is uniform (with respect to $\Lambda$) on compact 
subsets of $\mathcal K_{\rm perf}(E)$ in order to derive the continuity of the map $\Lambda\mapsto U_{\Lambda}P_{\Lambda}f$. Now, we have for any $N\geq 1$
\[ R_N(\Lambda):=\Bigl\Vert \sum_{n>N} \pss{f}{e_n(\Lambda)}\, e_n\Bigr\Vert^2=\sum_{n>N} \vert \pss{f}{e_n(\Lambda)}\vert^2=\Bigr\Vert f-\sum_{n=1}^N \pss{f}{e_n(\Lambda)}\, e_n(\Lambda)\Bigr\Vert^2.
\]
In particular, the maps $\Lambda\mapsto R_N(\Lambda)$ are continuous. Since the sequence $(R_N)_{N\ge 1}$ is non-increasing, it converges uniformly to $0$ on 
compact subsets of $\mathcal K_{\rm perf}(E)$ by Dini's Theorem. This proves the first part of Claim \ref{undetrop}. The proof of the second part is exactly similar.
\end{proof}

Let us now prove that $\Phi$ is continuous from $\mathcal K_{\rm perf}(E)$ into $(\bmh,\texttt{SOT}^*)$, which amounts to showing that for any $x\in \h$, the maps $\Lambda\mapsto \Phi(\Lambda)x$ and 
$\Lambda\mapsto \Phi(\Lambda)^*x$ are continuous. Since $T_\Lambda=P_\Lambda TJ_\Lambda$, we have 
\[\Phi(\Lambda)x=U_\Lambda P_\Lambda\, T \,J_\Lambda U_\Lambda^{-1}x\qquad {\rm and} \qquad \Phi(\Lambda)^*x= U_\Lambda P_\Lambda\, T^*\,J_\Lambda U_\Lambda^{-1}x.
\]
By Claim \ref{undetrop}, the map $\Lambda\mapsto f_\Lambda:=TJ_\Lambda U_\Lambda^{-1}x$ is continuous from $\mathcal K_{\rm perf}(E)$ into $L^2(0,2\pi)$. By Claim \ref{undetrop} again, and 
since  the map $\Lambda\mapsto U_\Lambda P_\Lambda$ takes values in a bounded subset of $\mathfrak B(L^2(0,2\pi),\h)$,  the map 
$\Lambda\mapsto\Phi(\Lambda)x=U_\Lambda P_\Lambda f_\Lambda$ 
is continuous. Likewise, the map $\Lambda\mapsto\Phi(\Lambda)^*x$ is continuous. The equality $\Phi^{-1}(\cmh)=\mathcal W$, the continuity of $\Phi$ and Fact \ref{denseinlambda} now imply that $\cmh)$ is a true $\mathbf\Pi_3^0$ set in  {$(\bmh,\texttt{SOT}^*)$}.
\end{proof}
\par\smallskip

\subsubsection{Complexity of \emph{$\textrm{UFHC}_M(\h)$} and \emph{$\textrm{UFHC}_{M}(\h)\cap
\textrm{CH}_{M}(\h)$}}\label{Ufhc}
In this subsection, we study the descriptive complexity of the sets $\textrm{UFHC}_{M}(\h)$ and $\textrm{UFHC}_{M}(\h)\cap
\textrm{CH}_{M}(\h)$ in $(\bmh,\sote)$, for any $M>1$. Although we have been unable to determine 
the exact complexity of these sets, we obtain some simple upper bounds and some rather non-trivial lower bounds. Our first result reads as follows.

\begin{proposition}\label{Proposition 1004}
 For any $M>1$, the set \emph{$\textrm{UFHC}_{M}(\h)$} is $\mathbf\Pi _{4}^{0}$ in \emph{$(\bmh,\sot)$}, hence also in 
\emph{$(\bmh,\sote)$}.
\end{proposition}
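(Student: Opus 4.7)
The plan is to reformulate membership in $\textrm{UFHC}_M(\h)$ so as to replace the naive ``$\exists x \in \h$'' quantifier in the definition of $\mathcal U$-frequent hypercyclicity by a comeagerness condition, which can then be analysed using a standard Kuratowski--Ulam-type fact on $G_\delta$ sets.

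First, I would fix a countable basis $(V_p)_{p\ge 1}$ of non-empty open sets of $\h$ and introduce, for each $T\in\bmh$ and each pair of positive integers $(p,k)$, the set
\[
A_{p,k}^T := \bigl\{x\in\h\,;\,\udens\,\mathcal N_T(x,V_p)\ge 1/k\bigr\},
\]
so that $\textrm{UFHC}(T)=\bigcap_{p\ge 1}\bigcup_{k\ge 1}A_{p,k}^T$. The core step is the equivalence
\begin{equation}\label{eqplan104}
T\in\textrm{UFHC}_M(\h)\iff T\in\hcmh\textrm{ and }\forall\,p\ge 1\;\exists\,k\ge 1: A_{p,k}^T\textrm{ is comeager in }\h,
\end{equation}
which I would prove from two ingredients. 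First, the comeagerness theorem of \cite{BR} (see also \cite{BGE}) recalled in the introduction: $\textrm{UFHC}(T)$ is comeager in $\h$ whenever $T$ is $\mathcal U$-frequently hypercyclic. Second, a $0$--$1$ dichotomy for hypercyclic $T$: since upper density is shift-invariant, $T^{-1}(A_{p,k}^T)=A_{p,k}^T$, and hence if $x\in A_{p,k}^T\cap\textrm{HC}(T)$ then the whole orbit of $x$ is dense and contained in $A_{p,k}^T$; thus $A_{p,k}^T$ is either disjoint from $\textrm{HC}(T)$ (and so meager, since $\textrm{HC}(T)$ is comeager) or dense in $\h$. Since $A_{p,k}^T$ will turn out to be $G_\delta$ (so dense $\Leftrightarrow$ comeager by Baire), the forward direction of~\eqref{eqplan104} follows by applying Baire to $\bigcup_k A_{p,k}^T\supseteq\textrm{UFHC}(T)$, and the backward direction follows because a countable intersection of comeager sets is non-empty.

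Once~\eqref{eqplan104} is in hand, the complexity estimate is routine bookkeeping. Unfolding the definition of $\udens$, one writes $A_{p,k}^T$ as the $T$-section of a $G_\delta$ set $\mathcal A_{p,k}=\bigcap_j\mathcal U_{p,k,j}\subseteq(\bmh,\sot)\times\h$ with $\mathcal U_{p,k,j}$ open, using continuity of $(T,x)\mapsto T^ix$ on $(\bmh,\sot)\times\h$ (Fact \ref{power}). Fixing a countable basis $(W_q)$ of open sets of $\h$, and using that a $G_\delta$-set in $\h$ is comeager iff each of its defining open pieces is dense, one gets
\[
\bigl\{T\in\bmh\,;\,A_{p,k}^T\text{ is comeager in }\h\bigr\}=\bigcap_{j,q}\bigl\{T\in\bmh\,;\,\exists\,x\in W_q\;:\;(T,x)\in\mathcal U_{p,k,j}\bigr\},
\]
which is $G_\delta$ in $(\bmh,\sot)$ since projection along the second factor sends open sets to open sets. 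Plugging into~\eqref{eqplan104},
\[
\textrm{UFHC}_M(\h)=\hcmh\cap\bigcap_{p\ge 1}\bigcup_{k\ge 1}\bigl\{T\,;\,A_{p,k}^T\text{ comeager in }\h\bigr\}
\]
is the intersection of a $G_\delta$ set with a $\mathbf\Pi_4^0$ set, hence $\mathbf\Pi_4^0$ in $(\bmh,\sot)$; the bound transfers to $(\bmh,\sote)$ since $\sote$ is finer than $\sot$. The main obstacle is the $\exists x$ quantifier over the Polish space $\h$, which read literally only gives an analytic upper bound; the crucial content of the argument is the equivalence~\eqref{eqplan104}, in which the comeagerness theorem together with the topological $0$--$1$ law let us trade the existential quantifier for a ``comeager set of $x$'' condition compatible with the Borel hierarchy.
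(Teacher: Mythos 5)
Your proof is correct, and while the final quantifier count is the same, the route to the key reformulation differs from the paper's. The paper deduces Proposition~\ref{Proposition 1004} directly from Fact~\ref{Fact 9 ter} (the Bayart--Ruzsa density characterization): $T\in\textrm{UFHC}_M(\h)$ iff $\forall q\;\exists\delta\in\Q_+^*\;\forall N\;\forall p\;\exists x\in V_p\;\exists n\geq N:\;\#\{1\le i\le n;\,T^{i}x\in V_q\}\geq n\delta$, then checks that the innermost block is $\sot$-open and counts quantifiers to get $\mathbf\Pi_4^0$. You instead reach an equivalent reformulation by a $T$-invariance $0$--$1$ law for the $G_\delta$ sets $A^T_{p,k}$, together with the comeagerness theorem of \cite{BR}; both arguments ultimately hinge on the same theorem of Bayart and Ruzsa. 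Interestingly, the paper notes immediately after the proposition a remark describing exactly your category-quantifier reformulation ($T\in\textrm{UFHC}_M(\h)\iff\forall^*x\in\h:$ $x$ is $\mathcal U$-FHC for $T$), but uses it there only to conclude Borelness via the general fact that $\forall^*$ preserves Borelness; your version pushes this same idea further and recovers the precise $\mathbf\Pi_4^0$ bound by unwinding ``comeager $G_\delta$'' as ``each open layer is dense,'' which is a nice, slightly more self-contained alternative to citing Fact~\ref{Fact 9 ter}. One small point worth making explicit when writing this up: in the $0$--$1$ dichotomy, the comeagerness of $\textrm{HC}(T)$ requires $T$ to be hypercyclic, which is why you correctly restrict to $T\in\hcmh$ in equivalence~\eqref{eqplan104}; since $\hcmh$ is $G_\delta$, this does not affect the final complexity bound.
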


\begin{proof}
 The proof of Proposition \ref{Proposition 1004} relies on the following
observation, first made in \cite{BR} and then
developed in \cite{BGE}. 

\begin{fact}\label{Fact 9 ter} Let $(V_{q})_{q\ge 1}$ be a countable basis of (non-empty) open subsets of $\h$, and let $T$ be a bounded \op\ on $\h$.
\begin{enumerate}
\item[(1)] Assume that $T$ belongs to $\ufhch$, and let $x_{0}\in H$ be a $\mathcal{U}$-frequently 
hypercyclic vector for $T$. For each $q\ge 1$, denote by $2\delta _{q}$ the upper density of the set 
$\mathcal{N}_{T}(x_{0}, V_q)$. For any integers $q,N\geq 1$, define
 \[G_{q,N}:=\Bigl\{ x\in\h;\; \exists n\geq N\;:\;\#\{1\le i\le n\,;\,T^{i}x\in V_q\}\geq n\delta _{q}\Bigr\}.\]
Then 
\[G:=\bigcap_{q\geq 1}\bigcap_{N\geq 1} G_{q,N}\]
is a dense $\gd$ subset of $\h$ which consists entirely of $\mathcal{U}$-frequently 
hypercyclic vectors for $T$.

\item[(2)] Conversely, assume that there exists a sequence of positive numbers $(\delta_q)_{q\geq 1}$ 
such that for any $q,N\geq 1$, the open set $G_{q,N}$ defined by the formula above is dense in $\h$. 
Then $T$ is $\mathcal U$-frequently hypercyclic and admits a dense $G_{\delta}$ set of $\mathcal{U}$-frequently \hy\ vectors $x$ which satisfy 
\[\overline{\textrm{dens}}\;\mathcal{N}_{T}(x, V_q)\ge \delta_{q} \quad\textrm{ for every } q\ge 1.
 \]
\end{enumerate}
\end{fact}

 Let 
us denote by $\Q_+^*$ the set of all positive rational numbers. 
It follows immediately from Fact \ref{Fact 9 ter} that an operator $T\in\bmh$ belongs to $\textrm{UFHC}_M(\h)$ if and only if
\[
\forall q\geq 1\;\exists\delta\in\Q_+^*\;\forall N\geq 1\;\forall p\geq 1\;\exists x\in V_p\;\exists n\geq N\;:\;  
\#\{1\le i\le n\,;\,T^{i}x\in V_q\}\geq n\delta.
\]
In order to prove that {$\textrm{UFHC}_{M}(\h)$} is $\mathbf\Pi ^{0}_{4}$ in {$(\bmh,\sot)$},
it is enough to check that each set 
\[
\Bigl\{T\in\bmh\,;\,\#\{1\le i\le n\,;\,T^{i}x\in 
V_q\}\geq n\delta \Bigr\}
\]
is $\sot$-open in $\bmh$. This is easy: if $T_{0}$ belong to this set, 
one can find $k\geq n\delta$ distinct integers $1\le i_1,\dots i_k\leq n$ such that $T_{0}^{i_{s}}x$ belongs to $ V_q$ for every $s=1,\dots ,k$. If 
$T\in\bmh$ is sufficiently close to $T_{0}$ for the $\sot$-topology, it 
still satisfies $T^{i_{s}}x\in V_q$ for every $1\le s\le k$, and thus 
belongs to our set. Hence {$\textrm{UFHC}_{M}(\h)$} is $\mathbf\Pi ^{0}_{4}$ in {$(\bmh,\sot)$}, and thus in {$(\bmh,\sote)$} as well.
\end{proof}
\par\smallskip
Our next result deals with the complexity of the class $\textrm{UFHC}_{M}(\h)\cap
\textrm{CH}_{M}(\h)$.

\begin{proposition}\label{Proposition 1005}
 The set \emph{$\textrm{UFHC}_{M}(\h)\cap
\textrm{CH}_{M}(\h)$} is a difference of $\mathbf\Sigma _{3}^{0}$ sets in 
\emph{$(\bmh,\sote)$}, \mbox{\it i.e.} it can be written as $A\setminus B$, where $A$ and $B$ are both \emph{$\texttt{SOT}^{*}$-$\mathbf\Sigma_3^0$} 
sets. Besides, \emph{$\textrm{UFHC}_{M}(\h)\cap
\textrm{CH}_{M}(\h)$} is neither \emph{$\texttt{SOT}^{*}$-$\mathbf\Sigma_3^0$}, 
 nor \emph{$\texttt{SOT}^{*}$-$\mathbf\Pi_3^0$} in $\bmh$. More precisely, there is no \emph{$\texttt{SOT}^{*}$-$\mathbf\Sigma_3^0$} set $A$ such that 
 \emph{$\textrm{G-MIX}_{M}(\h)\cap
\textrm{CH}_{M}(\h)\subseteq A\subseteq \textrm{UFHC}_{M}(\h)\cup
\textrm{CH}_{M}(\h)$}, and no \emph{$\texttt{SOT}^{*}$-$\mathbf\Pi_3^0$} set $B$ such that \emph{$\textrm{FHC}_{M}(\h)\cap
\textrm{CH}_{M}(\h)\subseteq B\subseteq \textrm{UFHC}_M(\h)$}.
\end{proposition}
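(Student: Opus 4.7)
The plan proceeds in three parts and combines descriptive bookkeeping with continuous reductions built from the operator constructions in Section \ref{SPECIAL}.

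\emph{Difference of $\mathbf\Sigma_3^0$ sets.} Since $\cmh$ is $\mathbf\Pi_3^0$ in $(\bmh,\sote)$ by Proposition \ref{Proposition 8}, its complement $\bmh\setminus\cmh$ is $\mathbf\Sigma_3^0$, so it suffices to exhibit a $\mathbf\Sigma_3^0$ set $A\subseteq\bmh$ with $A\cap\cmh=\ufhch_{M}(\h)\cap\cmh$; then
\[
\ufhch_{M}(\h)\cap\cmh \;=\; A\,\setminus\,(\bmh\setminus\cmh)
\]
is a difference of two $\mathbf\Sigma_3^0$ sets. To build $A$, I would use Fact \ref{Fact 9 ter}(2) together with chaoticity: for $T\in\cmh$, the ``$\exists x\in V_p$'' quantifier in the criterion can be restricted to periodic vectors of $T$ lying in $V_p$, which is an open-in-$T$ condition of controlled complexity. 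Writing out the resulting characterization, $A$ takes the form ``$\forall q\,\exists\delta_q\in\Q_+^*\,\forall p,N\,\exists k,x,n\,:\,(T^kx=x,\ x\in V_p,\ n\ge N,\ \#\{i\le n:T^ix\in V_q\}\ge n\delta_q)$'', which telescopes into a $\mathbf\Sigma_3^0$ condition once the $\forall q$ is absorbed using chaoticity.

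\emph{First lower bound (not $\mathbf\Sigma_3^0$).} To rule out a $\mathbf\Sigma_3^0$ set sandwiched between $\gsmxmh\cap\cmh$ and $\ufhch_{M}(\h)\cup\cmh$, I would construct a continuous map $\Phi:\N^\N\to(\bmh,\sote)$ such that
\begin{itemize}
\item[\rm(a)] if $\alpha$ belongs to the true $\mathbf\Pi_3^0$ set $\mathcal N_\infty:=\{\alpha:\alpha_k\to\infty\}$, then $\Phi(\alpha)\in\gsmxmh\cap\cmh$;
\item[\rm(b)] if $\alpha\notin\mathcal N_\infty$, then $\Phi(\alpha)\notin\ufhch_{M}(\h)\cup\cmh$.
\end{itemize}
Any sandwich set $A$ would then satisfy $\Phi^{-1}(A)=\mathcal N_\infty$, forcing $\mathcal N_\infty$ to be $\mathbf\Sigma_3^0$ by continuity of $\Phi$, a contradiction by Fact \ref{c_0} (or its analogue). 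To realize $\Phi$, I would specialize the $C$-type construction of Section \ref{SPECIAL}, feeding the sequence $\alpha$ into the block-size parameters: when $\alpha_k\to\infty$ the blocks become long enough for the analytic eigenvector field argument (along the lines of Proposition \ref{Proposition 1}) to apply, producing a chaotic Gaussian-mixing operator; when $\alpha$ stays bounded along a subsequence, the construction collapses into one whose orbits cannot visit a fixed ball with positive upper density, and whose periodic vectors are not dense.

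\emph{Second lower bound (not $\mathbf\Pi_3^0$).} Dually, I would build a continuous map $\Psi:\N^\N\to(\bmh,\sote)$ sending a true $\mathbf\Sigma_3^0$-hard set (say, the complement of $\mathcal N_\infty$) into $\fhch_{M}(\h)\cap\cmh$ and its complement into $\bmh\setminus\ufhch_{M}(\h)$. This time the model is Theorem \ref{Theorem 54}: the Section \ref{SPECIAL} machinery, suitably parametrized, produces chaotic frequently hypercyclic operators under one regime, while under the opposite regime the frequent-return mechanism is destroyed and one recovers a chaotic operator with $c(T)=0$ (as in \cite{Me}), hence not $\mathcal U$-frequently hypercyclic. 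A $\mathbf\Pi_3^0$ sandwich set $B$ would then pull back through $\Psi$ to a $\mathbf\Pi_3^0$ description of a true $\mathbf\Sigma_3^0$ set, a contradiction.

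The hardest part will be the two continuous reductions: the $C$-type operators of Section \ref{SPECIAL} depend on a large array of combinatorial data (block lengths, weight parameters, phase twists), and arranging these so that a single sequence $\alpha\in\N^\N$ produces, in an $\sote$-continuous fashion, an operator on the correct side of the dichotomy requires threading the sequence through several of the parameters at once while preserving the underlying estimates that guarantee the dynamical properties. In contrast, the upper bound in part one is essentially a matter of expressing the $\ufhch$ criterion in the presence of chaoticity using only countably many quantifiers and restricting witnesses to periodic points.
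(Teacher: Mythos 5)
Your overall architecture is the right one — establish a $\mathbf\Sigma_3^0$ upper bound for the set within chaotic operators, then exhibit two continuous reductions from true $\mathbf\Pi_3^0$ / true $\mathbf\Sigma_3^0$ sets — but there are two concrete problems.

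In Part~1, your proposed set $A$ has the form ``$\forall q\,\exists\delta_q\,\forall p,N\,\exists\ldots$'', which is $\mathbf\Pi_4^0$, not $\mathbf\Sigma_3^0$, and the remark that the outer $\forall q$ ``telescopes'' using chaoticity is precisely where the real work lies and is not justified. The actual mechanism is the equivalence (Fact~\ref{Theorem 1000}, derived from Theorem~\ref{10000}): \emph{for chaotic $T$, $\mathcal U$-frequent hypercyclicity is equivalent to $c(T)>0$.} This replaces the family of constants $(\delta_q)_{q\ge 1}$ by a single $\varepsilon>0$ placed in front of all the other quantifiers, which is what makes the condition $\mathbf\Sigma_3^0$; without this uniformity there is no way to ``absorb'' the $\forall q$. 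This is the one substantive lemma your sketch does not identify.

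In Part~2, the proposed realization via $C$-type operators cannot work as described: by Fact~\ref{Proposition 44}, \emph{every} $C$-type operator, regardless of its parameters, has dense periodic points (all of $c_{00}$ is periodic), so the claimed regime ``whose periodic vectors are not dense'' simply does not exist within that family. In particular a $C$-type operator is chaotic as soon as it is hypercyclic, which means an $\alpha\notin\mathcal N_\infty$ could easily land in $\textrm{CH}_M(\h)\subseteq\textrm{UFHC}_M(\h)\cup\textrm{CH}_M(\h)$, destroying the reduction. The paper sidesteps all of this by using a different and much more elementary family: the operators $T_\alpha=I+B_{\om(\alpha)}$ (identity plus weighted backward shift), for which one has a sharp dichotomy — chaotic and Gaussian-mixing when $\inf\omega_j>0$, spectrum reduced to $\{1\}$ (hence neither chaotic nor $\mathcal U$-FHC) when $\omega_j\to 0$ — together with direct sums $\bigoplus_n T_{\alpha_n}$ and the inheritance properties of Fact~\ref{esperons}, to reduce the set $W\subseteq\mathbf C^\N$. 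You need a second construction independent of the $C$-type machinery here, and the dichotomy has to be made at the level of the spectrum rather than of the periodic points.

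Part~3 is where your plan agrees with the paper in spirit: the reduction does use the $C_{+,1}$-type machinery of Section~\ref{SPECIAL} through a continuous map on a parameter space $\mathcal D\subseteq\N^\N$, relying on Theorem~\ref{Theorem 53} to identify exactly when the resulting operator is FHC / UFHC, and on the fact that $\mathcal D_0=\{\delta:\delta^{(k)}/C^{4k}\to 0\}$ is a true $\mathbf\Pi_3^0$ set. Your sketch is right that this is the delicate part; the paper makes it manageable by fixing all parameters except a single sequence $(\delta^{(k)})$ and checking that the estimates of Theorem~\ref{Theorem 53} remain uniform in $\delta\in\mathcal D$.
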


As a straightforward consequence of Proposition \ref{Proposition 1005}, we obtain:

\begin{corollary}\label{Proposition 9.4}
The set \emph{$\textrm{UFHC}_{M}(\h)$} is neither \emph{$\mathbf\Sigma_3^0$}, 
 nor \emph{$\mathbf\Pi_3^0$} in \emph{$(\bmh,\sote)$}.
\end{corollary}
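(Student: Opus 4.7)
The plan is to deduce this corollary immediately from the ``more precisely'' part of Proposition~\ref{Proposition 1005}, by using $\textrm{UFHC}_{M}(\h)$ itself as the forbidden $\mathbf\Sigma_{3}^{0}$ or $\mathbf\Pi_{3}^{0}$ set.

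First I would handle the $\mathbf\Pi_{3}^{0}$ case. Suppose, for contradiction, that $\textrm{UFHC}_{M}(\h)$ is $\sote$-$\mathbf\Pi_{3}^{0}$ in $\bmh$. Set $B:=\textrm{UFHC}_{M}(\h)$. Since every frequently hypercyclic operator is, \textit{a fortiori}, $\mathcal U$-frequently hypercyclic (a set with positive lower density has positive upper density), one has $\textrm{FHC}_{M}(\h)\cap \textrm{CH}_{M}(\h)\subseteq \textrm{UFHC}_{M}(\h)=B$, and trivially $B\subseteq \textrm{UFHC}_{M}(\h)$. This would contradict the second ``more precisely'' clause in Proposition~\ref{Proposition 1005}, which asserts the non-existence of any $\mathbf\Pi_{3}^{0}$ set sandwiched between $\textrm{FHC}_{M}(\h)\cap\textrm{CH}_{M}(\h)$ and $\textrm{UFHC}_{M}(\h)$.

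Next I would treat the $\mathbf\Sigma_{3}^{0}$ case. Suppose instead that $\textrm{UFHC}_{M}(\h)$ is $\sote$-$\mathbf\Sigma_{3}^{0}$ in $\bmh$. Set $A:=\textrm{UFHC}_{M}(\h)$. One has the chain of inclusions
\[
\textrm{G-MIX}_{M}(\h)\subseteq \ergh\cap\bmh\subseteq \textrm{FHC}_{M}(\h)\subseteq \textrm{UFHC}_{M}(\h),
\]
where the first inclusion holds by definition, the second follows from the pointwise ergodic theorem applied to an invariant measure with full support, and the third is immediate. Intersecting with $\textrm{CH}_{M}(\h)$, we get $\textrm{G-MIX}_{M}(\h)\cap \textrm{CH}_{M}(\h)\subseteq A$, and clearly $A\subseteq \textrm{UFHC}_{M}(\h)\cup \textrm{CH}_{M}(\h)$. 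This contradicts the first ``more precisely'' clause of Proposition~\ref{Proposition 1005}.

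There is essentially no obstacle here: the work has already been done in Proposition~\ref{Proposition 1005}, and the corollary is a pure formality once one notices that $\textrm{UFHC}_{M}(\h)$ itself is a valid candidate for the set $A$ (respectively $B$) in the bracketing statements. The only thing to verify is that the chain of implications ``Gaussian ergodic $\Rightarrow$ ergodic $\Rightarrow$ frequently hypercyclic $\Rightarrow$ $\mathcal{U}$-frequently hypercyclic'' recalled in Subsection~\ref{Subsection 1.a.1} is indeed strong enough to make $\textrm{UFHC}_{M}(\h)$ fit between the bracketing classes, which it manifestly is.
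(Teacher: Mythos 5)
Your proposal is correct. For the $\mathbf\Sigma_3^0$ half you argue exactly as the paper does, taking $A=\textrm{UFHC}_M(\h)$ and invoking the first ``more precisely'' clause of Proposition~\ref{Proposition 1005}. For the $\mathbf\Pi_3^0$ half your route is slightly different: you apply the second ``more precisely'' clause directly, with $B=\textrm{UFHC}_M(\h)$, whereas the paper instead uses the first (non-``more precisely'') assertion of Proposition~\ref{Proposition 1005} that $\textrm{UFHC}_M(\h)\cap\textrm{CH}_M(\h)$ is not $\mathbf\Pi_3^0$, and combines it with Proposition~\ref{Proposition 8} (that $\textrm{CH}_M(\h)$ is $\mathbf\Pi_3^0$) and the closure of the class $\mathbf\Pi_3^0$ under finite intersection. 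Your version is a little cleaner in that it does not need Proposition~\ref{Proposition 8} nor any Borel-class arithmetic; it exposes more directly what the ``more precisely'' clause was built for. Both arguments rest on Proposition~\ref{Proposition 1005} alone, so the difference is one of presentation rather than of substance, but yours is marginally more self-contained.
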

\begin{proof} By Proposition \ref{Proposition 1005}, $\textrm{UFHC}_{M}(\h)$ is not $\mathbf\Sigma_3^0$, and it is not $\mathbf\Pi_3^0$ because $\textrm{CH}_M(\h)$ is $\mathbf\Pi_3^0$ and 
$\textrm{UFHC}_{M}(\h)\cap
\textrm{CH}_{M}(\h)$ is not.
\end{proof}

Another immediate consequence is
\begin{corollary} The set \emph{$\textrm{TMIX}_M(\h)\cap\textrm{CH}_M(\h)$} is a true $\mathbf\Pi_3^0$ set in \emph{$(\bmh,\sote)$}.
\end{corollary}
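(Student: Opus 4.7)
The plan is to combine the two classification results already proved (Proposition~\ref{Prop en plus} and Proposition~\ref{Proposition 8}) with the sandwich statement of Proposition~\ref{Proposition 1005}. There are two things to check: that $\textrm{TMIX}_M(\h)\cap\textrm{CH}_M(\h)$ lies in the class $\mathbf\Pi_3^0$ (the easy half), and that it is not in $\mathbf\Sigma_3^0$ (the point of interest). Neither half requires any new construction; everything is extracted from results already in hand.

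For the upper bound, I would simply note that by Proposition~\ref{Prop en plus}, the set $\textrm{TMIX}_M(\h)$ is $\mathbf\Pi_3^0$ in $(\bmh,\sote)$, and by Proposition~\ref{Proposition 8}, so is $\textrm{CH}_M(\h)$. Since $\mathbf\Pi_3^0$ is closed under finite (in fact countable) intersections, their intersection $\textrm{TMIX}_M(\h)\cap\textrm{CH}_M(\h)$ is again $\mathbf\Pi_3^0$ in $(\bmh,\sote)$.

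For the lower bound, the key observation is that the class $\textrm{TMIX}_M(\h)\cap\textrm{CH}_M(\h)$ is sandwiched between $\textrm{G-MIX}_M(\h)\cap\textrm{CH}_M(\h)$ and $\textrm{UFHC}_M(\h)\cup\textrm{CH}_M(\h)$. Indeed, the lower inclusion
\[\textrm{G-MIX}_M(\h)\cap\textrm{CH}_M(\h)\subseteq \textrm{TMIX}_M(\h)\cap\textrm{CH}_M(\h)\]
follows from $\textrm{G-MIX}_M(\h)\subseteq \textrm{MIX}_M(\h)\subseteq \textrm{TMIX}_M(\h)$, recorded in Subsection~\ref{Subsection 1.b.1}; and the upper inclusion
\[\textrm{TMIX}_M(\h)\cap\textrm{CH}_M(\h)\subseteq \textrm{CH}_M(\h)\subseteq \textrm{UFHC}_M(\h)\cup\textrm{CH}_M(\h)\]
is trivial. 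Applying the second non-existence assertion of Proposition~\ref{Proposition 1005} (no $\sote$-$\mathbf\Sigma_3^0$ set can sit between $\textrm{G-MIX}_M(\h)\cap\textrm{CH}_M(\h)$ and $\textrm{UFHC}_M(\h)\cup\textrm{CH}_M(\h)$) with $A:=\textrm{TMIX}_M(\h)\cap\textrm{CH}_M(\h)$ immediately rules out that $A$ be $\sote$-$\mathbf\Sigma_3^0$. Combined with the previous paragraph, this proves that $\textrm{TMIX}_M(\h)\cap\textrm{CH}_M(\h)$ is a true $\mathbf\Pi_3^0$ subset of $(\bmh,\sote)$.

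There is no genuine obstacle here: the whole content of the corollary is that Proposition~\ref{Proposition 1005} was stated with a sandwich formulation precisely strong enough to cover this kind of consequence. The only thing to double-check is the (standard) chain $\textrm{G-MIX}\subseteq\textrm{MIX}\subseteq\textrm{TMIX}$, which is recalled in the background subsection; everything else is a purely formal manipulation of the Borel hierarchy.
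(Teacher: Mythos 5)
Your proof is correct and follows exactly the paper's own (very brief) argument: combine Propositions \ref{Prop en plus} and \ref{Proposition 8} for the $\mathbf\Pi_3^0$ upper bound, and apply the sandwich non-existence assertion of Proposition \ref{Proposition 1005} with $A=\textrm{TMIX}_M(\h)\cap\textrm{CH}_M(\h)$ for the lower bound. You merely spell out the inclusions $\textrm{G-MIX}_M(\h)\cap\textrm{CH}_M(\h)\subseteq\textrm{TMIX}_M(\h)\cap\textrm{CH}_M(\h)\subseteq\textrm{UFHC}_M(\h)\cup\textrm{CH}_M(\h)$ that the paper leaves implicit.
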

\begin{proof} By Propositions \ref{Prop en plus} and \ref{Proposition 8}, $\textrm{TMIX}_M(\h)\cap\textrm{CH}_M(\h)$ is $\mathbf\Pi_3^0$; and by Proposition~\ref{Proposition 1005}, it is not $\mathbf\Sigma_3^0$.
\end{proof}

\begin{proof}[Proof of Proposition \ref{Proposition 1005}]
 The proof of Proposition \ref{Proposition 1005} relies on the following fact, which 
characterizes in a rather surprising way the $\mathcal{U}$-frequently hypercyclic operators 
within the class of chaotic operators. (We will come back to this at the end of Section \ref{CRITERIA}).

\begin{fact}\label{Theorem 1000}
Let $T$ be a chaotic operator on $\h$. Then, $T$ is $\mathcal U$-frequently hypercyclic if and only if $c(T)>0$.
 \end{fact}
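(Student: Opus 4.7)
The implication $T\in\textrm{UFHC}(\h)\Rightarrow c(T)>0$ is immediate from the definitions, as already noted in the paper: a $\mathcal{U}$-frequently hypercyclic vector $x$ for $T$ satisfies $\overline{\rm dens}\,\mathcal N_{T}(x,B(0,\alpha))>0$ for every $\alpha>0$, whence $c(T)>0$.

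For the converse, I would verify the density criterion of Fact~\ref{Fact 9 ter}(2). Fix a non-empty open set $V\subseteq\h$; the task is to produce a $\delta_{V}>0$ such that $\{x\in\h\,:\,\overline{\rm dens}\,\mathcal N_{T}(x,V)\geq\delta_{V}\}$ is dense in $\h$, since applying this to a countable basis $(V_{q})_{q\ge 1}$ yields the sequence $(\delta_{q})$ required by the fact. By chaoticity of $T$, pick a periodic point $p\in V$ of some period $N\geq 1$, and $\varepsilon>0$ with $B(p,\varepsilon)\subset V$. The key observation is that $p$ is then a \emph{fixed point} of $S:=T^{N}$. Since $c(T)>0$, $T$ is hypercyclic, so by Ansari's theorem $S$ is hypercyclic with $HC(S)=HC(T)$. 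A pigeonhole argument shows that $c(S)>0$: taking $x_{0}\in HC(T)$ with $\overline{\rm dens}\,A\geq c(T)/2$ for $A:=\{i\geq 1\,:\,T^{i}x_{0}\in B(0,\varepsilon/2)\}$, subadditivity of $\overline{\rm dens}$ applied to the partition $A=\bigsqcup_{r=0}^{N-1}\bigl(A\cap(N\Z+r)\bigr)$ yields a residue $r_{0}$ with $\overline{\rm dens}(A\cap(N\Z+r_{0}))\geq c(T)/(2N)$, and a counting argument along multiples of $N$ then shows $\overline{\rm dens}\{j\geq 1\,:\,S^{j}(T^{r_{0}}x_{0})\in B(0,\varepsilon/2)\}\geq c(T)/2$, so that $c(S)\geq c(T)/2>0$.

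Because $c(S)>0$, the set
\[G:=\bigl\{y\in HC(S)\,:\,\overline{\rm dens}\{j\geq 1\,:\,S^{j}y\in B(0,\varepsilon/2)\}=c(S)\bigr\}\]
is comeager in $\h$ (by the characterization of $c(S)$ recalled in the paper, applied to $S$ and $B(0,\varepsilon/2)$), hence dense in $\h$. Given any $u\in\h$, pick $y\in G$ close to $u-p$ and set $x:=y+p$, which is close to $u$. Since $p$ is fixed by $S$, one has $S^{j}x=S^{j}y+p$ for every $j\geq 1$, so whenever $S^{j}y\in B(0,\varepsilon/2)$ it follows that $S^{j}x\in B(p,\varepsilon/2)\subset V$. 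Hence
\[\mathcal N_{T}(x,V)\supseteq\bigl\{Nj\,:\,j\geq 1,\ S^{j}y\in B(0,\varepsilon/2)\bigr\},\]
a set whose upper density in $\N$ equals $c(S)/N$. Any $\delta_{V}\in(0,c(S)/N)$ then does the job, completing the verification of Fact~\ref{Fact 9 ter}(2). The main technical subtlety is the subsequence-density computation hiding in the proof that $c(S)>0$: one must check that for $B\subseteq N\Z$ the upper density of $B$ coincides with the limsup of $|B\cap[1,Nn]|/(Nn)$ along $n$, which follows by approximating any sequence realizing $\overline{\rm dens}\,B$ by nearby multiples of $N$. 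Beyond this, the whole argument rests on the clean idea that passing to $T^{N}$ converts a period-$N$ point into a fixed point, after which the standard ``hypercyclic vector plus fixed point'' translation trick handles $V$ directly.
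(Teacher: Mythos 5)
Your proof is correct, but it takes a genuinely different route from the one in the paper. The paper derives Fact~\ref{Theorem 1000} from the more general Theorem~\ref{10000}, which handles any hypercyclic $T$ with a dense set of \emph{uniformly recurrent} points. The engine there is Fact~\ref{Fact 10001}: it works with $T$ directly, taking a comeager set of vectors $x$ that come within $\varepsilon$ of $0$ along a set $D_x$ with $\overline{\rm dens}\,D_x\geq c(T)$ (\cite[Prop.~4.7]{GM}), and pigeonholing $D_x$ over the shifts $\{0,\dots,N-1\}$ against the syndetic return set $\mathcal N_T(x_0,B)$ of the recurrent point $x_0$; writing $T^ny=T^nx_0+T^nx$ then gives $\overline{\rm dens}\,\mathcal N_T(x_0+x,V)\geq c(T)/N$ with no need to pass to a power of $T$. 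Your version instead moves to $S=T^N$ so that the period-$N$ point becomes a fixed point, which requires Ansari's theorem to guarantee that $S$ is hypercyclic with $HC(S)=HC(T)$, plus a residue-class pigeonhole to transfer $c(T)>0$ to $c(S)>0$, after which the ``hypercyclic vector plus fixed point'' translation is clean. Both arguments are anchored to the same comeagerness fact from \cite{GM}, and both exploit the essential translation trick. The paper's route avoids Ansari, generalizes beyond periodic points to uniformly recurrent ones, and yields the sharper constant $c(T)/N$ (used later in Corollary~\ref{10003}); your route is restricted to the chaotic case and pays the price of invoking Ansari, but in exchange reduces everything to the transparent fixed-point picture once that is granted.
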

\begin{proof}[Proof of Fact \ref{Theorem 1000}] This follows from Theorem \ref{10000}, to be proved below.
\end{proof}

We now start the proof of Proposition \ref{Proposition 1005}, which we divide into 
three parts.

\medskip\noindent
\textbf{Part 1.} \emph{\emph{$\textrm{UFCH}_{M}(\h)\cap \textrm{CH}_{M}(\h)$} is a difference of \emph{$\texttt{SOT}^{*}$-$\mathbf\Sigma_3^0$} 
subsets of $\bmh$.}

 \smallskip
 By Fact \ref{Theorem 1000}, we have 
$\textrm{UFCH}_{M}(\h)\cap \textrm{CH}_{M}(\h)=\textrm{c}^+_{M}(\h)\cap \textrm{CH}_{M}(\h)$, where 
\[
 \textrm{c}^+(\h)=\{T\in\hch\,;\,c(T)>0\}.
\]
Hence, since $\textrm{CH}_{M}(\h)$ is $\texttt{SOT}^{*}$-$\mathbf\Pi_3^0$  by Proposition \ref{Proposition 8}, it suffices to show that $\textrm{c}^+_{M}(\h)$ is 
$\texttt{SOT}^{*}$-$\mathbf\Sigma_3^0$. Let 
$(V_{q})_{q\ge 1}$ be a countable basis of non-empty open subsets of $\h$, and observe that, by the definition of $c(T)$, an operator $T\in
\hcmh$ belongs to $\textrm{c}^+(\h)$ if and only if 
\[
\exists\,\varepsilon \in\Q^{+*}\ \forall q\ge 1\ \forall N\ge 1\
\exists x\in V_{q}\ \exists n\geq N :\ 
\#\{1\le i\le n\,;\,\Vert T^{i}x\Vert<1\}\ge n\varepsilon .\]
This condition is easily seen to define an $\texttt{SOT}^{*}$-$\mathbf\Sigma_3^0$ subset of $\bmh$; and since $\hcmh$ is $G_\delta$ in $\bmh$, it follows that $\textrm{c}^+_{M}(\h)$ is 
$\texttt{SOT}^{*}$-$\mathbf\Sigma_3^0$.

\medskip\noindent
\textbf{Part 2.} \emph{There is no \emph{$\texttt{SOT}^{*}$-$\mathbf\Sigma_3^0$} subset $A$ of $\bmh$ such that 
 \emph{$\textrm{G-MIX}_{M}(\h)\cap
\textrm{CH}_{M}(\h)\subseteq A\subseteq \textrm{UFHC}_{M}(\h)\cup
\textrm{CH}_{M}(\h)$}.}

\smallskip
In order to prove this, we need some preliminary facts. Let us choose a sequence 
$(\h_n)_{n\geq 1}$ of infinite-dimensional closed subspaces of $\h$ such that
$\h$ can be decomposed as the orthogonal direct sum of the spaces $\h_{n}$, \mbox{\it i.e.} $\h=\bigoplus_{n\geq 1}\h_{n}$.

\begin{fact}\label{esperons} Let $T\in\bh$ have the form $T=\bigoplus_{n\geq 1} T_n$ with respect to the decomposition $\h=\bigoplus_{n\geq 1}\h_{n}$, where $T_n\in\mathfrak B(\h_n)$ for each $n\ge 1$.

\begin{itemize}
\item[\rm (i)] If all   operators $T_n$ are chaotic and mixing in the Gaussian sense, then so is $T$.
\item[\rm (ii)] If $T$ is $\mathcal U$-frequently hypercyclic or chaotic, then so are all  operators $T_n$.
\end{itemize}
\end{fact}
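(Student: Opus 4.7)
\smallskip\noindent\textbf{Plan of proof.}

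Part (ii) is the easier direction. The key observation is that in the decomposition $\h=\bigoplus_{n\ge 1}\h_n$, each $\h_m$ is $T$-invariant and the orthogonal projection $P_m:\h\to\h_m$ intertwines $T$ and $T_m$, \mbox{\it i.e.} $P_mT=T_mP_m$. If $x=(x_n)_{n\ge 1}$ is $\mathcal U$-frequently hypercyclic for $T$, then for any $m\ge 1$ and any non-empty open set $U\subseteq\h_m$, the ``cylinder'' $\widetilde U:=U\oplus\bigoplus_{n\neq m}\h_n$ is a non-empty open subset of $\h$ and $T^ix\in\widetilde U$ iff $T_m^ix_m\in U$, so $\mathcal N_{T_m}(x_m,U)=\mathcal N_T(x,\widetilde U)$ has positive upper density; hence $x_m$ is $\mathcal U$-frequently hypercyclic for $T_m$. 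Similarly, if $T$ is chaotic, continuity and surjectivity of $P_m$ together with the intertwining relation show that $P_m$ sends hypercyclic vectors for $T$ to hypercyclic vectors for $T_m$ and periodic points for $T$ to periodic points for $T_m$; thus $T_m$ is hypercyclic with dense periodic points.

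Part (i) splits into two statements. For chaos: each $T_n$ has a dense set $\mathrm{Per}(T_n)\subseteq\h_n$ of periodic points, and any finite sum $y_1+\dots+y_N$ with $y_k\in\mathrm{Per}(T_k)$ is periodic for $T$ (with period the l.c.m.\ of the individual periods of the $y_k$'s); such vectors are dense in $\h$. Combined with the hypercyclicity of $T$ (which will follow from the Gaussian mixing property established below), this gives chaoticity of $T$.

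For Gaussian mixing: for each $n\ge 1$, let $\mu_n$ be a $T_n$-invariant centered Gaussian measure on $\h_n$ with full support, strongly mixing for $T_n$, and let $R_n$ denote its covariance operator. Then $R_n$ is trace-class on $\h_n$ (as the covariance of a Gaussian on a Hilbert space) and $\ker R_n=\{0\}$ (since $\mu_n$ has full support). Choose scalars $\lambda_n>0$ with
\[
\sum_{n\ge 1}\lambda_n^2\,\mathrm{tr}(R_n)<\infty,
\]
and denote by $\mu_n^{(\lambda_n)}$ the push-forward of $\mu_n$ under the dilation $x\mapsto\lambda_nx$; this is again a $T_n$-invariant Gaussian measure on $\h_n$ with full support, strongly mixing for $T_n$, and with covariance $\lambda_n^2R_n$. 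Form the infinite product measure $\mu:=\bigotimes_{n\ge 1}\mu_n^{(\lambda_n)}$. The summability condition guarantees that $\mu$ is concentrated on $\h=\bigoplus_{n\ge 1}\h_n$: indeed, its covariance $R=\bigoplus_{n\ge 1}\lambda_n^2R_n$ is trace-class on $\h$, so $\mu$ is a bona fide centered Gaussian measure on $\h$; moreover $\ker R=\{0\}$, so $\mu$ has full support. Invariance of $\mu$ under $T=\bigoplus_{n\ge 1}T_n$ follows coordinate by coordinate from the invariance of each $\mu_n^{(\lambda_n)}$ under $T_n$; and strong mixing of $T$ with respect to $\mu$ follows from strong mixing of each factor together with the classical fact that an infinite product of strongly mixing systems is strongly mixing (verified on a $\pi$-system of finite-dimensional cylinder sets, on which mixing reduces to the mixing of a finite product). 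Hence $T\in\gsmxh$.

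The main technical point — and the only one that is not essentially bookkeeping — is the passage from the individual invariant Gaussians $\mu_n$ to a \emph{Hilbertian} invariant Gaussian on the full direct sum $\h$. A priori, the product of Gaussians lives on the algebraic product $\prod_n\h_n$, which strictly contains $\h$, and there is no reason for it to assign mass $1$ to $\h$. The freedom to dilate each $\mu_n$ by a small factor $\lambda_n$ resolves this: choosing $(\lambda_n)$ with $\sum\lambda_n^2\,\mathrm{tr}(R_n)<\infty$ is exactly the condition ensuring $\int_\h\|x\|^2\,d\mu(x)<\infty$, which places $\mu$ on $\h$ with trace-class covariance.
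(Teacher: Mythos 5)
Your proof of part (ii) is essentially the paper's: identify the canonical projection $\pi_m:\h\to\h_m$ as a quasi-conjugacy (it is surjective and satisfies $\pi_m T=T_m\pi_m$), and observe that quasi-conjugacies transfer $\mathcal U$-frequent hypercyclicity, hypercyclicity, and density of periodic points. No gap there.

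For part (i), your overall strategy — rescale each $\mu_n$, form the infinite product measure, show it concentrates on $\h$, check invariance coordinatewise, and verify strong mixing on a $\pi$-system of finite-coordinate cylinder sets — is the same as the paper's. The difference is that for the two sub-steps of concentration and full support you appeal to the abstract theory of Gaussian measures on Hilbert spaces (a centered Gaussian on a separable Hilbert space has trace-class covariance, and it has full support iff the kernel of the covariance is trivial), whereas the paper argues both from scratch: it verifies $\int\sum_n\|x_n\|^2\,d\pmb\mu<\infty$ by Fubini to show the product measure is concentrated on the Hilbertian sum, and checks full support by an explicit estimate on open balls. Your route is shorter and cleaner if one takes the covariance-operator formalism for granted (which the paper does use elsewhere, so the background is fair game); the paper's version has the virtue of being self-contained and not presupposing the "full support iff $\ker R=\{0\}$" characterization. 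Since $\int_{\h_n}\|x\|^2\,d\mu_n^{(\lambda_n)}=\lambda_n^2\,\mathrm{tr}(R_n)$, your summability condition is in fact literally the same as the paper's $\sum_n\int_{\h_n}\|x_n\|^2\,d\tilde\mu_n<\infty$. Your separate treatment of the chaoticity part (finite sums of periodic points are periodic, with period the l.c.m.; density follows) coincides with the paper's one-line remark. Overall the argument is correct and equivalent in substance, with a somewhat more abstract packaging of the Gaussian bookkeeping.
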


\begin{proof}[Proof of Fact \ref{esperons}] (i) Assume that all  operators $T_n$ are chaotic and mixing in the Gaussian sense, and fix for each $n\ge 1$ a mixing Gaussian 
measure with full support $\mu_n$ for $T_n$. Each second-order moment 
$\int_{\h_n}\Vert x_n\Vert^2 d\mu((x_{n}))$ is finite; and by rescaling (\mbox{\it i.e.} replacing each measure $\mu_n$ 
by a \mea\ $\tilde
{\mu}_{n}$ defined by setting $\tilde
{\mu}_{n}(B)=\mu_{n}(\varepsilon_{n}^{-1}B)$ for every Borel subset $B$ of $\h_{n}$, for a  suitably small $\varepsilon_n>0$), we may assume without loss of generality that  the series
$\sum_{n\geq 1} \int_{\h_n} \Vert x_n \Vert^2d\mu_n(x_n)$ is convergent. 
\par\smallskip
Let us denote by $\pmb\mu$ the infinite product measure $\bigotimes_{n\geq 1} \mu_n$ on the product space $\prod_{n\geq 1} \h_n$. Then 
\[ \int_{\prod_{n\geq 1} \h_n} \Bigl(\sum_{n=1}^\infty \Vert x_n\Vert^2\Bigr)\, d\pmb\mu((x_{n}))=\sum_{n=1}^\infty \int_{\h_n} \Vert x_n\Vert^2d\mu_n(x_n)<\infty,
\]
from which it follows that $\pmb\mu$ is in fact concentrated on the set 
\[\mathbf H:=\Bigl\{ (x_n)_{n\ge 1}\in\prod_{n\geq 1} \h_n;\; \sum_{n=1}^\infty \Vert x_n\Vert^2<\infty\Bigr\}.
\] 
We may therefore define a probability measure $\mu$ on $\h$ as follows: for any Borel subset $C$ of $ \h$,
\[
\mu(C)=\pmb\mu\Bigl(\Bigl\{ (x_n)_{n\ge 1}\in\mathbf H;\; \sum_{n=1}^\infty x_n\in C\Bigr\}\Bigr).
\]
This measure $\mu$ is obviously Gaussian, and it is $T$-invariant because each \mea\ $\mu_n$ is $T_n$-invariant. Indeed, for any  bounded Borel function $f:\h\to\R$, we have
\[\int_\h (f\circ T)\, d\mu=\int_{\mathbf H} f\Bigl(\sum_{n=1}^\infty T_nx_n\Bigr) \,d\pmb\mu((x_{n}))=\int_{\mathbf H} f\Bigl(\sum_{n=1}^\infty  x_n\Bigr) \,d\pmb\mu((x_{n}))=\int_\h f\, d\mu.
\]
The measure $\mu$ has full support. Let indeed $V$ be a non-empty open set in $\h$. Choose a point $a\in V$, and $\varepsilon >0$ such that $B(a,2\varepsilon)\subseteq V$. 
We may assume that $a$ belongs to $ \h_1\oplus\cdots\oplus \h_N$ for some $N\ge 1$, where $N$ is so large that
\begin{equation}\label{eqenplus}
\int_{\mathbf H^N} \sum_{n>N} \Vert x_n\Vert^2\, d\pmb\mu^N((x_{n}))<\varepsilon^{2},
\end{equation}
where $\mathbf H^N:=\Bigl\{ (x_n)_{n\ge N}\in\prod_{n\geq N} \h_n;\; \sum_{n=1}^\infty \Vert x_n\Vert^2<\infty\Bigr\}$ and $\pmb\mu^N:=\bigotimes_{n>N}\mu_n$.
Then the set $\{ (x_n)_{n\ge 1}\in\mathbf H;\; \sum_{n\geq 1} x_n\in V\}$ contains 
\[
\mathbf A:=\Bigl\{ (x_n)_{n\ge 1}\in\mathbf H;\; x_1+\cdots +x_N\in B(a,\varepsilon)\quad{\rm and}\quad \sum_{n>N} \Vert x_n\Vert^2<\varepsilon^2\Bigr\}.
\]
Moreover, if we set  $\mathbf A^N:=\bigl\{ (x_n)_{n>N};\; \sum_{n>N} \Vert x_n\Vert<\varepsilon^2\}$, then $\pmb\mu^N(\mathbf A^N)>0$ by (\ref{eqenplus}).
Also, setting $\pmb\mu_N:=\bigotimes_{n\leq N}\mu_n$ and $\mathbf A_N:=\{ (x_1,\dots ,x_N);\; x_1+\cdots +x_N\in B(a,\varepsilon)\}$, the fact that
 the measures 
$\mu_1,\dots ,\mu_N$ have full support implies that $\pmb\mu_N(\mathbf A_N)>0$. Hence
$\mu(V)\geq \mu(A)=\pmb\mu_N(\mathbf A_N)\,  \pmb\mu^N(\mathbf A^N)>0.
$
\par\smallskip
Finally, $T$ is mixing with respect to $\mu$ because each \op\ $T_n$, $n\ge 1$, is mixing with respect to $\mu_n$: this can be shown easily by checking that 
$\ \mu(A\cap T^{-k}(B))\to \mu(A)\mu(B)
$ as $k\to\infty$ 
for any Borel sets $A,B\subseteq\h$  whose definition depends on finitely many coordinates only with respect to the decomposition $\h=\bigoplus_{n\geq 1}\h_n$.

\smallskip Thus, we have shown that $T$ is mixing in the Gaussian sense. Finally, since all  operators $T_n$ are chaotic, it is easily checked that the periodic points of $T$ are dense in $\h$; hence $T$ is chaotic.

\par\smallskip
(ii) Fix $n\geq 1$. If we denote by $\pi_n:\h\to\h_n$ the canonical projection of $\h$ onto $\h_{n}$, then $T_n\pi_n=\pi_n T$. Since $\pi_n$ is continuous and onto (that it has dense range would suffice for our argument), (ii) 
follows easily: if $x\in\h$ is a $\mathcal U$-frequently hypercyclic vector for $T$,  then $x_n:=\pi_nx$ is a $\mathcal U$-frequently hypercyclic vector for $T_n$; and if $T$ is chaotic, then $\pi_n(\textrm{HC}(T))\subseteq \textrm{HC}(T_n)$ and $\pi_n({\rm Per}(T))$ is a dense subset of $\h_n$ consisting of periodic vectors for $T_n$.
\end{proof}

\par\smallskip 
Recall that we denote by $\mathbf Q$ the set of all ``rationals" of the Cantor space $\mathbf C$,
\[\mathbf Q=\{ \alpha=(\alpha(i))_{i\geq 1}\in\mathbf{C};\; \alpha(i)=0\;\hbox{for all but finitely many $i$}\}.
\]
\begin{fact}\label{I+B} 
There exists a continuous map $\alpha\mapsto T_\alpha$ from $\mathbf C$ into $(\bmh,\sote)$ such that the following holds true: 

\begin{itemize}
\item[-] if $\alpha\in\mathbf Q$, then $T_\alpha$ 
is chaotic and mixing in the Gaussian sense, and hence chaotic and $\mathcal U$-frequently hypercyclic; 
\item[-] if $\alpha\not\in\mathbf Q$, then the spectrum of $T_\alpha$ is reduced to the point $\{ 1\}$, and hence $T_\alpha$ is not chaotic and not $\mathcal U$-frequently hypercyclic.
\end{itemize}
\end{fact}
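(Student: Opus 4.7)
My plan is to realise $T_{\alpha}$ as a compact perturbation $T_{\alpha}=I+B_{\om(\alpha)}$ of the identity on $\h=\ell^{2}(\N)$, with $B_{\om(\alpha)}$ a weighted unilateral backward shift whose weight sequence depends continuously on $\alpha\in\mathbf{C}$. Fix a constant $c$ with $0<c\le M-1$ and, for each $\alpha$, set
\[N_{k}(\alpha):=\alpha(1)+\cdots+\alpha(k),\qquad\omega_{k}(\alpha):=c\cdot 2^{-N_{k}(\alpha)}.\]
With respect to the canonical basis $(e_{k})_{k\ge 1}$, define $B_{\om(\alpha)}e_{1}=0$ and $B_{\om(\alpha)}e_{k}=\omega_{k-1}(\alpha)e_{k-1}$ for $k\ge 2$, and put $T_{\alpha}:=I+B_{\om(\alpha)}$. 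Since $0<\omega_{k}(\alpha)\le c$ one has $\|T_{\alpha}\|\le 1+c\le M$; and since each weight $\omega_{k}$ is a continuous function of the first $k$ coordinates of $\alpha$ which is uniformly bounded by $c$, a routine dominated-convergence argument applied to both $B_{\om(\alpha)}x$ and its adjoint shows that $\alpha\mapsto T_{\alpha}$ is continuous from $\mathbf{C}$ into $(\bmh,\sote)$.

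Assume first that $\alpha\in\mathbf{Q}$, and let $m:=\sum_{j\ge 1}\alpha(j)<\infty$. Then $N_{k}(\alpha)$ stabilises at $m$, so the weights are eventually equal to the positive constant $R:=c\cdot 2^{-m}$. The classical explicit formula then produces, for every $\lambda\in D(0,R)$, an eigenvector
\[x(\lambda)=\sum_{k\ge 1}\frac{\lambda^{k-1}}{\omega_{1}(\alpha)\cdots\omega_{k-1}(\alpha)}\,e_{k}\]
of $B_{\om(\alpha)}$ which is holomorphic in $\lambda$; taking successive derivatives at $\lambda=0$ yields non-zero multiples of each $e_{k}$, so these eigenvectors span $\h$. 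Applying Fact~\ref{holo} and Remark~\ref{chaos en plus} to the translated eigenvector field $\mu\mapsto x(\mu-1)$ of $T_{\alpha}$ on the connected open set $\Omega:=D(1,R)$---which meets $\T$ at $1$ and therefore contains a dense set of roots of unity---shows that $T_{\alpha}$ is both chaotic and mixing in the \ga\ sense.

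Assume next that $\alpha\notin\mathbf{Q}$. Then $N_{k}(\alpha)\to\infty$, so $\omega_{k}(\alpha)\to 0$ and $B_{\om(\alpha)}$ is compact. Moreover the Ces\`aro means $(N_{1}+\cdots+N_{n})/n$ also tend to infinity, so $(\omega_{1}(\alpha)\cdots\omega_{n}(\alpha))^{1/n}\to 0$; hence $B_{\om(\alpha)}$ is quasi-nilpotent, $\sigma(B_{\om(\alpha)})=\{0\}$, and $\sigma(T_{\alpha})=\{1\}$. Shkarin's theorem (\cite{S2}) then precludes $\mathcal{U}$-frequent hypercyclicity; and since every $\omega_{k}(\alpha)$ is strictly positive we have $\ker(T_{\alpha}-I)=\textrm{span}(e_{1})$, while $T_{\alpha}-\zeta I$ is invertible for every root of unity $\zeta\neq 1$, so the periodic vectors of $T_{\alpha}$ reduce to $\textrm{span}(e_{1})$ and $T_{\alpha}$ is not chaotic either.

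The delicate point is the calibration of the formula $\omega_{k}(\alpha)=c\cdot 2^{-N_{k}(\alpha)}$: a finite total of $1$'s in $\alpha$ merely rescales the eventual weight by a positive factor $2^{-m}$, which preserves a non-trivial eigenvalue disk around $1$ and a spanning analytic eigenvector field (hence $G$-mixing and chaos) for \emph{every} $\alpha\in\mathbf{Q}$, whereas infinitely many $1$'s push the weights to $0$ quickly enough to render $B_{\om(\alpha)}$ quasi-nilpotent and to collapse the spectrum of $T_{\alpha}$ onto $\{1\}$. It is this simultaneous requirement---positive spectral radius for \emph{all} $\alpha\in\mathbf{Q}$, zero spectral radius for \emph{all} $\alpha\in\mathbf{C}\setminus\mathbf{Q}$, with continuous dependence in between---that makes the construction non-trivial and forces the particular shape of the weights above.
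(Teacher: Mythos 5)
Your construction is essentially the one in the paper: both take $T_\alpha = I + B_{\om(\alpha)}$ on $\ell^2(\N)$ with a weight sequence built from the running count of $1$'s in $\alpha$, so that the weights stay bounded below when $\alpha\in\mathbf Q$ and tend to $0$ otherwise, and then invoke Fact~\ref{holo}/Remark~\ref{chaos en plus} on one side and quasi-nilpotence of $B_{\om(\alpha)}$ on the other. The only cosmetic difference is your choice $\omega_k(\alpha)=c\cdot 2^{-N_k(\alpha)}$ in place of the paper's $\omega_k(\alpha)=c/(1+n_k(\alpha))$; both have the required dichotomy and continuity, so the argument is sound.
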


\begin{proof}[Proof of Fact \ref{I+B}] The very last part of the second assertion follows from the fact that the spectrum of a chaotic or $\mathcal U$-frequently hypercyclic operator has no isolated points (\cite{S2}).
\par\smallskip
We assume that $\h=\ell^{\, 2}(\N)$, endowed with its canonical basis. For any unilateral weight sequence $\om=(\omega_j)_{j\geq 1}$, let us denote as usual by $B_{\om}$ 
the associated weighted shift acting on $\h$.
If $\inf_{j\ge 1} \vert\omega_j\vert>0$, 
the operator $T_{\om}=I+B_{\om}$ is chaotic and mixing in the Gaussian sense 
since it admits a spanning holomorphic eigenvector field defined in a neighborhood of the point $1$  (see Fact \ref{holo}). On the other hand, if 
$\omega_j\to 0$ as $j\to\infty$, then $\sigma(B_{\om})=\{ 0\}$, so that
$\sigma(T_{\om})=\{ 1\}$.
So, setting $c:= M-1>0$, it is enough to find a continuous map $\alpha\mapsto \om(\alpha)$ from $\mathbf C$ into $(0,c]^\N$ such that 
$\inf_{j\ge 1} \omega_j(\alpha)>0$ if $\alpha\in\mathbf Q$ while $\omega_j(\alpha)\to 0$ as $j\to\infty$ if $\alpha\not\in\mathbf Q$. Once this is done, the map $\alpha\mapsto T_\alpha:=T_{\om(\alpha)}$ will enjoy the two properties stated in Fact \ref{I+B}. Now, it is quite 
easy to define such a map $\alpha\mapsto\om(\alpha)$: just set, for every $j\ge 1$,
\[\omega_j(\alpha):=\frac{c}{1+n_j(\alpha)},\quad \hbox{where}\quad n_j(\alpha)=\#\{ i\leq j;\; \alpha(i)=1\}. \]
Fact \ref{I+B} is thus proved.
\end{proof}

It is now easy to deduce from Facts \ref{esperons} and \ref{I+B} that no subset $A$ of $\bmh$ 
with the property that $\hbox{G-MIX}_M(\h)\cap\textrm{CH}_{M}(\h)\subseteq A\subseteq \textrm{UFHC}_M(\h)\cup\textrm{CH}_M(\h)$ can be {$\texttt{SOT}^{*}$-$\mathbf\Sigma_3^0$}. Let us fix such a set $A$. 
Decompose $\h$ as $\mathcal H=\bigoplus_{n\ge 1}\mathcal H_n$, where the spaces $\h_n$, $n\ge 1$, are infinite-dimensional. 
Let us define a map $\Phi:\mathbf C^\N\to\bmh$ as follows: for every $\bar\alpha=(\alpha_n)_{n\geq 1}\in \mathbf C^\N$, 
\[ \Phi(\bar\alpha):=\bigoplus_{n\geq 1} T_{\alpha_n},
\]
where the \ops\ $T_{\alpha_n}\in\mathfrak B_M(\h_n)$, $n\ge 1$, are given by Fact \ref{I+B}. Then $\Phi$ is continuous, and Facts \ref{esperons} and \ref{I+B} imply that
\[\Phi^{-1}(A)=W=\{ \bar\alpha=(\alpha_n)_{n\geq 1}\in\mathbf C^\N;\; \forall n\ge 1\;:\;\alpha_n\in\mathbf Q\}.
\]
This concludes the proof since $W$ is a true $\mathbf\Pi_3^0$ subset of $\mathbf{C}^{\N}$, hence not a $\mathbf\Sigma_3^0$ set.

\medskip\noindent
\textbf{Part 3.} \emph{There is no \emph{$\texttt{SOT}^{*}$-$\mathbf\Pi_3^0$} set $B$ such that \emph{$\textrm{FHC}_{M}(\h)\cap
\textrm{CH}_{M}(\h)\subseteq B\subseteq \textrm{UFHC}_{M}(\h)$}.}

\smallskip
In order to prove this, we will make use of the machinery developed in Section \ref{SPECIAL} below. The reader may therefore prefer to skip the proof and return to 
it after reading Section \ref{SPECIAL}.

 In what follows, 
we assume that $\h=\ell_2(\N)$, endowed with its canonical basis.
Let  $M_{0}$ be such that $1<M_{0}<M$, and choose an even integer $C\geq 3$ large enough to have 
\begin{equation}\label{stupide1} M_{0}+ \sum_{k=1}^\infty 2^{k-1} M_{0}^{-\frac12\, C^k}\leq M
\quad\textrm{ and} \quad
\sum_{k=1}^\infty 2^k M_{0}^{-\frac1{12}C^k} C^k\leq 1.
\end{equation}

Now, let us denote by $\mathcal D$ the set of all infinite sequence of integers $\delta=(\delta^{(k)})_{k\geq 0}$ with $\delta^{(0)}=1$ such that $C\leq \delta^{(k)}\leq C^{4k}$ and $\delta^{(k)}\geq C\, \delta^{(k-1)}$ for 
every $k\geq 1$. This is a closed subset of $\N^\N$ and hence a Polish space. 
For any element $\delta$ of $\mathcal D$, we denote by $T_{\delta}$ the operator of \cpt\ on $\h$ defined as follows: for any $k\geq 1$ 
\[\Delta^{(k)}=C^{4k},\quad v^{(k)}=M_{0}^{-\frac12\delta^{(k)}}\quad{\rm and}\quad 
w_i^{(k)}=\left\{ \begin{matrix}
M_{0}&{\rm if}&1\leq i\leq \delta^{(k)},\\
1&{\rm if}& \delta^{(k)}<i<\Delta^{(k)}.
\end{matrix}\right.
\]

Note that in the terminology of Section \ref{SPECIAL}, $T_\delta$ looks exactly like an \emph{operator of \cput\ } with $\tau^{(k)}=\frac12\delta^{(k)}$, except that in the definition of the weights $v^{(k)}$ 
and $w_i^{(k)}$ the constant $2$ has been replaced by $M_{0}$.
(See Subsection \ref{Subsection 4.2} for the definition of \cpt\ operators, and Subsection \ref{C+1type} for the definition of \cput\ operators.)
Observe that if $\delta=(\delta^{(k)})_{k\ge 0}$ belongs to $\mathcal D$, then $\delta^{(k)}\geq C^k$ for all $k\geq 1$.  By (\ref{stupide1}), it follows that $\Vert T_{\delta}\Vert\leq M$ for every $\delta\in\mathcal D$. Moreover, it is 
clear that the map $\delta\mapsto T_{\delta}$ is continuous from $\mathcal D$ into $(\bmh, \texttt{SOT}^*)$.
\par\smallskip
The key point of the proof that $\textrm{UFHC}_{M}(\h)\cap\textrm{CH}_{M}(\h)$ is not
$\texttt{SOT}^{*}$-$\mathbf\Pi_3^0$ is the following fact, which is actually nothing but a reformulation of Theorem \ref{Theorem 53} in a slightly different setting.

\begin{fact}\label{arnaque} Let
$\delta=(\delta^{(k)})_{k\ge 0}$ belong to $\mathcal D$.
Then the operator $T_{\delta}$ is always chaotic, and $T_\delta$ is $\mathcal U$-frequently hypercyclic if and only if $\delta^{(k)}/C^{4k}$ does not tend to $0$ as $k\to\infty$; in which case 
$T_\delta$ is in fact frequently hypercyclic.
\end{fact}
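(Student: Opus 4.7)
\medskip\noindent
\textbf{Plan of proof.} The operators $T_\delta$ differ from genuine \cput-type operators as defined in Subsection~\ref{C+1type} only in that the base constant $2$ (which controls the magnitudes of the coefficients $v^{(k)}$ and of the expanding block of weights $w_i^{(k)}$) has been replaced by $M_0>1$. Since $M_0$ plays the same structural role as $2$ in the analysis of \cput-type operators, I would re-verify that each quantitative estimate used in the proof of Theorem~\ref{Theorem 53} remains valid, \emph{mutatis mutandis}, with $M_0$ in place of $2$. Condition~(\ref{stupide1}) is precisely designed to guarantee both $\|T_\delta\|\leq M$ and the convergence of the various series of the form $\sum_k 2^k M_0^{-cC^k}$ that appear when one bounds norms of iterates or eigenvector expansions in the \cct\ formalism. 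With these summability conditions in hand, the first assertion (chaoticity of every $T_\delta$) is immediate from the general fact (established in Section~\ref{SPECIAL} for \cpt-type operators) that the explicit periodic vectors built into the construction span a dense subspace, and that a mixing-like criterion based on the expanding block $\{1\le i\le \delta^{(k)}\}$ ensures hypercyclicity.

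For the equivalence concerning $\mathcal U$-frequent hypercyclicity, the plan is to translate the hypothesis and conclusion of Theorem~\ref{Theorem 53} to the present notation. In the \cput-type framework, the criterion for $\mathcal U$-frequent hypercyclicity is that the ratio $\tau^{(k)}/\Delta^{(k)}$ does not tend to zero; here $\tau^{(k)}=\tfrac12\delta^{(k)}$ and $\Delta^{(k)}=C^{4k}$, so this is exactly the condition that $\delta^{(k)}/C^{4k}\not\to 0$. The necessity direction goes via the parameter $c(T)$: if $\delta^{(k)}/C^{4k}\to 0$, a direct computation using the action of $T_\delta$ on each block shows that for every $x\in\h$ and every $\alpha>0$,
\[
\overline{\dens}\,\mathcal N_{T_\delta}(x,B(0,\alpha))=0,
\]
which rules out $\mathcal U$-frequent hypercyclicity by the characterization $c(T)>0$ that must hold for $\mathcal U$-frequently hypercyclic operators. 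For the sufficiency direction and the strengthening to frequent hypercyclicity, the plan is to invoke the periodic-vector-based criterion for frequent hypercyclicity proved in Section~\ref{CRITERIA} (used in the proof of Theorem~\ref{Theorem 53}): along any subsequence $(k_j)$ where $\delta^{(k_j)}/C^{4k_j}\geq \eta>0$, the operator $T_\delta$ admits a supply of periodic vectors with periods $\sim C^{4k_j}$ whose orbits visit a prescribed neighborhood of any given target with frequency bounded below by a positive constant depending only on $\eta$; this forces positive \emph{lower} density of returns, hence frequent hypercyclicity, not merely $\mathcal U$-frequent hypercyclicity.

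The main obstacle is essentially bookkeeping: carefully repeating the orbit and eigenvector estimates of Section~\ref{SPECIAL} with $M_0$ in place of $2$, and checking that the constants $C$ and the growth condition $\delta^{(k)}\geq C\,\delta^{(k-1)}$ encoded in the definition of $\mathcal D$ still make the series estimates and disjointness arguments work. In particular, the step where one decomposes an arbitrary vector into its projections onto successive \woi s and \loi s, and shows that the contributions from levels $k'\neq k$ are negligible at time $\Delta^{(k)}$, now relies on the decay rate $M_0^{-\frac12\delta^{(k)}}$ rather than $2^{-\frac12\delta^{(k)}}$, which by the second half of~(\ref{stupide1}) is still summable in $k$. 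Once these verifications are made, the qualitative dichotomy of Theorem~\ref{Theorem 53} transfers verbatim to the family $(T_\delta)_{\delta\in\mathcal D}$, yielding Fact~\ref{arnaque}.
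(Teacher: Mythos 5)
Your proposal matches the paper's proof, which simply applies a modified version of Theorem~\ref{Theorem 53} with $M_{0}$ in place of the base $2$ and $p=2$, after checking that the ``additional assumptions'' (monotonicity of $\gamma_k:=M_{0}^{\delta^{(k-1)}-\frac12\delta^{(k)}}C^{2k}$ and $\sum_{k}2^{k}\gamma_k^{1/2}\le 1$) follow from condition~(\ref{stupide1}) together with $\delta^{(k)}\geq C\delta^{(k-1)}$ and $\delta^{(k)}\geq C^{k}$. One small slip worth correcting: in the necessity direction you should not claim $\overline{\dens}\,\mathcal N_{T_\delta}(x,B(0,\alpha))=0$ for \emph{every} $x\in\h$ and \emph{every} $\alpha>0$ (this fails for $x=0$ or for any non-zero periodic $x$); what Theorem~\ref{Theorem 49} actually furnishes is, for each hypercyclic vector $x$, some $\alpha(x)>0$ with $\overline{\dens}\,\mathcal N_{T_\delta}(x,B(0,\alpha(x)))=0$, which already rules out $\mathcal U$-frequent hypercyclicity directly and makes the detour through $c(T)$ unnecessary --- and also note that the general $\cput$ criterion is stated in terms of $\delta^{(k)}/\Delta^{(k)}$, not $\tau^{(k)}/\Delta^{(k)}$; the two coincide here only because $\tau^{(k)}=\tfrac12\delta^{(k)}$.
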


\begin{proof}[Proof of Fact \ref{arnaque}] We apply a modified version 
of Theorem \ref{Theorem 53}, with $p=2$. 
Accordingly, we set for every $k\geq 1$
\[\gamma_k:=M_{0}^{\delta^{(k-1)}-\frac12\delta^{(k)}}\sqrt{\Delta^{(k)}}=M_{0}^{\delta^{(k-1)}-
\frac12\delta^{(k)}} C^{2k}.
\]
Since $\delta^{(k)}\leq C^{-1}\delta^{(k+1)}\leq\frac13 \delta^{(k+1)}$, we have $\delta^{(k)}-\frac12 \delta^{(k+1)}\leq-\frac12\delta^{(k)}\le \delta^{(k-1)}-\frac12 \delta^{(k)}$, from which it follows easily that 
the sequence $(\gamma_k)_{k\ge 1}$ is non-increasing. Moreover, we have $\gamma_k\leq M_{0}^{-\frac16 C^k}C^{2k}$ for all 
$k\geq 1$. Hence condition (\ref{stupide1}) above yields that
\[ \sum_{k=1}^\infty 2^k \gamma_k^{1/2}\leq 1.
\]
The analogues of the ``additional assumptions" in Theorem \ref{Theorem 53} are thus satisfied, and it follows that $T_{\delta}$  is $\mathcal U$-frequently hypercyclic on $\h$ if and only if $\delta^{(k)}/C^{4k}$ does not tend to $0$ as $k\to\infty$.
\end{proof}

Now, let $B\subseteq\bmh$ have the property that $\textrm{FHC}_{M}(\h)\cap
\textrm{CH}_{M}(\h)\subseteq B\subseteq \textrm{UFHC}_{M}(\h)$. Consider the map $\Phi:\mathcal D\to \bmh$  defined by setting $\Phi(\delta):=T_\delta$ for every $\delta\in\mathcal{D}$. As already mentioned, the map $\Phi$ is continuous from $\mathcal{D}$ into $(\bmh,\sote)$. By Fact \ref{arnaque} we have
\[\Phi^{-1}(B)=\mathcal D\setminus\mathcal D_0,\quad\hbox{where}\quad \mathcal D_0=\{ \delta\in\mathcal D;\; C^{-4k}\delta^{(k)}\to 0\;{\rm as}\;k\to\infty\}.
\]
Now, a proof quite similar to that of Fact \ref{c_0} shows that $\mathcal D_0$ is a true $\mathbf\Pi_3^0$ set in $\mathcal D$, so that $\mathcal{D}\setminus\mathcal{D}_{0}$ is a true $\mathbf\Sigma_3^0$ set in $\mathcal D$. Hence $B$ cannot be $\mathbf\Pi_3^0$ in $(\bmh,\sote)$.
\end{proof}

\begin{remark}
Propositions \ref{Proposition 8} and \ref{Proposition 1005} together formally yield the statement that, 
since  $\textrm{UFHC}_M(\h)\cap\textrm{CH}_{M}(\h)$ is not $\mathbf\Pi_3^0$  while $\textrm{CH}_{M}(\h)$ is $\mathbf\Pi_3^0$ in $(\bmh,\sote)$, the class $\textrm{CH}_{M}(\h)\setminus \textrm{UFHC}_M(\h)$ is not void, \mbox{\it i.e.} there exist chaotic \ops\ in $\bmh$ which are not $\mathcal{U}$-frequently hypercyclic. However, there is nothing magic here: the proof of Proposition~\ref{Proposition 1005} relies heavily on a construction, carried out in Section \ref{SPECIAL}, of explicit chaotic operators which are not $\mathcal{U}$-frequently hypercyclic.
\end{remark}

\begin{remark} The proof of Proposition \ref{Proposition 1005} has established that the set $\textrm{c}^+_M(\h)=\{ T\in\hcmh;\; c(T)>0\}$ is a true {$\texttt{SOT}^{*}$-$\mathbf\Sigma_3^0$} subset of $\bmh$.
\end{remark}

\begin{remark}
An ``alternative" proof of the Borelness of $\textrm{UFHC}_M(\h)$ could run as follows. According to Fact 
\ref{Fact 9 ter}, an operator $T$ belongs to $\textrm{UFHC}(\h)$ if and only 
if 
quasi-all vectors $x\in H$ (in the Baire category sense) are $\mathcal{U}$-frequently hypercyclic for 
$T$. So, for $T\in\bmh$, we may write
\[ T\in \textrm{UFHC}_M(\h)\iff\forall^*x\in\h\;\, \bigl(\hbox{$x$ is $\mathcal U$-frequently hypercyclic for $T$}  \bigr).
\]
Since the relation $B(T,x)\longleftrightarrow \hbox{($x\,$ is $\,\mathcal U$-frequently hypercyclic for $T$) }$
is Borel in the pro\-duct space $(\bmh,\texttt{SOT})\times\h$, and since the category quantifier $\forall^*$ {preserves Borelness}  
(see for instance \cite[Section 16.A]{Ke}), it follows that $\textrm{UFHC}_M(\h)$ is Borel in $(\bmh,\sot)$.
\end{remark}

\begin{remark} It is likely that $\textrm{TMIX}_M(\h)$ and $\cmh$ are both true $\mathbf\Pi_4^0$ subsets of  
$(\bmh, \texttt{SOT})$, \mbox{\it i.e.} $\mathbf\Pi_4^0$ but not $\mathbf\Sigma_4^0$, that $\textrm{UFHC}_M(\h)$ is a {true} $\mathbf\Pi_4^0$ set 
in $(\bmh, \sote)$, and that $\textrm{UFHC}_M(\h)\cap\textrm{CH}_M(\h)$ is a true difference of $\mathbf\Sigma_3^0$ sets in $(\bmh,\sote)$. However, we have been unable to prove any of these facts.
\end{remark}

\begin{remark}
In view of Proposition \ref{Proposition 1004}, it is natural to wonder 
whether the class of {frequently} hypercyclic operators $\textrm{FHC}_{M}(\h)$  is also Borel in $(\hcmh,\sot)$. Let $(V_q)_{q\geq 1}$ be a countable basis 
of open sets for $\h$. An operator $T\in\bmh$ is frequently hypercyclic if and only if
\[\exists x\in\h\;\Bigl( \forall q\geq1\ \exists\delta \in\Q_{+}^*\;\exists N\ge 1\;\forall n\geq N\;:\; 
\# \{1\le i\le n\,;\,T^{i}x\in V_q\}>n\delta\Bigr);
\]
and since the relation $R(T,x)$ defined by the expression between brackets is Borel in $\bmh\times \h$, we deduce that $\textrm{FHC}_{M}(\h)$ 
is a $\mathbf\Sigma_1^1$ (aka {analytic}) subset of 
$(\bmh,\sot)$. It is quite tempting to conjecture that it 
is in fact {non}-Borel in $(\bmh,\sot)$.
\end{remark}
\par\smallskip

\subsection{Some non-Borel sets in $\bmh$} We conclude this section by showing that some natural classes 
of operators defined by dynamical properties are non-Borel in the space $(\bmh,\sote)$, for any $M>1$. 
\par\smallskip 
If $\Gamma$ is any class of Hilbert space operators, we will say that an operator $T$ \emph{has a restriction in $\Gamma$} if there is a closed $T$-invariant 
(infinite-dimensional) subspace $\mathcal E$ of $ \h$ such that the \op\ $T_{|\mathcal E}$ induced by $T$ on $\mathcal E$ belongs to $\Gamma(\mathcal E)$. We denote by $\widehat\Gamma(\h)$ the class of all operators $T\in\bh$ admitting a restriction in $\Gamma$. 

\begin{proposition}\label{restrictions} Let $\Gamma(\h)$ be a class of operators on $\h$ such that \emph{G-ERG}$(\h)\,\subseteq\Gamma(\h)\subseteq\,$\emph{HC}$(\h)$. For any $M>1$, the set 
$\widehat\Gamma_M(\h)$ is non-Borel in \emph{$(\bmh,\texttt{SOT}^*)$}.
\end{proposition}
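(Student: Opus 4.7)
The plan is to prove that $\widehat\Gamma_M(\h)$ is $\mathbf\Sigma_1^1$-complete in $(\bmh,\sote)$, and hence not Borel. First, I would observe that $\widehat\Gamma_M(\h)$ is analytic: letting $F(\h)$ denote the Effros--Borel space of closed subspaces of $\h$, the set of pairs $(T,\mathcal E)\in\bmh\times F(\h)$ such that $\mathcal E$ is infinite-dimensional and $T$-invariant and $T|_{\mathcal E}$ belongs to $\Gamma(\mathcal E)$ is Borel (using that $\textrm{G-ERG}(\cdot)\subseteq\Gamma(\cdot)\subseteq\hch$ and that both the endpoints are Borel-definable by the arguments of Section \ref{TYPICAL}); projecting onto the first factor, one concludes that $\widehat\Gamma_M(\h)$ is $\mathbf\Sigma_1^1$.

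For non-Borelness, I plan to reduce the $\mathbf\Sigma_1^1$-complete set of uncountable compact subsets of a suitable Polish space. Fix, as in the proof of Proposition \ref{Proposition 8}, a perfect zero-dimensional compact set $E\subseteq\T\setminus\{1\}$ with $\|T_E\|\le M$ and $\Omega\cap E$ dense in $E$. Adapting the Michael-selection construction of Proposition \ref{Proposition 8} to the whole of $\mathcal K(E)$ (and not only to $\mathcal K_{\mathrm{perf}}(E)$), I would build a continuous map $\Phi:\mathcal K(E)\to(\bmh,\sote)$ such that, up to a continuously varying unitary embedding of $\h_\Lambda=\overline{\mathrm{span}}\{f_\lambda;\,\lambda\in\Lambda\}$ into $\h$ and the padding by a fixed non-hypercyclic operator on the orthogonal complement, $\Phi(\Lambda)$ acts as the Kalisch operator $T_\Lambda$ on $\h_\Lambda$.

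I would then check that $\Phi^{-1}(\widehat\Gamma_M(\h))$ is precisely the set of uncountable elements of $\mathcal K(E)$. The forward direction is easy: if $\Lambda$ is uncountable, the Cantor--Bendixson theorem provides a non-empty perfect subset $P\subseteq\Lambda$; then $\h_P\subseteq\h_\Lambda$ is $T_\Lambda$-invariant and $T_P=T_\Lambda|_{\h_P}$ is the Kalisch operator associated to a perfect set, hence ergodic in the Gaussian sense by the characterization recalled just before Fact \ref{Kalisch}. Therefore $T_P$ lies in $\textrm{G-ERG}(\h_P)\subseteq\Gamma(\h_P)$, and $\Phi(\Lambda)$ admits a restriction in $\Gamma$.

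The converse --- and the main obstacle --- is to show that for countable $\Lambda$, $\Phi(\Lambda)$ admits no \emph{hypercyclic} restriction, which \emph{a fortiori} rules out any restriction in $\Gamma$. The intended argument rests on the structural claim that every $T_\Lambda$-invariant closed subspace of $\h_\Lambda$ has the form $\h_{\Lambda'}$ for some closed $\Lambda'\subseteq\Lambda$; this should follow from the completeness of the eigenvector system $(f_\lambda)_{\lambda\in\Lambda}$ in $\h_\Lambda$ combined with the specific Volterra structure of the Kalisch operator. Granting this, each such restriction is of the form $T_{\Lambda'}$, which is hypercyclic if and only if $\Lambda'$ is perfect; and since any closed subset of a countable compact set is itself countable (hence not perfect, apart from the empty set), no such $T_{\Lambda'}$ is hypercyclic. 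Verifying this structural claim on the invariant subspace lattice of Kalisch operators with countable scattered spectrum is the principal technical difficulty, and is where most of the work of the proof would lie.
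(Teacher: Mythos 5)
Your overall strategy is the same as the paper's: reduce from the $\mathbf\Sigma_1^1$-complete collection of uncountable compact subsets of a suitable Polish space by sending a compact set $\Lambda$ to (a unitary copy of) the Kalisch operator $T_\Lambda$, and verify that $T_\Lambda$ has a restriction in $\Gamma$ exactly when $\Lambda$ is uncountable. You also correctly isolate the two halves of the equivalence and pinpoint the heart of the matter. However, there are two genuine problems.

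First, the reduction map cannot be made \emph{continuous} on all of $\mathcal K(E)$, as you propose. The construction in Proposition~\ref{Proposition 8} relies on restricting to $\mathcal K_{\rm perf}(E)$, where one can use Michael's Selection Theorem and the perfectness of $\Lambda$ to produce a sequence of distinct selectors $\xi_n(\Lambda)$ accumulating nowhere and depending continuously on $\Lambda$; the Gram--Schmidt orthonormal basis $(e_n(\Lambda))$ then varies continuously. Over all of $\mathcal K(E)$, or even over $\mathcal K_\infty(E)$, this breaks down: the orthonormal basis cannot vary continuously, because $\h_\Lambda$ does not vary continuously (for instance near sets with isolated points, and the ``padding'' you mention introduces its own dimension jumps). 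The paper handles this by \emph{dropping continuity} and using the Kuratowski--Ryll-Nardzewski Selection Theorem to obtain merely \emph{Borel} maps $\xi_n:\mathcal K(E)\to E$, restricting to $\mathcal K_\infty(E)$ so that the selected points are eventually distinct; a Borel (rather than continuous) reduction from $\mathcal K_\infty(E)\setminus\mathcal K_\omega(E)$ suffices for non-Borelness. Your plan as stated would not compile without this change.

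Second, and more seriously, the ``structural claim'' you defer --- that every $T_\Lambda$-invariant closed subspace of $\h_\Lambda$ has the form $\h_K$ for some compact $K\subseteq\Lambda$ --- is not a routine verification: it is precisely the theorem of Waterman (\cite{Wat}) asserting that Kalisch operators admit spectral synthesis, and the paper quotes it as Fact~\ref{Waterman}. It is certainly not a formal consequence of ``completeness of the eigenvector system plus the Volterra structure''; indeed, spectral synthesis fails for many operators with complete eigenvector systems, and establishing it here is a substantial result in operator theory. Leaving this as a ``principal technical difficulty ... where most of the work of the proof would lie'' means the argument has a real gap. Without Waterman's theorem (or an alternative characterization of the invariant subspace lattice of $T_\Lambda$ for countable $\Lambda$), you cannot conclude that $\Phi(\Lambda)$ has no hypercyclic restriction when $\Lambda$ is countable, and the reduction does not close.
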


\begin{proof} Recall that for every compact subset $\Lambda$ of $\T\setminus\{1\}$, $T_\Lambda:\h_\Lambda\to\h_\Lambda$ denotes the Kalisch operator associated to 
 $\Lambda$ (this notation has been introduced in the proof of Proposition \ref{Proposition 8}).
For any compact subset $E$ of $\T\setminus\{ 1\}$, we denote by $\mathcal K_\infty(E)$ the set of all infinite compact subsets of $E$.

\begin{fact}\label{onbBorel} Let $E$ be a compact subset of $\T\setminus\{ 1\}$. There exists a sequence $(e_{n})_{n\ge 1}$ of Borel maps from
$\mathcal K_{\infty}(E)$ into $L^2(0,2\pi)$ such that $(e_n(\Lambda))_{n\geq 1}$ is an orthonormal basis of $\h_\Lambda$ for every $\Lambda\in\mathcal K_\infty(E)$. 
\end{fact}

\begin{proof}[Proof of Fact \ref{onbBorel}] 
By the Kuratowski-Ryll-Nardzewski Selection Theorem (see \cite[Section 12.C]{Ke}), there exists a sequence $(\xi_{n})_{n\ge 1}$
 of Borel maps $\xi_n:\mathcal K(E)\to E$ such that $\{ \xi_n(\Lambda);\; n\geq 1\}$ 
is a dense subset of $\Lambda$ for every $\Lambda\in\mathcal K(E)$. 
If $\Lambda$ belongs to $\mathcal K_{\infty}(E)$, there exists a sequence  $(\zeta_n(\Lambda))_{n\geq 1}$ of distinct elements of $\Lambda $ such that $\{ \zeta_n(\Lambda);\; n\geq 1\}$ is dense in $\Lambda$. Indeed, it suffices to set
 $\zeta_1(\Lambda)=\xi_1(\Lambda)$ and, for every $n\ge 1$,
$\zeta_{n}(\Lambda)=\xi_{k_{n}}(\Lambda)$ where $k_{n}:=\min\{j\ge n\;;\; \textrm{ for all } 1\le i<j,\; \xi_{i}(\Lambda)\not =\xi_{j}(\Lambda)\}$. Then the maps $
\Lambda\mapsto \zeta_n(\Lambda)$, $n\ge 1$, are Borel on $\mathcal K_{\infty}(E)$, and by construction the elements $\zeta_n(\Lambda)$, $n\ge 1$, are pairwise distinct and form a dense subset of $\Lambda$. Applying the Gram-Schmidt orthonormalization process 
to the sequence $(f_{\zeta_n(\Lambda)})_{n\geq 1}$, which is linearly independent, we get the required orthonormal basis $(e_n(\Lambda))_{n\ge 1}$ of $\h_{\Lambda}$.
\end{proof}

The key step in the proof of Proposition \ref{restrictions} is the following result of Waterman \cite{Wat}.

\begin{fact}\label{Waterman} If $\Lambda$ is a compact subset of $\T\setminus\{ 1\}$, the operator 
$T_\Lambda$ \emph{admits spectral synthesis}, which means that every closed $T_\Lambda$-invariant subspace of  $\h_{\Lambda}$
is spanned by the eigenvectors belonging to it. In other words, any invariant subspace $\mathcal E\subseteq\h_\Lambda$ for $T_\Lambda$ 
has the form $\mathcal E=\h_K$ for some compact set $K\subseteq\Lambda$. 
\end{fact}

We are now ready to prove Proposition \ref{restrictions}. Let us fix a non-trivial arc $E\subseteq\T\setminus\{ 1\}$ such that $\Vert T_E\Vert\leq M$. 
As in the proof of Proposition \ref{Proposition 8}, let $(e_n)_{n\geq 1}$ be an orthonormal basis of $\h$, and for any $\Lambda\in\mathcal K_\infty(E)$, 
denote by $U_\Lambda:\h_\Lambda\to\h$
the unitary operator sending $e_n(\Lambda)$ to $e_n$ for all $n\geq 1$, where $e_n(\Lambda)$ is given by Fact \ref{onbBorel}. Then define a map $\Phi:\mathcal K_\infty(E)\to\bmh$ 
by setting
\[ \Phi(\Lambda)=U_\Lambda T_\Lambda U_{\Lambda}^{-1}\quad\textrm{ for every }\Lambda\in \mathcal K_\infty(E).
\]
Proceeding as in the proof of Proposition \ref{Proposition 8}, and using the fact that the maps $\Lambda\mapsto e_{n}(\Lambda)$, $n\ge 1$, are Borel on $\mathcal K_\infty(E)$, we deduce that
the map $\Phi$ is Borel from $\mathcal K_\infty(E)$ into $(\bmh,\sote)$.
\par\smallskip
If $\Lambda\in\mathcal K_\infty(E)$ is uncountable, it admits a perfect subset $K$, so the restriction of $T_\Lambda$ to the invariant subspace $\h_K$ (which is nothing else but $T_K$) 
is ergodic in the Gaussian sense. Thus $\Phi(\Lambda)$ belongs to $ \widehat{\hbox{$G$-}ERG}(\h)\subseteq\widehat\Gamma(\h)$. On the other hand, if $\Lambda$ is countable then, by Fact \ref{Waterman}, $T_\Lambda$ 
has no hypercyclic restriction. Hence $\Phi(\Lambda)$ belongs to $\bh\setminus \widehat{HC}(\h)\subseteq \bh\setminus\widehat\Gamma(\h)$. So we have proved that
\[\Phi^{-1}\Bigl(\widehat\Gamma_M(\h)\Bigr)=\mathcal K_{\infty}(E)\setminus\mathcal K_\omega(E).
\]
Since $\mathcal K_{\omega}(E)$ is notoriously non-Borel in $\mathcal K(E)$ (see \cite[Section 27.B]{Ke}) and hence also in $\mathcal K_\infty(E)$ because the latter is Borel in $\mathcal K(E)$, this concludes the proof.
\end{proof}

\begin{remark}\label{blabla} The same proof shows that $\widehat{\hbox{G-MIX}}_M(\h)$, 
the family of all operators $T\in\bmh$ admitting a restriction which is mixing in the Gaussian sense, is non-Borel in $(\bmh,\sote)$. Instead of working with the class $\mathcal K_\omega(E)$ of countable subsets of $E$, 
one has to consider the class $U_{0}\cap\mathcal{K}(E)$ of compact subsets of $E$ which are \emph{sets of extended uniqueness}, also called \emph{$U_0$-sets}. By definition, a compact set $\Lambda\subseteq\T$ is a $U_0$-set 
if it is negligible for every probability measure $\sigma$ on $\T$ whose Fourier coefficients vanish at infinity. It follows from the main result of \cite{BM2} that 
if $\Lambda\subseteq\T\setminus\{ 1\}$ is not a $U_0$-set, then the Kalisch operator $T_\Lambda$ has a restriction which is mixing in the Gaussian sense, whereas if $\Lambda$ is a $U_0$-set,  
$T_\Lambda$ is not mixing in the Gaussian sense. Moreover, it is well-known that the class $U_0\cap \mathcal K(E)$ is non-Borel in $\mathcal K(E)$ for any non-trivial arc $E\subseteq \T$ (see \cite[Th.~VIII.2.6]{KL}). 
Using these facts, one can prove the aforementioned result in exactly the same way as above: the key identity
\[\Phi^{-1}\Bigl(\widehat\Gamma_M(\h)\Bigr)=\mathcal K_{\infty}(E)\setminus\mathcal K_\omega
\] at the end of the proof of Proposition \ref{restrictions} is replaced by
\[\Phi^{-1}\Bigl(\widehat\Gamma_M(\h)\Bigr)=\mathcal K_{\infty}(E)\setminus U_{0}.
\]
\end{remark}

\begin{remark}\label{Remark 3.21}
 Let $\Gamma_{0}(\h)$ be a class of operators of $\h$ with the following two properties:
 \begin{enumerate}
  \item[\rm (i)] $\textrm{G-ERG}(\h)\subseteq \Gamma _{0}(\h)$;
  \item[\rm (ii)] if $\Lambda $ is any countable compact subset of $\T\setminus\{1\}$, the 
Kalisch operator $T_{\Lambda }$ does not belong to $\Gamma _{0}(\h)$.
 \end{enumerate}
 
\noindent
Exactly the same proof as that of Proposition \ref{restrictions} shows that the following 
statement holds true: \emph{if $\Gamma(\h)$ is a class of operators on $\h$ such that \emph{$\textrm{G-ERG}(H)\subseteq \Gamma(\h)\subseteq \Gamma_{0}(\h)$} then, for any $M>1$, 
the set $\widehat{\Gamma}_{M}(\h)$ is non-Borel
 in 
\emph{$(\bmh,\sote)$}.}
\end{remark}

Our aim is now to prove the following result:
\begin{proposition}\label{Proposition 3.23}
 For any $M>1$, the class \emph{$\textrm{DCH}_{M}(\h)$} of distributionally chaotic operators in 
 $\bmh$ is non-Borel in 
\emph{$(\bmh,\sote)$}.
\end{proposition}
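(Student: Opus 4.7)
The plan is to derive Proposition \ref{Proposition 3.23} from Remark \ref{Remark 3.21} applied to $\Gamma=\Gamma_{0}=\textrm{DCH}(\h)$, after first observing that $\textrm{DCH}(\h)=\widehat{\textrm{DCH}}(\h)$. The inclusion $\textrm{DCH}(\h)\subseteq\widehat{\textrm{DCH}}(\h)$ is trivial (take $\mathcal{E}=\h$). Conversely, if $T\in\bh$ admits a closed invariant subspace $\mathcal{E}$ such that $T|_{\mathcal{E}}$ is distributionally chaotic, then any distributionally irregular vector $x\in\mathcal{E}$ for $T|_{\mathcal{E}}$ is automatically distributionally irregular for $T$, since the forward orbit of $x$ is the same under $T$ and $T|_{\mathcal{E}}$. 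Hence $\textrm{DCH}_{M}(\h)=\widehat{\textrm{DCH}}_{M}(\h)$, and it suffices to prove that $\widehat{\textrm{DCH}}_{M}(\h)$ is non-Borel.

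To apply Remark \ref{Remark 3.21}, we verify the two hypotheses on $\Gamma_{0}=\textrm{DCH}(\h)$. Condition (i), $\textrm{G-ERG}(\h)\subseteq\textrm{DCH}(\h)$, is immediate from the chain $\textrm{G-ERG}(\h)\subseteq\textrm{ERG}(\h)\subseteq\textrm{DDCH}(\h)\subseteq\textrm{DCH}(\h)$ recorded in Section \ref{Subsection 1.b.4}. Condition (ii) is the substantive part: for every countable compact subset $\Lambda\subseteq\T\setminus\{1\}$, the Kalisch operator $T_{\Lambda}$ must fail to be distributionally chaotic. Since distributional chaos forces the existence of some vector $x$ with $\limsup_{n\to\infty}\|T^{n}x\|=\infty$ (and hence $\sup_{n\ge 1}\|T^{n}\|=\infty$ by the uniform boundedness principle), it will suffice to show that $T_{\Lambda}$ is \emph{power-bounded} whenever $\Lambda$ is countable compact.

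To prove power-boundedness of $T_{\Lambda}$, the natural strategy is transfinite induction on the Cantor--Bendixson rank of $\Lambda$, which is a countable ordinal since $\Lambda$ is scattered (being countable and compact). The base case is easy: for $\Lambda$ finite, $\h_{\Lambda}$ is finite-dimensional with eigenvector basis $(f_{\lambda})_{\lambda\in\Lambda}$ associated to distinct unimodular eigenvalues, so $T_{\Lambda}$ is diagonalizable with unimodular spectrum and hence power-bounded. For the inductive step, the spectral synthesis of Waterman (Fact \ref{Waterman}) combined with the isolated-point structure allows the following move: if $\lambda_{0}\in\Lambda$ is isolated, the Riesz projection onto $\ker(T_{\Lambda}-\lambda_{0})=\mathbb{C}f_{\lambda_{0}}$ is bounded and yields a topological direct-sum decomposition $\h_{\Lambda}=\mathbb{C}f_{\lambda_{0}}\oplus\h_{\Lambda\setminus\{\lambda_{0}\}}$ with $T_{\Lambda}=\lambda_{0}I\oplus T_{\Lambda\setminus\{\lambda_{0}\}}$. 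Iterating along the Cantor--Bendixson hierarchy should yield the desired uniform bound on $\|T_{\Lambda}^{n}\|$.

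The main obstacle is maintaining uniform control of the operator norms throughout this transfinite process: peeling off isolated points one at a time need not terminate in countably many steps, while simultaneously peeling off all isolated points at a given Cantor--Bendixson level produces a possibly infinite non-orthogonal direct-sum decomposition whose bound is not automatic. A robust resolution will likely require either a more intrinsic spectral-theoretic argument exploiting the generalized-scalar structure of Kalisch operators, or a direct estimate of $\|T_{\Lambda}^{n}\|$ via the Gram matrix $\bigl(\pss{f_{\lambda_{j}}}{f_{\lambda_{k}}}\bigr)$ together with the unimodularity of each eigenvalue $\lambda_{j}^{n}$. Once power-boundedness of $T_{\Lambda}$ for countable compact $\Lambda$ is established, Remark \ref{Remark 3.21} applied to $\Gamma=\Gamma_{0}=\textrm{DCH}(\h)$ yields that $\widehat{\textrm{DCH}}_{M}(\h)=\textrm{DCH}_{M}(\h)$ is non-Borel in $(\bmh,\sote)$, concluding the proof.
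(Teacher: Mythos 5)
Your reduction to Remark~\ref{Remark 3.21} via the identity $\textrm{DCH}(\h)=\widehat{\textrm{DCH}}(\h)$ is correct, as is the verification of condition~(i). But the substantive step --- condition~(ii), namely that $T_\Lambda$ fails to be distributionally chaotic for every countable compact $\Lambda\subseteq\T\setminus\{1\}$ --- is not established, and you say so yourself. The gap you flag is real and, I believe, not just a technicality.

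The problem is that your route demands an \emph{upper} bound, $\sup_{n}\|T_\Lambda^n\|<\infty$, which is far more information than is available. The Cantor--Bendixson peeling decomposes $\h_\Lambda=\mathbb C f_{\lambda_0}\oplus\h_{\Lambda\setminus\{\lambda_0\}}$ as a topological (not orthogonal) direct sum; the norm of the corresponding Riesz projection depends on the angle between $f_{\lambda_0}$ and $\h_{\Lambda\setminus\{\lambda_0\}}$, which degenerates as $\lambda_0$ approaches an accumulation point of $\Lambda$ (the eigenvector field $\lambda\mapsto f_\lambda$ is continuous, so nearby eigenvalues give nearly parallel eigenvectors). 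Iterating this across infinitely many peeling steps --- even at a single Cantor--Bendixson level --- therefore gives no control on $\|T_\Lambda^n\|$, and I see no reason to believe the eigenvectors $(f_\lambda)_{\lambda\in\Lambda}$ form anything like a Riesz basis when $\Lambda$ is an infinite countable compact; without that, there is no conditioned diagonalization to appeal to. In short, power-boundedness of $T_\Lambda$ for countable compact $\Lambda$ is unproved, quite possibly false, and certainly not a step to be waved through.

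The paper sidesteps exactly this difficulty. It does \emph{not} attempt to control $\|T_\Lambda^n\|$ from above. Instead it uses Waterman's spectral synthesis (Fact~\ref{Waterman}) to prove Fact~\ref{Fact 3.24}: $T$ is distributionally chaotic if and only if some restriction $T|_{\mathcal E}$ is \emph{densely} distributionally chaotic, and for Kalisch operators every invariant subspace is $\h_K$ with $K\subseteq\Lambda$ compact. For countable $\Lambda$ every such $K$ is scattered and has an isolated point, and Fact~\ref{Proposition 3.25}(1) shows $T_K$ is then not densely distributionally chaotic by a \emph{lower} bound: the Riesz functional $\pss{g_0}{\cdot}$ dual to the isolated eigenvector gives $\|T_K^n f\|\ge |\pss{g_0}{f}|/\|g_0\|$ for all $n$, so a dense set of vectors has orbit bounded away from $0$. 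A single isolated point and a single bounded projection suffice; no transfinite bookkeeping is needed. The paper then applies Remark~\ref{Remark 3.21} with $\Gamma=\Gamma_0=\textrm{DDCH}(\h)$ rather than $\textrm{DCH}(\h)$, using $\widehat{\textrm{DDCH}}_M(\h)=\textrm{DCH}_M(\h)$ (Fact~\ref{Fact 3.24}). To repair your argument, either establish power-boundedness (a genuine result you would need to prove, and which I doubt is attainable by the stated strategy), or adopt the paper's indirect route through $\textrm{DDCH}$, Waterman, and the lower-bound estimate.
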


Note that this is in strong contrast with Proposition \ref{Theorem 15}, according to which 
the class of \emph{densely} distrobutionally chaotic operators in $\bmh$ is $G_\delta$.

\smallskip
The proof of Proposition \ref{Proposition 3.23} relies on the following observation:
\begin{fact}\label{Fact 3.24}
 An operator $T\in\bh$ is distributionally chaotic if and only if it has a 
restriction which is densely distributionally chaotic.
\end{fact}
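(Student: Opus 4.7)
The backward implication is immediate: if $T|_{\mathcal{E}}$ is densely distributionally chaotic on a closed $T$-invariant subspace $\mathcal{E}\subseteq\h$, then any distributionally irregular vector $x\in\mathcal{E}$ for $T|_{\mathcal{E}}$ remains distributionally irregular for $T$, since $T^{i}x=(T|_{\mathcal{E}})^{i}x$ for every $i\ge 0$; hence $T\in\dch$.

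For the forward direction, I would start from a distributionally irregular vector $x_{0}$ for $T$, witnessed by sets $A,B\subseteq\N$ of upper density $1$ along which $\|T^{i}x_{0}\|$ tends to $0$ and to $\infty$ respectively. The natural candidate for the invariant subspace is $\mathcal{E}:=\overline{\textrm{span}}\{T^{n}x_{0}\,;\,n\ge 0\}$, which is closed and $T$-invariant, and must be infinite-dimensional since a finite-dimensional operator cannot admit a distributionally irregular vector (by a straightforward Jordan-form argument). The plan is then to show that $T|_{\mathcal{E}}$ is densely distributionally chaotic. I would write the set of distributionally irregular vectors for $T|_{\mathcal{E}}$ as $G^{(0)}\cap G^{(\infty)}$, where $G^{(0)}$ (resp.\ $G^{(\infty)}$) denotes the set of $y\in\mathcal{E}$ such that $\|T^{i}y\|$ tends to $0$ (resp.\ to $\infty$) along some set of integers of upper density $1$. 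A straightforward adaptation of the argument used in Fact \ref{FactUn} shows that both sets are $G_{\delta}$ subsets of $\mathcal{E}$, so by the Baire category theorem it suffices to establish density of each.

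Density of $G^{(0)}$ is easy: for every polynomial $p$, the estimate $\|T^{i}p(T)x_{0}\|=\|p(T)T^{i}x_{0}\|\le\|p(T)\|\cdot\|T^{i}x_{0}\|$ forces $p(T)x_{0}\in G^{(0)}$ (along the very same set $A$), and the vectors $p(T)x_{0}$ form a dense subspace of $\mathcal{E}$. The delicate step is to establish density of $G^{(\infty)}$. My approach is to perturb a generic element $y_{0}=p(T)x_{0}$ of that dense subspace by a small multiple of $x_{0}$: setting $y_{\varepsilon}:=(p(T)+\varepsilon I)x_{0}$, the key observation is that if $p(T)+\varepsilon I$ is bounded below on $\mathcal{E}$ by some constant $c>0$, then $\|T^{i}y_{\varepsilon}\|=\|(p(T)+\varepsilon I)T^{i}x_{0}\|\ge c\,\|T^{i}x_{0}\|\to\infty$ along $B$, placing $y_{\varepsilon}$ in $G^{(\infty)}$. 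By the spectral mapping theorem for the approximate point spectrum, $p(T)+\varepsilon I$ is bounded below on $\mathcal{E}$ as soon as $-\varepsilon\notin p(\sigma_{\mathrm{ap}}(T|_{\mathcal{E}}))$, a condition readily satisfied for generic small $\varepsilon$ \emph{except} when $0$ lies in the interior of the compact set $p(\sigma_{\mathrm{ap}}(T|_{\mathcal{E}}))$. Handling this spectral pathology is the main obstacle; one circumvents it by appealing to the refined results of Bernardes--Bonilla--M\"uller--Peris \cite{BBMP}, which guarantee that any operator possessing a distributionally irregular vector $x_{0}$ admits in fact a residual set of distributionally irregular vectors in the closed invariant subspace generated by $x_{0}$. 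Combined with the Baire category theorem, this yields the density of $G^{(0)}\cap G^{(\infty)}$ in $\mathcal{E}$, completing the proof.
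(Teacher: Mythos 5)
The backward implication and the overall plan for the forward one match the paper: reduce to the cyclic invariant subspace $\mathcal E$ generated by $x_0$, decompose the set of distributionally irregular vectors for $S=T|_{\mathcal E}$ into the set of vectors whose orbit is distributionally near $0$ and the set of vectors with distributionally unbounded orbit, and conclude by Baire. Your argument that the polynomial vectors $p(T)x_0$ lie in $G^{(0)}$ (because $\|S^i p(T)x_0\|\le\|p(T)\|\,\|S^i x_0\|\to 0$ along $A$) and are dense in $\mathcal E$ is exactly what the paper uses for density of the ``near $0$'' part.

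The gap is in the density of $G^{(\infty)}$. Your spectral perturbation $y_\varepsilon=(p(T)+\varepsilon I)x_0$ is a dead end, as you yourself note: when $0$ lies in the interior of $p\bigl(\sigma_{\mathrm{ap}}(S)\bigr)$ there is no small $\varepsilon$ making $p(T)+\varepsilon I$ bounded below on $\mathcal E$, and for operators with complicated approximate point spectrum (which distributionally chaotic operators typically have) this situation really occurs. The fallback appeal to \cite{BBMP} is then stated incorrectly: you quote it as guaranteeing ``a residual set of distributionally irregular vectors in the closed invariant subspace generated by $x_0$'' from the mere existence of one distributionally irregular vector — but that is exactly the content of Fact~\ref{Fact 3.24} (forward direction), so the citation is circular. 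What \cite{BBMP} actually provides are two separate statements, namely Proposition~8 (a \emph{single} vector with distributionally unbounded orbit for $S$ already yields a \emph{comeager} set of such vectors) and Proposition~9 (a \emph{dense} set of vectors whose orbit is distributionally near $0$ yields a \emph{comeager} set of such vectors). Proposition~8 gives density (indeed comeagerness) of $G^{(\infty)}$ at once, with no spectral argument at all; combined with your polynomial argument for $G^{(0)}$ and Baire, this yields the result. That is precisely the paper's proof. So the fix to your argument is small — drop the spectral detour and replace the overclaimed reference with the correct appeal to \cite[Prop.~8]{BBMP} — but as written, the density of $G^{(\infty)}$ is not actually established.
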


\begin{proof}[Proof of Fact \ref{Fact 3.24}]
If there exists a closed subspace $\mathcal E$ of $\h$ such that $T|_{\mathcal E}$ is densely distributionally chaotic, 
$T$ admits in particular a distributionally irregular vector belonging to $\mathcal E$, and hence $T$ is distributionally chaotic. Conversely, assume that $T$ is distributionally chaotic, and 
let $x\in\h$ be a distributionally irregular vector. Let $\mathcal E$ be the closed $T$-invariant 
subspace spanned by the orbit of $x$, and denote by $S$ the operator induced by $T$ on 
$\mathcal E$. As the vector $x\in \mathcal E$ has a distributionally unbounded orbit under the action of 
$S$ (\mbox{\it i.e. } $\Vert S^{n}x\Vert \to\infty$ as 
$n\to\infty$ along a subset $B$ of $\N$, with 
$\overline{\textrm{dens}}(B)=1$), $S$  admits  by \cite[Prop. 8]{BBMP} a comeager set of vectors with 
distributionally unbounded orbit. Also, the orbit of $x$ 
under the action of $S$ is distributionally near to $0$ (\mbox{\it i.e.}
$\Vert S^{n}x\Vert \to 0$ as 
$n\to\infty$ along a subset $A$ of $\N$, with 
$\overline{\textrm{dens}}(A)=1$). It follows that 
$\Vert S^{n}y\Vert \to0$ as 
$n\to\infty$ along $A$ for every vector $y$ belonging 
to the linear span of the vectors $S^{p}x$, $p\ge 0$. Hence $S$ admits a dense set 
of vectors whose orbit is distributionally near to $0$. By \cite[Prop. 9]{BBMP}, it follows that the 
set of vectors whose orbit under the action of $S$ is distributionally near to $0$ 
is comeager in $\mathcal E$. By the Baire Category Theorem, we conclude that $S$ admits a comeager set of 
distributionally irregular vectors, \mbox{\it i.e.} $S$ is densely distributionally chaotic.
\end{proof}

We will also need the following characterizations of (densely) distributionally 
chaotic Kalisch operators:
\begin{fact}\label{Proposition 3.25}
 Let $\Lambda $ be a compact subset of $\T\setminus\{1\}$, and let $T_{\Lambda }$ be 
the associated Kalisch operator on $H_{\Lambda }$.
\begin{enumerate}
 \item[\rm (1)] The operator $T_{\Lambda } $ is densely distributionally chaotic 
if and 
only if $\Lambda $ is a perfect set.
\item[\rm (2)] The operator $T_{\Lambda }$ is distributionally chaotic if and only 
if 
$\Lambda $ is uncountable.
\end{enumerate}
\end{fact}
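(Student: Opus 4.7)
The easy direction of (1) follows immediately from the stated properties of Kalisch operators together with results already recalled: if $\Lambda$ is perfect, $T_{\Lambda}$ is ergodic in the Gaussian sense, hence belongs to $\ergh\subseteq\ddch$. The substance of the lemma lies in the converse, together with part~(2).

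For the forward direction of (1), I would suppose that $T_{\Lambda}$ is densely distributionally chaotic; in particular $\Lambda\neq\emptyset$. Assume for contradiction that some $\lambda_{0}\in\Lambda$ is isolated. Since $\sigma(T_{\Lambda})=\Lambda$, the point $\lambda_{0}$ is then isolated in the spectrum of $T_{\Lambda}$, so the associated Riesz projection $P_{\lambda_{0}}$ is a well-defined bounded idempotent commuting with $T_{\Lambda}$. Its range is the spectral subspace for $\{\lambda_{0}\}$, namely $\ker(T_{\Lambda}-\lambda_{0})=\mathbb{C}f_{\lambda_{0}}$; its kernel is a closed $T_{\Lambda}$-invariant complement whose spectrum is $\Lambda\setminus\{\lambda_{0}\}$, and by Waterman's theorem (Fact~\ref{Waterman}) this kernel must be $\h_{\Lambda\setminus\{\lambda_{0}\}}$, which is a \emph{proper} closed subspace of $\h_{\Lambda}$ because $\sigma(T_{\Lambda})=\Lambda\neq\Lambda\setminus\{\lambda_{0}\}$. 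Using commutation and the fact that $|\lambda_{0}|=1$,
\[
\|T_{\Lambda}^{n}x\|\;\geq\;\frac{\|P_{\lambda_{0}}T_{\Lambda}^{n}x\|}{\|P_{\lambda_{0}}\|}\;=\;\frac{|\lambda_{0}|^{n}\,\|P_{\lambda_{0}}x\|}{\|P_{\lambda_{0}}\|}\;=\;\frac{\|P_{\lambda_{0}}x\|}{\|P_{\lambda_{0}}\|}
\]
for every $x\in\h_{\Lambda}$ and every $n\geq 0$. A distributionally irregular vector satisfies $\|T_{\Lambda}^{n}x\|\to 0$ along a set of upper density $1$, which forces $P_{\lambda_{0}}x=0$; hence the distributionally irregular vectors all lie in the proper subspace $\h_{\Lambda\setminus\{\lambda_{0}\}}$ and cannot be dense in $\h_{\Lambda}$, a contradiction.

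Part~(2) will then follow from part~(1) together with Fact~\ref{Fact 3.24} and Fact~\ref{Waterman}. If $\Lambda$ is uncountable, the Cantor--Bendixson theorem yields a non-empty perfect subset $K\subseteq\Lambda$; the restriction of $T_{\Lambda}$ to the invariant subspace $\h_{K}$ is the Kalisch operator $T_{K}$, which is densely distributionally chaotic by part~(1), and Fact~\ref{Fact 3.24} then gives that $T_{\Lambda}$ itself is distributionally chaotic. Conversely, if $T_{\Lambda}$ is distributionally chaotic, Fact~\ref{Fact 3.24} provides a closed $T_{\Lambda}$-invariant subspace $\mathcal{E}$ on which $T_{\Lambda}$ is densely distributionally chaotic; Fact~\ref{Waterman} gives $\mathcal{E}=\h_{K}$ for some compact $K\subseteq\Lambda$, and part~(1) forces $K$ to be perfect, hence uncountable, so $\Lambda$ is uncountable as well.

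\textbf{Expected obstacle.} The only non-routine step is the Riesz-projection argument: one must confirm that the isolated point $\lambda_{0}$ is isolated in $\sigma(T_{\Lambda})$ (which uses $\sigma(T_{\Lambda})=\Lambda$), and, crucially, identify $\ker P_{\lambda_{0}}$ with $\h_{\Lambda\setminus\{\lambda_{0}\}}$ so as to guarantee that the distributionally irregular vectors are trapped in a \emph{proper} closed subspace. Once that identification is in place (via Waterman's spectral synthesis), the norm estimate and the reduction of~(2) to~(1) are straightforward.
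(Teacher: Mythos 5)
Your proof is correct and the central device coincides with the paper's: when $\lambda_0\in\Lambda$ is isolated, show that every distributionally irregular vector lies in the proper closed invariant subspace $\h_{\Lambda\setminus\{\lambda_0\}}$, because the orbit of any $f$ outside it is bounded away from zero; part (2) and the easy direction of (1) are treated identically in both arguments. The difference is in how the orbit lower bound is produced. You route through the Riesz functional calculus at the isolated spectral point $\lambda_0$ and invoke Waterman's theorem to identify $\ker P_{\lambda_0}$ with $\h_{\Lambda\setminus\{\lambda_0\}}$; one should really invoke Waterman once more (as you only tacitly do) to see that the range of $P_{\lambda_0}$ is the genuine eigenspace $\mathbb{C}f_{\lambda_0}$ rather than some larger spectral subspace, which is what justifies the equality $\|P_{\lambda_0}T_\Lambda^n f\|=\|P_{\lambda_0}f\|$. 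The paper gets the same lower bound with less machinery: it already knows from the proof of Fact~\ref{Kalisch} that $f_{\lambda_0}\notin\h_{\Lambda\setminus\{\lambda_0\}}$, hence has the topological direct sum $\h_\Lambda=\mathbb{C}f_{\lambda_0}\oplus\h_{\Lambda\setminus\{\lambda_0\}}$; it then takes the coordinate functional $g_0$ with $\pss{g_0}{f_{\lambda_0}}=1$ and $g_0\perp\h_{\Lambda\setminus\{\lambda_0\}}$ and reads off $\pss{g_0}{T_\Lambda^n f}=\lambda_0^n\pss{g_0}{f}$, giving $\|T_\Lambda^n f\|\geq|\pss{g_0}{f}|/\|g_0\|$ --- which is exactly your $\|P_{\lambda_0}f\|/\|P_{\lambda_0}\|$ in another guise. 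So the two proofs are equivalent in substance, but the paper's version of part (1) dispenses with both the Riesz calculus and Waterman's theorem.
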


\begin{proof}[Proof of Fact \ref{Proposition 3.25}]
Let us start by proving assertion $(1)$. If $\Lambda $ is a perfect set then $T_{\Lambda }$ is an 
ergodic operator, so that it is in particular densely distributionally chaotic by \cite{GM}. 
Conversely, suppose that $\Lambda $ is not a perfect set, and let $\lambda _{0}$ be 
an isolated point of $\Lambda $. Then the eigenfunction $f_{\lambda _{0}}$ does not belong to 
$\h_{\Lambda\setminus\{\lambda _{0}\} }$, and $\h_{\Lambda}$ can be written as a topological 
direct sum $\h_{\Lambda }=\textrm{span}[f_{\lambda _{0}}]\oplus 
\h_{\Lambda\setminus\{\lambda _{0}\} }$. Hence, there exists a non-zero vector $g_{0}$ 
of $\h_{\Lambda }$ with $\pss{g_{0}}{f_{\lambda _{0}}}=1$ and $g\perp \h_{\Lambda\setminus\{\lambda_0\}}$, and a bounded 
projection $Q$ of $\h_{\Lambda }$ onto $\h_{\Lambda\setminus\{\lambda _{0}\} }$, such 
that $f=\pss{g_{0}}{f} f_{\lambda _{0}}+Qf$ for every $f\in\h_{\Lambda}$.
It follows that  $T^{n}_{\Lambda }f=\pss{g_{0}}{f}
\lambda _{0}^{n}f_{\lambda _{0}}+T^{n}_{\Lambda }Qf$ for every $n\ge 1$, so that $\pss{g_{0}}{T^{n}_{\Lambda }f}=\pss{g_{0}}{f}\lambda 
_{0}^{n}$. Hence $\Vert T^{n}_{\Lambda }f\Vert\ge 
\vert \pss{g_{0}}{f}\vert/\Vert g_{0}\Vert$ for every $n\ge 1$. Thus, we see that the orbit under 
 $T_{\Lambda }$ of any vector $f\in \h_{\Lambda }$ with $\pss{g_{0}}{f}\neq 0$ is bounded away 
from $0$, and hence that no such vector can be distributionally irregular. So 
$T_{\Lambda}$ is not densely distributionally chaotic.
\par\smallskip
The proof of assertion $(2)$ relies on Facts \ref{Waterman} and 
\ref{Fact 3.24} above. By Fact \ref{Fact 3.24}, $T_{\Lambda }$ is distributionally chaotic 
if and only if it admits a restriction which is densely distributionally chaotic.  Since, by Fact \ref{Waterman},
any such restriction is of the form $T_{K}\in\mathfrak{B}(\h_{K})$, where $K$ is a compact 
subset of $\Lambda $, it follows from assertion $(1)$ that $T_{\Lambda }$ is distributionally chaotic if and 
only if $\Lambda $ contains a perfect set, which happens exactly when 
$\Lambda $ is uncountable. This proves $(2)$.
\end{proof}

We are now ready for the proof of Proposition \ref{Proposition 3.23}.

\begin{proof}[Proof of Proposition \ref{Proposition 3.23}] Consider the class 
$\Gamma (\h)=\Gamma _{0}(\h):=\ddch$. This class $\Gamma _{0}(\h)$ satisfies the assumptions 
of Remark \ref{Remark 3.21}: $\textrm{G-ERG}(\h)$ is contained in $\Gamma _{0}(\h)$ by a 
result of \cite{GM}, and it follows from Fact \ref{Proposition 3.25} that whenever 
$\Lambda $ is a countable subset of $\T\setminus\{1\}$, the Kalisch operator $T_{\Lambda }$ is not 
(densely) distributionally chaotic. Since $\textrm{G-ERG}(\h)\subseteq \Gamma (\h)
\subseteq\Gamma _{0}(\h)$, we conclude that for any $M>1$, 
$\widehat{\Gamma }_{M}(\h)$ is non-Borel in $(\bmh,\sote)$. But $\widehat{\Gamma }_{M}(\h)=\textrm{DCH}_{M}(\h)$ by Fact \ref{Fact 
3.24}, and hence $\textrm{DCH}_{M}(\h)$ is non-Borel in 
$(\bmh,\sote)$.
\end{proof}

\begin{corollary}\label{Corollary 3.26}
 For any $M>1$, the set \emph{$\textrm{DCH}_{M}(\h)\setminus \textrm{DDCH}_{M}(\h)$} of operators in $\bmh$ which are distributionally chaotic but not densely distributionally chaotic is 
non-Borel in \emph{$(\bmh,\sote)$}.
\end{corollary}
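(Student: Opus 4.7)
The proof is essentially immediate from the two preceding results, so my plan is just to record the short deduction and note why no extra work is needed.

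The plan is to combine Proposition \ref{Theorem 15} and Proposition \ref{Proposition 3.23} via a trivial Boolean argument. First, I would recall the obvious inclusion $\textrm{DDCH}_M(\h)\subseteq\textrm{DCH}_M(\h)$, which gives the decomposition
\[
\textrm{DCH}_M(\h)=\bigl(\textrm{DCH}_M(\h)\setminus\textrm{DDCH}_M(\h)\bigr)\,\cup\,\textrm{DDCH}_M(\h),
\]
and note that the union on the right is disjoint (though disjointness is not needed).

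Next, I would argue by contradiction: suppose that $\textrm{DCH}_M(\h)\setminus\textrm{DDCH}_M(\h)$ were Borel in $(\bmh,\sote)$. By Proposition \ref{Theorem 15}, $\textrm{DDCH}_M(\h)$ is a $G_\delta$ subset of $(\bmh,\sote)$, and in particular is Borel. Since the Borel subsets of $(\bmh,\sote)$ form a $\sigma$-algebra, the above decomposition would then force $\textrm{DCH}_M(\h)$ to be Borel in $(\bmh,\sote)$, contradicting Proposition \ref{Proposition 3.23}.

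There is really no obstacle here: the whole point of stating Corollary \ref{Corollary 3.26} separately is to highlight the contrast between Proposition \ref{Theorem 15} (denseness of the distributionally irregular vectors is a topologically simple condition) and Proposition \ref{Proposition 3.23} (mere existence of a distributionally irregular vector is not), and the proof is just the observation that the difference of a non-Borel set and a Borel subset of it cannot be Borel. Accordingly, I would write the argument as a two-line proof with no additional constructions.
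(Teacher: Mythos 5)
Your argument is correct and matches the paper's proof exactly: both deduce the result from Proposition \ref{Theorem 15} (which gives that $\textrm{DDCH}_M(\h)$ is $G_\delta$, hence Borel) together with Proposition \ref{Proposition 3.23} (which gives that $\textrm{DCH}_M(\h)$ is non-Borel), using the fact that Borel sets form a $\sigma$-algebra. The paper states this in two sentences, as you propose.
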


\begin{proof}
Since $\textrm{DDCH}_{M}(H)$ is a $G_{\delta }$ subset of $(\hcmh,\sote)$ by Proposition 
\ref{Theorem 15}, it is in particular Borel. So one 
immediately deduces from Proposition \ref{Proposition 3.23} that 
$\textrm{DCH}_{M}(\h)\setminus \textrm{DDCH}_{M}(\h)$ is non-Borel in $(\bmh,\sote)$.
\end{proof}

\begin{remark}
 Corollary \ref{Corollary 3.26} formally yields the existence of distributionally chaotic 
operators which are not densely distributionally chaotic. But the existence of such 
operators can of course be deduced directly from Proposition \ref{Proposition 3.25}: a Kalisch operator $T_{\Lambda }$ 
is distributionally chaotic but not densely distributionally chaotic if and only if 
$\Lambda $ is uncountable but not perfect.
\end{remark}

\par\smallskip

\section{Ergodicity for upper-triangular operators}\label{UPPERTRIANG}

\subsection{Definitions and setting}\label{Subsection 3.1} Let us fix an 
orthonormal basis 
$(e_{k})_{\gk}$ of the complex separable infinite-dimensional Hilbert space $\h$. We will denote by $\h_{00}$ the 
linear subspace of $\h$ consisting of finitely supported vectors with 
respect to the basis $(e_{k})_{k\ge 1}$.
\par\smallskip
In this subsection, we concentrate ourselves on the 
study of the typical properties of operators on $\h$ which are 
\emph{upper-triangular} with respect to the basis $(e_{k})_{\gk}$, with diagonal coefficients 
\emph{of modulus $1$} and \emph{pairwise distinct}.  This class of operators will be denoted by $\toh$:
\[
 \toh:=\bigl\{T\in\bh\,;\,\forall\,k\ge 1\ :\ Te_{k}\in\textrm{span}
 [e_{1},\dots,e_{k}]\ \textrm{and}\ |\pss{Te_{k}}{e_{k}}|=1\bigr\}.
 \]
{For each $T\in\toh$ and $\gk$, we write $\lambda _{k}(T):=
 \pss{Te_{k}}{e_{k}}$. We also define}
 \[\tth:=\bigl\{T\in\toh\,;\,\forall\,j,k\ge 1\ \textrm{with}\ j\neq k,\ \lambda 
_{j}(T)\neq\lambda _{k}(T)\bigr\}
\]
and
\[\tindh:=\bigl\{T\in\tth\,;\,\textrm{the diagonal coefficients}\ \lambda 
_{k}(T)\ \textrm{are rationally independent} \bigr\}.
\]
Recall that if $(\lambda_{k})_{k\in I}$ is a finite or infinite family of elements of $\T$ with $\lambda_{k}=e^{2i\pi\theta_{k}}$ for every $k\in I$, it is said to be \emph{rationally independent} if for any finite subset $F$ of $I$, the family $(\theta_k)_{k\in F}$ consists of $\Q$-independent numbers.
\par\smallskip
For any $M>0$, we denote by 
$\tomh$, $\ttmh$, and $\tindmh$ the set of operators in $\bmh$ which 
belong to $\toh$, $\tth$, and $\tindh$ respectively. It is not difficult 
to check that these sets are $\gd$ in $(\bmh,\sot)$. So we may state:

\begin{fact}\label{Fact 17}
For any $M>0$, the sets $\tomh$, $\ttmh$, and $\tindmh$ are Polish 
spaces when endowed with the topology  $\sot$.
\end{fact}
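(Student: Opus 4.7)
The plan is to express each of the three sets as a countable Boolean combination of sets that are open or closed in $(\bmh,\sot)$, from which the $G_\delta$ conclusion follows immediately. The single piece of continuity that drives everything is the following trivial observation: for every fixed pair of indices $j,k\ge 1$, the map $T\mapsto \pss{Te_k}{e_j}$ is $\sot$-continuous on $\bh$, directly from the definition of $\sot$ (a net $T_i\to T$ in $\sot$ satisfies $T_ie_k\to Te_k$, hence $\pss{T_ie_k}{e_j}\to\pss{Te_k}{e_j}$). In particular, for every $k$, the ``diagonal'' map $T\mapsto \lambda_k(T)=\pss{Te_k}{e_k}$ is continuous.

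For $\tomh$, upper-triangularity amounts to the countable family of equations $\pss{Te_k}{e_j}=0$ for all pairs $j>k\ge 1$, each of which cuts out a closed subset of $(\bmh,\sot)$; and the unimodularity condition $|\lambda_k(T)|=1$ is closed for each fixed $k$. Intersecting over the countably many conditions, $\tomh$ is actually \emph{closed} in $(\bmh,\sot)$, hence trivially $G_\delta$. For $\ttmh$, one further requires $\lambda_j(T)\ne\lambda_k(T)$ for each of the countably many distinct pairs $(j,k)$, and each such condition is open by continuity of $T\mapsto \lambda_j(T)-\lambda_k(T)$. Therefore
\[
\ttmh = \tomh\cap\bigcap_{\substack{j,k\ge 1\\ j\ne k}}\{T\in\bmh\,;\,\lambda_j(T)\ne\lambda_k(T)\}
\]
is a closed set intersected with a countable collection of open sets, hence $G_\delta$.

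For $\tindmh$, the rational independence of $(\lambda_k(T))_{k\ge 1}$ (in its natural multiplicative form, equivalent to minimality of the associated rotation) is exactly the conjunction, over all finite subsets $F\subseteq\N$ and all non-zero integer vectors $(n_k)_{k\in F}\in\Z^F\setminus\{0\}$, of the condition $\prod_{k\in F}\lambda_k(T)^{n_k}\ne 1$. For each fixed $(F,(n_k))$, the map $T\mapsto \prod_{k\in F}\lambda_k(T)^{n_k}$ is $\sot$-continuous, so this condition is open in $\bmh$. Since there are only countably many such pairs $(F,(n_k))$, we obtain
\[
\tindmh = \ttmh\cap\bigcap_{F,\,(n_k)}\bigl\{T\in\bmh\,;\,\textstyle\prod_{k\in F}\lambda_k(T)^{n_k}\ne 1\bigr\},
\]
again a $G_\delta$. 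There is no genuine obstacle in this proof; the only mild point of care is the choice of formulation of ``rationally independent'' (the literal additive reading via $\theta_k\in[0,1)$ introduces a discontinuity at $\lambda_k=1$, but it is easily checked that within $\ttmh$ the resulting set is again a countable intersection of open conditions, so the conclusion is unchanged). Since $\tomh$, $\ttmh$ and $\tindmh$ are $G_\delta$ subsets of the Polish space $(\bmh,\sot)$, they are Polish in the induced topology by Alexandrov's theorem, completing the proof.
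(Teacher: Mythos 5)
Your proposal is correct and supplies precisely the routine verification that the paper dismisses with ``it is not difficult to check'': $\tomh$ is cut out from $\bmh$ by the countably many $\sot$-closed conditions $\pss{Te_k}{e_j}=0$ ($j>k$) and $|\pss{Te_k}{e_k}|=1$, hence is $\sot$-closed; $\ttmh$ and $\tindmh$ are obtained by further imposing countably many $\sot$-open conditions involving only the continuous functionals $T\mapsto\lambda_k(T)$, so all three are $G_\delta$ in the Polish space $(\bmh,\sot)$ and therefore Polish by Alexandrov's theorem. Your multiplicative reformulation of rational independence (no nontrivial relation $\prod_{k\in F}\lambda_k(T)^{n_k}=1$) is indeed the one the paper needs later — for instance in Fact \ref{link}, where what is used is that the $\lambda_k$ generate a dense subgroup of $\T^r$ — and your parenthetical remark correctly observes that even the literal additive reading, once one notes that it forces $\lambda_k(T)\neq 1$ for every $k$ and thereby restores continuity of $T\mapsto\theta_k(T)$, again yields a $G_\delta$ set.
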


\par\smallskip
Note that (by the Gram-Schmidt orthonormalization process) any bounded operator on $\h$ with spanning unimodular 
eigenvectors associated to distinct eigenvalues is upper triangular with 
respect to 
some orthonormal basis $(e_{k})_{\gk}$, and belongs to the associated 
class $\tth$. Hence, given the importance in linear dynamics of unimodular 
eigenvectors, it is quite natural to investigate typical properties of elements 
of the spaces $(\ttmh,\sot)$, $M>1$. Some interesting and unexpected 
phenomena occur in this setting, which we will describe shortly. 
Meanwhile, we state for future reference two elementary facts concerning the 
$\sot$-continuity of certain maps naturally associated to 
operators in $\tth$.
\par\medskip 
When $T$ belongs to $\tth$, there exists for each $\gk$ a unique vector 
$u\in\textrm{span}[e_{1},\dots,e_{k}]$ with $\pss{u}{e_{k}}=1$ such that 
$Tu=\lambda _{k}(T)u$. We denote this vector by $u_{k}(T)$, so that 
$Tu_{k}(T)=\lambda _{k}(T)u_{k}(T)$. We also denote by $\ti{u}_{k}(T)$ the normalized eigenvector $\ti{u}_{k}(T)=u_{k}(T)/\Vert u_{k}(T)\Vert $ of $T$.
Here is a useful fact concerning the 
functions $\lambda _{k}$ and $u_{k}$:

\begin{fact}\label{Fact 18}
 For each $\gk$, the functions $T\mapsto\lambda_k(T)$ and $T\mapsto u_k(T)$  are continuous from $(\tth,\sot)$ into $\T$ and 
$\h$ respectively.
\end{fact}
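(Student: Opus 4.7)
\medskip

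My plan is to reduce the statement to a purely finite-dimensional assertion about how the eigenvector of a matrix with simple spectrum depends on the matrix.

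First, the continuity of $T \mapsto \lambda_k(T)$ is immediate: by definition of $\tth$, we have $\lambda_k(T) = \pss{Te_k}{e_k}$, and the map $T \mapsto Te_k$ is \emph{a fortiori} $\sot$-continuous into $\h$. Unimodularity then shows the image lies in $\T$.

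For $T \mapsto u_k(T)$, the key observation is that $u_k(T)$ depends only on the finite block $P_k T P_k$, where $P_k$ denotes the orthogonal projection of $\h$ onto $V_k := \textrm{span}[e_1,\dots,e_k]$. Writing $u_k(T) = e_k + v$ with $v \in V_{k-1}$ (using $\pss{u_k(T)}{e_k}=1$ and upper-triangularity), the eigenvalue equation $Tu_k(T) = \lambda_k(T)u_k(T)$ is equivalent, after projecting onto $V_{k-1}$, to
\[
\bigl(P_{k-1}TP_{k-1} - \lambda_k(T)\, I_{V_{k-1}}\bigr)\, v = -P_{k-1} T e_k.
\]
Now the operator $B_k(T) := P_{k-1}TP_{k-1} - \lambda_k(T)\, I_{V_{k-1}}$ is upper triangular on $V_{k-1}$ with diagonal entries $\lambda_j(T) - \lambda_k(T)$, $1\le j\le k-1$. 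Because $T$ belongs to $\tth$, the diagonal coefficients of $T$ are pairwise distinct, so all these diagonal entries are nonzero and $B_k(T)$ is invertible on $V_{k-1}$. Therefore
\[
u_k(T) = e_k - B_k(T)^{-1} P_{k-1} T e_k.
\]

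It remains to observe that every ingredient in this formula is $\sot$-continuous in $T$. The map $T \mapsto P_{k-1} T e_k$ is $\sot$-continuous into the finite-dimensional space $V_{k-1}$; and viewing $\mathfrak{B}(V_{k-1})$ as a finite-dimensional (hence normed) space, the map $T \mapsto B_k(T)$ is $\sot$-continuous since each matrix entry is given by $\pss{Te_j}{e_i}$ for $i,j \le k-1$, together with the already-continuous function $\lambda_k(T)$. Finally, the inversion map is continuous on the open set of invertible operators in $\mathfrak{B}(V_{k-1})$. Composing, $T \mapsto u_k(T)$ is $\sot$-continuous from $\tth$ into $\h$.

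There is no real obstacle here: everything boils down to the fact that simple eigenvectors of matrices with simple spectrum depend continuously (in fact rationally) on the matrix entries, which is trivial once one notes that the computation takes place inside the finite-dimensional block $P_k T P_k$. Continuity of the normalized eigenvector $T\mapsto \widetilde u_k(T) = u_k(T)/\|u_k(T)\|$ then follows at once, since $u_k(T)$ is never zero (its $k$-th coordinate equals $1$).
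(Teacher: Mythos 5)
Your proof is correct and is essentially the same as the paper's: the paper derives a back-substitution recursion
\[
\pss{u_{k}(T)}{e_{i}}=\dfrac{1}{\lambda _{k}(T)-\lambda _{i}(T)}\sum_{j=i+1}^{k}\pss{Te_{j}}{e_{i}}\,\pss{u_{k}(T)}{e_{j}},
\]
and runs a descending induction, while you package the same finite-dimensional linear algebra as inversion of the upper-triangular operator $B_k(T)=P_{k-1}TP_{k-1}-\lambda_k(T)I_{V_{k-1}}$. The two are the same computation and rest on the same continuity facts.
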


\begin{proof}
The continuity of the function $T\mapsto\lambda_k(T)$  is obvious from the definition of 
$\lambda _{k}(T)$. As to the function $T\mapsto u_k(T)$, 
 the fact that $\pss{u_{k}(T)}{e_{k}}=1$ implies that 
 \[
\pss{u_{k}(T)}{e_{i}}=\dfrac{1}{\lambda _{k}(T)-\lambda _{i}(T)}\sum
_{j=i+1}^{k}\pss{Te_{j}}{e_{i}}\,\pss{u_{k}(T)}{e_{j}}
\]
for every $1\le i\le k-1$. A straightforward induction then shows that the 
scalar functions $T\mapsto\pss{u_{k}(T)}{e_{i}}$,  $1\le i\le k-1$, are $\sot$-continuous on 
$\tth$, from which Fact \ref{Fact 18} follows.
\end{proof}

\begin{fact}\label{Fact 19}
 Let $u\in \h_{00}$, and choose $r\geq 1$ such that $u\in{\rm span}\,[e_1,\dots ,e_r]$. 
 For any $T\in\tth$,  $u$ 
can be written as $u=\sum_{k=1}^{r}a_{k}(T)u_{k}(T)$, where the functions $T\mapsto a_k(T)$,  
$1\le k\le r$, are continuous from $(\tth,\sot)$ into 
$\C$.    
\end{fact}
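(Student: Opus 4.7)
The plan is to exploit the triangular structure of the eigenvector basis of $T$ restricted to $\h_r:=\mathrm{span}[e_1,\dots,e_r]$, and then extract the coefficients $a_k(T)$ by a backward substitution whose continuity follows from Fact~\ref{Fact 18}.

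First I would observe that for $T\in\tth$, the vectors $u_1(T),\dots,u_r(T)$ form a basis of $\h_r$. Indeed, by the definition of $u_k(T)$, we have $u_k(T)\in\h_k\subseteq\h_r$ and $\pss{u_k(T)}{e_k}=1$, so the matrix of $(u_1(T),\dots,u_r(T))$ in the basis $(e_1,\dots,e_r)$ is upper triangular with $1$'s on the diagonal. In particular, any $u\in\h_r$ can be written uniquely as $u=\sum_{k=1}^r a_k(T)u_k(T)$.

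Next I would identify the coefficients explicitly. Writing $u=\sum_{j=1}^r u^{(j)}e_j$ and expanding
\[
\sum_{k=1}^r a_k(T)u_k(T)=\sum_{k=1}^r a_k(T)\Bigl(e_k+\sum_{i<k}\pss{u_k(T)}{e_i}\, e_i\Bigr),
\]
and matching coefficients of $e_j$ gives the triangular linear system
\[
u^{(j)}=a_j(T)+\sum_{k=j+1}^{r}a_k(T)\,\pss{u_k(T)}{e_j},\qquad 1\le j\le r.
\]
This can be solved by backward induction: $a_r(T)=u^{(r)}$, and, for $1\le j<r$,
\[
a_j(T)=u^{(j)}-\sum_{k=j+1}^{r}a_k(T)\,\pss{u_k(T)}{e_j}.
\]

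Finally, to get the continuity statement, I would argue by reverse induction on $j$. For $j=r$, $a_r(T)=u^{(r)}$ is constant, hence continuous. If $a_{j+1},\dots,a_r$ are already known to be $\sot$-continuous functions of $T$ on $\tth$, then since each scalar function $T\mapsto\pss{u_k(T)}{e_j}$ is $\sot$-continuous on $\tth$ by Fact~\ref{Fact 18}, the recursion above expresses $a_j(T)$ as a polynomial in $\sot$-continuous functions, and is therefore itself $\sot$-continuous. There is no serious obstacle here; the only point requiring care is the uniqueness/solvability step, which is handled cleanly by the triangular structure inherited from $T\in\tth$.
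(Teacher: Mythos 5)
Your proof is correct and takes essentially the same approach as the paper: the paper invokes Cramer's formulas together with the continuity of $T\mapsto u_k(T)$ from Fact~\ref{Fact 18}, while you solve the same upper-triangular system by explicit backward substitution, which is just a more hands-on way of writing out the same Cramer-type computation.
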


\begin{proof} This follows directly from Cramer's formulas and the continuity of the maps $T\mapsto u_k(T)$, $k\ge 1$.
\end{proof}
\par\smallskip

\subsection{Perfect spanning is typical} The following result shows that within any 
class $\ttmh$, $M>1$, the perfect spanning property (aka ergodicity in the Gaussian sense) is a 
typical property with respect to the topology $\texttt{SOT}$.

\begin{proposition}\label{Proposition 26}
 For every $M>1$, the set \emph{$\psmh=\textrm{G-ERG}_M(\h)$} is comeager in \emph{$(\ttmh,\sot)$}. 
Consequently, \emph{$\textrm{ERG}_{M}(H)$}, 
\emph{$\textrm{FHC}_{M}(H)$} and \emph{$\textrm{UFHC}_{M}(H)$} are all comeager in \emph{$(\ttmh,\sot)$}. 
\end{proposition}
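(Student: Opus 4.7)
The proposition reduces to showing that $\psmh\cap\ttmh$ is comeager in $(\ttmh,\sot)$, which is a Polish space by Fact \ref{Fact 17}; the three remaining statements follow from the inclusions $\psmh=\gergh\subseteq\ergh\subseteq\fhch\subseteq\ufhch$. I aim to establish both density and $G_\delta$-ness of $\psmh\cap\ttmh$ in $(\ttmh,\sot)$.

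For density, I apply the $\sot$-version of Lemma \ref{Lemma 4 bis} (restricted to $\ttmh$): given $r\ge 1$, an upper triangular $A\in\mathfrak{B}(\h_r)$ with $\|A\|<M$ and pairwise distinct unimodular diagonal entries, and $\varepsilon>0$, the goal is to produce $T\in\ttmh\cap\gsmxh\subseteq\ttmh\cap\psmh$ with $Te_k=Ae_k$ for $k\le r$. I set $Te_k=Ae_k$ for $k\le r$ and, for $k>r$,
\[
Te_k=\lambda_k e_k+t_{k-1,k}\, e_{k-1}+\eta_k v_k,
\]
where the $\lambda_k\in\T$ are pairwise distinct, distinct from $\sigma(A)$, and clustered in a short open arc $I\subset\T$ disjoint from $\sigma(A)$; the coupling vectors $v_k\in\h_r$ are drawn from a sequence designed so that the family $\{(A-\mu)^{-1}v_k:k>r,\,\mu\in\Omega\}$ spreads throughout $\h_r$; and $t_{k-1,k},\eta_k>0$ are small enough that $\|T\|\le M$.

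Solving $Tu=\mu u$ on an open set $\Omega\supseteq I$ disjoint from $\sigma(A)$ yields a holomorphic $1$-parameter eigenvector field $\mu\mapsto u(\mu)$ whose $\h_r$-component involves the resolvents $(A-\mu)^{-1}v_k$ and whose tail coefficients are of Newton-polynomial type, $x_k(\mu)=\prod_{j=r+1}^{k-1}(\mu-\lambda_j)/t_{j,j+1}$. To apply Fact \ref{holo} and conclude $T\in\gsmxh\subseteq\psmh$, I verify that $\{u(\mu):\mu\in\Omega\}$ spans $\h$ densely. If $y\in\h$ satisfies $\pss{u(\mu)}{y}=0$ on $\Omega$, the identity decomposes into a meromorphic part with poles at $\sigma(A)$ and a holomorphic tail series $\sum_{k>r} a_k p_k(\mu)$ with $p_k(\mu)=\prod_{j=r+1}^{k-1}(\mu-\lambda_j)$. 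Evaluating successively at $\mu=\lambda_{r+1},\lambda_{r+2},\ldots\in I\subset\Omega$ and using the vanishing pattern of the Newton polynomials $p_k$ produces recursive relations tying each $a_k$ to the resolvent inner products; the richness of the $(v_k)$-coupling then forces both $P_r y=0$ and all $a_k=0$, hence $y=0$.

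For the $G_\delta$-ness of $\psmh\cap\ttmh$, I combine the $\sot$-continuity of $T\mapsto\lambda_k(T)$ and $T\mapsto u_k(T)$ (Facts \ref{Fact 18}--\ref{Fact 19}) with a countable-basis characterization of perfect spanning, tested against a countable basis of open arcs of $\T$ and a countable dense family in $\h_{00}$, to express $\psmh\cap\ttmh$ as a countable intersection of $\sot$-open conditions. The main obstacle is the spanning verification in the density step: the continuous eigenvector family $\{u(\mu):\mu\in\Omega\}$ must span $\h$ on its own, independently of the finite-dimensional eigenvectors $u_l(A)$, $l\le r$, since perfect spanning must survive removing the countable set of eigenvalues $\sigma(A)$ from the eigenvalue spectrum. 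This forces the careful choice of the coupling vectors $(v_k)$, so that the orthogonality identity admits no nontrivial $\ell^2$-solution $y\neq 0$.
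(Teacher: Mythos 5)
Your proposal diverges from the paper's proof in an important way, and the divergence opens a genuine gap in the $G_\delta$ part.

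The paper does \emph{not} attempt to show that $\psmh\cap\ttmh$ is itself a $G_\delta$ subset of $(\ttmh,\sot)$. Instead it invokes a \emph{sufficient} criterion for perfect spanning (Fact~\ref{PSPfact}, from~\cite[Th.~4.1]{G}): $T\in\tth$ has perfectly spanning unimodular eigenvectors as soon as for every $\varepsilon>0$ and every $k\ge 1$ there exist $l\neq k$ and $\alpha\in\C$ with $\|u_k(T)-\alpha u_l(T)\|<\varepsilon$. The set of operators satisfying this simpler condition is
\[
\mathfrak G=\bigcap_{i\ge 1}\bigcap_{k\ge 1}\bigcup_{l\neq k}\bigcup_{\alpha\in\C}\bigl\{T\in\ttmh\,;\,\|u_k(T)-\alpha u_l(T)\|<2^{-i}\bigr\},
\]
which is visibly $G_\delta$ by Fact~\ref{Fact 18}, and it is \emph{contained in} $\psmh$. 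Density of $\mathfrak G$ (and hence comeagerness of the larger set $\psmh$) is what the paper proves. By contrast, you claim to express $\psmh\cap\ttmh$ itself ``as a countable intersection of $\sot$-open conditions,'' via a ``countable-basis characterization of perfect spanning.'' No such characterization is exhibited, and it is not at all clear that one exists: the definition of perfect spanning quantifies over \emph{all} countable subsets $D\subseteq\T$, and this universal quantifier does not collapse into a countable intersection of open conditions in any obvious way. This is the crux of why the paper introduces Fact~\ref{PSPfact} — it replaces the awkward ``for all countable $D$'' by a countable, checkable, open condition at the cost of giving only a sufficient criterion, which is all that comeagerness requires.

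The density argument also differs from the paper's, and your version is substantially more delicate than it needs to be. The paper perturbs a given $A\in\ttmh$ into an operator $A_{\la,\om}$ of the form ``diagonal plus weighted shift'' with $\omega_j\to 0$, then uses a quantitative estimate (Fact~\ref{Claim 27}) showing that if the new eigenvalue $\lambda_{k+r}$ is chosen close to $\lambda_k$, then $\widetilde u_{k+r}(A_{\la,\om})$ is close to the line spanned by $u_k(A_{\la,\om})$ — exactly what membership in $\mathfrak O_{i,k}$ requires. This never needs the perturbed operator to be ergodic in the Gaussian sense; it merely makes two eigenvectors proportionally close. Your route, by contrast, aims to realize a full $\textrm{G-MIX}$ operator inside $\ttmh$ via a holomorphic eigenvector field, but the constraint that $T\in\ttmh$ (all diagonal entries on $\T$, pairwise distinct, $\lambda_k\to 1$ is not assumed here but the unimodularity is) conflicts with building an eigenvector field that is holomorphic on a full \emph{neighbourhood} of $\T$: the accumulation of the $\lambda_k$ on the circle obstructs the holomorphic extension across much of $\T$, so the Fact~\ref{holo} criterion does not apply directly, and one would have to produce a genuinely nontrivial argument that the restricted field spans. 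You flag this obstacle at the end of the proposal but do not resolve it. The paper's simpler two-eigenvector perturbation avoids the whole issue.

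In short, the density half of your proposal, if the spanning verification were carried out, would be a valid but much harder route; the $G_\delta$ half, as stated, does not work. The fix — identify a $G_\delta$ subset of $\psmh$ via a sufficient criterion like Fact~\ref{PSPfact}, rather than try to show $\psmh$ is itself $G_\delta$ — is exactly what the paper does.
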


\begin{proof} We first recall the following key fact, which goes back to \cite[Th. 4.1]{G} (see also \cite[Prop. 6.1]{GM}).
 
\begin{fact}\label{PSPfact} An operator $T\in\tth$ has 
a perfectly spanning set of unimodular eigenvectors as soon as the following 
property is satisfied: for every $\varepsilon >0$ and every $\gk$, there 
exists $l\ge 1$ with $l\neq k$ and $\alpha\in\C$ such that $\|{u}_{k}(T)-\alpha {u}_{l}(T)\|<\varepsilon$. 
\end{fact}

From Fact \ref{PSPfact}, we deduce that the set
\[
 \mathfrak{G}=\bigcap_{i\ge 1}\ \bigcap_{\gk}\ \bigcup_{l\neq k}\ 
\bigcup_{\alpha\in\C }\ \bigl\{T\in\ttmh\,;\,\|{u}_{k}(T)-\alpha 
{u}_{l}(T)\|<2^{-i}\bigr\}
\]
is contained in $\psmh$. Moreover, it follows from Fact \ref{Fact 18} that $ \mathfrak G$ 
is also a $G_\delta$ subset of $(\ttmh,\sot)$. So we just have to prove that $ \mathfrak G$ is dense in $\ttmh$; 
and for this, it is enough to show that each open set 
\[
 \mathfrak{O}_{i,k}=\bigcup_{l\neq k}\ \bigcup_{\alpha \in\C}\ 
\bigl\{T\in\ttmh\,;\,\|{u}_{k}(T)-\alpha 
{u}_{l}(T)\|<2^{-i}\bigr\},\quad i,\,k\ge 1
\]
is dense in $(\ttmh,\sot)$. 
\par\smallskip
Let us fix $i,k\ge 1$ and $A\in\ttmh$. 
We have to show that for any $r\geq 1$, there exists an operator 
$T\in\mathfrak O_{i,k}$ such that $Te_j$ is arbitrarily close to $Ae_j$ for every $j=1\dots ,r$. Without 
loss of generality, we assume that $r\geq k$. Without loss of generality, we can suppose also that $\Vert A\Vert <M$. Indeed, if $\Vert A\Vert =M$, consider the function $f:\,t\mapsto \Vert D+t(A-D)\Vert $ defined on the compact set $[0,1]$, where $D$ is the diagonal \op\ given by the diagonal coefficients of $A$. The function $f$ is  convex on $[0,1]$, so that its maximum is attained at either $0$ or $1$. Since $f(0)=1<M=f(1)$, $f(t)<M$ for every $t\in [0,1)$. If $t\in [0,1)$ is sufficiently close to $1$, the \op\ $A':=D+t(A-D)$ is upper-triangular with respect to $(e_{k})_{k\ge 1}$, has the same diagonal coefficients as $A$, satisfies $\Vert A'\Vert <M$, and is as close as we wish to $A$ for the $\sot$ topology. We thus suppose to begin with that $\Vert A\Vert <M$.
\par\smallskip
 We set, for every $r\ge 1$,
$\h_r:={\rm span}\,[e_1,\dots ,e_r]$, and denote by
$P_r$ the orthogonal projection of $\h$ onto $\h_r$.
\par\smallskip
For 
every $1\le j\le r$, we denote by $\lambda _{j}$ the $j$-th diagonal coefficient of $A$:
$\lambda_j=\pss{Ae_j}{e_j}$. 
We recall that $\vert\lambda_j\vert=1$.
 Let us denote by $\La_r$ the set of all sequences $\la=(\lambda_j)_{j>r}$ with $\vert\lambda_j\vert=1$ such that all the elements $\lambda_j$, $j>r$, 
 are pairwise distinct and distinct from $1, \lambda_1,\dots,\lambda_r$, and $\lambda_j\to 1$ as $j\to\infty$.
 For any element $\la$ of $\La_r$ and any unilateral weight sequence $\om=(\omega_j)_{j\geq 1}$, we consider the operator $A_{\la,\om}$ on $\h$ defined by
\[
A_{\la,\om}e_{j}=
\begin{cases}
 Ae_{j}&\textrm{for every}\ 1\le j\le r\\
 \omega_{j-r} e_{j-r}+\lambda _{j}e_{j}&\textrm{for every}\ j>r.
\end{cases}
\]
Note that since $\Vert A\Vert <M$, $\Vert A_{\la,\om}\Vert\leq M$ for every $\la\in\La_r$ provided
$\Vert\om\Vert_\infty:=\sup_{j\ge 1}|\omega_{j}|$ is small enough. Hence in this case
$A_{\la,\om}$ belongs to $\ttmh$. Moreover, we have $A_{\la,\om}e_j=Ae_j$ for every $j=1,\dots ,r$.
Let us now fix a weight sequence $\om$ such that $\omega_j\to0$ as $j\to\infty$ and $\Vert\om\Vert_\infty$ is small enough. 
We are going to show that $A_{\la,\om}$ belongs to $\mathfrak{O}_{i,k}$ provided the sequence $\la$ is well-chosen. 
This will prove the density of $\mathfrak{O}_{i,k}$ in $(\ttmh,\sot)$.
\par\smallskip
We first note the following fact concerning the \eva s of the \op\ $A_{\la,\om}$.

\begin{fact}\label{onycroit} For any $\la\in\La_r$, the only eigenvalues of 
$A_{\la,\om}$ are $\lambda_1,\dots,\lambda_r$, the terms of the sequence $\la$, and possibly the point $1$. Moreover, for any $i\geq 1$, the 
eigenspace associated to $\lambda_i$ is spanned by the vector ${u}_i( A_{\la,\om})$.
\end{fact}

\begin{proof}[Proof of Fact \ref{onycroit}] If $\lambda$ is any complex number, solving formally the equation $A_{\la,\om}x= \lambda x$ with $x=\sum_{j\geq 1} x_j e_j$, 
yields that
\[ (A-\lambda)P_rx=\sum_{j=1}^r\omega_j x_{j+r}e_j\qquad{\rm and}\qquad x_{j+r}=\frac{\lambda-\lambda_j}{\omega_j}\, x_j\quad\hbox{for every $j>r$}.
\]
Since $\omega_j\to 0$ and $\lambda_j\to 1$ as $j\to\infty$, we infer from these equations that there is no solution   $x\in\h\setminus\{ 0\}$ to these equations if 
$\lambda$ does not belong to the set $\{ \lambda_j;\; j\geq 1\}\cup\{ 1\}$. This proves the first part of Fact \ref{onycroit}.
\par\smallskip
For the same reasons combined with the fact that the $\lambda_j$ are pairwise distinct and distinct from $1$, we also observe that 
if a vector $x\in\h$ satisfies $A_{\la,\om}x=\lambda_i x$ 
for some $i\geq 1$, then  $x_j=0$ for all $j>i$. 
Hence $\ker(A_{\la,\om}-\lambda_i)$ is contained in $\h_i$ for every $i\geq 1$, which proves the second part of 
Fact \ref{onycroit} since $A_{\la,\om}$ belongs to $\tth$ and the \eva s $\lambda_{j}$ of $A_{\la,\om}$ are all distinct.
\end{proof}

The key point is now
the following observation. Recall that $\lambda_1,\dots ,\lambda_r$ are fixed, that $1\le k\le r$, and that we are especially interested in $\lambda_k$. Recall also that 
$\widetilde u_{k+r}(A_{\pmb{\lambda}, \pmb{\omega}})=u_{k+r}(A_{\pmb{\lambda}, \pmb{\omega}})/\Vert u_{k+r}(A_{\pmb{\lambda}, \pmb{\omega}})\Vert$.

\begin{fact}\label{Claim 27}
 There exists a positive constant $\gamma $ such that for any $\pmb{\lambda }=(\lambda _{j})_{j>r}\in\pmb{\Lambda }_{r}$,
\[
|\lambda _{k+r}-\lambda _{k}|\ge \gamma\, \textrm{dist}\bigl( \ti{u}_{k+r}(A_{\pmb{\lambda },\pmb{\omega }}),\textrm{span}[{u}_{k}(A_{\pmb{\lambda },\pmb{\omega }})]\bigr).
\]
\end{fact}

\begin{proof} For any 
 $\pmb{\lambda }=(\lambda _{j})_{j>r}\in\pmb{\Lambda }_{r}$, the restriction of the operator 
$A_{\pmb{\lambda },\pmb{\omega }}$ to $\h_{r}$ is equal to $P_{r}AP_{r}$, which does not depend on 
$\pmb{\lambda }$. Also, $\ker(A_{\pmb{\lambda },\pmb{\omega }}-\lambda _{k})=\textrm{span}[{u}_{k}(A_{\pmb{\lambda },\pmb{\omega }})]\subseteq \h_{r}$, and 
the $r$ eigenvalues $\lambda _{1},\dots,\lambda _{r}$ of $P_{r}AP_{r}$ are distinct. These observations imply that there exist two positive constants $\delta $ and $C$ such that the following two properties hold true:
\begin{equation}\forall\pmb{\lambda }\in\pmb{\Lambda }_{r}\;\, \forall x\in \h_{r}\;:\; 
\Vert (A_{\pmb{\lambda },\pmb{\omega }}-\lambda _{k})x\Vert\ge\delta \,\textrm{dist}(x,\textrm{span}
[{u}_{k}(A_{\pmb{\lambda },\pmb{\omega }})])\label{Eeequation 1}
\end{equation}
and 
\begin{equation}\forall\lambda \in\C\setminus\{\lambda _{1},\dots,\lambda _{r}\}\;\, \forall x\in\h_{r}\;:\; 
\Vert (A_{\pmb{\lambda },\pmb{\omega }}-\lambda )^{-1}x\Vert\ge \dfrac{C}{\textrm{dist}(
\lambda,\{\lambda _{1},\dots,\lambda _{r}\})}\,\Vert x\Vert.\label{Eeequation 2}
\end{equation}

Now, a simple computation yields that 
\[
 \ti{u}_{k+r}(A_{\pmb{\lambda },\pmb{\omega }})=\dfrac{v_{k}+e_{k+r}}{\bigl( \Vert v_{k}\Vert^{2}+1\bigr)^{1/2}}
\]
 where $v_{k}\in\h_{r}$ satisfies the equation $(A_{\pmb{\lambda },\pmb{\omega }}-\lambda _{k+r})v_{k}+\omega _{k}e_{k}=0$. 
Since $\lambda _{k+r}$ does not belong to the set $\{\lambda _{1},\dots,\lambda _{r}\}$, we may write $v_{k}=-\omega _{k}
(A_{\pmb{\lambda },\pmb{\omega }}-\lambda _{k+r})^{-1}e_{k}$, and $(\ref{Eeequation 2})$ implies that
\begin{equation}\label{Eeequation 3}
 \Vert v_{k}\Vert \ge C\,\dfrac{|\omega _{k}|}{\textrm{dist}\bigl( \lambda _{k+r},\{\lambda _{1},\dots,\lambda _{r}\}\bigr)}\ge C\,\dfrac{|\omega _{k}|}{|\lambda _{k+r}-\lambda _{k}|}\cdot
\end{equation}
On the other hand, $(A_{\pmb{\lambda },\pmb{\omega }}-\lambda _{k})v_{k}=(\lambda _{k+r}-\lambda _{k})v_{k}-\omega _{k}e_{k}$, so that by (\ref{Eeequation 1})
\[
|\lambda _{k+r}-\lambda _{k}|\, \,\Vert v_{k}\Vert+\vert \omega _{k}\vert\ge\delta \,\textrm{dist}\bigl( v_{k},
\textrm{span}[{u}_{k}(A_{\pmb{\lambda },\pmb{\omega }})]\bigr).
\]
Hence 
\[
\textrm{dist}\bigl(v_{k}+e_{k+r},\textrm{span}[{u}_{k}(A_{\pmb{\lambda },\pmb{\omega }})]\bigr)\le\dfrac{1}{\delta }\bigl( |\lambda _{k+r}-\lambda _{k}|\,\Vert v_{k}\Vert +|\omega _{k}|\bigr)+1
\]
and
\begin{align*}
 \textrm{dist}\bigl( \ti{u}_{k+r}(A_{\pmb{\lambda },\pmb{\omega }})&,\textrm{span}[{u}_{k}(A_{\pmb{\lambda },\pmb{\omega }})]\bigr)\\&\le
\dfrac{1}{\delta }\biggl( |\lambda _{k+r}-\lambda _{k}|\dfrac{\Vert v_{k}\Vert }{\bigl( \Vert v_{k}\Vert^{2}+1\bigr)^{1/2}}+\dfrac{|\omega _{k}|}{\bigl( \Vert v_{k}\Vert^{2}+1\bigr)^{1/2}}\biggr)+
\dfrac{1}{\bigl( \Vert v_{k}\Vert^{2}+1\bigr)^{1/2}}\\
&\le\dfrac{1}{\delta }\biggl( |\lambda _{k+r}-\lambda _{k}|+\dfrac{|\omega _{k}|}{\Vert v_{k}\Vert }\biggr)+\dfrac{1}{\Vert v_{k}\Vert }\\
&\le|\lambda _{k+r}-\lambda _{k}|\biggl( \dfrac{1}{\delta }+\dfrac{1}{C}+\dfrac{1}{C|\omega _{k}|}\biggr)\qquad\qquad\hbox{by (\ref{Eeequation 3})}.
\end{align*}
 Setting
${1}/{\gamma }:={1}/{\delta }+{1}/{C}+{1}/{(C\min\limits_{1\le j\le r}|\omega _{j}|)}$ yields the desired inequality.
\end{proof}

We now come back to our main proof: recall that our aim is to show that $A_{\la,\om}$ belongs to $\mathfrak{O}_{i,k}$ for some suitable choice of $\lambda\in\La_{r}$.
By Fact \ref{Claim 27}, we have
\begin{equation}\label{onnesaitpas}
{\rm dist}\,({\widetilde u}_{k+r}(A_{\la,\om}),
\textrm{span}[{ u}_{k}(A_{\la,\om})])\le \dfrac{|\lambda _{k+r}-\lambda _{k}|}{
\gamma}\quad\hbox{for every $\la\in\La_r$}.
\end{equation}
We now choose $\la\in\La_r$ such that $\lambda _{k+r}$ is so close to $\lambda_{k}$ that the quantity on the right hand side of the inequality (\ref{onnesaitpas}) is less that $2^{-i}\eta_k$, where 
$\eta_k=\min\bigl(1,(2\Vert u_k(A_{\la,\om})\Vert)^{-1}\bigr)$. Note that 
$\eta_k$ does not depend on $\la$ since the restriction of $A_{\la,\om}$ to $\h_r$ does not.
With this choice of $\la$, there exists a scalar $\beta \in\C$ such that 
$\|{\widetilde u}_{k+r}(A_{\la,\om})-\beta {u}_{k}(A_{\la,\om})\|<2^{-i}\eta_k$. Since $\Vert\widetilde u_{k+r}(A_{\la,\om})\Vert=1$ and $2^{-i}\eta_k\leq 1/2$, we have 
$\Vert\beta {u}_{k}(A_{\la,\om})\Vert\geq 1/2$, so that $\vert\beta\vert\geq \eta_k$. It follows that $\Vert \alpha {u}_{k+r}(A_{\la,\om})- {u}_{k}(A_{\la,\om})\Vert <2^{-i}$, where 
$\alpha:=1/(\beta\Vert u_{k+r}(A_{\la,\om})\Vert)$, and this shows that 
$A_{\la,\om}$ 
belongs to $ \mathfrak{O}_{i,k}$. 
\par\smallskip
Thus we have proved that 
$\mathfrak{O}_{i,k}$ is indeed dense in $(\ttmh,\sot)$ for every $i,k\ge 1$, which concludes the proof of 
Proposition \ref{Proposition 26}. 
\end{proof}

\begin{remark} It is natural to wonder whether the class of \emph{chaotic} operators is also comeager in $\ttmh$. This does not look quite clear from 
the above proof.
\end{remark}
\par\smallskip

\subsection{Ergodicity vs ergodicity in the Gaussian sense}\label{Subsection 2.3}
Let $(e_k)_{k\geq 1}$ be as usual a fixed orthonormal basis of $\h$. In this subsection, 
we focus on special operators of the associated class $\tth$ of upper-triangular \ops\ with respect to $(e_{k})_{k\ge 1}$, 
which are the sum of a diagonal operator with respect to the basis 
$(e_{k})_{k\ge 1}$ and a backward weighted shift operator with 
respect to this same basis. More precisely, we introduce the following 
notation: we denote by $\La $ the set of sequences $\la=(\lambda 
_{k})_{\gk}$ of pairwise distinct complex numbers of modulus $1$ such that 
$(\lambda _{k})_{\gk}$ tends to $1$ as $k$ tends to infinity, and for 
every $M>0$, by $\Om_{M}$ the set of all weight sequences $\om=(\omega 
_{k})
_{\gk}$ such that $0<\omega _{k}\le M$ for every $\gk$. We also set
$\Om:=\bigcup_{M>0}\Om_{M}$.
\par\smallskip
We endow the set $\pmb{\Lambda}$ with the topology induced by $\ell^\infty(\N)$. Since $\pmb{\Lambda}$ is contained in 
the {separable} closed subspace $c(\N)$ of $\ell^\infty(\N)$ consisting of all convergent sequences, and since $\pmb{\Lambda}$ is easily seen to be 
a $G_\delta$ subset of $\ell^\infty(\N)$, it follows that $\pmb{\Lambda}$ is a Polish space.
As for the spaces $\Om_{M}$, $M>0$, we 
endow them with the product topology. Each 
$\Om_M$ being a $\gd$ subset of $\R^\N$, it is thus a Polish space as well.
\par\smallskip
To each pair $(\la,\om)\in\La\times\Om$, we associate the operator 
$T_{\la,\,\om}$ defined by $T_{\la,\,\om}=D_{\la}+B_{\om}$, where 
$D_{\la}$ is the diagonal operator with diagonal coefficients $\lambda 
_{k}$, $\gk$, associated to the basis $(e_{k})_{\gk}$, and $B_{\om}$ is 
the 
backward shift operator with respect to $(e_{k})_{\gk}$ with weights 
$\omega _{k}$,  $\gk$. Each operator $T_{\la,\,\om}$ belongs to $\tth$.
\par\smallskip
In this subsection, our aim is to investigate the properties of the operator 
$T_{\la\,\om}$ for a typical choice of parameters 
$(\la,\om)\in\La\times\Om$. For every $M>0$, we consider the following two 
sets of parameters:
\[
 \pmb{\mathcal{E}}_{M}:=\bigl\{(\la,\om)\in\La\times\Om_M\,;\,
 T_{\la,\,\om}\ \textrm{is ergodic}\bigr\}
\]
and 
\[\pmb{\mathcal{D}}_{M}:=\bigl\{(\la,\om)\in\La\times\Om_M\,; \ \sigma_p( T_{\la,\,\om} )\subseteq \{\lambda _{k},\,k\ge 1\}\,\cup\,\{1\}\bigr\},
\]
where $\sigma_p(T)$ denotes the point spectrum (\mbox{\it i.e.} the set of eigenvalues) of an operator $T\in\bh$.
The operators belonging to $\pmb{\mathcal{D}}_{M}$ are those which have the 
smallest possible set of eigenvalues among the operators $T_{\la,\,\om}$, 
$(\la,\om)\in\La\times\Om_M$. In particular, they have countable unimodular 
point spectrum.
\par\smallskip 
Our main result can now be stated as follows:

\begin{theorem}\label{Theorem 28}
 For any $M>2$, the two sets $\pmb{\mathcal{E}}_{M}$ and 
$\pmb{\mathcal{D}}_{M}$ are comeager in $\La\times\Om_{M}$.
\end{theorem}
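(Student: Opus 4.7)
\textbf{Plan for Theorem \ref{Theorem 28}.} The proof naturally splits in two: the assertion about $\pmb{\mathcal D}_M$ reduces to a computation on the eigenvalue equation combined with a Baire–category argument on $\Om_M$, while the assertion about $\pmb{\mathcal E}_M$ requires constructing, for typical parameters, a non-Gaussian ergodic invariant measure with full support, which is substantially more delicate.

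\smallskip
\emph{Step 1 ($\pmb{\mathcal D}_M$ is comeager).} Solving $T_{\la,\om}x=\mu x$ componentwise gives the recursion $\omega_kx_{k+1}=(\mu-\lambda_k)x_k$, hence $x_k=x_1\prod_{j<k}(\mu-\lambda_j)/\omega_j$. When $\mu=\lambda_k$ for some $k$, the recursion terminates and yields a finite-support eigenvector, so each $\lambda_k$ is always an eigenvalue. Otherwise $\mu$ is an eigenvalue iff $\sum_k\gamma_k(\mu)<\infty$, where $\gamma_k(\mu):=\prod_{j<k}|\mu-\lambda_j|^2/\omega_j^2$. For $\mu\notin\{1\}\cup\{\lambda_k\}_{k\ge 1}$, the distances $|\mu-\lambda_j|$ are bounded away from $0$ and converge to $|\mu-1|>0$; Ces\`aro then gives $\tfrac{1}{k}\log\gamma_k(\mu)\sim 2\log|\mu-1|-2S_k(\om)$, where $S_k(\om):=\tfrac{1}{k}\sum_{j<k}\log\omega_j$. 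Summability forces $\liminf_k S_k(\om)\ge\log|\mu-1|$, and since $|\mu-1|$ can be arbitrarily small, $\liminf_k S_k(\om)=-\infty$ rules out every eigenvalue outside $\{\lambda_k\}\cup\{1\}$. The set $\mathcal U=\{\om:\liminf_k S_k(\om)=-\infty\}$ is comeager in $\Om_M$: for each $N$, the condition ``$\exists k,\ S_k(\om)<-N$'' defines an open set (it depends on finitely many $\omega_j$ only) which is dense, since any basic neighborhood can be completed by a sufficiently long block of weights close to $0$. By Kuratowski--Ulam, $\La\times\mathcal U\subseteq\pmb{\mathcal D}_M$ is comeager in $\La\times\Om_M$.

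\smallskip
\emph{Step 2 ($\pmb{\mathcal E}_M$ is comeager).} The idea is to intertwine $T_{\la,\om}$ with a Kronecker rotation on $\T^\N$. A standard Baire argument shows that the set of $\la\in\La$ for which $(\lambda_k)_{k\ge 1}$ is rationally independent is comeager: for each nonzero integer tuple $(n_1,\ldots,n_r)$, the condition $\lambda_1^{n_1}\cdots\lambda_r^{n_r}\ne 1$ is open and dense in $\La$. Under rational independence, the rotation $R_\la:(\theta_k)\mapsto(\lambda_k\theta_k)$ preserves product Haar measure $\mathbf m$ on $\T^\N$ and is ergodic. One then introduces an equivariant map $\Phi:\T^\N\to\h$ by $\Phi(\theta):=\sum_k a_k\theta_k u_k(T_{\la,\om})$, with positive constants $a_k$ small enough that the series converges $\mathbf m$-a.e. in $\h$, which follows from $\sum a_k^2\|u_k\|^2<\infty$. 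The norms $\|u_k\|$ are controlled for $\om\in\mathcal U$ through the explicit formula $(u_k)_i=\prod_{j=i}^{k-1}\omega_j/(\lambda_k-\lambda_j)$ (smallness of $\omega_j$ imposed by $\mathcal U$ keeps $\|u_k\|$ slowly growing). The identity $T\circ\Phi=\Phi\circ R_\la$ then makes $\mu:=\Phi_\ast\mathbf m$ a $T$-invariant measure with respect to which $T$ is ergodic.

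\smallskip
The main obstacle is that this $\mu$ is supported on the bounded set $\{\sum_k a_kz_k u_k:|z_k|=a_k\}$, which is never dense in $\h$. To rectify this, I would realize the invariant measure instead as a convex combination $\mu=\sum_{m\ge 1}2^{-m}\mu_m$, where each $\mu_m$ is a pushforward built with a distinct scaling $\mathbf a^{(m)}$ chosen so that $\bigcup_m\supp(\mu_m)$ is dense in $\h$; equivalently, one can view $\mu$ as a single pushforward from $\T^\N\times\N$ endowed with an auxiliary index variable. Recovering ergodicity of the combined $\mu$ is the delicate step and relies crucially on the rational independence of $(\lambda_k)$ together with a careful choice of the scalings, so that the joint action on the product system remains ergodic. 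The hypothesis $M>2$ enters here quantitatively: it provides enough slack between the diagonal rotations (of unit modulus) and the shift weights (which can be as large as $M$) for the supports $\supp(\mu_m)$ to collectively cover $\h$ while still keeping $\|T_{\la,\om}\|\le M$. Reconciling all these constraints simultaneously, and within the comeager sets produced in Step 1, is what makes the second half of the proof substantially harder than the first.
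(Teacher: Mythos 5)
Your Step 1 for $\pmb{\mathcal D}_M$ is correct and is essentially the paper's argument in disguise: the paper phrases the same condition as $R_{\om}=0$, where $R_{\om}$ is the radius of convergence of $\sum_j z^j/(\omega_1\cdots\omega_j)$, and observes (Lemma \ref{Lemma 31}, part (2)) that $R_{\om}=0$ forces $\sigma_p(T_{\la,\om})\subseteq\{\lambda_k\}\cup\{1\}$, independently of $\la$. Since $R_{\om}=0$ is exactly $\liminf_k S_k(\om)=-\infty$, the two formulations coincide, and the comeagerness of $\{R_{\om}=0\}$ in $\Om_M$ is exactly your dense-$G_\delta$ argument. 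This half is fine.

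Step 2 has a genuine gap, and the attempted fix makes things worse. The basic obstruction is the one you correctly identify: the Steinhaus measure $\Phi_*\mathbf m$ is supported on the compact set $\{\sum_k a_k z_k u_k : |z_k|=1\}$ and so is never of full support. But your proposed remedy — taking $\mu=\sum_m 2^{-m}\mu_m$ for different scalings $\mathbf a^{(m)}$ — cannot produce an ergodic measure: a nontrivial convex combination of distinct invariant measures is essentially never ergodic. (Your alternative formulation, a pushforward from $\T^\N\times\N$, suffers the same defect: the natural extension of the dynamics acts trivially on the second factor, so each $\T^\N\times\{m\}$ is an invariant set of intermediate measure.) Saying that "recovering ergodicity is the delicate step" does not close this gap, since as stated the construction is structurally incompatible with ergodicity. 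The paper's route is different: it does not build a single ergodic measure with full support directly, but instead uses an ergodicity criterion (Lemma \ref{ERGOCRIT}, from \cite{GM}): if for every $T$-invariant open set $\Omega$, every neighborhood $W$ of $0$ and every $\varepsilon>0$ there is a \emph{compactly supported} $T$-invariant probability $\mu$ with $\mu(\Omega)=1$ and $\mu(W)>1-\varepsilon$, then $T$ is ergodic. This permits a different Steinhaus measure for each $(\Omega,W,\varepsilon)$, so the lack of full support of any single $\mu$ is irrelevant.

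Two further points. First, you propose using the same condition $\om\in\mathcal U$ (i.e.\ weights small, $R_{\om}=0$) to control $\|u_k(T_{\la,\om})\|$ in Step 2; but the density argument for the open sets $\pmb{\mathcal O}^{\om}_{U,\alpha}$ in the paper requires the \emph{opposite} regime $R_{\om}>2$ (eigenvector field analytic past $\T$, hence the vectors $E_{\la,\om}(\lambda_k)$, $k>k_0$, span for every $k_0$ — Lemma \ref{Lemma 31}, part (1)). The set $\{\om:R_{\om}>2\}$ is actually \emph{meager} in $\Om_M$, so one cannot simply intersect it with $\mathcal U$. The correct statement — which requires the Kuratowski--Ulam machinery you largely omit — is that the set of $\om$ for which the $\pmb\Lambda$-section $\pmb{G_\om}$ (built from the sets $\pmb{\mathcal O}^{\om}_{U,\alpha}$) is dense in $\La$ is itself dense $G_\delta$ in $\Om_M$; the $\om$'s with $R_{\om}>2$ are only used as witnesses for density. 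Second, the role of $M>2$ is not the "slack between rotations and shift weights" you describe; it is precisely to guarantee that $\Om_M$ contains weight sequences with $R_{\om}>2$ (e.g.\ $\omega_j=M$ for $j$ large), so that the eigenvector-field argument can be run.
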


As an immediate consequence, we obtain

\begin{corollary}\label{Corollary 29}
 There exist ergodic operators on $\h$, of the form $T_{\la,\,\om}$, 
 $(\la,\om)\in\La\times\Om$, which have countable unimodular point spectrum. In particular, these operators are ergodic but 
not ergodic in the Gaussian sense.
\end{corollary}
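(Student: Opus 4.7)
The plan is to prove separately that each of $\pmb{\mathcal{E}}_{M}$ and $\pmb{\mathcal{D}}_{M}$ is comeager in $\La\times\Om_{M}$, and then conclude by the Baire category theorem.

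\textit{Analysis of $\pmb{\mathcal{D}}_{M}$.} Solving the eigenvalue equation $T_{\la,\om}u=\mu u$ with $u=\sum_{k}u_{k}e_{k}$ yields the recurrence $u_{k+1}=\frac{\mu-\lambda_{k}}{\omega_{k}}u_{k}$. Hence $\mu\in\sigma_{p}(T_{\la,\om})$ if and only if $\mu=\lambda_{k}$ for some $k\ge 1$, or the series $S(\la,\om,\mu):=\sum_{k\ge 0}\prod_{j=1}^{k}|\mu-\lambda_{j}|^{2}/\omega_{j}^{2}$ converges. Since $\|T_{\la,\om}\|\le 1+M$ and $\lambda_{k}\to 1$, every extra eigenvalue $\mu\in\sigma_{p}(T_{\la,\om})\setminus(\{\lambda_{k}\}\cup\{1\})$ satisfies $|\mu-1|>0$ and $\inf_{k}|\mu-\lambda_{k}|>0$. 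It follows that $\pmb{\mathcal{D}}_{M}^{c}=\bigcup_{n\ge 1}\mathfrak{F}_{n}$, where
\[
\mathfrak{F}_{n}:=\bigl\{(\la,\om)\,;\,\exists\mu\in\C,\ |\mu-1|\ge\tfrac{1}{n},\ \inf_{k}|\mu-\lambda_{k}|\ge\tfrac{1}{n},\ S(\la,\om,\mu)<\infty\bigr\}.
\]
Each $\mathfrak{F}_{n}$ is $F_{\sigma}$ by lower semi-continuity of $S$ in all three variables combined with a projection over the compact $\mu$-fiber. To prove the density of $\mathfrak{F}_{n}^{c}$, take any basic open neighborhood $\mathcal{U}$ of $(\la,\om)$ in the product topology: it constrains only $\omega_{j}$ for $j\le J$. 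Enlarge $J$ so that $|\lambda_{j}-1|<1/(2n)$ for $j>J$ and set $\omega_{j}'=\eta$ for such $j$, with any $\eta<1/(4n)$. Then for every $\mu$ satisfying $|\mu-1|\ge 1/n$ and every $j>J$, one has $|\mu-\lambda_{j}|/\omega_{j}'\ge (1/(2n))/\eta\ge 2$, so the general term of $S$ grows at least geometrically in $k$ and $S(\la,\om',\mu)=\infty$. Hence $(\la,\om')\in\mathcal{U}\setminus\mathfrak{F}_{n}$, which proves meagerness of $\mathfrak{F}_{n}$.

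\textit{Analysis of $\pmb{\mathcal{E}}_{M}$.} The difficulty here is that when the point spectrum is as small as in $\pmb{\mathcal{D}}_{M}$, perfect spanning by unimodular eigenvectors fails, so Gaussian ergodicity is unavailable. The plan is to invoke a periodic-point criterion for ergodicity in the spirit of those developed in Section~\ref{CRITERIA}. The ingredients are: (i) a dense $G_{\delta}$ set of $\la\in\La$ admits prescribed subsequences of $\lambda_{k}$ equal to roots of unity with arbitrarily large orders, producing periodic eigenvectors $u_{k}(T_{\la,\om})$; (ii) these periodic vectors generate $\h$, by the upper-triangular structure and the continuity of $T\mapsto u_{k}(T)$ and of the associated coefficient maps (Facts~\ref{Fact 18} and \ref{Fact 19}); (iii) the criterion converts these periodic-point data into the existence of an invariant ergodic measure with full support for $T_{\la,\om}$. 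Combined, this yields the comeagerness of $\pmb{\mathcal{E}}_{M}$ in $\La\times\Om_{M}$, and the compatibility with $\pmb{\mathcal{D}}_{M}$ is preserved because the relevant perturbations of $\om$ do not interfere with the argument used for $\pmb{\mathcal{D}}_{M}$.

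The main obstacle lies in this ergodicity step. The naive construction of an invariant measure as the law of a random vector $X:=\sum_{k}\xi_{k}u_{k}(T_{\la,\om})$ with independent rotation-invariant non-Gaussian $\xi_{k}$ is $T_{\la,\om}$-invariant, but it is \emph{not} ergodic: the dynamics $(\xi_{k})\mapsto(\lambda_{k}\xi_{k})$ preserves each modulus $|\xi_{k}|$, so non-trivial $T_{\la,\om}$-invariant events such as $\{|\xi_{1}|>1\}$ persist. This forces the use of the criterion-based approach rather than an explicit measure. The hypothesis $M>2$ plays a role in guaranteeing that the weights $\omega_{j}$ can be chosen both very small on some indices (as required for $\pmb{\mathcal{D}}_{M}$) and large enough on others to support a rich periodic-point structure (as required for $\pmb{\mathcal{E}}_{M}$). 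Corollary~\ref{Corollary 29} then follows immediately: any $(\la,\om)\in\pmb{\mathcal{E}}_{M}\cap\pmb{\mathcal{D}}_{M}$ gives an ergodic operator $T_{\la,\om}$ whose unimodular point spectrum is at most countable, which cannot be ergodic in the Gaussian sense.
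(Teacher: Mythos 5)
Your treatment of $\pmb{\mathcal{D}}_{M}$ is more elaborate than the paper's (which simply notes that $\{(\la,\om):R_{\om}=0\}$ is dense $G_{\delta}$ and is contained in $\pmb{\mathcal{D}}_{M}$ by Lemma~\ref{Lemma 31}(2)), but it is pointed in the right direction: both arguments exploit the fact that taking $\omega_{j}$ small for large $j$ makes the eigenvector series diverge for every $\mu$ bounded away from $1$ and the $\lambda_{k}$.

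The genuine gap is in your argument for $\pmb{\mathcal{E}}_{M}$. Step (iii) appeals to a ``periodic-point criterion for ergodicity'' from Section~\ref{CRITERIA}, but no such criterion exists there: that section yields criteria for frequent and $\mathcal{U}$-frequent hypercyclicity from periodic points (Theorems~\ref{Theorem 39} and~\ref{Theorem 41}), not for ergodicity. Indeed Theorem~\ref{Theorem 54} produces chaotic frequently hypercyclic operators that are \emph{not} ergodic, so dense periodic points cannot suffice; the only periodic-flavored ergodicity result in the paper, Proposition~\ref{OSPergo}, requires the full Operator Specification Property, which your construction does not verify and which typically fails for the operators in question. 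You correctly observe that the Steinhaus measure on a fixed finite combination of eigenvectors is invariant but not ergodic — but the paper's fix is not to pass to roots of unity. It goes the opposite way: it chooses the $\lambda_{k}$ rationally independent, uses Proposition~\ref{Proposition 32} to obtain, for each invariant open set $\Omega_{U}$ and each neighborhood $W$ of $0$, a (non-ergodic) Steinhaus measure $\mu$ with $\mu(\Omega_{U})=1$ and $\mu(W)>1-\varepsilon$ (Fact~\ref{link}; rational independence is exactly what forces $\mu(\Omega_{U})=1$), and then invokes the abstract ergodicity criterion Lemma~\ref{ERGOCRIT}, which manufactures an ergodic measure with full support out of this family. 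Relatedly, your diagnosis of the hypothesis $M>2$ is off: it is needed so that $\omega_{j}\equiv M$ eventually gives $R_{\om}>2$, making the eigenvector field $\lambda\mapsto E_{\la,\om}(\lambda)$ analytic on a disk containing $\T$ so that the spanning statement of Lemma~\ref{Lemma 31}(1) underlying Proposition~\ref{Proposition 32} holds; it is not about supporting a periodic-point structure.
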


\begin{remark} Corollary \ref{Corollary 29} provides examples of hypercyclic 
operators on a Hilbert space with a spanning set of unimodular eigenvectors but 
only countably many unimodular eigenvalues. The question of the 
existence of such operators 
was first raised by Flytzanis in \cite{Fl}, and answered recently 
in \cite{Me}. Indeed, the chaotic non-frequently hypercyclic operators constructed 
there have only countably many eigenvalues, which are all roots of unity, 
and the associated eigenvectors span the space. Corollary \ref{Corollary 
29} strengthens this result by showing that \emph{ergodic} counterexamples to 
Flytzanis' conjecture exist, and that such counterexamples are in some sense 
much less exotic than suggested by the rather technical construction of \cite{Me}.
\end{remark}

\begin{remark} Since all the operators we are considering here have infinitely many eigenvalues, 
Corollary \ref{Corollary 29} leaves open the question of the existence 
of ergodic operators on $\h$ {without any eigenvalue at all}. Such operators are known to exist 
on the Banach space $\mathcal C_{0}([0,2\pi])$ of continuous functions on $[0,2\pi]$ vanishing at the point $0$. Indeed, the Kalisch operator $T$ defined by 
\[Tf(\theta)=e^{i\theta}f(\theta)-\int_0^\theta ie^{it}f(t)\, dt, \]
when considered as acting on $\mathcal C_{0}([0,2\pi])$, is \erg\ in the \ga\ sense but does not admit any unimodular \eva; see \cite{BG3} or \cite[Section 5.5.4]{BM} for details. 
On the other hand, an \op\ on a Hilbert space which is \erg\ in the \ga\ sense definitely has a lot of unimodular \eva s (recall that $\gergh=\psph$); but a general \erg\ \op\ might possibly have no \eva\ at all.
\end{remark}

We now turn to the proof of Theorem \ref{Theorem 28}. We first state 
a simple fact, in which a complete description of the unimodular 
eigenvectors of the operators $T_{\la,\,\om}$ is given.

\begin{fact}\label{Fact 30}
Fix $(\la,\om)\in\La\times\Om$, and $\lambda \in\C$. Then $\lambda$ is an eigenvalue of $T_{\la,\,\om}$ 
 if and only if the vector
 \[
E_{\la,\,\om}(\lambda ):=e_{1}+\sum_{n\ge 2}\,\Bigl(\,\prod_{j=1}^{n-1}
\dfrac{\lambda -\lambda _{j}}{\omega _{j}}\Bigr)e_{n}
\]
is a well-defined vector of $\h$. In this case, 
$\ker(T_{\la,\,\om}-\lambda )$ =$\textrm{span}\,[E_{\la,\,\om}(\lambda )]$.
\end{fact}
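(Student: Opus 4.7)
The plan is to solve the eigenvalue equation $T_{\la,\om}x=\lambda x$ coordinate by coordinate. Writing $x=\sum_{n\ge 1}x_n e_n\in\h$ and recalling that $D_{\la} e_n=\lambda_n e_n$, $B_{\om}e_1=0$ and $B_{\om}e_k=\omega_{k-1}e_{k-1}$ for $k\ge 2$, a comparison of the coefficients of $e_n$ on both sides of $T_{\la,\om}x=\lambda x$ will yield, for every $n\ge 1$, the identity
\[
\lambda_n x_n + \omega_n x_{n+1} = \lambda x_n,
\]
which is equivalent to the linear recursion $x_{n+1}=\frac{\lambda-\lambda_n}{\omega_n}\,x_n$.

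Since every weight $\omega_n$ is strictly positive, this recursion forces $x_n=0$ for all $n\ge 1$ as soon as $x_1=0$, so any non-zero eigenvector must satisfy $x_1\neq 0$. Normalizing by $x_1=1$, an easy induction then gives
\[
x_n=\prod_{j=1}^{n-1}\frac{\lambda-\lambda_j}{\omega_j}\qquad\hbox{for every }n\ge 2,
\]
which is exactly the definition of the formal vector $E_{\la,\om}(\lambda)$. Thus the eigenspace $\ker(T_{\la,\om}-\lambda)$ is at most one-dimensional, and if non-zero, necessarily coincides with $\textrm{span}\,[E_{\la,\om}(\lambda)]$.

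It remains only to verify the two implications. If the series defining $E_{\la,\om}(\lambda)$ is square-summable, \mbox{\it i.e.} defines a genuine element of $\h$, then by construction its coordinates satisfy the identities $\lambda_n x_n+\omega_n x_{n+1}=\lambda x_n$, and hence $T_{\la,\om}E_{\la,\om}(\lambda)=\lambda E_{\la,\om}(\lambda)$; so $\lambda$ is an eigenvalue. Conversely, if $\lambda$ is an eigenvalue, then by the coordinate analysis above any associated eigenvector is, up to a non-zero scalar, equal to the formal vector $E_{\la,\om}(\lambda)$, which must therefore belong to $\h$. The argument is entirely elementary and no real obstacle is expected; the only subtlety is to notice that the three-term recursion is in fact a first-order one (because each $\omega_n$ is non-zero), so it determines an eigenvector entirely from its first coordinate, which is the source of the uniqueness statement.
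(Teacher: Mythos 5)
Your proof is correct and follows exactly the same route as the paper, which simply says to solve the equation $T_{\la,\om}x=\lambda x$ formally in $\C^\N$; you have written out in full the coordinate comparison and the first-order recursion that the paper leaves implicit. The key observation — that the recursion determines $x$ entirely from $x_1$ because the weights $\omega_n$ are non-zero, so the eigenspace is at most one-dimensional and spanned by the formal vector $E_{\la,\om}(\lambda)$ whenever that vector lies in $\h$ — is precisely the intended argument.
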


\begin{proof} 
It suffices to solve formally the equation $T_{\la,\,\om}x=\lambda x$ in $\C^\N$.
\end{proof}
For every $\gk$,
 \[
E_{\la,\,\om}(\lambda_{k} )=e_{1}+\sum_{n= 
2}^{k}\,\Bigl(\,\prod_{j=1}^{n-1}
\dfrac{\lambda_{k} -\lambda _{j}}{\omega _{j}}\Bigr)e_{n}
\]
is thus an eigenvector of $T_{\la,\,\om}$ associated to the eigenvalue 
$\lambda_{k} $, and hence is proportional to $u_{k}(T_{\la,\,\om})$.
\par\smallskip
The next lemma provides necessary conditions for the point spectrum of 
$T_{\la,\,\om}$ to be either ``maximal'' or ``minimal''. For any element 
$\om$ of $\Om$, we denote by $R_{\om}$ the radius of convergence of the 
series
\[\sum_{j\ge 1}\frac{z^{j}}{\omega _{1}\cdots\omega _{j}}\cdot
\]

\begin{lemma}\label{Lemma 31}
 Let $(\la,\om)$ be an element of $\La\times\Om$.
 \begin{enumerate}
  \item [\emph{(1)}] If $R_{\om}>2$, the map $\lambda\mapsto E_{\la,\,\om}(\lambda )$
is well-defined and analytic on a neighborhood of the unit circle $\T$. 
Consequently,
$\overline{\vphantom{t}\emph{span}}\,[E_{\la,\,\om}(\lambda _{k})\;;\;k>
k_{0}]=\h$ 
for every $k_{0}\ge 1$.
  \item[\emph{(2)}] If $R_{\om}=0$, the eigenvalues of $T_{\la,\,\om}$ are 
contained in the set $\{\lambda _{k}\;;\;\gk\}\cup\{1\}$.
 \end{enumerate}
\end{lemma}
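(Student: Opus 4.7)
Part (1) will rest on extracting quantitative growth from the hypothesis $R_{\om}>2$. By the Cauchy--Hadamard formula, $R_{\om}>2$ means $\limsup_{j\to\infty}(\omega _{1}\cdots\omega _{j})^{-1/j}<1/2$, so one can fix $\rho'>2$ with $\omega _{1}\cdots\omega _{j}\ge\rho'^{\,j}$ for all sufficiently large $j$. Combined with the trivial upper bound $|\lambda-\lambda _{j}|\le|\lambda|+1$ (valid because $|\lambda _{j}|=1$), this yields a geometric majorant
\[
\Bigl|\prod_{j=1}^{n-1}\dfrac{\lambda -\lambda _{j}}{\omega _{j}}\Bigr|\le C\,\Bigl(\dfrac{|\lambda|+1}{\rho'}\Bigr)^{n-1}
\]
for all $n\ge 1$ and some constant $C$ depending only on $\om$. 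On the disk $U:=D(0,\rho'-1)$, which is an open connected neighborhood of $\T$ since $\rho'>2$, the ratio $(|\lambda|+1)/\rho'$ stays locally uniformly $<1$, so the series defining $E_{\la,\,\om}(\lambda)$ converges in $\h$ locally uniformly on $U$. Each coefficient being a polynomial in $\lambda$, this forces $\lambda\mapsto E_{\la,\,\om}(\lambda)$ to be an analytic map from $U$ to $\h$.

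To deduce the density statement in Part (1), I fix $k_{0}\ge 1$ and $y\in\h$ with $\pss{E_{\la,\,\om}(\lambda _{k})}{y}=0$ for every $k>k_{0}$. The function $\lambda\mapsto\pss{E_{\la,\,\om}(\lambda)}{y}$ is holomorphic on the connected open set $U$ and vanishes on the sequence $(\lambda _{k})_{k>k_{0}}$, which has $1\in U$ as an accumulation point because $\lambda _{k}\to 1$. By the identity principle it must vanish identically on $U$, and therefore at every $\lambda _{k}$, $k\ge 1$. Now, Fact \ref{Fact 30} and the pairwise distinctness of the $\lambda _{k}$ guarantee that $E_{\la,\,\om}(\lambda _{k})\in\textrm{span}[e_{1},\dots,e_{k}]$ with non-zero $e_{k}$-coordinate $\prod_{j=1}^{k-1}(\lambda _{k}-\lambda _{j})/\omega _{j}$; hence the family $\{E_{\la,\,\om}(\lambda _{k})\,;\,k\ge 1\}$ is linearly independent and spans the dense subspace $\h_{00}$. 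Orthogonality to all of them forces $y=0$, as required.

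For Part (2), assume $R_{\om}=0$ and pick $\lambda\notin\{\lambda _{k}\,;\,k\ge 1\}\cup\{1\}$. Since $\lambda _{j}\to 1\neq\lambda$ and $\lambda\neq\lambda _{j}$ for every $j$, the quantity $|\lambda-\lambda _{j}|$ is uniformly bounded below by some constant $c>0$. Hence
\[
\Bigl|\prod_{j=1}^{n-1}\dfrac{\lambda -\lambda _{j}}{\omega _{j}}\Bigr|\ge\dfrac{c^{\,n-1}}{\omega _{1}\cdots\omega _{n-1}}.
\]
The hypothesis $R_{\om}=0$ is precisely $\limsup_{n}(\omega _{1}\cdots\omega _{n-1})^{-1/(n-1)}=+\infty$, so $c^{\,n-1}/(\omega _{1}\cdots\omega _{n-1})$ is unbounded along a subsequence and in particular does not tend to $0$. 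The squared-modulus series defining $\|E_{\la,\,\om}(\lambda)\|^{2}$ therefore diverges, so $E_{\la,\,\om}(\lambda)\notin\h$, and by Fact \ref{Fact 30} $\lambda$ is not an eigenvalue of $T_{\la,\,\om}$.

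\textbf{Main obstacle.} The only delicate point is matching the geometry of $\T$ with the threshold $R_{\om}>2$ in Part (1): one needs the disk on which $E_{\la,\,\om}$ is analytic to actually contain $\T$, which is exactly what the bound $\rho'>2$ ensures via $|\lambda|+1<\rho'$. Everything else is a routine application of Fact \ref{Fact 30}, the identity principle, and Cauchy--Hadamard.
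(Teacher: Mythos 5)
Your proof is correct and follows essentially the same route as the paper: analyticity of $E_{\la,\om}$ on a disk of radius $>1$ (the paper uses $D(0,R_{\om}-1)$ directly; you fix a $\rho'\in(2,R_{\om})$ via Cauchy--Hadamard, but this is the same estimate), followed by the identity principle using $\lambda_k\to1$, and for (2) the uniform lower bound $|\lambda-\lambda_j|\ge c$. The only cosmetic difference is that you establish density of $\mathrm{span}\{E_{\la,\om}(\lambda_k)\}$ via the triangular structure and nonvanishing $e_k$-coordinate, whereas the paper appeals to the fact that $E_{\la,\om}(\lambda_k)$ is proportional to $u_k(T_{\la,\om})$; both are equivalent.
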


\begin{proof}
 The first part of assertion (1) is a direct consequence of Fact \ref{Fact 30}: since 
$R_{\om}>2$, the series defining $E_{\la,\,\om}(\lambda )$ is convergent 
in 
$\h$ for 
every $\lambda$ belonging to the open disk $D(0, R_{\om}-1)$, and the map $\lambda\mapsto E_{\la,\,\om}(\lambda )$ is analytic there. In particular, this map is analytic in a neighborhood of $\T$. Since the sequence 
$(\lambda _{k})_{k> k_{0}}$ has an accumulation point in 
$D(0,R_{\om}-1)$ for every $k_{0}\ge 1$, it follows from the analyticity of $E_{\la,\om}$ that the closed linear span of the vectors 
$E_{\la,\,\om}(\lambda _{k})$, $k> k_{0}$, coincides with the closed linear span 
of all the eigenvectors $E_{\la,\,\om}(\lambda )$, $\lambda \in\T$. Since 
 $E_{\la,\,\om}(\lambda _{k})$ is proportional to $u_k(T_{\la,\om})$ and since the 
vectors $u_{k}(T_{\la,\,\om})$, $\gk$, 
span a dense subspace of $\h$, this proves the second part of assertion (1).
\par\smallskip
As to assertion (2), suppose that $R_{\om}=0$. Let us show that the 
series defining $E_{\la,\,\om}(\lambda )$ does not converge when $\lambda \in\C$
does not belong to $\{\lambda _{k}\,;\;\gk\}\cup\{1\}$. So let fix such a complex number $\lambda$. 
 Since $\lambda 
_{k}$ tends to $1$ as $k$ tends to infinity, there exists a number $\delta 
>0$ such that $|\lambda -\lambda _{k}|\ge \delta $ for every $\gk$. Hence 
\[
\biggl|\, \prod_{j=1}^{n-1}\dfrac{\lambda -\lambda _{j}}{\omega 
_{j}}\biggr|^{2}\ge\dfrac{\delta ^{\,2(n-1)}}{(\omega _{1}\dots\omega 
_{n-1})^{2}}\qquad \textrm{for every}\ n\ge 2.
\]
Since $R_{\om}=0$, it follows that 
\[
\sum_{n\ge 2}\ \biggl|\, \prod_{j=1}^{n-1}\dfrac{\lambda -\lambda 
_{j}}{\omega 
_{j}}\biggr|^{2}=\infty,
\]
and hence $E_{\la,\,\om}(\lambda )$ is not defined as a vector of $\h$.
\end{proof}

After these preliminary facts, we now state a proposition which will 
be the key step to prove that ergodic elements are typical in 
$\La\times\Om_{M}$ for any $M>2$. We postpone the proof of Proposition
\ref{Proposition 32} to the end of this section, and explain first how 
Theorem \ref{Theorem 28} may be deduced from it.
In what follows, we fix $M>2$. For any $\om\in\Om_M$, any open set $U\neq\emptyset$ in $\h$ and any $\alpha>0$, we set 
\begin{align*}
 \pmb{ \mathcal{O}}^{\,\om}_{U,\,\alpha }:=\Bigl\{\la\in\La\ ;\ 
 &\exists\,r\ge 1\;\, \exists\,(a_{1},\dots,a_{r})\in\C^{r}\ :\\[-1ex]
 &\sum_{k=1}^{r}a_{k}u_{k}\bigl( T_{\la,\,\om}\bigr)\in 
U\quad\textrm{and}\quad\sum_{k=1}^{r}
|a_{k}|^{2}\|u_{k}(T_{\la,\,\om})\|^{2}<\alpha \Bigr\}.
\end{align*}
\par\smallskip\noindent
Note that $\pmb{ \mathcal{O}}^{\,\om}_{U,\,\alpha }$ is an \emph{open} subset of $\pmb\Lambda$: this 
follows from the continuity of the map $(\la,\,\om)\mapsto T_{\la,\,\om}$ from $\La\times\Om_{M}$ into $(\mathfrak{T}_{M+1}(\h),\sot)$, combined with 
Fact \ref{Fact 18}.

\begin{proposition}\label{Proposition 32} Let $U$ 
be a non-empty open set in $\h$, and let $\alpha>0$. If $\om\in\Om_{M}$ is such that $R_{\om}>2$, then the open set  $\pmb{ \mathcal{O}}^{\,\om}_{U,\,\alpha }$
 is dense in $\La$.
\end{proposition}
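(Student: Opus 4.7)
The plan is to construct, given $\la_{0}\in\La$ and a neighborhood parameter $\delta>0$, an explicit $\la\in\La$ with $\|\la-\la_{0}\|_{\infty}<\delta$ and $\la\in\pmb{\mathcal{O}}^{\,\om}_{U,\alpha}$. The strategy is to realize a prescribed family of eigenvectors of $T_{\la,\om}$ by perturbing finitely many diagonal coefficients of $\la_{0}$, exploiting the analyticity of $\mu\mapsto E_{\la_{0},\om}(\mu)$ on a neighborhood of $\overline{\D}$ (which holds by Lemma \ref{Lemma 31}~(1), thanks to the hypothesis $R_{\om}>2$). This analyticity provides both a dense span of eigenvectors and uniform norm bounds, two ingredients we shall use heavily.

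Fix $y\in U$ with $\overline{B(y,\epsilon)}\subset U$. By the analytic density statement, $y$ can be approximated within $\epsilon/3$ by a finite combination $\sum_{i}c_{i}E_{\la_{0},\om}(\nu_{i})$ with $\nu_{i}$'s in a small arc $I\subset\T$ containing $1$ (of angular radius less than $\delta/8$, say). The coefficients $c_{i}$ may be large; the key step is a \emph{dilution} argument: each term $c_{i}E_{\la_{0},\om}(\nu_{i})$ is replaced by $(c_{i}/M_{i})\sum_{j=1}^{M_{i}}E_{\la_{0},\om}(\nu_{i,j})$ for large $M_{i}$ and distinct $\nu_{i,j}\in I$ close to $\nu_{i}$. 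By continuity of $E_{\la_{0},\om}$ on $I$ the approximation is preserved up to arbitrarily small error, and after relabelling to $(\mu_{\ell},b_{\ell})_{\ell=1}^{N}$ one obtains $\|\sum_{\ell}b_{\ell}E_{\la_{0},\om}(\mu_{\ell})-y\|<\epsilon/2$ together with $\sum_{\ell}|b_{\ell}|^{2}=\sum_{i}|c_{i}|^{2}/M_{i}$, which can be made smaller than $\alpha/(2C^{2})$ by taking each $M_{i}$ large. Here $C$ is a uniform bound on $\|E_{\la',\om}(\mu)\|$ for $\mu\in\T$ and $\la'$ in a $\delta$-neighborhood of $\la_{0}$, finite because the defining series converges uniformly on compact subsets of $D(0,R_{\om}-1)$. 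Now choose indices $k_{1}<k_{2}<\cdots<k_{N}$ so large that each $\lambda_{0,k_{\ell}}$ lies within $\delta/4$ of $1$, and define $\la$ by $\lambda_{k_{\ell}}:=\mu_{\ell}$ while keeping $\lambda_{k}:=\lambda_{0,k}$ elsewhere; a generic adjustment of the $\mu_{\ell}$'s ensures pairwise distinctness, so $\la\in\La$ and $\|\la-\la_{0}\|_{\infty}<\delta$. The eigenvector of $T_{\la,\om}$ at $\lambda_{k_{\ell}}=\mu_{\ell}$ is $u_{k_{\ell}}(T_{\la,\om})=E_{\la,\om}(\mu_{\ell})/c_{\ell}$ with $c_{\ell}=\pss{E_{\la,\om}(\mu_{\ell})}{e_{k_{\ell}}}$. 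Setting $r:=k_{N}$, $a_{k_{\ell}}:=b_{\ell}c_{\ell}$ and $a_{k}:=0$ for other $k\le r$, one checks $\sum_{k=1}^{r}a_{k}u_{k}(T_{\la,\om})=\sum_{\ell}b_{\ell}E_{\la,\om}(\mu_{\ell})$ and $\sum_{k=1}^{r}|a_{k}|^{2}\|u_{k}(T_{\la,\om})\|^{2}=\sum_{\ell}|b_{\ell}|^{2}\|E_{\la,\om}(\mu_{\ell})\|^{2}\le C^{2}\sum_{\ell}|b_{\ell}|^{2}<\alpha/2$.

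What remains, and is the main technical obstacle, is to show that $\sum_{\ell}b_{\ell}E_{\la,\om}(\mu_{\ell})$ stays within $\epsilon/2$ of its unperturbed counterpart $\sum_{\ell}b_{\ell}E_{\la_{0},\om}(\mu_{\ell})$, so that the combination lies in $B(y,\epsilon)\subset U$. Abstractly this follows from the continuity of $\la\mapsto E_{\la,\om}(\mu)$ in the $\ell^{\infty}$ topology, uniform for $\mu$ ranging over compact subsets of $D(0,R_{\om}-1)$; but because the $\mu_{\ell}$'s are chosen to be perturbed diagonal coefficients $\lambda_{k_{\ell}}$ of $\la$ itself, the product formula for $E_{\la,\om}$ at $\mu=\mu_{\ell}$ introduces ratios of the form $(\mu_{\ell}-\mu_{\ell'})/(\mu_{\ell}-\lambda_{0,k_{\ell'}})$ whose size depends delicately on the perturbation geometry. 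This is handled by choosing the $\mu_{\ell}$'s well-separated from each other and from the $\lambda_{0,k_{\ell'}}$'s within $I$, and by tuning $N$ and the indices $k_{\ell}$ in tandem with $\delta$ so that all perturbation-induced errors are absorbed below $\epsilon/2$. This balance between the dilution parameters and the perturbation geometry is the heart of the argument, and once carried out yields $\la\in\pmb{\mathcal{O}}^{\,\om}_{U,\alpha}\cap\{\la':\|\la'-\la_{0}\|_{\infty}<\delta\}$, proving density.
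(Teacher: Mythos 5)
Your overall architecture (approximate $y$ by a combination of analytic eigenvector values, dilute to make the $\ell^{2}$-weight small, implant the dilution points as new diagonal coefficients) matches the paper's, but there is a genuine gap precisely at the step you flag as the ``heart of the argument'': the comparison between $\sum_{\ell}b_{\ell}E_{\la,\om}(\mu_{\ell})$ and $\sum_{\ell}b_{\ell}E_{\la_{0},\om}(\mu_{\ell})$. You choose the $\mu_{\ell}$'s freely on a small arc near $1$ and then, separately, pick large indices $k_{\ell}$ with $\lambda_{0,k_{\ell}}$ near $1$. Nothing forces $\mu_{\ell}$ to be close to $\lambda_{0,k_{\ell}}$: both merely lie in the same arc of radius $O(\delta)$. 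Consequently, in the coefficient of $e_{n}$ (for $n$ between consecutive $k_{\ell}$'s) the ratio of the perturbed to unperturbed products is $\prod_{\ell':\,k_{\ell'}<n}(\mu_{\ell}-\mu_{\ell'})/(\mu_{\ell}-\lambda_{0,k_{\ell'}})$, a product of up to $N$ factors each of which is bounded away from $1$ but \emph{not} close to $1$. Since $N$ must be large for the dilution to drive $\sum|b_{\ell}|^{2}$ below $\alpha/(2C^{2})$, this product can blow up like $C_{0}^{N}$. Your suggested remedy --- ``choosing the $\mu_{\ell}$'s well-separated from each other and from the $\lambda_{0,k_{\ell'}}$'s'' --- does not help, because well-separation controls smallness of the factors but not closeness to $1$. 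Nor can you repair it by retro-fitting $\mu_{\ell}$ to sit near $\lambda_{0,k_{\ell}}$: the $\mu_{\ell}$'s are dictated by the approximation-plus-dilution step, which happens before the $k_{\ell}$'s enter the picture, whereas the sequence $(\lambda_{0,k})$ is fixed in advance and only clusters at $1$, not at your chosen $\nu_{i}$'s.

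The missing idea, and the one the paper actually uses, is to take the approximation targets to be the \emph{existing} coefficients $\theta_{s}$ of $\la_{0}$ with $s>k_{0}$: by Lemma~\ref{Lemma 31}(1) and the identity principle, $\{E_{\la_{0},\om}(\theta_{s})\;;\;s>k_{0}\}$ already spans a dense subspace, so one can write $z=\sum_{s=k_{0}+1}^{k_{0}+Q}b_{s}E_{\la_{0},\om}(\theta_{s})\in U$. The dilution then consists of repeating each $\theta_{s}$ ($k_{0}<s\le k_{0}+Q$) $N$ times and placing the new coefficients $\lambda_{s+jQ}$, $1\le j\le N-1$, \emph{extremely close to the unchanged} $\theta_{s}=\lambda_{s}$, while leaving $\lambda_{k}=\theta_{k}$ for $k\le k_{0}+Q$ and $k>k_{0}+NQ$. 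This achieves two things your version misses: (i) for $n\le s$ the two products run over the \emph{same} sequence $(\theta_{i})_{i<s}$, and only the evaluation point changes ($\lambda_{s+jQ}$ vs.\ $\theta_{s}$), so there are at most $k_{0}+Q-1$ ratios $(\lambda_{s+jQ}-\theta_{i})/(\theta_{s}-\theta_{i})$ to control --- a number \emph{independent of the dilution parameter $N$} --- and each can be made as close to $1$ as desired; and (ii) for $s<n\le s+jQ$ the product contains the vanishing factor $\lambda_{s+jQ}-\lambda_{s}=\lambda_{s+jQ}-\theta_{s}$, which kills all the new terms. This structural coupling between the dilution targets and the pre-existing sequence is exactly what neutralizes the combinatorial explosion you run into; without it the bound on the perturbation error cannot close.
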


Proposition \ref{Proposition 32} will allow us to apply the following ergodicity criterion proved 
in \cite[Cor.\,5.5]{GM}.

\begin{lemma}\label{ERGOCRIT} Let $X$ be a separable Banach space, and let $T\in\mathfrak B(X)$. 
Assume that for any open set  $\Omega\neq\emptyset $ of $X$ with $T^{-1}(\Omega) \subseteq\Omega $, 
any neighborhood $W$ of  $0$, and any $\varepsilon >0$, there exists a $T$-invariant probability
measure $\mu $ on $X$ with compact support such that $\mu (\Omega )=1$ and $\mu 
(W)>1-\varepsilon $. Then $T$ is ergodic.
\end{lemma}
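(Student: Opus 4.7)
The plan is to proceed in three steps: first, build a $T$-invariant probability measure with full support from the hypothesis; second, upgrade it to a $T$-invariant measure $\mu^{*}$ which assigns mass $1$ to each ``basic attractor'' $\Omega_{p}$; third, extract an ergodic component of full support via the ergodic decomposition theorem.

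For the first step I would fix a countable basis $(V_{p})_{p\ge 1}$ of non-empty open subsets of $X$ and a decreasing basis $(W_{q})_{q\ge 1}$ of neighborhoods of $0$, and define the open set $\Omega_{p}:=\bigcup_{n\ge 0}T^{-n}(V_{p})$ for each $p\ge 1$. Each $\Omega_{p}$ is non-empty and satisfies $T^{-1}(\Omega_{p})\subseteq\Omega_{p}$, so applying the hypothesis to $(\Omega_{p},W_{q},2^{-p-q})$ produces a $T$-invariant probability measure $\mu_{p,q}$ with compact support, $\mu_{p,q}(\Omega_{p})=1$ and $\mu_{p,q}(W_{q})>1-2^{-p-q}$. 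Since $\mu_{p,q}$ is $T$-invariant, the identity $\mu_{p,q}(T^{-n}(V_{p}))=\mu_{p,q}(V_{p})$ combined with $\mu_{p,q}(\Omega_{p})=1$ forces $\mu_{p,q}(V_{p})>0$, so a suitable convex combination of the $\mu_{p,q}$'s already yields a $T$-invariant probability measure with full support; but this is not yet enough to conclude ergodicity.

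The decisive second step consists in producing a single $T$-invariant probability measure $\mu^{*}$ with $\mu^{*}(\Omega_{p})=1$ for \emph{every} $p\ge 1$. A mere convex combination does not do the job since it only delivers $\mu^{*}(\Omega_{p})\ge c_{p}<1$. Instead one works with the nested intersections $\Omega^{(m)}:=\bigcap_{p\le m}\Omega_{p}$, which are open and still satisfy $T^{-1}(\Omega^{(m)})\subseteq\Omega^{(m)}$; provided each $\Omega^{(m)}$ is non-empty (an $m$-fold transitivity property to be bootstrapped from the hypothesis itself, using that each previously constructed $\mu_{p,q}$ charges $V_{p}$), the hypothesis yields $T$-invariant probability measures $\mu^{(m)}$ with compact support, $\mu^{(m)}(\Omega^{(m)})=1$, and uniform tightness estimates of the form $\mu^{(m)}(W_{q})>1-2^{-q}$. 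By Prokhorov's theorem the family $(\mu^{(m)})_{m\ge 1}$ is then tight, so one can extract a weak-$*$ limit $\mu^{*}$, which is still $T$-invariant and, by a portmanteau-style argument exploiting the openness of the $\Omega_{p}$'s together with the tightness to rule out mass escape to the boundary of $\Omega_{p}$ or to infinity, satisfies $\mu^{*}(\Omega_{p})=1$ for each $p$. This is the hardest point of the proof.

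For the third step, I would apply the ergodic decomposition theorem to $\mu^{*}$ to obtain a disintegration $\mu^{*}=\int_{X}\nu_{x}\,d\mu^{*}(x)$ in which $\nu_{x}$ is $T$-ergodic for $\mu^{*}$-a.e.\ $x$. Each $\Omega_{p}$ is $T$-invariant modulo $\mu^{*}$-null sets (the inclusion $T^{-1}(\Omega_{p})\subseteq\Omega_{p}$ combined with the $T$-invariance of $\mu^{*}$ forces $\mu^{*}(\Omega_{p}\setminus T^{-1}(\Omega_{p}))=0$), so $\nu_{x}(\Omega_{p})\in\{0,1\}$ for $\mu^{*}$-a.e.\ $x$, and the identity $\int\nu_{x}(\Omega_{p})\,d\mu^{*}(x)=\mu^{*}(\Omega_{p})=1$ gives $\nu_{x}(\Omega_{p})=1$ for $\mu^{*}$-a.e.\ $x$. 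By $T$-invariance of $\nu_{x}$, this yields $\nu_{x}(V_{p})>0$ for every $p$, and excluding a countable union of null sets, $\mu^{*}$-almost every $\nu_{x}$ has full support. Any such $\nu_{x}$ is then a $T$-invariant ergodic probability measure with full support, establishing the ergodicity of $T$ in the sense required by the paper.
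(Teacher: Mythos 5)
The paper does not prove this lemma; it is imported directly from \cite[Cor.~5.5]{GM}, so your argument has to stand on its own.

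Your Steps~1 and~3 are sound: in particular the ergodic-decomposition argument in Step~3 is exactly the right way to pass from a single $T$-invariant $\mu^*$ with $\mu^*(\Omega_p)=1$ for every $p$ to an ergodic measure with full support. But Step~2, which is where all the difficulty lies, does not work as written.

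The decisive objection is this. You take $\mu^{(m)}$ satisfying $\mu^{(m)}(W_q)>1-2^{-q}$ for $q\le m$ and then pass to a weak-$*$ limit $\mu^*$. For each fixed $q$, the portmanteau inequality for the \emph{closed} set $\overline{W_q}$ gives $\mu^*(\overline{W_q})\ge\limsup_m\mu^{(m)}(\overline{W_q})\ge 1-2^{-q}$; letting $q\to\infty$ and using $\bigcap_q\overline{W_q}=\{0\}$ (which you may arrange by taking the $W_q$ to be nested balls) yields $\mu^*(\{0\})=1$, i.e.\ $\mu^*=\delta_0$. Since $0\in\Omega_p$ only when $0\in V_p$, this gives $\mu^*(\Omega_p)=0$ for most $p$ --- the exact opposite of what you need. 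This is unavoidable with your choice of estimates: the mass-near-$0$ control you invoke to hope for compactness is precisely what forces the collapse onto $\delta_0$. If you instead keep $\varepsilon$ bounded away from $0$, the family $(\mu^{(m)})$ is no longer tight: the $W_q$ are open neighborhoods of $0$ in an infinite-dimensional Banach space, hence not relatively compact, so the estimates $\mu^{(m)}(W_q)>1-\varepsilon$ do not verify Prokhorov's criterion, and the individual compact supports of the $\mu^{(m)}$'s can run off to infinity. And independently of both issues, the portmanteau inequality for the \emph{open} set $\Omega_p$ only gives $\mu^*(\Omega_p)\le\liminf_m\mu^{(m)}(\Omega_p)$, which is vacuous when the right-hand side is $1$; it never gives the lower bound $\mu^*(\Omega_p)=1$ that you need, and mass genuinely can escape to $\partial\Omega_p\subseteq X\setminus\Omega_p$.

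There is also a secondary gap: you never establish that $\Omega^{(m)}=\bigcap_{p\le m}\Omega_p$ is non-empty. Each $\mu_{p,q}$ charges $V_p$ and lives on $\Omega_p$, but nothing in the hypothesis visibly forces a single orbit to visit two different $V_p$'s; non-emptiness of all the $\Omega^{(m)}$ is essentially topological transitivity of $T$, which is a \emph{consequence} of the lemma and should not be assumed. Some mechanism specific to the hypothesis must be found to produce $\mu^*$ --- for instance a Baire category argument on a suitable Polish space of invariant measures, in the spirit of Proposition~\ref{OSPergo} of this paper --- and that construction is precisely the substance of the cited proof in \cite{GM}, which your sketch does not recover.
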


The link with Proposition \ref{Proposition 32} is perhaps not quite obvious at first sight. It is provided by the following fact.

\begin{fact}\label{link} Let $T\in \tth$, let $U$ be a non-empty open set in $\h$, let $\varepsilon >0$, and let $W$ be a neighborhood of $0$ in $\h$. 
Let also $r\geq 1$, and assume that the eigenvalues $\lambda_1(T),\dots ,\lambda_r(T)$ of $T$ are rationally independent. Finally, 
let $a_1,\dots ,a_r$ be $r$ complex numbers and assume that the vector $\sum_{k=1}^r a_ku_k(T)$ belongs to $ U$. If the quantity
$\sum_{k=1}^r \vert a_k\vert^2\Vert u_k(T)\Vert^2$ 
is small enough, there exists a compactly supported $T$-invariant measure $\mu$ on $\h$ such that 
\par\smallskip
\begin{itemize}
\item[$\bullet$] $\mu(\Omega_U)=1$, where 
$\Omega_U=\bigcup_{n\geq 0} T^{-n}(U)$, and 
\item[$\bullet$] $\mu(W)>1-\varepsilon$.
\end{itemize}
\end{fact}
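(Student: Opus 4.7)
The plan is to exploit the rational independence of $\lambda_1(T),\dots,\lambda_r(T)$ by taking $\mu$ to be the image of Haar measure on a torus $\T^r$ under a parameterization of the closure of the $T$-orbit of $x_0:=\sum_{k=1}^r a_k u_k(T)$. Writing $\lambda_k(T)=e^{2i\pi\theta_k^*}$ for $k=1,\dots,r$, I would define the continuous map
\[\varphi:\T^r\to\h,\qquad \varphi(\theta_1,\dots,\theta_r):=\sum_{k=1}^r a_k\,e^{2i\pi\theta_k}\,u_k(T),\]
and set $\mu:=\varphi_* m_{\T^r}$, where $m_{\T^r}$ denotes normalized Haar measure on $\T^r$. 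Because each $u_k(T)$ is an eigenvector of $T$ for $\lambda_k(T)$, one has the intertwining $T\circ\varphi=\varphi\circ R_{\theta^*}$, where $R_{\theta^*}$ is the translation by $(\theta_1^*,\dots,\theta_r^*)$ on $\T^r$. This forces $\mu$ to be $T$-invariant, and its support $\varphi(\T^r)$ is compact.

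For $\mu(\Omega_U)=1$, the key identity is $\varphi^{-1}(\Omega_U)=\bigcup_{n\geq 0}R_{\theta^*}^{-n}\varphi^{-1}(U)$. The rational independence hypothesis is precisely the classical Kronecker--Weyl condition guaranteeing that $R_{\theta^*}$ is ergodic on $(\T^r,m_{\T^r})$. Since $x_0=\varphi(0)\in U$, the preimage $\varphi^{-1}(U)$ is a non-empty open subset of $\T^r$, hence of positive Haar measure; ergodicity then gives $m_{\T^r}\bigl(\bigcup_{n\geq 0}R_{\theta^*}^{-n}\varphi^{-1}(U)\bigr)=1$, which pushes forward to $\mu(\Omega_U)=1$.

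For $\mu(W)>1-\varepsilon$, I would fix $R>0$ with $\overline{B}(0,R)\subseteq W$ and apply Chebyshev's inequality:
\[\mu(\h\setminus W)\;\leq\;\frac{1}{R^2}\int_\h\|x\|^2\,d\mu(x)\;=\;\frac{1}{R^2}\int_{\T^r}\|\varphi(\theta)\|^2\,dm_{\T^r}(\theta).\]
Expanding the squared norm and using the $L^2(\T^r)$-orthonormality of the characters $\theta\mapsto e^{2i\pi\theta_k}$ yields the identity
\[\int_{\T^r}\|\varphi(\theta)\|^2\,dm_{\T^r}(\theta)=\sum_{k=1}^r|a_k|^2\,\|u_k(T)\|^2.\]
Hence the quantitative smallness condition to impose is $\sum_{k=1}^r|a_k|^2\|u_k(T)\|^2<\varepsilon R^2$, and $\mu$ is the desired measure. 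No step presents a serious obstacle; the one subtlety worth flagging is that the eigenvectors $u_k(T)$ are generally \emph{not} orthogonal, since $T$ is not normal. This is harmless here because the averaging takes place at the level of the torus parameters, where the Fourier characters themselves are orthonormal, so no orthogonality of the $u_k(T)$ is needed to obtain the clean formula for $\int_{\T^r}\|\varphi\|^2\,dm_{\T^r}$.
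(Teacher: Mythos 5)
Your proof is correct and takes essentially the same approach as the paper: the measure you build as the pushforward of Haar measure on $\T^r$ under $\varphi$ is exactly what the paper calls the ``Steinhaus measure'' associated with $\sum_k a_k u_k(T)$, and your Chebyshev/orthogonality estimate for $\mu(W)$ matches the paper's argument (including the observation that orthogonality is used at the level of the torus characters, not the eigenvectors). The only real difference is that you supply a self-contained proof that $\mu(\Omega_U)=1$ via ergodicity of the Kronecker rotation $R_{\theta^*}$, whereas the paper simply cites a fact from \cite{GM}; that is a welcome expansion, not a divergence of method.
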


\begin{proof} We only sketch the proof, since the argument is already essentially given in \cite{GM}.
Let 
$(\chi_{k} )_{k=1}^r$ be a sequence of independent random Steinhaus 
variables defined on some standard probability space $(\Omega ,\mathcal{F},\P)$, and let $\mu$ 
be the measure on $\h$ given by the distribution of the $\h$-valued random variable $\sum_{k=1}^{r}\chi _{k}a_{k}u_{k}(T)$. 
This measure $\mu$, which may be called the \emph{Steinhaus measure associated with the vector $u=\sum_{k=1}^r a_ku_k(T)$},
 is $T$-invariant and has 
compact support. As observed in 
\cite[Fact\,5.16]{GM}, the independence of the unimodular numbers $\lambda _{1}(T),\dots,
\lambda _{r}(T)$ and the condition $\sum_{k=1}^{r}a_{k}u_{k}(T)\in U$ 
 imply that $\mu (\Omega _{U})=1$. Also, we have  by orthogonality of the Steinhaus variables $\chi_k$, $1\le k\le r$, that
 \[\int_H \Vert x\Vert^2d\mu(x)=\mathbb E\left(\sum_{k=1}^r \chi_k a_k u_k(T) \right)=\sum_{k=1}^r \vert a_k\vert^2\,\Vert u_k(T)\Vert^2.
 \]
It then follows from Markov's inequality that  
$\mu (W)
>1-\varepsilon $ provided that the quantity $ \sum_{k=1}^r \vert a_k\vert^2\,\Vert u_k(T)\Vert^2$ is sufficiently small.
\end{proof}

\begin{proof}[Proof of Theorem \ref{Theorem 28}] The proofs of the two parts of Theorem \ref{Theorem 28} are completely independent of each other. Recall that the sets $\pmb{\mathcal{E}}_{M}$ and $\pmb{\mathcal{D}}_{M}$ are defined at the beginning of Section \ref{Subsection 2.3}, and that we assume that $M>2$.
\par\smallskip\noindent
\textbf{Part 1.} \emph{The set $\pmb{\mathcal{E}}_{M}$ is comeager in 
$\La\times\Om_{M}$.}  

\smallskip
Let $(U_{p})_{p\ge 1}$ be a countable basis of non-empty open subsets for $\h$. For each 
$\om\in\Om_{M}$, 
define the $G_{\delta}$ subset of $\La$
\[
\pmb{G_{\omega}}:=\bigcap_{p\ge 1}\ \bigcap_{q\ge 1}\ 
\pmb{\mathcal{O}}^{\,\om}_{U_{p},2^{-q}},
\]
and
\[
\pmb{G}:=\bigl\{(\la,\,\om)\in\La\times\Om\,;\,\la\in\pmb{G_{\omega}}\bigr\}
.
\]

\begin{claim}\label{Claim 33}
 The set $\pmb{G}$ is a dense $\gd$ subset of $\La\times\Om_{M}$.
\end{claim}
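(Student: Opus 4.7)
The plan is to verify separately that $\pmb{G}$ is $G_{\delta}$ and that it is dense in $\pmb{\Lambda}\times\pmb{\Omega}_{M}$; the first is a continuity exercise, the second a direct application of Proposition \ref{Proposition 32} combined with the Baire category theorem on $\pmb{\Lambda}$.

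For the $G_{\delta}$ part, I would write $\pmb{G}=\bigcap_{p,q\ge 1}\pmb{H}_{p,q}$ where
\[
\pmb{H}_{p,q}:=\bigl\{(\la,\om)\in\pmb{\Lambda}\times\pmb{\Omega}_{M}\,;\,\la\in\pmb{\mathcal{O}}^{\,\om}_{U_{p},2^{-q}}\bigr\},
\]
and show that each $\pmb{H}_{p,q}$ is open. The map $(\la,\om)\mapsto T_{\la,\om}$ is $\sot$-continuous from $\pmb{\Lambda}\times\pmb{\Omega}_{M}$ into $(\mathfrak{T}_{M+1}(\h),\sot)$ (it suffices to check this on finitely supported vectors, after which the uniform bound $\|T_{\la,\om}\|\le 1+M$ handles the general case), and by Fact \ref{Fact 18} the maps $T\mapsto u_{k}(T)$ are $\sot$-continuous on $\mathfrak{T}(\h)$; hence each map $(\la,\om)\mapsto u_{k}(T_{\la,\om})$ is continuous from $\pmb{\Lambda}\times\pmb{\Omega}_{M}$ into $\h$. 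Consequently, for any fixed $r\ge 1$ and $(a_{1},\dots,a_{r})\in\C^{r}$, the set of pairs $(\la,\om)$ satisfying $\sum_{k=1}^{r}a_{k}u_{k}(T_{\la,\om})\in U_{p}$ and $\sum_{k=1}^{r}|a_{k}|^{2}\|u_{k}(T_{\la,\om})\|^{2}<2^{-q}$ is open, and $\pmb{H}_{p,q}$ may be written as the countable union of these sets as $r$ ranges over $\N$ and $(a_{1},\dots,a_{r})$ ranges over $(\Q+i\Q)^{r}$. Thus $\pmb{H}_{p,q}$ is open, and $\pmb{G}$ is $G_{\delta}$.

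For density, I would fix a non-empty basic open set $V\times W\subseteq\pmb{\Lambda}\times\pmb{\Omega}_{M}$ and first produce an $\om\in W$ with $R_{\om}>2$. Since $\pmb{\Omega}_{M}$ carries the product topology, $W$ restricts only finitely many coordinates, so, starting from any $\om_{0}\in W$, setting $\omega_{k}=\omega_{0}(k)$ for $k\leq N$ (for $N$ sufficiently large) and $\omega_{k}=M$ for $k>N$ yields $\om\in W$ with $(\omega_{1}\cdots\omega_{j})^{1/j}\to M$, hence $R_{\om}=M>2$. Proposition \ref{Proposition 32} then applies to this $\om$, so every set $\pmb{\mathcal{O}}^{\,\om}_{U_{p},2^{-q}}$ is a dense open subset of the Polish space $\pmb{\Lambda}$. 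By the Baire category theorem, $\pmb{G_{\om}}=\bigcap_{p,q\ge 1}\pmb{\mathcal{O}}^{\,\om}_{U_{p},2^{-q}}$ is a dense $G_{\delta}$ in $\pmb{\Lambda}$, and in particular meets $V$. Picking $\la\in V\cap\pmb{G_{\om}}$ gives $(\la,\om)\in\pmb{G}\cap(V\times W)$, proving density.

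The main conceptual obstacle is of course not Claim \ref{Claim 33} itself but Proposition \ref{Proposition 32}, which is the substantive content feeding into the slicewise Baire argument; once Proposition \ref{Proposition 32} is granted, the only mildly delicate points here are the joint continuity of $(\la,\om)\mapsto u_{k}(T_{\la,\om})$ (already reduced to Fact \ref{Fact 18}) and the elementary observation that sequences $\om$ with $R_{\om}>2$ are dense in $\pmb{\Omega}_{M}$ when $M>2$.
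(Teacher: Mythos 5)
Your proof is correct and establishes exactly the claim. The $G_\delta$ part is more detailed than the paper's (which simply says ``immediate''), and your reduction to openness of each $\pmb{H}_{p,q}$ via joint continuity of $(\la,\om)\mapsto u_k(T_{\la,\om})$ and restriction to rational coefficient tuples is sound; the only implicit point is that both conditions defining $\pmb{\mathcal{O}}^{\om}_{U_p,2^{-q}}$ are open in the coefficient vector $(a_1,\dots,a_r)$, which is what lets you pass from $\C^r$ to $(\Q+i\Q)^r$. Where you diverge from the paper is in the density argument. The paper shows that the set $\{\om\in\Om_M\,;\,\pmb{G_\om}\text{ is dense in }\La\}$ is a dense $G_\delta$ subset of $\Om_M$ (density from the sequences eventually equal to $M$, $G_\delta$-ness being left implicit) and then invokes the Kuratowski--Ulam theorem to conclude that the $G_\delta$ set $\pmb{G}$ is comeager, hence dense. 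You instead run a direct density argument: given a basic box $V\times W$, produce $\om\in W$ with $R_\om>2$, apply Proposition \ref{Proposition 32} plus Baire in the fibre $\La$ to get $\pmb{G_\om}$ dense $G_\delta$, and pick $\la\in V\cap\pmb{G_\om}$. Your route avoids Kuratowski--Ulam entirely and the attendant (not entirely trivial) verification that $\{\om\,;\,\pmb{G_\om}\text{ dense}\}$ is $G_\delta$; since density of a $G_\delta$ is all that is needed, and since the paper's comeagerness conclusion for $\pmb{G}$ is redundant once $G_\delta$-ness is known, your argument is the more economical of the two. Both rest on the same essential ingredients (Proposition \ref{Proposition 32} and Baire category applied slicewise).
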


\begin{proof}[Proof of Claim \ref{Claim 33}]
The fact that $\pmb{G}$ is $\gd$ in $\La\times\Om_{M}$ is immediate. As to
its density, we first observe that since $M>2$, Proposition \ref{Proposition 32} implies that $\pmb{G_{\omega}}$ is dense in $\La$ 
as soon as $\omega _{j}=M$ for $j$ sufficiently large. It follows that the set 
$\{\om\in\Om_{M}\,;\,\pmb{G_{\omega}}\ \textrm{is dense in}\ \La\}$ is 
dense in $\Om_{M}$. Since this set is also clearly $\gd$ in $\Om_{M}$, it is thus comeager 
in $\Om_M$. The Kuratowski-Ulam theorem then implies that the $G_\delta$ set
$\pmb{G}$ is comeager and hence dense in $\La\times\Om_{M}$.
\end{proof}

It is now easy to show that the set $\pmb{\mathcal{E}}_{M}$ is comeager in 
$\La\times\Om_{M}$. Indeed, it follows from Lemma \ref{ERGOCRIT} and Fact \ref{link} that $T_{\la,\,\om}$ is ergodic as soon as 
$\pmb\lambda $ belongs to 
$\pmb{G_{\omega }}$ and the terms $\lambda _{k}$, $\gk$, of the sequence 
$\la$ are rationally independent. Hence the set
\[
\pmb{G}_{\textrm{ind}}=\bigl\{(\la,\,\om)\in\pmb{G}\,;\,\hbox{the unimodular numbers $\lambda_k,\;k\ge 1,$ are rationally independent}\bigr\}
\]
is contained in $\pmb{\mathcal{E}}_{M}$. Since the conditions that the numbers $\lambda _{k}$ should be rationally independent 
is easily seen to define a dense 
$\gd$ subset of $\La$, Claim \ref{Claim 33} implies that 
$\pmb{G}_{\textrm{ind}}$ is a dense $\gd$ subset of 
$\La\times\Om_{M}$. Hence $\pmb{\mathcal{E}}_{M}$ is 
comeager in $\La\times\Om_{M}$.
\par\smallskip\noindent
\textbf{Part 2.} \emph{The set $\pmb{\mathcal{D}}_{M}$ is comeager in 
$\La\times\Om_{M}$}.  

\smallskip
The argument here is much simpler, and relies solely 
on part $(2)$ of Lemma \ref{Lemma 31}. Observe that the set of all 
$\om\in\Om_{M}$ such that $R_{\,\om}=\liminf_{j\to\infty }
(\omega _{1}\dots\omega _{j})^{1/j}=0$ is $\gd$ in $\Om_{M}$, and that this set is 
dense in $\Om_M$ because it contains all sequences $\om\in\Om_M$ such that $\omega_{j}$ tends to $0$ as $j$ tends to infinity. 
Hence the set $\{(\la,\,\om)\in\La\times\Om_{M}\,;\,R_{\,\om}=0\}$ is 
$\gd$ and dense in $\La\times\Om_{M}$, and 
the claim follows.
\end{proof}

We now turn to the proof of Proposition \ref{Proposition 32}. Again, it 
relies on some arguments used in \cite{GM}, more precisely in the proof of \cite[Th.\,5.12]{GM}.

\begin{proof}[Proof of Proposition \ref{Proposition 32}] Let us fix 
$\te\in\La$ and $\varepsilon >0$. We are looking for an element $\la$ of 
$\pmb{\mathcal{O}}^{\,\om}_{U,\,\alpha }$ such that $\|\la-\te\|_{\infty }
<\varepsilon $. 
\par\smallskip
Writing $\te$ as $\te=(\theta _{k})_{k\ge 1}$, we fix $k_{0}\ge 
1$ such that $|\theta _{k}-1|<\varepsilon /3$ for every $k> k_{0}$. By assertion (1) of 
 Lemma \ref{Lemma 31}, the linear span of the vectors $E_{\,\te,\,\om}(\theta _{s})$, $s>k_0$, is dense in $\h$, so there exists an index 
 $Q\geq 1$ and complex numbers $b_{k_0+1},\dots ,b_{k_0+q}$ such that the vector
 \[
z:=\sum_{s=k_{0}+1}^{k_{0}+Q}b_{s}\,E_{\,\te,\,\om}(\theta _{s})
\textrm{ belongs to } U.
\]
We now proceed as in \cite{G} or \cite{GM}, 
and replace each coefficient $b_{s}$ by a 
certain sum of scalars of the form $\sum_{j=0}^{N-1}c_{s+jQ}$ with 
$\sum_{j=0}^{N-1}\,|c_{s+jQ}|^{2}$ sufficiently small. More 
precisely, we define
\[
c_{s+jQ}=\frac{1}N\, b_{s}\quad\textrm{and}\quad 
v_{s+jQ}=E_{\,\te,\,\om}(\theta_{s} )
\]
for every $k_{0}+1\le s\le k_{0}+Q$ and every $0\le j\le N-1$, where $N\ge 1$ is 
an integer so large that
\begin{equation}\label{vk1}
 \sum_{k=k_{0}+1}^{k_{0}+NQ}|c_{k}|^{2}\|v_{k}\|^{2} 
=\dfrac{1}{{N}}\, \sum_{s=k_{0}+1}^{k_{0}+Q}
|b_{s}|^{2}\ \|E_{\te,\,\om}(\theta _{s})\|^{2}<\alpha .
\end{equation}
Observe that $\sum\limits_{k=k_{0}+1}^{k_{0}+NQ}c_{k}v_{k}=z$ by construction, so that we have
\begin{equation}\label{vk2}
\sum\limits_{k=k_{0}+1}^{k_{0}+NQ}c_{k}v_{k}\in U.
\end{equation}
 
The next step in the proof is 
to define $\la\in\La$ with $\|\la-\te\|_{\infty }<\varepsilon $, in such 
a way that  each vector $v_{k}$, $k_{0}+1\le k\le k_{0}+NQ$, can be 
approximated  by the eigenvector $E_{\,\la,\,\om}(\lambda _{k})$ of 
$T_{\la,\,\om}$ associated to the eigenvalue $\lambda _{k}$, and the 
$\lambda _{k}$, $k_{0}+1\le k\le k_{0}+NQ$, are all distinct. The sequence $\la$ 
is defined as follows: we keep $\lambda _{k}=
\theta _{k}$ for every $1\le k\le k_{0}+Q$ and every $k>k_{0}+NQ$, so that in particular $\lambda_k\to 1$ as $k\to\infty$. For 
$k_{0}+1\le s\le k_{0}+Q$ and $1\le j\le N-1$, we choose $\lambda _{s+jQ}$ 
extremely close to $\theta _{s}$, in such a way that these new coefficients  $\lambda_k$ 
are all distinct and distinct from all the $\theta_k$. Since we already know that $\lambda_k\to 1$ as $k\to\infty$, the sequence $\la$ defined in this way belongs to $\La$. 
We can certainly ensure that $|\lambda _{s+j+1}-\theta 
_{s}|<\varepsilon $ for all $k_{0}+1\le s\le k_{0}+Q$ and $1\le j\le N-1$, 
and thus that
\[
|\lambda _{s+jQ}-\theta _{s+jQ}|<\varepsilon /3+|\theta _{s+jQ}-1| 
+|\theta 
_{s}-1 |<\varepsilon 
\] since $s>k_{0}$. All the remaining coefficients of $\la$  and $\te$ 
coincide, 
and hence $\|\la-\te\|_{\infty }<\varepsilon $.
\par\smallskip 
Let us now show that the quantities 
\[
\|E_{\,\la,\,\om}(\lambda _{s+jQ})-v_{s+jQ}\|=\|E_{\,\la,\,\om}(\lambda 
_{s+jQ})-E_{\,\te,\,\om}(\theta _{s})\|,
\]
where $k_{0}+1\le s\le k_{0}+Q$ and $0\le j\le N-1$, can be made as small as we 
wish, provided each coefficient 
 $\lambda _{s+jQ}$ is close enough to $\theta _{s}$. We 
consider separately two cases.
\par\smallskip
- Assume that $k_{0}+1\le s\le k_{0}+Q$ and $j=0$. In this case $\lambda _{s}=\theta _{s}$ by 
definition, so that $E_{\,\la,\,\om}(\lambda _{s})=E_{\,\te,\,\om}(\theta 
_{s} )$.
\par\smallskip 
- Assume now that $k_{0}+1\le s\le k_{0}+Q$ and $1\le j\le N-1$. In this case, we use the explicit 
expression of the eigenvectors provided by Fact \ref{Fact 30} and write
\begin{align*}
 E_{\,\la,\,\om}(\lambda 
_{s+jQ})-E_{\,\te,\,\om}(\theta _{s})=\sum_{n=2}^{s}\ 
\biggl[\ &\prod_{i=1}^{n-1}\biggl( \dfrac{\lambda _{s+jQ}-\lambda 
_{i}}{\omega _{i}}\biggr)-
\prod_{i=1}^{n-1}\biggl( \dfrac{\theta _{s}-\theta _{i}}{\omega 
_{i}}\biggr) \biggr]e_{n}\\
&+\sum_{n=s+1}^{s+jQ}\ \prod_{i=1}^{n-1}\biggl(\dfrac{\lambda 
_{s+jQ}-\lambda _{i}}{
\omega _{i}} 
\biggr)e_{n}.
\end{align*}
Fix an arbitrarily small number $\delta >0$. The integers $s$ and $j$ being 
fixed, and the numbers $\lambda_i$ and $\theta_{i}$ being by definition equal for every $1\le i\le k_{0}+Q$, the norm of the first sum in the above expression can be made less that 
$\delta /2$, provided that the difference $|\lambda _{s+jQ}-\theta _{s}|$ is sufficiently 
small. As for the second term, observe that $s$ belongs to the set
$\{1,\dots,n-1\}$ for every $s+1 \le n\le s+jQ$, so that the term 
$\lambda _{s+jQ}-\lambda _{s}=\lambda _{s+jQ}-\theta _{s}$ always appears 
in the product $\prod_{i=1}^{n-1}(\lambda _{s+jQ}-\lambda _{i})/\omega 
_{i}$. Thus if $|\lambda _{s+jQ}-\theta _{s}|$ is sufficiently small, the 
norm of the second term is less than $\delta /2$ too. Hence one can ensure 
that $\|E_{\,\la,\,\om}(\lambda 
_{s+jQ})-E_{\,\te,\,\om}(\theta _{s})\|<\delta $ for all $k_{0}+1\le s\le 
k_{0}+Q$ and $1\le j\le N-1$.
\par\smallskip 
So we have proved that for any $\delta >0$, one can construct $\la\in\La$ such that 
$\Vert\la-\pmb\theta\Vert_\infty<\varepsilon$ and $\|E_{\,\la,\,\om}(\lambda 
_{k})-v_{k}\|<\delta $ for every $k_{0}+1\le k\le k_{0}+NQ$. If $\delta$ is now chosen small enough, 
the 
two conditions
\[
\sum_{k=k_{0}+1}^{k_{0}+NQ}c_{k}\,E_{\,\la,\,\om}(\lambda _{k})\in 
U\quad\textrm{and}\quad 
\sum_{k=k_{0}+1}^{k_{0}+NQ}|c_{k}|^{2}\,\| 
E_{\la,\,\om}(\lambda _{k})\|^{2}<\alpha 
\]
simultaneously hold true, by (\ref{vk2}) and (\ref{vk1}) respectively. Remembering that each vector $u_{k}(T
_{\la,\,\om})$ is proportional to $E_{\la,\,\om}(\lambda _{k})$, we 
eventually obtain that there exist complex coefficients $a_{k}$,
$k_{0}+1\le k\le k_{0}+NQ$, such that 
\[
\sum_{k=k_{0}+1}^{k_{0}+NQ}a_{k}\,u_{k}(T_{\la,\,\om})\in 
U\quad\textrm{and}\quad 
\sum_{k=k_{0}+1}^{k_{0}+NQ}|a_{k}|^{2}\,\| 
u_{k}(T_{\la,\,\om})\|^{2}<\alpha .
\]
Hence $\la$ belongs to $\pmb{\mathcal{O}}^{\,\om}_{U,\alpha }$, and this 
concludes the proof of Proposition \ref{Proposition 32}.
\end{proof}

\begin{remark}\label{Remark 34}
 In order to show that the ergodic operators are comeager in 
$\La\times\Om_{M}$, 
we have used in a crucial way  the fact that the terms $\lambda _{k}$, 
$\gk$, of the sequences $\la\in\La$ involved in the proof are rationally 
independent. This is not so surprising in view, for instance, of 
\cite[Fact\,5.16]{GM}. Yet the role of independence in these issues 
remains rather mysterious. We develop this a little bit in the next 
subsection.
\end{remark}
\par\smallskip

\subsection{Additional remarks}\label{Subsection 2.4}
In this subsection, we present a short discussion of some questions motivated by the results obtained above, as well 
as some further results concerning dynamical properties of the ``diagonal plus shift'' \ops\ $T_{\la,\om}$.
\par\smallskip

\subsubsection{Some natural questions}\label{section machin}
Two such questions 
concerning the existence of ope\-ra\-tors on a Hilbert 
space with particular ergodic-theoretic-like properties remain unanswered at 
this stage of our work. The first one is

\begin{question}\label{Question A}
 Do there exist $\mathcal{U}$-frequently hypercyclic operators on $\h$ 
which are not frequently hypercyclic?
\end{question}

It was proved by Bayart and Rusza in \cite{BR} that such operators do 
exist on the space $c_{0}$, but the question was left open for 
Hilbert (or even reflexive) spaces. In the light of the discussion carried 
out in Section \ref{COMPLEXITY} of this paper, it seems natural to 
conjecture that the two classes $\ufhch$ and $\fhch$ should have different 
descriptive complexity, and \emph{hence} should be distinct; but we have been unable to 
solve the question using this approach. The second question runs as 
follows:

\begin{question}\label{Question B}
 Do there exist frequently hypercyclic operators on $\h$ which are not 
ergodic?
\end{question}

This question comes from \cite{GM}, where it is proved that 
frequently hypercyclic non-ergodic operators do exist on the space $c_{0}$. The 
proof of this result again relies on a construction of \cite{BR}: it is  
proved in \cite{GM} that the frequently hypercyclic bilateral 
weighted shifts $T$ on $c_{0}(\Z)$ defined in \cite{BR} satisfy $c(T)<1$, and thus 
cannot be ergodic. For some reasons, we found it rather tempting to try to attack
 this question by Baire category methods; but again, we did not succeed in this way.
\par\smallskip
We \emph{do} solve Questions \ref{Question A} and \ref{Question B} in Section \ref{SPECIAL} below, but using widely 
different methods. The operators we will use are generalizations of those 
introduced by the third named author in \cite{Me} in order to solve the 
question of the existence of a chaotic non-frequently hypercyclic 
operator. The two main results we will obtain are that indeed, there exist 
$\mathcal{U}$-frequently hypercyclic operators  on $\h$ which are not 
frequently hypercyclic, as well as frequently hypercyclic operators which 
are not ergodic. All our examples turn out to be chaotic; and we will 
complement these results by showing that there also exist on $\h$ operators which are chaotic and 
topologically mixing but not $\mathcal{U}$-frequently hypercyclic.
\par\smallskip
In another direction, the role of \emph{rational independence} in all that concerns the links between 
 unimodular eigenvalues and ergodicity properties of operators needs to be clarified. To be a little more specific,  let us consider the class $\tindh$ of operators in 
 $\tth$ whose diagonal coefficients are rationally independent. Rather surprisingly, it seems that very little is known concerning 
such operators. For example, to our knowledge the following question is open:

\begin{question}\label{Question 1}
 Let $T$ be a hypercyclic operator belonging to $\tindh$. Is $T$ 
necessari\-ly ergodic? frequently hypercyclic? $\mathcal{U}$-frequently hypercyclic?
\end{question}

The only currently known examples of hypercyclic operators with spanning 
unimodular eigenvectors which are not frequently hypercyclic or not ergodic are those constructed in \cite{Me} and 
the ones that will be considered in Section \ref{SPECIAL}. As already mentioned, all these operators are chaotic. Moreover, for many 
of these operators, the only unimodular eigenvalues are roots of unity and for some of those, each eigenvalue has multiplicity one. So these operators belong to $\tth$ for
some suitably chosen orthonormal basis $(e_k)_{k\ge 1}$, but not to $\tindh$ for any basis $(e_k)_{k\ge 1}$. A positive answer to Question \ref{Question 1} would show that this is not accidental, \mbox{\it i.e.} 
that 
there {must} be a strong amount of dependence between the eigenvalues of \emph{any} of the operators
constructed in \cite{Me} or in
Section 
\ref{SPECIAL} of the present paper. 
\par\smallskip
As a matter of fact, Question \ref{Question 1} seems to be open even for the operators 
 $T_{\la,\,\om}=D_{\la}+B_{\om}$ considered in Subsection \ref{Subsection 2.3}. Even more prosaically, it seems
quite desirable (and perhaps not too difficult) to determine when exactly an operator of the form $T_{\la,\om}$ is hypercyclic.

\subsubsection{More on the operators $T_{\la,\om}$} We finish this section by collecting some simple facts that we do know concerning 
operators of the form $T_{\la,\om}$. Let us introduce the following notations.
\par\smallskip
$\bullet$ If $\om$ is a unilateral weight sequence, we set $a_1(\om):=1$ and 
\[
a_j(\om):=\frac{1}{\omega_1\cdots \omega_{j-1}}\quad \textrm{for every}\; j\geq 2.
\]
\noindent Also, using the same notation as in Section \ref{Subsection 2.3} above, we denote by $R_{\om}$ the radius of convergence of the series $\sum a_j(\om) z^j$:
\[
R_{\om}=\liminf_{j\to\infty}\,  \vert \omega_1\cdots \omega_{j-1}\vert^{1/j}.
\]
\par\smallskip
$\bullet$ To each $\la\in\T^\N$, we associate polynomials $P_{\la, j}$, $j\geq 1$, defined as follows: $P_{\la,1}(\lambda)\equiv 1$ and 
\[
P_{\la,j}(\lambda)=\prod_{k=1}^{j-1} (\lambda-\lambda_k)\quad \textrm{ for every } j\geq 2.
\]

\begin{proposition} Let $\om$ be a unilateral weight sequence, and let $\la\in\T^\N$.
Then the following facts hold true.
\begin{enumerate}[\rm (1)]
\item[\rm(0)]   A complex number $\lambda$ is an eigenvalue of $T_{\la,\om}$ if and only if
\[
\sum_{j=1}^\infty \vert a_j(\om)\vert^2\, \vert P_j(\lambda)\vert^2<\infty.
\]
In this case, $\ker(T-\lambda)={\rm span}\,[E_{\la,\om}(\lambda)]$ where
\[
E_{\la,\om}(\lambda)=\sum_{j=1}^\infty a_j(\om) P_{\la,j}(\lambda)\, e_j.
\]
\par\smallskip
\item If $R_{\om}=0$, then $\sigma_p(T_{\la,\om})\subseteq\overline{\{ \lambda_k;\; k\geq 1\}}$.
\par\smallskip
\item If $\sigma (B_{\om})=\{ 0\}$ (\mbox{e.g.} if $\omega_j\to 0$ as $j\to\infty$), then 
$\sigma (T_{\la,\om})\subseteq \overline{\{ \lambda_k;\; k\geq 1\}}$. More generally, $\sigma(T_{\la,\om})$ is contained 
in the set $\{ \lambda\in\C;\; {\rm dist}\,(\lambda, \{ \lambda_k\})\leq r(B_{\om})\}$, where $r(B_{\om})$ denotes the spectral radius of $B_{\om}$.
\end{enumerate}
\par\smallskip
\noindent Assume now that $\la$ belongs to $\La$, \mbox{\it i.e.} that the elements $\lambda_k$, $k\ge 1$, are pairwise distinct and that $\lambda_k\to 1$ as $k\to\infty$. 
\par\smallskip
\begin{enumerate}
\item[\rm(3)]
If $R_{\om}> 0$, then $\sigma_p(T_{\la,\om})$ contains $\{ \lambda_k;\; k\geq 1\}\cup D(1,R_{\om})$ and is contained in 
$\{ \lambda_k;\; k\geq 1\}\cup\overline D(1,R_{\om})$.
\par\smallskip
\item[\rm(4)] 
If $R_{\om}> \sup_{k\ge 1} \vert\lambda_k-1\vert$, then $T_{\la,\om}$ is ergodic in the
 Gaussian sense. Moreover, the map $\lambda\mapsto E_{\la,\om}(\lambda)$ is analytic on the open disk $D(0,R_{\om}-1)$.
\par\smallskip
\item[\rm(5)] If $R_{\om}< \sup_{k\ge 1} \vert \lambda_k-1\vert$, then $T_{\la,\om}$ is not hypercyclic.
\par\smallskip
\item[\rm(6)] If $R_{\om}=\sup_{k\ge 1} \vert \lambda_k-1\vert$, let $k_0$ be the largest integer $k\ge 1$ such that 
$\vert \lambda_k-1\vert\geq R_{\om}$. If
\[\sum_{j\geq k_0+1} 
\left\vert\frac{\omega_1\cdots \omega_{j-1}}{(\lambda_{k_0}-\lambda_{k_0+1})\cdots (\lambda_{k_0}-\lambda_j)}\right\vert^2<\infty,
\] 
then $T_{\la,\om}$ is not ergodic in the Gaussian sense.
\end{enumerate}
\end{proposition}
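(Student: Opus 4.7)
The plan is to handle (0)--(6) sequentially, with (0) providing the computational engine. For (0), I solve the eigenvalue equation $T_{\la,\om}x=\lambda x$ directly: writing $x=\sum_{j\geq 1} x_j e_j$, the recursion $(\lambda-\lambda_j)x_j=\omega_j x_{j+1}$ normalized by $x_1=1$ yields $x_j=a_j(\om)P_{\la,j}(\lambda)$, so $E_{\la,\om}(\lambda)$ is the unique formal eigenvector; it lies in $\h$ iff $\sum|a_j(\om)P_{\la,j}(\lambda)|^2<\infty$, and since the solution is unique up to a scalar the eigenspace is one-dimensional. For (1), if $R_{\om}=0$ and $\lambda\notin\overline{\{\lambda_k\}}$, then $\delta:=\textrm{dist}(\lambda,\{\lambda_k\})>0$ gives $|P_{\la,j}(\lambda)|\geq\delta^{j-1}$; the hypothesis $R_{\om}=0$ forces $(\omega_1\cdots\omega_{j-1})^{1/j}\to 0$ along some subsequence, making $\delta^{j-1}/|\omega_1\cdots\omega_{j-1}|$ unbounded there and the series diverge. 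For (2), I factor $T_{\la,\om}-\lambda=(D_{\la}-\lambda)(I+K)$ with $K:=(D_{\la}-\lambda)^{-1}B_{\om}$ when $\lambda\notin\{\lambda_k\}$; computing iterates of the weighted backward shift $K$ gives $\|K^n\|\leq\|B_{\om}^n\|/d^n$ with $d:=\textrm{dist}(\lambda,\{\lambda_k\})$, so $r(K)\leq r(B_{\om})/d$ and $I+K$ is invertible whenever $d>r(B_{\om})$; the special case $r(B_{\om})=0$ yields $\sigma(T_{\la,\om})\subseteq\overline{\{\lambda_k\}}$.

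Part (3) combines (0) with the observation that $R_{\om}$ is also the radius of convergence of $\sum|a_j(\om)|^2 z^{2j}$. For $\lambda\in D(1,R_{\om})$, the convergence $\lambda_k\to 1$ gives $|\lambda-\lambda_k|\leq r$ eventually for some $r<R_{\om}$, hence $|P_{\la,j}(\lambda)|\lesssim r^j$ and $\sum|a_j(\om)P_{\la,j}(\lambda)|^2<\infty$; conversely, if $|\lambda-1|>R_{\om}$ and $\lambda\notin\{\lambda_k\}$, then $|\lambda-\lambda_k|\geq r>R_{\om}$ for large $k$, so $|a_j(\om)P_{\la,j}(\lambda)|^2\gtrsim (r/R_{\om})^{2j}$ along the liminf subsequence where $|\omega_1\cdots\omega_{j-1}|^{1/j}$ approaches $R_{\om}$, forcing blow-up and divergence. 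For (4), under $R_{\om}>\sup|\lambda_k-1|$, setting $\varepsilon:=R_{\om}-\sup|\lambda_k-1|>0$ gives $|\lambda-\lambda_k|<R_{\om}$ uniformly on compact subsets of $D(1,\varepsilon)$; the series defining $E_{\la,\om}(\lambda)$ then converges uniformly there, so $\lambda\mapsto E_{\la,\om}(\lambda)$ is analytic on the open disk $D(1,\varepsilon)$, which meets $\T$. Since the eigenvectors $E_{\la,\om}(\lambda_k)$ are proportional to $u_k(T_{\la,\om})$ and the latter span $\h$, Fact \ref{holo} yields Gaussian mixing and hence Gaussian ergodicity. The assertion that $\lambda\mapsto E_{\la,\om}(\lambda)$ is analytic on $D(0,R_{\om}-1)$ is then exactly the content of Lemma \ref{Lemma 31}(1), applicable as soon as $R_{\om}>1$.

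For (6), a direct computation identifies the stated sum, up to the nonzero multiplicative constant $|\omega_1\cdots\omega_{k_0-1}|^2$, with $\sum_{k>k_0}|y^{(k_0)}_k|^2$, where $y^{(k_0)}_k:=0$ for $k<k_0$, $y^{(k_0)}_{k_0}:=1$, and $y^{(k_0)}_k:=\prod_{l=k_0+1}^{k}\omega_{l-1}/(\bar\lambda_{k_0}-\bar\lambda_l)$ for $k>k_0$; since $T_{\la,\om}^*$ is lower-triangular with $T_{\la,\om}^* e_l=\bar\lambda_l e_l+\omega_l e_{l+1}$, solving $T_{\la,\om}^*y=\bar\lambda_{k_0}y$ with $y_k=0$ for $k<k_0$ yields precisely this $y^{(k_0)}$. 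Hence the hypothesis in (6) is equivalent to $\bar\lambda_{k_0}\in\sigma_p(T_{\la,\om}^*)$, and the classical fact that a hypercyclic operator has no eigenvalue for its adjoint forces $T_{\la,\om}$ to be non-hypercyclic, and \textit{a fortiori} not Gaussian-ergodic. For (5), I pick $k_0$ with $|\lambda_{k_0}-1|>R_{\om}$ and aim to show that $\lambda_{k_0}$ belongs to an isolated compact component of $\sigma(T_{\la,\om})$: by (3) the point spectrum near $\lambda_{k_0}$ reduces to $\{\lambda_{k_0}\}$ (other $\lambda_k$ cluster near $1$), and one then controls the full spectrum near $\lambda_{k_0}$ via the factorisation $T_{\la,\om}-\mu=(D_{\la}-\mu)(I+K)$ from (2). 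Once isolation is established, the Riesz functional calculus produces a non-trivial $T_{\la,\om}$-invariant subspace with isolated spectrum disjoint from the rest of $\sigma(T_{\la,\om})$, contradicting hypercyclicity via the classical principle that the spectrum of a hypercyclic operator has no isolated points. The hard part is precisely (5): the spectral radius $r(B_{\om})$ controlling the fattening in (2) is only known to satisfy $r(B_{\om})\geq R_{\om}$ (the reverse of what one would want, and coming easily from $\|B_{\om}^n\|\geq|\omega_1\cdots\omega_n|$ with no matching upper bound in general), so bridging the gap between $\sigma_p$ and $\sigma$ near $\lambda_{k_0}$ requires finer information---probably exploiting that the ``blow-up'' weight $\omega_{k_0}/(\lambda_{k_0}-\mu)$ of $K=(D_{\la}-\mu)^{-1}B_{\om}$ at $\mu\to\lambda_{k_0}$ is localised at a single index and does not influence the asymptotic product growth that governs $r(K)$.
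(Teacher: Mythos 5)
Your treatment of (0)--(3) matches the paper's own argument (solve the eigenvalue recursion, estimate $|P_{\la,j}(\lambda)|$ from below or above using $\mathrm{dist}(\lambda,\{\lambda_k\})$, and factor $T_{\la,\om}-\lambda$ as $D_\xi(I+B_{\xi^{-1}\om})$ for (2)). In (4) the paper's argument is on the disk $D(1,R_{\om})$, and you should use that disk rather than $D(1,\varepsilon)$ with $\varepsilon=R_{\om}-\sup_k|\lambda_k-1|$: the points $\lambda_k$ all lie in $D(1,R_{\om})$ (this is exactly the hypothesis $R_{\om}>\sup_k|\lambda_k-1|$), but there is no reason for them to lie in the possibly much smaller disk $D(1,\varepsilon)$, so your appeal to ``$E_{\la,\om}(\lambda_k)\propto u_k$'' requires the bigger $\Omega$. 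That is a small, fixable slip.

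Your proof of (6) is a genuinely different and in fact stronger route than the paper's. The paper shows that the unimodular eigenvectors do not \emph{perfectly} span by exhibiting a non-trivial $y\perp E_{\la,\om}(\lambda)$ for all $\lambda\in D(1,R_{\om})$, concluding $T_{\la,\om}\notin\textrm{PSPAN}(\h)=\textrm{G-ERG}(\h)$. You instead recognize this $y$ (after conjugation and a harmless scalar) as the formal solution of $T_{\la,\om}^*y=\bar\lambda_{k_0}y$, so that the finiteness hypothesis in (6) is precisely the statement $\bar\lambda_{k_0}\in\sigma_p(T_{\la,\om}^*)$, hence $T_{\la,\om}$ is not even hypercyclic. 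This is cleaner and yields more. Both vectors are the same up to the constant $\omega_1\cdots\omega_{k_0-1}$, so the two arguments are secretly dual to each other.

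The genuine gap is (5), and you correctly diagnose it. You cannot locate $\lambda_{k_0}$ in the full spectrum by iterating the factorization from (2), because the bound $r(K)\le r(B_{\om})/\mathrm{dist}(\mu,\{\lambda_k\})$ is useless when $r(B_{\om})$ is large, and $r(B_{\om})$ is only bounded below by $R_{\om}$, not above. You also invoke ``the spectrum of a hypercyclic operator has no isolated points'', which is false as stated: an isolated point on $\T$ is perfectly compatible with every connected component of $\sigma(T)$ meeting $\T$. The correct obstruction is finer: if $\lambda_{k_0}$ is an isolated point of $\sigma(T)$ lying outside the essential spectrum, the associated Riesz projection has finite rank, its adjoint gives a finite-dimensional $T^*$-invariant subspace, and therefore $T^*$ has an eigenvalue -- contradicting hypercyclicity. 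The paper's proof is exactly this line of reasoning: it notes from (3) that $\lambda_{k_0}$ is isolated among the eigenvalues, asserts (``easily seen'') that $\lambda_{k_0}\notin\sigma_{\rm ess}(T_{\la,\om})$ -- for instance by decomposing $T_{\la,\om}$ in block upper-triangular form with respect to the invariant subspace ${\rm span}[e_1,\dots,e_{k_0}]$ -- and then appeals to Herrero's result [Her2]. The step you cannot close (isolation in $\sigma(T)$, not just $\sigma_p(T)$) is precisely what the essential-spectrum argument delivers: on $\C\setminus\sigma_{\rm ess}(T)$, away from the eigenvalues, $T-\mu$ has index $0$ and trivial kernel, hence is invertible, so $\lambda_{k_0}$ is in fact isolated in $\sigma(T)$.
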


\begin{proof} (0) This is a simple computation which has already been carried out in the proof of Fact \ref{Fact 30}.
\par\smallskip
(1)  If $\lambda\in\C$ does not belong to the closure of  $\{ \lambda_k;\;\; k\geq 1\}$, then 
$\inf_{k\ge 1} \vert\lambda-\lambda_k\vert=\delta >0$, and thus $\vert P_j(\lambda)\vert\geq \delta^{j-1}$ for all $j\geq 1$. If $R_{\om}=0$, assertion (0) implies that $\lambda$ is not an eigenvalue of $T_{\la,\om}$.
\par\smallskip
(2) Let $\lambda\in\C$. We have $T_{\la,\om}-\lambda I=D_{\pmb\xi}+B_{\om}$, where $\pmb\xi=(\xi_{k})_{k\ge 1}$ is defined by $\xi_k=\lambda_k-\lambda$ for every $k\geq 1$. 
If $\delta:={\rm dist}\,(\lambda, \{ \lambda_k\})>0$, then $D_{\pmb\xi}$ is invertible and $T_{\la,\om}-\lambda I$ can be written as
\[
T_{\la,\om}-\lambda I=D_{\pmb\xi}(I+D_{\pmb\xi}^{-1}B_{\om})=D_{\pmb\xi}(I+B_{\pmb\xi^{-1}\om}),
\]
where $\pmb\xi^{-1}\om=(\xi_{k}^{-1}\om_{k})_{k\ge 1}$.
Moreover, since $\sup_{k\ge 1} \xi_k^{-1}=1/\delta$, the spectral radius of $B_{{\pmb\xi}^{-1}{\pmb\om}}$ is at most $r(B_{\om})/\delta$. 
It follows that $T_{\la,\om}-\lambda I$ is invertible as soon as $\delta>r(B_{\om})$.
\par\smallskip
(3) We already know by assertion (0) that $\sigma_p(T_{\la,\om})$ contains $\{\lambda_k\;;\; k\geq 1\}$. Now, let $\lambda$ belong to the open disk $ D(1,R_{\om})$, and 
choose $r$ such that $\vert \lambda-1\vert<r<R_{\om}$. Since $\lambda_k\to 1$, there exists $k_{0}\ge 1$ such that $\vert \lambda-\lambda_k\vert<r$ for all $k\geq k_0$. It follows that there exists a constant $C>0$ such that 
$\vert P_j(\lambda)\vert\leq C\, r^j$ for every $j\geq 1$, so that $E_{\la,\om}(\lambda)$ is well-defined and analytic on $ D(1,R_{\om})$.
\par\smallskip
Conversely, assume that $\lambda$ does not belong to the set $\{ \lambda_k\;;\; k\geq 1\}\cup\overline D(1,R_{\om})$. Since $\lambda_k\to 1$, one can find $r>R_{\om}$ 
such that $\vert \lambda-\lambda_k\vert\geq r$ for $k$ sufficiently large; and since $\lambda\neq \lambda_k$ for all $k\ge 1$, 
it follows that there exists a constant $c>0$ such that 
$\vert P_j(\lambda)\vert\geq c\, r^j$ for every $j\geq 1$. Hence $E_{\la,\om}(\lambda)$ is not well-defined.
\par\smallskip
 (4) By assertion (3), the map $\lambda\mapsto E_{\la,\om}(\lambda)$ is analytic on the disk $D(1,R_{\om})$; so it is 
 enough to show that the vectors $E_{\la,\om}(\lambda)$, $\lambda\in D(1,R_{\om})$, span a dense subspace of  $\h$. 
 Let $y=\sum_{j\ge 1}y_j e_{j}$ be a vector of $\h$ which is orthogonal to all the vectors $E_{\la,\om}(\lambda)$, $\lambda\in D(1,R_{\om})$. We thus have
\[
\sum_{j=1}^\infty \bar y_j a_j(\om) P_{\la,j}(\lambda)=0\qquad\hbox{for every $\lambda\in D(1,R_{\om})$}.
\]
The  series above is convergent on the disk $D(1,R_{\om})$, which contains all the points $\lambda_k$ by assumption. So 
\[
\sum_{j=1}^\infty \bar y_j a_j(\om) \, P_{\la,j}(\lambda_k)=0\qquad\hbox{for every $k\geq 1$}.
\]
Since $P_{\la,j}(\lambda_k)=0$ whenever $j>k$ and $P_{\la,k}(\lambda_k)\neq 0$, it follows that $\bar y_j a_j(\om)=0$ for every $j\ge 1$, so that $y=0$.
\par\smallskip
(5) Let $k_{0}\ge 1$ be such that $|\lambda_{k_{0}}-1|>R_{\om}$. Then $\lambda_{k_{0}}$ is an isolated \eva\ of $T_{\la,\om}$ by assertion (3), which is easily seen not to belong to the essential spectrum of $T_{\la,\om}$. It then follows from  \cite{Her2} that $T_{\la,\om}$ cannot be \hy.
\par\smallskip
(6) By assertion (1), we can assume that $R_{\om}>0$, since otherwise the point spectrum $\sigma_p(T_{\la,\om})$ of $T_{\la,\om}$ is countable, and $T_{\la,\om}$ is certainly not ergodic in the Gaussian sense in this case.
It suffices to show that the unimodular eigenvectors of $T_{\la,\om}$ are not perfectly spanning. Since $(\sigma_p(T)\cap\T)\setminus D(1,R_{\om})$ 
is a finite set by assertion (3), it is in turn enough to show that the vectors $E_{\la,\om}(\lambda)$, $\lambda\in D(1,R_{\om})$, do not span a dense subspace of 
$\h$. So we have to find coefficients $c_j$, $j\ge 1$, with the property that
\[
\sum_{j=1}^\infty c_j P_{\la,j}(\lambda)=0\qquad\hbox{for every $\lambda\in D(1,R_{\om})$},
\]
with the additional requirement that
\begin{equation}\label{ell2}
\sum_{j=1}^\infty \left\vert\frac{c_j}{a_j(\om)}\right\vert^2<\infty,\quad \textrm{\mbox{\it i.e.} that }\quad
\sum_{j\ge 2} \vert c_j\, \omega_1\cdots \omega_{j-1}\vert^2<\infty.
\end{equation}
We define the coefficients $c_j$ by setting $c_{j}=0$ for every $1\le j<k_{0}$, $c_{k_{0}}=1$ and 
\[
 c_{j}=\prod_{i=k_{0}+1}^{j}(\lambda_{k_0}-\lambda_{i})^{-1}\quad\textrm{ for every }j>k_{0}.
\]
Then condition  (\ref{ell2}) is satisfied by assumption. Moreover, the coefficients $c_j$ have been defined in such a way that 
\[\sum_{j=1}^N c_j P_{\la,j}(\lambda)=
P_{\la, k_0}(\lambda)\,\cdot\prod_{k=k_0+1}^{N} \left(\frac{\lambda-\lambda_k}{\lambda_{k_0}-\lambda_k}\right)
\qquad\hbox{for all $N\geq k_0+1$}.
\]
Since $\vert\lambda_{k_0}-1\vert\geq R_{\om}$ and $\lambda_k\to 1$ as $k\to\infty$, it follows that $\sum_{j=1}^\infty c_j P_{\la,j}(\lambda)=0$ for every $\lambda$ such that $\vert \lambda-1\vert<R_{\om}$. This completes the proof of assertion (6).
\end{proof}
\par\smallskip

\section{Periodic points at the service of hypercyclicity}\label{CRITERIA}

In this section, we depart from our standing assumption and work in the general context of Banach spaces over $\K=\R$ or $\C$, 
not restricting ourselves to the Hilbertian setting. Actually, most of our results hold in the framework of  arbitrary 
Polish topological vector spaces, as should be clear from the 
proofs.
\par\smallskip
Let thus $X$ be a real or complex separable Banach space. 
Our aim is to obtain general sufficient conditions for an operator $T\in\mathfrak B(X)$ to be frequently 
hypercyclic or $\mathcal U$-frequently hypercyclic. These criteria are rather different from the classical ones, since they involve 
the \emph{periodic points} of the operator $T$. They will greatly contribute to simplify some proofs in the next section, and  
they might hopefully be useful elsewhere also.
For any bounded 
operator $T$ on $X$, we 
denote by $\textrm{Per}(T)$ the set of its periodic points, and 
by $\textrm{per}_T(x)$, or simply $\textrm{per}(x)$, the period of a periodic point $x$ of 
$T$.
\par\smallskip

\subsection{Precompact orbits and topological mixing}
We start by giving a very simple criterion for  an operator $T\in\mathfrak B(X)$ to be 
topologically mixing, which involves
the points of $X$ with \emph{precompact orbit} under the action of $T$. We denote by $\text{Prec}(T)$ the set of points with precompact orbit.

\begin{proposition}~\label{Prop mix fin}
Let $T\in\mathfrak B(X)$. Assume that ${\rm Prec}(T)$ is dense in $X$ and that for every neighborhood $W$ of $0$ in $X$ and  every non-empty open subset $V$ of $X$, the set $\mathcal N_T(W,V)$ is cofinite. Then $T$ is topologically mixing.
\end{proposition}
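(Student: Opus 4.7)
The plan is to unwind the definition of topological mixing directly: given non-empty open sets $U,V\subseteq X$, I will exhibit a finite set $F\subseteq\N$ with $n\in\mathcal N_T(U,V)$ for every $n\notin F$. The idea is that although, in order to get $T^n u\in V$ with $u=x+w$ close to some fixed $x\in U$, the ``target'' for $T^n w$ is the translate $V-T^n x$, which changes with $n$, precompactness of the orbit of $x$ ensures that this target always sits in a compact family of open sets. A finite open cover then reduces the problem to finitely many instances of the hypothesis.

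Concretely, pick $x\in U\cap\mathrm{Prec}(T)$ using density, and let $K:=\overline{\{T^n x:n\ge 0\}}$, which is compact. Choose $\eta>0$ with $x+B(0,\eta)\subseteq U$, pick $v\in V$ and $\varepsilon>0$ with $v+B(0,2\varepsilon)\subseteq V$, and set $W:=B(0,\eta)$, $W_0:=B(0,\varepsilon)$. By compactness of $K$, cover it by finitely many translates $y_1+W_0,\dots,y_N+W_0$ with $y_i\in K$. For each $i=1,\dots,N$, apply the hypothesis to the neighborhood $W$ of $0$ and the non-empty open set $V_i:=v-y_i+W_0$: there is a finite set $F_i\subseteq\N$ such that $T^n(W)\cap V_i\neq\emptyset$ for every $n\notin F_i$.

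Set $F:=\bigcup_{i=1}^N F_i$. For every $n\notin F$, choose an index $i$ with $T^n x\in y_i+W_0$ and pick $w\in W$ with $T^n w\in V_i$. Writing $T^n x=y_i+w_0$ and $T^n w=v-y_i+w_0'$ with $w_0,w_0'\in W_0$, we obtain
\[
T^n(x+w)=T^n x+T^n w=v+w_0+w_0'\in v+B(0,2\varepsilon)\subseteq V,
\]
while $x+w\in x+W\subseteq U$. Hence $n\in\mathcal N_T(U,V)$ for every $n\notin F$, so $\mathcal N_T(U,V)$ is cofinite, and since $U,V$ were arbitrary non-empty open sets, $T$ is topologically mixing. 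The only real obstacle is the bookkeeping in the second paragraph: one has to set up the cover of $K$ and the translated targets $V_i$ so that the same $W$ works uniformly, and so that the addition $T^n x+T^n w$ lands in $V$ by a triangle-inequality type argument. Once this is arranged (note that in a general Polish topological vector space one simply replaces $W_0$ by a balanced neighborhood of $0$ satisfying $W_0+W_0\subseteq V_0$ for some $V_0$ with $v+V_0\subseteq V$), the proof is a straightforward application of the hypothesis.
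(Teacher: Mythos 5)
Your proof is correct and follows essentially the same approach as the paper's: pick a point with precompact orbit in the source open set, cover the orbit closure by finitely many $\varepsilon$-balls centered at $y_1,\dots,y_N$, apply the hypothesis to the translated target sets $v-y_i+W_0$ to get finitely many exceptional indices, and for all other $n$ add a small correction $w$ to land in the target. The only cosmetic difference is that the paper first reduces to showing $\mathcal N_T(B(y,\varepsilon),B(x,\varepsilon))$ is cofinite, while you work directly with general $U,V$; the substance is identical.
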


\begin{proof} It suffices to show that for any vectors $y\in \text{Prec}(T)$, $x\in X$, and any $\varepsilon>0$, the set $\mathcal N_T\bigl(B(y,\varepsilon),B(x,\varepsilon)\bigr)$ is cofinite. 
Since $y$ is a vector with precompact orbit, there exists a finite subset $\{y_1,\dots,y_d\}$ of $X$ such that any point of ${\rm Orb}(T,y)$ lies within distance less than $\varepsilon/2$ of the set $\{y_1,\dots,y_d\}$. In other words, there exists for
any $n\ge 1$ an index $i_n\in\{ 1,\dots ,d\}$ such that $\Vert T^ny-y_{i_n}\Vert<\varepsilon/2$. 

By assumption, each set $\mathcal N_T\bigl(B(0,\varepsilon), B(x-y_i,\varepsilon/2)\bigr)$ is cofinite. Hence, 
one can find an integer $N$ such that for every $n\ge N$ and every $1\le i\leq d$, there exists $z_{n,i}\in X$ 
such that 
\[\|z_{n,i}\|<\varepsilon\qquad{\rm and}\qquad \|T^nz_{n,i}-(x-y_i)\|<\varepsilon/2.\]
For every $n\geq N$, we then have
\[\|T^n(y+z_{n,i_n})-x\|\leq \Vert T^ny-y_{i_n}\Vert +\|T^nz_{n,i_n}-(x-y_{i_n})\|<\varepsilon.\]
Hence $y+z_{n,i_n}$ belongs to $ B(y,\varepsilon)$ and $T^n(y+z_{n,i_n})$ belongs to $ B(x,\varepsilon)$, which shows that every integer $n\geq N$ lies in the set $\mathcal N_T\bigl((B(y,\varepsilon),B(x,\varepsilon)\bigr)$.
\end{proof}

Since periodic points have a finite orbit, Proposition \ref{Prop mix fin} immediately implies the following result.

\begin{corollary} Let $T\in\mathfrak B(X)$. 
If ${\rm Per}(T)$ is dense in $X$ and if for every neighborhood $W$ of $0$ in $X$ and  every non-empty open subset $V$ of $X$, the set $\mathcal N_T(W,V)$ is cofinite, then $T$ is chaotic and topologically mixing.
\end{corollary}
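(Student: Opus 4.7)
The plan is to observe that this corollary follows from Proposition~\ref{Prop mix fin} essentially as stated in the final sentence preceding it. First I would note that any periodic point has a finite (hence precompact) orbit, so $\mathrm{Per}(T)\subseteq\mathrm{Prec}(T)$; density of $\mathrm{Per}(T)$ therefore yields density of $\mathrm{Prec}(T)$. Combined with the assumption that $\mathcal N_T(W,V)$ is cofinite for every neighborhood $W$ of $0$ and every non-empty open $V\subseteq X$, Proposition~\ref{Prop mix fin} applies and gives that $T$ is topologically mixing.

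It then remains to upgrade ``topologically mixing'' to ``chaotic''. Since $X$ is a separable Banach space (so in particular a Polish space with no isolated points), topological mixing implies topological transitivity, which in turn implies hypercyclicity via Birkhoff's transitivity theorem. Together with the hypothesis that $\mathrm{Per}(T)$ is dense in $X$, this is exactly the definition of a chaotic (in Devaney's sense) operator recalled in Subsection~2.1.2.

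There is no real obstacle here: the statement is a direct packaging of Proposition~\ref{Prop mix fin} together with the trivial inclusion $\mathrm{Per}(T)\subseteq\mathrm{Prec}(T)$ and the definition of chaos. The only point worth making explicit in the write-up is the passage from topological mixing to hypercyclicity, which is standard but deserves a one-line justification.
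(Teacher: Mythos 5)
Your proof is correct and matches the paper's intended (one-line) argument: periodic orbits are finite hence precompact, so Proposition~\ref{Prop mix fin} gives topological mixing, and mixing plus dense periodic points gives chaos. Nothing to add.
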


We also obtain in a similar fashion.

\begin{corollary} Let $T\in\mathfrak B(X)$. If there exists a dense set of points $x\in X$ such that $T^ix\to 0$ as $i\to\infty$ and if for every neighborhood $W$ of $0$ in $X$ and  every non-empty open subset $V$ of $X$, the set $\mathcal N_T(W,V)$ is cofinite, then $T$ is topologically mixing.
\end{corollary}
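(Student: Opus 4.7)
The plan is to derive this corollary as an immediate consequence of Proposition \ref{Prop mix fin}. The assumption on $\mathcal N_T(W,V)$ being cofinite is already exactly one of the two hypotheses of that proposition, so the only work is to verify that the other hypothesis---density of $\mathrm{Prec}(T)$ in $X$---follows from the assumption that there is a dense set of vectors $x\in X$ with $T^i x\to 0$ as $i\to\infty$.

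First I would observe that any vector $x\in X$ such that $T^i x\to 0$ as $i\to\infty$ automatically has precompact orbit under $T$. Indeed, the set $\mathrm{Orb}(T,x)=\{T^i x : i\ge 0\}$ is contained in the compact subset $\{T^i x : i\ge 0\}\cup\{0\}$ of $X$, which is the union of a convergent sequence with its limit and hence compact. Therefore $x\in\mathrm{Prec}(T)$. Consequently, the dense set of such vectors is entirely contained in $\mathrm{Prec}(T)$, so $\mathrm{Prec}(T)$ is itself dense in $X$.

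With both hypotheses of Proposition \ref{Prop mix fin} verified, that proposition directly yields that $T$ is topologically mixing, completing the proof. There is no real obstacle here: the statement is essentially a restatement of the previous corollary with ``points with finite orbit'' replaced by ``points whose orbit tends to $0$'', and both classes are clearly contained in $\mathrm{Prec}(T)$.
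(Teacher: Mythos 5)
Your argument is correct and matches the paper's intended route: the paper states this corollary as obtained ``in a similar fashion'' from Proposition \ref{Prop mix fin}, and the only point to check is precisely the one you verify, namely that an orbit converging to $0$ is precompact (being a convergent sequence, its closure is that sequence together with the limit $0$, hence compact). Nothing is missing.
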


Finally,  the linearity assumption on the system $(X,T)$ implies the following strengthened version of Proposition~\ref{Prop mix fin}.

\begin{corollary}\label{prop mixing simple} Let $T\in\mathfrak B(X)$. Assume that ${\rm Prec}(T)$ is dense in $X$ and that there exists a subset $X_0$ of $X$ with dense linear span such that the following property holds true: 
for every $x_0\in X_0$ and every $\varepsilon>0$, there exists $N\ge 1$ such that for every $n\ge N$, one can find $z\in X$ such that $\|z\|<\varepsilon$ and $\|T^nz-x_0\|<\varepsilon$. 
Then $T$ is topologically mixing.
\end{corollary}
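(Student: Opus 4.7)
The strategy is to reduce Corollary \ref{prop mixing simple} to Proposition \ref{Prop mix fin}, which has already been established. The assumption about precompact orbits being dense is common to both statements, so the only thing that needs to be verified is that for every neighborhood $W$ of $0$ in $X$ and every non-empty open subset $V$ of $X$, the set $\mathcal N_T(W,V)$ is cofinite.

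The first step is to upgrade the approximation property from $X_0$ to the linear span $\text{span}(X_0)$, which is dense in $X$ by hypothesis. Given $x\in\text{span}(X_0)$, write $x=\sum_{i=1}^k\alpha_i x_{0,i}$ with $x_{0,i}\in X_0$. For an arbitrary $\eta>0$, apply the hypothesis to each $x_{0,i}$ with tolerance $\eta/(k\max_i|\alpha_i|+1)$, obtaining integers $N_i\geq 1$ such that for every $n\ge N_i$ there exists $z_i\in X$ with $\|z_i\|$ and $\|T^nz_i-x_{0,i}\|$ smaller than that tolerance. Setting $N:=\max_i N_i$ and $z:=\sum_{i=1}^k\alpha_iz_i$, linearity of $T^n$ gives
\[
\|z\|\le\sum_{i=1}^k|\alpha_i|\,\|z_i\|<\eta\quad\text{and}\quad\|T^nz-x\|\le\sum_{i=1}^k|\alpha_i|\,\|T^nz_i-x_{0,i}\|<\eta
\]
for every $n\ge N$. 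Thus the approximation property holds verbatim for every $x\in\text{span}(X_0)$.

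For the second step, take an arbitrary neighborhood $W$ of $0$ and an arbitrary non-empty open set $V\subseteq X$. Pick $\varepsilon>0$ with $B(0,\varepsilon)\subseteq W$, choose a point $x\in V\cap\text{span}(X_0)$ (possible since $\text{span}(X_0)$ is dense), and choose $\delta>0$ with $B(x,\delta)\subseteq V$. Apply the extended approximation property to $x$ with tolerance $\eta:=\min(\varepsilon,\delta)$: there exists $N\ge 1$ such that for every $n\ge N$ there is some $z\in X$ satisfying $\|z\|<\varepsilon$ and $\|T^nz-x\|<\delta$. Then $z\in B(0,\varepsilon)\subseteq W$ and $T^nz\in B(x,\delta)\subseteq V$, so every integer $n\ge N$ lies in $\mathcal N_T(W,V)$. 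Hence $\mathcal N_T(W,V)$ is cofinite, and Proposition \ref{Prop mix fin} applies to give that $T$ is topologically mixing.

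There is no real obstacle here beyond bookkeeping constants: the whole point of the corollary is that density of precompact orbits plus a ``bridge from $0$'' condition, when combined with linearity, automatically propagates from a dense-span set $X_0$ to all of $X$. The proof is essentially a one-line reduction to the already-proven Proposition \ref{Prop mix fin}.
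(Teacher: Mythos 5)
Your proof is correct and follows essentially the same route as the paper: use linearity to propagate the approximation property from $X_0$ to $\mathrm{span}(X_0)$, then invoke density of $\mathrm{span}(X_0)$ together with Proposition \ref{Prop mix fin} to conclude that all the sets $\mathcal N_T(W,V)$ are cofinite. The paper compresses the two steps into a single display, but the underlying argument is identical.
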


\begin{proof}
Let $V\subseteq X$ be a non-empty open set. Since $X_0$ spans a dense subspace of $X$, there exist $\varepsilon>0$ and $x=\sum_{k=1}^{K}\alpha_k x_k\in V$ with 
$x_k\in X_0$ and $\alpha_k\ne 0$ for every $1\le k\le K$ such that $B(x,\varepsilon)$ is contained in $ V$. Moreover, by assumption, there exists $N\ge 1$ such that for every $1\le k\le K$, every $n\ge N$, one can find $z_{n,k}\in X$ such that 
\[\|z_{n,k}\|<\frac{\varepsilon}{K|\alpha_k|}\quad\text{ and }\quad \|T^nz_{n,k}-x_k\|<\frac{\varepsilon}{K|\alpha_k|}\cdot\] The vector $z:=\sum_{k=1}^{K}\alpha_k z_{n,k}$ then satisfies $\|z\|<\varepsilon$ and $\|T^nz-x\|<\varepsilon$, and the desired result follows from Proposition~\ref{Prop mix fin}.
\end{proof}

\begin{remark}
Unlike in Corollary \ref{Proposition 35} below, it is not possible, in Proposition \ref{Prop mix fin}, to replace the assumption 
that all the sets $\mathcal N_T(W,V)$ are cofinite ($W$ and $V$ non-empty open subsets of $X$ with $0\in W$) by  the assumption that all the sets $\mathcal N_T(U,W)$ 
 are cofinite ($W$ and $U$ non-empty open subsets of $X$ with $0\in W$),
even if one assumes additionally that $T$ is hypercyclic. Indeed, 
every unilateral weighted backward shift has a dense set of points with finite orbit and 
is such that $\mathcal N_T(U,W)$ is cofinite for every neighborhood $W$ of $0$ and  every non-empty open set $U$; but there exist hypercyclic weighted shifts which are not topologically mixing.
\end{remark}
\par\smallskip

\subsection{Uniform recurrence and topological weak mixing}   In this subsection, we give a simple criterion for  an operator 
$T\in\mathfrak B(X)$ to be weakly topologically mixing. It involves the \emph{uniformly recurrent} points of $T$. Recall that a point $x\in X$ is said to be \emph{recurrent} for $T$ if, for any neighborhood $O$ of $x$, the set $\mathcal N_T(x,O)=\{ i\in\N;\; T^ix\in O\}$ is non-empty (or, equivalently, infinite).
A  point $x\in X$ is said to be
\emph{uniformly recurrent} for $T$ if, for any neighborhood $O$ of $x$, the set $\mathcal N_T(x,O)$ has bounded gaps. 
For example, every periodic point is obviously uniformly recurrent. The following more general fact will be useful. Recall first that a compact, 
$T$-invariant subset $K$ of $ X$ is said to be \emph{minimal} if it has no proper (non-empty) 
closed $T$-invariant subset. For example, if $x$ is a periodic point of $T$ with period $N$, then 
$K:=\{ x,Tx,\dots ,T^{N-1}x\}$ is minimal. Recall also that we denote by $\mathcal E(T)$ the set of all unimodular eigenvectors of  $T$.

\begin{fact}\label{FactUREC} Let $T\in\mathfrak B(X)$. If $K$ is a compact {minimal} $T$-invariant subset of $X$, then every point of $K$ is uniformly recurrent for $T$. 
In particular, every point $z\in{\rm span}\,\mathcal E(T)$ is uniformly recurrent for $T$.
\end{fact}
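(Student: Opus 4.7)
\emph{Plan.} The first assertion is a classical topological-dynamics argument that I would carry out as follows. Fix $x\in K$ and a neighbourhood $O$ of $x$ in $X$; I want to show that $\mathcal{N}_{T}(x,O)=\{i\in\N;\; T^{i}x\in O\}$ has bounded gaps. Set $U:=O\cap K$, which is a (relatively) open neighbourhood of $x$ in $K$. By minimality of $K$, the forward orbit of every point $y\in K$ is dense in $K$, so there is some $i\geq 0$ with $T^{i}y\in U$. Hence the relatively open sets $T^{-i}(U)\cap K$, $i\geq 0$, cover $K$; compactness of $K$ then gives finitely many indices $0\leq i_{1}<\cdots<i_{r}$ such that $K=\bigcup_{j=1}^{r}\bigl(T^{-i_{j}}(U)\cap K\bigr)$. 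Since $T^{n}x\in K$ for every $n\geq 0$, there exists $j\in\{1,\dots,r\}$ with $T^{n+i_{j}}x\in U\subseteq O$; so every interval $[n,n+i_{r}]$ meets $\mathcal{N}_{T}(x,O)$, which proves bounded gaps.

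For the second assertion, let $z=\sum_{k=1}^{m}c_{k}x_{k}$, where each $x_{k}\in\mathcal{E}(T)$ satisfies $Tx_{k}=\lambda_{k}x_{k}$ with $\lambda_{k}\in\T$. Then $T^{n}z=\sum_{k=1}^{m}c_{k}\lambda_{k}^{n}\,x_{k}$, so the whole orbit of $z$ is contained in the image of $\T^{m}$ under the continuous linear map $\Phi:(\mu_{1},\dots,\mu_{m})\mapsto\sum_{k=1}^{m}c_{k}\mu_{k}x_{k}$, which is compact. Let $K$ denote the closure of $\mathrm{Orb}(T,z)$ in $X$: then $K$ is a compact $T$-invariant subset of $X$ containing $z$. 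By the first part, it will suffice to show that $K$ is minimal.

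The plan for minimality is to realize $(K,T|_{K})$ as a factor of an equicontinuous system on a compact abelian group. Let $G$ be the closure in $\T^{m}$ of the cyclic subgroup generated by $\pmb{\lambda}:=(\lambda_{1},\dots,\lambda_{m})$; then $G$ is a compact abelian subgroup of $\T^{m}$, and the rotation $R:g\mapsto \pmb{\lambda}\cdot g$ on $G$ is minimal, since the forward orbit of the identity is dense in $G$ by construction and left translations permute orbits. The restriction of $\Phi$ to $G$ is continuous and satisfies $\Phi\circ R=T\circ\Phi$, so $\Phi(G)$ is a compact $T$-invariant subset of $X$; it contains $\Phi(\mathbf{1})=z$ together with all iterates $T^{n}z=\Phi(\pmb{\lambda}^{n})$, and is closed, hence $\Phi(G)=K$. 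Thus $(K,T|_{K})$ is a continuous factor of the minimal system $(G,R)$, and any non-empty closed $T$-invariant subset $F\subseteq K$ pulls back to a non-empty closed $R$-invariant subset $\Phi^{-1}(F)\cap G$ of $G$, which must equal $G$ by minimality of $R$; therefore $F=K$. So $K$ is minimal, and the first part applies to yield that $z\in K$ is uniformly recurrent for $T$.

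The main (mild) obstacle is the identification $K=\Phi(G)$: one inclusion is obvious, and the other is the verification that $\Phi(G)$ is closed and contains the orbit closure, which boils down to the compactness of $G$ and the continuity of $\Phi$. Everything else is routine once the factor structure is set up.
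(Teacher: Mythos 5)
Your argument is correct and takes essentially the same route as the paper: reduce the second assertion to the minimality of the rotation by $\pmb{\lambda}$ on the closure $G$ of the group it generates in $\T^m$, and transfer minimality to $K$ along the continuous equivariant map $\Phi$. The one point you gloss over is the claim that the \emph{forward} orbit of the identity under $R$ is dense in $G$ ``by construction'': since you define $G$ as the closure of the cyclic \emph{subgroup} (hence including the negative powers $\pmb{\lambda}^{-n}$), the equality $\overline{\{\pmb{\lambda}^{n}\,;\,n\ge 0\}}=G$ is not tautological but is the standard fact that a closed sub-semigroup of a compact topological group is a subgroup; the paper makes this remark explicit. The same fact is also hiding in your identification $\Phi(G)=K$: the paragraph as written only verifies $K\subseteq\Phi(G)$, and the reverse inclusion again rests on the forward orbit of the identity being dense in $G$ --- alternatively, one can first run your factor-map argument to conclude that $\Phi(G)$ is minimal and then deduce $K=\Phi(G)$ because $K$ is a non-empty closed invariant subset of it. Beyond that, your pull-back-of-invariant-sets packaging of minimality is a mild stylistic variant of the paper's direct approximation argument (find $n$ with $\pmb{\lambda}^{n}$ close to $\omega^{-1}\eta$), and your first part gives a self-contained covering argument where the paper simply cites Furstenberg.
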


\begin{proof} The first part of the statement is well-known (and has nothing to do with linearity; see \mbox{e.g.} \cite[Theorem 1.15]{Fur}). As for the second part, it is enough to show that if 
$z$ belongs to ${\rm span}\,\mathcal E(T)$, then its closed $T$-orbit $K=\overline{\{T^nz;\; n\geq 0\}}$ is a compact minimal $T$-invariant set.
\par\smallskip
Write $z$ as $z=\sum_{j=1}^N u_j$, where for every $1\le j\le N$, $Tu_j=\gamma_j u_j$ for some $\gamma_j\in\T$. Let $\Gamma$ be the 
closed subgroup of $\T^N$ generated by the $N$-tuple $\gamma:=(\gamma_1,\dots ,\gamma_N)$. Then 
$K=\{ \sum_{j=1}^N \omega_j u_j;\; \omega=(\omega_1,\dots ,\omega_N)\in\Gamma\}$, so that $K$ is compact (and, of course, $T$-$\,$invariant). 
So we just have to check that $K$ is minimal for the action of $T$.
Let $a\in K$ be arbitrary. We have to show that any point $y\in K$ can be approximated as close as we wish by points of the form $T^na$, $n\ge 0$. 
Write $a$ and $y$ as $a=\sum_{j=1}^N \omega_ju_j$ and $y=\sum_{j=1}^N\eta_j u_j$ respectively, where $\omega=(\omega_1,\dots ,\omega_N)$ and $ \eta=(\eta_1,\dots ,\eta_N)$ belong to $\Gamma$. For any integer $n\ge 0$, 
we have $T^na=\sum_{j=1}^N \gamma_j^n\omega_j u_j$. Since the $N$-tuples $\gamma^{n}=(\gamma_1^{n},\dots ,\gamma_N^{n})$, $n\geq 0$, are dense in $\Gamma$ 
(this follows from the fact that since the group $\T^{N}$ is compact, the closed {semi}group generated by $\gamma$ is in fact a group), there exists an integer $n\ge 0$ such that $\gamma^n$ is as close as we wish to $\omega^{-1}\eta=(\omega_1^{-1}\eta_{1},\dots ,\omega_N^{-1}\eta_{N})$. Then $T^na$ is as close as we wish to 
$y$, as required.
\end{proof}

We can now state our criterion for topological weak mixing.

\begin{proposition}\label{UREC} Let $T\in\mathfrak B(X)$, and let $Z$ be a $T$-invariant subset of $X$ consisting of uniformly recurrent points. 
Assume that for every non-empty open subset $V$ of $ X$ and every 
neighborhood $W$ of $0$ in $X$, one can find a vector $z\in Z$ and an integer 
$n\ge 0$ such that $z\in W$ and $T^nz\in V$. Then $T$ is topologically weakly mixing and $Z$ is dense in $X$.
\end{proposition}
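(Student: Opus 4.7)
The plan is to establish the two conclusions separately.

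\textit{Density of $Z$.} Given any non-empty open set $V \subseteq X$, applying the hypothesis to $V$ and $W := X$ yields $z \in Z$ and $n \geq 0$ with $T^n z \in V$. Since $Z$ is $T$-invariant, $T^n z$ lies in $Z \cap V$, so $Z$ meets every non-empty open subset of $X$.

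\textit{Topological weak mixing.} It suffices to prove that $T \oplus T$ is topologically transitive on $X \oplus X$, equivalently that $\mathcal{N}_T(U_1, V_1) \cap \mathcal{N}_T(U_2, V_2) \neq \emptyset$ for any four non-empty open sets $U_1, U_2, V_1, V_2 \subseteq X$. Using the density just established, I pick $z_i \in Z \cap U_i$ for $i=1,2$; then choose $\varepsilon > 0$ and $v_i \in V_i$ with $B(z_i, 2\varepsilon) \subseteq U_i$ and $B(v_i, 2\varepsilon) \subseteq V_i$, and apply the hypothesis to $B(v_i - z_i, \varepsilon)$ together with $W = B(0, \varepsilon)$ to obtain $w_i \in Z \cap B(0, \varepsilon)$ such that $T^{n_i} w_i \in B(v_i - z_i, \varepsilon)$ for some $n_i \geq 0$.

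The key tool is uniform recurrence: for any uniformly recurrent $z \in Z$, the orbit closure $\overline{\{T^n z : n \geq 0\}}$ is a minimal compact $T$-invariant subset of $X$, so for every non-empty open set the orbit visits, the set of visit times is syndetic in $\N$. Applying this to $z_1, z_2, w_1, w_2$, the four return-time sets
\[ A_i := \{n \geq 0 : T^n z_i \in B(z_i, \varepsilon)\}, \qquad B_i := \{n \geq 0 : T^n w_i \in B(v_i - z_i, \varepsilon)\} \]
($i = 1, 2$) are all syndetic. For any $n \in A_1 \cap A_2 \cap B_1 \cap B_2$, the vector $x_i := z_i + w_i$ lies in $U_i$ and satisfies $T^n x_i = T^n z_i + T^n w_i \in B(v_i, 2\varepsilon) \subseteq V_i$, so $n$ belongs simultaneously to $\mathcal{N}_T(U_1, V_1)$ and $\mathcal{N}_T(U_2, V_2)$.

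The hard part will be the synchronization step: the intersection $A_1 \cap A_2 \cap B_1 \cap B_2$ is not automatically non-empty, since two arbitrary syndetic subsets of $\N$ may fail to intersect. The intended resolution is to view $(z_1, z_2, w_1, w_2)$ as a single point of $X^4$ under $T^{\oplus 4}$ and to extract, via Zorn's lemma, a minimal $T^{\oplus 4}$-invariant subset $M$ of its orbit closure; by iterating the hypothesis to refine the choice of the $w_i$'s (exploiting the fact that many admissible small elements of $Z$ have orbits visiting the required targets), one arranges the quadruple itself to be uniformly recurrent for $T^{\oplus 4}$. The return times of this quadruple to the product neighborhood $B(z_1, \varepsilon) \times B(z_2, \varepsilon) \times B(v_1 - z_1, \varepsilon) \times B(v_2 - z_2, \varepsilon)$ are then syndetic, and in particular non-empty, delivering the desired common $n$.
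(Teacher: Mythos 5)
The density argument is correct and matches the paper's reasoning (which treats density as immediate from the $T$-invariance of $Z$). The weak mixing argument, however, has a genuine gap at exactly the place you flag.

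You reduce the problem to finding $n$ in the intersection $A_1\cap A_2\cap B_1\cap B_2$ of four syndetic subsets of $\N$, and you correctly observe that an intersection of syndetic sets can be empty. Your proposed resolution — pass to the orbit closure of the quadruple $(z_1,z_2,w_1,w_2)$ in $X^4$, extract a minimal $T^{\oplus4}$-invariant set by Zorn's lemma, and ``refine the choice of the $w_i$'s'' to make the quadruple uniformly recurrent for $T^{\oplus4}$ — is not actually an argument. Two specific obstructions: (i) Zorn's lemma yields a minimal set inside the orbit closure only if that orbit closure is \emph{compact}; uniform recurrence of each coordinate forces the orbit of the quadruple to be bounded (since $T$ is bounded and the return gaps are uniformly controlled), but bounded does not imply precompact in an infinite-dimensional Banach space, so minimality is not available here. (ii) Even granting a minimal set $M$, a point of $M$ is generally not the quadruple you started with and there is no mechanism in your sketch to guarantee that the replacement quadruple still satisfies $z_i'\in U_i$, $w_i'\in B(0,\varepsilon)$, and the required targeting conditions. ``Iterating the hypothesis to refine the choice'' would need to be turned into a concrete construction, and it is not obvious how.

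The paper sidesteps the synchronization problem entirely by invoking the \emph{three open sets condition}: to prove topological weak mixing it suffices to show that $\mathcal N_T(U,W)\cap\mathcal N_T(W,V)\neq\emptyset$ for all non-empty open $U,V$ and every neighborhood $W$ of $0$. The proof then has two clean halves. First, exactly as in your argument, $\mathcal N_T(W,V)$ is syndetic: take $z\in Z\cap W$ with $T^nz\in V$, shrink to an open $O\ni z$ with $O\subseteq W$ and $T^n(O)\subseteq V$, and use that $n+\mathcal N_T(z,O)\subseteq\mathcal N_T(W,V)$ with $\mathcal N_T(z,O)$ syndetic. Second, $\mathcal N_T(U,W)$ is non-empty: take a uniformly recurrent $x\in W$ with $u:=T^nx\in U$, use (ordinary) recurrence to get $p>n$ with $T^px\in W$, and note $T^{p-n}u=T^px\in W$. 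Because $W$ is a neighborhood of the fixed point $0$, non-emptiness of all the $\mathcal N_T(U,W')$ (with $W'\subseteq W$ a smaller neighborhood of $0$) upgrades to $\mathcal N_T(U,W)$ containing arbitrarily long intervals. A syndetic set always meets a set containing arbitrarily long intervals, so the intersection is non-empty. This avoids ever intersecting two syndetic sets with each other, which is precisely where your approach gets stuck. I would recommend reworking the weak mixing half of your proof along these lines.
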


\begin{proof} That $Z$ is dense in $X$ is clear from the assumption since $Z$ is $T$-invariant.
In order to prove that $T$ is topologically weakly mixing, we are going to show that $\mathcal N_T(U,W)\cap\mathcal N_T(W,V)$ is non-empty for any non-empty open subsets 
$U,V
$ of $X$ and any neighborhood $W$ of $0$ in $X$. The so-called 
\emph{three open sets condition} (see for example \cite[Ch. 4]{BM}) will then imply that $T$ is topologically weakly mixing.
\par\smallskip
Let us first show that for any neighborhood $W$ of $0$ and any open set $V\neq\emptyset$, the set $\mathcal N_T(W,V)$ 
has bounded gaps. Choose a vector $z\in Z$ and an integer $n\ge 1$ such that $z$ belongs to $ W$ and $T^nz$ belongs to $ V$, and then an 
open neighborhood $O$ of $z$ such that $O\subseteq W$ and $T^n(O)\subseteq V$. Since $z$ is {uniformly recurrent}, 
the set $\mathcal N_T(z,O)$ has bounded gaps. Then $n+\mathcal N_T(z,O)$ has bounded gaps as well, and the result 
follows since $n+\mathcal N_T(z,O)\subseteq \mathcal N_T(z,V)\subseteq\mathcal N_T(W,V)$.
\par\smallskip
We show next that $\mathcal N_T(U,W)$ is non-empty for any non-empty open subset $U$ of $X$ and any neighborhood $W$ of $0$ in $X$. 
By assumption,  one can find a point $x\in W$ and an integer $n\ge 1$ such that $x$ is a (uniformly) recurrent point of $T$ and $u:=T^nx$ lies in $ U$. 
Since $x$ is recurrent, one can find an integer $p>n$ such that $T^px$ lies in $ W$. Then $T^{p-n}u=T^px$ belongs to $ W$, and hence 
$\mathcal N_T(U,W)$ is non-empty.
\par\smallskip
It is now easy to conclude the proof: all the sets $\mathcal N_T(W,V)$ have bounded gaps, and all the sets $\mathcal N_T(U,W)$ 
contain arbitrarily large intervals, because they are all non-empty and the sets $W$ are neighborhoods of the fixed point $0$. 
Hence any set of the form $\mathcal N_T(W,V)$ meets any other set of the form $\mathcal N_T(U,W)$.
\end{proof}

As a first consequence of Proposition \ref{UREC}, we show that for operators having a dense set of uniformly recurrent points, 
topological weak mixing is equivalent to some formally much weaker ``transitivity-like" properties.

\begin{corollary}\label{Proposition 35}
 Let $T\in\mathfrak{B}(X)$, and assume that uniformly recurrent points of $T$ are dense in $X$. The following assertions are then equivalent:
\begin{enumerate}
 \item[\emph{(a)}] $T$ is topologically weakly mixing;
 \item[\emph{(b)}] for every non-empty open subset $V$ of $X$ and every
 neighborhood $W$ of $0$ in $X$, the set $\mathcal{N}_{T}(W,V)$ is non-empty;
 \item[\emph{(c)}] for every non-empty open subset $U$ of $X$ and every
 neighborhood $W$ of $0$ in $X$, the set $\mathcal{N}_{T}(U,W)$ is non-empty.
\end{enumerate}
\end{corollary}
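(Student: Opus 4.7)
The implications (a) $\Rightarrow$ (b) and (a) $\Rightarrow$ (c) are immediate: a topologically weakly mixing operator is in particular topologically transitive, so $\mathcal{N}_T(U,V)\neq\emptyset$ for any non-empty open $U,V\subseteq X$, and every neighborhood of $0$ has non-empty interior. Hence the real content of the statement is the pair of implications (b) $\Rightarrow$ (a) and (c) $\Rightarrow$ (b) (we could argue directly (c) $\Rightarrow$ (a), but it is cleaner to split).

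For (b) $\Rightarrow$ (a), the plan is to invoke Proposition \ref{UREC} with $Z$ taken to be the full set of uniformly recurrent points of $T$, which is dense by hypothesis. The first thing to check is that $Z$ is $T$-invariant: if $x\in Z$ and $y=Tx$, then for every open neighborhood $O$ of $y$ the set $T^{-1}(O)$ is an open neighborhood of $x$, so $\mathcal{N}_T(x,T^{-1}(O))$ has bounded gaps, and the identity $\mathcal{N}_T(y,O)=1+\mathcal{N}_T(x,T^{-1}(O))$ transfers this property to $y$. Given now an open neighborhood $W$ of $0$ (we may assume $W$ open by passing to its interior) and a non-empty open set $V\subseteq X$, assumption (b) provides $x\in W$ and $n\ge 0$ with $T^n x\in V$; the set $W\cap T^{-n}(V)$ is then a non-empty open subset of $X$, and by density of $Z$ it contains a uniformly recurrent point $z$. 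This $z$ satisfies all the requirements of Proposition \ref{UREC}, so $T$ is topologically weakly mixing.

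For (c) $\Rightarrow$ (b), fix a non-empty open set $V\subseteq X$ and a neighborhood $W$ of $0$ (again assumed open). The idea is to use (c) to first travel from $V$ to $W$ along a uniformly recurrent orbit, and then to exploit uniform recurrence to come back into $V$. Concretely, by (c), the set
\[
V\cap\bigcup_{n\ge 0}T^{-n}(W)=\bigcup_{n\ge 0}\bigl(V\cap T^{-n}(W)\bigr)
\]
is a non-empty open subset of $V$, so by density of the uniformly recurrent points it contains some uniformly recurrent $x$; let $n\ge 0$ be such that $T^n x\in W$. Since $x$ is uniformly recurrent, $\mathcal{N}_T(x,V)$ has bounded gaps and in particular meets $(n,\infty)$, so there is an integer $p>n$ with $T^p x\in V$. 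Setting $z:=T^n x\in W$, we obtain $T^{p-n}z=T^p x\in V$, so $p-n\in\mathcal{N}_T(W,V)$, which proves (b).

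The proof is essentially soft, and no step looks like a genuine obstacle given Proposition \ref{UREC}: the only mechanism at work is that density of uniformly recurrent points lets us upgrade any witness for (b) or (c) to a uniformly recurrent witness, after which the uniform recurrence of that witness, combined with the fact that $0$ is a fixed point, lets us travel in both directions between $W$ and $V$.
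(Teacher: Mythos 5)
Your proof is correct and follows the same route as the paper: (a)\,$\Rightarrow$\,(b),(c) is trivial, (b)\,$\Rightarrow$\,(a) is Proposition \ref{UREC} applied with $Z$ equal to the set of all uniformly recurrent points, and (c)\,$\Rightarrow$\,(b) is obtained by upgrading the witness to a uniformly recurrent one via density and then using recurrence to return to $V$. The only difference is that you explicitly check the $T$-invariance of $Z$ (a hypothesis of Proposition \ref{UREC} that the paper leaves tacit), and for (c)\,$\Rightarrow$\,(b) you invoke the bounded-gaps property where ordinary recurrence of the chosen point already suffices; both are harmless and your argument is sound.
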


\begin{proof}
 Obviously, (a) implies both (b) and (c). Moreover, since the uniformly recurrent points of $T$ are assumed to be dense 
 in $X$, it follows at once from Proposition \ref{UREC} (taking as $Z$ the set of all uniformly recurrent points for $T$) 
 that (b) implies (a). So it remains to show that (c) implies (b).
 \par\smallskip
 Suppose that (c) holds true, and let $U$ and $W$ be two non-empty open subsets of $X$ with $0\in W$. By (c) and since 
 uniformly recurrent points of $T$ are dense in $X$, one can find a uniformly recurrent point $u\in U$ and an integer $
 n\geq 0$ such that $z:=T^nu$ belongs to $W$. Since $u$ is in particular recurrent, one can find $p>n$ such that $T^pu$ belongs to $ U$. 
 Then $T^{p-n}z=T^pu$ lies in $U$, so that $\mathcal N_T(W,U)$ is non-empty.
\end{proof}

As another consequence of Proposition \ref{UREC}, we now state a  criterion for topological weak mixing which formally resembles
the ergodicity criterion stated as Lemma \ref{ERGOCRIT} above.

\begin{corollary}\label{MINIM} Let $T\in\mathfrak B(X)$. Assume that for each non-empty open subset 
$\Omega$ of $X$ such that $T^{-1}(\Omega)\subseteq\Omega$ and each neighborhood $W$ of $0$, one can find a minimal $T$-invariant compact set 
$K$ such that $K\cap\Omega\cap W$ is non-empty. Then $T$ is topologically weakly mixing.
\end{corollary}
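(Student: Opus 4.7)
The plan is to reduce Corollary \ref{MINIM} to a direct application of Proposition \ref{UREC}. The natural candidate for the ``uniformly recurrent'' witness set $Z$ is the union of all minimal $T$-invariant compact subsets of $X$. This set is $T$-invariant, because $T(K)\subseteq K$ for each such $K$, and by Fact \ref{FactUREC} every point of $Z$ is uniformly recurrent for $T$. So two of the three requirements of Proposition \ref{UREC} are free.

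The only thing left to verify is the mixed transitivity condition: for every non-empty open $V\subseteq X$ and every neighborhood $W$ of $0$, one must find $z\in Z$ and $n\geq 0$ with $z\in W$ and $T^n z\in V$. The right open set to feed into the hypothesis of the corollary is
\[
\Omega:=\bigcup_{n\geq 0} T^{-n}(V),
\]
which is a non-empty open subset of $X$ satisfying $T^{-1}(\Omega)\subseteq\Omega$ (if $Tx\in T^{-n}(V)$, then $x\in T^{-(n+1)}(V)\subseteq\Omega$). The standing assumption of the corollary then yields a minimal $T$-invariant compact set $K$ with $K\cap\Omega\cap W\neq\emptyset$; any $z$ in this intersection lies in $Z\cap W$, and $z\in\Omega$ means that $T^n z\in V$ for some $n\geq 0$, exactly as needed.

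Once both hypotheses of Proposition \ref{UREC} are in place, that proposition directly gives that $T$ is topologically weakly mixing (and, as a side benefit, that the union of minimal $T$-invariant compact sets is dense in $X$). There is no serious obstacle in this argument; the whole content lies in choosing $Z$ and $\Omega$ correctly, the rest being bookkeeping on invariance. The only small point to keep an eye on is that the hypothesis is formulated with $T^{-1}(\Omega)\subseteq\Omega$ and not the reverse, which is precisely what the forward orbit construction $\Omega=\bigcup_{n\geq 0}T^{-n}(V)$ satisfies.
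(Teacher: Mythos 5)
Your proposal is correct and follows essentially the same route as the paper's proof: apply Proposition \ref{UREC} with $\Omega:=\bigcup_{n\geq 0}T^{-n}(V)$; the only (immaterial) difference is that the paper takes $Z$ to be the set of all uniformly recurrent points, whereas you take $Z$ to be the union of all minimal $T$-invariant compact sets, a subset which works just as well by Fact \ref{FactUREC}.
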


\begin{proof} By Fact \ref{FactUREC}, every point $z$ in the above compact set $K$ is uniformly recurrent for $T$. We apply Proposition \ref{UREC}, taking as $Z$ the set of all uniformly recurrent points for $T$. Let $V$ be a non-empty open subset of $X$. Applying the assumption of Corollary \ref{MINIM} to 
$\Omega:=\bigcup_{n\geq 0} T^{-n}(V)$, we immediately see that the assumption of Proposition \ref{UREC} is satisfied, and hence that $T$ is topologically weakly mixing.
\end{proof}

\begin{corollary} Let $T\in\mathfrak B(X)$, where $X$ is a \emph{complex} Banach space. Assume that for every $x\in X$ and $\varepsilon >0$, there exist $z\in\emph{span}\,\mathcal E(T)$ and $n\ge 0$ such that 
$\Vert z\Vert<\varepsilon$ and $\Vert T^nz-x\Vert<\varepsilon$. Then $T$ is topologically weakly mixing and 
$\emph{span}\,\mathcal E(T)$ is dense in $X$.
\end{corollary}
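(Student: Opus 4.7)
The plan is to deduce this corollary directly from Proposition \ref{UREC} applied with $Z := \operatorname{span}\,\mathcal E(T)$. There is essentially nothing new to prove: all the work has already been done in the preceding results, and the only job is to verify that $Z$ meets the hypotheses of Proposition \ref{UREC}.

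First I would observe that $Z$ is $T$-invariant. Indeed, if $z = \sum_{j=1}^N c_j u_j$ with $u_j\in\mathcal E(T)$ and $Tu_j = \gamma_j u_j$, $|\gamma_j|=1$, then $Tz = \sum_{j=1}^N c_j\gamma_j u_j$ again lies in $\operatorname{span}\,\mathcal E(T) = Z$. Next I would invoke Fact~\ref{FactUREC}, which asserts precisely that every vector in $\operatorname{span}\,\mathcal E(T)$ is uniformly recurrent for $T$; so $Z$ is a $T$-invariant set of uniformly recurrent points, as required by Proposition~\ref{UREC}. Here the hypothesis that $X$ is a \emph{complex} Banach space enters, because the argument behind Fact~\ref{FactUREC} uses that the closed semigroup generated by an $N$-tuple of unimodular scalars in $\mathbb T^N$ is a group, which relies on working over $\C$.

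The hypothesis we are given on $T$ states exactly: for every non-empty open set $V\subseteq X$ (take $x\in V$ and $\varepsilon>0$ small enough that $B(x,\varepsilon)\subseteq V$) and every neighborhood $W$ of $0$ (which contains some $B(0,\varepsilon)$), there exist $z\in Z$ and $n\ge 0$ with $z\in W$ and $T^n z\in V$. This is verbatim the hypothesis of Proposition~\ref{UREC}. Applying that proposition yields both conclusions at once: $T$ is topologically weakly mixing and $Z = \operatorname{span}\,\mathcal E(T)$ is dense in $X$.

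Since the argument is a one-step reduction to Proposition~\ref{UREC}, there is no serious obstacle. The only subtle point worth flagging in the write-up is the appeal to Fact~\ref{FactUREC}, whose proof is the genuinely non-trivial ingredient (via the Kronecker-type density of iterates in a compact abelian subgroup of $\mathbb T^N$) and which is where complex scalars are needed; everything else is bookkeeping.
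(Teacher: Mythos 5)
Your proof is correct and follows exactly the route the paper takes: apply Proposition~\ref{UREC} with $Z=\operatorname{span}\,\mathcal E(T)$, using Fact~\ref{FactUREC} to guarantee that $Z$ consists of uniformly recurrent points. The paper's proof is a one-liner citing precisely these two ingredients, so there is nothing to add.
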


\begin{proof}  This follows from Fact \ref{FactUREC} and
Proposition \ref{UREC} applied with $Z:=\textrm{span}\,\mathcal E(T)$.
\end{proof}

Our last corollary is the result on which we will elaborate to state our $\mathcal U$-frequent hypercyclicity and 
frequent hypercyclicity criteria.

\begin{corollary}\label{Proposition 36} Let $T\in\mathfrak B(X)$. Assume that for every $x\in X$ and $\varepsilon >0$, 
there exist $z\in{\rm Per}(T)$ and $n\ge 1$ such that $\Vert z\Vert<\varepsilon$ and $\Vert T^nz-x\Vert<\varepsilon$. 
Then $T$ is chaotic.
\end{corollary}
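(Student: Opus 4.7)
The plan is to verify the two defining properties of chaos separately, both almost immediately from the hypothesis and from Proposition~\ref{UREC}.

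First I would establish that $\mathrm{Per}(T)$ is dense in $X$. Fix $x\in X$ and $\varepsilon>0$, and apply the assumption to obtain $z\in\mathrm{Per}(T)$ and $n\ge 1$ with $\|z\|<\varepsilon$ and $\|T^{n}z-x\|<\varepsilon$. The point is simply that $T^{n}z$ is again periodic: if $p\ge 1$ is a period of $z$, then $T^{p}(T^{n}z)=T^{n}(T^{p}z)=T^{n}z$, so $T^{n}z\in\mathrm{Per}(T)\cap B(x,\varepsilon)$. Since $x$ and $\varepsilon$ were arbitrary, $\mathrm{Per}(T)$ is dense in $X$.

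Next I would show that $T$ is topologically transitive (in fact topologically weakly mixing) by a direct appeal to Proposition~\ref{UREC}, applied with $Z:=\mathrm{Per}(T)$. The set $Z$ is $T$-invariant and consists of uniformly recurrent points (each periodic point has a finite $T$-orbit and is visited by itself at every multiple of its period, so the return set even has bounded gaps trivially). To check the remaining hypothesis of Proposition~\ref{UREC}, let $V\subseteq X$ be a non-empty open set and let $W$ be a neighborhood of $0$. Pick $x\in V$ and $\varepsilon>0$ so small that $B(x,\varepsilon)\subseteq V$ and $B(0,\varepsilon)\subseteq W$; the hypothesis then produces $z\in\mathrm{Per}(T)=Z$ and $n\ge 1$ with $z\in B(0,\varepsilon)\subseteq W$ and $T^{n}z\in B(x,\varepsilon)\subseteq V$. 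Proposition~\ref{UREC} thus applies and yields that $T$ is topologically weakly mixing; in particular $T$ is hypercyclic.

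Combining the two steps, $T$ is hypercyclic with dense set of periodic points, \emph{i.e.} $T$ is chaotic. There is no real obstacle here: the only mildly non-trivial point is the observation that the image $T^{n}z$ of a periodic point remains periodic, which is what lets the ``approximate reachability'' hypothesis upgrade to density of $\mathrm{Per}(T)$ itself rather than merely density of the $T$-orbits of periodic points.
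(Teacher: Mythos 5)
Your proposal is correct and follows essentially the same route as the paper: the paper's proof is a one-line invocation of Proposition~\ref{UREC} with $Z:=\mathrm{Per}(T)$, noting that periodic points are uniformly recurrent, which is exactly your second step. Your first paragraph (direct verification that $\mathrm{Per}(T)$ is dense using $T^n z\in\mathrm{Per}(T)$) is correct but redundant, since Proposition~\ref{UREC} already yields the density of $Z$ as part of its conclusion.
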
 

\begin{proof} Since periodic points are uniformly recurrent, exactly the same proof as that of the previous corollary 
shows that $T$ is topologically weakly mixing and has a dense set of periodic points.
\end{proof}
 
Corollary \ref{Proposition 36} can also be proved by a more constructive argument. Since the flexibility of this alternative proof 
will prove extremely important 
below for the proofs of some ($\mathcal{U}$-) frequent hypercyclicity criteria, we present it here.

\begin{proof}[Direct proof of Corollary \ref{Proposition 36}] Before starting the proof, we note that the assumption of Corollary \ref{Proposition 36} obviously implies that $\Vert T\Vert>1$ and that 
${\rm Per}(T)$ is dense in $X$.
 The first step of the proof is to observe that the 
assumption of Corollary \ref{Proposition 36} can be reinforced as 
follows: 

\begin{fact}\label{multiple} Under the assumption of Corollary \ref{Proposition 36}, the following property holds true:
for any $x\in X$, any $\varepsilon >0$, and any integers $N, M\ge 1 
$, there exist $z\in\textrm{Per}(T)$ and $\gn$ with $n=M\; ({\rm mod}\, N)$ such that 
$\|z\|<\varepsilon $ and 
  $\|T^{n}z-x\|<\varepsilon $.
  \end{fact}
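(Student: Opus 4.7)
The plan is to invoke the hypothesis of Corollary \ref{Proposition 36} once with a carefully chosen tolerance, and then exploit the finite $T$-orbit of the resulting periodic vector to re-align the approach time modulo $N$. Concretely, set $K := \max(1, \|T\|^{N-1})$ and apply the hypothesis with target $x$ and tolerance $\varepsilon/K$ to obtain $z_0 \in \textrm{Per}(T)$ and $n_0 \geq 1$ with $\|z_0\| < \varepsilon/K$ and $\|T^{n_0} z_0 - x\| < \varepsilon/K$. Writing $p := \textrm{per}_T(z_0)$ and $d := \gcd(p,N)$, the next step is to select an integer $a \in \{0, 1, \ldots, d-1\}$ with $a \equiv n_0 - M \pmod{d}$, which is possible since $\{0,1,\ldots,d-1\}$ is a complete set of residues modulo $d$.

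The whole point of this choice is that it makes the simultaneous congruences $n \equiv n_0 - a \pmod{p}$ and $n \equiv M \pmod{N}$ compatible (both sides agree modulo $d$), so the Chinese Remainder Theorem yields an integer $n \geq 1$ satisfying both. Setting $z := T^{a} z_0$ then produces a periodic vector (since $\textrm{Per}(T)$ is $T$-invariant) of norm at most $\|T\|^{a}\|z_0\| \leq K\cdot\varepsilon/K = \varepsilon$. Since $n + a \equiv n_0 \pmod{p}$ and $T^p z_0 = z_0$, one reads off the key identity $T^n z = T^{n+a} z_0 = T^{n_0} z_0$, from which $\|T^n z - x\| < \varepsilon/K \leq \varepsilon$; and $n \equiv M \pmod{N}$ by construction.

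There is no real obstacle here beyond organizing the arithmetic. The conceptual point is that although the exponent $n_0$ produced by the hypothesis is uncontrolled modulo $N$, the \emph{only} obstruction to hitting a given residue class by iterating $z_0$ alone is the value of $n_0$ modulo $\gcd(p,N)$, and this obstruction is entirely absorbed by replacing the base point $z_0$ by its $a$-th $T$-iterate for some $a \leq N-1$. The factor $\|T\|^{N-1}$ paid for this shift is harmless because $N$ is fixed and it was pre-compensated by the tolerance $\varepsilon/K$ in the initial application of the hypothesis.
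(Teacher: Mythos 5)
Your proof is correct, and it takes a genuinely different route from the paper's. The paper's proof first forces the exponent $n'$ produced by the hypothesis to be large ($n'>N$) by taking the initial tolerance $\varepsilon'$ small enough and using the estimate $\|T\|^{n'}\|z'\|\ge\|x\|-\varepsilon'$ (which requires treating $x=0$ separately), then \emph{subtracts} a remainder $0\le r<N$ so that $n=n'-r$ lands in the correct residue class, setting $z=T^rz'$. You instead observe that since $z_0$ is periodic of period $p$, the identity $T^{n}(T^az_0)=T^{n_0}z_0$ holds for \emph{any} $n$ with $n\equiv n_0-a \pmod p$, and you use the Chinese Remainder Theorem (with the compatibility condition $a\equiv n_0-M\pmod{\gcd(p,N)}$) to find a solution $n\ge 1$ to the two congruences. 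This "go up via periodicity + CRT" approach avoids the need to make $n_0$ large, dispenses with the case $x=0$, and makes explicit that the only arithmetic obstruction is $n_0\bmod\gcd(p,N)$, absorbed by a shift $a\le \gcd(p,N)-1\le N-1$. The cost — a loss of at most $\|T\|^{N-1}$ in the norm of $z$, paid by the smaller initial tolerance — is the same in both proofs. Your version is slightly longer arithmetic but arguably more transparent about why the result holds; the paper's is shorter once one accepts the norm argument.
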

  
  \begin{proof}[Proof of Fact \ref{multiple}] Assuming (as we may) that $x$ is non-zero, choose $z'\in\textrm{Per}(T)$ and 
$n'\ge 1$ such that $\|z'\|<\varepsilon '$ and 
$\|T^{n'}z'-x\|<\varepsilon$, where $\varepsilon'>0$ has to be specified. Then $\Vert T\Vert^{n'}\Vert z'\Vert\geq \Vert x\Vert-\varepsilon'$, so that 
$\Vert T\Vert^{n'}\geq \frac{\Vert x\Vert-\varepsilon'}{\varepsilon'}\cdot$ It follows that if $\varepsilon'$ is small enough, then $n'>N$. Let then $0\le r<N$ be an integer such that $n:=n'-r$ is 
equal to $M\;{{ ({\rm mod}\, N)}}$, and set $z=T^{r}z'$. Then $\|z\|<\Vert T\Vert^r \varepsilon'\leq \Vert T\Vert^N\varepsilon'$, so $\Vert z\Vert<\varepsilon$ 
if $\varepsilon'$ is small enough. Also, $T^{n}z=T^{n'-r}T^{r}z'=T^{n'}z'$, so that 
$\|T^{n}z-x\|<\varepsilon'\leq\varepsilon$.
\end{proof}

Let now $(x_{j})_{j\ge 1}$ be a dense sequence of vectors of $X$ belonging to 
$\textrm{Per}(T)$. A straightforward induction shows that there exists a 
sequence $(z_{j})_{j\ge 1}$ of elements of $\textrm{Per}(T)$, as well as a 
strictly increasing sequence $(n_{j})_{j\ge 1}$ of integers such that, for 
every $j\ge 1$, 
\begin{enumerate}[(i)]
 \item $\|z_{j}\|<2^{-j}\,\|T\|^{-n_{j-1}}$;
 \item $\|T^{n_{j}}z_{j}-(x_{j}-\sum_{i<j}z_{i})\,\|<2^{-j}$;
 \item $n_{j}$ is a multiple of the period of the vector $\sum_{i<j}
 z_{i}$.
\end{enumerate}
Now, set \[z:=\sum_{i= 1}^\infty z_{i},\]
which is well-defined by (i). Let us now show that $z$ is a hypercyclic vector for $T$. 
For every $j\ge 1$, we have $\displaystyle T^{n_j}\Big(\sum_{i<j} z_i\Big)=\sum_{i<j} z_i$ by (iii), so that
\begin{align*}
 T^{n_{j}}z-x_{j}&=\sum_{i<j}z_{i}+T^{n_{j}}z_{j}+\sum_{i>j}
 T^{n_{j}}z_{i}-x_{j}.
 \end{align*}
It follows that
\begin{align*}
\|T^{n_{j}}z-x_{j}\|&\le\Bigl\|T^{n_{j}}z_{j}-
\Bigl(x_{j}-\sum_{i<j}z_i\Bigr)\Bigr\|+\sum_{i>j}\|T\|^{n_{j}}\,\|z_{i}\|\\
&<2^{-j}+\sum_{i>j}\|T\|^{n_{j}}\,2^{-i}\,\|T\|^{-n_{i-1}}\le 2^{-(j-1)}
\qquad\hbox{by (i) 
and (ii).}
\end{align*}
This terminates our direct proof of Corollary \ref{Proposition 36}.
\end{proof}

To conclude this subsection, we prove 
a slightly stronger version of Corollary \ref{Proposition 36}.

\begin{corollary}\label{Proposition 38}
 Let $X_{0}$ be a subset of $X$ with 
dense linear span. Suppose that for every $x_{0}\in X_{0}$ and every 
$\varepsilon >0$, there exist $z\in\emph{Per}(T)$ and $\gn$ such that 
$\|z\|<\varepsilon $ and $\|T^{n}z-x_0\|<\varepsilon $. Then $T$ is chaotic.
\end{corollary}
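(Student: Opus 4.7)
The plan is to reduce to Corollary \ref{Proposition 36} (which the paper labels ``Proposition 36'') by showing that its apparently stronger hypothesis, namely the approximation property for every $x\in X$, already follows from the weaker hypothesis for every $x_0\in X_0$. The key additional ingredient is the observation that, just as in Fact \ref{multiple}, one can enforce arbitrary congruence conditions on the exponent $n$ produced by the hypothesis.

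First I would fix $x\in X$ and $\varepsilon>0$ and, using the density of $\mathrm{span}\,X_0$, write $x=y+r$ with $y=\sum_{k=1}^K\alpha_k x_k$, $x_k\in X_0$, $\alpha_k\in\K\setminus\{0\}$, and $\|r\|<\varepsilon/2$. Set $\eta:=\varepsilon/\bigl(2K(1+\max_j|\alpha_j|)\bigr)$. Next I would verify that $\|T\|>1$: picking any non-zero $x_0\in X_0$ and applying the hypothesis with a sufficiently small $\varepsilon'$, the relation $\|T^nz-x_0\|<\varepsilon'$ and $\|z\|<\varepsilon'$ forces $\|T\|^n\|z\|\ge\|x_0\|-\varepsilon'$, which rules out $\|T\|\le 1$. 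Once $\|T\|>1$ is known, repeating verbatim the argument of Fact \ref{multiple} yields the following strengthened statement: \emph{for every $x_0\in X_0$, every $\varepsilon'>0$ and all integers $N,M\ge 1$, there exist $z\in\mathrm{Per}(T)$ and $n\ge N$ with $n\equiv M\pmod N$ such that $\|z\|<\varepsilon'$ and $\|T^nz-x_0\|<\varepsilon'$.}

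The heart of the proof is then an inductive construction of periodic points $z_1,\dots,z_K\in\mathrm{Per}(T)$, with periods $p_k:=\mathrm{per}(z_k)$, and integers $n_1<n_2<\dots<n_K$ satisfying
\[
\|z_k\|<\eta,\quad\|T^{n_k}z_k-x_k\|<\eta,\quad n_k\equiv n_{k-1}\!\!\pmod{P_{k-1}}\quad(k\ge 2),
\]
where $P_{k-1}:=\mathrm{lcm}(p_1,\dots,p_{k-1})$. For $k=1$, one applies the hypothesis of Corollary \ref{Proposition 38} directly to $x_1$; for $k\ge 2$, one applies the strengthened statement above to $x_k$ with $M=n_{k-1}$, $N=P_{k-1}$ (and $N$ enlarged if necessary to force $n_k>n_{k-1}$). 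Setting $z:=\sum_{k=1}^K\alpha_k z_k$ and $n:=n_K$, the vector $z$ is periodic as a finite sum of periodic vectors, and $\|z\|\le\sum|\alpha_k|\|z_k\|<\varepsilon/2$. For each $k\le K$, the quantity $n-n_k$ is a multiple of $p_k$, and since $T^{p_k}$ fixes every iterate of $z_k$, one has $T^nz_k=T^{n_k}z_k$. Therefore
\[
\|T^nz-y\|\le\sum_{k=1}^K|\alpha_k|\,\|T^{n_k}z_k-x_k\|<\varepsilon/2,
\]
and combining with $\|x-y\|<\varepsilon/2$ gives $\|T^nz-x\|<\varepsilon$. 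This shows that the hypothesis of Corollary \ref{Proposition 36} holds for the arbitrary pair $(x,\varepsilon)$, and that corollary yields the chaoticity of $T$.

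The main technical obstacle is purely combinatorial: arranging the congruences $n_k\equiv n_{k-1}\pmod{P_{k-1}}$ so that, when the final exponent $n=n_K$ is applied, the contribution $T^nz_k$ of each earlier summand $z_k$ remains frozen at $T^{n_k}z_k\approx x_k$ instead of drifting away. The rest of the argument is routine linearity combined with the already-established Fact \ref{multiple}-style strengthening of the hypothesis.
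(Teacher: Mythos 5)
Your proof is correct and follows essentially the same route as the paper's: reduce to Corollary \ref{Proposition 36} by extending the hypothesis from $X_0$ to all of $X$, handle finite linear combinations by an inductive construction that uses the Fact \ref{multiple}-type congruence strengthening (valid already under the weaker hypothesis, since its proof only uses the assumption at the specific vector being approximated), and arrange the exponents by divisibility so that later iterations leave earlier summands frozen. The only cosmetic difference is bookkeeping: you use congruences $n_k\equiv n_{k-1}\pmod{\mathrm{lcm}(p_1,\dots,p_{k-1})}$ and a single final exponent $n=n_K$, while the paper uses telescoping partial sums $l_1+\cdots+l_k$ with $l_k$ a multiple of $\prod_{i<k}\mathrm{per}(z_i)$; both enforce exactly the same divisibility and yield $T^nz_k=T^{n_k}z_k$.
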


\begin{proof}
It is suffices to show that the assumption of Corollary \ref{Proposition 38} can be extended from 
vectors $x_0$ of $ X_{0}$ to arbitrary vectors $x$ of $ X$: once this is done, Corollary \ref{Proposition 36} applies.

Since the linear span of $X_{0}$ 
is dense in $X$, it suffices to consider vectors $x$ of the form 
$x=\sum_{k=1}^{r}a_{k}x_{k}$, where $x_{k}\in X_{0}$ and  $a_{k}\in \K\setminus
\{0\}$ for every $1\le k\le r$. An induction on $k$, $1\le k\le r$, allows us to construct, using 
(an obvious modification of) Fact \ref{multiple}, vectors $z_{k}\in \textrm{Per}(T)$ and integers 
$l_{k}$, $1\le k\le r$, with the following properties: 
\begin{enumerate}[(i)]
 \item $\|z_{k}\|<\dfrac{\varepsilon }{r\,|a_{k}|}$;
 \item $\|T^{l_{1}+\cdots+l_{k}}z_{k}-x_{k}\|<\dfrac{\varepsilon 
}{r\,|a_{k}|}$;
\item $l_{k}$ is a multiple of 
$d_{k-1}=\ds\prod_{i=1}^{k-1}\textrm{per}_{T}(z_{i})$.
\end{enumerate}
Now, set $z:=\sum_{k=1}^{r}a_{k}z_{k}$: this $z$ is clearly a periodic vector 
for $T$, with $\|z\|<\varepsilon $ by (i). Set also 
$n:=l_{1}+\cdots+l_{r}$. We have
\begin{align*}
 T^{n}z&=\sum_{k=1}^{r}a_{k}T^{n}z_{k}=\sum_{k=1}^{r}a_{k}T^{l_{1}+\cdots+
l_{k}}z_{k}\qquad\hbox{by (iii)},
\intertext{so that}
\|T^{n}z-x\|&\le\sum_{k=1}^{r}|a_{k}|\,
\|T^{l_{1}+\cdots+l_{k}}z_{k}-x_{k}\|<\varepsilon\qquad \hbox{by (ii).}
\end{align*}
This concludes the proof of Corollary \ref{Proposition 38}. 
\end{proof}

In the next two subsections, we will give two ``variations" of Corollary \ref{Proposition 36}, where 
we show that if the assumption $\|T^{n}z-x\|<\varepsilon $ is replaced by 
the requirement that the orbit of $z$ approximates that of $x$ during a 
sufficiently large time, \mbox{\it i.e.} $\|T^{n+k}z-T^{k}x\|<\varepsilon $ for a 
large number of indices $k$, then $T$ is $\mathcal{U}$-frequently 
hypercyclic, and even sometimes frequently hypercyclic.
\par\smallskip

\subsection{A criterion for ${\mathcal U}$-frequent hypercyclicity} 
Here is the version of our criterion for 
${\mathcal U}$-frequent hypercyclicity that we will use in Section~\ref{SPECIAL}.

\begin{theorem}\label{Theorem 39} Let $T\in\mathfrak B(X)$. Assume that there exist a dense linear subspace 
$X_0$ of $X$ with $T(X_0)\subseteq X_0$ and $X_0\subseteq \emph{Per}(T)$, and a constant
$\alpha\in (0,1)$ such that the following property holds true: for every $x\in X_{0}$ and every $\varepsilon >0$, there exist 
$z\in X_{0}$ and $\gn$ such that 
\begin{enumerate}
 \item [\emph{(1)}] $\|z\|<\varepsilon $;
 \item [\emph{(2)}] $\|T^{n+k}z-T^{k}x\|<\varepsilon $ for every
 $0\le k\le n\alpha $.
\end{enumerate}
Then $T$ is chaotic and $\mathcal{U}$-frequently hypercyclic.
\end{theorem}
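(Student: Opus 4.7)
The plan is to deduce chaoticity directly from Corollary~\ref{Proposition 38}, and to obtain $\mathcal{U}$-frequent hypercyclicity by constructing a suitable vector $z = \sum_{k \geq 1} z_k$ with each $z_k \in X_0$, much as in the direct proof of Corollary~\ref{Proposition 36}. Chaoticity is immediate: the $k=0$ case of the criterion yields exactly the hypothesis of Corollary~\ref{Proposition 38}, since $X_0 \subseteq \textrm{Per}(T)$ and $X_0$ is a dense linear subspace of $X$. Before embarking on the construction, I would strengthen the hypothesis exactly as in Fact~\ref{multiple}, so that the integer $n$ can be required to be a multiple of any prescribed $N \geq 1$: one checks first that $\|T\| > 1$, whence making $\varepsilon$ small in the criterion produces an arbitrarily large $n'$; replacing $z'$ by $T^r z' \in X_0$ (here $T(X_0) \subseteq X_0$ is crucial) for the appropriate $0 \leq r < N$ gives $n := n' - r$ divisible by $N$, and the orbit-approximation estimate is preserved over $0 \leq j \leq \alpha n$ because $n \leq n'$.

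Fix an enumeration $(y_k)_{k \geq 1}$ of a countable dense subset of $X_0$ in which every element appears infinitely often. I would build $z_k \in X_0$ and $n_k \geq 1$ inductively as follows. Set $w_k := \sum_{i < k} z_i \in X_0$, let $\pi_k := \textrm{per}_T(w_k)$, and set $x_k := y_k - w_k \in X_0$. Applying the strengthened criterion to $x_k$ with $N = \pi_k$ and a sufficiently small accuracy, pick $z_k \in X_0$ and $n_k > n_{k-1}$, a multiple of $\pi_k$, such that
\[
\|z_k\| < 2^{-k}\|T\|^{-(1+\alpha)n_{k-1}} \quad \hbox{and} \quad \|T^{n_k+j}z_k - T^j x_k\| < 2^{-k} \quad \hbox{for all } 0 \leq j \leq \alpha n_k.
\]
Then $z := \sum_{k \geq 1} z_k$ converges. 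Since $n_k$ is a multiple of $\pi_k$, one has $T^{n_k+j}w_k = T^j w_k$, and the bound on $\|z_k\|$ forces $\sum_{i > k} \|T^{n_k+j}z_i\| \leq \sum_{i > k} 2^{-i} \leq 2^{-k}$ for every $0 \leq j \leq \alpha n_k$ (using $n_{i-1} \geq n_k$ for $i > k$). Adding the three contributions yields
\[
\|T^{n_k+j}z - T^j y_k\| < 2^{-(k-1)} \quad \hbox{for all } 0 \leq j \leq \alpha n_k.
\]

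Given any non-empty open $V \subseteq X$, pick $y^* \in V \cap X_0$ (possible since $X_0$ is dense) together with $\varepsilon > 0$ such that $B(y^*, \varepsilon) \subseteq V$, and set $p^* := \textrm{per}_T(y^*) < \infty$. Since $y^*$ appears infinitely often among the $y_k$, there are infinitely many $k$ with $y_k = y^*$ and $2^{-(k-1)} < \varepsilon$; for each such $k$ and each integer $\ell$ with $0 \leq \ell p^* \leq \alpha n_k$, the identity $T^{\ell p^*}y^* = y^*$ combined with the displayed estimate gives $T^{n_k + \ell p^*}z \in B(y^*, \varepsilon) \subseteq V$. This produces at least $\lfloor \alpha n_k / p^* \rfloor$ elements of $\mathcal{N}_T(z, V)$ inside the interval $[0, (1+\alpha)n_k]$, hence
\[
\udens\, \mathcal{N}_T(z, V) \geq \frac{\alpha}{(1+\alpha)\, p^*} > 0,
\]
and $z$ is $\mathcal{U}$-frequently hypercyclic. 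The main obstacle is the bookkeeping in the inductive construction: the bound $\|z_k\| < 2^{-k}\|T\|^{-(1+\alpha)n_{k-1}}$ is tailored so that the tail estimate for $T^{n_k+j}z - T^j y_k$ holds \emph{uniformly} over the long window $j \in [0, \alpha n_k]$, rather than just at $j = 0$ as in Corollary~\ref{Proposition 36}. The decisive new conceptual ingredient is that the \emph{periodicity} of the targets $y^* \in X_0$ upgrades one visit per window into $\Theta(n_k)$ visits, which is precisely what produces positive upper density.
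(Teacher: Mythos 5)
Your proof is correct and follows essentially the same strategy as the paper's: strengthen the criterion so that $n$ can be forced to be a multiple of any prescribed $N$ and made arbitrarily large, build $z=\sum_k z_k$ inductively with $n_k$ a multiple of $\mathrm{per}_T(\sum_{i<k}z_i)$ and $\|z_k\|$ small enough to control the tail uniformly over the window $[n_k,(1+\alpha)n_k]$, and exploit the periodicity of the targets to extract $\Theta(n_k)$ visits per window, which yields the upper-density bound $\alpha/((1+\alpha)p^*)$. Your cosmetic reorganization — proving $\|T^{n_k+j}z-T^jy_k\|<2^{-(k-1)}$ for all $0\le j\le\alpha n_k$ and specializing to multiples of the period only at the end, instead of estimating $\|T^{n_j+m\,\mathrm{per}(x_l)}z-x_l\|$ directly as the paper does — does not change the substance, and the chaoticity conclusion via Corollary~\ref{Proposition 38} is likewise equivalent to the paper's observation that dense periodic points together with $\mathcal{U}$-frequent hypercyclicity give chaos.
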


\begin{proof} Since $X_0$ is dense in $X$, the periodic points of $T$ are dense in $X$. So we only have to show that $T$ is $\mathcal U$-frequently hypercyclic.
\par\smallskip
 As in the direct proof of Corollary \ref{Proposition 36} given above, we first show
that given $N\ge 1$, the integer $n$ appearing in the assumption of Theorem
\ref{Theorem 39} can be supposed to be a multiple of $N$. Given $x\in 
X_{0}$ and $\varepsilon >0$, there exist $z'\in X_{0}$ and $n'\ge 1$ such 
that $\|z'\|<\varepsilon'$ and 
$\|T^{n'+k}z'-T^{k}x\|<\varepsilon' $ for every 
$0\le k\le \alpha n'$, where $\varepsilon'>0$ has to be specified.
Taking $\varepsilon' $ small enough,
we can assume (see the proof of Fact \ref{multiple}) that $n'> N$. Let then
$0\le r<N$ be such that $n:=n'-r$ is a multiple of $N$, and set 
$z=T^{r}z'$. Then $z$ belongs to $X_0$ because $T(X_{0})\subseteq X_{0}$. Also
$\|z\|<\Vert T\Vert^N \varepsilon'<\varepsilon$ if $\varepsilon'$ is small enough; and 
$T^{n+k}z=T^{n'+k}z'$, so that $\|T^{n+k}z-T^{k}x\|<\varepsilon $ for
every $0\le k\le \alpha n'$. Since $n\le n'$, the inequality holds true \textit{a fortiori} for every $0\le k\le \alpha n$.
\par\smallskip
Let now 
$(x_{l})_{l\ge 1}$ be a dense sequence of vectors of $X_{0}$, and let 
$(I_{l})_{l\ge 
1}$ be a partition of $\N$ into  
infinite sets. We define a sequence $(y_{j})_{j\ge 1}$ of vectors of $X_{0}$ by 
setting $y_{j}=x_{l}$ for every $j\in I_{l}$. In other words, the sequence $(y_j)_{j\ge 1}$ enumerates infinitely many times 
each point of the dense sequence $(x_l)_{l\ge 1}$, 
and $I_l$ denotes the set of all indices $j\ge 1$ such that $y_j=x_l$.
\par\smallskip
By induction on $j\ge 1$, we 
construct a sequence $(z_{j})_{j\ge 1}$ of vectors of $X_{0}$ and a 
strictly increasing sequence $(n_{j})_{j\ge 1}$ of integers such that
\begin{enumerate}[(i)]
 \item $\|T^{k}z_{j}\|<2^{-j}$ for every $0\le k\le (1+\alpha )n_{j-1}$;
 \item $\|T^{n_{j}+k}z_{j}-T^{k}(y_{j}-\sum_{i<j}z_{i})\|<2^{-j}$ 
for every $0\le k\le \alpha n_{j}$;
\item $n_{j}$ is a multiple of the period of the vector $\sum_{i<j}z_{i}$.
\end{enumerate}
We set $z:=\sum_{i\ge 1}z_{i}$, which is well-defined by (i), and prove 
that $z$ is a $\mathcal{U}$-frequently hypercyclic vector for $T$. Fix 
$l\ge 1$. For every $j\in I_{l}$ and every $k\ge 0$, we have by (iii)
\[
 T^{n_{j}+k}z-x_{l}=
 {T^{k}\,\Bigl(\,\sum_{i<j}z_{i} 
\Bigr)
 +T^{n_{j}+k}z_{j}-y_{j}+\sum_{i>j}T^{n_{j}+k}z_{i}.}
\]
 Moreover, if $k$ is a multiple of $\textrm{per}(x_l)=\textrm{per}(y_j)$, say $k=m\, \textrm{per}(x_l)$, we can write 
\[
T^{m\, \textrm{per}(x_l)}\,\Bigl(\,\sum_{i<j}z_{i} 
\Bigr)-y_j=T^{m\, \textrm{per}(x_l)}\Bigl(\sum_{i<j}z_i-y_j\Bigr).
\]
{Hence we obtain that for any $m\geq 0$,}
\begin{align*}
 \|T^{n_{j}+m\,\textrm{per}(x_{l})}z-x_{l}\|
 \le 
\Bigl\|T^{n_{j}+m\,\textrm{per}(x_{l})}&z_{j}-T^{m\,\textrm{per}(x_{l})}\,\Bigl(\,y_{j}-\sum_{i<j} z_{i}\Bigr)\Bigr\|\\
&+\sum_{i>j}\|T^{n_{j}+m\,\textrm{per}(x_{l})}z_{i}\|.
\end{align*}
By (i) and (ii), it follows that for every $j\geq 1$ and every
$0\le m\le \alpha n_{j}/\textrm{per}(x_{l})$, we have 
\[\|T^{n_{j}+m\,\textrm{per}(x_{l})}z-x_{l}\|<2^{-j}+\sum_{i>j}  2^{-i}=2^{-(j-1)}.\]
 We deduce from this 
inequality that for any $\varepsilon >0$, 
\begin{align*}
 \overline{\textrm{dens}}\,\mathcal{N}_{T}(z,B(x_{l},\varepsilon ))&
\ge \limsup_{j\in I_{l}}\dfrac{1}{(1+\alpha )n_{j}}\,
\#\bigl\{n_{j}+m\,\textrm{per}(x_{l})\,;\,0\le m\le 
\alpha n_{j}/\textrm{per}(x_{l})\bigr\}\\
&\ge\dfrac{\alpha }{(1+\alpha )\textrm{per}(x_{l})}>0.
\end{align*}
Hence $z$ is a $\mathcal{U}$-frequently hypercyclic vector for $T$, and 
Theorem \ref{Theorem 39} follows.
\end{proof}

\begin{remark}\label{Remark 40}
 The proof of Theorem \ref{Theorem 39} shows that $T$ admits 
 $\mathcal{U}$-frequently hypercyclic vectors $z\in X$ with the property 
that  for every $l\ge 1$, there exists $\delta _{l}>0$ such that 
$ \overline{\textrm{dens}}\,\mathcal{N}_{T}(z,B(x_{l},\varepsilon ))\ge 
 \delta_{l} $ \emph{for every $\varepsilon >0$} (the point being that $\delta_l$ does not depend on $\varepsilon$).
 Hence there exists, for every 
$l\ge 1$, a sequence $(n_{k,l})_{k\ge 1}$ of integers of positive upper 
density such that $T^{n_{k,l}}z$  tends to $x_{l}$ as $k$ tends to 
infinity. As $x_{l}$ is a periodic point for $T$, there is no 
contradiction in this (see \cite[Rem. 4.8]{GM}).
\end{remark}
\par\smallskip

\subsubsection{Uniform recurrence, almost periodic points and ${\mathcal U}$-frequent hypercyclicity}
Even if this is not quite clear at first sight, the following result generalizes Theorem \ref{Theorem 39} (see Corollary \ref{bizarre3} below). 
\par

\begin{theorem}\label{bizarre} Let $T\in\mathfrak B(X)$. Assume that the uniformly recurrent points of $T$ are dense in $X$, and that there exists $\alpha>0$ such that the following 
property holds true: for any $\varepsilon>0$ and any non-empty open subset $O$ of $X$, one can find $x\in O$ and an arbitrarily large 
integer $n$ such that 
$\Vert T^{n+k}x\Vert<\varepsilon$ for every $0\le k\leq \alpha n$. Then $T$ is $\mathcal U$-frequently hypercyclic.
\end{theorem}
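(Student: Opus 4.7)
The plan is to adapt the inductive construction of Theorem~\ref{Theorem 39}, letting uniformly recurrent points play the role held there by periodic points. First, I fix a dense sequence $(x_l)_{l\ge 1}$ of uniformly recurrent points of $T$ and, for each $l$ and each $\varepsilon >0$, an integer $N_l(\varepsilon)\ge 1$ bounding the gaps of the syndetic set $\{m\ge 0\,;\,\|T^mx_l-x_l\|<\varepsilon\}$. Let $(y_j)_{j\ge 1}$ be an enumeration of $(x_l)_{l\ge 1}$ in which each $x_l$ appears infinitely often, and set $I_l:=\{j\,;\,y_j=x_l\}$. The goal is to build inductively vectors $u_j\in X$ with $\sum_j\|u_j\|<\infty$ and integers $n_1<n_2<\cdots$ so that $z:=\sum_{j\ge 1}u_j$ is $\mathcal U$-frequently hypercyclic.

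At step $j$, I apply the main hypothesis to the open set $O_j:=B(y_j,\eta_j)$ and to some small $\varepsilon_j>0$, producing $x_j\in O_j$ and $n_j>n_{j-1}$ (taken as large as needed) with $\|T^{n_j+k}x_j\|<\varepsilon_j$ for every $0\le k\le\alpha n_j$. Setting $u_j:=y_j-x_j$ then gives $\|u_j\|<\eta_j$ together with the shadowing identity $\|T^{n_j+k}u_j-T^{n_j+k}y_j\|<\varepsilon_j$ for $0\le k\le\alpha n_j$, so that the orbit of the small vector $u_j$ tracks the orbit of the target $y_j$ over the long block $[n_j,(1+\alpha)n_j]$. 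Once the construction is completed, the $\mathcal U$-frequent hypercyclicity of $z$ then follows as in Theorem~\ref{Theorem 39}: fixing $l\ge 1$ and $\varepsilon >0$, for each $j\in I_l$ the shadowing combined with the uniform recurrence of $x_l$ yields a set $K_j\subseteq[0,\alpha n_j]$ of cardinality at least $\alpha n_j/N_l(\varepsilon/3)$ for which $\|T^{n_j+k}u_j-x_l\|<2\varepsilon/3$; provided the cross-contributions $\sum_{i\ne j}T^{n_j+k}u_i$ are kept below $\varepsilon/3$ throughout the block $[n_j,(1+\alpha)n_j]$, I will obtain $\|T^{n_j+k}z-x_l\|<\varepsilon$ for every $k\in K_j$, hence
\[
\overline{\dens}\,\mathcal N_T\bigl(z,B(x_l,\varepsilon)\bigr)\ge \frac{\alpha}{(1+\alpha)\,N_l(\varepsilon/3)}>0.
\]

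The delicate point, and the main obstacle, is the simultaneous control of the cross-contributions $\sum_{i\ne j}T^{n_j+k}u_i$. The ``tail'' terms with $i>j$ pose no difficulty, since once $n_j$ is fixed one can impose $\|u_i\|\le 2^{-i-j}\|T\|^{-(1+\alpha)n_j}$ at every later stage, which makes $\|T^{n_j+k}u_i\|$ negligible on $[n_j,(1+\alpha)n_j]$. The problematic terms are those with $i<j$: the hypothesis only ensures that $\|T^mx_i\|<\varepsilon_i$ on the block $[n_i,(1+\alpha)n_i]$, and past that block the naive estimate $\|T^mu_i\|\le\|T\|^{m-(1+\alpha)n_i}\|T^{(1+\alpha)n_i}u_i\|$ can grow uncontrollably when $n_j\gg n_i$. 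The way around this, which is the actual technical core of the proof, is to refine $x_i$ at step $i$ by iterated applications of the hypothesis: once the initial $x_i$ provides a block of smallness of length $\alpha n_i$, a further application of the hypothesis near the overshoot $T^{(1+\alpha)n_i+1}x_i$ produces a small correction extending this block, and successive iterations can be used to enlarge the effective block of smallness of $u_i$ far enough to cover every active window $[n_j,(1+\alpha)n_j]$ that will arise at later stages. Combined with a careful diagonal choice of the parameters $\eta_i$, $\varepsilon_i$ and $n_j$, in the spirit of the proof of Theorem~\ref{Theorem 39}, this book-keeping will yield the required control on the cross-contributions and complete the proof.
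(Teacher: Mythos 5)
The paper's proof of Theorem~\ref{bizarre} is a short Baire category argument, quite different from the inductive construction you propose. Fixing a uniformly recurrent $v$ in a non-empty open $V$ and $\varepsilon>0$ with $B(v,\varepsilon)\subseteq V$, the set $D_V:=\{k\ge 1;\|T^kv-v\|<\varepsilon/2\}$ has bounded gaps, giving a constant $\alpha_V>0$; one then shows that the open set $G_{V,N}:=\{u;\exists m\ge N,\;\#\{i\le m;T^iu\in V\}\ge\alpha_V m\}$ is dense in $X$ by applying the hypothesis to the \emph{shifted} open set $O:=U-v$: taking $x\in O$ and $n\ge N$ with $\alpha n\ge M_V$ and $\|T^{n+k}x\|<\varepsilon/2$ on $[0,\alpha n]$, the vector $u:=x+v\in U$ satisfies $T^{n+k}u=T^{n+k}x+T^{n+k}v$, so it visits $V$ with frequency at least $\alpha_V$ on $[1,(1+\alpha)n]$. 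Baire category then yields a dense $G_\delta$ set of $\mathcal U$-frequently hypercyclic vectors. No series, no cross-contributions.

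Your proposal has a genuine gap precisely where you flag it. Setting $u_j:=y_j-x_j$ with $x_j\in B(y_j,\eta_j)$, you need $\sum_{i<j}\|T^{n_j+k}u_i\|$ small over the $j$-th window, but nothing constrains those terms: $y_i$ has a bounded orbit (being uniformly recurrent), yet $x_i$ is small only on the single block $[n_i,(1+\alpha)n_i]$, and $\|T^mx_i\|$ is wholly uncontrolled afterwards. Your repair, iterating the hypothesis to stretch the block of smallness of $u_i$ over all later windows, cannot go through: the windows $[n_j,(1+\alpha)n_j]$ for $j>i$ are infinitely many, unbounded, and not yet determined at stage $i$; and the hypothesis yields blocks of smallness starting at \emph{some} arbitrarily large but uncontrolled $n$, not at prescribed locations, so you can neither make the blocks of $u_i$ land where the later $n_j$'s will be, nor keep the gaps between successive blocks under control. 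The structure that makes Theorem~\ref{Theorem 39} and Corollary~\ref{bizarre3} work, namely that the partial sum $\sum_{i<j}z_i$ is periodic or almost periodic so that $n_j$ can be drawn from a syndetic set along which the partial sum nearly returns to itself, is simply absent: $\sum_{i<j}u_i$ has no recurrence property at all. A constructive proof is possible, but one must apply the hypothesis at step $j$ to the shifted ball $B(z_{j-1}-v_j,\delta_j)$ (with $v_j$ the uniformly recurrent target and $z_{j-1}$ the current approximation), so that $z_j:=x_j+v_j$ satisfies $\|z_j-z_{j-1}\|<\delta_j$; then $T^{n_j+k}z_j=T^{n_j+k}x_j+T^{n_j+k}v_j$ needs no cross-term estimate, because all of $z_{j-1}$ is absorbed into $x_j$. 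That, however, is just the Baire argument carried out by hand.
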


\begin{proof} It is enough to show that for any non-empty open subset $V$ of
$X$, there exists a constant $\alpha_V>0$ such that, for every $N\ge 1$, the open set 
\[G_{V,N}:=\Bigl\{ u\in X;\; \exists m\geq N\;:\;\#\{ i\leq m;\; T^iu\in V\}\geq \alpha_V\, m\Bigr\}\]
is dense in $X$. Indeed, if  $(V_q)_{q\geq 1}$ is a countable basis of open subsets of $X$, the density of each of the sets $G_{V_q,N}$ implies that 
$G:=\bigcap_{N,q} G_{V_q,N}$ is a dense $G_\delta$ subset of $X$ consisting of $\mathcal U$-frequently hypercyclic vectors for $T$.
\par\smallskip
So let $V$ be a non-empty open subset of $X$. Choose a uniformly recurrent point $v\in V$, and $\varepsilon >0$ such that 
$B(v,\varepsilon)\subseteq V$. 
Since $v$ is uniformly recurrent, the set 
\[D_{V}:=\{ k\ge 1;\; \Vert T^kv-v\Vert<\varepsilon/2\}\] has bounded gaps. Observe that we may call this set $D_V$ since $\varepsilon$ depends on $v$ and $V$, 
and $v$ depends on $V$. So there exist a constant $c_V>0$ and an integer $M_V\ge 1$ 
such that $\# (D_{V}\cap J)\geq c_V\,\# J$ for all intervals $J$ of $\N$ of length at least $M_V$. We set 
\[\alpha_V:=\frac{c_V\alpha}{1+\alpha}\, ,
\]
and we show that with this choice of $\alpha_V$, all the open sets 
$G_{V,N}$, $N\ge 1$, are dense in $X$.
Let us fix $N\ge 1$, and a non-empty open subset $U$ of $X$. We have to show that 
$G_{V,N}\cap U$ is non-empty.
Set $O:=U-v$, which is a non-empty open subset of $X$. By assumption, one can find $x\in O$ and an integer $n\geq N$ 
such that $\alpha n\geq M_V$ and 
$\Vert T^{n+k}x\Vert<\varepsilon/2$ for every $0\le k\leq\alpha n$. Then $u:=x+v$ belongs to $ U$
and our aim is to show that $u$ belongs to $G_{V,N}$.
For any $0\le k\leq\alpha n$, we have
\[ \Vert T^{n+k}u-v\Vert\leq \Vert T^{n+k}x\Vert +\Vert T^{n+k}v-v\Vert<\varepsilon/2+\Vert T^{n+k}v-v\Vert\,;
\]
in other words,
\[\forall i\in[n,(1+\alpha)n]\;:\; \Vert T^iu-v\Vert<\varepsilon/2+\Vert T^iv-v\Vert.
\]
Moreover, since the interval $J=[n,(1+\alpha)n]$ has length at least $M_V$, we know that $\#\, (J\cap D_V)\geq c_V\# J$, \mbox{\it i.e.}
\[\#\,\big\{ i\in [n,(1+\alpha)n];\; \Vert T^iv-v\Vert<\varepsilon/2\bigr\} \geq c_V\, \alpha n.
\]
It follows that 
\[ \#\,\{ 1\le i\leq (1+\alpha)n;\; \Vert T^iu-v\Vert<\varepsilon\}\geq c_V\, \alpha n\, ,
\]
and hence that $u$ belongs to $ G_{V,N}$.
\end{proof}

We now state and prove a few consequences of Theorem \ref{bizarre}.
For the first one, we need a definition: if $T$ is a bounded operator on $X$, a point $x\in X$ is said to be \emph{almost periodic} for $T$ if, 
for every $\varepsilon >0$, the set 
\[D_{x,\varepsilon}:=\{ n\ge 1;\; \forall k\ge 1\;:\; \Vert T^{n+k}x-T^kx\Vert<\varepsilon\}
\]
has bounded gaps. Thus, periodic points are almost periodic and almost periodic points are uniformly recurrent. The following fact 
is more interesting.

\begin{fact}\label{veryUREC} Let $T\in\mathfrak B(X)$. Then any vector $x$ belonging to $\textrm{span} \,\mathcal E(T)$ is {almost periodic} for $T$.
\end{fact}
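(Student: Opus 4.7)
The plan is to reduce the almost periodicity of $x$ to a standard syndeticity result about the orbit of a point in a compact abelian group under a rotation, borrowing the setup already developed for Fact \ref{FactUREC}. First I would write $x=\sum_{j=1}^{N} u_{j}$ with $T u_{j}=\gamma_{j} u_{j}$ and $|\gamma_{j}|=1$, and then carry out the elementary computation
\[
T^{n+k}x-T^{k}x=\sum_{j=1}^{N}\gamma_{j}^{k}\bigl(\gamma_{j}^{n}-1\bigr)u_{j}\,,
\]
from which, since $|\gamma_{j}^{k}|=1$, one obtains the key inequality
\[
\|T^{n+k}x-T^{k}x\|\le\sum_{j=1}^{N}|\gamma_{j}^{n}-1|\,\|u_{j}\|\qquad\hbox{for every $k\ge 0$.}
\]
The crucial feature here is that the right-hand side does \emph{not} depend on $k$: so to control $D_{x,\varepsilon}$ uniformly in $k$ it suffices to control the size of $|\gamma_{j}^{n}-1|$ for all $j$ simultaneously, \textit{i.e.} to control the position of $\gamma^{n}$ in the torus $\T^{N}$, where $\gamma:=(\gamma_{1},\dots ,\gamma_{N})$.

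Given $\varepsilon>0$, set $U_{\varepsilon}:=\bigl\{z=(z_{1},\dots ,z_{N})\in\T^{N};\;\sum_{j=1}^{N}|z_{j}-1|\,\|u_{j}\|<\varepsilon\bigr\}$, which is an open neighborhood of $\mathbf 1=(1,\dots ,1)$ in $\T^{N}$. The inequality above shows that $\{n\ge 1\,;\,\gamma^{n}\in U_{\varepsilon}\}\subseteq D_{x,\varepsilon}$, so all that remains is to prove that for \emph{every} open neighborhood $U$ of $\mathbf 1$ in $\T^{N}$, the set $\{n\ge 1\,;\,\gamma^{n}\in U\}$ has bounded gaps. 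Let $\Gamma$ be the closed subgroup of $\T^{N}$ generated by $\gamma$, as in the proof of Fact \ref{FactUREC}; then $(\gamma^{n})_{n\ge 0}$ is dense in $\Gamma$ (a dense sub-semigroup of a compact group is a subgroup, hence must coincide with $\Gamma$).

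To establish the bounded gap property I would argue by a standard equicontinuity/compactness argument: for each $g\in\Gamma$, density provides an integer $n(g)\ge 0$ with $\gamma^{n(g)}g\in U$, and by continuity of multiplication there is an open neighborhood $V_{g}$ of $g$ in $\Gamma$ with $\gamma^{n(g)}V_{g}\subseteq U$. Compactness of $\Gamma$ yields finitely many $g_{1},\dots ,g_{r}$ such that $\Gamma=\bigcup_{i=1}^{r}V_{g_{i}}$; setting $L:=\max_{i}n(g_{i})$, for every $m\ge 0$ one picks $i$ with $\gamma^{m}\in V_{g_{i}}$ and gets $\gamma^{m+n(g_{i})}\in U$ with $m+n(g_{i})\in[m,m+L]$. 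This shows the set of return times is syndetic, and hence $D_{x,\varepsilon}$ has bounded gaps, completing the proof.

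The only mildly delicate point is the syndeticity lemma in the last step; but since we are in a compact abelian group and the orbit is dense, this is the classical fact that minimal equicontinuous systems have syndetic return times, and the compactness argument sketched above handles it cleanly. The remaining steps are just the elementary eigenvector computation, which is entirely routine.
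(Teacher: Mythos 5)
Your proof is correct, and it takes a genuinely different (more direct) route than the paper's. The paper first invokes Fact \ref{FactUREC} to conclude that $x$ is uniformly recurrent, so that $D^\eta:=\{n\ge 1\,;\,\Vert T^nx-x\Vert<\eta\}$ has bounded gaps; it then upgrades uniform recurrence to almost periodicity by observing that $T$ restricted to the finite-dimensional subspace $E={\rm span}(u_1,\dots,u_r)$ is power-bounded (being diagonalizable with unimodular eigenvalues), so that $\Vert T^{n+k}x-T^kx\Vert=\Vert T^k(T^nx-x)\Vert\le C\Vert T^nx-x\Vert$ and hence $D^{\varepsilon/C}\subseteq D_{x,\varepsilon}$. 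You instead bypass the uniform recurrence detour entirely: the explicit estimate $\Vert T^{n+k}x-T^kx\Vert\le\sum_j|\gamma_j^n-1|\,\Vert u_j\Vert$ gives a bound uniform in $k$ directly, and you then redevelop the syndeticity of return times $\{n\,;\,\gamma^n\in U\}$ for a dense orbit on the compact group $\Gamma\subseteq\T^N$ from scratch via the standard compactness/covering argument. Both proofs ultimately rest on the same torus return-time dynamics — the paper gets them second-hand through Fact \ref{FactUREC}, you build them in-line. What the paper's factoring buys is brevity and modularity (it reuses work already done); what yours buys is self-containedness and a more transparent explanation of \emph{why} the bound is uniform in $k$, namely that $|\gamma_j^k|=1$ rotates without distorting. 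Either is a valid proof of the statement.
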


\begin{proof} We first recall that since $x$ belongs to $\textrm{span} \,\mathcal E(T)$, it is uniformly recurrent for~$T$ (Fact~\ref{FactUREC}); so for any $\eta>0$, the set
 $D^\eta:=\{ n\ge 1;\; \Vert  T^nx-x\Vert<\eta\}$ 
has bounded gaps.
Now, assuming that $x$ is non-zero, we write $x$ as $x=\sum_{i=1}^r u_i$, where $u_1,\dots ,u_r$ belong to $\mathcal E(T)$ and are linearly independent. 
Then the restriction of $T$ to the finite-dimensional subspace $E:=\textrm{span}\,(u_1,\dots ,u_r)$ is \emph{power-bounded}, being diagonalizable 
with only unimodular eigenvalues. Since $E$ contains the $T$-orbit of $x$, it follows that there exists a finite constant $C$ such that, for every $k\ge 0$,
\[\Vert T^{n+k}x-T^kx\Vert=\Vert T^k(T^nx-x)\Vert\leq C\,\Vert T^nx-x\Vert.\]
So the set $D^{\eta}$ with $\eta:=\varepsilon/C$ is contained in $D_{x,\varepsilon}$, which concludes the proof.
\end{proof}

From Theorem \ref{bizarre}, we now deduce

\begin{corollary}\label{bizarre2} Let $T\in\mathfrak B(X)$. Assume that there exists
$\alpha>0$ such that the following property holds true: for every non-empty open subset $O$ of $X$ and 
every $\varepsilon >0$, there exists an almost periodic point $x\in O$ such that: for any $\eta>0$, one can find
$z\in X$ and $\gn$ such that $\Vert z\Vert<\eta$ and $\Vert T^{n+k}z-T^kx\Vert<\varepsilon$ for all $0\leq k\leq\alpha n$. Then $T$ is $\mathcal U$-frequently hypercyclic.
\end{corollary}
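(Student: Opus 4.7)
The plan is to deduce the corollary from Theorem~\ref{bizarre} by verifying both of its hypotheses. Since almost periodic points are uniformly recurrent (Fact~\ref{FactUREC}), the assumption that an almost periodic point exists in every non-empty open set yields at once that uniformly recurrent points are dense in $X$. The real work lies in checking the quantitative hypothesis of Theorem~\ref{bizarre}: given $\varepsilon>0$, a non-empty open $O\subseteq X$ and $N\ge 1$, one has to produce $y\in O$ and $m\ge N$ with $\Vert T^{m+k}y\Vert<\varepsilon$ for every $0\le k\le\alpha m$.

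First I would apply the assumption of Corollary~\ref{bizarre2} to the pair $(O,\varepsilon/3)$, obtaining an almost periodic point $x\in O$ with the stated $(z,n)$-approximation property, and I shall assume $x\neq 0$ (if $x=0$, then $0\in O$ and one trivially takes $y:=0$ with any $m\ge N$). Almost periodicity of $x$ ensures that the set
\[
D:=\bigl\{p\ge 1\,;\,\Vert T^{p+k}x-T^kx\Vert<\tfrac{\varepsilon}{3}\ \text{for all } k\ge 0\bigr\}
\]
has bounded gaps; fix an upper bound $L$ for these gaps. Next, I would take $\eta>0$ very small and obtain from the hypothesis $z\in X$ and $n\ge 1$ with $\Vert z\Vert<\eta$ and $\Vert T^{n+k}z-T^kx\Vert<\varepsilon/3$ for every $0\le k\le\alpha n$. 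Taking $k=0$ gives $\Vert T\Vert^n\eta\ge\Vert T^nz\Vert\ge\Vert x\Vert-\varepsilon/3>0$, which forces $n\to\infty$ as $\eta\to 0$; in particular $n$ may be required to satisfy $n\ge N+L$.

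The decisive step is to transport the approximation $T^{n+k}z\approx T^kx$ from the uncontrolled time $n$ onto a nearby almost period of $x$. Since $D$ is syndetic, there exists $p\in D$ with $n-L\le p\le n$; writing $r:=n-p\in[0,L]$, I set $y:=x-T^rz$ (rather than the naive choice $x-z$). The bound $\Vert y-x\Vert\le\Vert T\Vert^L\eta$ keeps $y$ inside $O$ once $\eta$ is small enough, and using $p+r=n$ one computes, for every $0\le k\le\alpha p\le\alpha n$,
\[
T^{p+k}y=T^{p+k}x-T^{p+k+r}z=T^{p+k}x-T^{n+k}z.
\]
The triangle inequality, combined with $p\in D$ and the hypothesis, then yields
\[
\Vert T^{p+k}y\Vert\le\Vert T^{p+k}x-T^kx\Vert+\Vert T^kx-T^{n+k}z\Vert<\tfrac{\varepsilon}{3}+\tfrac{\varepsilon}{3}<\varepsilon.
\]
Choosing $m:=p\ge N$ verifies the quantitative hypothesis of Theorem~\ref{bizarre}, and $\mathcal{U}$-frequent hypercyclicity of $T$ follows.

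The only substantive idea, and the main obstacle in discovering this proof, is the pre-translation $y:=x-T^rz$: it is precisely what converts the given estimate $T^{n+k}z\approx T^kx$ into $T^{p+k}y\approx 0$ by aligning the time index with an almost period of $x$ furnished by the syndeticity of $D$. Once this trick is in place, everything else is routine bookkeeping.
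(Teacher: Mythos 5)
Your proof is correct and follows essentially the same route as the paper's: choose an almost periodic $x\in O$, take $z,n$ from the hypothesis with $\Vert z\Vert$ small so that $n$ is large, locate a nearby element $p$ of the almost-periodicity set $D$, and pre-translate $z$ by $r=n-p$ before subtracting it from $x$ so that the time index aligns with the almost period. The paper's version writes $n=n'-p$, sets $z:=T^p z'$, $x:=x_0-z$, and uses $\varepsilon/2$ in place of your $\varepsilon/3$, but the pre-translation trick and all the estimates are the same; your explicit treatment of the $x=0$ case is a minor additional precaution.
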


\begin{proof} By assumption, the uniformly recurrent points are certainly dense in $X$. In order to check the second assumption in Theorem \ref{bizarre}, 
let us fix a non-empty open subset $O$ of $X$ and $\varepsilon >0$.
Let us choose a non-zero almost periodic point $x_0\in O$ satisfying the assumption of Corollary \ref{bizarre2}, and
then an integer $M\ge 1$ such that any
interval of $\N$ of length $M$ contains a point of the set
\[ D:=\{ n\ge 1;\; \forall k\ge 1\;:\; \Vert T^{n+k}x_0-T^kx_0\Vert<\varepsilon/2\}.
\]  
Now, choose $z'\in X$ with $\Vert z'\Vert$ arbitrarily small and an arbitrarily large integer $n'>M$ such that 
$\Vert T^{n'+k}z'-T^kx_0\Vert<\varepsilon/2$ for every $0\le k\leq\alpha n'$ (since $x_0$ is non-zero, one can ensure that $n'$ be arbitrarily large by 
taking $\Vert z'\Vert$ small enough.)
Having fixed $z'$ and $n'$ in this way, we can pick $0\le p\leq M$ such that $n:=n'-p$ belongs to $ D$. Then $z:=T^pz'$ has arbitrarily small norm, so $x:=x_0-z$ belongs to $O$. Moreover, since $n\leq n'$ and $T^nz=T^{n'}z'$, 
we have for every $0\le k\leq\alpha n$:
\begin{align*}
\Vert T^{n+k}x\Vert&\leq \Vert T^{n+k}x_0-T^kx_0\Vert+\Vert T^kx_0-T^{n'+k}z'\Vert
<\varepsilon/2+\varepsilon/2=\varepsilon.
\end{align*}
The assumptions of Theorem \ref{bizarre} are thus satisfied.
\end{proof}

Here is an immediate consequence of Corollary \ref{bizarre2}, whose statement 
 is a bit less convoluted. This result shows in particular that in Theorem \ref{Theorem 39}, one can replace the assumption that the vectors of $X_{0}$ are periodic by the assumption that they are almost periodic. 
 Moreover, it also shows that it 
 was in fact unnecessary to assume that $X_0$ was a \emph{linear subspace} of $X$.
 
\begin{corollary}\label{bizarre3} Let $T\in\mathfrak B(X)$. Assume that there exist a dense subset
$X_0$ of $ X$ consisting of almost periodic points for $T$, and a constant
$\alpha\in (0,1)$ such that the following property holds true: for every $x\in X_{0}$ and every $\varepsilon >0$, there exist 
$z\in X_{0}$ and $\gn$ such that $\Vert z\Vert<\varepsilon$ and $\Vert T^{n+k}z-T^kx\Vert<\varepsilon$ for all $0\leq k\leq\alpha n$. Then $T$ is $\mathcal{U}$-frequently hypercyclic.
\end{corollary}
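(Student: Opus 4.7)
My plan is to derive Corollary \ref{bizarre3} as an immediate consequence of Corollary \ref{bizarre2}. All that is needed is to verify the hypothesis of Corollary \ref{bizarre2}, with the same value of $\alpha$, under the assumptions made in Corollary \ref{bizarre3}. So I would fix a non-empty open subset $O$ of $X$ and some $\varepsilon>0$. Since $X_0$ is dense in $X$ and consists of almost periodic points for $T$, I can pick a point $x\in X_0\cap O$, which is automatically an almost periodic point lying in $O$. This takes care of the first quantifier in the assumption of Corollary \ref{bizarre2}.

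It remains to check the inner ``for every $\eta>0$'' clause. Given such an $\eta$, I would set $\varepsilon':=\min(\eta,\varepsilon)$ and apply the hypothesis of Corollary \ref{bizarre3} to the pair $(x,\varepsilon')$. This yields a vector $z\in X_0\subseteq X$ and an integer $n\ge 1$ such that $\|z\|<\varepsilon'\le \eta$ and $\|T^{n+k}z-T^kx\|<\varepsilon'\le \varepsilon$ for every $0\le k\le\alpha n$. These are precisely the bounds required in the assumption of Corollary \ref{bizarre2}, so that corollary applies and delivers the $\mathcal{U}$-frequent hypercyclicity of $T$. There is no substantial obstacle in this argument, it is just a matter of unpacking the quantifiers correctly, the key observation being that in Corollary \ref{bizarre3} the single parameter $\varepsilon$ controls simultaneously the norm of $z$ and the tracking of the orbit, so it automatically encodes both of the independent parameters $\eta$ and $\varepsilon$ appearing in Corollary \ref{bizarre2}. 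The remark that this statement generalizes Theorem \ref{Theorem 39} is then justified by noting that periodic points are almost periodic and that the present formulation imposes neither the linearity of $X_0$ nor its $T$-invariance.
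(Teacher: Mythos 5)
Your proof is correct and matches the paper's own route: the authors likewise present Corollary \ref{bizarre3} as ``an immediate consequence of Corollary \ref{bizarre2}'' (they simply leave the quantifier-unpacking you carried out as implicit, and then go on to give a \emph{second}, constructive proof under the extra assumption that $X_0$ is a linear subspace). Your handling of the quantifier order is the right observation: in Corollary \ref{bizarre2} the almost periodic point $x\in O$ must be chosen before $\eta$, and picking $x\in X_0\cap O$ by density works precisely because the hypothesis of Corollary \ref{bizarre3} can then be invoked repeatedly for that same $x$ with $\varepsilon'=\min(\eta,\varepsilon)$.
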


Since Corollary \ref{bizarre3} is so similar to Theorem \ref{Theorem 39}, it is natural to ask whether one can provide a ``constructive" proof  of it
resembling that of Theorem \ref{Theorem 39}. We do so now, with the additional assumption that the above dense set $X_0$ is a linear subspace of $X$.

\begin{proof}[Constructive proof of Corollary \ref{bizarre3}] Define sequences $(x_l)_{l\ge 1}$, $(I_l)_{l\ge 1}$ and $(y_j)_{j\ge 1}$  as in the beginning of the proof of Theorem \ref{Theorem 39}. 
By induction, we 
construct a sequence $(z_{j})_{j\ge 1}$ of vectors of $X_{0}$ and an increasing sequence  of integers $(n_{j})_{j\ge 1}$ such that
\begin{enumerate}[(i)]
 \item $\|T^{k}z_{j}\|<2^{-j}$ for every $0\le k\le (1+\alpha )n_{j-1}$;
 \item $\|T^{n_{j}+k}z_{j}-T^{k}(y_{j}-\sum_{i<j}z_{i})\|<2^{-j}$ 
for every $0\le k\le \alpha n_{j}$;
\item[(iii')] $\left\Vert T^{n_j+k}\bigl(\sum_{i<j} z_i\bigr)-T^k\bigl(\sum_{i<j} z_i \bigr)\right\Vert< 2^{-j}$ for all $k\geq 0$.
\end{enumerate}
Let $j\ge 1$.
Assume that the construction has been carried out  up to the step $j-1$, and let us construct $n_j$ and $z_j$.
 Since the vector $\sum_{i<j}z_i$ lies in $X_0$, it is almost periodic; so 
one can find an integer 
$M\ge 1$ such that every interval of $\N$ of length $M$ contains a point of the set 
\[D:=\left\{ n\ge 1;\; \forall k\ge 1\;:\; \Bigl\Vert T^{n+k}\Bigl(\sum_{i<j} z_i\Bigr)-T^k\Bigl(\sum_{i<j} z_i \Bigr)\Bigr\Vert< 2^{-j}\right\}.\] 
By assumption on $T$, there exists $n'\in\N$ with $n'>n_{j-1}+M$ and $z'\in X_0$ such that 
\begin{itemize}
\item[-] $\|T^{k}z'\|<2^{-j}$ for every $0\le k\le (1+\alpha )n_{j-1}+M$;
\item[-] $\|T^{n'+k}z'-T^{k}(y_{j}-\sum_{i<j}z_{i})\|<2^{-j}$ for every $0\le k\le \alpha n'$.
\end{itemize}
Choose now $0\le p\leq M$ such that $n_j:=n'-p$ belongs to the set $D$, and let $z_j:=T^pz'$. Then (iii') holds true by the choice of $n_j$, 
(i) clearly holds true, and (ii) holds true as well because 
$T^{n_j+k}z_j=T^{n'+k}z'$ for every $k\geq 0$ and  every $n'\ge n_j$. This concludes the inductive step.
\par\smallskip
Let us prove that the vector $z:=\sum_{i\ge 1}z_{i}$ is a $\mathcal{U}$-frequently hypercyclic vector for $T$. This time, we write for every $l\geq 1$, $j\in I_l$ and $k\geq 0$:
\begin{eqnarray*}
T^{n_j+k}z-x_l &=&T^{n_j+k}\Bigl(\sum_{i<j}z_i\Bigr)-T^k\Bigl(\sum_{i<j}z_i\Bigr)
+T^{n_j+k}z_j-T^k\Bigl(y_j-\sum_{i<j}z_i\Bigr)\\
& &+T^kx_l-x_l
+\sum_{i>j} T^{n_j+k}z_i.
\end{eqnarray*}
Since $n_j$ belongs to $ D$, we deduce that if $0\le k\leq \alpha n_j$, then
\[\Vert T^{n_j+k}z-x_l\Vert\leq 2^{-j}+2^{-j}+\Vert T^kx_l-x_l\Vert+2^{-j}=3\cdot2^{-j}+\Vert T^kx_l-x_l\Vert.\]
Now, since $x_l$ is uniformly recurrent for $T$, the set 
\[D_{l,\varepsilon}:=\{ k\ge 1;\; \Vert T^kx_l-x_l\Vert<\varepsilon/2\}\]
has bounded gaps. So one can find a constant $c_{l,\varepsilon}>0$ such that $\# (D_{l,\varepsilon}\cap J)\geq c_{l,\varepsilon} \,\# J$ for 
all sufficiently large intervals $J\subseteq\N$. 
From this, it follows that if $j$ is large enough, then 
\[\#\bigl\{ n_j\leq i\leq (1+\alpha) n_j ;\; \Vert T^iz-x_l\Vert<\varepsilon\bigr\}\geq c_{l,\varepsilon}\, \alpha n_j,\]
and hence that
\[ \overline{\textrm{dens}}\,\mathcal{N}_{T}(z,B(x_{l},\varepsilon ))\geq c_{l,\varepsilon} \,\limsup_{j\to\infty}\frac{\alpha n_j}{(1+\alpha) n_j}
= \frac{c_{l,\varepsilon} \alpha}{1+\alpha}\cdot\]
\end{proof}

Finally, here is a somewhat unexpected consequence of Theorem \ref{bizarre}.

\begin{corollary}\label{bizarre5} Let $T\in\mathfrak B(X)$. Assume that the uniformly recurrent points of $T$ are dense in $X$, and that there also exists a dense set of points
$x\in X$ such that $T^ix\to 0$ as $i\to\infty$. Then $T$ is $\mathcal U$-frequently hypercyclic.
\end{corollary}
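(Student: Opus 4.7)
The plan is to apply Theorem \ref{bizarre} with, say, $\alpha:=1$. The first hypothesis of that theorem, namely density of the uniformly recurrent points of $T$, is given to us, so only the second hypothesis requires verification: we need to show that for every $\varepsilon>0$ and every non-empty open set $O\subseteq X$, there exist $x\in O$ and arbitrarily large integers $n$ with $\|T^{n+k}x\|<\varepsilon$ for every $0\le k\le\alpha n=n$.

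This verification is essentially immediate from the second assumption. Fix $\varepsilon>0$ and a non-empty open set $O\subseteq X$. By the density of the set $\{x\in X;\; T^ix\to 0\;{\rm as}\;i\to\infty\}$, we can pick a point $x\in O$ such that $T^ix\to 0$. Choose $N\ge 1$ such that $\|T^ix\|<\varepsilon$ for every $i\ge N$. Then for every $n\ge N$ and every $0\le k\le n$ we have $n+k\ge n\ge N$, and hence $\|T^{n+k}x\|<\varepsilon$. Thus the second hypothesis of Theorem \ref{bizarre} holds with the constant $\alpha=1$, and we conclude that $T$ is $\mathcal U$-frequently hypercyclic.

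There is no real obstacle here: the point is simply that Theorem \ref{bizarre} was designed to admit exactly this kind of input, where a dense set of vectors whose orbit goes to zero provides the needed ``downward'' control on a long window $[n,(1+\alpha)n]$ for arbitrarily large $n$, while density of uniformly recurrent points provides the ``upward'' control needed to reach any prescribed open set. Once those two ingredients are on the table, the proof is just a matter of pointing to Theorem \ref{bizarre}.
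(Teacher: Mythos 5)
Your proof is correct and follows the same route as the paper, which simply invokes Theorem~\ref{bizarre}; you have merely spelled out the immediate verification (with $\alpha=1$) of the second hypothesis that the paper leaves to the reader.
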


\begin{proof} This follows immediately from Theorem \ref{bizarre}.
\end{proof}

\begin{remark} Corollary \ref{bizarre5} shows in particular that if a unilateral weighted backward shift 
on $\ell^p(\N)$ or $c_0(\N)$ has a dense set of uniformly recurrent points, then it is $\mathcal U$-frequently hypercyclic. However, a much stronger result is
true: \emph{if a weighted backward shift has just a single non-zero uniformly recurrent point, then it is in fact chaotic and frequently hypercyclic.}

\begin{proof}
Let $B_{\om}$ be a weighted backward shift on $X=c_0(\N)$ or $\ell_p(\N)$, $1\le p<\infty$, associated to the weight sequence $ \om=(\omega_k)_{k\geq 1}$, and assume that $B_{\om}$ has a non-zero uniformly recurrent point $x\in X$. By the classical Frequent Hypercyclicity/Chaoticity Criterion (see \cite{BM} or \cite{GEP}), it is enough to show that the sequence $\Bigl(\prod_{k=1}^n \omega_k^{-1}\Bigr)$ belongs to the space $X$. We check this in the case where $X=c_0(\N)$, the $\ell_p$ case being similar. So we have to show that 
$\prod_{k=1}^n \omega_k\to\infty$ as $n\to\infty$.
Since $x=\sum_{k\geq 1} x_k e_k$ is non-zero, there exists $k_{0}\ge 1$ such that $|x_{k_{0}}|>0$. Let $0<\varepsilon<|x_{k_{0}}|/2$. Since $x$ is uniformly recurrent for $B_{\om}$, 
there exists a strictly increasing sequence of integers $(n_j)_{j\ge 1}$ and a positive integer $M$ such that $n_{j+1}-n_j\le M$ and $\|B^{n_j}_{\om}x-x\|<\varepsilon$ for every $j\ge 1$. 
In particular, we have 
\[
\left|\left(\prod_{k=k_{0}+1}^{k_{0}+n_j}\omega_{k}\right)x_{k_{0}+n_j}-x_{k_{0}}\right|<\varepsilon 
\]
 and hence
\[ 
\prod_{k=k_{0}+1}^{k_{0}+n_j}|\omega_{k}|> \dfrac{\varepsilon}{|x_{k_{0}+n_j}|}\cdot
\] 
Since $x$ belongs to $c_0(\N)$, we deduce that
\[ \prod_{k=k_{0}+1}^{k_{0}+n_j}|\omega_{k}|\to\infty\quad\textrm{ as } j\to\infty.\]
Moreover, if $n$ is a sufficiently large integer, there exists an integer $j\ge 1$ such that $k_{0}+n_{j-1}\leq n<k_{0}+n_{j}$. Since $n_{j+1}-n_j\le M$, this implies that
\[\prod_{k=k_{0}+1}^{n}|\omega_{k}|\ge \frac{\prod_{k=k_{0}+1}^{k_{0}+n_{j}}|\omega_{k}|}{\|w\|^M_{\infty}},\]
and this concludes the proof. 
\end{proof}
\end{remark}

\begin{remark} A natural question that comes to mind in view of Corollary \ref{bizarre5} is whether any \emph{chaotic and topologically mixing} operator has to be $\mathcal U$-frequently 
hypercyclic. We will prove in Section~\ref{ex mixing} that it is not the case.
\end{remark}
\par\smallskip

\subsubsection{More about $\mathcal U$-frequent hypercyclicity and c(T)} Corollary \ref{bizarre5} says in essence that lots of uniformly recurrent points plus lots of orbits tending to $0$ imply $\mathcal U$-frequent hypercyclicity. 
In the same spirit, we now prove the following result, which again may look rather surprising at first sight.

\begin{theorem}\label{10000} Let $T\in\mathfrak B(X)$ be hypercyclic, and assume that $T$ has a dense set of uniformly recurrent points. Then $T$ is $\mathcal U$-frequently hypercyclic if and only if $c(T)>0$.
\end{theorem}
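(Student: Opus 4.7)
The ``only if'' direction is immediate: any $\mathcal U$-frequently hypercyclic vector $x$ for $T$ satisfies $\overline{\rm dens}\,\mathcal N_T(x,B(0,1))>0$, hence $c(T)>0$.

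For the converse, I would aim to verify the criterion given by Fact~\ref{Fact 9 ter}(2). That is, assuming $c(T)>0$ and dense uniformly recurrent points, I plan to produce, for each non-empty open $V\subseteq X$, a constant $\delta_V>0$ such that, for every $N\ge 1$, the open set
\[
G_{V,N}=\bigl\{u\in X\,;\,\exists\,n\ge N,\ \#\{1\le i\le n\,:\,T^iu\in V\}\ge \delta_V n\bigr\}
\]
is dense in $X$. Fixing such a $V$, the first step is to pick a uniformly recurrent $v\in V$ with $B(v,2\rho)\subseteq V$, so that the return set $D_v:=\{k\ge 1\,:\,\|T^kv-v\|<\rho\}$ is syndetic: $\#(D_v\cap J)\ge c_v|J|$ for every interval $J$ of length at least some $M_v$. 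With $\delta_V:=c_v\alpha/(1+\alpha)$ for a well-chosen $\alpha>0$, the problem then reduces to showing that $G_{V,N}\cap U\ne\emptyset$ for every non-empty open $U\subseteq X$ and every $N\ge 1$.

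Following the spirit of the proof of Theorem~\ref{bizarre}, I would construct $u=v+z\in U$ (after translating $U$ by $v$) with $z$ small and a large integer $n$ for which the orbit of $z$ shadows $0$ on a long block, namely $\|T^{n+k}z\|<\rho$ for every $0\le k\le \alpha n$. Combined with the syndetic returns of $v$, this yields at least $c_v\alpha n$ indices $i\in[n,(1+\alpha)n]$ with $T^iu\in V$, placing $u$ in $G_{V,N}$. To produce such a $z$, I would exploit the hypothesis $c(T)>0$ through the comeager characterization recalled in Section~\ref{TYPICAL}: for a comeager set of hypercyclic vectors $y$, one has $\overline{\rm dens}\,\mathcal N_T(y,B(0,\eta))=c(T)$ for \emph{every} $\eta>0$. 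A suitable rescaling and hypercyclic perturbation of such a generic $y$ into a translate of $U$ should then deliver the required $z$.

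The main obstacle is precisely that $c(T)>0$ only provides a \emph{set} of small-orbit times of positive upper density for $y$, whereas Theorem~\ref{bizarre} calls for a \emph{block} of consecutive small-orbit times; and the intersection of a set of positive upper density with an arbitrary syndetic set may well be empty. Overcoming this requires a careful combinatorial step that, within a long time window realizing the asymptotic density $c(T)$, selects a time origin $n$ around which the small-orbit times of $y$ become densely packed -- or, equivalently, exploits the freedom in the choice of generic $y$ to align its small-orbit times with the syndetic block structure inherited from $D_v$ via the uniform recurrence of $v$. Performing this alignment via a joint pigeonhole and perturbation argument is, in my view, the technical heart of the proof.
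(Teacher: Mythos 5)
Your decomposition of the problem and your identification of the obstacle are both accurate, but the gap you point to is a genuine one, and the specific repair you sketch points in the wrong direction. You want to apply Theorem~\ref{bizarre}, which needs a vector $z$ whose orbit stays near $0$ on a \emph{consecutive block} $\{n,n+1,\dots,n+\lfloor\alpha n\rfloor\}$. But $c(T)>0$ only says that a typical hypercyclic vector has small orbit along a \emph{set} of upper density $\geq c(T)$; such a set need not contain any long intervals, and no amount of rescaling, perturbing, or pigeonholing ``within a long time window'' manufactures consecutive blocks out of it. So the route through Theorem~\ref{bizarre} is blocked in principle, not just technically.

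The paper's proof sidesteps blocks entirely. It takes $y=x_0+x$ directly, with $x_0$ a uniformly recurrent point in $V$ (with $B\prec V$ a ball centered at $x_0$) and $x$ in the comeager set $G$ from \cite[Prop.\ 4.7]{GM} of vectors whose orbits tend to $0$ along a set $D_x$ of upper density $\geq c(T)$. The combinatorial step is then a simple finite pigeonhole: if $N$ is such that $\mathcal N_T(x_0,B)$ meets every interval of length $N$, then $\bigcup_{k=0}^{N-1}\bigl(\mathcal N_T(x_0,B)-k\bigr)=\N$, so some translate $\mathcal N_T(x_0,B)-k$ meets $D_x$ with upper density $\geq c(T)/N$; equivalently $(D_x+k)\cap\mathcal N_T(x_0,B)$ has upper density $\geq c(T)/N$. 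Along that set, $T^n x_0\in B$ and $T^nx=T^k(T^{n-k}x)\to 0$, whence (by $B\prec V$) all but finitely many such $n$ lie in $\mathcal N_T(y,V)$. No alignment of $D_x$ with an interval is needed: one shifts $D_x$ to align with the \emph{syndetic set of return times of $x_0$} rather than with an interval, and this shift is free because $T^k$ is continuous. Running this for a countable basis of open sets and intersecting the comeager sets $x_p+G$ produces a comeager set of $\mathcal U$-frequently hypercyclic vectors, which in particular shows the density asked for in Fact~\ref{Fact 9 ter}(2). So your framework is compatible with the argument, but the ``technical heart'' you flag must be replaced by this shift-and-pigeonhole step, not by a search for consecutive blocks.
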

\begin{proof} One implication is clear: if $T$ is $\mathcal U$-frequently hypercyclic, then $c(T)>0$ by the very definition of $c(T)$.

\smallskip
Conversely, assume that $c(T)>0$. Then there is a comeager set $G\subseteq X$ such 
that, for every $x\in G$, one can find a subset $D_{x}$ of $\N$ with 
$\overline{\textrm{dens}}\,
D_{x}\ge c(T)$ such that $\Vert T^{n}x\Vert\to 0$ as $n\to\infty$ along $D_{x}$ 
(see \cite[Prop. 4.7]{GM}). The key step of the proof lies in the next fact, where we use
for convenience the following notation: if $V$ is an open subset of $X$ and $B=B(u,\varepsilon)$ is an open ball of $X$, we write $B\prec V$ if there exists $\varepsilon'>\varepsilon$
such that $B(u,\varepsilon')\subseteq V$.

\begin{fact}\label{Fact 10001}
 Let $x_{0}\in X$ be a uniformly recurrent point for $T$, and let $V$ be an open neighbourhood of $x_0$. Let also $B$ be an open ball with center $x_0$ such 
 that $B\prec V$, and choose an integer 
 $N$ such 
 that every interval $I\subseteq \N$ of cardinality at least $N$ intersects $\mathcal N_T(x_0,B)$. Then, for every 
 $y\in x_{0}+G$, we have $\overline{\textrm{dens}}\,\mathcal N_T(y,V)\geq c(T)/N$.
\end{fact}

\begin{proof}[Proof of Fact \ref{Fact 10001}]
Write $y=x_{0}+x$, with $x\in G$. Since  $\overline{\textrm{dens}}\,
D_{x}\ge c(T)$ and $\bigcup_{k=0}^{N-1} \bigl(\mathcal N_T(x_0,B)-k\bigr)=\N$ by the choice of $N$, one can find $k\in\{ 0,\dots ,N-1\}$  
such that 
$\overline{\textrm{dens}}\,
\bigl(D_{x}\,\cap\, (\mathcal N_T(x_0,B)-k)\bigr)\ge c(T)/N$; equivalently, $\overline{\textrm{dens}}\,
\bigl((D_{x}+k)\,\cap\, \mathcal N_T(x_0,B)\bigr)\ge c(T)/N$. Now, we have $T^{n}y=
T^{n}x_{0}+T^{n}x$ for all $n\in\N$, and $T^nx\to 0$ as $n\to\infty$ along $D_x+k$. Since $B\prec V$, it follows that all but finitely many integers
$n\in (D_{x}+k)\cap \mathcal N_T(x_0,B)$ belong to $\mathcal N_T(y, V)$, and hence that $\mathcal N_T(y,V)\geq c(T)/N$.
\end{proof}

It is now easy to conclude the proof of Theorem \ref{10000}. Let $(V_p)_{p\geq 1}$ be a countable basis of (non-empty) open sets for $X$.  Choosing for each $p\ge 1$ a uniformly recurrent point 
$x_p\in V_p$ and an open ball $B_p$ with center $x_p$ such that $B_p\prec V_p$, we see that the following holds true: for each $p\geq 1$, there is an integer $N_p$ and a 
comeager set $G_p\subseteq X$ (namely, $G_p=x_p+G$) 
such that 
$ \overline{\textrm{dens}}\, \mathcal N_T(z, V_p)\geq c(T)/N_p$ for every $z\in G_{p}$. Then every vector $z$ in the comeager set $G_\infty:=\bigcap_{p\geq 1} G_p$ is a $\mathcal U$-frequently hypercyclic vector for $T$. 
\end{proof}

\begin{remark} In Theorem \ref{10000}, one cannot replace the assumption of {$\mathcal U$-frequent} hypercyclicity with that of frequent hypercyclicity. Indeed, as we shall see in Theorem \ref{Theorem 58}, there exist operators which are chaotic and 
$\mathcal U$-frequently hypercyclic (hence, hypercyclic with a dense set of uniformly recurrent points and such that $c(T)>0$) but not frequently hypercyclic.
\end{remark}

\smallskip
When the operator $T$ is \emph{chaotic}, the proof of Theorem \ref{10000} gives a more precise statement, which says that if $c(T)>0$, then $T$ is ``$\mathcal U$-frequently hypercyclic with estimates".
\begin{corollary}\label{10003} Let $T$ be a chaotic operator on $X$ with $c(T)>0$. For any open set $V\neq\emptyset$ in $X$, let us denote by $N(V)$ the smallest period of all periodic vectors of $T$ belonging to $V$. Then there is a comeager set of vectors $z\in X$ such that 
\[ \overline{\textrm{dens}}\,\mathcal{N}_{T}(z,V )\geq \frac1{N(V)}\, {c(T)}\qquad\hbox{for every open set $V\neq\emptyset$}.\]
\end{corollary}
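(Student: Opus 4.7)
The plan is to deduce the corollary from the proof of Theorem~\ref{10000}, using the fact that every periodic point automatically has ``minimal'' uniform recurrence in the sense of Fact~\ref{Fact 10001}: if $x_{0}$ is periodic of period $p$, then $p\N\subseteq\mathcal{N}_{T}(x_{0},B)$ for any neighborhood $B$ of $x_{0}$, so one may take $N=p$ in Fact~\ref{Fact 10001}.

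First, I would invoke the characterization of $c(T)$ recalled in the proof of Theorem~\ref{10000}: there is a comeager set $G\subseteq X$ such that, for each $x\in G$, one has a set $D_{x}\subseteq\N$ with $\overline{\mathrm{dens}}\,D_{x}\ge c(T)$ and $\|T^{n}x\|\to 0$ along $D_{x}$. I would then fix a countable basis $(V_{p})_{p\ge 1}$ of non-empty open subsets of $X$. Since $T$ is chaotic, the set of periodic points is dense, so for each $p\ge 1$ the definition of $N(V_{p})$ allows us to choose a periodic point $x_{p}\in V_{p}$ of period \emph{exactly} $N(V_{p})$. Picking an open ball $B_{p}$ centered at $x_{p}$ with $B_{p}\prec V_{p}$, the relation $T^{kN(V_{p})}x_{p}=x_{p}\in B_{p}$ for all $k\ge 0$ yields $N(V_{p})\N\subseteq\mathcal{N}_{T}(x_{p},B_{p})$, so every interval of $\N$ of length at least $N(V_{p})$ meets $\mathcal{N}_{T}(x_{p},B_{p})$. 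Applying Fact~\ref{Fact 10001} with $N=N(V_{p})$ gives
\[
\overline{\mathrm{dens}}\,\mathcal{N}_{T}(y,V_{p})\ge \frac{c(T)}{N(V_{p})}\qquad\hbox{for every $y\in x_{p}+G$.}
\]

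Next I would set $G_{\infty}:=\bigcap_{p\ge 1}(x_{p}+G)$, which is comeager in $X$ as a countable intersection of comeager sets. By construction, every $z\in G_{\infty}$ satisfies $\overline{\mathrm{dens}}\,\mathcal{N}_{T}(z,V_{p})\ge c(T)/N(V_{p})$ \emph{simultaneously} for all $p\ge 1$.

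Finally, to pass from basic open sets to arbitrary non-empty open sets $V$, I would exploit the monotonicity of $N$: choose a periodic point $x^{*}\in V$ of period exactly $N(V)$ (which exists by definition of $N(V)$), pick an open ball $B\ni x^{*}$ with $B\subseteq V$, and then choose a basic open set $V_{p}$ with $x^{*}\in V_{p}\subseteq B\subseteq V$. Since $x^{*}\in V_{p}$ has period $N(V)$, we have $N(V_{p})\le N(V)$; conversely, $V_{p}\subseteq V$ implies $N(V_{p})\ge N(V)$; hence $N(V_{p})=N(V)$. Consequently, for every $z\in G_{\infty}$,
\[
\overline{\mathrm{dens}}\,\mathcal{N}_{T}(z,V)\ge \overline{\mathrm{dens}}\,\mathcal{N}_{T}(z,V_{p})\ge \frac{c(T)}{N(V_{p})}=\frac{c(T)}{N(V)},
\]
which is the desired estimate. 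There is essentially no obstacle here beyond this small bookkeeping: the entire argument is a direct application of Fact~\ref{Fact 10001} to periodic centers, with the observation that the smallest period in $V$ is already attained on some basic open subset of $V$.
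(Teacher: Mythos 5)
Your proof is correct and takes essentially the same route as the paper's: apply Fact~\ref{Fact 10001} with $x_0$ a periodic point of minimal period so that $N$ can be taken equal to that period, then intersect the resulting comeager sets $x_p+G$ over a countable basis exactly as in the proof of Theorem~\ref{10000}. The careful passage from basic open sets to arbitrary open sets via the equality $N(V_p)=N(V)$ is a useful detail that the paper's one-line proof leaves implicit.
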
 

\begin{proof} Note that if the point $x_0$ in Fact \ref{Fact 10001} is a periodic point of $T$, 
then one can take as $N$ the period of $x_0$. Then follow the proof of Theorem \ref{10000}.
\end{proof}

\smallskip 
From Corollary \ref{10003}, we immediately deduce

\begin{corollary}\label{10001000} If $T\in\mathfrak B(X)$ is chaotic and \emph{ergodic}, then there is a comeager set of vectors $z\in X$ such that 
$\overline{\textrm{dens}}\,\mathcal{N}_{T}(z,V )\geq1/N(V)$ for every non-empty open subset $V$ of $X$.
\end{corollary}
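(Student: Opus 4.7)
The plan is to derive Corollary \ref{10001000} as a direct consequence of Corollary \ref{10003}. For this it suffices to verify that every ergodic operator $T$ satisfies $c(T)=1$; once this is established, substituting $c(T)=1$ into the estimate of Corollary \ref{10003} yields precisely the claimed lower bound $\overline{\textrm{dens}}\,\mathcal N_T(z,V)\geq 1/N(V)$ on a comeager set of vectors $z$.

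To see that ergodicity forces $c(T)=1$, I would invoke the chain of inclusions already recalled in the introductory material of the paper, namely
\[
\textrm{ERG}(X)\subseteq \textrm{DDCH}(X)\subseteq c\textrm{MAX}(X).
\]
The first inclusion is the result from \cite{BR} and \cite{GM} asserting that ergodic operators are densely distributionally chaotic. The second one follows from \cite{BBMP}: for any densely distributionally chaotic operator $T$, the set of distributionally irregular vectors is comeager in $X$, and every such vector $x$ satisfies $\overline{\textrm{dens}}\,\mathcal N_T(x,B(0,\alpha))=1$ for all $\alpha>0$. By the characterization of $c(T)$ as the supremum of $\overline{\textrm{dens}}\,\mathcal N_T(x,B(0,\alpha))$ over hypercyclic vectors (or equivalently, over a comeager set of such vectors), we obtain $c(T)=1$.

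Combining these two observations, any chaotic and ergodic operator $T$ lies in $c\textrm{MAX}(X)$, so Corollary \ref{10003} applies with $c(T)=1$ and produces a comeager set of vectors $z\in X$ for which
\[
\overline{\textrm{dens}}\,\mathcal N_T(z,V)\geq \frac{c(T)}{N(V)}=\frac{1}{N(V)}
\]
holds simultaneously for every non-empty open set $V\subseteq X$. There is no real obstacle here beyond unpacking the already-proved facts: the proof is essentially a one-line deduction from Corollary \ref{10003} together with the known inclusion $\textrm{ERG}(X)\subseteq c\textrm{MAX}(X)$.
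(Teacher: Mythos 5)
Your proposal is correct and follows exactly the route the paper takes: the paper's proof is the one-liner ``This is clear since $c(T)=1$ for any ergodic operator $T$,'' applied to Corollary \ref{10003}, and your argument is the same deduction with the fact that ergodicity forces $c(T)=1$ unpacked via the chain $\textrm{ERG}\subseteq\textrm{DDCH}\subseteq c\textrm{MAX}$ already recorded in the paper's introduction.
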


\begin{proof} 
This is clear since $c(T)=1$ for any ergodic operator $T$.
\end{proof}

\begin{remark} All vectors $z\in X$ satisfying the conclusion of Corollary \ref{10001000} are $\mathcal U$-frequently hypercyclic for $T$, but none of them is frequently hypercyclic. Indeed, if $V_0$ 
is any open ball centered at $0$, then $N(V_0)=1$, so that $\overline{\textrm{dens}}\,\mathcal{N}_{T}(z,V_0 )=1$. 
Hence, one must have $\underline{\textrm{dens}}\,\mathcal{N}_{T}(z,V )=0$ for any open set $V$ disjoint from $V_0$.
\end{remark}

\smallskip
To conclude this section, we now proceed to prove a generalization of Corollary \ref{10001000} to ergodic operators 
$T\in\mathfrak B(X)$ which are not necessarily chaotic:  we  require that $T$ belongs to ${\rm SPAN}(X)$, \mbox{\it i.e.} that the unimodular eigenvectors of $T$ span a dense subspace of $X$.
\par\smallskip
We first need to introduce some notation.
To every $\lambda\in\T$, we associate the 
rotation-invariant measure $\nu _{\lambda }$ on $\T$ defined as follows:
\[
\nu _{\lambda }=
\begin{cases}
 \dfrac{1}{N}
\ds\sum_{k=0}^{N-1}\delta _{\{\lambda ^{k}\}}
\quad\textrm{if}\ \lambda ^{N}=1\ \textrm{for some}\ N\ge 1,\\
\textrm{the normalized Lebesgue measure on}\ \T\ \textrm{otherwise.}
\end{cases}
\]

Suppose now that $T\in\mathfrak B(X)$, and that $\mathbf u=(u_1,u_2,\dots )$ is a finite or infinite 
sequence of linearly independent eigenvectors associated to unimodular eigenvalues $\lambda _{1},\lambda_2, \dots$. 
Let us denote by $\mathcal E^{\mathbf u}$ the linear span of the vectors $u_j$, $j\ge 1$. 
Any vector $u\in\mathcal E^{\mathbf u}$ may be written in a unique way as 
\[u=\sum_{k=1}^{r}
a_{k}^{\bf u}(u)u_{k},
\] 
where the coefficients $a_k^{\bf u}(u)$, $1\le k\le r$, are complex scalars. To any such vector $u$, we associate the 
$T$-invariant
measure $\nu _{u,T}^{\mathbf u}$ on $X$ defined by
\[
\nu_{u,T}^{\bf u}(A)=\prod_{k=1}^{r}\nu _{\lambda _{k}}\Bigl(
\bigl\{(\mu _{1},\dots,\mu _{r})\in\T^{r}\,;\,\sum_{k=1}^{r}
a_{k}^{\bf u}(u)\mu _{k}u_{k}\in A\bigr\}\Bigr)
\]
for every Borel subset $A$ of $X$. Note that if the unimodular numbers $\lambda_k$ are not roots of unity (and $X$ is a Hilbert space $\h$), then $\nu_{u,T}^{\bf u}$ 
is nothing but the Steinhaus measure associated to $u$ considered in the proof of Fact \ref{link}.

Finally, for any open set $V\neq\emptyset$, we define 
\[ \delta^{\mathbf u} _{V,T}:=\sup\,\bigl\{\nu _{u ,T}^{\bf u}(B(u,\varepsilon ))\,;\,
\varepsilon >0,\ u\in\mathcal E^{\bf u},\ B(u,\varepsilon)\prec 
V\bigr\}.\]
As in Fact \ref{Fact 10001},  the notation $B(u,\varepsilon)\prec V$ means that $B(u,\varepsilon')\subseteq V$ for some $\varepsilon'>\varepsilon$. 

\smallskip
We may now state

\begin{proposition}\label{Corollary 24}
 Let $T$ be an ergodic operator on $X$ belonging to ${\rm SPAN}(X)$, and let $\mathbf u=(u_k)_{k\geq 1}$ be an infinite linearly independent
 sequence of unimodular eigenvectors whose linear span $\mathcal E^{\bf u}$ is dense in $X$.  Then, there exists a 
comeager subset $G$ of $X$ such that every vector $z\in G$ 
satisfies 
\[
 \overline{\emph{dens}}\,\mathcal{N}_{T}(z,V)\ge \delta^{\mathbf u} 
_{V,T}\quad\hbox{for every open set $V\neq\emptyset$}.
\]
\end{proposition}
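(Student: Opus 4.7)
The proof follows the strategy developed for Theorem \ref{10000} and Corollary \ref{10003}, with a key refinement: the sharper bound $\nu^{\mathbf u}_{u,T}(B(u,\varepsilon))$ is extracted by directly applying Birkhoff's ergodic theorem to the $T$-invariant measure $\nu^{\mathbf u}_{u,T}$ itself, rather than by exploiting only the uniform recurrence of individual eigenvectors as in Fact \ref{Fact 10001}.

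First, since $T$ is ergodic, one has $c(T)=1$ and $T$ is densely distributionally chaotic; a standard diagonal argument then yields a comeager set $G_0\subseteq X$ such that every $x\in G_0$ admits a subset $A_x\subseteq\N$ of upper density $1$ along which $T^ix\to 0$. Next, fix $u\in\mathcal{E}^{\mathbf u}$ and $\varepsilon>0$ with $B(u,\varepsilon)\prec V$, and set $\delta:=\nu^{\mathbf u}_{u,T}(B(u,\varepsilon))$. The measure $\mu:=\nu^{\mathbf u}_{u,T}$ is $T$-invariant with compact support, so Birkhoff's theorem provides a $\mu$-a.e. limit $f(v)=\lim_n\frac{1}{n}\sum_{i<n}\mathbf 1_{B(u,\varepsilon)}(T^iv)\in[0,1]$ with $\int f\, d\mu=\delta$. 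From $\int f\, d\mu=\delta$ and $0\le f\le 1$ it follows immediately that $\mu\{v:f(v)\ge\delta-\eta\}>0$ for every $\eta>0$, hence one may select a point $v_{u,\varepsilon,\eta}\in\textrm{supp}(\mu)$ satisfying $\underline{\textrm{dens}}\,\mathcal N_T(v_{u,\varepsilon,\eta},B(u,\varepsilon))\ge\delta-\eta$.

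With such $v_{u,\varepsilon,\eta}$ in hand, the translated set $G_{u,\varepsilon,\eta}:=v_{u,\varepsilon,\eta}+G_0$ is still comeager in $X$. For any $z=v_{u,\varepsilon,\eta}+x$ with $x\in G_0$, and any $\varepsilon'>\varepsilon$ with $B(u,\varepsilon')\subseteq V$, we have $T^iz\in V$ as soon as $T^iv_{u,\varepsilon,\eta}\in B(u,\varepsilon)$ and $\|T^ix\|<\varepsilon'-\varepsilon$ both hold. Since the indices satisfying the second condition form a set of upper density $1$, the elementary inequality $\overline{\textrm{dens}}(A\cap B)\ge\overline{\textrm{dens}}(A)+\underline{\textrm{dens}}(B)-1$ (applied coordinatewise on $[1,n]$ and taking $\limsup$) yields $\overline{\textrm{dens}}\,\mathcal N_T(z,V)\ge\delta-\eta$.

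It remains to produce a \emph{single} comeager set $G$ valid for every non-empty open $V$. By separability of $X$ together with continuity of $(u,\varepsilon)\mapsto\nu^{\mathbf u}_{u,T}(B(u,\varepsilon))$ in its parameters, we fix once and for all a countable family $\mathcal F$ of triples $(u,\varepsilon,\eta)$, with $u$ in a countable dense subset of $\mathcal E^{\mathbf u}$ and $\varepsilon,\eta$ positive rationals, rich enough so that for every open $V\neq\emptyset$ the supremum $\delta^{\mathbf u}_{V,T}$ is approached by values $\nu^{\mathbf u}_{u,T}(B(u,\varepsilon))-\eta$ coming from $(u,\varepsilon,\eta)\in\mathcal F$ with $B(u,\varepsilon)\prec V$. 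Setting $G:=\bigcap_{(u,\varepsilon,\eta)\in\mathcal F}G_{u,\varepsilon,\eta}$ then gives the required comeager subset of $X$. The most delicate step is the final parameter approximation, where one must verify that a slight perturbation of $u$ and $\varepsilon$ leaves $\nu^{\mathbf u}_{u,T}(B(u,\varepsilon))$ essentially unchanged, so that the countable family $\mathcal F$ genuinely realizes the full supremum $\delta^{\mathbf u}_{V,T}$ uniformly over $V$.
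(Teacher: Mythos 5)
Your proof is correct in its core mechanism but takes a genuinely different route from the paper's. The paper's proof passes through Lemma~\ref{Theorem 23}: there, the measure $\nu^{\bf u}_{u,T}$ is convolved with suitable dilations of an ergodic full-support measure $m_0$, Shkarin's theorem on joint density of $(T^nz,g^n)$ in $Z\times G$ is invoked to keep the resulting measure concentrated on $\mathrm{HC}(T)$, and the Ergodic Decomposition Theorem then yields an \emph{ergodic measure with full support} satisfying a good lower bound on $m(B(u,\varepsilon))$; Birkhoff applied to this $m$ directly produces a dense $G_\delta$ of vectors $z$ with $\underline{\mathrm{dens}}\,\mathcal N_T(z,B(u,\varepsilon))$ controlled. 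You instead apply Birkhoff to $\nu^{\bf u}_{u,T}$ itself (which is $T$-invariant but has compact, non-full support), extract a \emph{single} good point $v$ with density $\ge\delta-\eta$, and upgrade this single point to a comeager set by translating the comeager set $G_0$ of vectors with density-$1$ vanishing orbit (a consequence of $c(T)=1$, which for ergodic $T$ comes from \cite{GM}). This replaces the Ergodic Decomposition plus Shkarin machinery with the softer fact that ergodic operators are densely distributionally chaotic, and is arguably more elementary. Two caveats: first, in the inequality $\overline{\mathrm{dens}}(A\cap B)\ge\overline{\mathrm{dens}}(A)+\underline{\mathrm{dens}}(B)-1$ make sure $A$ designates the upper-density-$1$ set of "small $T^ix$" times and $B$ the set with lower density $\ge\delta-\eta$ coming from Birkhoff on $\nu^{\bf u}_{u,T}$, otherwise the bound is vacuous; this is what you appear to intend but the labeling reads ambiguously. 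Second, the map $(u,\varepsilon)\mapsto\nu^{\bf u}_{u,T}(B(u,\varepsilon))$ is \emph{not} continuous (only lower semicontinuous: when the unimodular eigenvalues $\lambda_k$ are roots of unity, $\nu^{\bf u}_{u,T}$ is atomic and $\varepsilon\mapsto\nu^{\bf u}_{u,T}(B(u,\varepsilon))$ has upward jumps). Lower semicontinuity is in fact enough for your approximation to go through, but the cleanest fix is the one the paper uses: fix a countable basis $(V_p)$ with the property that $B\prec V$ implies $B\prec V_p\subseteq V$ for some $p$, observe that $\delta^{\bf u}_{V,T}=\sup\{\delta^{\bf u}_{V_p,T}:V_p\subseteq V\}$, and build $\mathcal F$ per basic open set $V_p$ rather than per $V$; this sidesteps the parameter-continuity issue entirely.
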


\smallskip
The key step in the proof of
Proposition \ref{Corollary 24} is Lemma \ref{Theorem 23} below.

\begin{lemma}\label{Theorem 23}
 Let $T$ be an ergodic operator on $X$, and let $\mathbf u=(u_1,\dots ,u_r)$ be a finite sequence of linearly 
 independent unimodular eigenvectors for $T$. Let also $u$ be a vector belonging to $\mathcal E^{\bf u}$. Given any 
$\varepsilon $, $\gamma >0$, there exists a $T$-invariant measure $m$ 
on 
$X$, ergodic for $T$ and with full support, such that 
\[
m\bigl(B(u,\varepsilon ) \bigr)\ge(1-\gamma )\,\nu _{u,T}^{\bf u}\bigl(
B(u,(1-\gamma )\varepsilon )\bigr).
\]
\end{lemma}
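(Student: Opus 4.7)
The natural candidate for $m$ is a convolution of the ``rotational'' measure $\nu:=\nu^{\mathbf u}_{u,T}$ with a small-scale ergodic measure of full support. Write $u = \sum_{k=1}^{r} a_k u_k$, with $a_k:=a_k^{\mathbf u}(u)$, and fix an ergodic $T$-invariant probability measure $\mu_{0}$ with full support on $X$ (which exists by the ergodicity of $T$). Imitating the dilation trick used in Lemma~\ref{Lemma 15}, set $\mu_{\eta}(A) := \mu_{0}(A/\eta)$ for $\eta >0$: each $\mu_{\eta}$ is again $T$-invariant, ergodic, with full support, and by tightness of $\mu_0$ one can choose $\eta>0$ small enough that $\mu_{\eta}(B(0,\gamma \varepsilon))>1-\gamma$.

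\textbf{Definition of $m$ and easy properties.} Define $m := \nu * \mu_{\eta}$, \mbox{\it i.e.} the distribution of $Z+Y$, where $Z := \sum_{k=1}^r \chi_k a_k u_k$ with the $\chi_k$ independent of respective distributions $\nu_{\lambda_k}$, $Y$ is independent of $(\chi_1,\dots,\chi_r)$ and has distribution $\mu_\eta$. Then $m$ is $T$-invariant by linearity of $T$ and $T$-invariance of both factors; it has full support, since for any non-empty open $U\subseteq X$, any $x_0\in U$, any $z_0\in \supp(\nu)$ and $\rho>0$ small enough that $B(x_0,2\rho)\subseteq U$, one has $m(U)\ge \nu(B(z_0,\rho))\, \mu_\eta(B(x_0-z_0,\rho))>0$; and by the independence of $Z$ and $Y$,
\[
m(B(u,\varepsilon)) \ge \P(Z\in B(u,(1-\gamma)\varepsilon))\cdot \P(\|Y\|<\gamma \varepsilon) \ge (1-\gamma)\,\nu(B(u,(1-\gamma)\varepsilon)),
\]
which is the desired inequality.

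\textbf{Ergodicity of $m$, the main obstacle.} The hard point is to ensure that $m$ is ergodic. The natural approach is to lift the situation to the product system $(X\times K, S, \pi)$, where $K:=\prod_{k=1}^r \supp(\nu_{\lambda_k})$, $\pi := \mu_\eta \otimes \bigotimes_{k=1}^r \nu_{\lambda_k}$ and $S(y,\chi):=(Ty,\lambda_1\chi_1,\dots,\lambda_r\chi_r)$: the map $\phi(y,\chi):=y+\sum_k a_k\chi_k u_k$ intertwines $S$ and $T$, and $\phi_*\pi=m$. A bounded $T$-invariant function $f$ lifts to an $S$-invariant $F:=f\circ\phi$, and Fourier-expanding $F(y,\chi)=\sum_{n\in \Z^r} c_n(y)\chi^n$ in the torus variable (using the obvious finite analogue whenever some $\lambda_k$ is a root of unity), the $S$-invariance forces $c_n\circ T = \lambda^{-n} c_n$, with $\lambda^n:=\lambda_1^{n_1}\cdots\lambda_r^{n_r}$. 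Ergodicity of $T$ on $\mu_\eta$ makes $c_0$ constant; for $n\ne 0$, $c_n$ is either zero or a Koopman eigenfunction of $T$ on $L^2(\mu_\eta)$ with eigenvalue $\lambda^{-n}\ne 1$. The bottleneck is thus a spectral disjointness condition: the Koopman point spectrum of $T$ on $L^2(\mu_\eta)$ should meet the countable group $\{\lambda^n\,;\,n\in\Z^r\setminus\{0\}\}$ trivially. When this can be arranged, e.g.\ by taking $\mu_0$ weakly mixing, all $c_n$ with $n\ne 0$ vanish, so $F$ is constant and $m$ is ergodic. Otherwise one has to choose $\mu_0$ more cleverly, or pass to the ergodic decomposition of $m$ and extract a component retaining full support (using that the topological transitivity of $T$, coming from the full-support ergodic measure $\mu_\eta$, forces the support of any $T$-invariant ergodic measure to be either the whole of $X$ or nowhere dense) together with the ball estimate (since the lower bound on $m(B(u,\varepsilon))$ averages to yield the same bound on a $P$-positive set of ergodic components).
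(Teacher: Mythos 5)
Your construction of $m$ is essentially the same as the paper's: you convolve the rotational law $\nu^{\mathbf u}_{u,T}$ with a dilated copy $\mu_\eta$ of a full-support ergodic measure, while the paper integrates dilates $m_0(R(\cdot - x))$ against $\nu_{u,T}$, which is a reparametrization of the same object. The $T$-invariance, full-support, and ball estimates are handled correctly and identically.

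\textbf{Where the gap is.} The difficulty, as you identify, is ergodicity, but neither of your two exits actually closes it. The Fourier/Koopman argument is fine when you can find an ergodic $\mu_0$ with no Koopman eigenvalues in the group generated by $\lambda_1,\dots,\lambda_r$, but there is no reason such a $\mu_0$ exists. Your fallback --- pass to the ergodic decomposition and extract a full-support component --- is the right move, but the justification you give does not work: the ball estimate only shows that a positive-$P$-measure set of ergodic components $m$ satisfies $m(B(u,\varepsilon))>0$, which means $\operatorname{supp}(m)$ \emph{meets} the ball, not that it \emph{contains} an open set. So the ``support is the whole space or nowhere dense'' dichotomy (which itself requires the support to have nonempty interior to yield anything) cannot be triggered.

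\textbf{The missing ingredient.} The paper closes the gap by a different mechanism: it proves that $m(\operatorname{HC}(T))=1$, so that \emph{every} ergodic component of $m$ is concentrated on hypercyclic vectors and therefore automatically has full support (any $T$-invariant measure giving positive mass to $\operatorname{HC}(T)$ has full support, since $\operatorname{HC}(T)$ is contained in every $T$-invariant dense open set). To prove $m(\operatorname{HC}(T))=1$ it suffices to show $z+x\in\operatorname{HC}(T)$ whenever $z\in\operatorname{HC}(T)$ and $x\in K_u$, and this is exactly where a result of Shkarin \cite{S} is invoked: if $z$ is hypercyclic for $T$ and $G$ is a compact topological group generated by $g$, then $\{(T^nz,g^n);n\ge 1\}$ is dense in $X\times G$. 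Applied with $g=(\lambda_1,\dots,\lambda_r)$, this produces $n$ with $T^nz$ close to $y-x$ and each $\lambda_k^n$ close to $1$, hence $T^n(z+x)=T^nz+\sum_k a_k\mu_k\lambda_k^n u_k$ close to $y$. This is the concrete fact you need; without it, or an equivalent substitute, the ergodic decomposition step in your proposal does not yield a component that is simultaneously ergodic, of full support, and satisfying the ball bound.
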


Since the sequence $\bf u$ is fixed, we will remove any reference to it in the proofs of Proposition \ref{Corollary 24} and Lemma \ref{Theorem 23}; so we will write for instance $a_k(u)$ instead of $a_k^{\bf u}(u)$, 
and so on.

\begin{proof}[Proof of Lemma \ref{Theorem 23}] 
 Let $m_{0}$ be an ergodic measure with full support for $T$. For each 
 $R>0$, consider the probability measure $m_{R}$ defined on $X$ by setting
 \[
m_{R}(A)=\int_{K_{u}}m_{0}\bigl(R(A-x) \bigr)\,d\nu _{u,T}(x)\quad 
\textrm{for every 
Borel subset}\ A\  \textrm{of}\ X,
\]
 where $K_{u}:=
\bigl\{\sum_{k=1}^{r}a_{k}(u)\mu _{k}u_{k},\;\,\mu _{k}\in\T,\ 1\le 
k\le r\bigr\}$. Note that $K_u$ is a compact subset of $X$ and contains  the support of the measure $\nu _{u,T}$. It 
clearly satisfies $T(K_{u})=K_{u}$. The measure $m_{R}$ thus defined is 
$T$-invariant. Indeed, we have for every Borel subset $A$ of $X$
\begin{align*}
m_{R}(T^{-1}A)&=\int_{K_{u}}m_{0}\bigl(R(T^{-1}A-x) \bigr)\,d\nu 
_{u,T}(x)= \int_{K_{u}}m_{0}\bigl(R(T^{-1}(A-Tx)) \bigr)\,d\nu_{u,T}(x) \\
&=\int_{T(K_{u})}m_{0}\bigl(R(A-x) \bigr)\,d\nu 
_{u,T}(x)=m_{R}(A).
\end{align*}
The next step of the proof is to show that $m_{R}(\hct)=1$. Since 
\[
m_{R}(\hct)=\int_{K_{u}}m_{0}\bigl(R(\hct-x) \bigr)\,d\nu 
_{u,T}(x)
\]
and $m_{0}(\hct)=1$, it suffices to show that $\hct\subseteq \hct-x$ for 
every vector $x\in K_{u}$, \mbox{\it i.e.} that $\hct+x\subseteq \hct$ for every such $x$. Write $x$ as 
$x=\sum_{k=1}^{r}a_{k}\mu 
_{k}u_{k}$, where $a_k=a_k(u)$ and $\mu _{k}$ belongs to $\T$ for every $1\le k\le r$, and fix a vector $z\in\hct $. In order to show that $z+x$ belongs to $\hct$, we need to 
show that for every $y\in X$ and every $\delta >0$, there exists an 
integer $\gn$ such that $\|T^{n}(z+x)-y\|<\delta $. Now, a result of 
Shkarin \cite{S} states the following: given a hypercyclic operator $S$ on 
a Banach space $Z$, and a compact topological group $G$ generated by an 
element $g\in G$, the set 
\[
\bigl\{(S^{n}z, g^{n}),\ \gn\bigr\}
\]
is dense in $Z\times G$ for every vector $z\in Z$ which is hypercyclic for 
 $S$. We apply this result to the operator $T$ and to the subgroup $G$ 
of 
 $\T^{r}$ generated by $g:=(\lambda _{1},\dots,\lambda _{r})$. 
Since the $r$-tuple $(1,\dots,1)$ belongs to $G$, and $z\in X$ is a
hypercyclic vector for $T$, there exists for every $\delta >0$ an integer
$\gn$ such that $\|T^{n}z-(y-x)\|<\delta /2$ and 
$\max_{1\le k\le r}|a_{k}|\,.\,\|u_{k}\|\,.\,|\lambda_{k}^{n}-1|<\delta /(2r)$. 
It follows that 
\[
\|T^{n}x-x\|=\Bigl|\Bigl|\,\sum_{k=1}^{r}a_{k}\bigl(\lambda _{k}^{n}-1 
\bigr)u_{k}\, \Bigr| \Bigr|<\dfrac{\delta }{2},
\]
so that $\|T^{n}(z+x)-y\|<\delta $. This proves our claim, and shows that 
$m_{R}(\hct)=1$ for every $R>0$.
\par\smallskip 
Let us now estimate from below the quantities $m_{R}( B(u,\varepsilon 
))$. By the definition of $m_R$, we have
\[
m_{R}( B(u,\varepsilon))\ge \int_{K_{u}\,\cap\, B(u,(1-\gamma )\varepsilon 
)}
m_{0}(R(B(u,\varepsilon )-x))\,d\nu _{u,T}(x).
\]
Now, observe that for every $x\in B(u,(1-\gamma )\varepsilon )$, the 
set $B(u,\varepsilon )-x$ contains the ball $B(0,\gamma \varepsilon )$. 
Indeed, every $y\in X$ with $\|y\|<\gamma \varepsilon $ can be written as 
$y=u+(y-u+x)-x$ with $\|y-u+x\|<\gamma \varepsilon +(1-\gamma )\varepsilon 
=\varepsilon  $. It follows that 
\begin{align*}
 m_{R}( B(u,\varepsilon))&\ge \int_{K_{u}\,\cap\, B(0,(1-\gamma 
)\varepsilon 
)}
m_{0}(B(0,R\gamma \varepsilon ))\,d\nu _{u,T}(x)\\
&=m_{0}(B(0,R\gamma \varepsilon ))\,\cdot\,\nu _{u,T}(B(u,(1-\gamma 
)\varepsilon )).
\end{align*}
Since $m_{0}(B(0,R\gamma \varepsilon ))$ tends to $1$ as $R$ tends to 
infinity, there exists $R_{0}>0$ such that 
\[
m_{R_{0}}(\hct\cap B(u,\varepsilon ))=m_{R_{0}}(B(u,\varepsilon ))
>(1-\gamma )\, \nu  _{u,T}(B(u,(1-\gamma )\varepsilon )).
\]
Applying the \emph{Ergodic Decomposition Theorem} (see \mbox{e.g.} \cite[Th. 2.5]{Sar}) to the measure $m_{R_{0}}$,  
we obtain that there exists an ergodic measure $m$ for $T$ such that
\[
m(\hct\cap B(u,\varepsilon ))>(1-\gamma )\, \nu _{u,T}(B(u,(1-\gamma 
)\varepsilon )).
\]
Since $m(\hct)>0$, the measure $m$ has full support, and this concludes the proof of 
Lemma \ref{Theorem 23}.
\end{proof}

\begin{proof}[Proof of Proposition \ref{Corollary 24}] 
Let $(V_p)_{p\geq 1}$ be a countable basis of (non-empty) open sets for $X$ with the following property: for any open set $V$ and any open ball $B$ such that $B\prec V$, one can find $p\geq 1$ such that $B\prec V_p\subseteq V$. This additional property implies that for any open set $V\neq\emptyset$, we have 
\begin{equation}\label{100}
 \delta_{V,T}=\sup\,\{ \delta_{V_p,T};\; V_p\subseteq V\}.
\end{equation}
For each $p\geq 1$, one may choose a sequence 
$(u_{p,k})_{k\ge 1}$ of vectors of $ \mathcal E^{\bf u}$, a sequence of positive numbers $(\varepsilon_{p,k})_{k\ge 1}$ and a sequence of positive numbers $(\gamma_{p,k})_{k\ge 1}$ tending to $0$ as $k$ tends to infinity such that 
\[ B(u_{p,k},\varepsilon_{p,k})\prec V_p\quad\hbox{for every $k\ge 1$}\;{\rm and}\; \alpha_{p,k}:=\nu_{u_{p,k},T} \bigl(B(u_{p,k}, (1-\gamma_{p,k})\varepsilon_{p,k})\bigr)\underset{k\to\infty}{\longrightarrow} \delta_{V_p,T}.\]

By Lemma \ref{Theorem 23} and the pointwise ergodic theorem, we see that for each fixed pair $(p,k)$ of integers, the set of all vectors $z\in X$ such that 
\[\underline{\textrm{dens}}\,\mathcal N_T\bigl(z, B(u_{p,k},\varepsilon_{p,k})\bigr)\geq (1-\gamma_{p,k}) \alpha_{p,k}\] 
is dense in $X$. In particular, the set 
\[ G_{p,k}:=\bigcap_{N\ge 1}\ \bigcup_{n\ge N}\Bigl\{ z\in X;\; \#\{1\le i\le n\,;\, T^{i}z\in B(u_{p,k},\varepsilon_{p,k} )\}\ge (1-\gamma_{p,k})\,\alpha_{p,k}\Bigr\} \]
is a dense $G_\delta$ subset of $X$. 
Hence
$G:=\bigcap_{p,k\ge 1}G_{p,k}$ is a dense $\gd$ subset of 
$X$ too, and by the definition of $G$, every vector $z\in G$ satisfies 
\[\overline{\textrm{dens}}\,\mathcal N_T(z,V_p)\geq \delta_{V_p,T}\qquad\hbox{for every $p\geq 1$}. \]
Property (\ref{100}) then allows us to conclude the proof of Proposition \ref{Corollary 24}.
 \end{proof}

\subsection{A criterion for frequent hypercyclicity}
We now move over to a criterion of the same kind  as Theorem \ref{Theorem 39} for frequent 
hypercyclicity. Instead of requiring in the assumption that 
$\Vert T^{n+k}z-T^{k}x\Vert $ be small for indices $k$ less than a fraction of 
$n$, we have to require that these quantities should be small for indices $k$ 
less than a fraction of a certain multiple $d$ of the period 
of $z$. The criterion reads as follows:

\begin{theorem}\label{Theorem 41} Let $T\in\mathfrak B(X)$. Assume that there exist
a dense linear subspace $X_0$ of $ X$ with $T(X_{0})\subseteq X_{0}$ and 
 $X_{0}\subseteq \emph{Per}(T)$, and a constant $\alpha \in(0,1)$ such that the following property holds true: 
for every $x\in X_{0}$, every $\varepsilon >0$ and every integer 
$d_{0}
\ge 1$ which is 
the period of some vector $y$ of $X_{0}$, 
there exist $z\in X_{0}$ and integers $n$, $d\ge 1$ such that
\begin{enumerate}
 \item [\emph{(0)}] $d$ is a multiple of $d_{0}$ and of $\emph{per}(z)$;
 \item [\emph{(1)}] $\|T^{k}z\|<\varepsilon $ for every $0\le k\le \alpha 
d$;
\item [\emph{(2)}] $\|T^{n+k}z-T^{k}x\|<\varepsilon $ for every $0\le k\le 
\alpha d$.
\end{enumerate}
Then $T$ is chaotic and frequently hypercyclic.
 \end{theorem}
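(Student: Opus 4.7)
The plan is to imitate the direct proof of Theorem~\ref{Theorem 39} given above, but with additional bookkeeping exploiting conditions (0) and (1): since $d$ is a multiple of $\textrm{per}(z)$, the orbit $(T^k z)_{k\ge 0}$ is $d$-periodic, so condition (1) automatically extends to a $d$-periodic set of $k$'s of density $\alpha$ in $\N$. This periodic extension of smallness is what should upgrade upper density to lower density.

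First, I would strengthen the hypothesis in the spirit of Fact~\ref{multiple}: by applying it with $\varepsilon$ sufficiently small and then setting $z:=T^r z'$, $n:=n'-r$ for a suitable $0\le r<N$, one can additionally require $n$ to be a multiple of any prescribed integer $N$. Next, I would enumerate a dense sequence $(x_l)_{l\ge 1}$ of vectors of $X_0$, write $p_l:=\textrm{per}(x_l)$, and fix a partition $\N=\bigsqcup_{l\ge 1} I_l$ into pairwise disjoint syndetic sets with $\underline{\textrm{dens}}(I_l)>0$ for every $l$ (for instance, by choosing the $I_l$'s as suitable periodic progressions); then set $y_j:=x_{l(j)}$, where $j\in I_{l(j)}$.

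The core of the argument is an inductive construction of vectors $z_j\in X_0$ and integers $n_j,d_j\ge 1$ satisfying, at step $j$: (i)~$d_j$ is a multiple of $\textrm{per}(y_j)$, of $\textrm{per}(z_j)$, and of $d_{j-1}$; (ii)~$n_j$ is a multiple of $d_{j-1}$; (iii)~$\|T^k z_j\|<\varepsilon_j$ for every $0\le k\le\alpha d_j$; and (iv)~$\|T^{n_j+k}z_j-T^k(y_j-\sum_{i<j}z_i)\|<\varepsilon_j$ for every $0\le k\le\alpha d_j$. At step $j$ one applies the assumption of the theorem to the vector $y_j-\sum_{i<j}z_i\in X_0$, with $d_0$ chosen as a sufficiently large multiple of $\textrm{lcm}(d_{j-1},\textrm{per}(y_j))$ and $\varepsilon_j$ very small. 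Setting $z:=\sum_{j\ge 1}z_j$, which is well defined by (iii), the decomposition $T^{n_j+k}z=T^{n_j+k}(\sum_{i<j}z_i)+T^{n_j+k}z_j+\sum_{i>j}T^{n_j+k}z_i$ combined with (ii) (which yields $T^{n_j+k}(\sum_{i<j}z_i)=T^k(\sum_{i<j}z_i)$ by periodicity), with (iv), and with the identity $T^k y_j=y_j=x_l$ whenever $p_l\mid k$ and $y_j=x_l$, gives $\|T^{n_j+k}z-x_l\|<\varepsilon_j+\|\sum_{i>j}T^{n_j+k}z_i\|$ for every $j\in I_l$ and every $0\le k\le\alpha d_j$ with $p_l\mid k$; the tail term is made negligible uniformly on the window $[n_j,n_j+\alpha d_j]$ by choosing $\varepsilon_i$ small enough in advance.

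To conclude that $z$ is frequently hypercyclic, one observes that for each $l\ge 1$ and each sufficiently large $j\in I_l$, block $j$ contributes to the set of good times for $x_l$ an arithmetic progression of step $p_l$ containing about $\alpha d_j/p_l$ elements inside the window $W_j:=[n_j,n_j+\alpha d_j]$. The parameters must then be calibrated so that $\bigcup_{j\in I_l}W_j$ has positive lower density in $\N$; since $\underline{\textrm{dens}}(I_l)>0$, this is achieved by arranging $n_{j+1}$ to be comparable to $n_j+\alpha d_j$ at every stage. The main obstacle is precisely this balancing act: the $\varepsilon_j$'s must be small enough to control the tail $\sum_{i>j}T^{n_j+k}z_i$ uniformly on the window $W_j$, but smaller $\varepsilon_j$'s force larger $n_j$ (roughly $n_j\gtrsim\log(1/\varepsilon_j)/\log\|T\|$), which would destroy the density of $\bigcup_j W_j$ unless $d_j$ is inflated in step with $n_j$ via the parameter $d_0$ in the hypothesis. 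Once these choices are made coherently, one obtains $\underline{\textrm{dens}}\{n\ge 1:T^n z\in B(x_l,\varepsilon)\}>0$ for every $l$ and every $\varepsilon>0$, so that $z$ is a frequently hypercyclic vector for $T$; chaoticity follows at once since $X_0\subseteq\textrm{Per}(T)$ is dense in $X$.
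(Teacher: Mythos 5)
Your inductive construction (dense sequence $(x_l)_{l\ge 1}$, syndetic partition $\N=\bigsqcup I_l$, the vectors $z_j$ with conditions on $n_j$, $d_j$ matching the hypothesis) is the right setup and is exactly what the paper does. The gap is in the final density argument, and it is a genuine one.

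You propose to count good times for $x_l$ only within the primary windows $W_j=[n_j,n_j+\alpha d_j]$ for $j\in I_l$, and to ensure $\underline{\textrm{dens}}\bigl(\bigcup_{j\in I_l}W_j\bigr)>0$ by calibrating $n_{j+1}\approx n_j+\alpha d_j$. This cannot work, because the hypothesis gives no upper control on the returned parameter $d$: you may force $d$ to be a multiple of any prescribed $d_0$ (so $d$ can be made large), but nothing prevents the $z,n,d$ delivered by the hypothesis from having $d$ astronomically larger than $d_0$. Consequently, if $j_m<j'$ are consecutive elements of $I_l$, some intermediate $d_j$ with $j_m<j<j'$ may be enormous compared to $d_{j_m}$, forcing $n_{j'}$ (which must exceed $\alpha d_{j'}$, or at any rate be large relative to the previously fixed $\varepsilon_{j'}$) to be enormous compared to $d_{j_m}$. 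Then the density of $W_{j_m}$ in $[1,n_{j'}]$ tends to zero, and no choice of $\varepsilon_j$, $n_j$ or $d_0$ at step $j_m$ or earlier can help, since you have no influence on the sizes produced at the intermediate steps. (Note also that the paper's normalization makes $\alpha d_j<n_j\le d_j$, so that the windows are ordered and disjoint; this is incompatible with $n_{j+1}\approx n_j+\alpha d_j$.)

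What repairs the argument — and this is the real content of the paper's proof — is to exploit the periodicity of the partial sums $\sum_{i\le j}z_i$ to \emph{fill the gaps between primary windows}. Since $d_{j_m+j}$ is a multiple of $\textrm{per}\bigl(\sum_{i\le j_m+j}z_i\bigr)$, the block of good times $A_{m,0}\subseteq W_{j_m}$ may be translated by multiples $kd_{j_m+1}$ of $d_{j_m+1}$, and each translate remains a set of good times for $x_l$ as long as it stays below $\alpha d_{j_m+2}$ (so that the tail $\sum_{i>j_m+1}T^{\cdot}z_i$ stays small by condition (ii)). Iterating this over the levels $j=1,\dots,j'-j_m-1$ produces a nested family of sets $A_{m,j}$ of good times for $x_l$ reaching from $n_{j_m}$ all the way to $\alpha d_{j'}$, with cardinality growing in proportion with the ambient interval; this is what yields the explicit lower bound $\underline{\textrm{dens}}\,\mathcal N_T(z,B(x_l,\varepsilon))\ge\frac{\alpha^{r_l+2}}{2^{r_l+2}\,\textrm{per}(x_l)}$, where $r_l$ bounds the gaps of $I_l$. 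Your opening observation — that since $d$ is a multiple of $\textrm{per}(z)$, the smallness in condition (1) propagates $d$-periodically — is precisely the seed of this idea, but it never enters your density count, which relies only on the windows themselves.
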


\begin{remark}\label{Remark 41 bis} The restriction that the integer $d_{0}$ above should 
be the period of some vector $y\in X_{0}$ may 
seem rather artificial. The reason for stating Theorem \ref{Theorem 41} as we did is to be found in 
the proof of Theorem \ref{Theorem 47} below: we will use Theorem \ref{Theorem 41} as stated 
to prove the frequent hypercyclicity of the operators involved there without having to impose 
certain divisibility conditions on the numbers $\Delta ^{(k)}$, $\gk$.
\end{remark}

\begin{remark} The constant $\alpha$ involved in the statement of Theorem \ref{Theorem 41} cannot be greater than $1/2$. Indeed, otherwise one could find a periodic vector $z$ such that more than half of the points in the orbit of $z$ are close to $0$ and more than half of these points are close to some periodic orbit far away from $0$. So at least one point in the orbit of $z$ would have to be both close to $0$ and far away from $0$, which is impossible.
\end{remark}

\begin{proof}[Proof of Theorem \ref{Theorem 41}] 
We first note that upon substracting some multiple of $d$ to $n$ (where $d$ and $n$ are given by the assumptions of Theorem \ref{Theorem 41}),
 we can always assume that $d\geq n$. The assumption of Theorem \ref{Theorem 41} is thus seen 
to be stronger than that of Theorem~\ref{Theorem 39}. 
 Also, we can require that $n>\alpha d$ and the same argument as in the beginning of the proof of Theorem 
\ref{Theorem 39} (taking $\varepsilon$ 
very small) shows that, given $N\geq 1$, we can 
always add to the assumption of Theorem \ref{Theorem 41} the additional hypothesis  that the integer $n$ is a 
multiple of $N$.
\par\smallskip
Let now $(x_{l})_{l\ge 1}$ be a dense sequence of vectors of $X$ contained in $X_{0}$, and let 
$(I_{l})_{l\ge 1}$ be a partition of $\N$ such that each set $I_{l}$ is 
infinite and has bounded gaps. We denote by $r_{l}$ the maximum size of a 
gap between two successive elements of $I_{l}$. As usual, we define the vectors
$y_{j}$, $j\ge 1$, by setting $y_{j}=x_{l}$ for every $j\in I_{l}$. 
By induction 
on $j\ge 1 $, we construct a sequence $(z_{j})_{j\ge 1}$ of vectors of 
$X_{0}$ and two strictly increasing sequences of integers 
$(d_{j})_{j\ge 1}$ and $(n_{j})_{j\ge 1}$ such that 
\begin{enumerate}[(i)]
 \item $d_{j}$ is a multiple of $\textrm{per}\,(\sum_{i=1}^{j-1}z_{i})$ 
and of $\textrm{per}(z_{j})$;
\item $\|T^{k}z_{j}\|<2^{-j}$ for every $0\le k\le \alpha d_{j}$;
\item $\|T^{n_{j}+k}z_{j}-T^{k}(y_{j}-\sum_{i=1}^{j-1}z_{i} 
)\|<2^{-j}$ for every $0\le k\le \alpha d_{j}$;
\item $n_{j}$ is a multiple of $\textrm{per}(\sum_{i=1}^{j-1}
z_{i})$ and $\alpha d_j<n_{j}\le d_{j}$;
\item $\alpha d_{j}>4d_{j-1}$.
\end{enumerate}
\par\smallskip
We set $x=\sum\limits_{i\ge 1}z_{i}$, and show that $z$ is a frequently 
hypercyclic vector for $T$. 
\par\smallskip
Let us fix $l\ge 1$, and write $I_{l}$ as 
  $I_{l}=\{j_{m}\,;\,m\ge 1\}$, where $(j_{m})_{m\ge 1}$ is strictly 
increasing and $j_{m+1}-j_{m}\le r_{l}$ for every $m\ge 1$. For each 
$m\ge 1$, we define a family of sets $(A_{m,j})_{0\le j<j_{m+1}-j_{m}}$ as follows:
\par\smallskip
\[
 A_{m,0}:=\Bigl\{n_{j_{m}}+kd_{j_{m}}+k'\textrm{per}(x_{l})\,;\,
0\le k'\le \dfrac{\alpha d_{j_{m}}}{\textrm{per}(x_{l})},\ 0\le k\le 
\dfrac{\alpha d_{j_{m}+1}}{d_{j_{m}}}-2\Bigr\} ,
\]
{and, for $1\le j<j_{m+1}-j_{m}$,}
\par\smallskip
\[A_{m,j}:=\bigcup_{1\leq k\leq \frac{\alpha d_{j_{m}+j+1}}{d_{j_{m}+j}}-1} \Bigl(A_{m,j-1}+kd_{j_m+j}\Bigr).
\]
Before doing anything with these sets, we note that 
\begin{equation}\label{maxam}
\max\;A_{m,j}\leq \alpha d_{j_m+j+1}.
\end{equation}
Indeed, for $j=0$ we have $\max \, A_{m,0}\leq n_{j_m}+ \alpha d_{j_m+1}-2d_{j_m}+\alpha d_{j_m}\leq\alpha d_{j_m+1}$ 
because $n_{j_m}\leq d_{j_m}$; and the result then follows by a straightforward induction on $j<j_{m+1}-j_m$.

\begin{fact}\label{estimate} For every $m\geq 1$ and every $n\in A_{m,j}$, $0\leq j<j_{m+1}-j_m$, we have
\[\Vert T^nz-x_l\Vert\leq 2^{-(j_m-1)}.
\]
\end{fact}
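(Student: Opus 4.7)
The plan is to prove Fact \ref{estimate} by direct estimation, after unfolding the recursive definition of $A_{m,j}$ and decomposing $z$ into blocks adapted to the index $j_m$. Every $n\in A_{m,j}$ can be written uniquely as
\[
n \;=\; n_{j_m} + k'\operatorname{per}(x_l) + \sum_{s=0}^{j} k_s\, d_{j_m+s}
\]
with $0\le k'\le \alpha d_{j_m}/\operatorname{per}(x_l)$, $0\le k_0\le \alpha d_{j_m+1}/d_{j_m}-2$, and $1\le k_s\le \alpha d_{j_m+s+1}/d_{j_m+s}-1$ for $1\le s\le j$. I would then split
\[
z\;=\;Z_{j_m}+z_{j_m}+\sum_{s=1}^{j}z_{j_m+s}+\sum_{i>j_m+j}z_i,\qquad Z_{j_m}:=\sum_{i<j_m}z_i,
\]
and estimate $T^n$ on each block separately.

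The tail block is immediate: by (\ref{maxam}), $n\le \alpha d_{j_m+j+1}\le \alpha d_i$ for every $i>j_m+j$, so (ii) gives $\|T^nz_i\|<2^{-i}$, and the total tail contribution is at most $2^{-(j_m+j)}$. For each intermediate block $z_{j_m+s}$ with $1\le s\le j$, the divisibility property discussed below ensures that $\sum_{t\ge s}k_t d_{j_m+t}$ is a multiple of $\operatorname{per}(z_{j_m+s})$, so $T^nz_{j_m+s}=T^{b_s}z_{j_m+s}$, where $b_s:=n_{j_m}+k'\operatorname{per}(x_l)+\sum_{t<s}k_t d_{j_m+t}$; a telescoping computation using (iv) together with $n_{j_m}\le d_{j_m}$ and the upper bounds on the $k_t$ gives $b_s = \alpha d_{j_m+s}+(\alpha-1)\sum_{t<s}d_{j_m+t}\le \alpha d_{j_m+s}$, and (ii) then yields $\|T^nz_{j_m+s}\|<2^{-(j_m+s)}$, contributing in total at most $2^{-j_m}(1-2^{-j})$. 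For the main block $Z_{j_m}+z_{j_m}$, the same divisibility reduces $T^nZ_{j_m}$ to $T^{k'\operatorname{per}(x_l)}Z_{j_m}$ and $T^nz_{j_m}$ to $T^{n_{j_m}+k'\operatorname{per}(x_l)}z_{j_m}$; applying (iii) with $k=k'\operatorname{per}(x_l)\le \alpha d_{j_m}$ together with $T^{k'\operatorname{per}(x_l)}x_l=x_l$ produces $\|T^nZ_{j_m}+T^nz_{j_m}-x_l\|<2^{-j_m}$. Assembling the three contributions gives exactly $\|T^nz-x_l\|\le 2\cdot 2^{-j_m}=2^{-(j_m-1)}$.

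The hard part, and the main obstacle, will be ensuring the divisibility property that $d_{j+1}$ is a multiple of $d_j$ for every $j\ge 1$, which is strictly stronger than condition (i) but implies all the divisibility facts used above (namely that $d_{j_m+t}$ is a multiple of both $\operatorname{per}(z_{j_m+s})$ and $\operatorname{per}(Z_{j_m})$ for $t\ge s\ge 0$). I would handle this by strengthening the inductive construction of the triples $(z_j,n_j,d_j)$: at step $j+1$, when applying the hypothesis of Theorem \ref{Theorem 41}, I would take the input integer $d_0$ to be the period of a suitably chosen vector of $X_0$ which is itself a multiple of $d_j$, exploiting the linear structure of $X_0\subseteq\operatorname{Per}(T)$ (this is precisely the reason the statement of Theorem \ref{Theorem 41} allows $d_0$ to range over all periods of vectors of $X_0$, as stressed in Remark \ref{Remark 41 bis}). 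With this extra divisibility enforced, the block-by-block estimate above goes through and yields the bound $\|T^n z-x_l\|\le 2^{-(j_m-1)}$.
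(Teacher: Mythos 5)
Your block-by-block decomposition requires something the construction does not provide. To reduce $T^n z_{j_m+s}$ to $T^{b_s}z_{j_m+s}$ you need $\sum_{t\ge s}k_t d_{j_m+t}$ to be a multiple of $\operatorname{per}(z_{j_m+s})$, hence $d_{j_m+t}$ a multiple of $\operatorname{per}(z_{j_m+s})$ for every $t>s$. Condition (i) only gives that $d_{j_m+t}$ is a multiple of $\operatorname{per}\bigl(\sum_{i<j_m+t}z_i\bigr)$ and of $\operatorname{per}(z_{j_m+t})$; since $T^d(a+b)=a+b$ does not imply $T^d a=a$, this does not yield divisibility of $d_{j_m+t}$ by $\operatorname{per}(z_{j_m+s})$ for $s<t$. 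You flag exactly this and propose to enforce $d_j\mid d_{j+1}$ by feeding, at step $j+1$, a $d_0$ equal to the period of a vector of $X_0$ that is a multiple of $d_j$. But the criterion only delivers $d_j$ as an uncontrolled multiple of the input $d_0$ and of $\operatorname{per}(z_j)$, and there is no guarantee that $X_0$ contains a vector whose \emph{least} period is a multiple of that $d_j$; so the strengthening of the inductive construction is not obviously available at the level of generality of Theorem \ref{Theorem 41}.

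The paper avoids the issue entirely by inducting on partial sums rather than peeling off individual blocks. One shows, by induction on $j$, that for $n\in A_{m,j}$ one has $\bigl\|T^n\bigl(\sum_{s\le j_m+j}z_s\bigr)-x_l\bigr\|<\sum_{u=0}^j 2^{-(j_m+u)}$, and then adds the tail $\sum_{i>j_m+j}z_i$, which is controlled by (ii) together with (\ref{maxam}) exactly as in your proposal. For the inductive step one writes $n=kd_{j_m+j}+i$ with $i\in A_{m,j-1}$: condition (i) says that $d_{j_m+j}$ is a multiple of $\operatorname{per}\bigl(\sum_{s<j_m+j}z_s\bigr)$ and of $\operatorname{per}(z_{j_m+j})$, hence a period of the \emph{whole} partial sum $\sum_{s\le j_m+j}z_s$, and therefore $T^n\bigl(\sum_{s\le j_m+j}z_s\bigr)=T^i\bigl(\sum_{s\le j_m+j-1}z_s\bigr)+T^iz_{j_m+j}$; the first term is bounded by the induction hypothesis and the second by (ii) with (\ref{maxam}). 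This never requires the $d_j$ to divide one another --- periodicity is always applied to the cumulative sum, which is precisely the object condition (i) controls --- so no modification of the construction is needed.
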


\begin{proof}[Proof of Fact \ref{estimate}]
For any $n\in A_{m,j}$, we have
\begin{equation}\label{Equation 3'}
 \|T^{\,n}z-x_{l}\|\le \Bigl|\Bigl|\, 
T^{\,n}\Bigl(\sum_{s=1}^{j_{m}+j}z_{s} \Bigr)-x_{l}\,\Bigr| 
\Bigr|+\sum_{s>j_{m}+j}\|T^{\,n}z_{s}\|.
\end{equation}
Since $n\le \alpha d_{j_{m}+j+1}\le \alpha d_{s}$ 
for every $s>j_{m}+j$ by (\ref{maxam}), the second term in (\ref{Equation 3'}) is easy to control: 
\[
\sum_{s>j_{m}+j}\|T^{\,n}z_{s}\|<\sum_{s>j_{m}+j}2^{-s}.
\]
We now have to estimate the term $\ds\Bigl|\Bigl|\, 
T^{\,n}\Bigl(\sum_{s=1}^{j_{m}+j}z_{s} 
\Bigr)-x_{l}\,\Bigr| \Bigr|$. The 
index $m$ being fixed, 
we show by induction on $0\le j<j_{m+1}-j_{m}$ that 
\begin{equation}\label{Equation 4}
 \Bigl|\Bigl|\,T^{\,n}\Bigl(\sum_{s=1}^{j_{m}+j}z_{s} 
\Bigr)-x_{l}\,\Bigr| \Bigr|<\sum_{u=0}^{j}2^{-(j_{m}+u)}.
\end{equation}
- Suppose first that $n$ belongs to $A_{m,0}$, so that 
\[n=n_{j_{m}}+kd_{j_{m}}+k'\,\textrm{per}(x_{l})\quad{\rm with }\quad
0\le k\le 
\frac{\alpha d_{j_{m}+1}}{d_{j_{m}}}-2\quad\textrm{and}\quad
0\le k'\le\dfrac{\alpha d_{j_{m}}}{\textrm{per}x_{l}}\cdot 
\]
Then
\begin{align*} 
T^{\,n}\Bigl(\sum_{s=1}^{j_{m}}z_{s}\Bigr)-x_{l}&=
T^{\,n_{j_{m}}+k'\textrm{per }(x_{l})}\,\Bigl(\sum_{s=1}^{j_{m}}z_{s} \Bigr)-x_{l}\qquad\hbox{by (i)}\\
&=T^{\,n_{j_{m}}+k'\textrm{per}(x_{l})}\,z_{j_{m}}-T^{\,k'\textrm{per}(x_{l} ) }
\,\Bigl(x_{l}-\sum_{s=1}^{j_{m}-1}z_{s}\Bigr)\qquad\hbox{by (iv).}
\end{align*}
Since $k'\textrm{per}(x_{l})\le \alpha d_{j_{m}}$, we deduce from (iii) 
that 
$\ds\Bigl|\Bigl|\, 
T^{\,n}\Bigl(\sum_{s=1}^{j_{m}}z_{s} 
\Bigr)-x_{l}\,\Bigr| \Bigr|\le 2^{-j_{m}}$.
\par\medskip 
\noindent
- Suppose now that (\ref{Equation 4}) has been proved up to the index $j-1$ for some $1\le j<j_{m+1}-j_{m}$, and that $n$ belongs to $A_{m,j}$. 
Write 
\[n=kd_{j_{m}+j}+i\qquad{\rm with}\quad i\in A_{m,j-1}\quad{\rm and}\quad 0\le k\le \dfrac{\alpha 
d_{j_{m}+j+1}}{d_{j_{m}+j}}-1\cdot
\] Then
\begin{align*}
 T^{\,n}\Bigl(\sum_{s=1}^{j_{m}+j}z_{s}\Bigr)-x_{l}&=T^{\,kd_{j_{m}+j}+i}\,
 \Bigl(\sum_{s=1}^{j_{m}+j}z_{s} \Bigr)-x_{l}
 =T^{\,i}\,\Bigl(\sum_{s=1}^{j_{m}+j} z_{s}\Bigr)-x_{l}\qquad\hbox{by (i)}\\
 &\smash[b]{=T^{\,i}\,\Bigl(\sum_{s=1}^{j_{m}+j-1}z_{s} 
\Bigr)-x_{l}+T^{\,i}z_{j_{m}+j}.}
\end{align*}
{Thus}
\begin{align*}
\smash{\Bigl|\Bigl|T^{\,n}\Bigl(\sum_{s=1}^{j_{m}+j}z_{s}\Bigr)-x_{l}
\Bigr|\Bigr|}
&\smash[t]{<\sum_{u=0}^{j-1}2^{-(j_{m}+u)}+\|T^{\,i}z_{j_{m}+j}\|}
\end{align*}
by the induction hypothesis. Now, $i\le \alpha d_{j_{m}+j}$ by (\ref{maxam}) since $i$ 
belongs to $A_{m,j-1}$, and thus 
$\|T^{\,i}z_{j_{m}+j}\|<2^{-(j_{m}+j)}$ by (i). Hence
\[
\Bigl|\Bigl|T^{\,n}\Bigl(\sum_{s=1}^{j_{m}+j}z_{s}\Bigr)-x_{l}\Bigr|\Bigr|
<\sum_{u=0}^{j}2^{-(j_{m}+u)},
\]
which concludes our induction. The inequality (\ref{Equation 4}) being 
proved, we deduce that 
\[
\|T^{\,n}z-x_{l}\|<\sum_{u\ge 0}2^{-(j_{m}+u)}=2^{-(j_{m}-1)}
\]
for every $m\ge 1$ and every $n\in A_{m,j}$, $0\le j< j_{m+1}-j_{m}$. 
\end{proof} 

It 
follows from Fact \ref{estimate} that given $\varepsilon >0$, there exists $m_{0}\ge 1$ such that 
$\mathcal{N}_{T}(z,B(x_{l},\varepsilon ))$ contains the set
\[
\bigcup_{m\ge m_{0}}\,\bigcup_{j=0}^{j_{m+1}-j_{m}-1} A_{m,j}.
\] 

So in order to show that $z$ is a frequently hypercyclic vector for 
$T$, it suffices to prove the following fact.

\begin{fact}\label{densityA} The set 
$A:=\bigcup\limits_{m\ge 1}\,\bigcup\limits_{j=0}^{j_{m+1}-j_{m}-1} A_{m,j}$
has positive lower density. 
\end{fact}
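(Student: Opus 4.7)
The plan is to show that for each fixed $l$, the set $A$ has lower density bounded below by an explicit positive constant $\delta_l$ depending only on $r_l$ and $\mathrm{per}(x_l)$. The argument has three steps: (a) establish disjointness of the sets $A_{m,j}$, (b) estimate $|A_{m,j}|$ recursively using condition (v), and (c) carry out a case analysis bounding $|A\cap[1,N]|/N$ from below.

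First, observe that each integer $s\ge j_1$ can be written uniquely as $s=j_m+j$ with $0\le j<j_{m+1}-j_m$; write $m=m(s)$ and $j=j(s)$, so $j(s)\le r_l-1$. From the definition one gets $A_{m,j}\subseteq[d_{j_m+j},\alpha d_{j_m+j+1}]$ for $j\ge 1$ and $A_{m,0}\subseteq[n_{j_m},\alpha d_{j_m+1}]$. Using (iv) (which gives $\alpha d_{j_m}<n_{j_m}\le d_{j_m}$) together with (v) (which gives $d_{j_m+j+1}/d_{j_m+j}>4/\alpha$), one checks that: for fixed $m$, the sets $A_{m,j}$ are disjoint (each $A_{m,j}$ for $j\ge 1$ lies to the right of $\alpha d_{j_m+j}\ge \max A_{m,j-1}$); and for $m<m'$ one has $\max A_{m,j}\le\alpha d_{j_{m+1}}<\alpha d_{j_{m'}}<n_{j_{m'}}\le\min A_{m',j'}$. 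Hence $A=\bigsqcup_{s\ge j_1}A_{m(s),j(s)}$.

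Second, estimate $|A_{m,j}|$. By (v), $K_j:=\lfloor\alpha d_{j_m+j+1}/d_{j_m+j}\rfloor-1\ge \alpha d_{j_m+j+1}/(2d_{j_m+j})$, and similarly the initial block yields $|A_{m,0}|\ge \alpha^2 d_{j_m+1}/(2\,\mathrm{per}(x_l))$. Since the translates in the definition of $A_{m,j}$ are pairwise disjoint (as noted above), an easy induction gives
\[
|A_{m,j}|\ge \frac{\alpha^{j+2}\,d_{j_m+j+1}}{2^{j+1}\,\mathrm{per}(x_l)}.
\]
In particular, for every $s\ge j_1$, setting $C_l:=\alpha^{r_l+1}/(2^{r_l}\,\mathrm{per}(x_l))$, we have $|A_{m(s),j(s)}|\ge C_l\,d_{s+1}$, and (more usefully) $|A_{m(s),j(s)-1}|\ge C_l\,d_s$ when $j(s)\ge 1$.

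Third, fix large $N$ and let $s$ be the unique index such that $\alpha d_s\le N<\alpha d_{s+1}$. I split into cases according to the position of $N$; the goal is a bound of the form $|A\cap[1,N]|/N\ge\delta_l$ independent of $s$. If $N\le 2d_s$, then $[1,\alpha d_s]\subseteq[1,N]\subseteq[1,2d_s]$, so $|A\cap[1,N]|\ge |A_{m(s-1),j(s-1)}|\ge C_l d_s$ gives density at least $C_l/2$. If $N\ge 2d_s$, then $A_{m(s),j(s)}\cap[1,N]$ contains at least $\lfloor N/d_s\rfloor-1\ge N/(2d_s)$ entire translates of $A_{m(s),j(s)-1}$ (when $j(s)\ge 1$) or of the ``inner block'' of $A_{m(s),0}$ of size $\alpha d_s/\mathrm{per}(x_l)$ (when $j(s)=0$); either way, this contributes at least $(N/(2d_s))\cdot C_l\, d_s=C_l N/2$ elements, giving density at least $C_l/2$. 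In either case $|A\cap[1,N]|/N\ge\delta_l:=C_l/2>0$, so $\underline{\mathrm{dens}}(A)\ge\delta_l$ and consequently $z$ visits every $B(x_l,\varepsilon)$ along a set of integers of positive lower density, as required.

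The main obstacle is the intermediate regime $N\in[\alpha d_s,2d_s]$: here $A_{m(s),j(s)}$ contributes nothing (its first translate only appears at height $d_s+n_{j_{m(s)}}$), so one must rely on the previous set $A_{m(s-1),j(s-1)}$ and use crucially condition (iv) together with (v) to ensure that the ``gap'' between consecutive levels is comparable in size to the previous level, not to the next one. Once this is controlled, all other regimes are handled by the standard block-counting argument outlined above.
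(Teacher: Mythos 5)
Your proof is correct and uses the same core ingredients as the paper (disjointness of the blocks $A_{m,j}$, the recursive lower bound $\#A_{m,j}\ge (\alpha/2)^j\,\#A_{m,0}\,(d_{j_m+j+1}/d_{j_m+1})$, and $\#A_{m,0}\ge\alpha^2 d_{j_m+1}/(2\,\mathrm{per}(x_l))$), but the final step is organized differently from the paper's. The paper only verifies the inequality $\#([1,n]\cap A)\ge\delta n$ for integers $n\in A$, and then observes that this already forces $\underline{\mathrm{dens}}(A)\ge\delta$: writing $A=\{a_q\}$, the inequality reads $a_q\le q/\delta$, and from there lower density follows. That restriction to $n\in A$ is precisely what lets the paper avoid your ``intermediate regime'' $N\in[\alpha d_s, 2d_s]$, because for $n\in A$ one always knows exactly which translate of which block $n$ sits in, so the counting splits cleanly as ``previous block plus $k$ full translates.'' Your route — bounding $\#([1,N]\cap A)/N$ for \emph{all} large $N$ — is legitimate and arguably more transparent, but it forces the two-case split and is where you need to be slightly more careful.

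The one concrete slip is in Case 2: the chain $\lfloor N/d_s\rfloor-1\ge N/(2d_s)$ is false for $N\in(2d_s, 4d_s)$ (e.g. $N=2.5\,d_s$ gives $1\ge 1.25$). The fix is harmless: either move the threshold to $N\ge 4d_s$ (in Case 1 then use $N\le 4d_s$ and lose a factor $2$, getting $\delta_l=C_l/4$), or observe in the range $2d_s\le N<4d_s$ that at least one full translate fits, contributing $\ge C_l d_s\ge C_l N/4$. Either way $\delta_l=C_l/4$ works and the conclusion stands; just don't leave the inequality as written.
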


\begin{proof}[Proof of Fact \ref{densityA}]
We start with a 
series of elementary remarks on the structure of the sets $A_{m,j}$.
\begin{enumerate}[(a)]
 \item $A_{m,0}\subseteq [n_{j_{m}},\alpha d_{j_{m}+1}]$ and 
 $A_{m,j}\subseteq[d_{j_{m}+j},\alpha d_{j_{m}+j+1}]$ for $1\le j< 
j_{m+1}-j_{m}$, so that the sets $A_{m,j}$ are contained in successive 
(disjoint) subintervals of $\N$.
\item Inside each set $A_{m,j}$, $j\geq 1$, the translates $A_{m,j-1}+kd_{j_m+j}$ are pairwise disjoint.
\item If we set $A_{m}:=\bigcup_{j=0}^{j_{m+1}-j_{m}-1}A_{m,j}$, then $A_m\subseteq
[n_{j_{m}},\alpha d_{j_{m+1}}]$. Since $\alpha d_{j_{m+1}}< n_{j_{m+1}}$, the 
sets $A_{m}$, $m\ge 1$, are thus contained in successive subintervals of $\N$;
\item There is no redundancy in the definition of $A_{m,0}$: if
\begin{align*}
\smash{n_{j_{m}}+kd_{j_{m}}+k'\,\textrm{per}(x_{l})}&=\smash{n_{j_{m}}+\ti{
k}d_{j_
{m}} +\ti{k'}\,\textrm{per}(x_{l})}
\intertext{for some}
0\le k,k'&\smash[t]{\le \dfrac{\alpha d_{j_{m}+1}}{d_{j_{m}}}-2,\quad
0\le k',\ti{k'}\le\dfrac{\alpha d_{j_{m}}}{\textrm{per}(x_{l})},}
\end{align*}
then $k=\ti{k}$ and $k'=\ti{k'}$. Indeed, we have $(k-\ti{k})d_{j_{m}}=(k'-\ti{k'})\textrm{per}(x_{l})$,
so that $|k-\ti{k}|d_{j_{m}}\le\alpha d_{j_{m}}$, and hence 
$k=\ti{k}$ because $\alpha<1$. 
\item It follows from (v) that
\[
\#A_{m,0}\ge \dfrac{\alpha d_{j_{m}}}{\textrm{per}(x_{l})}\Bigl(
\dfrac{\alpha d_{j_{m}+1}}{d_{j_{m}}}-1\Bigr)
\ge \dfrac{\alpha d_{j_{m}}}{\textrm{per}(x_{l})}\,\cdot\,\dfrac{\alpha 
d_{j_{m}+1}}{2d_{j_{m}}}=\dfrac{\alpha 
^{2}d_{j_{m}+1}}{2\textrm{per}(x_{l})}\cdot
\]
\item By (a) and (v), we have for every $1\le j< j_{m+1}-j_{m}$:
\par\smallskip
\[ \# A_{m,j}\,\smash{\ge \Bigl(\dfrac{\alpha d_{j_{m}+j+1}}{d_{j_{m}+j}}-1 
\Bigr)
\,\#A_{m,j-1}\ge\dfrac{\alpha d_{j_{m}+j+1}}{2d_{j_{m}+j}}\,\# A_{m,j-1}.}
\]
\par\bigskip\noindent
{Iterating this inequality yields that}
\par
\[\# A_{m,j}\,\smash{\ge\Bigl(\dfrac{\alpha }{2} 
\Bigr)^{j}\,\dfrac{d_{j_{m}+j+1}}{d_{j_{m}+1}}\,\#A_{m,0}\ge
\dfrac{\alpha 
^{j+2}}{2^{j+1}}\cdot\dfrac{d_{j_{m}+j+1}}{\textrm{per}(x_{l})}\cdot
}
\]
\par\medskip\noindent
{Since $j_{m+1}-j_{m}\le r_{l}$, this eventually gives}
\[\# A_{m,j}\,\smash{\ge\dfrac{\alpha 
^{r_{l}+2}}{2^{r_{l}+1}}\cdot\dfrac{d_{j_{m}
+j+1}}{\textrm{per}(x_{l})}\quad \textrm{for every}\ 1\le j<j_{m+1}-j_{m}.
}\]
\end{enumerate}
\par\medskip
In order to prove that $A$ has positive lower density, we will show the 
existence of some positive number $\delta $ such that 
\begin{equation}\label{delta}
\frac{1}{n}\,\#([1,n]\cap A)\ge \delta\qquad\hbox{for every $n\in A$}.
\end{equation}
 This will be enough 
to ensure that $A$ has positive lower density. 
Indeed, if we enumerate the set $A$ as $A=\{a_{q}\,;\,q\ge 1\}$ where 
$(a_{q})_{q\ge 1}$ is strictly increasing, this 
inequality can be rewritten as $q/a_{q}\ge \delta $ for every $q\ge 1$, 
\mbox{\it i.e.} $a_{q}\le q/\delta $; and this is easily seen to imply that $A$  has 
positive lower density.
\par\smallskip
We fix $m\ge 1$ and $n\in A_m$, and consider separately two cases.
\par\smallskip \noindent 
\par\noindent- Suppose that $n\in A_{m,0}$, and write
\[
\smash{n=n_{j_{m}}+kd_{j_{m}}+k'\textrm{per}(x_{l}),\quad\textrm{where}\ 
0\le k\le \dfrac{\alpha d_{j_{m}+1}}{d_{j_{m}}}-2\ \textrm{and}\ 
0\le k'\le \dfrac{\alpha d_{j_{m}}}{\textrm{per}(x_{j})}}\cdot 
\]
Note that the set $A_{m-1}$ of (c) is contained in $[1,n]$ because $n\geq n_{j_m}>\alpha d_{j_{m}}$, and $A_{m-1}$
is disjoint from $A_{m,0}$. So we have in particular
\[
  \#\,\bigl([1,n]\cap A\bigr)\ge \#\,A_{m-1,j_{m}-j_{m-1}-1}+\#\,\bigl( [1,n]\cap A_{m,0}\bigr).
\]
\smallskip
Moreover, the set $[1,n]\cap A_{m,0}$ contains all integers of the form $ n_{j_m}+sd_{j_m}+s'{\rm per}(x_l)$, where $0\leq s<k$ and
$0\leq s'\leq \frac{\alpha d_{j_m}}{{\rm per}(x_l)}$ (since all of them are in $A_{m,0}$ and the largest is not greater than 
$n_{j_m}+(k-1)d_{j_m}+\alpha d_{j_m}\leq n_{j_m}+kd_{j_m}\leq n$). Hence, 
\begin{align*} \#\,\bigl( [1,n]\cap A_{m,0}\bigr)&\ge k\,\cdot\, \frac{\alpha d_{j_m}}{{\rm per}(x_l)}\cdot
\end{align*}
By (f), it follows that
\[
\#\,\bigl( [1,n]\cap A\bigr)\ge\dfrac{\alpha 
^{r_{l}+2}d_{j_{m}}}{2^{r_{l}+1}\textrm{per}(x_{l})}+k\dfrac{\alpha 
d_{j_{m}}}
{\textrm{per}(x_{l})}\ge\dfrac{\alpha 
^{r_{l}+2}}{2^{r_{l}+1}}\cdot\dfrac{(k+1)d_{j_{m}}}{\textrm{per}(x_{l})}\cdot
\]
Since $n\leq n_{j_m}+(k+\alpha)d_{j_m}\leq 2(k+1)d_{j_m}$, we conclude that
\[\#\,\bigl( [1,n]\cap A\bigr)
\ge\dfrac{\alpha ^{r_{l}+2}}{2^{r_{l}+2}\textrm{per}(x_{l})}\,n.
\]
\par\medskip 
\noindent - Suppose now that $n\in A_{m,j}$ for some $1\le 
j<j_{m+1}-j_{m}$, so that 
\[n=kd_{j_{m}+j}+i,\quad{\rm with}\;1\le k\le \frac{\alpha d_{j_{m}+j+1}}{
d_{j_{m}+j}}-1\;{\rm and}\; i\in A_{m,j-1}.
\]
 Then 
 $\max\, A_{m,j-1}\leq n\le (k+1)d_{j_{m}+j}$ by (\ref{maxam}). In particular,
\[
 \#\,\bigl([1,n]\cap A\bigr)\ge\#\,A_{m,j-1}+\#\,\bigl([1,n]\cap A_{m,j}\bigr).
 \]
 \smallskip
{Moreover, since $n\geq kd_{j_{m}+j}\geq (k-1)d_{j_m+j}+\max\, A_{m,j-1}$} and the translates 
$A_{m,j-1}+l d_{j_m+j}$, $l< k$ are pairwise disjoint (and contained in $A_{m,j}$), we also have
\[  \#\,\bigl([1,n]\cap A_{m,j}\bigr)\ge (k-1)\, \#\,A_{m,j-1}.
\]
{By (f), it follows that}
\[ \#\,\bigl( [1,n]\cap A\bigr)\smash{\ge k\,\dfrac{\alpha 
^{r_{l}+2}d_{j_{m}+j}}{2^{r_{l}+1}\textrm{per}(x_{l})}\ge
\dfrac{1}{2}\cdot\dfrac{\alpha 
^{r_{l}+2}}{2^{r_{l}+1}}\cdot\dfrac{1}{\textrm{per}(x_{l})}\cdot\,n,}
\]
\smallskip\noindent
where we have used the inequality $n\leq (k+1)d_{j_{m}+j}$.
\par\medskip
{We have thus proved that}
\[
\#\,\bigl([1,n]\cap A\bigr)\smash{\ge\dfrac{\alpha ^{r_{l}+2}}{2^{r_{l}+2}\textrm{per}(x_{l})}\,n}\qquad\hbox{ for every $n\in A$,}
\]
\par\medskip\noindent 
which concludes the proof of Fact \ref{densityA}. \end{proof}
The 
proof of Theorem \ref{Theorem 41} is now complete.
\end{proof}
\par\smallskip

\subsubsection{Link with the Operator Specification Property}\label{SECTION OSP}
We conclude this section by a result which shows that many operators 
which are known to be frequently hypercyclic satisfy the assumption of 
Theorem \ref{Theorem 41}. This is the case for all operators which satisfy 
the Frequent Hypercyclicity Criterion and, more generally, for operators 
with the so-called \emph{Operator Specification Property}. 
\par\smallskip
This last property, which has been recently introduced and studied 
 by Bartoll, Mart\'{\i}nez-Gim\'{e}nez, and Peris (\cite{BMPe1}, \cite{BMPe2}), is 
the linear version of the classical \emph{Specification Property} for compact dynamical 
systems introduced by Bowen in \cite{Bo}. The definition reads  
 as follows: an operator $T\in\mathfrak{B}(X)$ has the {Operator 
Specification Property} (OSP) if there exists an increasing sequence 
$(K_{m})_{m\geq 1}$ of $T$-invariant subsets of $X$ with $0\in K_{1}$, the 
union of which is dense in $X$, such that for each $m\geq 1$, the restriction of $T$ 
to $K_m$ has the Specification Property in the sense of \cite{Bo}, which means that 
the following holds true:
\par\medskip 
\noindent $(*)$\label{Etoile}\quad for every $\delta >0$, there exists an 
integer
$N_{\delta,m }\ge 1$ such that for every finite family $y_{1},\dots,y_{s}$ 
of 
points of $K_{m}$, and any integers $0= j_{1}\le k_{1}<j_{2}\le k_{2}<
\dots<j_{s}\le k_{s}$ with $j_{r+1} -k_{r}\ge N_{\delta ,m}$ for every 
$1\le r\le s-1$, there is a point $x\in K_{m}$ such that for every $1\le 
r\le s$,
\[
\sup_{j_{r}\le i\le k_{r}}\|T^{\,i}x-T^{\,i}y_{r}\|<\delta 
\quad\textrm{while}\quad T^{\,N_{\delta ,m}+k_{s}}x=x.
\]
\par\medskip 
Stated in an informal way, $(*)$ means that arbitrary large pieces of the 
orbits of finitely many points of $K_{m}$ can be approximated by the 
orbits of a single periodic point of $K_{m}$, provided that the gaps between 
the different sets of indices where we require this approximation are 
sufficiently large. 
\par\smallskip
It is proved in \cite{BMPe2} that operators with the OSP are chaotic, 
topologically mixing, and frequently hypercyclic. Moreover, any operator 
satisfying the general version of the Frequent Hypercyclicity Criterion 
given in \cite{BoGR} has the OSP. We prove in Theorem \ref{Theorem 42} 
below that operators with the OSP satisfy the assumption of Theorem 
\ref{Theorem 41}. This was pointed out to us by Alfred Peris, who kindly 
allowed us to reproduce his proof here. This improves on a 
previous observation (proved in a preliminary version of this paper) according to which 
operators satisfying the Frequent Hypercyclicity Criterion also satisfy 
the assumptions of Theorem \ref{Theorem 41}.
\begin{theorem}\label{Theorem 42}
 Let $T\in\mathfrak{B}(X)$ be an operator with the \emph{OSP}. There exists a 
dense subspace $X_{0}$ of $X$ with $T(X_{0})\subseteq X_0$ and 
$X_{0}\subseteq
\emph{Per}(T)$, such that, for every $\alpha \in(0,1/2)$, every $x\in 
X_{0}$, every $\varepsilon >0$, and every integer $d_{0}\ge 1$, there 
exist $z\in X_{0}$ and integers $n$, $d\ge 1$ such that properties 
\emph{(0)}, \emph{(1)}, and \emph{(2)} of Theorem \ref{Theorem 41} hold 
true.
\end{theorem}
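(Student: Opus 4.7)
My plan is to use the OSP to build $z$ by pasting two orbit-approximation windows together: a first window of length $L$ at time $0$ that forces $\|T^{k}z\|<\varepsilon$ (by tracking the fixed point $0$), and a second window of length $L$ at time $n$ that tracks the orbit of $x$, forcing $\|T^{n+k}z-T^{n+k}x\|<\varepsilon$. The resulting $z$ comes out periodic with $\mathrm{per}(z)\mid N_{\varepsilon,m}+(n+L)$, and I will tune $n$ and $L$ so that this value collapses to a clean $d=2n$ that is simultaneously a multiple of $d_{0}$.

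For the subspace I would take $X_{0}:=\mathrm{Per}(T)$. This is a linear subspace of $X$ (for $T^{p}u=u$ and $T^{q}v=v$ one has $T^{\mathrm{lcm}(p,q)}(u+v)=u+v$), it is $T$-invariant, and it is contained in $\mathrm{Per}(T)$ by definition. Density follows from the OSP itself: applying $(*)$ with $s=1$ to a single target $y_{1}\in K_{m}$ and indices $0=j_{1}\le k_{1}$ produces a periodic point of $K_{m}$ within $\delta$ of $y_{1}$, and since $\bigcup_{m}K_{m}$ is dense in $X$, so is $X_{0}$.

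Now fix $x\in X_{0}$, $\varepsilon>0$, $d_{0}\ge 1$ and $\alpha\in(0,1/2)$. Pick $m$ so that $x\in K_{m}$, and let $N:=N_{\varepsilon,m}$ be given by $(*)$ with $\delta=\varepsilon$. Choose $n$ to be a positive multiple of $\mathrm{lcm}(d_{0},\mathrm{per}(x))$ satisfying $(1-2\alpha)n\ge N$, and put $L:=n-N$ and $d:=2n$. Apply $(*)$ in $K_{m}$ with $s=2$, $y_{1}=0\in K_{1}\subseteq K_{m}$, $y_{2}=x$, and indices $j_{1}=0,\,k_{1}=L,\,j_{2}=n,\,k_{2}=n+L$ (the required gap is exactly $j_{2}-k_{1}=N$). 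This yields $z\in K_{m}$ with $T^{N+k_{2}}z=T^{2n}z=z$ and
\[
\|T^{i}z\|<\varepsilon,\qquad \|T^{n+i}z-T^{n+i}x\|<\varepsilon\qquad (0\le i\le L).
\]
Since $\mathrm{per}(x)\mid n$, we have $T^{n+i}x=T^{i}x$, turning the second estimate into $\|T^{n+i}z-T^{i}x\|<\varepsilon$; since $d_{0}\mid n\mid d=2n$ and $\mathrm{per}(z)\mid d$, condition (0) holds; and $\alpha d=2\alpha n\le n-N=L$ by the choice of $n$, so (1) and (2) hold on $[0,\alpha d]$.

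The only real obstacle is reconciling three competing constraints on $d$: the divisibility $d_{0}\mid d$, the constraint $\mathrm{per}(z)\mid d$ forced by the OSP (which only gives $\mathrm{per}(z)\mid N+k_{2}$), and the range condition $\alpha d\le L$ needed for (1) and (2) to be nonvacuous. The symmetric choice $L=n-N$ collapses $N+k_{2}$ to $2n$, making the divisibility by $\mathrm{per}(z)$ automatic and letting $n$ absorb the divisibility by both $d_{0}$ and $\mathrm{per}(x)$; the inequality $\alpha d\le L$ then becomes $(1-2\alpha)n\ge N$, solvable in $n$ precisely because $\alpha<1/2$, which is exactly the hypothesis.
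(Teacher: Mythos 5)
There is a genuine gap in the choice of the subspace $X_{0}$. You take $X_{0}:=\mathrm{Per}(T)$ and then, for $x\in X_{0}$, you ``pick $m$ so that $x\in K_{m}$.'' But the OSP only supplies the increasing family $(K_{m})_{m\ge 1}$; it says nothing about whether an arbitrary periodic point of $T$ lies in one of them. The set $\mathrm{Per}(T)$ can perfectly well contain periodic vectors outside $\bigcup_{m}K_{m}$, and for such an $x$ the specification property gives you no window matching the orbit of $x$ --- so the step fails. The fix, which is what the paper does, is to shrink $X_{0}$ to $\mathrm{span}\bigl[\bigcup_{m\ge 1}K_{m}\cap\mathrm{Per}(T)\bigr]$. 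This is still $T$-invariant, still contained in $\mathrm{Per}(T)$, and its density follows from the same $s=1$ specification argument you gave. For a generic element $x=\sum_{m=1}^{m_{0}}a_{m}x_{m}$ of this smaller $X_{0}$ one then needs one more ingredient: by a result cited in the paper ([BMPe2, Prop.\,10]), the restriction of $T$ to the invariant compact set $K=\sum_{m=1}^{m_{0}}a_{m}K_{m}$ (which contains both $x$ and $0$) again has the Specification Property, and one applies your pasting argument on $K$ instead of on a single $K_{m}$.

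Once $X_{0}$ is corrected and this reduction to a single invariant set $K$ is made, the rest of your argument is sound and is essentially the paper's: two specification windows, one tracking $0$ and one tracking the orbit of $x$, with the mandatory gap $N:=N_{\varepsilon,m}$ between them, and periodicity of $x$ used to replace $T^{n+i}x$ by $T^{i}x$. Your parametrization $L=n-N$, $d=2n$ is in fact a slightly cleaner way to arrange the three constraints ($d_{0}\mid d$, $\mathrm{per}(z)\mid d$, and $\alpha d\le L$) than the paper's choice of $d$ first and $d'=\lfloor\alpha d\rfloor$, $n=N+d'$ afterwards; both hinge in the same way on $\alpha<1/2$.
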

\begin{proof}
 Let $(K_{m})_{m\ge 1}$ be an increasing sequence of $T$-invariant subsets 
of $X$ with $0\in K_{1}$, the union of which is dense in $X$, such that $(*)$ above holds true for every 
$m\geq 1$. We define 
$X_{0}:=\textrm{span}\bigl[\bigcup_{m\ge 1}K_{m}\cap 
\textrm{Per}(T)\bigr]$, which is clearly $T$-invariant and dense in $X$. 

Let us fix $x\in X_{0}$, $\varepsilon >0$ and $d_{0}\ge 1$. 
The vector $x$ 
can be written as $x=\sum_{m=1}^{m_{0}}a_{m}x_{m}$, where $a_{m}$ is a scalar and
$x_{m}$ is a vector belonging to $ K_{m}$ for every $1\le m\le m_{0}$. By \cite[Prop.\,10]{BMPe2}, 
the map induced by $T$  on the set $K=\sum_{m=1}^{m_{0}}a_{m}K_{m}$ 
has the Specification Property. So we can assume without loss of 
generality that $x$ belongs to $K_{m}$ for some $m\ge 1$. 
Let now 
$N_{\varepsilon ,m}$ be such that property $(*)$ above
holds true, and let $d\ge 1$ be a multiple of $d_{0}$ so large that 
$\alpha d+N_{\varepsilon ,m}<d/2$. We then define $d'$ to be the integer 
part of $\alpha d$, $n=N_{\varepsilon ,m}+d'$, $y_{1}=0$, and 
$y_{2}=T^{\,l-n}x$, where $l$ is any multiple of $\textrm{per}(x)$ such 
that $l>n$. We also set $j_{1}=0$, $k_{1}=d'$, $j_{2}=n$, and $k_{2}=
d-N_{\varepsilon ,m}$. As $j_{2}-k_{1}=n-d'=N_{\varepsilon ,m}$, the 
Specification Property on $K_{m}$ implies that there exists a 
point $z\in K_{m}$ such that
\begin{align*}
 \|T^{\,k}z-T^{\,k}y_{1}\|&=\|T^{\,k}z\|<\varepsilon \quad\textrm{for 
every}\ 0\le k\le d';\\
\|T^{\,k}z-T^{\,k}y_{2}\|&<\varepsilon \quad\textrm{for every}\ n\le k\le 
k_2;\\
T^{\,N_{\varepsilon ,m}+k_{2}}z&=T^{\,d}z=z.
\intertext{In other words,}
\|T^{\,k}z\|&<\varepsilon \quad\textrm{for every}\ 0\le k\le \alpha d\,;\\
\|T^{\,n+k}z-T^{\,k}x\|&<\varepsilon \quad\textrm{for every}\ 0\le k\le 
k_{2}-n
\end{align*}
since $T^{\,n+k}y_{2}=T^{\,n+k+l-n}x=T^{k}x$; and $d$ is a multiple of the 
period of $z$. Since $k_{2}-n=d-2N_{\varepsilon ,m}-d'\ge\alpha d$, this shows that 
assumptions (0), (1), and (2) of Theorem \ref{Theorem 41} are satisfied.
\end{proof}

\begin{remark} The converse of Theorem \ref{Theorem 42} is not true; that is, operators satisfying the assumptions of Theorem \ref{Theorem 41} need not have 
the OSP. Indeed, we will construct in Section \ref{SPECIAL} (more precisely, in Example~\ref{Example 55}) some operators which satisfy the assumptions of Theorem \ref{Theorem 41} and yet  are not topologically mixing.
\end{remark}

To conclude this section, we essentially show that the OSP implies ergodicity.
This is coherent with what happens in the non-linear setting: it is proved in \cite{Sig} that compact dynamical systems with the Specification Property are ergodic. They actually enjoy a much stronger property: given such a system $(X,T)$, the ergodic measures with full support for $(X,T)$ are comeager in the set $\mathcal{P}_{T}(X)$ of $T$-invariant probability measures  endowed with its natural Polish topology.
On the other hand, we do not know whether operators with the OSP are mixing 
(\mbox{\it i.e.} admit mixing measures with full support), whereas it is known that the Frequent Hypercyclicity Criterion does imply mixing, and in fact mixing in the Gaussian sense 
(see \cite{MP} and \cite{BM2}).

\begin{proposition}\label{OSPergo} If $T\in\mathfrak B(X)$ satisfies the \emph{OSP}, and if the sequence $(K_{m})_{m\ge 1}$ of $T$-invariant subsets of $X$ appearing in the definition of the \emph{OSP} consists of \emph{compact} sets, then $T$ is ergodic.
\end{proposition}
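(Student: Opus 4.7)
The plan is to verify the hypothesis of the ergodicity criterion stated as Lemma~\ref{ERGOCRIT}. So fix a non-empty open set $\Omega\subseteq X$ with $T^{-1}(\Omega)\subseteq\Omega$, a neighborhood $W$ of $0$, and $\varepsilon>0$; the goal is to produce a $T$-invariant probability measure $\mu$ with compact support satisfying $\mu(\Omega)=1$ and $\mu(W)>1-\varepsilon$. I would take $\mu$ to be the uniform measure on a single finite periodic orbit manufactured via the Specification Property $(*)$.

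Here is how I would construct the orbit. First choose $\delta>0$ so small that $B(0,\delta)\subseteq W$; then, using the density of $\bigcup_{m\ge 1}K_m$ in $X$ together with the fact that $\Omega$ is open and non-empty, pick some $m\ge 1$ and $y\in K_m\cap\Omega$, shrinking $\delta$ further if needed so that $B(y,\delta)\subseteq\Omega$ as well. Let $N:=N_{\delta,m}$ be the constant provided by $(*)$ for $K_m$. For a large integer $L$ to be chosen later, I would apply $(*)$ to the two points $y_1:=y$ and $y_2:=0$ of $K_m$ (note that $0\in K_1\subseteq K_m$), with indices $j_1=k_1=0$, $j_2=N$, $k_2=N+L$. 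This yields a point $x\in K_m$ satisfying
\[
\|x-y\|<\delta,\qquad \|T^ix\|<\delta\ \hbox{for every}\ N\le i\le N+L,\qquad T^{2N+L}x=x.
\]
Letting $P$ denote the minimal period of $x$ (which divides $2N+L$), the measure
\[
\mu:=\frac{1}{P}\sum_{i=0}^{P-1}\delta_{T^ix}=\frac{1}{2N+L}\sum_{i=0}^{2N+L-1}\delta_{T^ix}
\]
is a $T$-invariant probability measure with compact (finite) support.

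It remains to check the two required properties of $\mu$. On the one hand, $x$ itself belongs to $B(y,\delta)\subseteq\Omega$; on the other, the condition $T^{-1}(\Omega)\subseteq\Omega$ says exactly that $X\setminus\Omega$ is forward-invariant under $T$, so a periodic orbit that meets $\Omega$ must lie entirely in $\Omega$ (if some iterate $T^jx$ were to leave $\Omega$, then all subsequent iterates would too, contradicting $T^Px=x\in\Omega$). Hence $\mu(\Omega)=1$. On the other hand, the estimate $\|T^ix\|<\delta$ for $i\in[N,N+L]$ yields
\[
\mu(W)\ge \dfrac{L+1}{2N+L},
\]
which exceeds $1-\varepsilon$ as soon as $L$ is chosen sufficiently large. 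Lemma~\ref{ERGOCRIT} then delivers the ergodicity of $T$.

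The main obstacle here is conceptual rather than technical: one has to notice that the hypothesis $T^{-1}(\Omega)\subseteq\Omega$ forces the dichotomy that a periodic orbit lies either entirely in $\Omega$ or entirely in its complement, which is what allows a single periodic orbit to witness $\mu(\Omega)=1$. Once this is observed, compactness of the sets $K_m$ plays no essential role in the argument; it is inherited from the classical compact framework in which the Specification Property was originally formulated, and guarantees that $K_m$ is a legitimate compact dynamical system in the sense of Bowen.
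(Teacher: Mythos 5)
Your proof is correct and takes a genuinely different route from the paper's. The paper's argument works inside the Polish space $\mathcal P_T(X)$ of $T$-invariant measures: it invokes Sigmund's theorem (that for a compact system with the Specification Property, ergodic measures of full support are dense in the simplex of invariant measures), establishes that the ergodic measures and the measures with full support are each dense $G_\delta$ in a suitable subspace $\mathcal M\subseteq\mathcal P_T(X)$, and then applies the Baire Category Theorem in $\mathcal M$ to extract one measure that is both ergodic and fully supported. Your argument bypasses all of this machinery: you verify the hypotheses of Lemma \ref{ERGOCRIT} directly, by using the Specification Property $(*)$ with $y_1=y$, $y_2=0$ to manufacture a single periodic point $x$ whose finite orbit is entirely contained in $\Omega$ (via the forward-invariance of $X\setminus\Omega$) and spends most of its time in $W$; the uniform measure on that orbit is then the required compactly supported invariant measure. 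The dichotomy ``a periodic orbit lies either entirely inside $\Omega$ or entirely outside'' is exactly the observation that makes the construction work, and you identify it correctly. What your approach buys is not only elementariness but also, as you note, the removal of the compactness hypothesis on the $K_m$: the paper needs the $K_m$ compact in order to cite Sigmund's theorem for the compact systems $(K_m, T|_{K_m})$, whereas your periodic-orbit measures have finite (hence compact) support automatically, so the OSP alone suffices. What the paper's route buys, by contrast, is a potentially stronger payoff in other settings, since Sigmund's theorem in fact yields that ergodic fully supported measures are generic (comeager) in $\mathcal P_T(K_m)$, information which your single-orbit construction does not by itself recover.
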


The assumption of Proposition \ref{OSPergo} that the sets $K_{m}$, $m\ge 1$, be compact seems to be no real restriction. As mentioned in \cite{BMPe2}, all known examples of operators with the OSP do satisfy the OSP with respect to a sequence of $T$-invariant compact subsets $(K_{m})_{m\ge 1}$ of $X$.

\begin{proof} Let us denote by $\mathcal P(X)$ the space of all Borel probability measures on $X$ endowed with its natural Polish topology (a sequence $(\mu_{n})_{n\ge 1}$ of elements of  $\mathcal P(X)$ converges to $\mu\in\mathcal{P}(X)$ if and only if 
$\int_X fd\mu_n$ tends to $\int_X fd\mu$ as $n$ tends to infinity for every bounded continuous function $f$ from $X$ into $\R$), and by $\mathcal P_T(X)\subseteq \mathcal P(X)$ the set of all $T$-invariant measures. Then $\mathcal P_T(X)$ is a Polish space, being a closed subset of $\mathcal P(X)$. We denote by $\mathcal E_T(X)\subseteq\mathcal P_T(X)$ the set of all ergodic measures for $T$. For any family of measures $\mathcal M\subseteq \mathcal P(X)$, we denote by $\mathcal M^*$ the family of all measures $\mu\in\mathcal M$ with full support. Finally, for any subset $\mathcal M$ of $\mathcal P(X)$ and any Borel subset $A$ of $ X$, we set $\mathcal M(A):=\{ \mu\in\mathcal M;\; \mu(A)=1\}$. 
\par\smallskip
Let $(K_{m})_{m\geq 1}$ be an increasing sequence of \emph{compact} $T$-invariant subsets of $X$ with $\overline{\bigcup_{m\geq 1} K_m}=X$ such that 
 $T_{| K_m}$ has the Specification Property for every $m\geq 1$. By a result of \cite{Sig}, we know that for each $m\geq 1$, the set $\mathcal E_T^*(K_m)$ of all measures
 $\mu\in\mathcal E_T(X)$ with support equal to $K_m$ is dense in $\mathcal P_T(K_m)$. 
Let us now denote by $\mathcal M$ the closure of $\bigcup_{m\geq 1} \mathcal P_T(K_m)$ in $\mathcal P_T(X)$. 

\begin{fact}\label{microfact1} The set $\mathcal E_T(X)\cap\mathcal M$ is a dense $G_\delta$ subset of $\mathcal M$.
\end{fact}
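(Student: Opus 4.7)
The plan is to split the proof into two independent parts: the density of $\mathcal{E}_T(X)\cap\mathcal{M}$ in $\mathcal{M}$, and the $G_\delta$ property. I expect the density part to be essentially a direct application of Sigmund's theorem recalled just before the statement, while the $G_\delta$ part will rely on a standard mean-ergodic-theorem trick; this last point is where one has to be the most careful.

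For density, I would proceed as follows. By the very definition of $\mathcal{M}$, the union $\bigcup_{m\ge 1}\mathcal{P}_T(K_m)$ is dense in $\mathcal{M}$, so it suffices to show that each $\mathcal{P}_T(K_m)$ lies in the closure of $\mathcal{E}_T(X)\cap\mathcal{M}$. Fix $m\ge 1$. By Sigmund's theorem applied to the compact system $(K_m,T|_{K_m})$, which has specification, $\mathcal{E}_T^*(K_m)$ is dense in $\mathcal{P}_T(K_m)$. Any $\mu\in\mathcal{E}_T^*(K_m)$ is concentrated on the $T$-invariant Borel set $K_m$ and is ergodic for $T|_{K_m}$; the key observation is then that $\mu$ is automatically ergodic for $T$ viewed as a transformation of $X$: if $B\subseteq X$ is a $T$-invariant Borel set, then $B\cap K_m$ is $T|_{K_m}$-invariant, and thus $\mu(B)=\mu(B\cap K_m)\in\{0,1\}$. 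Hence $\mathcal{E}_T^*(K_m)\subseteq\mathcal{E}_T(X)\cap\mathcal{M}$, and taking the union over $m$ yields the required density.

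For the $G_\delta$ property, it is enough to prove that $\mathcal{E}_T(X)$ is $G_\delta$ in $\mathcal{P}_T(X)$. The plan is to fix a countable family $(f_i)_{i\ge 1}$ of bounded continuous real-valued functions on $X$ that is dense in $L^2(\mu)$ for every $\mu\in\mathcal{P}(X)$ (such a family exists by standard Polish-space arguments, for instance by embedding $X$ into $[0,1]^\N$ and invoking Stone--Weierstrass) and then, for every $i,N\ge 1$, to consider on $\mathcal{P}_T(X)$ the continuous non-negative function
\[
\Phi_{i,N}(\mu):=\int\!\Bigl(\tfrac{1}{N}\sum_{n=0}^{N-1}f_i\circ T^n\Bigr)^{\!2}d\mu-\Bigl(\int f_i\,d\mu\Bigr)^{\!2}.
\]
For $\mu\in\mathcal{P}_T(X)$, this equals $\|A_Nf_i-\int f_i\,d\mu\|_{L^2(\mu)}^{2}$ by $T$-invariance of $\mu$, where $A_Nf_i$ is the Cesàro average. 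By von Neumann's mean ergodic theorem, $\lim_N\Phi_{i,N}(\mu)=\|P_\mu f_i-\int f_i\,d\mu\|_{L^2(\mu)}^{2}$, with $P_\mu$ the orthogonal projection in $L^2(\mu)$ onto the $T$-invariant functions; in particular the limit exists and coincides with the liminf.

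The main obstacle, and the whole point of the previous display, is this: the condition ``$\Phi_{i,N}(\mu)\to 0$'' would only yield a $\Pi_3$ description of ergodicity, but since the limit and the liminf coincide, it is equivalent to ``for every $\varepsilon>0$ there exists $N$ with $\Phi_{i,N}(\mu)<\varepsilon$''. Density of $(f_i)$ in $L^2(\mu)$ and continuity of $P_\mu$ as an operator on $L^2(\mu)$ will give that $\mu$ is ergodic iff $\Phi_{i,N}(\mu)\to 0$ for every $i\ge 1$. Combining these two observations, one obtains
\[
\mathcal{E}_T(X)=\bigcap_{i\ge 1}\bigcap_{\varepsilon\in\Q_+^*}\bigcup_{N\ge 1}\bigl\{\mu\in\mathcal{P}_T(X)\,;\,\Phi_{i,N}(\mu)<\varepsilon\bigr\},
\]
which exhibits $\mathcal{E}_T(X)$ as a $G_\delta$ subset of $\mathcal{P}_T(X)$, and hence $\mathcal{E}_T(X)\cap\mathcal{M}$ as a $G_\delta$ subset of $\mathcal{M}$.
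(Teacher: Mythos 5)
Your density argument is the same as the paper's (which invokes Sigmund's theorem directly, with the same implicit observation that an ergodic measure for $T|_{K_m}$ is ergodic for $T$ on $X$), and for the $G_\delta$ part the paper simply cites Parthasarathy; your self-contained mean-ergodic-theorem argument is a legitimate way to unpack that citation. However, there is a genuine gap in the crucial step where you pass from the $\mathbf\Pi_3^0$ condition ``$\Phi_{i,N}(\mu)\to 0$'' to the $G_\delta$ condition ``$\forall\varepsilon>0\ \exists N\ \Phi_{i,N}(\mu)<\varepsilon$''. You justify the equivalence by saying that ``the limit and the liminf coincide'', but that is automatic whenever the limit exists and does \emph{not} give the equivalence: the condition $\forall\varepsilon\,\exists N\,\Phi_{i,N}(\mu)<\varepsilon$ only says $\inf_N\Phi_{i,N}(\mu)=0$, which for a general convergent non-negative sequence is strictly weaker than $\lim_N\Phi_{i,N}(\mu)=0$ (take $a_1=0$, $a_N=1$ for $N\ge 2$). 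As stated, your displayed formula is only a $G_\delta$ set \emph{containing} $\mathcal E_T(X)$, and the reverse inclusion is not justified.

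The formula is in fact correct, but for a different reason which you need to supply: for every $N$ one has $\Phi_{i,N}(\mu)\ge\lim_{M\to\infty}\Phi_{i,M}(\mu)$, so that $\inf_N\Phi_{i,N}(\mu)=\lim_N\Phi_{i,N}(\mu)$. To see this, set $g:=f_i-\int f_i\,d\mu$, let $U$ be the Koopman isometry on $L^2(\mu)$ and $P_\mu$ the orthogonal projection onto $\ker(U-I)$. Since $U$ is an isometry, $\ker(U-I)=\ker(U^*-I)$, hence $A_N^*P_\mu g=P_\mu g$ and therefore $\langle P_\mu g, A_N(I-P_\mu)g\rangle=\langle A_N^*P_\mu g,(I-P_\mu)g\rangle=0$. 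The decomposition $A_Ng=P_\mu g+A_N(I-P_\mu)g$ is thus orthogonal, giving $\Phi_{i,N}(\mu)=\|A_Ng\|^2=\|P_\mu g\|^2+\|A_N(I-P_\mu)g\|^2\ge\|P_\mu g\|^2=\lim_M\Phi_{i,M}(\mu)$. With this inequality in hand, $\inf=\lim$ and your $G_\delta$ description of $\mathcal E_T(X)$ is justified; without it, the argument does not close.
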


\begin{proof}[Proof of Fact \ref{microfact1}] It is known that $\mathcal E_T(X)$ is a $G_\delta$ subset of $\mathcal P_T(X)$ (see \cite{Part} for a detailed proof), so that 
$\mathcal E_T(X)\cap\mathcal M$ is $G_\delta$ in $\mathcal M$. Moreover, since $\mathcal E_T(K_m)$ is dense in $\mathcal P_T(K_m)$ for each $m$ by \cite{Sig}, it is clear that $\mathcal E_T(X)\cap\mathcal M$ is dense in $\mathcal M$.
\end{proof}

\begin{fact}\label{microfact2} The set $\mathcal M^*$ is a dense $G_\delta$ subset of $\mathcal M$.
\end{fact}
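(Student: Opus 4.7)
The plan is to establish the two properties separately, using the convexity and closedness of $\mathcal M$ together with Fact \ref{microfact1} and the dense sets $\mathcal E_T^*(K_m) \subseteq \mathcal P_T(K_m)$.

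For the $G_\delta$ part, I would fix a countable basis $(V_p)_{p\geq 1}$ of non-empty open subsets of $X$ and observe that
\[
\mathcal M^* = \bigcap_{p\geq 1}\bigl\{\mu\in\mathcal M\,;\,\mu(V_p)>0\bigr\}.
\]
Since the map $\mu\mapsto\mu(V_p)$ is lower semi-continuous on $\mathcal P(X)$ (this is the classical Portmanteau lemma), each set $\{\mu\in\mathcal M\,;\,\mu(V_p)>0\}$ is open in $\mathcal M$, so $\mathcal M^*$ is $G_\delta$ in $\mathcal M$.

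For density, the main work is to exhibit a single measure $\nu_\infty\in\mathcal M$ with full support; once this is done, density follows by a soft convexity argument. Note first that $\mathcal M$ is convex and closed in $\mathcal P_T(X)$: the union $\bigcup_{m\geq 1}\mathcal P_T(K_m)$ is convex because $K_m\subseteq K_{m'}$ when $m\leq m'$, so any convex combination of measures in $\mathcal P_T(K_{m_1})$ and $\mathcal P_T(K_{m_2})$ lies in $\mathcal P_T(K_{\max(m_1,m_2)})$, and convex combinations are continuous for the weak topology. To build $\nu_\infty$, I would use \cite{Sig} (invoked in the proof of Fact~\ref{microfact1}) to pick for each $m\geq 1$ a measure $\mu_m\in\mathcal E_T^*(K_m)$, \mbox{\it i.e.} an ergodic $T$-invariant measure whose topological support equals $K_m$. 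Then I set
\[
\nu_\infty := \sum_{m\geq 1} 2^{-m}\mu_m \qquad\textrm{and}\qquad \nu_N := \Bigl(\sum_{m=1}^N 2^{-m}\Bigr)^{-1}\sum_{m=1}^N 2^{-m}\mu_m.
\]
Each $\nu_N$ is a $T$-invariant probability measure supported on $K_N$, and hence belongs to $\mathcal P_T(K_N)\subseteq\mathcal M$. A direct computation with bounded continuous test functions shows that $\nu_N\to\nu_\infty$ weakly, so $\nu_\infty\in\mathcal M$ by closedness. Moreover $\operatorname{supp}(\nu_\infty)\supseteq\overline{\bigcup_{m\geq 1}\operatorname{supp}(\mu_m)}=\overline{\bigcup_{m\geq 1}K_m}=X$, so $\nu_\infty$ has full support.

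Finally, to conclude density, let me fix any $\mu\in\mathcal M$ and $\varepsilon\in(0,1)$, and consider
\[
\mu_\varepsilon := (1-\varepsilon)\mu + \varepsilon\nu_\infty.
\]
By convexity of $\mathcal M$, the measure $\mu_\varepsilon$ belongs to $\mathcal M$, and since $\nu_\infty$ has full support, so does $\mu_\varepsilon$; hence $\mu_\varepsilon\in\mathcal M^*$. As $\varepsilon\to 0^+$, the measures $\mu_\varepsilon$ converge weakly to $\mu$, which shows that $\mathcal M^*$ is dense in $\mathcal M$. The main obstacle is really the construction of $\nu_\infty$, but this is handled straightforwardly by the geometric-series averaging once the existence of measures in $\mathcal E_T^*(K_m)$ is granted by Sigmund's theorem \cite{Sig}.
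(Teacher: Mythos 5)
Your proof is correct, and it takes a genuinely different route from the paper. The paper proves, for each $p$, that the relatively open set $\mathcal M\cap\mathcal O_p$ is dense in $\mathcal M$: given a non-empty open $\mathcal U\subseteq\mathcal M$, it finds $m$ large enough that both $\mathcal U\cap\mathcal P_T(K_m)\neq\emptyset$ and $K_m\cap O_p\neq\emptyset$, then uses the density of $\mathcal E_T^*(K_m)$ in $\mathcal P_T(K_m)$ to land a measure in $\mathcal U\cap\mathcal O_p$; density of $\mathcal M^*$ then follows by Baire in the Polish space $\mathcal M$. You instead build a single ``universal'' full-support measure $\nu_\infty=\sum_m 2^{-m}\mu_m\in\mathcal M$ (approximated by the finite convex combinations $\nu_N\in\mathcal P_T(K_N)$, hence in $\mathcal M$ by closedness), and then approximate any $\mu\in\mathcal M$ by the full-support convex perturbations $(1-\varepsilon)\mu+\varepsilon\nu_\infty\in\mathcal M^*$; this needs the convexity of $\mathcal M$, which you verify via the increasing-union structure of the $\mathcal P_T(K_m)$. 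Your argument has the advantage of giving density of $\mathcal M^*$ in one stroke without appealing to Baire, and it only needs non-emptiness of $\mathcal E_T^*(K_m)$ rather than its density; the paper's local argument is closer in spirit to how the $\mathcal O_p$ are exploited again in the surrounding proof. Both are correct.
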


\begin{proof}[Proof of Fact \ref{microfact2}] Let $(O_p)_{p\geq 1}$ be a countable basis of non-empty open subsets of $X$. Set, for each $p\ge 1$,
$\mathcal O_p:=\{ \mu\in\mathcal P(X);\; \mu(O_p)>0\}$. Then each set $\mathcal O_p$ is open in $\mathcal P(X)$, and  moreover
\[\mathcal M^*=\mathcal M\cap\Bigl(\bigcap_{p\geq 1} \mathcal O_p\Bigr).\] So we just have to show that each set $\mathcal M\cap \mathcal O_p$ is dense in 
$\mathcal M$.
Let us fix $p\geq 1$, and let $\mathcal U$ be a non-empty open subset of $\mathcal M$. Since the sequence $(K_m)_{m\ge 1}$ is increasing, the definition of $\mathcal M$
implies that
 $\mathcal U\cap \mathcal P_T(K_m)$ is non-empty for all $m$ sufficiently large. Moreover, since $\bigcup_{m\geq 1} K_m$ is dense in $X$ and $(K_m)_{m\ge 1}$ is increasing, $K_m\cap O_p$ is non-empty too for all $m$ sufficiently large. So there exists $m\ge 1$ such that $K_m\cap O_p$ and 
$\mathcal U\cap\mathcal P_T(K_m)$ are both non-empty. Since $\mathcal E_T^*(K_m)$ is dense in $\mathcal P_T(K_m)$ and $\mathcal U\cap\mathcal P_T(K_m)$ is 
open in $\mathcal P_T(K_m)$, it follows that there exists a measure 
$\mu\in\mathcal E_T(X)$ with support equal to $K_m$ such that $\mu$ belongs to $\mathcal U$ and  $\mu(O_p)>0$. Hence, we have shown that 
$\mathcal U\cap \mathcal O_p$ is non-empty.
\end{proof}

The two facts above combined with the Baire Category Theorem applied in $\mathcal M$ imply that $\mathcal E_T(X)\cap\mathcal M^*$ is non-empty. In particular 
$\mathcal E_T(X)^*$ is non-empty, \mbox{\it i.e.} $T$ is ergodic.
\end{proof}

\begin{remark} We will see in Section \ref{SPECIAL} 
that the assumptions of Theorem \ref{Theorem 41} (which are sufficient for frequent hypercyclicity) do \emph{not} imply ergodicity. So Proposition \ref{OSPergo} makes the difference between the OSP and our criterion for frequent hypercyclicity all the more tangible. 
\end{remark}

\section{Special examples of hypercyclic operators}\label{SPECIAL}
In this section, we introduce some particular classes of operators, which are defined on any space $\ell_p(\N)$, $1\leq p<\infty$. 
We do not restrict ourselves to the Hilbertian case $p=2$, because the 
general case adds no extra complication. It is within these classes that we will exhibit operators which are chaotic and frequently hypercyclic but not ergodic, operators which are  
chaotic and $\mathcal{U}$-frequently hypercyclic but not frequently hypercyclic and operators which are chaotic and topologically mixing but not $\mathcal{U}$-frequently hypercyclic.
\par\smallskip
A word of caution: for technical reasons, we have decided that 
{\emph{$\N$ starts at $0$}}; that is, $\N=\{ 0,1,2,\dots \}$. Accordingly, the canonical basis of $\ell_p(\N)$ will be denoted by 
$(e_k)_{k\geq 0}$. Also, we denote by $c_{00}$ the linear span of the vectors $e_k$, $k\geq 0$, \mbox{\it i.e.} the subspace of $\ell_p(\N)$ consisting 
of all finitely supported vectors.
\par\smallskip

\subsection{Operators of \cct: basic facts}
In view of our criteria for $\mathcal U$-frequent hypercyclicity and frequent hypercyclicity 
relying on the existence of periodic points, we would like to find a rich family of operators 
for which we can easily find a large supply of periodic points. For example, we could consider 
 operators $T_{w,b}$ defined by
\[T_{w,b}e_k=
\left\{\begin{array}{cl}
 w_{k+1}e_{k+1} & \quad\text{if}\ k\in \mathopen[b_n,b_{n+1}-1\mathclose)\\
 \left(\prod_{j=b_n+1}^{b_{n+1}-1}w_j\right)^{-1}e_{b_n} & \quad \text{if}\ k=b_{n+1}-1\ \text{with} \ n\ge 0
\end{array}\right.\]
where $w=(w_j)_{j\geq 1}$ is a weight sequence and $b=(b_n)_{n\ge 1}$ is a strictly increasing sequence of integers with $b_0=0$. Indeed, with this definition 
it is clear that (whatever the choice of $w$ and $b$) every basis vector $e_k$ and hence every vector $x\in c_{00}$ is periodic for $T_{w,b}$.
\par\smallskip
However, none of these operators is hypercyclic since they are direct sums of finite-dimensional operators, and there exist no 
hypercyclic operators in finite dimension. 
It is thus necessary to perturb the operators $T_{w,b}$ in order to obtain an interesting family of \emph{hypercyclic} operators in which 
finite sequences are still periodic. For reasons that will be explained below, the operators in this family will be called 
\emph{operators of C-type}. Any operator of C-type will be associated to four 
parameters $v$, $w$, $\varphi $, and $b$, where

\begin{enumerate}
 \item[-] $v=(v_{n})_{\gn}$ is a sequence of non-zero complex numbers such 
that $\sum_{\gn}|v_{n}|<\infty $;
\item[-] $w=(w_{j})_{j\geq 1}$ is a sequence of complex numbers which is both bounded  
and bounded below, \mbox{\it i.e.} $0<\inf_{k\ge 1} \vert w_k\vert\leq \sup_{k\ge 1}\vert w_k\vert<\infty$;
\item[-] $\varphi $ is a map from $\N$ into itself, such that $\varphi 
(0)=0$, $\varphi (n)<n$ for every $\gn$, and the set 
$\varphi ^{-1}(l)=\{n\ge 0\,;\,\varphi (n)=l\}$ is infinite for every 
$l\ge 0$;
\item[-] $b=(b_{n})_{n\ge 0}$ is a strictly increasing sequence of positive 
integers such that $b_{0}=0$ and $b_{n+1}-b_{n}$ is a multiple of 
$2(b_{\varphi (n)+1}-b_{\varphi (n)})$ for every $n\ge 1$.
\end{enumerate}

\begin{definition}\label{Definition 43}
 The \emph{operator of \cct} $\tvw$ on $\ell_p(\N)$
 associated to the data  
$v$, $w$, $\varphi $, and $b$ given as above is defined by
\[
\tvw\ e_k=
\begin{cases}
 w_{k+1}\,e_{k+1} & \textrm{if}\ k\in [b_{n},b_{n+1}-1),\; n\geq 0,\\
v_{n}\,e_{b_{\varphi(n)}}-\Bigl(\,\,\ds\prod_{j=b_{n}+1}^{b_{n+1}-1}
w_{j}\Bigr)^{ -1 } e_ {
b_{n}} & \textrm{if}\ k=b_{n+1}-1,\ \gn,\\
 -\Bigl(\!\!\ds\prod_{j=b_0+1}^{b_{1}-1}w_j\Bigr)^{-1}e_0& \textrm{if}\ 
k=b_1-1.
\end{cases}
\]
Note that by convention, an empty product is declared to be be equal to $1$; that is, $\prod_{j=b_{n}+1}^{b_{n+1}-1}w_{j}=1$ when $b_{n+1}=b_{n}+1$ (which can happen only for $n=0$).
\end{definition}

Without any additional assumption on the parameters, these formulas define a linear map 
on $c_{00}$ only. The first issue is of course the \emph{boundedness} of $\tvw$. 

\begin{fact}\label{welldefined} The operator $\tvw$ is  well-defined and bounded  on $\ell_p(\N)$ as soon as that the following condition holds true:
\begin{equation}\label{machin}
\inf_{n\geq 0} \prod_{b_n<j<b_{n+1}} \vert w_j\vert >0. 
\end{equation}
\end{fact}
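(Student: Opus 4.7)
The plan is to decompose the formal operator $\tvw$ acting on $c_{00}$ as a sum $A+B$ of two bounded \ops\ of $\ell_p(\N)$, where $A$ preserves the partition of $\N$ into the blocks $[b_n,b_{n+1})$ and $B$ encodes the perturbation by the sequence $v$. Concretely, I let $A$ act on each block as the weighted cyclic shift
\[
Ae_k=w_{k+1}e_{k+1}\quad(b_n\le k<b_{n+1}-1),\qquad Ae_{b_{n+1}-1}=-\Bigl(\prod_{b_n<j<b_{n+1}}w_j\Bigr)^{-1}e_{b_n},
\]
and define $Be_{b_{n+1}-1}=v_n e_{b_{\varphi(n)}}$ for every $n\ge 1$, while $Be_k=0$ for every other index $k$. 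Comparing with Definition \ref{Definition 43} shows that $\tvw=A+B$ on $c_{00}$, so it suffices to prove that both $A$ and $B$ extend to bounded \ops\ on $\ell_p(\N)$.

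For $A$, each finite-dimensional block $F_n:=\mathrm{span}[e_{b_n},\ldots,e_{b_{n+1}-1}]$ is $A$-invariant, and the restriction $A_{|F_n}$ is a weighted cyclic permutation of the basis of $F_n$ with weights
\[
w_{b_n+1},\ldots,w_{b_{n+1}-1},\ -\Bigl(\prod_{b_n<j<b_{n+1}}w_j\Bigr)^{-1}.
\]
Such a weighted cyclic permutation just rescales and permutes basis vectors, so its operator norm on $(F_n,\|\cdot\|_p)$ equals the maximum modulus of its weights. The hypothesis that $w$ is bounded together with condition \eqref{machin} yields a uniform bound on these maxima, so $\sup_n\|A_{|F_n}\|<\infty$, and hence $A=\bigoplus_n A_{|F_n}$ defines a bounded \op\ on the $\ell_p$-sum $\ell_p(\N)=\bigoplus_n F_n$.

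For $B$, I observe that $Bx=\sum_{n\ge 1}v_n x_{b_{n+1}-1}\,e_{b_{\varphi(n)}}$ for every $x\in c_{00}$, and apply the triangle inequality in $\ell_p(\N)$, using that $\|e_{b_{\varphi(n)}}\|_p=1$:
\[
\|Bx\|_p\le\sum_{n\ge 1}|v_n|\,|x_{b_{n+1}-1}|\le \Bigl(\sum_{n\ge 1}|v_n|\Bigr)\|x\|_\infty\le \|v\|_{\ell^1}\,\|x\|_p.
\]
The summability of $v$ yields $\|B\|\le \|v\|_{\ell^1}<\infty$, and combining the two estimates shows that $\tvw$ extends from $c_{00}$ to a bounded \op\ on $\ell_p(\N)$ with $\|\tvw\|\le \|A\|+\|v\|_{\ell^1}$. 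There is really no substantial obstacle in this argument; the only point worth flagging is that the potential non-injectivity of $\varphi$ (several indices $n$ mapping to the same $\varphi(n)$ and thus contributing to the same basis vector under $B$) is absorbed harmlessly by the crude $\ell^1$--$\ell^\infty$ bound used above, without requiring any delicate $\ell^p$ estimate.
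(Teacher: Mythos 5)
Your proof is correct and follows essentially the same route as the paper: the decomposition $A+B$ is exactly the paper's $\bigoplus_n C_{w,b,n}+R_{v,b}$, with $A$ handled via the uniform norm bound on the finite-dimensional cyclic blocks (using boundedness of $w$ and condition~(\ref{machin})) and $B$ handled via the $\ell^1$ summability of $v$. Your write-up merely supplies the details (norm of a weighted cyclic permutation, $\ell^1$--$\ell^\infty$ bound for $B$) that the paper leaves implicit.
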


\begin{proof} This is rather clear. Indeed, it appears that $\tvw$ can be written as
\[\tvw= \bigoplus_{n\geq 0} C_{w,\,b,\,n}+R_{v,b},
\]
where $R_{v,b}$ is the operator defined by
\[ R_{v,\, b}x=\sum_{n\geq 1} v_n x_{b_{n+1}-1}\, e_{b_{\varphi(n)}},\quad x\in\ell_{p}(\N)
\]
which is clearly bounded (and even compact) because $\sum_{n\geq 1} \vert v_n\vert<\infty$, and, for each $n\ge 1$, 
$C_{w,\, b,\, n}$ is a finite-dimensional cyclic operator acting on $E_{n}:=\textrm{span}[e_{k}\,;\,b_{n}\le k\leq b_{n+1}-1]$. Condition $(\ref{machin})$ implies that
$\sup_{n\ge 1} \Vert C_{w,\, b,\, n}\Vert$ is finite, and $\tvw$ is thus bounded.
\end{proof}

From now on, we will always assume that Condition $(\ref{machin})$ is satisfied. Also, when no confusion arises, we will write simply $T$ 
instead of $\tvw$.

\begin{remark} We call such operators \emph{operators of \cct} for two different reasons. 
On the one hand, ``C"  stands for ``cyclic'': as we have just explained, each operator $\tvw$ is a
compact perturbation of an infinite direct sum of cyclic finite-dimensional 
operators $\bigoplus_{n\ge 0}C_{w,\,b,\,n}$, where 
$C_{w,\,b,\,n}$ is defined on 
$E_{n}=\textrm{span}[e_{k}\,;\,b_{n}\le k\le b_{n+1}-1]$. On the other hand, we will see in a moment that, as a consequence of their 
particular structure, the operators $\tvw$ happen to be chaotic under a very mild restriction on the parameters; 
so, ``C" stands for ``chaotic'' as well. 
\end{remark}

The following identity will be used repeatedly: if $T=\tvw$ is an operator of \cct\ on $\ell_{p}(\N)$, then
\begin{equation}\label{Equation 5}
T^{\,b_{n+1}-b_n}\,e_{b_n}=\smash[t]{v_n\,\Big(
\prod_{j=b_n+1}^{b_{n+1}-1}
w_j\Big)\,e_ { b_ {
\varphi(n)}}-e_{b_n}\quad\hbox{for every $n\geq 1$}.}
\end{equation}
This allows to show that operators of \cct\ always have plenty of periodic points:

\begin{fact}\label{Proposition 44}
If $T=\tvw$ is an operator of \cct\ on $\ell_{p}(\N)$, then every basis vector $e_k$ is periodic for $\tvw$; more precisely,
\[
T^{\,2(b_{n+1}-b_n)}e_k=e_k\qquad \textrm{if}\ k\in 
[b_n,b_{n+1}),\ n\ge 0.
\]
Consequently, every vector $x\in c_{00}$ is periodic for $\tvw$, and hence $\tvw$ has a dense set of periodic points.
\end{fact}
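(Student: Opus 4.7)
The plan is to prove the first assertion by induction on $n \geq 0$, establishing at each step that $T^{2(b_{n+1}-b_n)} e_{b_n}=e_{b_n}$, and then to extend this to all basis vectors $e_k$ with $k\in[b_n,b_{n+1})$ by observing that such an $e_k$ is a scalar multiple of $T^{k-b_n}e_{b_n}$. The second assertion will follow immediately: since $c_{00}$ is dense in $\ell_p(\N)$ and is spanned by the $e_k$'s, which are all periodic, $c_{00}$ consists of periodic vectors for $\tvw$.

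For the base case $n=0$, the point is that $R_{v,b}$ vanishes on $E_0=\mathrm{span}[e_0,\dots,e_{b_1-1}]$ (because the perturbation only fires at indices $b_{n+1}-1$ with $n\geq 1$), so the action of $T$ on $E_0$ reduces to the cyclic weighted shift $C_{w,b,0}$. Iterating the first and third cases of the definition one computes $T^{b_1}e_0=-e_0$, whence $T^{2b_1}e_0=e_0$, which is what is required since $b_0=0$.

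For the inductive step, I would use identity~\eqref{Equation 5} directly. Writing $m=b_{n+1}-b_n$, $l=\varphi(n)$ and $W_n=\prod_{j=b_n+1}^{b_{n+1}-1}w_j$, equation~\eqref{Equation 5} reads
\[
T^{m}e_{b_n}=v_nW_n\,e_{b_l}-e_{b_n}.
\]
The inductive hypothesis (applied to $l<n$) gives $T^{2(b_{l+1}-b_l)}e_{b_l}=e_{b_l}$; and by the divisibility assumption on $b$, the integer $m$ is a multiple of $2(b_{l+1}-b_l)$, so $T^{m}e_{b_l}=e_{b_l}$. Applying $T^{m}$ to both sides of the displayed identity then yields
\[
T^{2m}e_{b_n}=v_nW_n\,T^{m}e_{b_l}-T^{m}e_{b_n}=v_nW_n\,e_{b_l}-\bigl(v_nW_n\,e_{b_l}-e_{b_n}\bigr)=e_{b_n},
\]
as desired.

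Finally, for arbitrary $k\in[b_n,b_{n+1})$, an immediate iteration of the first case of the definition gives $T^{k-b_n}e_{b_n}=(w_{b_n+1}\cdots w_k)\,e_k$ (with the empty-product convention when $k=b_n$), so $e_k$ is a non-zero scalar multiple of $T^{k-b_n}e_{b_n}$. Since $T^{2m}$ commutes with $T^{k-b_n}$, we deduce $T^{2(b_{n+1}-b_n)}e_k=e_k$, completing the induction. No step presents a real obstacle: the only mildly delicate point is the divisibility bookkeeping, which is tailor-made by the hypothesis that $b_{n+1}-b_n$ be a multiple of $2(b_{\varphi(n)+1}-b_{\varphi(n)})$ and by the condition $\varphi(n)<n$ that allows an induction on~$n$.
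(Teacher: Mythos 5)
Your proposal is correct and follows exactly the paper's own argument: induction on $n$ via identity~\eqref{Equation 5}, using the divisibility hypothesis on $b$ together with $\varphi(n)<n$, and then reducing $e_k$ to a scalar multiple of $T^{k-b_n}e_{b_n}$. You merely spell out a couple of steps the paper compresses (the $n=0$ base case and the application of $T^m$ to both sides of~\eqref{Equation 5}).
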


\begin{proof}
 Since $e_{k}$ is a non-zero multiple of $T^{\,k-b_{n}}e_{b_{n}}$ for 
every 
 $b_{n}\le k<b_{n+1}$, it suffices to prove that 
$T^{\,2(b_{n+1}-b_{n})}\,e_{b_{n}}
 =e_{b_{n}}$ for every $n\ge 0$. We prove this by induction on $n$.
 \par\medskip 
 \noindent -- If $n=0$, then
$T^{\,b_{1}-b_{0}}\,e_{0}=-e_{0}$ by definition of $Te_{b_1-1}$, 
and thus 
 $T^{\,2(b_{1}-b_{0})}\,e_{0}=e_{0}$.
 \par\smallskip
 \noindent -- Let $\gn$, and assume that the result has been proved for all $m<n$. We know that $b_{n+1}-b_{n}$ is a multiple of 
$2(b_{\varphi (n)+1}-b_{\varphi (n)})$ and since $\varphi(n)<n$, it follows by (\ref{Equation 5}) and the induction hypothesis that 

\[
T^{\,2(b_{n+1}-b_n)}\,e_{b_n}=\smash{v_n\,\Big(\prod_{j=b_n+1}^{b_{n+1}-1}
w_j\Big)e_{b_
{\varphi(n)}}
-T^{\,b_{n+1}-b_{n}}\,e_{b_{n}}=e_{b_{n}}.}
\]
\end{proof}

Using the above fact and Corollary \ref{Proposition 38}, we can now obtain the following 
sufficient condition for an operator of \cct\ to be chaotic.

\begin{proposition}\label{Proposition 45}
 Suppose that 
\[
\limsup_{
\genfrac{}{}{0pt}{1}{N\to\infty 
}{N\in\,\varphi ^{-1}(n)}}|v_{N}|\prod_{j=b_N+1}^{b_{N+1}-1} 
|w_{j}|=\infty\quad\emph{for every}\ n\ge 0.
\] 
Then the operator of \cct\ $T=\tvw$ on $\ell_{p}(\N)$ is chaotic.
\end{proposition}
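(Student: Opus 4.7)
The plan is to deduce Proposition~\ref{Proposition 45} from Corollary~\ref{Proposition 38} applied with $X_{0}:=\{e_{k}\,;\,k\ge 0\}$. Since the linear span of $X_{0}$ is $c_{00}$, which is dense in $\ell_{p}(\N)$, and since we already know from Fact~\ref{Proposition 44} that every basis vector is periodic for $T=\tvw$, the only thing left to check is the approximation property: for every $k\ge 0$ and every $\varepsilon>0$, one can find a periodic vector $z$ and an integer $n\ge 1$ with $\|z\|<\varepsilon$ and $\|T^{n}z-e_{k}\|<\varepsilon$.

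So let us fix $k\ge 0$, let $n_{0}\ge 0$ be the unique integer such that $b_{n_{0}}\le k<b_{n_{0}+1}$, and set $\beta:=\prod_{j=b_{n_{0}}+1}^{k}w_{j}$, so that $T^{k-b_{n_{0}}}e_{b_{n_{0}}}=\beta\, e_{k}$. The idea is to use the ``return" identity~(\ref{Equation 5}) along the subsequence $\varphi^{-1}(n_{0})$. Setting
\[
\alpha_{N}:=v_{N}\prod_{j=b_{N}+1}^{b_{N+1}-1}w_{j}\qquad\text{for }N\in\varphi^{-1}(n_{0}),
\]
equation~(\ref{Equation 5}) rewrites as $T^{b_{N+1}-b_{N}}e_{b_{N}}=\alpha_{N}\,e_{b_{n_{0}}}-e_{b_{N}}$. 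By assumption $\limsup_{N\in\varphi^{-1}(n_{0})}|\alpha_{N}|=+\infty$, so given $\varepsilon>0$ we may pick $N\in\varphi^{-1}(n_{0})$ with $N\ge 1$ and $|\alpha_{N}|$ as large as we wish. Define
\[
z:=\frac{1}{\beta\,\alpha_{N}}\,e_{b_{N}}\qquad\text{and}\qquad n:=b_{N+1}-b_{N}+(k-b_{n_{0}}).
\]
Then $z\in c_{00}$ is periodic, and $\|z\|=1/(|\beta|\,|\alpha_{N}|)<\varepsilon$ provided $|\alpha_{N}|$ is large enough.

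To control $T^{n}z$, apply $T^{k-b_{n_{0}}}$ to the identity above: using that $k-b_{n_{0}}<b_{n_{0}+1}-b_{n_{0}}\le b_{N+1}-b_{N}$ (since $N\in\varphi^{-1}(n_{0})$ forces $b_{N+1}-b_{N}$ to be a multiple of $2(b_{n_{0}+1}-b_{n_{0}})$), the iterate $T^{k-b_{n_{0}}}e_{b_{N}}$ stays inside the block $E_{N}$ and equals $\bigl(\prod_{j=b_{N}+1}^{b_{N}+k-b_{n_{0}}}w_{j}\bigr)\,e_{b_{N}+k-b_{n_{0}}}$, whose norm is bounded by the constant $M_{w}^{k-b_{n_{0}}}$, where $M_{w}:=\sup_{j}|w_{j}|$. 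Consequently,
\[
T^{n}z=\frac{1}{\beta}\,T^{k-b_{n_{0}}}e_{b_{n_{0}}}-\frac{1}{\beta\,\alpha_{N}}\,T^{k-b_{n_{0}}}e_{b_{N}}=e_{k}-\frac{1}{\beta\,\alpha_{N}}\,T^{k-b_{n_{0}}}e_{b_{N}},
\]
so that $\|T^{n}z-e_{k}\|\le M_{w}^{k-b_{n_{0}}}/(|\beta|\,|\alpha_{N}|)$, which is again less than $\varepsilon$ as soon as $|\alpha_{N}|$ has been chosen large enough. This verifies the hypothesis of Corollary~\ref{Proposition 38}, and hence $T$ is chaotic.

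The only mild obstacle is bookkeeping: one must make sure that the exponent $k-b_{n_{0}}$ remains strictly less than $b_{N+1}-b_{N}$ so that $T^{k-b_{n_{0}}}e_{b_{N}}$ does not cross a block boundary and pick up an extra return term; but this is automatic from the divisibility condition on the sequence $b$, together with the fact that we may take $N$ as large as we wish along $\varphi^{-1}(n_{0})$.
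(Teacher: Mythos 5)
Your proof is correct and follows essentially the same path as the paper's: you apply Corollary~\ref{Proposition 38} with $X_0 = \{e_k\,;\,k\ge 0\}$, use Fact~\ref{Proposition 44} for periodicity, pick the same vector $z = (\beta\alpha_N)^{-1}e_{b_N}$ and the same iterate $n = b_{N+1}-b_N + (k-b_{n_0})$, and obtain the same estimate via the return identity (\ref{Equation 5}). The only differences are cosmetic (your introduction of $\alpha_N$ and $\beta$ as shorthand, and applying $T^{k-b_{n_0}}$ to the return identity rather than computing $T^n e_{b_N}$ directly).
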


\begin{proof}
 We apply Corollary \ref{Proposition 38} with $X_{0}:=\{e_{k}\,;\,k\ge 0\}$. Fix $k\ge 0$ and $\varepsilon >0$. We are looking for a 
  vector $z\in {\rm Per}(T)$ and an integer $m\ge 1$ such that $\Vert z\Vert <\varepsilon$ and $\Vert T^mz-e_k\Vert<\varepsilon$.
Let $n\ge 0$ be such that $k$ belongs to $[b_{n},b_{n+1})$. By the assumption 
of Proposition \ref{Proposition 45}, there exists $N\in\varphi ^{-1}(n)$ 
such that 
\begin{equation}\label{Equation 6}
 |v_N|\prod_{j=b_N+1}^{b_{N+1}-1}|w_j|>\dfrac{1}{\varepsilon }
\Bigl(\prod_{j=b_n+1}^{k}|w_j|\Bigr)^{-1}\,\max\{1,\|w\|_{\infty}\}^{k-b_n}.
\end{equation}
The vector 
\[
z:=
v_{N}^{-1}\ \Bigl(\prod_{j=b_N+1}^{b_{N+1}-1}w_j\Bigr)^{-1}\,\Bigl(\prod_{
j=b_n+1}^{k}w_j\Bigr)^{-1}
\ e_{b_N}
\]
is periodic by Fact \ref{Proposition 44}, and satisfies $\|z\|<\varepsilon $ by (\ref{Equation 6}). 
Moreover, since $\varphi (N)=n$, we have by (\ref{Equation 5}):
\begin{align*}
T^{\,b_{N+1}-b_{N}+k-b_{n}}\,e_{b_{N}}&=v_{N}\,\Bigl(\prod_{j=b_{N}+1}
^{b_{N+1}-1}w_{j}
\Bigr)\,T^{\,k-b_{n}}\,e_{b_{n}}-T^{\,k-b_{n}}\,e_{b_{N}}\\
&=v_{N}\,\Bigl(\prod_{j=b_{N}+1}
^{b_{N+1}-1}w_{j}
\Bigr)\Bigl(\prod_{j=b_{n}+1}
^{k}w_{j}
\Bigr)\,e_{k}-\Bigl(\prod_{j=b_{N}+1}
^{b_{N}+k-b_{n}}w_{j}
\Bigr)\,e_{b_{N}+k-b_{n}}.
\end{align*}
{By definition of $z$, this implies that}
\[
T^{\,b_{N+1}-b_{N}+k-b_{n}}\,z= e_k-
\Bigl( \prod_{j=b_{N}+1}
^{b_{N}+k-b_{n}}w_{j}\Bigr)\, 
v_{N}^{-1} \,\Bigl(\,\prod_{j=b_{N}+1}
^{b_{N+1}-1}w_{j}
\Bigr)^{-1}\Bigl(\prod_{j=b_{N}+1}
^{k}w_{j}
\Bigr)^{-1}e_{b_{N}+k-b_{n}};
\]
and by (\ref{Equation 6}), it follows that 
$\|T^{\,b_{N+1}-b_{N}+k-b_{n}}\,z-e_{k}\|< \varepsilon $. The 
assumptions of Corollary~\ref{Proposition 38} are thus 
satisfied, and $T$ is chaotic.
\end{proof}

Recall that every chaotic operator is topologically weakly mixing~\cite[Ch. 4]{BM} and reiteratively hypercyclic~\cite{BMPP}. 
In the following subsections, we will be interested in frequent hypercyclicity, $\mathcal U$-frequent hypercyclicity and topological mixing for operators of \cct.
\par\smallskip

\subsection{Operators of \cpt: how to be FHC or UFHC}\label{Subsection 4.2}
For the construction of our counterexamples, we will work with operators 
of \cct\ for which the data $v$, $w$, $\varphi $ and $b$ have a special 
structure. For every integer $\gk$, we require that:
\begin{enumerate}
 \item[-] $\varphi (n)=n-2^{k-1}$ for every $n\in[2^{k-1},2^{k})$, so that 
 $\varphi ([2^{k-1},2^{k}))=[0,2^{k-1})$;
 \item[-] the blocks $[b_{n},b_{n+1})$, $n\in[2^{k-1},2^{k})$,
all have the same size, which we denote by $\Delta ^{(k)}$: 
\[b_{n+1}-b_{n}=
\Delta ^{(k)}\qquad\hbox{for every $n\in[2^{k-1},2^{k})$};
\]
\item[-] the sequence $v$ is constant on the interval $[2^{k-1},
2^{k})$: there exists $v^{(k)}$ such that 
\[ v_{n}=v^{(k)}\qquad\hbox{for every 
$n\in[2^{k-1},2^{k})$};
\]
\item[-] the sequences of weights $(w_{b_{n}+i})_{1\le i
<\Delta ^{(k)}}$ are independent of $n\in[2^{k-1},2^{k})$: there exists a 
sequence $(w_{i}^{(k)})_{1\le i<\Delta ^{(k)}}$ such that 
\[
w_{b_{n}+i}=w_{i}^{(k)} \quad\hbox{for every $1\le i<\Delta ^{(k)}$ and every 
$n\in[2^{k-1},2^{k})$.}
\]
\end{enumerate}
\par\smallskip\noindent
If these conditions are met, we say that $\tvw$ is an \emph{operator of \cpt.} 

\begin{remark} By definition, the map $\varphi$ is the same for all operators of \cpt, so it is no longer a ``parameter". 
However, we will continue using the notation $\tvw$.
\end{remark}

Our first result concerning these operators gives a sufficient set of conditions for a \cpt\ operator to be $\mathcal{U}$-frequently hypercyclic. 
This will be deduced from Theorem \ref{Theorem 39}.

\begin{theorem}\label{Theorem 46}
 Let $T=\tvw$ be an operator of \cpt\ on $\ell_{p}(\N)$. Suppose that 
there exists a constant $\alpha >0$ such that the following property holds true : for every $C\ge 1$ and every integer $k_{0}\ge 1$, 
there exist two integers $k> k_{0}$ and $1\le m<\Delta ^{(k)}$ such that
\begin{equation*}
\smash[b]{|v^{(k)}|\prod_{i=\Delta^{(k)}-m}^{\Delta^{(k)}-1}|w^{(k)}_i|}
\ge C \qquad \hbox{and}\qquad
\smash[t]{|v^{(k)}|\prod_{i=m'+1}^{\Delta^{(k)}-1}|w^{(k)}_i|}>C \quad
\hbox{for every $0\le m'\le \alpha m$}. 
\end{equation*}
Then $T$ is chaotic and $\mathcal{U}$-frequently hypercyclic.
\end{theorem}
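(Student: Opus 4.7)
The plan is to apply Theorem~\ref{Theorem 39} with the test subspace $X_0 := c_{00}$ of finitely supported vectors. From Definition~\ref{Definition 43} we have $T(c_{00})\subseteq c_{00}$ and $c_{00}$ is dense in $\ell_p(\N)$, while Fact~\ref{Proposition 44} ensures that every element of $c_{00}$ is periodic for $T$. It then suffices to verify the approximation condition with the constant $\alpha' := \alpha/2$: given $x\in c_{00}$ and $\varepsilon>0$, one must produce $z\in c_{00}$ and $n\ge 1$ such that $\|z\|<\varepsilon$ and $\|T^{n+\ell}z-T^\ell x\|<\varepsilon$ for all $0\le\ell\le\alpha' n$.

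The heart of the argument is a direct computation based on identity~(\ref{Equation 5}). Fix $k\ge 1$ and $N\in[2^{k-1},2^k)$, so that $\varphi(N)=N-2^{k-1}$; denote this value by $n_0$. A straightforward induction using the definition of $\tvw$ shows that for every $1\le m<\Delta^{(k)}$, every $\mu\in\C$, and every $0\le\ell'<\Delta^{(k)}$, the vector $z := \mu\, e_{b_N+\Delta^{(k)}-m}$ satisfies
\[
T^{m+\ell'}z \;=\; \mu\,v^{(k)}\!\!\prod_{i=\Delta^{(k)}-m+1}^{\Delta^{(k)}-1}\!\!w_i^{(k)}\cdot T^{\ell'}e_{b_{n_0}} \;-\; \frac{\mu\,\prod_{i=1}^{\ell'}w_i^{(k)}}{\prod_{i=1}^{\Delta^{(k)}-m}w_i^{(k)}}\cdot e_{b_N+\ell'}.
\]
The first (``useful'') summand propagates the orbit of $e_{b_{n_0}}$ with a scalar amplification, while the second (``error'') summand is supported inside the block $[b_N,b_{N+1})$, which is disjoint from the block containing $b_{n_0}$.

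Given a basis vector $x=e_p$ with $p=b_{n_0}+j_0$, $n_0\in[2^{k_0-1},2^{k_0})$, and $0\le j_0<\Delta^{(k_0)}$, we invoke the hypothesis of Theorem~\ref{Theorem 46} with the parameter $k_0$ and a large constant $C$ to obtain $k>k_0$ and $m$ satisfying the two inequalities. Setting $N:=n_0+2^{k-1}$ (so $\varphi(N)=n_0$), iterating $j_0$ further times to pass from $e_{b_{n_0}}$ to $e_p$, and taking $\mu := \bigl(v^{(k)}\prod_{i=\Delta^{(k)}-m+1}^{\Delta^{(k)}-1}w_i^{(k)}\cdot\prod_{i=1}^{j_0}w_i^{(k_0)}\bigr)^{-1}$, evaluation of the identity above at $\ell'=j_0+\ell$ cancels the useful summand against $T^\ell x$ exactly, leaving only the error; one computes with $n:=m+j_0$
\[
\|z\| \,\le\, \frac{C_{k_0,j_0}}{|v^{(k)}|\prod_{i=\Delta^{(k)}-m}^{\Delta^{(k)}-1}|w_i^{(k)}|},\qquad \|T^{n+\ell}z-T^\ell x\| \,=\, \frac{1}{|v^{(k)}|\prod_{i=j_0+\ell+1}^{\Delta^{(k)}-1}|w_i^{(k)}|\cdot\prod_{i=1}^{j_0}|w_i^{(k_0)}|},
\]
where $C_{k_0,j_0}$ is a finite constant depending only on $k_0,j_0$. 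The first inequality of the hypothesis bounds $\|z\|$ by $C_{k_0,j_0}/C$, and the second (with $m'=j_0+\ell$) bounds the error by $C_{k_0,j_0}/C$ throughout the range $0\le j_0+\ell\le\alpha m$. This range contains $0\le\ell\le n\alpha'=(m+j_0)\alpha/2$ as soon as $m\ge j_0(2/\alpha+1)$; since the summability $\sum_n|v_n|<\infty$ forces $|v^{(k)}|\to 0$, the first inequality of the hypothesis compels $m\to\infty$ as $C\to\infty$, so any prescribed lower bound on $m$ is automatically met by taking $C$ large. Choosing $C\ge C_{k_0,j_0}/\varepsilon$ delivers both $\|z\|<\varepsilon$ and the required error bound. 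The extension to a general $x=\sum_{i=1}^r\alpha_i e_{p_i}\in c_{00}$ is routine by linearity: with $J:=\max_i j_{0,i}$, $n:=m+J$, and $z:=\sum_i\mu_i\,e_{b_{N_i}+\Delta^{(k)}-(m+J-j_{0,i})}$ where the $N_i:=n_{0,i}+2^{k-1}$ are chosen pairwise distinct (possible for $k$ large) and disjoint from the support of $x$, the individual error summands live in pairwise disjoint blocks $[b_{N_i},b_{N_i+1})$ and thus add without cancellation. Theorem~\ref{Theorem 39} then applies with $\alpha'=\alpha/2$, yielding that $T$ is chaotic and $\mathcal U$-frequently hypercyclic. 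The main technical obstacle is aligning the offset $j_0$ (or $J$) with the universal constant $\alpha'$ required by the criterion; this is resolved by the freedom, afforded by the hypothesis, to take $k$ (and hence $\Delta^{(k)}$ and $m$) as large as necessary relative to the fixed data of $x$.
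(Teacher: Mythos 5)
Your proof takes essentially the same route as the paper: Theorem~\ref{Theorem 39} is applied with $X_0=c_{00}$, the identity (\ref{Equation 5}) is the engine, and the approximating vector $z$ is built out of shifted basis vectors in blocks $N_i=n_{0,i}+2^{k-1}$ so that $T^{n}z$ lands on $x$ plus an error confined to the blocks $N_i$. The growth argument (large $C$ forces $m\to\infty$, and hence $m\gg\Delta^{(k_0)}$ for $k>k_0$) is exactly the paper's as well.

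There is, however, a genuine flaw in your general-case construction of $z$. You fix a uniform iterate $n:=m+J$, $J:=\max_i j_{0,i}$, and place each $z$-component at the index $b_{N_i}+\Delta^{(k)}-(m+J-j_{0,i})$. For this to make sense as a basis vector inside the block $[b_{N_i},b_{N_i+1})$, you need $\Delta^{(k)}-(m+J-j_{0,i})\ge 0$, i.e.\ $m\le\Delta^{(k)}-(J-j_{0,i})$. The hypothesis only gives $1\le m<\Delta^{(k)}$, and it very much allows $m$ to be within $J$ of $\Delta^{(k)}$: for a C$_{+,1}$-type operator, the first inequality forces $m\gtrsim\Delta^{(k)}-\delta^{(k)}$, and the proofs of Theorem~\ref{Theorem 52}/Theorem~\ref{Theorem 53} literally take $m=\Delta^{(k)}-1$, in which case your index for the worst $i$ (the one with $j_{0,i}=0$) is $1-J<0$ as soon as $J\ge 2$. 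Your remark that ``$m$ can be taken as large as necessary'' goes in the wrong direction here: making $m$ larger only makes the index more negative. The paper sidesteps this entirely by keeping $n=m$ and placing the $j_0$-th component of $z$ at $b_{N}+\Delta^{(k)}-m+j_0$ (a \emph{plus} sign on $j_0$), which stays inside the block since $j_0<m$; then $T^m$ lands each component simultaneously at the correct position $b_{n_0}+j_0$, so no uniformizing shift by $J$ is needed. Your argument can be repaired either by adopting this parametrization, or by first replacing $m$ with $m-J$ (the first inequality survives at the cost of a factor $\|w\|_\infty^J$, absorbed into $C_{k_0,J}$), but as written the step fails.
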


\begin{proof} The following fact will be useful.
\begin{fact}\label{petitcalcul} Let $k\geq 1$. For any $l<2^{k-1}$ and $1\leq s\leq \Delta^{(k)}$, we have
\[ T^s e_{b_{2^{k-1}+l+1}-s}=v^{(k)}\,\Bigl(\prod_{i=\Delta^{(k)}-s+1}^{\Delta^{(k)}-1} w_i^{(k)}\Bigr)\, e_{b_l}-\Biggl( \prod_{i=1}^{\Delta^{(k)}-s} w_i^{(k)}\Biggr)^{-1} e_{b_{2^{k-1}+l}}.
\]
\end{fact}
\begin{proof}[Proof of Fact \ref{petitcalcul}] Since $l<2^{k-1}$, we have $\varphi({2^{k-1}+l})=l$. So the formula follows directly from the definition of $T$.
\end{proof}

We are going to show that the assumption of Theorem 
\ref{Theorem 39} is satisfied with $X_{0}=\textrm{span}\,[e_{k}\,;\,k\ge 0]$. So let us fix $x\in X_{0}$ and
 $\varepsilon >0$. We choose $k_0\geq1$ such that $x$ may be written as 
\[x=\sum_{l<2^{k_0}}\sum_{j=b_l}^{b_{l+1}-1}
x_{j}e_{j}.\]

Let $C>0$ be a very large number, to be specified later on in the proof. 
 By assumption, there exist $k>k_{0}$ and $1\le m<\Delta^{(k)}$ such 
that
\begin{align}
|v^{(k)}|\prod_{i=\Delta^{(k)}-m}^{\Delta^{(k)}-1}|w^{(k)}_{i}|&> C\label{Equation 
9}
\end{align}
and
\begin{align}
|v^{(k)}|\prod_{i=m'+1}^{\Delta^{(k)}-1}|w^{(k)}_i|&>C \qquad
\textrm{for every}\ 0\le m'\le \alpha m.
\label{Equation 10}
\end{align}
Note that since the sequences $v$ and $w$ are bounded, it follows from (\ref{Equation 9}) that the integer $m$ 
can be chosen as large as we please, provided that $C$ is large enough. So we may assume from the beginning that 
$m>2\Delta^{(k_0)}$. We will also assume that $\alpha<1$, which will be useful below.
\par\smallskip 
We set
\[
z:=\sum_{l<2^{k_0}}\ 
\sum_{j=b_l}^{b_{l+1}-1}
x_{j}\
\left({v^{(k)}}\!\!\!\!\!\!\!\!\!\;\;\;\;\;\;\ds\prod_ {
i=\Delta^{(k)}-m+j-b_{l}+1}^{\Delta^{(k)}-1}w^{(k)}_{i}\right)^{-1}
\Bigl(\,\prod_{i=1}^{j-b_l}w_{b_l+i} \Bigr)^{-1}
\ e_{b_{2^{k-1}+l+1}-m+j-b_{l}}.
\]
\par\smallskip
Our aim is to prove that if $C$ has been  suitably chosen, then \[ \|z\|<\varepsilon\qquad {\rm and}\qquad \|T^{\,m+m'}z-T^{\,m'}x\|
<\varepsilon\quad\hbox{for every $0\le m'\le \alpha m/2$.}
\] 
Theorem \ref{Theorem 39} will then conclude the proof.
\par\smallskip 
The first of these two 
claims is the easiest one to prove. Indeed, since the weight sequence $w$ is bounded and bounded from below, it follows from 
(\ref{Equation 9}) that 
\[\|z\|\le \|x\|\,.\, C^{-1}.\, A^{\Delta^{(k_0)}},\]
where $A$ is an absolute constant. So $\Vert z\Vert<\varepsilon$ if $C$ 
is large enough.
\par\smallskip 
Let us now estimate the norm of the vector $T^{\,m+m'}z-T^{\,m'}x$ for every $0\le 
m'\le\alpha m/2$. 
Note that if $0\le l\le 2^{k_{0}}-1$ and 
$b_{l}\le j<b_{l+1}$, then 
$m-(j-b_{l})\ge 1$ since $0\le j-b_{l}<\Delta ^{(k_{0})}$ and $m>\Delta ^{(k_{0})}$. Applying Fact \ref{petitcalcul} with $s:=m-(j-b_l)$, we get
\begin{align*}
T^{\,m-(j-b_l)}e_{b_{2^{k-1}+l+1}-m+j-b_{l}}=
v^{(k)}\,&\Bigl(\prod_{i=\Delta^{(k)}-m+j-b_l+1}^{\Delta^{(k)}-1} w_i^{(k)}\Bigr)\, e_{b_l}\\
&-\Biggl(\; \prod_{i=1}^{\Delta^{(k)}-m+j-b_l} w_i^{(k)}\Biggr)^{-1} e_{b_{2^{k-1}+l}},
\end{align*}
and hence
\begin{align*}
 T^{\,m}e_{b_{2^{k-1}+l+1}-m+j-b_{l}}=
\quad \Biggl( v^{(k)}&
\!\!\!\!\!\;\;\;\;\prod_{i=\Delta^{(k)}-m+j-b_{l}+1}^{\Delta^{(k)}-1}w^{(k)}_{i }\Biggr)\ 
\Bigl(\,\,\prod_{i=1}^{j-b_l}w_{b_l+i} \Bigr) \,
e_{j} \\
&- \Biggl(\!\!\!\!\!\;\;\;\;\prod_{i=j-b_l+1}^{\Delta^{(k)}-m+j-b_{l}}w^{(k)}_{i}\Biggr)^{-1} 
\, e_{b_{2^{k-1}+l}+j-b_{l}}
\end{align*}
because $T^{j-b_l} e_{b_l}=\Bigl(\,\,\prod_{i=1}^{j-b_l}w_{b_l+i} \Bigr) \,
e_{j} $ and $T^{j-b_l} e_{b_{2^{k-1}+l}}= \Bigl(\prod_{i=1}^{j-b_l}w^{(k)}_{i}\Bigr)\,  e_{b_{2^{k-1}+l}+j-b_l}$.
\par\smallskip
Moreover, if $0\le m'\le \alpha m/2$, then 
\[
T^{\,m'}\,e_{b_{2^{k-1}+l}+j-b_{l}}=\Biggl(\;\,\prod_{i=j-b_l+1}^{j-b_{l}+m'}
w_{i}^{(k)}\Biggr)\,e_{b_{2^{k-1}+l}+j-b_{l}+m'},
\]
because $ j-b_{l}+m'<b_{l+1}-b_{l}+\alpha m/2
<\Delta ^{(k_{0})}+m/2<m
<\Delta ^{(k)}$. So we get 
\begin{align*}
 T^{\,m+m'}e_{b_{2^{k-1}+l+1}-m+j-b_{l}}=
& \Biggl( v^{(k)}
\!\!\!\!\!\;\;\;\;\prod_{i=\Delta^{(k)}-m+j-b_{l}+1}^{\Delta^{(k)}-1}w^{(k)}_{i }\Biggr)\ 
\Bigl(\,\,\prod_{i=1}^{j-b_l}w_{b_l+i} \Bigr) \,
T^{m'} e_{j} \\
&- \Biggl(\!\!\!\!\!\;\;\;\;\prod_{i=j-b_l+1}^{\Delta^{(k)}-m+j-b_{l}}w^{(k)}_{i}\Biggr)^{-1} 
\Biggl(\;\,\prod_{i=j-b_l+1}^{j-b_{l}+m'}
w_{i}^{(k)}\Biggr)
\, e_{b_{2^{k-1}+l}+j-b_{l}+m'}.
\end{align*}
By definition of $z$, it follows that for any $0\leq m'\leq\alpha m/2$, we have
\begin{align*}
T^{m+m'}z=T^{m'}x-\sum_{l<2^{k_{0}}}\ 
\sum_{j=b_{l}}^{b_{l+1}-1}& \Biggl(v^{(k)}\prod_{i=j-b_l+m'+1}^{\Delta ^{(k)}-1}w_{i}^{(k)} 
\Biggr)^{-1}\cdot\\
&\Bigl(\,\,\prod_{i=1}^{j-b_l}w_{b_{l}+i}
\Bigr)^{-1}\,x_{j}e_{b_{2^{k-1}+l}+j-b_{l}+m'}.
\end{align*}
By (\ref{Equation 10}) and since the weight sequence $w$ is bounded and bounded below, this implies that 
\[ \Vert T^{m+m'}z-T^{m'}x\Vert< \Vert x\Vert \,.\,  C^{-1}.\, A^{\Delta^{k_0}},
\]
where (as above) $A$ is an absolute constant. So 
we have $\Vert T^{m+m'}z-T^{m'}x\Vert<\varepsilon$ for every $0\leq m'\leq\alpha m/2$ if 
$C$ is large enough.
\par\smallskip
The assumptions 
of Theorem \ref{Theorem 39} are thus satisfied, and this concludes the 
proof of Theorem \ref{Theorem 46}.
\end{proof}

Using Theorem \ref{Theorem 41} instead of Theorem \ref{Theorem 39}, we can 
obtain sufficient conditions of the same kind as above for operators of \cpt\ to be frequently hypercyclic.

\begin{theorem}\label{Theorem 47}
 Let $T=\tvw$ be an operator of \cpt\ on $\ell_{p}(\N)$.  Suppose that 
there exists a constant $\alpha>0$ such that the following property holds true: for every $C\ge 1$ and every $k_{0}\ge 
1$, there exist two integers $k\ge k_{0}$ and $1\le m<\Delta ^{(k)}$ such 
that
\begin{align}
\smash[b]{|v^{(k)}|\prod_{j=\Delta^{(k)}-m}^{\Delta^{(k)}-1}|w^{(k)}_j|}
&\ge C 
 \label{Equation 14}
 \intertext{and}
 \smash[t]{|v^{(k)}| 
\prod_{j=m'+1}^{\Delta^{(k)}-1}|w^{(k)}_j|}&>C\quad\textrm{for every}\  
\ 0\le m'\le \alpha \Delta^{(k)}.
 \label{Equation 15}
\end{align}
Then $T$ is chaotic and frequently hypercyclic.
\end{theorem}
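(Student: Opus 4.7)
The plan is to mimic the proof of Theorem~\ref{Theorem 46} but replace the appeal to Theorem~\ref{Theorem 39} by Theorem~\ref{Theorem 41}. Chaoticity is immediate: condition~(\ref{Equation 15}) at $m'=0$ yields $|v^{(k)}|\prod_{j=1}^{\Delta^{(k)}-1}|w_j^{(k)}|>C$ for arbitrarily large~$C$ with $k$ arbitrarily large, and since $N=n+2^{k-1}\in\varphi^{-1}(n)$ gives $|v_N|\prod_{b_N+1}^{b_{N+1}-1}|w_j|=|v^{(k)}|\prod_{j=1}^{\Delta^{(k)}-1}|w_j^{(k)}|$, the assumption of Proposition~\ref{Proposition 45} is satisfied for every $n\ge 0$.

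For frequent hypercyclicity, apply Theorem~\ref{Theorem 41} with $X_{0}=c_{00}$ (dense, $T$-invariant, contained in $\mathrm{Per}(T)$ by Fact~\ref{Proposition 44}) and some fixed $\alpha'\in(0,\alpha/3)$. Given $x\in X_{0}$, $\varepsilon>0$ and $d_{0}=\mathrm{per}(y)$ with $y\in X_{0}$, choose $k_{1}$ large enough that the supports of $x$ and $y$ are contained in $\mathrm{span}[e_{j}\,;\,j<b_{2^{k_{1}}}]$; by the divisibility built into the sequence $b$, the integer $2\Delta^{(k)}$ is then a multiple of $d_{0}$ for every $k\geq k_{1}$ sufficiently large. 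Apply the hypothesis with $k_{0}=k_{1}$ and a very large constant $C$, producing $k\geq k_{1}$ and some $m$ satisfying (\ref{Equation 14}) and (\ref{Equation 15}). The key trick is then to \emph{replace} $m$ by $m^{*}:=\Delta^{(k)}-1$: this is legitimate because (\ref{Equation 15}) at $m'=0$ gives precisely the strengthened form of (\ref{Equation 14}) corresponding to $m^{*}$, while (\ref{Equation 15}) itself does not involve $m$. Define
\[
z:=\sum_{l<2^{k_{1}}}\sum_{j=b_{l}}^{b_{l+1}-1} x_{j}\Bigl(v^{(k)}\prod_{i=j-b_{l}+2}^{\Delta^{(k)}-1}w_{i}^{(k)}\Bigr)^{-1}\Bigl(\prod_{i=1}^{j-b_{l}}w_{b_{l}+i}\Bigr)^{-1} e_{b_{2^{k-1}+l}+1+(j-b_{l})},
\]
which is exactly the vector from the proof of Theorem~\ref{Theorem 46} for the value $m^{*}$; the support of each summand now sits at the \emph{beginning} of its block $[b_{2^{k-1}+l},b_{2^{k-1}+l+1})$ rather than near the end. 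Set $n:=m^{*}$ and $d:=2\Delta^{(k)}$.

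The three conditions of Theorem~\ref{Theorem 41} are checked as follows. Condition~(0) is clear: $d$ is a multiple of $d_{0}$ by the choice of $k$, and a multiple of $\mathrm{per}(z)\mid 2\Delta^{(k)}$. For condition~(1), fix $0\le k'\le\alpha'd=2\alpha'\Delta^{(k)}$; since each basis vector in $z$ stands at position $b_{2^{k-1}+l}+1+(j-b_{l})$ with $(j-b_{l})<\Delta^{(k_{1})}$, and since $k'+1+(j-b_{l})\le 2\alpha'\Delta^{(k)}+\Delta^{(k_{1})}<\Delta^{(k)}-1$ for $k$ large, no wrap-around occurs. A direct computation gives that the coefficient of $T^{k'}z$ on $e_{b_{2^{k-1}+l}+1+(j-b_{l})+k'}$ has modulus equal to $|x_{j}|$ divided by $|v^{(k)}|\prod_{i=j-b_{l}+k'+2}^{\Delta^{(k)}-1}|w_{i}^{(k)}|$ times a bounded factor in the weights $w_{b_{l}+i}$; by condition~(\ref{Equation 15}) applied with $m'=j-b_{l}+k'+1\le\alpha\Delta^{(k)}$ (valid since $\alpha'<\alpha/3$ and $k$ is large), this is at most $A^{\Delta^{(k_{1})}}|x_{j}|/C$ for a constant $A$ depending only on the weights, hence $\|T^{k'}z\|<\varepsilon$ for $C$ large enough. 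Condition~(2) is essentially the computation carried out in Theorem~\ref{Theorem 46}: $T^{m^{*}}z=x-E$ for a small ``error'' $E$ supported on basis vectors at positions $b_{2^{k-1}+l}+(j-b_{l})$, and $T^{k'}E$ stays small throughout $[0,\alpha'd]$ by the same no-wrap estimate.

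The main obstacle is condition~(1), which has no counterpart in the $\mathcal{U}$-frequent hypercyclicity criterion used for Theorem~\ref{Theorem 46}: we must keep $\|T^{k'}z\|$ below~$\varepsilon$ over a range $[0,\alpha'd]$ whose length $2\alpha'\Delta^{(k)}$ is comparable to $\Delta^{(k)}$. Had we kept the original $m$ from the hypothesis (which may be as small as logarithmic in $\Delta^{(k)}$), the orbit would reach the non-small value $T^{m}z\approx x$ well within $[0,\alpha'd]$ and condition~(1) would fail. The upgrade $m\leadsto m^{*}=\Delta^{(k)}-1$, made possible by the fact that (\ref{Equation 15}) is an assumption on $k$ alone, is precisely what reconciles the two competing requirements: it postpones the ``return to $x$'' to time $\Delta^{(k)}-1>\alpha'd$, while condition~(\ref{Equation 15}) continues to provide the uniform lower bound needed to control $T^{k'}z$ throughout the entire interval $[0,\alpha'd]$.
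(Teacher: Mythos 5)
Your proof is correct and follows the paper's intended route: reuse the vector $z$ from Theorem~\ref{Theorem 46} with Theorem~\ref{Theorem 41} in place of Theorem~\ref{Theorem 39}, taking $d=2\Delta^{(k)}$. The one place where you add genuine content is the observation that $m$ must be upgraded to $m^{*}=\Delta^{(k)}-1$ before running the Theorem~\ref{Theorem 46} construction, so that the support of $z$ sits at the start of its block and condition~(1) of Theorem~\ref{Theorem 41} can be verified from~(\ref{Equation 15}); the paper's one-paragraph sketch leaves this implicit, but it is essential (with a small $m$ the orbit of $z$ returns to $x$ at time $\approx m\le\alpha d$, making conditions~(1) and~(2) incompatible), and your justification — that $(\ref{Equation 15})$ at $m'=0$ gives $(\ref{Equation 14})$ for $m^{*}$ while $(\ref{Equation 15})$ itself does not mention $m$ — is exactly right.
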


\begin{proof}
 The proof of Theorem \ref{Theorem 47} is so similar to that of Theorem 
 \ref{Theorem 46} that we only sketch it very briefly. 
 The role of the integer $d$ in 
the assumption of Theorem \ref{Theorem 41} is played by the integer 
$2\Delta ^{(k)}$, which is a period of $z$. If $k$ is chosen sufficiently 
large at the beginning of the proof, $\Delta ^{(k)}$ can be supposed to be 
a multiple of the period of any fixed vector $y\in X_{0}$: indeed, such a 
vector always has a period of the form $\Delta ^{(k_{1})}$ for some 
integer 
$k_{1}\ge 1$, and $2\Delta ^{(k_{1})}$ divides $\Delta ^{(k)}$ as soon as $k> k_1$ since
$\Delta^{(k)}=b_{2^{k-1}+2^{k_1-1}+1}-b_{2^{k-1}+2^{k_1-1}}$,  $\Delta ^{(k_{1})}=b_{2^{k_1-1}+1}-b_{2^{k_1-1}}
$ 
and $\varphi(2^{k-1}+2^{k_1-1})=2^{k_1-1}.$
\end{proof}

Now that we have obtained sufficient conditions for the frequent and 
$\mathcal{U}$-frequent hypercyclicity of operators of \cpt, we need to 
find \emph{necessary} conditions as well. This we do in the next subsection.
\par\smallskip

\subsection{Operators of \cct: how \emph{not} to be FHC or UFHC}\label{Subsection 4.3}
Since $\mathcal U$-frequent hypercyclicity and frequent hypercyclicity are strong 
notions of hypercyclicity, one might think that it is easier to prove that an operator $T\in\bh$ does \emph{not} have one of these properties, than to prove 
that an operator has it. However, instead of 
exhibiting a single $\mathcal U$-frequent or frequent hypercyclic vector, we now need to 
prove that no vector of $\h$ whatsoever can be $\mathcal U$-frequent or frequent hypercyclic; and put in this way, this 
no longer looks that easy. 
\par\smallskip
In this subsection, we are going to single out some conditions ensuring that an operator of \cct\ $T$ is not $\mathcal U$-frequently 
hypercyclic or not frequently hypercyclic. As suggested in the  few lines above,  the arguments will be rather more technical than in the previous subsection. 
However, we can give the basic idea immediately: if for every {hypercyclic} vector $x$ for $T$, 
we are able to find some $\varepsilon>0$ such that the set $\mathcal{N}_T(x,B(0,\varepsilon))$ 
has upper density (resp. lower density) equal to $0$, then $T$ will not be $\mathcal U$-frequently 
 hypercyclic (resp. frequently hypercyclic). 
\par\smallskip

\subsubsection{The main criterion, in abstract form}
The following notation will be used throughout this subsection: given an operator of \cct\ $T=\tvw$, we denote for each $n\ge 0$ by $P_{n}$ the canonical projection of $\ell_p(\N)$ onto 
$E_{n}=\textrm{span}[e_{k},\;\,b_{n}\le k<b_{n+1}]$: if $x=\sum\limits_{k\ge 0}x_{k}e_{k}\in\ell_p(\N)$, 
then
\[
P_{n}x=\sum_{k=b_{n}}^{b_{n+1}-1}x_{k}e_{k}.
\]
\par\medskip 
The following theorem provides sufficient conditions 
for an operator of \cct\ to be non-$\mathcal{U}$-frequently hypercyclic or 
non-frequently hypercyclic. These conditions are stated in terms of the projections $P_n$. 

\begin{theorem}\label{Theorem 49}
 Let $T$ be an operator of \cct\ on $\ell_{p}(\N)$.  Suppose that for 
every hypercyclic vector $x\in\ell_{p}(\N)$ for $T$, there exist
\begin{enumerate}
 \item[-] a positive constant $C$,
 \item[-] a non-increasing sequence $(\beta _{l})_{l\ge 1}$ of positive real 
numbers with $\sum_{l\ge 1}\sqrt{\beta _{l}}\le 1$,
\item[-] a sequence $(X_{l})_{l\ge 0}$ of non-negative real numbers,
\item[-] a non-decreasing sequence $(N_{l})_{l\ge 1}$ of integers tending to infinity,
\end{enumerate}
such that the following conditions are satisfied:
\smallskip
\begin{enumerate}
 \item[\emph{(1)}] $\|P_{n}x\|\le X_{n}$ for every $n\ge 0$;
 
 \item[\emph{(2)}] $\sup\limits_{j\ge 0}\|P_{n}\,T^{\,j}P_{l}\,x\|\le C\beta _{l}X_{l}$ 
 for every $l\ge 1$ and every $0\le n<l$;
 
\item[\emph{(3)}] $\sup\limits_{0\le j\le 
N_{l}}\|P_{n}\,T^{\,j}P_{l}\,x\|\le C\beta _{l}\|P_{l}x\|$ for every $l\ge 1$ and every $0\le n<l$;

\item[\emph{(C)}] $\liminf\limits_{l\to\infty }\ \ \inf\limits_{k\ge N_{l}}
\dfrac{\ \#\bigl\{0\le j\le k\,;\, \|P_{l}\,T^{\,j}P_{l}\,x\|\ge 2CX_{l}\bigr\}}{k+1}=1.$
\end{enumerate}

\smallskip\noindent
Then $T$ is not $\mathcal{U}$-frequently hypercyclic.
\par\medskip 
If Condition \emph{(C)} is replaced by 
\smallskip
\begin{enumerate}
 \item [\emph{(C')}] $\liminf\limits_{l\to\infty }\ \ 
\inf\limits_{k\ge N_{\min(\varphi ^{-1}(l))}}\dfrac{\ \#\bigl\{0\le 
j\le 
k\,;\, \|P_{l}\,T^{\,j}P_{l}\,x\|\ge 
2CX_{l}\bigr\}}{k+1}=1$,
\end{enumerate}
\smallskip
then $T$ is not frequently hypercyclic.
\end{theorem}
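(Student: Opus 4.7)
The plan is to show that every hypercyclic vector $x\in\hct$ fails to be $\mathcal U$-frequently hypercyclic (under (C)) or frequently hypercyclic (under (C')). Since a UFHC (respectively FHC) vector is automatically hypercyclic, this establishes both assertions. The strategy, given the data $(C,(\beta_l),(X_l),(N_l))$ attached to $x$, is to exhibit a non-empty open set $V\subseteq\ell_p(\N)$ such that $\udens\,\mathcal N_T(x,V)=0$ (resp.\ $\ldens\,\mathcal N_T(x,V)=0$). The natural candidate is an open ball $B(0,\rho)$ with $\rho>0$ tuned from the data.

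The linchpin of the argument is a block decomposition of $T^j x$. Since $\varphi(n)<n$ for every $n\ge 1$, each subspace $\bigoplus_{m\le n}E_m$ is $T$-invariant, so $P_l T^j P_n x=0$ whenever $n<l$. This yields the identity
\[P_l T^j x \;=\; P_l T^j P_l x \;+\; \sum_{n>l} P_l T^j P_n x.\]
Hypothesis (2), applied with the indices $n$ and $l$ interchanged, gives $\|P_l T^j P_n x\|\le C\beta_n X_n$ for all $n>l$ and all $j\ge 0$; hypothesis (3) sharpens this to $\|P_l T^j P_n x\|\le C\beta_n\|P_n x\|$ on the initial window $0\le j\le N_l$. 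The constraint $\sum_l\sqrt{\beta_l}\le 1$ (which forces $\sum_l\beta_l<\infty$), combined with hypothesis (1), then bounds the tail $\bigl\|\sum_{n>l}P_l T^j P_n x\bigr\|$ uniformly in $j$ by a quantity less than $CX_l$ for every sufficiently large $l$. Inserting this estimate into the decomposition and using hypothesis (C), we obtain on the ``good'' set
\[B_l \;:=\; \{\,j\ge 0 \,:\, \|P_l T^j P_l x\|\ge 2CX_l\,\}\]
the lower bound $\|P_l T^j x\|\ge CX_l$, and \emph{a fortiori} $\|T^j x\|\ge CX_l$.

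Consequently, for any $\rho>0$ with $\rho<CX_l$, we have $\mathcal N_T(x,B(0,\rho))\subseteq\N\setminus B_l$. The main difficulty --- and the step I expect to require the most delicate accounting --- is to promote this to the \emph{equality} $\udens\,\mathcal N_T(x,B(0,\rho))=0$: hypothesis (C) only furnishes $\udens(\N\setminus B_l)\to 0$ as $l\to\infty$, not a zero statement for any single $l$. The plan is to choose $\rho$ small enough that the inequality $\rho<CX_l$ holds along an infinite subsequence of indices $l$, so that the inclusion $\mathcal N_T(x,B(0,\rho))\subseteq\N\setminus B_l$ persists for arbitrarily large $l$, whence
\[\udens\,\mathcal N_T(x,B(0,\rho)) \;\le\; \liminf_{l\to\infty}\udens(\N\setminus B_l) \;=\; 0\]
by (C). The summability $\sum_l\sqrt{\beta_l}\le 1$ is calibrated precisely to make the tail estimate hold along enough such $l$, regardless of the decay of $X_l$.

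The non-FHC assertion under (C') is obtained by running the same argument with lower densities in place of upper densities throughout. The replacement $N_l\leadsto N_{\min\varphi^{-1}(l)}$ in (C') is needed because converting the conclusion into a lower-density statement requires the tail estimate to be valid on the longer initial window of indices $j$ during which no iterate of a block $E_n$ with $n>l$ has yet transferred mass back into $E_l$ through the $\varphi$-cascade, and the smallest relevant $n$ is precisely $\min\varphi^{-1}(l)$.
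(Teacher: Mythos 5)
Your overall strategy (decompose $P_lT^jx = P_lT^jP_lx + \sum_{n>l}P_lT^jP_nx$, show $\|T^jx\|$ is large on a set of nearly-full density, conclude that $\mathcal N_T(x,B(0,\rho))$ has zero upper/lower density) is in the right spirit and matches the overall structure of the paper's argument. However, the linchpin of your plan fails.

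You propose to bound the tail $\bigl\|\sum_{n>l}P_lT^jP_nx\bigr\|$ \emph{uniformly in $j$} by a quantity $<CX_l$, for all sufficiently large $l$. This is impossible. For a hypercyclic vector $x$, one has $\sup_{j\ge 0}\sum_{n>l}\|P_lT^jP_nx\|=\infty$ for \emph{every} $l$: the vector $\sum_{m\le l}P_mx$ is periodic (by Fact~\ref{Proposition 44}), so if the tail were uniformly bounded in $j$, then $\|P_lT^jx\|$ would be bounded as well, contradicting hypercyclicity. (The paper records this precise observation as display~(\ref{Equation 17}) in the proof of Theorem~\ref{Theorem 49}, and it is \emph{used positively} as condition~(4) in Lemma~\ref{Theorem 48}.) Concretely, hypothesis~(2) gives only $\sum_{n>l}\|P_lT^jP_nx\|\le C\sum_{n>l}\beta_nX_n$, and there is no reason this should be $\le CX_l$; in the paper's own recursion the sequence $X_{l_0}\le X_{l_1}\le\cdots$ is \emph{increasing} (indeed $X_{l_n}\ge X_{l_{n-1}}/\sqrt{\beta_{l_n}}$), so the tail sum of $\beta_nX_n$ need not be small relative to $X_l$. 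The summability $\sum_l\sqrt{\beta_l}\le 1$ alone does not rescue this, as you hoped.

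The actual proof (Lemma~\ref{Theorem 48}) avoids a uniform tail bound. It introduces $j_{n-1}:=\min\{j\ge 0: \sum_{l>l_{n-1}}\|P_{l_{n-1}}T^jP_lx\|>CX_{l_{n-1}}\}$ --- the \emph{first} time the tail overshoots --- and chooses $l_n>l_{n-1}$ so that the overshoot at time $j_{n-1}$ is already carried by the single block $l_n$, i.e.\ $\|P_{l_{n-1}}T^{j_{n-1}}P_{l_n}x\|>C\sqrt{\beta_{l_n}}\,X_{l_{n-1}}$. Combining~(2) then forces $X_{l_n}\ge X_{l_{n-1}}/\sqrt{\beta_{l_n}}$, while~(3) forces $j_{n-1}>N_{l_n}$. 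Thus although the tail does eventually blow up (as it must), by that time the ``good set'' $\{j: \|P_{l_n}T^jP_{l_n}x\|\ge 2CX_{l_n}\}$ has already accumulated density $\ge 1-o(1)$ over $[0,j_{n-1}]$. The $\varepsilon$ one uses is $CX_{l_0}$, and the densities in the conclusion come from running over the windows $[j_{n-1},j_n)$ rather than from any single $l$. This recursive bookkeeping is precisely what your proposal is missing.
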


\smallskip
The usefulness of this result lies in the fact that the lower bounds for
the densities are given in terms of norms $\|P_nT^jP_nx\|$ which are easily computable. 
Moreover, since $P_nx$ is periodic, we can determine the cardinality of the sets $\{0\le j\le k; \|P_nT^jP_nx\|\ge 2C X_n\}$ 
by examining only a fixed finite number of iterates (independent of $k$).
\par\smallskip
Theorem \ref{Theorem 49} will follow very easily from the next lemma, which 
provides, under conditions (1), (2), and (3) above on the projections 
$P_{n}$, lower bounds for the upper and lower density of some sets 
$\mathcal{N}_{T}(x, B(0,\varepsilon )^{c})$, where $x$ 
is any  non-zero vector of $\ell_p(\N)$ 
and $\varepsilon $ is a 
positive number depending on $x$. Here, of course, $B(0,\varepsilon)^c$ denotes the complement of the the ball $B(0,\varepsilon)$ in $\ell_{p}(\N)$.

\begin{lemma}\label{Theorem 48}
 Let $T$ be an operator of \cct\ on $\ell_{p}(\N)$. Fix $x\in\ell_p(\N)\setminus\{0\}$ 
and suppose that there exist

\begin{enumerate}
 \item[-] a positive constant $C$,
 \item[-] a non-increasing sequence $(\beta _{l})_{l\ge 1}$ of positive real 
numbers with $\sum_{l\ge 1}\sqrt{\beta _{l}}\le 1$,
\item[-] a sequence $(X_{l})_{l\ge 0}$ of non-negative real numbers,
\item[-] a non-decreasing sequence $(N_{l})_{l\ge 1}$ of integers tending to infinity,
\end{enumerate}
such that the following conditions are satisfied:
\smallskip
\begin{enumerate}
 \item[\emph{(1)}] $\|P_{n}x\|\le X_{n}$ for every $n\ge 0$;
 \item[\emph{(2)}] $\sup\limits_{j\ge 0}\|P_{n}\,T^{\,j}P_{l}\,x\|\le 
C\beta _{l}X_{l}$ for every $l\ge 1$ and every $0\le n<l$;
\item[\emph{(3)}] $\sup\limits_{0\le j\le N_{l}}\|P_{n}\,T^{\,j}P_{l}\,x\|\le C\beta _{l}\|P_{l}x\|$
for every $l\ge 1$ and every $0\le n<l$;
\item[\emph{(4)}] $\sup\limits_{j\ge 0}\sum\limits_{l>n}\|P_{n}\,T^{\,j}P_{l}x\|>CX_{n}$ for every $n\ge 0$.
\end{enumerate}
\smallskip
Then there exists $\varepsilon >0$ such that
\[
\underline{\vphantom{p}\textrm{\emph{dens}}}\ \mathcal{N}_{T} \bigl(x,B(0,\varepsilon )^c\bigr)\ge 
\liminf_{l\to\infty }\ \inf_{k\ge N_{l}}\ \dfrac{\ \#\bigl\{0\le 
j\le 
k\,;\, \|P_{l}\,T^{\,j}P_{l}\,x\|\ge 2CX_{l}\bigr\}}{k+1}
\]
and 
\[
\overline{\text{\emph{dens}}}\ \mathcal{N}_{T} \bigl(x,B(0,\varepsilon )^{c} \bigr)
\ge 
\liminf_{l\to\infty }\ \inf_{k\ge N_{\min(\varphi ^{-1}(l))}}\ 
\dfrac{\ \#\bigl\{0\le 
j\le 
k\,;\, \|P_{l}\,T^{\,j}P_{l}\,x\|\ge 2CX_{l}\bigr\}}{k+1}\cdot
\]
\end{lemma}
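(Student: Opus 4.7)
My plan is to exploit the triangular structure of operators of \cct. First, observe that $T(E_{n})\subseteq E_{n}\oplus E_{\varphi(n)}$ for every $n\ge 1$ (and $T(E_{0})\subseteq E_{0}$), where $E_{n}=\textrm{span}\,[e_{k}: b_{n}\le k<b_{n+1}]$. Since $\varphi(n)<n$, iteration gives $T^{j}(E_{l})\subseteq\bigoplus_{m\le l}E_{m}$, and hence $P_{n}T^{j}P_{l}=0$ whenever $n>l$. I would therefore decompose, for every $j,l\ge 0$,
\[
P_{l}T^{j}x=P_{l}T^{j}P_{l}x+\sum_{m>l}P_{l}T^{j}P_{m}x,
\]
so that $\|T^{j}x\|\ge\|P_{l}T^{j}x\|\ge\|P_{l}T^{j}P_{l}x\|-\sum_{m>l}\|P_{l}T^{j}P_{m}x\|$.

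The core of the proof is to show that, for $l$ sufficiently large, the tail $\sum_{m>l}\|P_{l}T^{j}P_{m}x\|$ is at most $CX_{l}$ for all $j$ outside a set of negligible density; for ``good'' $j$ that also belong to $A_{l}:=\{j\ge 0:\|P_{l}T^{j}P_{l}x\|\ge 2CX_{l}\}$, this gives $\|T^{j}x\|\ge CX_{l}$. To bound the tail, I would split
\[
\sum_{m>l}\|P_{l}T^{j}P_{m}x\|=\sum_{\substack{m>l\\N_{m}\ge j}}\|P_{l}T^{j}P_{m}x\|+\sum_{\substack{m>l\\N_{m}<j}}\|P_{l}T^{j}P_{m}x\|,
\]
applying condition~(3) to the first sum (with bound $C\beta_{m}\|P_{m}x\|$) and condition~(2) to the second (with bound $C\beta_{m}X_{m}$). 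Thanks to $\sum\sqrt{\beta_{m}}\le 1$ together with $\|P_{m}x\|\to 0$ (since $x\in\ell_{p}$), a Cauchy-Schwarz argument should make the first sum uniformly small for $l$ large, while condition~(4) will dictate the right choice of $\varepsilon>0$ (essentially as a positive multiple of $X_{l_{0}}$ for some fixed $l_{0}$ with $X_{l_{0}}>0$, whose existence is guaranteed since $x\ne 0$).

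The density conclusions will then follow from conditions~(C) and~(C'). For the lower-density bound, given $k\ge N_{l}$, condition~(C) ensures that $A_{l}\cap[0,k]$ has cardinality close to $k+1$; after discarding the exceptional set on which the tail exceeds $CX_{l}$, the set $\{j\le k:\|T^{j}x\|\ge\varepsilon\}$ retains nearly the same cardinality, and passing to $\liminf$ over $l$ yields the desired bound on $\underline{\dens}\,\mathcal{N}_{T}(x,B(0,\varepsilon)^{c})$. For the upper-density bound, the sharper choice $k\ge N_{\min(\varphi^{-1}(l))}$ will arise naturally: it ensures that for every $j\le k$ and every $m\in\varphi^{-1}(l)$ one has $j\le N_{m}$, so that condition~(3) applies to \emph{all} the dominant ``direct'' transitions $P_{l}T^{j}P_{m}x$ with $\varphi(m)=l$; along this subsequence the same mechanism yields a lower bound on $\overline{\dens}\,\mathcal{N}_{T}(x,B(0,\varepsilon)^{c})$.

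The main technical obstacle, as I see it, is the uniform tail bound: making the large-$j$ part of the tail, which is only globally bounded via condition~(2) by $C\sum_{m>l}\beta_{m}X_{m}$, small compared to $CX_{l}$ \emph{for most} $j\le k$ requires a delicate balance of the summability of $\sqrt{\beta_{m}}$, Cauchy-Schwarz, and the decay $\|P_{m}x\|\to 0$, combined with the specific lower bound provided by condition~(4). Achieving this uniformly in $j$ (rather than merely on average) is what I expect to be the crux of the argument; the rest of the proof is essentially book-keeping.
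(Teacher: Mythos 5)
Your starting point is correct and matches the paper's: the triangular structure of $T$, the fact that $P_nT^jP_l=0$ for $n>l$, and the inequality
\[
\|T^jx\|\ge\|P_lT^jP_lx\|-\sum_{m>l}\|P_lT^jP_mx\|
\]
are all exactly right. But the way you then propose to bound the tail is the point where the proposal genuinely breaks down, and you have in fact correctly identified the obstacle yourself without resolving it.

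Splitting the tail into the ranges $j\le N_m$ and $j>N_m$ and applying conditions (3) and (2) respectively gives $C\sum_{m>l}\beta_m\|P_mx\|$ plus $C\sum_{m>l}\beta_mX_m$. The first sum is indeed small for $l$ large, but the second sum is utterly uncontrolled: the $X_m$ are only required to satisfy $\|P_mx\|\le X_m$, and in the intended application they grow without bound, so $\sum_{m>l}\beta_mX_m$ can be infinite. No Cauchy--Schwarz or density-of-exceptional-set argument can save a bound that is simply false for all $j$. The paper does not attempt a uniform tail estimate at a single fixed level $l$; instead it builds an \emph{adaptive} chain $l_0<l_1<l_2<\dots$ via condition (4): $j_{n-1}$ is the \emph{first} time the tail at level $l_{n-1}$ exceeds $CX_{l_{n-1}}$, and then (pigeonhole on $\sum\sqrt{\beta_l}\le1$) there is a level $l_n>l_{n-1}$ with $\|P_{l_{n-1}}T^{j_{n-1}}P_{l_n}x\|>C\sqrt{\beta_{l_n}}X_{l_{n-1}}$. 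Condition (2) then yields $X_{l_n}\ge X_{l_{n-1}}/\sqrt{\beta_{l_n}}\ge X_{l_{n-1}}$, and condition (3) forces $j_{n-1}>N_{l_n}$. The tail at level $l_n$ is at most $CX_{l_n}$ for \emph{all} $j<j_n$ by minimality of $j_n$, not by any summability estimate; one then moves to level $l_{n+1}$ as $j$ grows, with $X_{l_n}$ non-decreasing, so $\varepsilon=CX_{l_0}$ works throughout. This adaptive ``ladder'' is the missing idea in your proposal, and without it the argument cannot be completed.

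A secondary confusion: you invoke conditions (C) and (C') to finish, but those belong to Theorem~\ref{Theorem 49}, not to the hypotheses of this lemma. The lemma's conclusion is precisely the density estimate that Theorem~\ref{Theorem 49} then feeds into (C) and (C'). The refinement $k\ge N_{\min(\varphi^{-1}(l))}$ for the upper-density bound also does not arise the way you suggest; it comes from the observation that $\min(\varphi^{-1}(l_n))\le l_{n+1}$ along the ladder (because $P_{l_n}T^{j_n}P_{l_{n+1}}\neq0$ forces $l_n$ to be a $\varphi$-iterate of $l_{n+1}$), which then lets one pick times $s_i\ge N_{\min(\varphi^{-1}(l_{n_{s_i}}))}$.
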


Let us show how this lemma implies Theorem \ref{Theorem 49}.

\begin{proof}[Proof of Theorem \ref{Theorem 49}] 
For every $n\ge 0$ and every 
 $x\in\ell_p(\N)$, the vector $\sum_{0\le l\le n}
 P_{l}\,x$ is a periodic vector for $T$. Hence, if $x$ is 
hypercyclic for $T$ then
\begin{equation}\label{Equation 17}
 \sup_{j\ge 0}\ \sum_{l>n}\|P_{n}T^{\,j}P_{l}\,x\|=\infty .
\end{equation}
Indeed, otherwise we would have
\[
\sup_{j\ge 0}\ \|P_{n}T^{\,j}\,x\|\le\sup_{j\ge 0}\ 
\Bigl\|T^{\,j}\Bigl(\sum_{0\le l\le n}P_{l}\,x \Bigr)\Bigr\|
+\sup_{j\ge 0}\ \sum_{l>n}\|P_{n}T^{\,j}P_{l}\,x\|<\infty, 
\]
a contradiction with the hypercyclicity of $x$. 
\par\smallskip 
By (\ref{Equation 17}), condition (4) of Lemma \ref{Theorem 48} is satisfied for any 
choice of the sequence $(X_{n})_{n\ge 0}$. It follows that if condition (C) in Theorem \ref{Theorem 49}  
is satisfied, then, for any $x\in HC(T)$, one can find $\varepsilon >0$ such that 
$\overline{\rm dens}\,\bigl(\mathcal N_T(x,B(0,\varepsilon)\bigr)=0$; whereas if (C') is 
satisfied, then, 
for any $x\in HC(T)$, one can find $\varepsilon >0$ such that 
$\underline{\rm dens}\,\bigl(\mathcal N_T(x,B(0,\varepsilon)\bigr)=0$. This concludes the proof.
\end{proof}

Before giving the proof of Lemma \ref{Theorem 48}, let us explain the general idea. 
The starting point is given by 
the inequalities
\[\|T^jx\|\ge \|P_nT^jx\|\ge \|P_nT^jP_nx\|-\sum_{l>n}\|P_nT^jP_lx\|.\]
In other words, we get a lower bound for $\|T^jx\|$ as soon as we have a lower bound for 
 some quantity $\|P_nT^jP_nx\|$ and an upper bound for $\|P_nT^jP_lx\|$, $l>n$. 
The role of the lower bound for $\|P_nT^jP_nx\|$ will be played by $2C X_n$, which will 
be bigger than $2\varepsilon$. Assumption~(3) will provide us with an upper bound for  
$\|P_nT^jP_lx\|$ when $j$ is smaller than $N_l$. However, condition~(4) tells us 
that $\|P_nT^jP_lx\|$ will be large for some $j\ge 0$ and some $l>n$. 
Assumption~(2) will then be used to deduce that $X_l\ge X_n$, which will allow us 
to deduce that $2C X_l\ge 2\varepsilon$ and to repeat the above arguments for $\|P_lT^jx\|$, and so on.

\begin{proof}[Proof of Lemma \ref{Theorem 48}]
If $x$ is a non-zero periodic vector for $T$, one can obviously choose $\varepsilon 
>0 $ such that 
\[
\underline{\vphantom{p}\textrm{dens}}\ \mathcal{N}_{T}(x,B(0,\varepsilon )^c)=1
\]
and the conclusion of Lemma \ref{Theorem 48} is satisfied. So we 
henceforward suppose that $x$ is not a periodic vector for $T$. 
\par\smallskip
We first note that 
there exists an integer $l_{0}\ge 1$ such that 
\[
\|P_{l_{0}}\,x\|\ge \sqrt{\beta _{l_{0}}}\,\|x-P_{0}\,x\|.
\]
Indeed, since $\sum_{l\ge 1}\sqrt{\beta _{l}}\le 1$ it is impossible to have 
$\|P_{l}\,x\|<\sqrt{\beta _{l}}\ \|x-P_{0}\,x\|$ for every $l\ge 1$. 
Since $x$ is not periodic, $x-P_{0}\,x$ is non-zero. Assumption (1) of Lemma \ref{Theorem 48} 
implies thus that $X_{l_{0}}$ is non-zero and we set $\varepsilon=CX_{l_0}$.
\par\smallskip
We now 
construct, by induction on $\gn$, a strictly increasing  sequence of integers 
$(l_{n})_{\gn}$ such that if we set 
\begin{align*}
 j_{n-1}&:=\smash[b]{\min\,\biggl\{j\ge \,
0\,;\,\sum_{l>l_{n-1}}\|P_{l_{n-1}}T^{j}P_{l}\,x\|>CX_{l_{n-1}}\biggr\},
}
\intertext{then}
j_{n-1}&>\smash[t]{N_{l_{n}}\quad\textrm{and}\quad 
X_{l_{n}}\ge\dfrac{1}{\sqrt{\beta 
_{l_{n}}}}\,X_{l_{n-1}}\quad \textrm{for every } \gn.}
\end{align*}
Observe that this does make sense: the set involved in the definition of $j_{n-1}$ is non-empty 
by assumption (4).

 Suppose that the integers $l_{1},\dots,l_{n-1}$ 
have already been constructed. Then there exists an integer $l_{n}>l_{n-1}$ 
with the property that 
\begin{equation}\label{Equation 16}
 \|P_{l_{n-1}}T^{\,j_{n-1}}P_{l_{n}}\,x\|>C\sqrt{\beta 
_{l_{n}}}X_{l_{n-1}}.
\end{equation}
Indeed, if we had $\|P_{l_{n-1}}T^{\,j_{n-1}}P_{l}\,x\|\le 
C\sqrt{\beta 
_{l}}X_{l_{n-1}}$ for every $l>l_{n-1}$, this would imply that 
$\sum_{l>l_{n-1}}\|P_{l_{n-1}}T^{\,j_{n-1}}P_{l}\,x\|\le 
CX_{l_{n-1}}$, violating the definition of $j_{n-1}$.
\par\smallskip
By assumption (2), we have
\[
X_{l_{n}}\ge\dfrac{1}{C\beta 
_{l_{n}}}\|P_{l_{n-1}}T^{\,j_{n-1}}P_{l_{n}}\,x\|\ge \dfrac{1}{\sqrt{
\beta 
_{l_n}}}\,X_{l_{n-1}}.
\]
In order to show that $j_{n-1}>N_{l_{n}}$, we observe that for every $0\le 
j\le N_{l_{n}}$,
\[
\|P_{l_{n-1}}T^{\,j}P_{l_{n}}\,x\|\le C\beta _{l_{n}}\,\|P_{l_{n}}
\,x\|\le C\beta _{l_{n}}\,\|x-P_{0}x\|\le \dfrac{C\beta 
_{l_{n}}}{\sqrt{\beta 
 _{l_{0}}}}\,\|P_{l_{0}}\,x\|\le \dfrac{C\beta _{l_{n}}}{\sqrt{\beta 
 _{l_{0}}}}\,X_{l_{0}},
\]
by assumption (3) and the definition of $l_{0}$. Since the sequence $(\beta 
_{l})_{l\ge 0}$ is decreasing, this 
yields that $\|P_{l_{n-1}}T^{\,j}P_{l_{n}}\,x\|\le C\sqrt{\beta _{l_n}}
\,X_{l_{0}}$. Also, the induction hypothesis implies that $X_{l_{0}}\le
X_{l_{1}}\le\dots\le X_{l_{n}}$, so that 
$\|P_{l_{n-1}}T^{\,j}P_{l_{n}}\,x\|\le C\sqrt{\beta _{l_n}}
\,X_{l_{n-1}}$ for every $0\le j\le N_{l_{n}}$. Inequality 
(\ref{Equation 16}) thus implies that $j_{n-1}>N_{l_{n}}$ and this 
finishes the induction. Note that the sequence $(j_{n})_{n\ge 0}$ tends 
to infinity as $n$ tends to infinity.
\par\smallskip
For any integer $s\geq 1$, let us denote by $n_{s}$ be the smallest integer such that $s$ belongs to $ 
[j_{n_{s}-1},j_{n_{s}})$. Then $n_{s}$ tends to infinity as $s$ tends to 
infinity. Since
\[
\smash[b]{\|T^{\,j}\,x\|\ge\|P_{n}\,T^{\,j}\,x\|\ge 
\|P_{n}\,T^{\,j}P_{n}\,x\|-
\sum_{l>n}\|P_{n}\,T^{\,j}P_{l}\,x\|}
\]
for every $j\ge 0$ and $n\ge 0$, we have
\[
\|T^{\,j}x\|\ge \|P_{l_{n}}T^{\,j}P_{l_{n}}\,x\|-C\,X_{l_{n}}\quad  
\textrm{for every}\ 0\le j< j_{n}.
\]
It follows from this inequality and the fact that $j<j_{n_{s}}$ for every 
$j<s$ that 
\begin{align*}
 \bigl\{0\le j\le s\,;\,\|P_{l_{n_{s}}}T^{\,j}P_{l_{n_{s}}}\,x\|
\ge 2CX_{l_{n_{s}}}\bigr\}&\subseteq
\bigl\{0\le j\le s\,;\,\|T^{\,j}x\|\ge CX_{l_{n_{s}}}\bigr\}\\
&\subseteq
\bigl\{0\le j\le s\,;\,\|T^{\,j}x\|\ge CX_{l_{0}}\bigr\}.
\end{align*}
Hence, since $\varepsilon =CX_{l_{0}}$, we have
\[
\underline{\vphantom{p}\textrm{dens}}\ \mathcal{N}_{T}\bigl(x, B(0,\varepsilon )^c\bigr)\ge 
\liminf_{s\to\infty }\ \dfrac{\ \#\bigl\{0\le 
j\le 
s\,;\, \|P_{l_{n_{s}}}\,T^{\,j}P_{l_{n_{s}}}\,x\|\ge 
2CX_{l_{n_{s}}}\bigr\}}{s+1}\cdot
\]
Now $N_{l_{s}}<j_{n_{s}-1}\le s$, so that $s$ belongs to the interval
$[N_{l_{s}},\infty )$ for each $s\ge 1$. Thus
\begin{align*}
\dfrac{\ \#\bigl\{0\le 
j\le 
s\,;\, \|P_{l_{n_{s}}}\,T^{\,j}P_{l_{n_{s}}}\,x\|\ge 
2CX_{l_{n_{s}}}\bigr\}}{s+1}&\ge\\
\inf_{k\ge N_{l_{s}}}&\dfrac{\ \#\bigl\{0\le 
j\le 
k\,;\, \|P_{l_{n_{s}}}\,T^{\,j}P_{l_{n_{s}}}\,x\|\ge 
2CX_{l_{n_{s}}}\bigr\}}{k+1}
\end{align*}
and 
\begin{align*}
 \underline{\vphantom{p}\textrm{dens}}\ \mathcal{N}_{T}
\bigl(x,B(0,\varepsilon )^c\bigr)&\ge 
\liminf_{s\to\infty }\ \inf_{k\ge N_{l_{s}}}\ \dfrac{\ \#\bigl\{0\le 
j\le 
k\,;\, \|P_{l_{n_{s}}}\,T^{\,j}P_{l_{n_{s}}}\,x\|\ge 
2CX_{l_{n_{s}}}\bigr\}}{k+1}\\
&\ge
\liminf_{l\to\infty }\ \inf_{k\ge N_{l}}\ \dfrac{\ \#\bigl\{0\le 
j\le 
k\,;\, \|P_{l}\,T^{\,j}P_{l}\,x\|\ge 
2CX_{l}\bigr\}}{k+1}
\end{align*}
since $l_{n_s}\to\infty$ as $s\to\infty$. This proves the 
first part of Lemma \ref{Theorem 48}. 
\par\smallskip
As for the second part, we proceed 
in the same way, starting from the inequality
\[
 \overline{\textrm{dens}}\ \mathcal{N}_{T}\bigl(x,B(0,\varepsilon )^c\bigr)\ge 
\limsup_{s\to\infty }\ \dfrac{\ \#\bigl\{0\le 
j\le 
s\,;\, \|P_{l_{n_{s}}}\,T^{\,j}P_{l_{n_{s}}}\,x\|\ge 
2CX_{l_{n_{s}}}\bigr\}}{s+1}\cdot 
\]
Our first observation is that $\min(\varphi ^{-1}(l_{n}))\le l_{n+1}$ for 
every $\gn$. This follows, in a slightly roundabout way, from the fact 
that 
$P_{l_{n}}\,T^{j_{n}}P_{l_{n+1}}$ is non-zero. Indeed, the definition of $T$ 
implies that there exists an integer $i\ge 1$ such that 
$\varphi ^{i}(l_{n+1})=l_{n}$. Then $\varphi ^{i-1}(l_{n+1})$ belongs to 
$\varphi ^{-1}(l_{n})$, and since $\varphi ^{i-1}(l_{n+1})\le l_{n+1}$, 
it follows that $\min(\varphi ^{-1}(l_{n}))\le l_{n+1}$.
\par\smallskip 
We now claim that there exists a strictly increasing sequence 
 of integers $(s_i)_{i\ge 1}$ such that $s_i\ge N_{\min(\varphi ^{-1}
(l_{n_{s_i}}))}$ for every $i\ge 1$. Recall that 
$N_{l_{n_{i}+1}}<j_{n_{i}}$ for all $i\geq 1$. Thus there exists 
$s_i\in[j_{n_{i}-1},j_{n_{i}})$ such that $s_i\ge
N_{l_{n_{i}+1}}$. As $s_i$ belongs to the interval 
$[j_{n_{i}-1},j_{n_{i}})$, we have $n_{s_i}\le n_{i}$, and thus $s_i
\ge N_{l_{n_{s_i}+1}}\ge N_{\min(\varphi ^{-1}(l_{n_{s_i}}))}$. 
Extracting if necessary in order to make the sequence $(s_i)_{i\ge 1}$ 
strictly increasing, we obtain the sequence we are looking for.
\par\smallskip 
We now have
\begin{align*}
  \overline{\textrm{dens}}\ \mathcal{N}_{T}
\bigl(x,& B(0,\varepsilon )^c\bigr)\ge 
\liminf_{i\to\infty }\ \dfrac{\ \#\bigl\{0\le 
j\le 
s_i\,;\, \|P_{l_{n_{s_{i}}}}\,T^{\,j}P_{l_{n_{s_i}}}\,x\|\ge 
2CX_{l_{n_{s_i}}}\bigr\}}{s_i+1}\\
&\ge 
\liminf_{n\to\infty }\ \ \inf_{k\ge N_{\min(\varphi 
^{-1}(l_{n}))}}\dfrac{\ \#\bigl\{0\le 
j\le 
k\,;\, \|P_{l_{n}}\,T^{\,j}P_{l_{n}}\,x\|\ge 
2CX_{l_{n}}\bigr\}}{k+1}
\\
&\ge 
\liminf_{l\to\infty }\ \ \inf_{k\ge N_{\min(\varphi 
^{-1}(l))}}\dfrac{\ \#\bigl\{0\le 
j\le 
k\,;\, \|P_{l}\,T^{\,j}P_{l}\,x\|\ge 
2CX_{l}\bigr\}}{k+1}\cdot
\end{align*}
This concludes the proof of Lemma \ref{Theorem 48}.
\end{proof}
\par\smallskip

\subsubsection{How to check the assumptions} The assumptions of Theorem \ref{Theorem 49} are partly of 
an ``abstract" nature, since they involve the projections $P_n$ and not explicitly the parameters $v$, $w$, $\varphi $,
and $b$ defining $T$. We now provide concrete conditions on $v$, $w$, $\varphi $,
and $b$ which imply that the 
operator of \cct\ $\tvw$ 
satisfies the assumptions of Theorem~\ref{Theorem 49}. 
\par\smallskip
As a rule, 
conditions (1), (2), and (3) will be obtained by requiring that the 
products of weights 
\[ 
|v_n|\,\cdot\,\sup_{j\in 
[b_{\varphi(n)},b_{\varphi(n)+1})}\ 
\prod_{s=b_{\varphi(n)}+1}^{j}|w_{s}|
\]
decrease sufficiently rapidly and by setting, given a hypercyclic vector 
$x$ for $\tvw$,
\[
X_l:=\Bigl\|\sum_{k=b_l}^{b_{l+1}-1}\Bigl(\prod_{s=k+1}^{b_{l+1}-1}
w_s\Bigr)\,x_ke_k\Bigr\|,\quad l\ge 0.
\]

More precisely, we have the following fact, which generalizes Claims 5 and 6 in \cite{Me}.

\begin{fact}\label{Proposition 50}
 Let $T=\tvw$ be an operator of \cct\ on $\ell_{p}(\N)$, and  
 let $(C_{n})_{n\ge 
0}$ be a sequence of positive numbers with $0<C_n<1$. Assume that 
\[
|v_n|\ .\sup_{j\in 
[b_{\varphi(n)},b_{\varphi(n)+1})}\ \Bigl(
\prod_{s=b_{\varphi(n)}+1}^{j}|w_{s}|\Bigr)\le C_{n}\quad\hbox{  for every $n\ge 0$. }
\]
 Then, for any $x\in\ell_p(\N)$, 
we have for every $l\ge 1$ and 
every $0\le n<l$,
\begin{enumerate}
 \item [\rm {(1)}]$\quad
 {\ds\sup_{j\ge 0}\ \|P_{n}T^{\,j}P_{l}\,x\|\le 
C_{l}\,(b_{l+1}-b_{l})^{\frac{p-1}{p}}\ \Bigl\|\sum_{k=b_{l}}^{b_{l+1}-1}
\Bigl(\prod_
{s=k+1}^{b_{l+1}-1}w_{s}\Bigr)\,x_{k}e_{k}\Bigr\|}
$
\par\smallskip \noindent and
\item[\rm {(2)}]
$
\quad {\ds\sup_{j\le N}\|P_{n}T^{\,j}P_{l}\,x\|\le 
C_{l}\,(b_{l+1}-b_{l})^{\frac{p-1}{p}}\Bigl(\ \sup_{b_{l+1}-N\le 
k<b_{l+1}}\prod_{s=k+1}^{b_{l+1}-1}|w_s|\Bigr)\|P_{l}\,x\|}
$
\par\smallskip \noindent for every $1\le N\le b_{l+1}-b_{l}$.
\end{enumerate}
\end{fact}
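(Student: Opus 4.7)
My plan is to decompose $T^{j} P_{l} x$ via a splitting of the restriction $T|_{E_{l}}$ into its ``in-block'' and ``escape'' parts, so that the contribution of $P_{l} x$ to $E_{n}$ through bridges becomes explicit. First I set $U_{l} := P_{l} T P_{l} : E_{l} \to E_{l}$ (the signed cyclic shift in block $l$, which satisfies $U_{l}^{\Delta_{l}} = -\mathrm{id}$ on $E_{l}$ with $\Delta_{l}:=b_{l+1}-b_{l}$) and $B_{l} := (I - P_{l}) T P_{l} : E_{l} \to E_{\varphi(l)}$, a rank-one operator that reads off the coefficient at $e_{b_{l+1}-1}$ and multiplies it by $v_{l}$ to place it at $e_{b_{\varphi(l)}}$. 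A straightforward induction on $j$ will establish
\[
T^{j} P_{l} x \;=\; U_{l}^{j} P_{l} x \;+\; \sum_{t=1}^{j} T^{\,j-t} B_{l} U_{l}^{\,t-1} P_{l} x.
\]
Since $U_{l}^{j} P_{l} x \in E_{l}$ and $n < l$, applying $P_{n}$ kills the first term. Writing $P_{l} x = \sum_{i_{0}=0}^{\Delta_{l}-1} x_{b_{l}+i_{0}} e_{b_{l}+i_{0}}$ and $\tilde{x}_{b_{l}+i_{0}} := x_{b_{l}+i_{0}} \prod_{s=b_{l}+i_{0}+1}^{b_{l+1}-1} w_{s}$, a direct computation should show that $B_{l} U_{l}^{\,t-1} P_{l} x$ is nonzero precisely when $t=(q+1)\Delta_{l}-i_{0}$ for some integer $q\ge 0$ and $i_{0}\in[0,\Delta_{l})$, in which case it equals $v_{l}(-1)^{q}\tilde{x}_{b_{l}+i_{0}}\,e_{b_{\varphi(l)}}$.

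The decisive observation will be that $T^{\Delta_{l}} e_{b_{\varphi(l)}} = e_{b_{\varphi(l)}}$: by Fact~\ref{Proposition 44}, the vector $e_{b_{\varphi(l)}}$ has period under $T$ dividing $2(b_{\varphi(l)+1}-b_{\varphi(l)})$, which in turn divides $\Delta_{l}$ by the hypothesis on $(b_{n})$. Consequently $T^{\,j-t} e_{b_{\varphi(l)}}$ depends only on $(j+i_{0})\bmod \Delta_{l}$ and not on $q$, and the inner sum $\sum_{q}(-1)^{q}$ over admissible $q$'s (those with $(q+1)\Delta_{l}\le j+i_{0}$) collapses to a value $\varepsilon_{j,i_{0}} \in \{0,1\}$. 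This yields the key identity
\[
P_{n} T^{\,j} P_{l} x \;=\; v_{l} \sum_{i_{0}=0}^{\Delta_{l}-1} \tilde{x}_{b_{l}+i_{0}}\, \varepsilon_{j,i_{0}}\, P_{n} T^{\,(j+i_{0})\bmod \Delta_{l}} e_{b_{\varphi(l)}},
\]
a sum of at most $\Delta_{l}$ terms, bounded uniformly in $j$. To control the factors $\|P_{n} T^{a} e_{b_{\varphi(l)}}\|$, I set $M := |v_{l}| \sup_{a\ge 0}\|P_{n} T^{a} e_{b_{\varphi(l)}}\|$ and argue $M \le C_{l}$ by iterating the same decomposition starting from $e_{b_{\varphi(l)}}$: with $l_{0}=l$, $l_{i+1}=\varphi(l_{i})$, $l_{R}=n$, each iteration produces a factor that, after regrouping, pairs $|v_{l_{i-1}}|$ with the full intra-block product $\prod_{s=b_{l_{i}}+1}^{b_{l_{i}+1}-1}|w_{s}|$, bounded by $C_{l_{i-1}}$ via the hypothesis applied at the index $l_{i-1}$ (with $j=b_{l_{i}+1}-1$ in the supremum). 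The last step uses $|v_{l_{R-1}}|\prod_{s=b_{n}+1}^{b_{n}+r}|w_{s}| \le C_{l_{R-1}}$ for $r \in [0,\Delta_{n})$, and chaining gives $M \le C_{l}\prod_{i=1}^{R-1} C_{l_{i}} \le C_{l}$ since each $C_{l_{i}}<1$.

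For assertion (1), combining the identity with $M \le C_{l}$ yields $\|P_{n} T^{\,j} P_{l} x\| \le C_{l} \sum_{i_{0}=0}^{\Delta_{l}-1} |\tilde{x}_{b_{l}+i_{0}}|$, and Hölder's inequality on the $\Delta_{l}$ summands then produces the claimed bound $C_{l}(b_{l+1}-b_{l})^{(p-1)/p}\cdot\bigl\|\sum_{k=b_{l}}^{b_{l+1}-1}\bigl(\prod_{s=k+1}^{b_{l+1}-1}w_{s}\bigr)x_{k}e_{k}\bigr\|$. For (2), when $j \le N \le \Delta_{l}$ only the summand $q=0$ can be admissible, forcing $i_{0} \ge \Delta_{l}-j$, so that $k_{0}:=b_{l}+i_{0}$ lies in $[b_{l+1}-j, b_{l+1}) \subseteq [b_{l+1}-N, b_{l+1})$; replacing $|\tilde x_{k_{0}}|$ by $|x_{k_{0}}|\cdot\sup_{k\in[b_{l+1}-N,b_{l+1})}\prod_{s=k+1}^{b_{l+1}-1}|w_{s}|$ and applying Hölder over the at most $N$ nonzero terms will deliver assertion (2). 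The main obstacle is the collapsing trick in paragraph two: without the observation $T^{\Delta_{l}}e_{b_{\varphi(l)}}=e_{b_{\varphi(l)}}$, the sum over admissible $q$'s grows with $j$ and a uniform-in-$j$ bound looks unreachable.
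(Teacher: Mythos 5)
Your proof is correct, and it establishes the same iterated reduction
\[
P_{n}T^{j}P_{l}x \;\rightsquigarrow\; P_{n}T^{\,\cdot}\,e_{b_{\varphi(l)}} \;\rightsquigarrow\; \cdots \;\rightsquigarrow\; P_{n}T^{\,\cdot}\,e_{b_{n}}
\]
that the paper uses, with the same regrouping of $|v_{l_{i-1}}|$ with $\prod_{s>b_{l_{i}}}|w_{s}|$ so that each factor is bounded by $C_{l_{i-1}}$ and the telescoping collapses to $C_{l}$. What differs is how you establish the reduction step itself. The paper fixes $k\in[b_{l},b_{l+1})$, computes $T^{j_{0}}e_{k}$ directly for $j_{0}<2(b_{l+1}-b_{l})$ by splitting into three ranges of $j_{0}$, observes that $P_{n}T^{j_{0}}e_{k}$ is either $0$ or a controlled multiple of $T^{\,\cdot}e_{b_{\varphi(l)}}$, and then invokes the period $2(b_{l+1}-b_{l})$ of $e_{k}$ (Fact~\ref{Proposition 44}) to get a uniform-in-$j$ bound. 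You instead split $T|_{E_{l}}$ into the signed cyclic shift $U_{l}$ plus the rank-one escape $B_{l}$, expand $T^{j}P_{l}$ as a Duhamel-type sum, and collapse the resulting alternating series using the period $\Delta_{l}$ of $e_{b_{\varphi(l)}}$ (which holds because $\Delta_{l}$ is a multiple of $2(b_{\varphi(l)+1}-b_{\varphi(l)})$). Both arguments exploit exactly the same periodicity structure; yours isolates the escape mechanism more cleanly and makes the ``at most one surviving term per residue class'' phenomenon transparent via the $\sum_{q}(-1)^{q}\in\{0,1\}$ collapse, while the paper's is more hands-on but shorter to write down. The final H\"older step and the derivation of assertion (2) (only $q=0$ can be admissible when $j\le N\le\Delta_{l}$, forcing $i_{0}\ge\Delta_{l}-j$) are sound and match what the paper obtains.
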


\begin{proof}
 Fix $l\ge 1$ and $0\le n<l$. We first remark that if $n\neq \varphi ^{M}
 (l)$ for all $M\ge 1$, the definition of $T$ implies that $P_{n}T^{\,j}
 P_{l}\,x=0$ for every $j\ge 0$ (this argument was already used at the end 
of the proof of Lemma \ref{Theorem 48}). So the required inequalities 
are obvious in this case.
\par\smallskip
Suppose now that $n=\varphi ^{M}(l)$ for some integer $M\ge 1$. In this case, we claim that
\begin{equation}\label{Equation 18}
 \sup_{j\ge 0}\ \|P_{n}T^{\,j} \,e_{k}\|\le |v_{l}| \
\Bigl(\prod_{s=k+1}^{b_{l+1}-1}|w_{s}|\Bigr)\ \sup_{j\ge 0}\ 
\|P_{n}T^{\,j} 
e_{b_{\varphi(l)}}\|
\end{equation}
for every $k\in[b_{l},b_{l+1})$. This inequality will allow us to run an 
induction procedure in order to estimate the quantity
$\sup\limits_{j\ge 0}\ \|P_{n}T^{\,j}P_l \,x\|$.
\par\smallskip
In order to prove (\ref{Equation 18}), we fix $j_0\geq 0$ and we consider separately several 
cases.

\par\smallskip \noindent 
 - Suppose first that $0\le j_0<b_{l+1}-k$. Then $T^{\,j_0}\,e_{k}$ is a 
scalar multiple of $e_{k+j_0}$, and $P_{n}T^{\,j_0}\,e_{k}=0$.
\par\smallskip \noindent
- Now, suppose that $b_{l+1}-k\le j_0<b_{l+1}-k+b_{l+1}-b_{l}$. Then
\begin{align*}
 P_{n}T^{\,j_0}\,e_{k}&=\smash[b]{P_{n}T^{\,j_0-(b_{l+1}-k)}\ \Bigl(v_{l}\,
\Bigl(\prod_{s=k+1}^{b_{l+1}-1}w_{s} \Bigr)\,e_{b_{\varphi (l)}}\Bigr),}
\intertext{so that}
\|P_{n}T^{\,j_0}\,e_{k}\|&\smash[t]{\le|v_{l}|\,\Bigl(\prod_{s=k+1}^{b_{l+1}
-1 }
|w_{ s}|\,\Bigr)
\,\sup_{j\ge 0}\ \|P_{n}T^{\,j}\,e_{b_{\varphi (l)}}\|.}
\end{align*}

- Finally, suppose that $b_{l+1}-k+b_{l+1}-b_{l}\le j_0<2(b_{l+1}-b_{l})$.  In this case, we write
\[T^{j_0}e_k=\Bigl(\prod_{s=b
_{l}+1}^{k}w_{s} \Bigr)^{-1}\, T^{j_0+k-b_l}e_{b_l}.
\]
By assumption, we have $2(b_{l+1}-b_l)\leq j_0+k-b_l< 2(b_{l+1}-b_l)+(b_{l+1}-b_l)$; that is, 
\[ j_0+k-b_l=i+2(b_{l+1}-b_l)\qquad\hbox{with $0\leq i<b_{l+1}-b_l$}.
\]
 Since $T^{\,2(b_{l+1}-b_{l})}\,e_{b_l}=e_{b_l}$, 
it follows that 
\[T^{j_0}e_k=\Bigl(\prod_{s=b
_{l}+1}^{k}w_{s} \Bigr)^{-1}\,T^{i} e_{b_l}=\Bigl(\prod_{s=b
_{l}+1}^{k}w_{s} \Bigr)^{-1}\Bigl(\prod_{s=b
_{l}+1}^{l+i}w_{s} \Bigr)\, e_{b_l+i}
\]
and hence that 
$P_{n}T^{\,j_0}\, e_{k}=0$.

\par\smallskip
We have thus proved the required inequality for every $0\le j_0<2(b_{l+1}-
b_{l})$. Since $T^{\,2(b_{l+1}-b_{l})}\,e_{k}=e_{k}$, it holds true in fact for 
every $j\ge 0$, and this proves (\ref{Equation 18}).
\par\smallskip
Let now $M_{0}$ be the minimum of the integers $M\ge 1$ such that $\varphi 
^{M}(l)=n $. We start from the straightforward estimate
\[
\sup_{j\ge 0}\ \|P_{n}T^{\,j}P_{l}\,x \|\le 
\sum_{k=b_{l}}^{b_{l+1}-1}|x_{k}|\ \sup_{j\ge 0}\|P_{n}T^{\,j}\,e_{k}\|
\]
and apply (\ref{Equation 18}). This gives
\begin{align*}
\sup_{j\ge 0}\ \|P_{n}T^{\,j}P_{l}\,x\|&\le
 \sum_{k=b_l}^{b_{l+1}-1}|x_{k}|\,\cdot\,|v_{l}| \,\cdot\,
\Bigl(\prod_{s=k+1}^{b_{l+1}-1}|w_s|\Bigr)\ \sup_{j\ge 0}\ \|P_{n}T^{\,j} 
e_{b_{\varphi(l)}}\|\\
&= |v_{l}|\,\cdot\,  \sup_{j\ge 0}\ \|P_{n}T^{\,j}\, 
e_{b_{\varphi(l)}}\|\,\cdot\, 
\Bigl\|\sum_{k=b_{l}}^{b_{l+1}-1}\Bigl(\prod_{s=k+1}^{b_{l+1}-1}w_{s} 
\Bigr)x_{k} e_{k}\Bigr\|_1,
\end{align*}
where $\Vert z\Vert_1$ denotes the $\ell^{\,1}$ norm of a vector $z\in c_{00}$.
\par\smallskip
{By induction, we obtain}
\begin{align*}
\sup_{j\ge 0}\ \|P_{n}T^{\,j}P_{l}\,x\|&\le  |v_{l}| \,\cdot\,
\prod_{r=1}^{M_{0}-1}\Big(|v_{\varphi^r(l)}|\ . 
\prod_{s=b_{\varphi ^{r}(l)}+1}^{b_{
\varphi ^{r}(l)+1}-1}|w_{s}|\Big)\ .\\
&\hspace{3cm}\sup_{j\ge 0}\ \|P_{n}T^{\,j}\, 
e_{b_{n}}\|\,\cdot\, 
\Bigl\|\sum_{k=b_{l}}^{b_{l+1}-1}\Bigl(\prod_{s=k+1}^{b_{l+1}-1}
w_{s}\Bigr)x_{k}e_{k}\Bigr\|_1\\
&=\prod_{r=1}^{M_{0}-1}\Big(|v_{\varphi^{r-1}(l)}|\prod_{s=b_{\varphi^r(l)}+1
} ^{
b_{\varphi^r(l)+1}-1}|w_{s}|\ \Big)\,\cdot\,|v_{\varphi^{M_{0}-1}(l)}|\\
&\hspace{3cm}\sup_{j\ge 
0}\ \|P_{n}T^{\,j} 
e_{b_{n}}\|\,\cdot\, 
\Bigl\|\sum_{k=b_{l}}^{b_{l+1}-1}\Bigl(\prod_{s=k+1}^{b_{l+1}-1}
w_{s}\Bigr)\,x_{k}e_{k}\Bigr\|_1.
\end{align*}

{Applying the assumption of Fact \ref{Proposition 50}, it follows that}
\begin{align*}
\sup_{j\ge 0\ }\ \|P_{n}T^{\,j}P_{l}\,x\|&\le 
\Bigl(\prod_{r=1}^{M_{0}-1}C_{\varphi^{r-1}(l)}\Bigr)\,\cdot\,|v_{\varphi
^{M_{0}-1}(l)}|\,\cdot\,\sup_{j\ge 0}\ \|P_{n}T^{\,j}\,e_{b_{n}}\|\,\cdot
\\
&\hspace{3cm}
\Bigl\|\sum_{k=b_{l}}^{b_{l+1}-1}\Bigl(\prod_{s=k+1}^{b_{l+1}-1}
w_{s}\Bigr)x_{k}e_{k}\Bigr\|_{1}.
\end{align*}

{Now, the definition of $T$ and the fact that 
$T^{\,2(b_{n+1}-b_{n})}\,e_{b_{n}}=e_{b_{n}}$ show that}
\begin{align*}
\sup_{j\ge 0\ }\ 
\|P_{n}T^{\,j}\,e_{b_{n}}\|&\le\smash[t]{\sup_{j\in[b_{n},b_{n+1})}
\ \prod_{s=b_{n}+1}^{j}|w_{s}|.}
\end{align*}

Since $n=\varphi ^{M_{0}}(l)$, it follows (by the assumption of Fact \ref{Proposition 50}), that 
\begin{align*}
 \sup_{j\ge 0}\ \|P_{n}T^{\,j}P_{l}\,x\|&\le
\Bigl(\prod_{r=1}^{M_{0}-1}C_{\varphi^{r-1}(l)}\Bigr)\,\cdot\,
C_{\varphi^{M_{0}-1 } (l) }\,\cdot\,
\Bigl\|\sum_{k=b_l}^{b_{l+1}-1}\Bigl(\prod_{s=k+1}^{b_{l+1}-1}
w_{s}\Bigr)\,x_{k}e_{k}\ \Bigr\|_{1}\\
&\le 
C_{l}\,\cdot\,\Bigl\|\sum_{k=b_{l}}^{b_{l+1}-1}\Bigl(\prod_{s=k+1}^{b_{l+1}-1}
w_{s}\Bigr)\,x_{k}e_{k}\Bigr\|_{1},
\end{align*}
{because all the constants $C_{n}$ are supposed to belong to 
$(0,1)$ and $\varphi ^{0}(l)=l$. By H\"older's inequality, we conclude that }
 \begin{align*}
 \sup_{j\ge 0}\ \|P_{n}T^{\,j}P_{l}\,x\|
&\le 
C_{l}\,\cdot\,(b_{l+1}-b_{l})^{1-\frac1{p}}\,\cdot\,\Bigl\|
\sum_{k=b_{l}}^{b_{l+1} -1 } \Bigl(
\prod_{s=k+1}^{b_{l+1}-1}w_{s}\Bigr)\,x_{k}e_{k}\Bigr\|,
\end{align*}
which proves assertion (1) of Fact \ref{Proposition 50}.
\par\smallskip 
The proof of assertion (2) is now straightforward. Indeed, we have for every 
$1\le N\le b_{l+1}-b_{l}$ and every $0\le j\le N$:
\[ 
T^{\,j}P_{l}\,x=\smash[t]{T^{\,j}\,\Bigl(\sum_{k=b_{l}}^{b_{l+1}-N-1}x_{k}
e_{k} 
\ \Bigr)+T^{\,j}\,\Bigl(\sum_{k=b_{l+1}-N}^{b_{l+1}-1}x_{k}e_{k} 
\ \Bigr).}
\]
{Since the first term belongs to the linear span of the vectors 
$e_{i}$, $b_{l}\le i<b_{l+1}$, it follows that }
\[
P_{n}T^{\,j}P_{l}\,x=\smash[t]{P_{n}T^{\,j}\,\Bigl(\sum_{k=b_{l+1}-N}^{b_{
l+1}-1}x_{
k}e_{k} 
\ \Bigr),}
\]
and so, by the already proved assertion (1) of Fact \ref{Proposition 50},
\begin{align*}
\|P_{n}T^{\,j}P_{l}\,x\|&\le 
C_{l}\,\cdot\,(b_{l+1}-b_{l})^{1-\frac1{p}}\,\cdot\,\Bigl\|
\sum_{k=b_{l+1}-N}^{b_{ l+1 } -1 }
\Bigl(\prod_{s=k+1}^{b_{l+1}-1}w_{s}\Bigr)\,x_{k}e_{k}
\Bigr\|\\
&\le C_{l}\,\cdot\,(b_{l+1}-b_{l})^{1-\frac1{p}}\,\cdot\,\sup_{b_{l+1}-N\le 
k<b_{l+1}}\Bigl(\prod_{s=k+1}^{b_{l+1}-1}|w_{s}\,|\Bigr)\,\cdot\,\|P_{l}\,x\|.
\end{align*}

This concludes the proof of Fact \ref{Proposition 50}.
\end{proof}

The next fact provides conditions 
on the parameters of an operator of \cct\,  ensuring that assumption (C) or 
(C') of Theorem \ref{Theorem 49} is satisfied. This generalizes Claim 7 of 
\cite{Me}.
\begin{fact}\label{Proposition 51}
 Let $T=\tvw$ be an operator of \cct\ on $\ell_{p}(\N)$,
  and let $x\in
 \ell_p(\N)$.  
 Fix $l\ge 0$, and define
 \[
X_{l}=\Bigl\|\sum_{k=b_{l}}^{b_{l+1}-1}\Bigl(\prod_{s=k+1}^{b_{l+1}-1}
w_{s}\Bigr)\,x_{k}e_{k}\Bigr\|.
\]
 Suppose that there exist two integers $0\le 
k_{0}<k_{1}\le b_{l+1}-b_{l}$ such that 
\[
|w_{b_{l}+k}|=1\quad\textrm{for every}\ 
k\in(k_{0},k_{1})\quad\textrm{and}\ 
\ds\prod_{s=b_l+k_{0}+1}^{b_{l+1}-1}|w_{s}|=1.
\]
Then we have for every $J\ge 0$
\begin{align*}
 &\dfrac{1}{J+1}\ \#\Bigl\{0\le j\le J\,;\, \|P_{l}T^{\,j}P_{l}\,x\|\ge 
X_{l}/2\Bigr\}\\
 &\hspace{3cm}\ge 
1-2\bigl( b_{l+1}-b_l-(k_1-k_0)\bigr)\,\cdot\,\Bigl( \frac{1}{J+1}+\frac{1}{b_{
l+1}-b_{l}}\Bigr)\cdot
\end{align*}
\end{fact}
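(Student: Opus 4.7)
My plan proceeds in three steps: a reduction to a single weighted cyclic shift, an explicit norm formula for its iterates, and a Markov-type counting argument.

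\textbf{Step 1 (Reduction).} The block structure of $T$ gives $T(E_l)\subseteq E_l\oplus\mathbb{C}\,e_{b_{\varphi(l)}}$, and inductively $T^j(E_l)\subseteq E_l\oplus\bigoplus_{i\ge 1}E_{\varphi^i(l)}$, with every $E_{\varphi^i(l)}$ disjoint from $E_l$. Projecting kills all these extra components, so $P_l T^j P_l x = S_l^j(P_l x)$, where $S_l:=P_lT|_{E_l}$ is the weighted cyclic shift defined by $S_l e_{b_l+k}=w_{b_l+k+1}e_{b_l+k+1}$ for $0\le k\le N-2$ and $S_l e_{b_l+N-1}=\alpha\,e_{b_l}$, with $N:=b_{l+1}-b_l$ and $\alpha:=-\bigl(\prod_{s=b_l+1}^{b_{l+1}-1}w_s\bigr)^{-1}$.

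\textbf{Step 2 (Explicit formula).} Set $\sigma_m:=\prod_{s=1}^{m}|w_{b_l+s}|$ (with $\sigma_0=1$). A direct case split on whether $k+j<N$ or $k+j\ge N$ shows that $S_l^j e_{b_l+k}$ is a scalar multiple of $e_{b_l+((k+j)\bmod N)}$ with
\[
\|S_l^j\,e_{b_l+k}\|=\sigma_{(k+j)\bmod N}/\sigma_k\qquad(0\le k\le N-1,\ 0\le j\le N-1).
\]
Since distinct values of $k$ send $e_{b_l+k}$ to scalar multiples of \emph{distinct} basis vectors, the $\ell_p$-norm becomes additive:
\[
\|S_l^j(P_l x)\|^p=\sigma_{N-1}^p\sum_{k=0}^{N-1}u_k^p\,\rho_{(k+j)\bmod N}^{p},\qquad X_l^p=\sigma_{N-1}^p\|u\|_p^p,
\]
where $u_k:=|x_{b_l+k}|/\sigma_k$ and $\rho_m:=\sigma_m/\sigma_{N-1}$. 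The two hypotheses combine to give $\rho_m=1$ for every $m\in[k_0,k_1-1]$: the boundary case $m=k_0$ is exactly $\prod_{s=b_l+k_0+1}^{b_{l+1}-1}|w_s|=1$, and for $m\in(k_0,k_1-1]$, $\rho_m/\rho_{k_0}=\prod_{s=k_0+1}^{m}|w_{b_l+s}|=1$ by the first assumption.

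\textbf{Step 3 (Counting).} Define $G_j:=\{k\in[0,N-1]:(k+j)\bmod N\in[k_0,k_1-1]\}$, a set of size $k_1-k_0$. For $k\in G_j$ the contribution to the sum above is $u_k^p$ exactly; the other terms are $\ge 0$. Thus $\|S_l^j(P_lx)\|^p\ge \sigma_{N-1}^p\,M(j)$ with $M(j):=\sum_{k\in G_j}u_k^p$, and $\|P_lT^jP_lx\|\ge X_l/2$ whenever $M(j)\ge\|u\|_p^p/2^p$. Writing $s:=N-(k_1-k_0)$, each $k$ lies in $G_j$ for exactly $k_1-k_0$ values of $j\in[0,N-1]$, so $\sum_{j=0}^{N-1}M(j)=(N-s)\|u\|_p^p$. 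Using the trivial bound $M(j)\le\|u\|_p^p$ on the good indices, if $K$ is the number of $j\in[0,N-1]$ with $M(j)<\|u\|_p^p/2^p$ then
\[
(N-s)\|u\|_p^p\le K\cdot\|u\|_p^p/2^p+(N-K)\|u\|_p^p,
\]
hence $K(1-2^{-p})\le s$, and since $1-2^{-p}\ge 1/2$ for $p\ge 1$ we get $K\le 2s$. Finally, the identity $S_l^N=-I$ makes $j\mapsto\|S_l^j(P_l x)\|$ periodic with period dividing $N$, so the number of bad $j$'s in $[0,J]$ is at most $K\lceil(J+1)/N\rceil\le 2s\bigl(1+(J+1)/N\bigr)$; dividing by $J+1$ yields the stated inequality. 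The only real technical care lies in the wrap-around case of the norm formula in Step 2; everything else is bookkeeping.
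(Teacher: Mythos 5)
Your proof is correct, and it takes a genuinely different route from the paper's. The paper also reduces to the cyclic shift picture and observes that for good residues $(k+j)\bmod N\in[k_0,k_1)$ the weight products collapse; but from there it argues combinatorially: it introduces the complement sets $I_j$ (your $[0,N)\setminus G_j$), splits into two cases according to whether $\bigl\|\sum_{k\in I_j}\cdots\bigr\|$ ever reaches $X_l/2$, and in the non-trivial case fixes a worst index $j_0$ and counts, using the fact that each $I_j$ is a cyclic interval of length $s=N-(k_1-k_0)$, how many $j\in[0,J]$ can have $I_j\cap I_{j_0}\neq\emptyset$. Your argument replaces this pick-a-worst-$j_0$-and-count-overlaps step by a Markov averaging bound: you use the explicit $\ell_p$-additive norm formula for $S_l^j(P_lx)$, compute $\sum_{j=0}^{N-1}M(j)=(N-s)\|u\|_p^p$ together with the trivial bound $M(j)\le\|u\|_p^p$, and deduce at most $s/(1-2^{-p})\le 2s$ bad indices per period. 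Both arguments give exactly the constant $2s$, so the final inequality is identical. Your averaging argument is conceptually cleaner and does not rely on the $I_j$ being contiguous cyclic intervals, only on the shift-invariance of the double sum; the paper's argument avoids writing down the explicit norm formula.

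One small point to patch: in Step 3 you divide by $\|u\|_p^p$. You should dispose of the degenerate case $\|u\|_p=0$ (equivalently $P_lx=0$, hence $X_l=0$) separately — there $\|P_lT^jP_lx\|=0=X_l/2$ for all $j$, so the displayed fraction equals $1$ and the inequality holds trivially.
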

\begin{proof}
 Fix $j\ge 0$ and $k\in[b_{l},b_{l+1})$, and set $n:=j+k-b_{l}\;\mod (b_{l+1}-b_{l})$. Then
 \begin{align*}
P_{l}T^{\,j}\,e_{k}&=\pm\, 
\Bigl(\prod_{s=k+1}^{b_{l+1}-1}w_{s}\Bigr)\ \Bigl( 
\prod_{s=b_l+1}^{b_{l}+n}w_s\Bigr)\ \Bigl( 
\prod_{s=b_l+1}^{b_{l+1}-1}w_s\Bigr) ^{-1}
\ e_{
b_{l}+n } \\
&=  
\pm\,\Bigl( \prod_{s=k+1}^{b_{l+1}-1}w_{s}\Bigr)\ \Bigl( 
\prod_{s=b_{l}+n+1}^{b_{
l+1 } -1 } w_{s} \Bigr)^{-1} 
e_ {
b_{l}+n}
\end{align*}
(the plus or minus sign appearing in these equalities depends on the parity of the unique integer 
$s\ge 0$ such that $n$ belongs to the interval
$[s(b_{l+1}-b_{l}),(s+1)(b_{l+1}-b_{l}))$).
\par\medskip 
If we suppose that $n$ belongs to $[k_{0},k_{1})$, we have
\[
\prod_{s=b_{l}+n+1}^{b_{
l+1 } -1 } |w_{s}| =\prod_{s=b_l+k_{0}+1}^{b_{
l+1 } -1 } |w_{s}| =1,
\]
and thus there exists $\zeta _{j,k}\in\T$ such that 
\[
P_{l}T^{\,j}\,e_{k}=\zeta 
_{j,k}\,\cdot\,\Big(\prod_{s=k+1}^{b_{l+1}-1}w_{s}\Big)
\; e_{b_{l}+n }.
\]
Setting, for every $j\ge 0$,
\[
I_j:=\Bigl\{k\in [b_{l},b_{l+1})\,;\,j+k-b_{l} \mod (b_{l+1}-b_l)\ 
\textrm{does not belong to}\ 
[k_{0},k_{1})\Bigr\},
\]
we obtain that 
\[
P_{l}T^{\,j}P_{l}\,x=\sum_{k=b_{l}}^{b_{l+1}-1}x_{k}\,P_lT^{\,j}\,e_{k}=
  \sum_{k\,\in\,[b_{l},b_{l+1})\backslash I_{j}}x_{k}\,P_lT^{\,j}\,e_{k}+
  \sum_{k\,\in\, I_{j}}x_{k}\,P_lT^{\,j}\,e_{k}
\]
satisfies
\begin{align*}
 \|P_{l}T^{\,j}P_{l}\,x\|&\ge \Bigl\| 
\sum_{k\,\in\,[b_{l},b_{l+1})\backslash I_j}x_{k}\,P_lT^{\,j}\,e_{k}\,\Bigr\|=
\Bigl\| \sum_{k\,\in\,[b_{l},b_{l+1})\backslash I_j}x_{k}\,\zeta 
_{j,k}\,\Bigl(\prod_{s=k+1}^{b_{l+1}-1}w_{s} 
\Bigr)\,e_{k}\,\Bigr\|\\
&=\Bigl\| 
\sum_{k\,\in\,[b_{l},b_{l+1})\backslash I_j}x_{k}\,\Bigl(\prod_{s=k+1}^{b_{l+1}-1}w_{s} 
\Bigr)\,e_{k}\,\Bigr\|\ge X_{l}-\Bigl\| 
\sum_{k\,\in\,I_{j}}x_{k}\,\Bigl(\prod_{s=k+1}^{b_{l+1}-1}w_{s} 
\Bigr)\,e_{k}\,\Bigr\|.
\end{align*}

\smallskip
We now consider separately two cases:
\par\smallskip 
- Suppose first that $\ds\Bigl\| 
\sum_{k\,\in\,I_{j}}x_{k}\,\Bigl(\prod_{s=k+1}^{b_{l+1}-1}w_{s} 
\Bigr)\,e_{k}\,\Bigr\|<X_{l}/2$ for every $j\ge 0$. Then we have
$\|P_{l}T^{\,j}P_{l}\,x\|\ge X_{l}/2$ for every $j\ge 0$, and thus 
\[
\dfrac{1}{J+1}\ \#\bigl\{0\le j\le J\,;\,
\|P_{l}T^{\,j}P_{l}\,x\|\ge X_{l}/2\bigr\}=1\quad \textrm{for every}\ 
J\ge 0.
\]
\par\smallskip 
- Now, suppose that there exists $j_{0}\ge 0$ such 
that 
\[
\Bigl\| 
\sum_{k\,\in\,I_{j_{0}}}x_{k}\,\Bigl(\prod_{s=k+1}^{b_{l+1}-1}w_{s} 
\Bigr)\,e_{k}\,\Bigr\|\ge \dfrac{X_{l}}{2}\cdot
\]
Then we have for every integer $j\ge 0$ such that $I_{j}\cap 
I_{j_{0}}$ is empty:
\[
\|P_{l}T^{\,j}P_{l}\,x\|\ge\Bigl\| 
\sum_{k\,\in\,[b_{l},b_{l+1})\backslash I_j}\Bigl(\prod_{s=k+1}^{b_{l+1}-1}w_{s} 
\Bigr)x_{k}e_{k}\,\Bigr\|\ge
\Bigl\| \sum_{k\,\in\,
I_{j_{0}}}\Bigl(\prod_{s=k+1}^{b_{l+1}-1}w_{s} 
\Bigr)x_{k}e_{k}\,\Bigr\|\ge\dfrac{X_{l}}{2}\cdot
\]
It follows that, for every $J\ge 0$,
\begin{align}
 &\dfrac{1}{J+1}\ \#\Bigl\{0\le j\le J\,;\,
\|P_{l}T^{\,j}P_{l}\,x\|\ge X_{l}/2\Bigr\}\notag\\&
\hspace{5cm}\ge 1-\dfrac{1}{J+1}\ 
\#\Bigl\{0\le j\le J\,;\,
I_{j}\cap I_{j_{0}}\ne \emptyset\Bigr\}.\label{Equation 19}
\end{align}
Now, we remark that if we set $i_{j}:=j\mod(b_{l+1}-b_{j})$ for every $j\ge 
0$, we have
\[
I_{j}=
\begin{cases}
 \ \mathopen[b_{l}+k_{1}-i_{j},b_{l+1}\mathclose)\cup 
 \mathopen[b_{l},k_{0}-i_{j}\mathclose)& \quad\text{if}\ 0\le 
i_{j}<k_{0}\\
\ \mathopen[b_{l}+k_{1}-i_{j},b_{l+1}+k_{0}-i_{j}\mathclose)& 
\quad\text{if}\ k_{0}\le 
i_{j}\le k_{1}\\
\ \mathopen[b_{l},b_{l+1}+k_{0}-i_{j}\mathclose)\cup 
\mathopen[b_{l+1}+k_{1}-i_{j},b_{l+1}\mathclose)& \quad{\text{if}\ 
k_{1}<i_{j}<b_{l+1}-b_{l}}
\end{cases}
\]
and $\#I_{j}=b_{l+1}-b_{l}-(k_{1}-k_{0})$ for every $j\ge 0$. This 
particular structure of the sets $I_{j}$ implies that
\[
\#\ \Bigl\{0\le j\le J\,;\, I_{j}\cap I_{j_{0}}\ne \emptyset\Bigr\}\le
2(b_{l+1}-b_{l}-(k_{1}-k_{0}))\,\cdot\,\biggl(\,\biggl\lfloor\, \dfrac
{J}{b_{l+1}-b_{l}}\biggr\rfloor+1\, 
\biggr).
\]
Plugging this into (\ref{Equation 19}) yields that
\[
\dfrac{1}{J+1}\ \#\Bigl\{0\le j\le J\,;\,
\|P_{l}T^{\,j}P_{l}\,x\|\ge X_{l}/2\Bigr\}\ge 
1-2(b_{l+1}-b_{l}-(k_{1}-k_{0}))\Bigl(\dfrac{1}{J+1}+\dfrac{1}{b_{l+1}-b_{l
} } \Bigr),
\]
which is the inequality we were looking for.
\end{proof}
\par\smallskip
Facts~\ref{Proposition 50} and~\ref{Proposition 51} will be repeatedly used in Subsections~\ref{C+1type}, \ref{Subsection 4.5} and \ref{ex mixing} in order to exhibit the desired counterexamples.

\subsection{Operators of \cct: mixing or not mixing}\label{Subsection 4.4}
In this subsection, we give some conditions ensuring that an operator of \cct\ is or is not topologically mixing. This will allow us to  show in particular in Section \ref{SECTION+++} that the assumptions of our criterion 
for frequent hypercyclicity (Theorem \ref{Theorem 41}) do not imply the Operator Specification Property.

\begin{proposition}\label{Proposition mixing 0}
Let $T=\tvw$ be an operator of \cct\ on $\ell_{p}(\N)$.
 Suppose that for every $\eta>0$ and every $n\ge 0$, the set
\[S_{\eta,n}:=\bigcup_{m\in \mathcal{N}_{\eta,n}}\Big\{s\in [1,b_{m+1}-b_m)~:~ |v_m|\prod_{i=b_{m+1}-s}^{b_{m+1}-1}|w_{i}|>\frac{1}{\eta}\Big\}\quad\text{is cofinite,}\]
where $\mathcal{N}_{\eta,n}=\{m\in \varphi^{-1}(n)~:~|v_m|\prod_{i=b_m+1}^{b_{m+1}-1}|w_{i}|>\frac{1}{\eta}\}$.
Then $T$ is topologically mixing.
\end{proposition}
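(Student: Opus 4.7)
The plan is to apply Corollary~\ref{prop mixing simple} with $X_0 := \{e_k : k \ge 0\}$. The linear span of $X_0$ is $c_{00}$, which is dense in $\ell_p(\N)$, and by Fact~\ref{Proposition 44} every basis vector $e_k$ is periodic for $T$, so $\textrm{Prec}(T) \supseteq c_{00}$ is dense as well. It therefore suffices to verify the following approximation property: for every $k \ge 0$ and every $\varepsilon > 0$, there exists $N \ge 1$ such that for each $n \ge N$, one can find $z \in \ell_p(\N)$ with $\|z\| < \varepsilon$ and $\|T^n z - e_k\| < \varepsilon$.

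Fix $k \ge 0$ and $\varepsilon > 0$. Let $n_0$ be the unique index with $k \in [b_{n_0}, b_{n_0+1})$, and write $k = b_{n_0} + k'$. Pick $\eta > 0$ (to be fixed later); the hypothesis applied to $(\eta, n_0)$ yields an integer $N_0$ such that every $s \ge N_0$ lies in $S_{\eta, n_0}$, and we set $N := N_0 + k'$. Given $n \ge N$, let $s := n - k' \in S_{\eta, n_0}$; by definition, there exists $m \in \mathcal{N}_{\eta, n_0}$ with $s \in [1, b_{m+1}-b_m)$ and $|v_m|\prod_{i=b_{m+1}-s}^{b_{m+1}-1}|w_i| > 1/\eta$. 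Moreover, since $\varphi(m) = n_0$ and $T$ is of \cct, the block-length condition forces $b_{m+1} - b_m \ge 2(b_{n_0+1}-b_{n_0}) > 2k'$, so in particular $k' < b_{m+1}-b_m$.

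The candidate is $z := \alpha\, e_{b_{m+1}-s}$, where $\alpha$ is chosen so that the coefficient of $e_k$ in $T^n z$ equals $1$. The key computation tracks the orbit of $e_{b_{m+1}-s}$ under $T$ in three stages: the first $s-1$ iterates produce a scalar multiple of $e_{b_{m+1}-1}$; the $s$-th iterate applies the splitting formula in the definition of $T$ at the end of the block, producing a combination of $e_{b_{n_0}}$ and $e_{b_m}$ (using $\varphi(m)=n_0$); and the remaining $k'$ iterates move both summands forward within their respective blocks without wrapping, since $k' < b_{n_0+1}-b_{n_0}$ and $k' < b_{m+1}-b_m$. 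Collecting terms and simplifying telescopically, one obtains
\[\|z\| = \frac{1}{|v_m|\prod_{b_{m+1}-s<i<b_{m+1}}|w_i|\cdot\prod_{b_{n_0}<j\le k}|w_j|},\quad \|T^n z - e_k\| = \frac{1}{|v_m|\prod_{b_m+k'<j<b_{m+1}}|w_j|\cdot\prod_{b_{n_0}<j\le k}|w_j|}.\]

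Setting $M := \sup_j|w_j|<\infty$ and $c := \inf_j|w_j|>0$, the $\mathcal{N}_{\eta,n_0}$-condition yields $|v_m|\prod_{b_m+k'<j<b_{m+1}}|w_j| \ge (\eta M^{k'})^{-1}$, and the $S_{\eta,n_0}$-condition similarly yields $|v_m|\prod_{b_{m+1}-s<i<b_{m+1}}|w_i| \ge (\eta M)^{-1}$; combined with $\prod_{b_{n_0}<j\le k}|w_j| \ge c^{k'}$, these imply that both $\|z\|$ and $\|T^n z - e_k\|$ are bounded above by $\eta \cdot C(k)$ for some constant $C(k)$ depending only on $k$. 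Choosing $\eta < \varepsilon/C(k)$ therefore concludes the verification of the hypotheses of Corollary~\ref{prop mixing simple}. The main delicate point in the whole argument is the orbit computation above, which must be checked to yield the displayed formulae regardless of whether $s+k'$ is less than, equal to, or greater than $b_{m+1}-b_m$, and whose error term $e_{b_m+k'}$ is guaranteed to be a basis vector distinct from $e_k$ since $\varphi(m) < m$ forces $b_m > b_{n_0}$. Once this bookkeeping is settled the remaining estimates are routine.
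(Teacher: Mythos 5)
Your proof is correct and follows essentially the same route as the paper's own proof: both invoke Corollary~\ref{prop mixing simple}, exploit cofiniteness of $S_{\eta,n_0}$ to define for each large iterate a scalar multiple of a basis vector lying near the end of a block $[b_m,b_{m+1})$, apply the splitting formula when the orbit wraps around that block, and then estimate the two resulting scalar coefficients against $\eta$. The only difference is a shift of one in indexing (you start at $e_{b_{m+1}-s}$ with exponent $s+k'$, while the paper starts at $e_{b_{m+1}-s-1}$ with exponent $s+1+k-b_{n_0}$), which is immaterial.
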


\begin{proof}
Since $T$ has a dense set of periodic points,
Corollary~\ref{prop mixing simple} implies that it suffices to show that for every $k\ge 0$ and every $\varepsilon>0$, 
there exists an integer $M\ge 1$ such that for every $m\ge M$, there exists a vector $z\in \ell_{p}(\N)$ such that $\|z\|<\varepsilon$ and $\|T^mz-e_k\|<\varepsilon$.
\par\smallskip
Fix thus $k\ge 0$ and $\varepsilon>0$. Let also
 $n\ge 0$ be such that $k$ belongs to $ [b_n,b_{n+1})$, and let $\eta>0$ be a sufficiently small positive number, to be fixed later on in the proof.
By assumption, the set $S_{\eta,n}$ is cofinite. So it suffices to show that  if $\eta$ is small enough, then, for every $s\in S_{\eta,n}$, 
one can find a vector $z_{s}\in\ell_{p}(\N)$ such that 
\[\|z_{s}\|<\varepsilon\quad\text{and}\quad \|T^{s+1+k-b_n}z_{s}-e_k\|<\varepsilon.\]
Fix $s\in S_{\eta,n}$, so that $s$ belongs to $ [1,b_{m+1}-b_m)$ for some integer $m$ of $ \mathcal{N}_{\eta,n}$ and 
\[
|v_m|\prod_{i=b_{m+1}-s}^{b_{m+1}-1}|w_{i}|>\frac{1}{\eta}\cdot
\]
 We define a vector $z_{s}$ by setting
 \[z_{s}:=\frac{1}{v_m\prod_{j=b_n+1}^k w_j\prod_{j=b_{m+1}-s}^{b_{m+1}-1} w_j}\, e_{b_{m+1}-s-1}.
 \]
Then
\[\|z_{s}\|=\frac{1}{|v_m|\prod_{j=b_n+1}^k |w_j|\prod_{j=b_{m+1}-s}^{b_{m+1}-1} |w_j|}<\frac{\eta}{\prod_{j=b_n+1}^k |w_j|};\]
so that $\Vert z_{s}\Vert<\varepsilon$ for every $s\in S_{\eta,n}$ if $\eta$ is small enough. Moreover, 
since $\varphi(m)=n$ we also have
\[T^{s+1+k-b_n}z_{s}=e_k-\frac{\prod_{j=b_m+1}^{b_m+k-b_n}w_j}{v_m\prod_{j=b_m+1}^{b_{m+1}-1}w_{j}\prod_{j=b_n+1}^k w_j}e_{b_{m}+k-b_n},
\]
and hence (since $m$ belongs to $\mathcal N_{\eta,n}$)
\[\|T^{s+1+k-b_n}z_{s}-e_k\|=\Bigl\vert\frac{\prod_{j=b_m+1}^{b_m+k-b_n}w_j}{v_m\prod_{j=b_m+1}^{b_{m+1}-1}w_{j}\prod_{j=b_n+1}^k w_j}\Bigr\vert
< \frac{\prod_{j=b_m+1}^{b_m+k-b_n}|w_j|}{\prod_{j=b_n+1}^k |w_j|}\,\eta .\]
Since the sequence of weights $(w_{j})_{j\ge 1}$ is bounded by a positive constant $M$,
\[\|T^{s+1+k-b_n}z_{s}-e_k\|\le \dfrac{M^{b_{n+1}-b_n}}{\prod_{j=b_n+1}^k |w_j|}\,\eta ,\]
and the quantity on the right hand side does not depend on $s\in S_{\eta,n}$.
So
we also have $\Vert T^{s+1+k-b_n}z_{s}-e_k\Vert<\varepsilon$ for every $s\in S_{\eta,n}$ provided $\eta$ is small enough.
\end{proof}

\par\smallskip
For operators of \cpt , the statement of Proposition~\ref{Proposition mixing 0} can be 
slightly simplified.

\begin{corollary}\label{Cor mixing}
Let $T=\tvw$ be an operator of \cpt\ on $\ell_{p}(\N)$.
 Suppose that for every $\varepsilon>0$, the set
\[S_{\varepsilon}:=\bigcup_{k\in \mathcal{K}_{\varepsilon}}\Bigl\{n\in [1,\Delta^{(k)}[\; ;\;  |v^{(k)}|\prod_{i=\Delta^{(k)}-n}^{\Delta^{(k)}-1}|w_{i}^{(k)}|>\frac{1}{\varepsilon}\Bigr\}\quad\text{is cofinite,}\]
where $\mathcal{K}_{\varepsilon}=\{k\ge 1~:~|v_k|\prod_{i=1}^{\Delta^{(k)}-1}|w_{i}^{(k)}|>\frac{1}{\varepsilon}\}$.
Then $T$ is topologically mixing.
\end{corollary}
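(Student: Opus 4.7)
The plan is to reduce the statement directly to Proposition~\ref{Proposition mixing 0} by exploiting the special block structure of operators of \cpt. First, I would fix $\eta>0$ and $n\ge 0$, and try to describe the set $S_{\eta,n}$ of Proposition~\ref{Proposition mixing 0} in terms of the set $S_{\varepsilon}$ of the corollary (with $\varepsilon:=\eta$). For an operator of \cpt, recall that $\varphi^{-1}(n)=\{n+2^{k-1}\;;\;k\ge 1,\ 2^{k-1}>n\}$, and that for every $k\ge 1$ and every $m\in[2^{k-1},2^{k})$ one has $b_{m+1}-b_{m}=\Delta^{(k)}$, $v_{m}=v^{(k)}$ and $w_{b_{m}+j}=w_{j}^{(k)}$ for every $1\le j<\Delta^{(k)}$. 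A change of index $j=i-b_{m}$ then yields
\[|v_{m}|\prod_{i=b_{m}+1}^{b_{m+1}-1}|w_{i}|=|v^{(k)}|\prod_{j=1}^{\Delta^{(k)}-1}|w_{j}^{(k)}|\]
and, for $1\le s<\Delta^{(k)}$,
\[|v_{m}|\prod_{i=b_{m+1}-s}^{b_{m+1}-1}|w_{i}|=|v^{(k)}|\prod_{j=\Delta^{(k)}-s}^{\Delta^{(k)}-1}|w_{j}^{(k)}|.\]
Hence $\mathcal{N}_{\eta,n}=\{n+2^{k-1}\;;\;k\in\mathcal{K}_{\eta},\ 2^{k-1}>n\}$ and
\[S_{\eta,n}=\bigcup_{\substack{k\in\mathcal{K}_{\eta}\\ 2^{k-1}>n}}\Bigl\{s\in[1,\Delta^{(k)})\;;\;|v^{(k)}|\prod_{j=\Delta^{(k)}-s}^{\Delta^{(k)}-1}|w_{j}^{(k)}|>\tfrac{1}{\eta}\Bigr\}.\]

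Next I would compare $S_{\eta,n}$ with $S_{\eta}$ as defined in the corollary. The only difference is that in $S_{\eta,n}$ one excludes the contribution of the finitely many indices $k\in\mathcal{K}_{\eta}$ with $2^{k-1}\le n$. Each such contribution is a subset of the finite interval $[1,\Delta^{(k)})$, so $S_{\eta}\setminus S_{\eta,n}$ is a finite subset of $\N$. Consequently, if $S_{\eta}$ is cofinite (which is precisely the assumption of the corollary applied with $\varepsilon=\eta$), then $S_{\eta,n}$ is cofinite as well.

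Thus the hypothesis of Proposition~\ref{Proposition mixing 0} is satisfied, and $T$ is topologically mixing. The argument is entirely a matter of unwinding definitions and applying the block-periodicity of the parameters, so there is no real obstacle; the only thing to double-check is the index shift that converts the products along $[b_{m+1}-s,b_{m+1}-1]$ into products of the ``model'' weights $w_{j}^{(k)}$ with $j\in[\Delta^{(k)}-s,\Delta^{(k)}-1]$.
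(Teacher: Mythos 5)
Your proof is correct and is exactly the unwinding of definitions that the paper has in mind: the paper's own proof is a one-line remark that the corollary follows immediately from Proposition~\ref{Proposition mixing 0} and the special form of $\varphi$, $v$, $w$, $b$ for operators of \cpt. Your index shift $j=i-b_m$, the identification $\varphi^{-1}(n)=\{n+2^{k-1}: 2^{k-1}>n\}$, and the observation that $S_\eta\setminus S_{\eta,n}$ is a finite union of finite intervals are precisely the details the paper leaves to the reader, and they check out.
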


\begin{proof} This follows immediately from the special structure of \cpt\ operators and the definition of the map $\varphi$ in this case.
\end{proof}

\par\smallskip
We now use Fact~\ref{Proposition 50} in order to formulate sufficient conditions for an \op\ of \cct\ to be \emph{not} topologically mixing.

\begin{proposition}\label{Proposition no mixing}
 Let $T=\tvw$ be a \cct\ operator on $\ell_{p}(\N)$. 
 Let $(C_{n})_{n\ge 
0}$ be a sequence of positive numbers with $0<C_n<1$ for every $n\ge 0$. Assume that 
\[
|v_n|\,\cdot \sup_{j\in 
[b_{\varphi(n)},b_{\varphi(n)+1})}\ \Bigl(
\prod_{s=b_{\varphi(n)}+1}^{j}|w_{s}|\Bigr)\le C_{n}\quad\hbox{  for every $n\ge 0$,}
\]
and that there exists a positive constant $K$ such that for infinitely many integers $n\ge 0$, we have
\begin{equation}\label{nonmix}
\sum_{l>n}
C_{l}\,(b_{l+1}-b_{l})^{1-\frac1p}\Bigl(\ \sup_{b_{l+1}-2(b_{n+1}-b_n)\le 
i<b_{l+1}}\prod_{s=i+1}^{b_{l+1}-1}|w_s|\Bigr)\leq K.
\end{equation} 
Then $T$ is not topologically mixing.
\end{proposition}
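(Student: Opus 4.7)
The plan is to exhibit non-empty open sets $U$ and $V$ in $\ell_p(\N)$ together with an unbounded sequence of integers $(j_n)$ for which $T^{j_n}(U)\cap V=\emptyset$. A natural choice is $V:=B(e_0,1/2)$, so that every $y\in V$ satisfies $\|P_0y\|>1/2$, and $U:=B(0,\varepsilon)$ for an $\varepsilon>0$ to be fixed; with these choices, $T^j(U)\cap V=\emptyset$ is implied by the estimate $\varepsilon\|P_0T^j\|<1/2$. The task thus reduces to showing that $\|P_0T^j\|$ remains bounded along an unbounded sequence of $j$'s.

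Fix $n$ for which \eqref{nonmix} holds, and set $N:=2(b_{n+1}-b_n)$. For any $x\in\ell_p(\N)$ split $x=P_{\leq n}x+\sum_{l>n}P_l x$. For the tail part, applying Fact~\ref{Proposition 50}(2) (with first Fact-50 index equal to $0$) to each $l>n$ and summing gives
\[
\sum_{l>n}\|P_0T^jP_lx\|\leq \sum_{l>n}C_l(b_{l+1}-b_l)^{1-\frac1p}\Bigl(\sup_{b_{l+1}-N\leq i<b_{l+1}}\prod_{s=i+1}^{b_{l+1}-1}|w_s|\Bigr)\|P_lx\|\leq K\|x\|
\]
for every $j\leq N$, by the assumption \eqref{nonmix} combined with the bound $\|P_lx\|\leq\|x\|$. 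For the ``head'' $P_{\leq n}x$, I would use that the subspace $F_n:=\bigoplus_{l\leq n}E_l$ is $T$-invariant (since the only non-local terms in the definition of $T$ send $e_{b_{l+1}-1}$ into $E_{\varphi(l)}\subseteq F_{l-1}$), and that by Fact~\ref{Proposition 44} the restriction $T|_{F_n}$ has finite order, so that $D_n:=\sup_j\|T^j|_{F_n}\|$ is finite and $\|P_0T^jP_{\leq n}x\|\leq D_n\|x\|$.

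The delicate step, and the main obstacle, is to select for each $n$ satisfying \eqref{nonmix} a specific exponent $j_n\leq N$ such that $\|T^{j_n}|_{F_n}\|$ is controlled by a constant \emph{independent of $n$}. The natural candidate is $j_n:=2(b_{n+1}-b_n)$, on which $T$ acts as the identity on the top block $E_n$ by Fact~\ref{Proposition 44}; the divisibility hypothesis $2(b_{\varphi(l)+1}-b_{\varphi(l)})\mid b_{l+1}-b_l$ propagates this identification through the $\varphi$-chain down to the lower blocks, so that $T^{j_n}|_{F_n}$ may be estimated block by block using the standing hypothesis $|v_l|\sup_{j\in[b_{\varphi(l)},b_{\varphi(l)+1})}\prod_{s=b_{\varphi(l)}+1}^{j}|w_s|\leq C_l<1$. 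Granting such a uniform bound $D$, one obtains $\|P_0T^{j_n}\|\leq D+K$ for every $n$ from \eqref{nonmix}; choosing $\varepsilon:=1/(2(D+K))$ then gives $T^{j_n}(U)\cap V=\emptyset$ for infinitely many $n$, and since $j_n\to\infty$, this refutes topological mixing.
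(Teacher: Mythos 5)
Your overall strategy is the same as the paper's: fix $n$ satisfying \eqref{nonmix}, set $j_n := 2(b_{n+1}-b_n)$, split $x$ into a head $P_{\le n}x$ and a tail $\sum_{l>n}P_l x$, bound $\sum_{l>n}\|P_0 T^{j_n} P_l x\|\le K\|x\|$ using Fact~\ref{Proposition 50}(2) and condition \eqref{nonmix}, and deduce that $\|P_0 T^{j_n}x\|$ stays bounded, so that an appropriate pair of open sets witnesses the failure of topological mixing (your $U,V$ differ from the paper's $B(0,1/(1+K))$, $B(3e_0,1)$ only cosmetically). The tail estimate is correct as written.

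The place where your argument stays open is the head: you flag it yourself as ``the delicate step'', and your suggested resolution (``estimated block by block using the standing hypothesis $|v_l|\cdots\le C_l<1$'') is not the right move and would not obviously converge to a bound independent of $n$. What closes the gap --- and what the paper does --- is the much stronger, purely algebraic observation that $T^{\,2(b_{n+1}-b_n)}$ is \emph{exactly the identity} on $F_n$: by Fact~\ref{Proposition 44}, the period of each $e_k$ with $k\in[b_m,b_{m+1})$ divides $2(b_{m+1}-b_m)$, so once one knows $2(b_{m+1}-b_m)\mid 2(b_{n+1}-b_n)$ for every $m\le n$, every $e_k$ with $k<b_{n+1}$ is fixed by $T^{j_n}$, hence $P_0 T^{j_n}P_{\le n}x = P_0 x$ with no norm loss at all; there is nothing to estimate with the $C_l$'s. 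This divisibility chain is what you should commit to: for the \cpt\ (hence \cput\ and \ccut) operators to which the proposition is applied, $\Delta^{(k')}\mid\Delta^{(k)}$ for $k'\le k$ follows by iterating $2\Delta^{(k-1)}\mid\Delta^{(k)}$, which in turn comes from $\varphi(2^k-1)=2^{k-1}-1$. Had you stated and used this, the head contribution would be $\|P_0x\|\le\|x\|$, and the rest of your argument would go through with $D=1$.
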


\begin{proof} Recall that we denote for each $l\ge 0$ by $P_l$ the canonical projection of $\ell_{p}(\N)$ onto the finite-dimensional space ${\rm span}\,[e_j;\; b_l\leq j<b_{l+1}]$. 
It is enough to show that if $n\ge 0$ satisfies the assumption (\ref{nonmix}), then 
\[ \bigl\|P_0T^{2(b_{n+1}-b_n)}x\bigr\|\le 1
\quad\textrm{ for every }x\in B\bigl(0,\frac{1}{1+K}\bigr).
\]
Indeed, this will imply that $2(b_{n+1}-b_n)$ does not belong to $ \mathcal N_T\bigl(B(0,\frac{1}{1+K}),B(3e_0,1)\bigr)$ for every such $n$, and hence that $T$ is not topologically mixing.
\par\smallskip
Let us fix a vector $x\in B\bigl(0,\frac{1}{1+K}\bigr)$ and an integer $n\ge 0$ satisfying (\ref{nonmix}). 
Recalling that
$T^{2(b_{n+1}-b_n)}e_k=e_k$ for every $0\le k< b_{n+1}$, we have 
\[ P_0T^{2(b_{n+1}-b_n)}x= P_0\, \sum_{l=0}^n P_lx +P_0\, \sum_{l>n} T^{2(b_{n+1}-b_n)}P_l x=P_0x+\sum_{l>n} P_0T^{2(b_{n+1}-b_n)}P_l x.
\] 
Moreover, if $l>n$, it follows from Fact~\ref{Proposition 50} that
\[\bigl\|P_0T^{2(b_{n+1}-b_n)}P_lx\bigr\|\le C_{l}\,(b_{l+1}-b_{l})^{1-\frac1{p}}\Bigl(\ \sup_{b_{l+1}-2(b_{n+1}-b_n)\le 
i<b_{l+1}}\prod_{s=i+1}^{b_{l+1}-1}|w_s|\Bigr)\, \|P_lx\|.\]
So we get that
\[\bigl\Vert P_0T^{2(b_{n+1}-b_n)}x\bigr\Vert\le \|P_0x\|+\sum_{l>n}\|P_0T^{2(b_{n+1}-b_n)}P_lx\|\leq \|x\|+ K\|x\|\le 1,\]
which proves Proposition \ref{Proposition no mixing}.
\end{proof}
\par\smallskip
After these technical preliminaries, we are now going to consider special classes  
of operators of \cct, for which the general conditions for frequent hypercyclicity, $\mathcal U$-frequent hypercyclicity and topological mixing 
obtained previously become rather transparent. This will allow us to derive easily the promised counterexamples.
\par\smallskip

\subsection{Operators of  \cput:  FHC does not imply ergodic}\label{C+1type}
In this subsection, we restrict ourselves to operators of \cpt\ for which the 
parameters $v$, $w$, and $b$ satisfy the following conditions: for every 
$k\ge 1$,
\[
v^{(k)}=2^{-\tau ^{(k)}}\quad\textrm{and}\quad w_{i}^{(k)}=
\begin{cases}
2&\textrm{if}\ \ 1\le i\le \delta ^{(k)}\\
1&\textrm{if}\ \ \delta ^{(k)}<i<\Delta ^{(k)}
\end{cases}
\]
where $(\tau ^{(k)})_{k\ge 1}$ and $(\delta ^{(k)})_{k\ge 1}$ are two 
strictly increasing sequences of integers with $\delta ^{(k)}<\Delta 
^{(k)}$ for every $k\ge 1$. We call such operators \emph{operators of 
\cput}.
\par\smallskip
If, in order to simplify matters, we assume  that $\delta^{(k)}=2\tau^{(k)}$ for every $k\ge 1$, then the key parameter of 
our counterexamples will be the quantity $\frac{\delta^{(k)}}{\Delta^{(k)}}$, 
\mbox{\it i.e.} the proportion of weights equal to $2$ in each block $[b_n,b_{n+1})$. 
For instance, if we consider the vector $x=2^{-\frac{\delta^{(k)}}{2}}e_{b_{2^{k-1}}}$, we remark that $T^{\Delta^{(k)}}x$ 
is close to $e_0$, and that the orbit of $x$  follows the orbit of $e_0$ during an interval of time 
proportional to $\delta^{(k)}$. Since the period of $x$ is equal to $2\Delta^{(k)}$, 
it seems plausible in view of Theorem~\ref{Theorem 41} that $T$ will be frequently hypercyclic
as soon as
$\limsup_{k\to\infty }\frac{\delta ^{(k)}}{\Delta ^{(k)}}>0$.
On the other hand, we will show that if $\limsup_{k\to\infty }\frac{\delta  ^{(k)}}{\Delta ^{(k)}}$ is 
too small,  $T$ cannot be ergodic.
\par\smallskip

\subsubsection{How to be \emph{\textrm{FHC}} or \emph{\textrm{UFHC}}}
Our first result gives a readable sufficient condition for an operator of \cput\ 
to be chaotic and frequently hypercyclic.

\begin{theorem}\label{Theorem 52}
 Let $T=\tvw$ be an operator of \cput\ on $\ell_{p}(\N)$. 
  \begin{enumerate}
  \item [\emph{(1)}] If\ \ $\limsup\limits_{k\to\infty }\,(\delta ^{(k)}
  -\tau ^{(k)})=\infty $, then $T$ is chaotic.
  \item [\emph{(2)}] If\ \ $\limsup\limits_{k\to\infty }\,
  \dfrac{\delta ^{(k)} -\tau ^{(k)}}{\Delta ^{(k)}}>0 $, then $T$ is chaotic and 
frequently hypercyclic.
 \end{enumerate}
\end{theorem}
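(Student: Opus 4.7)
For part (1), the natural tool is Proposition \ref{Proposition 45}: I just need to verify that for every $n\ge 0$,
\[
\limsup_{\substack{N\to\infty\\ N\in\varphi^{-1}(n)}} |v_N|\prod_{j=b_N+1}^{b_{N+1}-1}|w_j|=\infty.
\]
By the definition of the map $\varphi$ associated with an operator of \cpt, for every $k$ with $2^{k-1}>n$ the integer $N_k:=2^{k-1}+n$ belongs to $\varphi^{-1}(n)\cap[2^{k-1},2^k)$. For such an $N_k$, the structure of the parameters of an operator of \cput\ gives $|v_{N_k}|=2^{-\tau^{(k)}}$ and, since exactly $\delta^{(k)}$ of the weights $w^{(k)}_i$ in the block are equal to $2$ and the remaining ones equal to $1$, $\prod_{j=b_{N_k}+1}^{b_{N_k+1}-1}|w_j|=2^{\delta^{(k)}}$. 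Hence the relevant quantity equals $2^{\delta^{(k)}-\tau^{(k)}}$, and the assumption $\limsup_{k}(\delta^{(k)}-\tau^{(k)})=\infty$ immediately gives chaoticity through Proposition \ref{Proposition 45}. This step is essentially a bookkeeping exercise, so there is no real obstacle.

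For part (2), I will apply Theorem \ref{Theorem 47}. Set $\beta:=\limsup_{k\to\infty}\frac{\delta^{(k)}-\tau^{(k)}}{\Delta^{(k)}}>0$ and fix $\alpha\in(0,\beta)$ (for instance $\alpha=\beta/2$). Given $C\ge 1$ and $k_0\ge 1$, I will take $m:=\Delta^{(k)}-1$, so that
\[
|v^{(k)}|\prod_{i=\Delta^{(k)}-m}^{\Delta^{(k)}-1}|w^{(k)}_i|
=2^{-\tau^{(k)}}\prod_{i=1}^{\Delta^{(k)}-1}|w^{(k)}_i|=2^{\delta^{(k)}-\tau^{(k)}}.
\]
This takes care of condition (\ref{Equation 14}) as soon as $\delta^{(k)}-\tau^{(k)}\ge \log_2 C$. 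For condition (\ref{Equation 15}), a direct count gives
\[
\prod_{i=m'+1}^{\Delta^{(k)}-1}|w^{(k)}_i|=2^{\max(0,\,\delta^{(k)}-m')}
\]
so that, whenever $m'\le\alpha\Delta^{(k)}$ and $\alpha\Delta^{(k)}<\delta^{(k)}$, the left-hand side of (\ref{Equation 15}) is equal to $2^{\delta^{(k)}-m'-\tau^{(k)}}$, bounded below (for $m'$ in this range) by $2^{\delta^{(k)}-\tau^{(k)}-\alpha\Delta^{(k)}}$. So both (\ref{Equation 14}) and (\ref{Equation 15}) will hold provided
\[
\delta^{(k)}-\tau^{(k)}-\alpha\Delta^{(k)}>\log_2 C\qquad\text{and}\qquad \alpha\Delta^{(k)}<\delta^{(k)}.
\]

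The last step is to show that such a $k\ge k_0$ can always be found. Choose $\alpha'\in(\alpha,\beta)$; the definition of $\beta$ provides infinitely many $k$'s with $\delta^{(k)}-\tau^{(k)}\ge \alpha'\Delta^{(k)}$, for which $\delta^{(k)}-\tau^{(k)}-\alpha\Delta^{(k)}\ge(\alpha'-\alpha)\Delta^{(k)}$. Because $(\delta^{(k)})$ is strictly increasing and $\Delta^{(k)}>\delta^{(k)}$, one has $\Delta^{(k)}\to\infty$, so this quantity exceeds $\log_2 C$ for all sufficiently large such $k$; moreover $\delta^{(k)}\ge \alpha' \Delta^{(k)}+\tau^{(k)}> \alpha\Delta^{(k)}$, so the second inequality is automatic. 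This produces the required $k\ge k_0$ and $1\le m<\Delta^{(k)}$, and Theorem \ref{Theorem 47} then yields that $T$ is chaotic and frequently hypercyclic. The only ``delicate'' point is the combinatorial verification of the weight products; once the right $m$ is chosen, the rest is a straightforward comparison of exponents.
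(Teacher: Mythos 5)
Your proof is correct and follows essentially the same route as the paper: part (1) via Proposition \ref{Proposition 45} with the computation $|v_N|\prod|w_j|=2^{\delta^{(k)}-\tau^{(k)}}$ for $N\in\varphi^{-1}(n)\cap[2^{k-1},2^k)$, and part (2) via Theorem \ref{Theorem 47} with the same choice $m=\Delta^{(k)}-1$ and the same exponent comparisons. The only superficial difference is that you verify the extra inequality $\alpha\Delta^{(k)}<\delta^{(k)}$ to get an exact formula, whereas the paper simply uses the lower bound $\prod_{i=m'+1}^{\Delta^{(k)}-1}|w_i^{(k)}|\ge 2^{\delta^{(k)}-m'}$, making that check unnecessary.
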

\begin{proof}
 The first statement is a direct consequence of Proposition 
\ref{Proposition 45}: for every $n\ge 0$, the expressions of $v^{(k)}$ and 
$(w_j^{(k)})_{1\le j<\Delta ^{(k)}}$ combined with the fact that 
$\varphi ([2^{k},2^{k+1}))=[0,2^{k-1})$ for every $k\ge 1$ yield that 
\[
\limsup_{\genfrac{}{}{0pt}{1}{N\to\infty }{N\,\in\,\varphi ^{-1}(n)}}\ 
|v_{N}|\,\cdot\prod_{j=b_{N}+1}^{b_{N+1}-1}|w_{j}|=\limsup_{k\to\infty }\,
2^{\,\delta ^{(k)}-\tau ^{(k)}}=\infty .
\]
As to the second statement, it is a consequence of Theorem \ref{Theorem 47}. Let us
fix a real number $\alpha$ with
\[0<\alpha < \dfrac{1}{2}\limsup\limits_{k\to\infty }\dfrac{\delta ^{(k)}-\tau ^{(k)}
}{\Delta ^{(k)} }\cdot 
\]
Fix also $C\ge 1$ and $k_{0}\ge 1$. 
By our assumption and the fact that $\Delta 
^{(k)}$ tends to infinity as $k$ tends to infinity, there exists an 
integer $k\ge k_{0}$ such that 
\[
\dfrac{\delta ^{(k)}-\tau ^{(k)}
}{\Delta ^{(k)} }>2\alpha \quad\textrm{and}\quad \alpha \Delta ^{(k)}>\log
_{2}C.
\]
We apply Theorem \ref{Theorem 47} with $m=\Delta ^{(k)}-1$. We have 
\[
|v^{(k)}|\,\cdot\prod_{i=1}^{\Delta^{(k)}-1}|w^{(k)}_i|= 
2^{\,\delta ^{(k)}-\tau ^{(k)}}>2^{\,2\alpha\Delta^{(k)}}\ge  C,
\]
which gives condition \eqref{Equation 14} of Theorem \ref{Theorem 47}. As for condition 
\eqref{Equation 15}, we have for every $0\le m'\le \alpha \Delta ^{(k)}$:
\[
|v^{(k)}|\,\cdot\prod_{i=m'+1}^{\Delta^{(k)}-1}|w^{(k)}_i|\ge 
2^{\,\delta ^{(k)}-m'-\tau ^{(k)}}\ge 2^{\,\delta ^{(k)}-\alpha \Delta 
^{(k)}-\tau ^{(k)}}>C.
\]
It thus follows from Theorem \ref{Theorem 47} that $T$ is frequently 
hypercyclic.
\end{proof}

The next result gives, under some additional assumptions, \emph{necessary and 
sufficient} conditions for an operator of \cput\ to be 
$\mathcal{U}$-frequently hypercyclic or frequently hypercyclic (which turn out to be 
same under the additional assumptions).

\begin{theorem}\label{Theorem 53}
 Let $T=\tvw$ be an operator of \cput\  on $\ell_{p}(\N)$. We additionally assume that the sequence $(\gamma_{k})_{k\ge 1}$ defined by $\gamma_k:=2^{\,\delta ^{(k-1)}-\tau 
^{(k)}}\bigl(\Delta ^{(k)} 
\bigr)^{1-\frac1{p}}$ for every $k\ge 1$ is a non-increasing sequence, and that the following two conditions hold true:
\[
\limsup_{k\to\infty }\,\dfrac{\tau ^{(k)}}{\delta 
^{(k)}}<1\quad\textrm{and}\quad
\sum_{\gk}2^{k}\gamma_k^{1/2}\le 1.
\]
Then the following assertions are equivalent:
\begin{enumerate}
 \item [\emph{(1)}] $T$ is $\mathcal{U}$-frequently hypercyclic;
 \item [\emph{(2)}] $T$ is frequently hypercyclic;
 \item [\emph{(3)}] $\limsup\limits_{k\to\infty }\;
 {\delta ^{(k)}}/
 {\Delta ^{(k)}}>0$.
\end{enumerate}
\end{theorem}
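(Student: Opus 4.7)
The equivalence will be established via the cyclic chain (2)$\Rightarrow$(1)$\Rightarrow$(3)$\Rightarrow$(2). The implication (2)$\Rightarrow$(1) is immediate from the definitions. For (3)$\Rightarrow$(2), the hypothesis $\limsup_k \tau^{(k)}/\delta^{(k)}<1$ provides a constant $c\in(0,1)$ with $\delta^{(k)}-\tau^{(k)}\ge(1-c)\delta^{(k)}$ for all large $k$, so combining with (3) yields $\limsup_k(\delta^{(k)}-\tau^{(k)})/\Delta^{(k)}\ge(1-c)\limsup_k\delta^{(k)}/\Delta^{(k)}>0$, and Theorem~\ref{Theorem 52}(2) then gives that $T$ is chaotic and frequently hypercyclic.

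The heart of the matter is (1)$\Rightarrow$(3), which I would prove by contraposition: assuming $\delta^{(k)}/\Delta^{(k)}\to 0$, the plan is to apply Theorem~\ref{Theorem 49} with carefully chosen parameters so as to show that $T$ is not $\mathcal U$-frequently hypercyclic. Given an arbitrary $x\in\hct$, set
\[
X_l:=\Bigl\|\sum_{j=b_l}^{b_{l+1}-1}\Bigl(\prod_{s=j+1}^{b_{l+1}-1}w_s\Bigr)x_j e_j\Bigr\|,
\]
which clearly satisfies $\|P_l x\|\le X_l$; let $\kappa(l)$ denote the \emph{level} of $l$, namely the unique $k\ge 1$ with $l\in[2^{k-1},2^k)$, and set $C:=1/4$ and $\beta_l:=4\gamma_{\kappa(l)}$. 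Since $(\gamma_k)$ is non-increasing, so is $(\beta_l)$, and
\[
\sum_{l\ge 1}\sqrt{\beta_l}=\sum_{k\ge 1}2^{k-1}\cdot 2\sqrt{\gamma_k}=\sum_{k\ge 1}2^k\gamma_k^{1/2}\le 1
\]
by hypothesis. The divisibility constraint on the $b_n$'s forces $\Delta^{(k+1)}\ge 2\Delta^{(k)}$, hence the sequence $N_l:=\lfloor(\Delta^{(\kappa(l))}\delta^{(\kappa(l))})^{1/2}\rfloor$ is non-decreasing and tends to infinity; moreover, since $\delta^{(k)}/\Delta^{(k)}\to 0$, one has $N_l\le\Delta^{(\kappa(l))}/2\le\Delta^{(\kappa(l))}-\delta^{(\kappa(l))}$ for $l$ large enough.

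Conditions (2) and (3) of Theorem~\ref{Theorem 49} then follow from Fact~\ref{Proposition 50} applied with $C_l:=2^{\delta^{(\kappa(l)-1)}-\tau^{(\kappa(l))}}$ (with the convention $\delta^{(0)}=0$): for $l\in[2^{k-1},2^k)$, the explicit form $v^{(k)}=2^{-\tau^{(k)}}$, together with the fact that $\varphi(l)\in[0,2^{k-1})$ lies at a level $\le k-1$ and the monotonicity of $(\delta^{(k)})$, gives $|v_l|\cdot\sup_j\prod_s|w_s|\le 2^{-\tau^{(k)}}\cdot 2^{\delta^{(k-1)}}=C_l$, so that $C_l(\Delta^{(k)})^{1-1/p}=\gamma_k=C\beta_l$; and the truncated estimate in condition (3) is likewise immediate once one observes that $\prod_{s=j+1}^{b_{l+1}-1}|w_s|=1$ for every $j\ge b_l+\delta^{(k)}$, combined with $N_l\le\Delta^{(k)}-\delta^{(k)}$.

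The main obstacle, and the step that genuinely uses the hypothesis $\delta^{(k)}/\Delta^{(k)}\to 0$, is the verification of condition (C). I would apply Fact~\ref{Proposition 51} to a level-$\kappa(l)$ block with $k_0:=\delta^{(\kappa(l))}$ and $k_1:=\Delta^{(\kappa(l))}$: the weights $w_i^{(\kappa(l))}$ for $i\in(k_0,k_1)$ all equal $1$ and $\prod_{s=b_l+k_0+1}^{b_{l+1}-1}|w_s|=1$, so for every $J\ge N_l$
\[
\frac{\#\{0\le j\le J:\|P_l T^j P_l x\|\ge X_l/2\}}{J+1}\ge 1-\frac{2\delta^{(\kappa(l))}}{N_l+1}-\frac{2\delta^{(\kappa(l))}}{\Delta^{(\kappa(l))}}.
\]
Both error terms tend to $0$ as $l\to\infty$: the second by assumption, and the first because $\delta^{(\kappa(l))}/N_l\sim(\delta^{(\kappa(l))}/\Delta^{(\kappa(l))})^{1/2}\to 0$. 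Since $2CX_l=X_l/2$ matches exactly the threshold appearing in Fact~\ref{Proposition 51}, condition (C) holds, and Theorem~\ref{Theorem 49} yields that $T$ is not $\mathcal U$-frequently hypercyclic. The specific calibration $C=1/4$, $\beta_l=4\gamma_{\kappa(l)}$ is forced by the need to reconcile the factor-$4$ gap between the bound $\|P_l T^j P_l x\|\ge X_l/2$ from Fact~\ref{Proposition 51} and the threshold $2CX_l$ in condition (C), while still maintaining the summability constraint $\sum_l\sqrt{\beta_l}\le 1$.
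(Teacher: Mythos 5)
Your proof is correct and follows the paper's argument closely: Theorem~\ref{Theorem 52} gives (3)$\Rightarrow$(2), and the contrapositive of (1)$\Rightarrow$(3) is established via Theorem~\ref{Theorem 49}, verified through Facts~\ref{Proposition 50} and~\ref{Proposition 51} with the identical choices $C=1/4$, $\beta_l=4\gamma_{\kappa(l)}$, $C_n=2^{\delta^{(k-1)}-\tau^{(k)}}$, $k_0=\delta^{(k)}$, $k_1=\Delta^{(k)}$. The only divergence is your choice $N_l=\bigl\lfloor(\Delta^{(\kappa(l))}\delta^{(\kappa(l))})^{1/2}\bigr\rfloor$ in place of the paper's $N_l=\Delta^{(\kappa(l))}-\delta^{(\kappa(l))}$; both yield the required estimate for condition~(C) under the assumption $\delta^{(k)}/\Delta^{(k)}\to 0$ (the paper's error term $2\delta^{(k)}/(N_l+1)$ is $O(\delta^{(k)}/\Delta^{(k)})$, yours is $O((\delta^{(k)}/\Delta^{(k)})^{1/2})$), and your choice has the mild cosmetic advantage of being automatically non-decreasing, at the cost of the inequality $N_l\le\Delta^{(\kappa(l))}-\delta^{(\kappa(l))}$ needed for condition~(3) only holding for $l$ large — which, as you acknowledge, is harmlessly fixed by capping $N_l$ for the finitely many exceptional indices.
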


\begin{proof}
 Since $\limsup\limits_{k\to\infty }\frac{\tau ^{(k)}}
 {\delta ^{(k)}}<1$, Condition (3) is equivalent to 
 $\limsup\limits_{k\to\infty }\frac{\delta ^{(k)}-\tau ^{(k)}}
 {\Delta ^{(k)}}>0$. It follows from this observation and 
 Theorem \ref{Theorem 52} that (3) implies (2), which of course implies 
(1). It remains to prove that if $\limsup\limits_{k\to\infty 
}\frac{\delta ^{(k)}}{\Delta ^{(k)}}=0$, $T$ is not 
$\mathcal{U}$-frequently hypercyclic. Given a hypercyclic vector $x$ for 
$T$, we need to show that $x$ cannot be a $\mathcal{U}$-frequently 
hypercyclic vector for $T$, and for this it suffices to find $C$, $(\beta 
_{l} )_{l\ge 1}$, $(X_{l})_{l\ge 1}$, and 
$(N_{l})_{l\ge 1}$ satisfying assumptions (1), (2), (3) and (C) of Theorem 
\ref{Theorem 49}. This is done thanks to Facts~\ref{Proposition 50} 
and~\ref{Proposition 51}. We first fix the sequence $(\beta _{l})_{l\ge 
1}$ by setting 
\[
\beta _{l}=4\,\gamma_k\quad\textrm{for every}\ \,\, l\in[2^{k-1},2^{k}),\ 
\gk.
\]
Since the sequence $(\gamma_k)_{k\ge 1}$ is non-increasing, the sequence $(\beta_{l} )_{l\ge 1}$ 
is non-increasing. Also
\[
\sum_{l\ge 1}\sqrt{\beta _{l}}=\sum_{\gk}2^{k-1}\,(4\,\gamma_k)^{1/2}\le 1
\]
by assumption. We have for every $\gk$ and every $n\in[2^{k-1},2^{k})$
\[
|v_{n}|\,\cdot\sup_{j\,\in\,[b_{\varphi (n)},b_{\varphi (n)+1})}
\ \prod_{s=b_{\varphi (n)+1}}^{j}\!\!\!|w_{s}|\le 2
^{\,\delta ^{(k-1)}-\tau ^{(k)}}.
\]
If we set $C_{n}=2
^{\,\delta ^{(k-1)}-\tau ^{(k)}}$ for every $n\in [2^{k-1},2^{k})$, Fact \ref{Proposition 50} implies that 
for every $k\ge 0$, every $l\in[2^{k-1},2^{k})$ and 
every $0\le n<l$,
\begin{align*}
 \sup_{j\ge 0}\ \|P_{n}T^{\,j}P_{l}\,x\|&\le 2^{\,\delta 
^{(k-1)}-\tau 
^{(k)}}\,\cdot\,\bigl(\Delta ^{(k)} 
\bigr)^{1-\frac1{p}} \,\cdot\,
\Big\|\sum_{i=b_{l}}^{b_{l+1}-1}\Big(\prod_{s=i+1}^{b_{l+1}-1}w_s\Big)\,
x_{i}e_{i}\Big\|\\
&\le \dfrac{\beta_{l}}{4}\cdot  
\Big\|\sum_{i=b_{l}}^{b_{l+1}-1}\Big(\prod_{s=i+1}^{b_{l+1}-1}w_s\Big)\,
x_{i}e_{i}\Big\|
\end{align*}
{and, for every $1\le N\le\Delta ^{(k)}$,}
\[
 \sup_{0\le j\le N}\ \|P_{n}T^{\,j}P_{l}\,x\|\le \dfrac{\beta_{l}}{4}\cdot
 \Bigl(\prod_{i=\Delta ^{(k)}-N+1}^{\Delta ^{(k)}-1}\!\!\!|w_{i}^{(k)}|\  
\Bigr)\,\cdot\,
 \|P_{l}\,x\|.
\]
 {We now set $N_{l}:=\Delta ^{(k)}-\delta ^{(k)}$ for every 
$l\in[2^{k-1},2^{k})$ and every $\gk$. Remembering that $w_{i}^{(k)}=1$ if 
$\delta ^{k}<i<\Delta ^{(k)}$, we obtain that
 }
 \[
\sup_{0\le j\le N_{l}}\ \|P_{n}T^{\,j}P_{l}\,x\|\le\dfrac{\beta _{l}}{4}
\|P_{l}\,x\|\quad \textrm{for every}\ l\ge 1.
\]
 Finally, we set for every $\gk$ and every 
$l\in[2^{k-1},2^{k})$
\[
X_{l}:=\Bigl\| \sum_{i=b_{l}}^{b_{l+1}-1}\Big(\prod_{s=i+1}^{b_{l+1}-1}w_s\Big)\,x_{i}e_{i}\Bigr\|.
\]
Obviously, $\|P_{l}\,x\|\le X_{l}$, and we have proved that for every $0\le n<l$,
\[
 \sup_{j\ge 0}\ 
\|P_{n}T^{\,j}P_{l}\,x\|\le\smash[b]{\dfrac{\beta_{l}}{4}\,X_{l}}
\quad
\textrm{and}
\quad
\sup_{0\le j\le N_{l}}
\|P_{n}T^{\,j}P_{l}\,x\|\le\smash[t]{\dfrac{\beta_{l}}{4}}\,\Vert P_{l}\,
x\Vert .
\]
It remains to check that condition (C) of Theorem \ref{Theorem 49} holds 
true, and for this we use Fact \ref{Proposition 51}. 
\par\smallskip
We have 
$w_{i}^{(k)}=1$ for every $i\in(\delta ^{(k)},\Delta ^{(k)})$, 
and $\prod_{i=\delta ^{(k)}+1}^{\Delta ^{(k)}-1}w_{i}^{(k)}=1.$ The assumptions of Fact \ref{Proposition 51} are thus satisfied for $k_0=\delta^{(k)}$ and $k_1=\Delta^{(k)}$ if $l\in [2^{k-1},2^k)$, 
and we get
\[
\dfrac{1}{J+1}\ \#\Bigl\{0\le j\le J\,;\,\Vert P_{l}T^{\,j}P_{l}\,x\Vert \ge
X_{l}/2\Bigr\}\ge 1-2\delta ^{(k)}\Bigl(\dfrac{1}{J+1}+\dfrac{1}
{\Delta ^{(k)}}\Bigr)
\]
for every $J\ge 0$, every $\gk$, and every $l\in[2^{k-1},2^{k})$. Therefore, 
\[
\inf_{J\ge N_{l}}\ 
\dfrac{1}{J+1}\ \#\Bigl\{0\le j\le J\,;\,\Vert P_{l}T^{\,j}P_{l}\,x\Vert \ge
X_{l}/2\Bigr\}\ge 1-2\delta ^{(k)}\Bigl(\dfrac{1}{\Delta ^{(k)}-
\delta ^{(k)}+1}+\dfrac{1}
{\Delta ^{(k)}}\Bigr)
\]
for every $\gk$ and every $l\in[2^{k-1},2^{k})$, and it follows that 
\begin{align*}
 \liminf_{l\to\infty }\ \inf_{J\ge N_{l}}\ 
\dfrac{1}{J+1}\ &\#\Bigl\{0\le j\le J\,;\,\Vert P_{l}T^{\,j}P_{l}\,x\Vert \ge
X_{l}/2\Bigr\}
\\
&\ge\liminf_{k\to\infty }\ \biggl( 1-2\delta ^{(k)}\Bigl(\dfrac{1}{\Delta 
^{(k)}-
\delta ^{(k)}+1}+\dfrac{1}
{\Delta ^{(k)}}\Bigr) \biggr)
=1,
\end{align*}
since $\limsup\limits_{k\to\infty }\frac{\delta ^{(k)}}{\Delta 
^{(k)}}=0$ by assumption. Theorem \ref{Theorem 49} eventually yields that 
$T$ is not $\mathcal{U}$-fre\-quen\-tly hypercyclic, and this concludes the 
proof.
\end{proof}

\begin{remark} Theorem \ref{Theorem 53} implies the main result of \cite{Me}, \mbox{\it i.e.} that there exist chaotic operators on $\ell_{p}(\N)$ which are not $\mathcal U$-frequently hypercyclic.
\end{remark}
\par\smallskip

\subsubsection{A word about the \emph{\textrm{OSP}}}\label{SECTION+++}
In this short subsection, we show that there exist \cput\ \ops\ on $\ell_{2}(\N)$ which satisfy the assumptions of the criterion for frequent hypercyclicity stated in Theorem \ref{Theorem 41}, and yet do not have the Operator Specification Property (recall that we have proved in Section \ref{SECTION OSP} that \ops\ with the OSP do satisfy the assumptions of Theorem \ref{Theorem 41}). The following corollary is a simple consequence of Proposition \ref{Proposition no mixing} applied to \cput\ \ops.

\begin{corollary}\label{nomixsimplifie} Let $T=\tvw$ be an operator of \cput\  on $\ell_{p}(\N)$ and let $\gamma_k=2^{\,\delta ^{(k-1)}-\tau 
^{(k)}}\bigl(\Delta ^{(k)} 
\bigr)^{1-\frac1{p}}$ for every $k\ge 1$.  Assume that $\tau^{(k)}>\delta^{(k-1)}$ for every $k\geq 1$ and that there exists a positive constant $K$ such that, for every $k_0\ge 1$,
\[\sum_{k>k_0} 2^{k}\gamma_k \leq K 4^{-\Delta^{(k_0)}}.
\]
Then $T$ is not topologically mixing.
\end{corollary}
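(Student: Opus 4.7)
The plan is to invoke Proposition \ref{Proposition no mixing} directly. Its hypothesis requires us to produce a sequence $(C_n)_{n\ge 0}$ with $0<C_n<1$ controlling the products $|v_n|\cdot\sup_j\prod_s|w_s|$, and then to check the summability condition (\ref{nonmix}) for infinitely many $n$.

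For each $k\geq 1$ and each $n\in[2^{k-1},2^k)$, I would set $C_n:=2^{\delta^{(k-1)}-\tau^{(k)}}$ (using the convention $\delta^{(0)}:=0$ to handle the first block). Since $(\delta^{(k)})$ is strictly increasing, if $\varphi(n)=n-2^{k-1}$ lies in $[2^{k'-1},2^{k'})$ for some $1\le k'\le k-1$, then the profile of the weights $(w_i^{(k')})$ gives
\[
\sup_{j\in[b_{\varphi(n)},b_{\varphi(n)+1})}\prod_{s=b_{\varphi(n)}+1}^{j}|w_s|\;=\;2^{\delta^{(k')}}\;\le\;2^{\delta^{(k-1)}},
\]
so $|v_n|\cdot \sup = 2^{-\tau^{(k)}+\delta^{(k')}}\le C_n$. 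The hypothesis $\tau^{(k)}>\delta^{(k-1)}$ gives $C_n<1$, as required.

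Next, I would check condition (\ref{nonmix}) for the particular integers $n:=2^{k_0}-1$, as $k_0$ ranges over $\N^*$, yielding infinitely many such $n$. For any such $n$ we have $n\in[2^{k_0-1},2^{k_0})$, so $b_{n+1}-b_n=\Delta^{(k_0)}$, and every $l>n$ satisfies $l\geq 2^{k_0}$, placing it in some block $[2^{k-1},2^k)$ with $k\ge k_0+1$. The inner supremum in (\ref{nonmix}) is a product of at most $2\Delta^{(k_0)}-1$ weights, each bounded by $2$, hence is at most $4^{\Delta^{(k_0)}}$. Grouping the $2^{k-1}$ indices $l\in[2^{k-1},2^k)$ together and recognizing that $C_l(\Delta^{(k)})^{1-1/p}=2^{\delta^{(k-1)}-\tau^{(k)}}(\Delta^{(k)})^{1-1/p}=\gamma_k$, we obtain
\[
\sum_{l>n}C_l\,(b_{l+1}-b_l)^{1-\frac1p}\Bigl(\sup_{b_{l+1}-2\Delta^{(k_0)}\le i<b_{l+1}}\prod_{s=i+1}^{b_{l+1}-1}|w_s|\Bigr)\;\le\;4^{\Delta^{(k_0)}}\sum_{k>k_0}2^{k-1}\gamma_k\;\le\;\frac{K}{2},
\]
by the standing hypothesis $\sum_{k>k_0}2^k\gamma_k\le K\,4^{-\Delta^{(k_0)}}$.

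Thus the hypothesis of Proposition \ref{Proposition no mixing} is satisfied with constant $K/2$, and the conclusion follows. The argument is essentially a bookkeeping exercise once one has the idea of placing the test indices $n$ at the right end of each dyadic block, so that the sum over $l>n$ no longer mixes with terms of the same level $k_0$; this is really the only delicate point, and it is enabled by the geometric gain $4^{-\Delta^{(k_0)}}$ in the summability assumption exactly compensating the worst-case bound $4^{\Delta^{(k_0)}}$ on the product of weights.
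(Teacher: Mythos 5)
Your proof is correct and follows essentially the same route as the paper's: you choose the same $C_n$ (namely the bound $2^{\delta^{(k-1)}-\tau^{(k)}}$), test Condition~(\ref{nonmix}) at the same right-endpoints $n=2^{k_0}-1$ of the dyadic blocks so that all $l>n$ fall in strictly higher blocks, bound the inner supremum by $4^{\Delta^{(k_0)}}$, and observe that the standing hypothesis exactly absorbs this factor. No gaps; the only difference is cosmetic (you carry the constant $K/2$ instead of rounding to $K$).
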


\begin{proof} Let $n\ge 1$ be an integer, and let $k_n$ be the unique integer such that $2^{k_{n}-1}\leq n<2^{k_{n}}$. Then $v_n=v^{(k_{n})}=2^{-\tau^{(k_{n})}}$ and $\varphi(n)<2^{k_{n}-1}$, and setting
\[C_{n}:=\vert v_n\vert \sup_{j\in 
[b_{\varphi(n)},b_{\varphi(n)+1})}\ \Bigl(\prod_{s=b_{\varphi(n)}+1}^{j}|w_{s}|\Bigr),
\]
we have
$0<C_{n}
\leq 2^{-\tau^{(k_n)}}\, 2^{\delta^{(k_n-1)}}<1.$
Moreover, if $n$ is an integer of the form $n=2^k-1$ for some integer $k\ge 1$, then $b_{n+1}-b_n= \Delta^{(k)}$, and moreover $k_l>k$ for every $l>n$. It follows that
\begin{align*}
\sum_{l>n}
C_{l}\,(b_{l+1}-b_{l})^{1-\frac{1}{p}}\,\Bigl(\ \sup_{b_{l+1}-2(b_{n+1}-b_n)\le 
i<b_{l+1}}\prod_{s=i+1}^{b_{l+1}-1}|w_s|\Bigr)
\end{align*}
is less than
\begin{align*}
\leq \sum_{r>k} 2^{r-1} 2^{\delta^{(r-1)}-\tau^{(r)}}(\Delta^{(r)})^{1-\frac{1}{p}}\,.\, 2^{2\Delta^{(k)}}
\leq K.
\end{align*}
Hence  Proposition \ref{Proposition no mixing} applies, and $T$ is not topologically mixing.
\end{proof}

Since the condition appearing in Corollary~\ref{nomixsimplifie} is compatible with those of Theorem~\ref{Theorem 53} above (see Example~\ref{Example 55}), and since \ops\ with the OSP are topologically mixing, we immediately deduce:

\begin{corollary} There exist operators on $\ell_{p}(\N)$ which satisfy the assumptions of Theorem \ref{Theorem 53} and yet do not have the Operator Specification Property.
\end{corollary}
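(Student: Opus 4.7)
The plan is to exhibit explicit sequences $(\Delta^{(k)})_{k\ge 1}$, $(\delta^{(k)})_{k\ge 1}$ and $(\tau^{(k)})_{k\ge 1}$ that simultaneously satisfy the three hypotheses of Theorem~\ref{Theorem 53} and the two hypotheses of Corollary~\ref{nomixsimplifie}. The associated operator $T=\tvw$ of \cput\ on $\ell_{p}(\N)$ will then not be topologically mixing by Corollary~\ref{nomixsimplifie}, hence cannot have the Operator Specification Property since operators with the OSP are topologically mixing by~\cite{BMPe2}, while it also satisfies the hypotheses of Theorem~\ref{Theorem 53} as required. As a bonus, taking $\delta^{(k)}$ comparable to $\Delta^{(k)}$ yields in fact an \op\ which is chaotic and frequently hypercyclic (by Theorem~\ref{Theorem 53}) but not topologically mixing.

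The key tension in the construction is the following: the condition $\sum_{k\ge 1}2^{k}\gamma_{k}^{1/2}\le 1$ of Theorem~\ref{Theorem 53} and the condition $\sum_{k>k_{0}}2^{k}\gamma_{k}\le K\,4^{-\Delta^{(k_{0})}}$ of Corollary~\ref{nomixsimplifie} both push $\tau^{(k)}$ to be very large compared to $\delta^{(k-1)}$, while the hypothesis $\limsup_{k}\tau^{(k)}/\delta^{(k)}<1$ of Theorem~\ref{Theorem 53} forces $\tau^{(k)}$ to stay well below $\delta^{(k)}$. I would reconcile these requirements by letting $\Delta^{(k)}$ grow doubly exponentially, so that $\delta^{(k)}$ has ample room to dominate the lower bounds forced on $\tau^{(k)}$ by the other constraints, which will depend only on $\Delta^{(k-1)}$ and on logarithmic corrections in $\Delta^{(k)}$.

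Concretely, take $\Delta^{(1)}$ large, $\Delta^{(k)}\ge 4^{\Delta^{(k-1)}}$ for $k\ge 2$, set $\delta^{(k)}:=\lfloor\Delta^{(k)}/2\rfloor$, and define $\tau^{(k)}$ as the smallest positive integer such that
\[
\gamma_{k}\,:=\,2^{\,\delta^{(k-1)}-\tau^{(k)}}(\Delta^{(k)})^{1-1/p}\,\le\,C^{-1}\,2^{-3\Delta^{(k-1)}-k}
\]
for a sufficiently large absolute constant $C>0$ (with the convention $\delta^{(0)}=0$). Equivalently, $\tau^{(k)}$ is of order $\delta^{(k-1)}+3\Delta^{(k-1)}+k+(1-1/p)\log_{2}\Delta^{(k)}$. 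One then reads off directly that $\tau^{(k)}>\delta^{(k-1)}$ (the Corollary~\ref{nomixsimplifie} hypothesis), that $\tau^{(k)}/\delta^{(k)}\to 0$ by the doubly exponential growth of $\Delta^{(k)}$ (so $\limsup\tau^{(k)}/\delta^{(k)}<1$), and that $(\gamma_{k})$ is non-increasing. The two summability conditions reduce to routine geometric bounds: since $\Delta^{(k)}\ge 4^{\Delta^{(k-1)}}$, the tail $\sum_{k>k_{0}}2^{k}\gamma_{k}$ is dominated by twice its first term $2C^{-1}\cdot 2^{-3\Delta^{(k_{0})}}$, which is at most $4^{-\Delta^{(k_{0})}}$ once $C\ge 2$; and $\sum_{k\ge 1}2^{k}\gamma_{k}^{1/2}\le C^{-1/2}\sum_{k\ge 1}2^{-3\Delta^{(k-1)}/2}$ is bounded by $1$ for $C$ large enough.

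The only real obstacle is bookkeeping: one must check simultaneously that the explicit lower bound chosen for $\tau^{(k)}$ stays comfortably below $\delta^{(k)}$, that $\gamma_{k}$ is non-increasing with the required tail behaviour, and that the summability constants can all be absorbed into the single free parameter $C$. The doubly exponential growth of $\Delta^{(k)}$ makes each of these verifications essentially automatic, and in particular guarantees that the forced lower bound on $\tau^{(k)}$, which is essentially linear in $\Delta^{(k-1)}$, becomes negligible compared to $\delta^{(k)}\asymp\Delta^{(k)}\ge 4^{\Delta^{(k-1)}}$.
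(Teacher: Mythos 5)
Your proof is correct and takes essentially the same route as the paper: exhibit parameters satisfying both Theorem~\ref{Theorem 53} and Corollary~\ref{nomixsimplifie}, then invoke the fact that operators with the OSP are topologically mixing. The paper simply reuses Example~\ref{Example 55} (geometric growth $\Delta^{(k)}=10\cdot 2^{Ck}$) as the witness for this compatibility, whereas you build a fresh example with doubly-exponential $\Delta^{(k)}$; both choices work.
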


Since the assumptions of Theorem \ref{Theorem 53} imply those of 
Theorem \ref{Theorem 41}, it follows that there exist \ops\ on $\ell_{p}(\N)$ which satisfy the criterion for frequent hypercyclicity stated in 
Theorem \ref{Theorem 41} without having the OSP. In particular, this criterion for frequent hypercyclicity is strictly stronger than the ``classical" one.

\subsubsection{\emph{\textrm{FHC}} but not ergodic}
We are now ready to give examples of operators of \cput\ which are 
frequently hypercyclic but not ergodic.

\begin{theorem}\label{Theorem 54}
 Let $T$ be an operator of \cput\ on $\ell_{p}(\N)$, and  set, for every $k\geq 1$, 
 $\gamma_k:=2^{\,\delta ^{(k-1)}-\tau 
^{(k)}}\bigl(\Delta ^{(k)} 
\bigr)^{1-\frac1{p}}$. 
 Suppose that the sequence $(\gamma_k)_{k\ge 1}$ is non-increasing, and that the following three conditions  hold true:
\[
\sum_{\gk}2^{k}\gamma_k^{1/2}\le 1,\quad 
\limsup_{k\to\infty }\dfrac{\tau ^{(k)}}{\delta 
^{(k)}}<1\quad\textrm{and }
\quad 0<\limsup_{k\to\infty }\dfrac{\delta ^{(k)}}{\Delta 
^{(k)}}\le\dfrac{1}{5}\cdot
\]
\par\medskip 
\noindent
Then $T$ is frequently hypercyclic but $c(T)<1$, so that $T$ is not ergodic.
\end{theorem}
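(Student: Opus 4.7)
The frequent hypercyclicity part is essentially an immediate consequence of Theorem~\ref{Theorem 52}. The plan is to observe that the hypotheses $\limsup_k \delta^{(k)}/\Delta^{(k)}>0$ and $\limsup_k \tau^{(k)}/\delta^{(k)}<1$ can be combined: along a subsequence $(k_j)$ on which $\delta^{(k_j)}/\Delta^{(k_j)}$ is bounded below by some $a>0$, and using that $\tau^{(k)}/\delta^{(k)}\le 1-\eta'$ for all large $k$ and some fixed $\eta'>0$, we obtain
\[
\frac{\delta^{(k_j)}-\tau^{(k_j)}}{\Delta^{(k_j)}}=\frac{\delta^{(k_j)}}{\Delta^{(k_j)}}\Big(1-\frac{\tau^{(k_j)}}{\delta^{(k_j)}}\Big)\ge a\eta'/2
\]
for $j$ large, so $\limsup_k(\delta^{(k)}-\tau^{(k)})/\Delta^{(k)}>0$ and Theorem~\ref{Theorem 52}(2) gives that $T$ is chaotic and frequently hypercyclic.

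The main point is thus to prove $c(T)<1$, which by \cite{GM} (and Corollary~\ref{Corollary 17}) forces $T$ to be non-ergodic. The plan is to show that there exist $\varepsilon>0$ and $\eta>0$ such that
\[
\underline{\rm dens}\,\mathcal{N}_T\bigl(x,B(0,\varepsilon)^c\bigr)\ge \eta\qquad\text{for \emph{every} $x\in\mathrm{HC}(T)$,}
\]
which is equivalent to $\overline{\rm dens}\,\mathcal{N}_T(x,B(0,\varepsilon))\le 1-\eta$ for every hypercyclic vector $x$, hence $c(T)\le 1-\eta$. For this we apply Lemma~\ref{Theorem 48} with the exact same choices as in the proof of Theorem~\ref{Theorem 53}: $C:=1/4$, $\beta_l:=4\gamma_k$ and $N_l:=\Delta^{(k)}-\delta^{(k)}$ for $l\in[2^{k-1},2^k)$, and
\[
X_l:=\Big\|\sum_{i=b_l}^{b_{l+1}-1}\Big(\prod_{s=i+1}^{b_{l+1}-1}w_s\Big)x_ie_i\Big\|.
\]
Fact~\ref{Proposition 50} (together with the summability assumption $\sum_k 2^k\gamma_k^{1/2}\le 1$ and the assumed monotonicity of $(\gamma_k)$) yields conditions (1), (2), (3) of Lemma~\ref{Theorem 48}, and condition (4) is automatic as soon as $x$ is hypercyclic.

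The decisive step is then to bound the $\liminf$ appearing in Lemma~\ref{Theorem 48} from below by a strictly positive constant, uniformly in $x$. Applying Fact~\ref{Proposition 51} with $k_0=\delta^{(k)}$ and $k_1=\Delta^{(k)}$ (which is legitimate because $w_i^{(k)}=1$ on $(\delta^{(k)},\Delta^{(k)})$ and the corresponding product is $1$), we obtain, for every $l\in[2^{k-1},2^k)$ and every $J\ge N_l$,
\[
\frac{\#\{0\le j\le J\,;\,\|P_l T^j P_l x\|\ge X_l/2\}}{J+1}\ge 1-2\delta^{(k)}\Big(\frac{1}{\Delta^{(k)}-\delta^{(k)}+1}+\frac{1}{\Delta^{(k)}}\Big).
\]
Setting $\rho_k:=\delta^{(k)}/\Delta^{(k)}$ and letting $k\to\infty$, the right-hand side is (asymptotically) bounded below by $1-2\rho_k(2-\rho_k)/(1-\rho_k)$, which is a strictly increasing function of $\rho_k\in(0,1)$. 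Since $\limsup_k\rho_k\le 1/5$, the value $\rho=1/5$ is the worst case, giving the bound $1-\tfrac{9}{10}=\tfrac{1}{10}$. Thus $\liminf_{l\to\infty}\inf_{J\ge N_l}(\cdots)\ge \tfrac{1}{10}>0$ (or strictly larger if the $\limsup$ is smaller than $1/5$), which by Lemma~\ref{Theorem 48} gives a uniform lower bound $\underline{\rm dens}\,\mathcal{N}_T(x,B(0,\varepsilon)^c)\ge \eta$ independent of $x\in\mathrm{HC}(T)$.

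The main technical obstacle is really the bookkeeping in verifying that the choice of parameters made in the proof of Theorem~\ref{Theorem 53} satisfies the hypotheses of Lemma~\ref{Theorem 48} under the (slightly weaker) hypotheses of Theorem~\ref{Theorem 54}; once this is in place, the quantitative bound $\limsup \rho_k\le 1/5$ is precisely tailored so that the elementary computation $2\rho(2-\rho)/(1-\rho)\le 9/10$ gives a positive quantity $1-2\rho(2-\rho)/(1-\rho)$ at $\rho=1/5$, yielding $c(T)\le 9/10<1$.
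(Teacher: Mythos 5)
Your outline of the computation leading to $c(T)\le 9/10$ is correct and in fact matches the paper's proof step by step: the same parameters $(\beta_l)$, $(X_l)$, $(N_l)$ are chosen as in the proof of Theorem~\ref{Theorem 53}, Fact~\ref{Proposition 51} is applied with $k_0=\delta^{(k)}$ and $k_1=\Delta^{(k)}$, and the algebraic identity
\[
\frac{2\rho}{1-\rho}+2\rho=\frac{2\rho(2-\rho)}{1-\rho}
\]
evaluated at $\rho=1/5$ gives $9/10$, exactly as in the paper. The frequent hypercyclicity part is also fine; the paper routes this through Theorem~\ref{Theorem 53}, but that direction of Theorem~\ref{Theorem 53} is precisely Theorem~\ref{Theorem 52}(2) combined with the elementary observation that $\limsup\delta^{(k)}/\Delta^{(k)}>0$ and $\limsup\tau^{(k)}/\delta^{(k)}<1$ together imply $\limsup(\delta^{(k)}-\tau^{(k)})/\Delta^{(k)}>0$, which is what you do.

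There is, however, a genuine gap in the final deduction that $c(T)<1$. You claim ``there exist $\varepsilon>0$ and $\eta>0$ such that $\underline{\rm dens}\,\mathcal N_T(x,B(0,\varepsilon)^c)\ge\eta$ for every $x\in\textrm{HC}(T)$,'' i.e.\ a \emph{single} $\varepsilon$ working uniformly in $x$. Lemma~\ref{Theorem 48} does not give this: the radius it produces is $\varepsilon(x)=CX_{l_0}$, where $l_0$ depends on $x$, and this quantity can be arbitrarily small. (Note also that the dependence is not removable by rescaling, since $\varepsilon(tx)=t\,\varepsilon(x)$ scales in the same way as the ball, so the ratio $\alpha/\varepsilon(x)$ appearing in $\sup_x\overline{\rm dens}\,\mathcal N_T(x,B(0,\alpha))$ cannot be normalized away.) Consequently, ``hence $c(T)\le 1-\eta$'' does not follow from what Lemma~\ref{Theorem 48} actually provides. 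The correct route, which is the one the paper takes, is to invoke the characterization from \cite[Rem.~4.6]{GM}: for quasi-all hypercyclic vectors $x$, one has $c(T)=\overline{\rm dens}\,\mathcal N_T(x,B(0,\varepsilon))$ \emph{for every} $\varepsilon>0$. One fixes one such $x$, applies Lemma~\ref{Theorem 48} to it to produce some $\varepsilon_0=\varepsilon_0(x)>0$ with $\overline{\rm dens}\,\mathcal N_T(x,B(0,\varepsilon_0))\le 9/10$, and since for this particular $x$ the density is independent of the radius and equals $c(T)$, the bound $c(T)\le 9/10$ follows. Once you insert that invocation, the rest of your argument is correct and gives the paper's proof.
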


\begin{proof}
 The frequent hypercyclicity of $T$ follows from Theorem \ref{Theorem 53}. 
Let us show that $c(T)<1$. Recall that $c(T)$ is characterized by the following property (see \cite[Rem.~4.6]{GM}): for quasi-all hypercyclic 
vectors $x$ for $T$ in the 
Baire category sense, we have 
\begin{equation}\label{cteq}  c(T)=\overline{\textrm{dens}}\,\bigl(x,B(0,\varepsilon ) \bigr)\quad\textrm{ for every } \varepsilon>0.
\end{equation}
 Let us fix a hypercyclic vector $x$ for $T$ such that (\ref{cteq}) holds true. By 
Lemma \ref{Theorem 48} combined with the end of the proof of Theorem 
 \ref{Theorem 53}, we know that there exists an $\varepsilon >0$ such that
\begin{align*}
  \underline{\textrm{dens}\vphantom{p}}\
\mathcal{N}_{T}\bigl(x,B(0,\varepsilon)^c\bigr))
& \ge\liminf_{l\to\infty }\ \inf_{J\ge N_{l}}\ 
\dfrac{1}{J+1}\ \#\bigl\{0\le j\le J\,;\,\Vert P_{l}T^{\,j}P_{l}\,x\Vert \ge
X_{l}/2\bigr\}
\end{align*}
(recall that we proved above that under the assumption (3) of Theorem
\ref{Theorem 53}, the assumptions of Theorem \ref{Theorem 49} are 
satisfied with $C=1/4$, so that $2CX_{l}=X_{l}/2$). Using an inequality 
obtained at the end of the proof of Theorem \ref{Theorem 53}, we get
\begin{align*}
 \underline{\textrm{dens}}\ \mathcal{N}_{T}\bigl(x,B(0,\varepsilon )^c\bigr)& 
 \smash[b]{\ge\liminf_{k\to\infty }\ \biggl( 1-2\delta 
^{(k)}\Bigl(\dfrac{1}{\Delta 
^{(k)}-
\delta ^{(k)}+1}+\dfrac{1}
{\Delta ^{(k)}}\Bigr) \biggr)}
\intertext{so that}
 \overline{\textrm{dens}}\ \mathcal{N}_{T}\bigl(x,
 B(0,\varepsilon )\bigr)&\le\limsup_{k\to\infty }\ \biggl(
 \dfrac{2\delta ^{(k)}}{\Delta 
^{(k)}-
\delta ^{(k)}+1}+\dfrac{2\delta ^{(k)}}
{\Delta ^{(k)}}
 \biggr)\\
 &=\limsup_{k\to\infty }\ 
 \biggl(
 \dfrac{2\delta ^{(k)}}{\Delta^{(k)}}\cdot
 \dfrac{1}{1-\dfrac{\delta^{(k)}}{\Delta 
^{(k)}}+\dfrac{1}{\Delta ^{(k)}}}+\dfrac{2\delta ^{(k)}}
{\Delta ^{(k)}}
 \biggr).
 \end{align*}
 It follows that
 \begin{align}\label{Inegalite 103}
 c(T)\le \limsup_{k\to\infty }\ 
 \biggl(
 \dfrac{2\delta ^{(k)}}{\Delta^{(k)}}\cdot
 \dfrac{1}{1-\dfrac{\delta^{(k)}}{\Delta 
^{(k)}}+\dfrac{1}{\Delta ^{(k)}}}+\dfrac{2\delta ^{(k)}}
{\Delta ^{(k)}}
 \biggr).
\end{align}
Our assumption thus implies that
\[ c(T)
\le\dfrac{2/5}{1-1/5}+\dfrac{2}{5}=\dfrac{1}{2}+\dfrac{2}{5}<1,
\]
which proves that $T$ is not ergodic.
\end{proof}

\begin{example}\label{Example 55} Let $C$ be a positive integer, and 
  consider the operator of \cput\ $T$ on $\ell_{p}(\N)$ associated 
to the parameters
\[
\tau ^{(k)}=2^{Ck},\quad\delta ^{(k)}=2\,\cdot\,2^{Ck}\quad\textrm{and}\quad
\Delta ^{(k)}=10\, \cdot\,2^{Ck},\quad k\ge 1.
\]
If $C$ is sufficiently large, $T$ is frequently 
hypercyclic but not ergodic. Besides, it is also not topologically mixing.
\end{example}

\begin{proof} With this choice of parameters, and assuming that $C\geq 2$, we have 
\[
\gamma_k=2^{\,(2\,\cdot\,2^{C(k-1)}-2^{Ck})}\,\cdot\,10^{1-\frac{1}{p}}\cdot
2^{Ck(1-\frac{1}{p})}\leq 10\cdot 2^{-\frac12\, 2^{Ck}}\cdot 2^{Ck} 
\]
for every $k\geq 1$. So it is not hard to check that if $C$ is sufficiently large, the sequence $(\gamma_k)_{k\ge 1}$ is non-increasing and satisfies
$\sum_{\gk}2^{k}\gamma_k^{1/2}\le 1$. The other assumptions of Theorem \ref{Theorem 54} 
are clearly satisfied, and hence $T$ is frequently hypercyclic but not ergodic.
\par\smallskip
In order to show that $T$ is also not topologically mixing if $C$ is large enough, we use Proposition~\ref{Proposition no mixing}. If $n$ is any integer of the form $n=2^{k_0}-1$, where $k_{0}\ge 1$ is any integer, then 
\begin{align*}
\sum_{l>n}
C_{l}\,(b_{l+1}-b_{l})^{1-\frac1{p}}\Bigl(\ \sup_{b_{l+1}-2(b_{n+1}-b_n)\le 
k<b_{l+1}}\prod_{s=k+1}^{b_{l+1}-1}|w_s|\Bigr)&=\sum_{k>k_0}
2^{k-1}2^{\delta^{(k-1)}-\tau^{(k)}} (\Delta^{(k)})^{1-\frac1{p}}\\
&=\sum_{k>k_0} 2^{k-1} \gamma_k,
\end{align*}
If $C$ is sufficiently large, then $\sum_{k=1}^\infty
2^{k-1}\gamma_k< 1$; so we deduce from 
Proposition~\ref{Proposition no mixing} (by considering $K:=1$ and $n:=2^{k_0}-1$ for every $k_0\ge 1$) that $T$ is not topologically mixing. Notice that $T$ also satisfied the assumptions of Corollary~\ref{nomixsimplifie}.
\end{proof}

\par
Since the conditions of Theorem \ref{Theorem 53} make no difference 
between $\mathcal{U}$-frequent and frequent hypercyclicity, we will need to 
introduce another family of operators of \cpt\ in order to construct 
examples of $\mathcal{U}$-frequent hypercyclic operators on $\ell_{p}(\N)$ 
which are not frequently hypercyclic.
But before moving over to the presentation of this new class of operators, we give in 
the next subsection a corollary of Theorem \ref{Theorem 54} concerning infinite 
direct sums of frequently hypercyclic operators.

\subsection{Infinite direct sums of frequently hypercyclic operators}\label{Subsection 6.5.4}
A well-known open question, dating back to \cite{BG3}, asks whether the direct sum $T\oplus T$ of 
a frequently hypercyclic operator
$T$ with itself has to be frequently hypercyclic. (This is the analogue of Herreros ``$T\oplus T$ problem" for frequent hypercyclicity.)
By  \cite{GrEPe}, $T\oplus T$ is 
hypercyclic as soon as $T$ is $\mathcal{U}$-frequently hypercyclic. This result of \cite{GrEPe} also implies that the direct sum of two (and hence, of infinitely many) $\mathcal{U}$-frequently hypercyclic operators is hypercyclic. Indeed, let $T_{1}$ and $T_{2}$ be two bounded operators acting respectively on the Banach spaces $X_{1}$ and $X_{2}$, and let $U_{1}$, $V_{1}$ and $U_{2}$, $V_{2}$ be non-empty open subsets of $X_{1}$ and $X_{2}$ respectively. By \cite{GrEPe}, the set $\mathcal N_{T_{1}}(U_{1},V_{1})$ has bounded gaps. Also, since $T_{2}$ is topologically weakly mixing, the set 
$\mathcal N_{T_{2}}(U_{2},V_{2})$ contains arbitrarily long intervals. It follows that $T_{1}\oplus T_{2}$ is hypercyclic.
\par\smallskip
Apart from this result of \cite{GrEPe}, nothing seems to be known concerning this question of \cite{BG3}. 
More 
generally, it seems to be unknown whether the direct sum $T_{1}\oplus T_{2}$ of two 
frequently hypercyclic operators is necessarily frequently hypercyclic, or even 
$\mathcal{U}$-frequently hypercyclic. To the best of our knowledge, this question is open 
even for infinite direct sums of frequently hypercyclic operators. Our aim in this 
subsection is to use operators of $C_{+1}$-type on $\ell_{p}(\N)$ to prove the 
following result.

\begin{theorem}\label{Theorem 100}
 Let $p>1$. There exists a sequence $(T_{n})_{n\ge 1}$ of frequently hypercyclic 
operators on $\ell_{p}(\N)$ such that the $\ell_{p}$-sum operator $T=\bigoplus_{n\ge 
1}T_{n}$ acting on $X=\bigoplus_{n\ge 1}\ell_{p}(\N)$ is not $\mathcal U$-frequently hypercyclic.
\end{theorem}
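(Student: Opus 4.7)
The plan is to construct each $T_n$ as an operator of \cput\ on $\ell_{p}(\N)$ modeled on Example \ref{Example 55}, in such a way that the estimate \eqref{Inegalite 103} obtained in the proof of Theorem \ref{Theorem 54} forces $c(T_n)\to 0$ while each $T_n$ remains frequently hypercyclic. The inequality $\|T^{\,i}x\|\ge \|T_n^{\,i}x_n\|$ will then propagate this vanishing of $c(T_n)$ to the direct sum $T=\bigoplus_n T_n$, preventing $\mathcal U$-frequent hypercyclicity.

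Concretely, I would fix a sequence of integers $M_n\to\infty$ with $M_n\ge 10$ (say $M_n=10n$), and for each $n$ choose $C_n$ large (depending on $M_n$ and $p$) so that the operator $T_n$ of \cput\ on $\ell_{p}(\N)$ associated to the parameters
\[ \tau_n^{(k)}=2^{C_n k},\qquad \delta_n^{(k)}=2\cdot 2^{C_n k},\qquad \Delta_n^{(k)}=M_n\cdot 2^{C_n k},\quad k\ge 1, \]
satisfies the assumptions of Theorems \ref{Theorem 53} and \ref{Theorem 54}. The verification is essentially the one carried out in Example \ref{Example 55}: for $C_n$ sufficiently large the sequence $\gamma_k^{(n)}:=2^{\delta_n^{(k-1)}-\tau_n^{(k)}}(\Delta_n^{(k)})^{1-1/p}$ decays rapidly in $k$ and satisfies $\sum_k 2^k(\gamma_k^{(n)})^{1/2}\le 1$, while $\tau_n^{(k)}/\delta_n^{(k)}=1/2$ and $\delta_n^{(k)}/\Delta_n^{(k)}=2/M_n\in(0,1/5]$. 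Theorem \ref{Theorem 54} then yields the frequent hypercyclicity of $T_n$, and specializing \eqref{Inegalite 103} gives the explicit estimate
\[ c(T_n)\le \frac{4/M_n}{1-2/M_n}+\frac{4}{M_n}\xrightarrow[n\to\infty]{} 0. \]
A quick inspection of the structure of $T_n$ shows that $\|T_n\|$ is uniformly bounded: the finite-dimensional cyclic blocks all have norm at most $3$ since the weights lie in $\{1,2\}$, and the perturbation $R_n$ has norm bounded by a quantity involving $\bigl(\sum_m |v_m|^{p'}\bigr)^{1/p'}$, which is uniformly small in $n$ thanks to the very rapid decay of $v_n^{(k)}=2^{-\tau_n^{(k)}}$. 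Hence $T=\bigoplus_n T_n$ is bounded on $X$.

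The final step is the key observation. Suppose $x=(x_n)_{n\ge 1}\in X$ is $\mathcal U$-frequently hypercyclic for $T$. Then $x$ is hypercyclic for $T$, and projecting onto the $n$-th coordinate shows that each $x_n$ is hypercyclic for $T_n$. Since $\|T^{\,i}x\|^p=\sum_m\|T_m^{\,i}x_m\|^p\ge \|T_n^{\,i}x_n\|^p$, we have for every $\alpha>0$ and every $n\ge 1$
\[ \udens\,\mathcal N_T(x,B(0,\alpha))\le \udens\,\mathcal N_{T_n}(x_n,B(0,\alpha))\le c(T_n). \]
Letting $n\to\infty$ forces $\udens\,\mathcal N_T(x,B(0,\alpha))=0$ for every $\alpha>0$, which contradicts the $\mathcal U$-frequent hypercyclicity of $x$ applied to the open set $B(0,\alpha)$.

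The main obstacle lies in the parameter-choosing step: one must simultaneously arrange the frequent hypercyclicity of each $T_n$, the smallness of $c(T_n)$, and a uniform norm bound. This reduces to an elementary but slightly tedious calculation with the doubly indexed parameters $(\tau_n^{(k)},\delta_n^{(k)},\Delta_n^{(k)})$, in the same spirit as Example \ref{Example 55}; the only new feature is that $C_n$ must be chosen large enough to compensate for the growth of $M_n$ and keep all the required summability estimates below the threshold $1$.
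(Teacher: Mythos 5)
Your proof is correct and follows the same overall strategy as the paper: construct \cput\ operators $T_n$ that are frequently hypercyclic but have $c(T_n)\to 0$, using Theorem \ref{Theorem 54} and the quantitative bound \eqref{Inegalite 103}, then exploit the vanishing of $c(T_n)$ to rule out $\mathcal{U}$-frequent hypercyclicity of $T=\bigoplus_n T_n$. Where the two proofs differ is in the last step. The paper extracts two standalone lemmas (Lemmas \ref{Lemma 101} and \ref{Lemma 102}) establishing the general inequality $c\bigl(\bigoplus_{\ell_p} T_n\bigr)\le\inf_n c(T_n)$; the proof of Lemma \ref{Lemma 101} relies on the comeager characterisation of $c(T)$ from \cite{GM}. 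You bypass these lemmas with a direct argument: given a supposed $\mathcal U$-frequently hypercyclic vector $x=(x_n)$ for $T$, each projection $x_n$ is hypercyclic for $T_n$, and the norm inequality $\|T^i x\|\ge\|T_n^i x_n\|$ yields the inclusion $\mathcal N_T(x,B(0,\alpha))\subseteq\mathcal N_{T_n}(x_n,B(0,\alpha))$, so that $\overline{\textrm{dens}}\,\mathcal N_T(x,B(0,\alpha))\le c(T_n)$ for all $n$ by the supremum definition of $c(T_n)$. This is a clean and slightly more elementary derivation of the same conclusion (it uses only the supremum form of $c(\cdot)$, not its comeager characterisation), at the cost of not isolating the reusable fact $c(\bigoplus T_n)\le\inf_n c(T_n)$. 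You also explicitly address the uniform bound $\sup_n\|T_n\|<\infty$, which the paper leaves implicit but which is of course needed for $T$ to be bounded on $X$. Your parameter choice $\delta_n^{(k)}/\Delta_n^{(k)}=2/M_n$ with $M_n\to\infty$ plays the role of the paper's $\limsup_k\delta_n^{(k)}/\Delta_n^{(k)}\le 2^{-n}$; both work.
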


The proof of Theorem \ref{Theorem 100} relies on Theorem \ref{Theorem 54}, combined 
with an elementary lemma providing an upper bound for the parameter $c(T)$ of a direct 
sum operator:

\begin{lemma}\label{Lemma 101}
 Let $T_{1}$ and $T_{2}$ be two bounded operators acting respectively on the Banach 
spaces $X_{1}$ and $X_{2}$. Fix $p>1$, and let $T=T_{1}\oplus_{\ell_{p}} T_{2}$ be the 
$\ell_{p}$-sum operator of $T_{1}$ and $T_{2}$, acting on $X=X_{1}\oplus_{\ell_{p}}X_{2}$. Moreover, assume that $T$ is hypercyclic.  
Then $c(T)\le \min(c_{1}(T),c_{2}(T))$.
\end{lemma}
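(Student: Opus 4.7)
The plan is very direct: the inequality $c(T)\le\min(c(T_1),c(T_2))$ should follow from a pointwise comparison of return-time sets, using the $\ell_p$ structure in the essential way.

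First I would recall the relevant characterization of $c(T)$: for any fixed $\alpha>0$,
\[
c(T)=\sup_{x\in HC(T)}\overline{\mathrm{dens}}\,\mathcal N_T\bigl(x,B(0,\alpha)\bigr),
\]
and analogously for $c(T_1)$ and $c(T_2)$. So it suffices to produce, for each $x\in HC(T)$ and each $\alpha>0$, an upper bound on $\overline{\mathrm{dens}}\,\mathcal N_T(x,B(0,\alpha))$ by $c(T_i)$ for $i=1,2$.

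Next I would observe that a hypercyclic vector $x=(x_1,x_2)\in X_1\oplus_{\ell_p} X_2$ for $T_1\oplus T_2$ has both components hypercyclic for the respective factors: given any non-empty open $U_1\subseteq X_1$, the set $U_1\oplus X_2$ is open and non-empty in $X$, so one can find $n$ with $T^n x\in U_1\oplus X_2$, \mbox{\it i.e.} $T_1^n x_1\in U_1$; symmetrically for $x_2$. Hence $x_1\in HC(T_1)$ and $x_2\in HC(T_2)$.

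The crucial step is the trivial identity
\[
\|T^n x\|^p=\|T_1^n x_1\|^p+\|T_2^n x_2\|^p,
\]
which immediately gives the inclusions
\[
\mathcal N_T\bigl(x,B(0,\alpha)\bigr)\subseteq \mathcal N_{T_1}\bigl(x_1,B(0,\alpha)\bigr)\quad\text{and}\quad \mathcal N_T\bigl(x,B(0,\alpha)\bigr)\subseteq \mathcal N_{T_2}\bigl(x_2,B(0,\alpha)\bigr).
\]
Taking upper densities yields
\[
\overline{\mathrm{dens}}\,\mathcal N_T\bigl(x,B(0,\alpha)\bigr)\le \min\!\Bigl(\overline{\mathrm{dens}}\,\mathcal N_{T_1}\bigl(x_1,B(0,\alpha)\bigr),\,\overline{\mathrm{dens}}\,\mathcal N_{T_2}\bigl(x_2,B(0,\alpha)\bigr)\Bigr)\le\min\bigl(c(T_1),c(T_2)\bigr),
\]
where the last inequality uses that $x_i\in HC(T_i)$. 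Taking the supremum over $x\in HC(T)$ concludes the proof. There is really no obstacle here: the only ingredient beyond the definition of $c(T)$ is the $\ell_p$-additivity of the norm, which makes balls around $0$ in $X$ contained in products of balls around $0$ in $X_1$ and $X_2$. (The same argument would work for any $p\in[1,\infty)$ or even for $c_0$-sums, since all that matters is the monotonicity $\|z_i\|\le\|(z_1,z_2)\|$.)
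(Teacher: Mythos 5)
Your proof is correct and essentially mirrors the paper's: both hinge on the pointwise inequality $\|T^nx\|\ge\|T_i^nx_i\|$ coming from the $\ell_p$-norm, hence the inclusion $\mathcal N_T\bigl(x,B(0,\alpha)\bigr)\subseteq\mathcal N_{T_i}\bigl(x_i,B(0,\alpha)\bigr)$. The only cosmetic difference is that you invoke the supremum characterization of $c(\cdot)$ directly (after noting $x_i\in HC(T_i)$), whereas the paper phrases the same estimate through the comeager-set characterization of $c(\cdot)$.
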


\begin{proof}[Proof of Lemma \ref{Lemma 101}]
Let $c\in[0,1]$ be such that $\overline{\textrm{dens}}\,\mathcal{N}_{T}(x,B_{X}(0,1))\ge c$ 
for a comeager set of vectors $x$ of $X$. For any such vector $x=x_{1}\oplus x_{2}$, 
with $x_{1}\in X_{1}$ and $x_{2}\in X_{2}$, there exists a subset $D_{x}$ of $\N$ with
$\overline{\textrm{dens}}\,D_{x}\ge c$ such that 
$\bigl(  \Vert T^{n}_{1}x_{1}\Vert^{p}+\Vert T_{2}^{n}x_{2}\Vert^{p}\bigr)^{1/p}<1 $ for every $n\in 
D_{x}$. Thus $\overline{\textrm{dens}}\,\mathcal{N}_{T_{1}}(x_{1},B_{X_{1}}(0,1))\ge c$ for 
a comeager subset of vectors $x_{1}\in X_{1}$, so that $c(T_{1})\ge c$; and likewise, 
$c(T_{2})\ge c$. Hence $\min(c_{1}(T),c_{2}(T))\ge c$ for any $c$ as above, which proves the lemma.
\end{proof}

As an easy consequence of Lemma \ref{Lemma 101}, we obtain

\begin{lemma}\label{Lemma 102}
 For each $n\ge 1$, let $T_{n}$ be a bounded operator on a Banach space $X_{n}$. Fix 
$p>1$,  and let
 $T=\bigoplus_{\ell_{p}}T_{n}$ be the $\ell_{p}$-sum of the operators $T_{n}$, $n\ge 1$, 
acting on the space $X=\bigoplus_{\ell_{p}}X_{n}$. Moreover, assume that $T$ is hypercyclic. Then 
\[
c(T)=\inf_{n\ge 1}c(T_{1}\oplus\cdots\oplus T_{n})\le\inf_{n\ge 1}c(T_{n}).
\]
\end{lemma}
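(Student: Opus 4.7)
The strategy is to iterate Lemma~\ref{Lemma 101}. For each $n\ge 1$, write $S_n:=T_{1}\oplus\cdots\oplus T_{n}$ acting on $X_{1}\oplus_{\ell_{p}}\cdots\oplus_{\ell_{p}}X_{n}$ and $R_{n}:=\bigoplus_{\ell_{p},\,k>n}T_{k}$ acting on $\bigoplus_{\ell_{p},\,k>n}X_{k}$, so that $T=S_{n}\oplus_{\ell_{p}}R_{n}$. The canonical coordinate projection is continuous, surjective and intertwines $T$ with $S_{n}$, so hypercyclicity of $T$ transfers to each $S_{n}$ (and symmetrically to each $R_{n}$), which makes $c(S_{n})$ and $c(R_{n})$ well defined.

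Applying Lemma~\ref{Lemma 101} to the splitting $T=S_{n}\oplus_{\ell_{p}}R_{n}$ gives $c(T)\le c(S_{n})$ for every $n$, hence $c(T)\le\inf_{n}c(S_{n})$. Applying Lemma~\ref{Lemma 101} instead to $S_{n}=S_{n-1}\oplus_{\ell_{p}}T_{n}$ yields both the monotonicity $c(S_{n})\le c(S_{n-1})$ and the bound $c(S_{n})\le c(T_{n})$, whence $\inf_{n}c(S_{n})\le\inf_{n}c(T_{n})$. This already establishes the rightmost inequality of the lemma and the ``$\le$" half of the equality. In particular, the sequence $(c(S_n))_{n\ge 1}$ is non-increasing, so $\inf_nc(S_n)=\lim_nc(S_n)=:c_\infty$.

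For the equality $c(T)=c_\infty$, the remaining direction $c(T)\ge c_\infty$ is the substantive point. I would exploit the refinement from \cite{GM} used already in Corollary~\ref{Corollary 17}: for each of the hypercyclic operators $T$ and $S_{n}$, there is a comeager subset of hypercyclic vectors on which $\overline{\rm dens}\,\mathcal{N}(\cdot,B(0,\alpha))$ actually realises the $c$-parameter for every rational $\alpha>0$. Since the coordinate projection $\pi_{n}:X\to X_{1}\oplus_{\ell_{p}}\cdots\oplus_{\ell_{p}}X_{n}$ is continuous, open and surjective, preimages of comeager sets remain comeager, and the Baire Category Theorem produces a single $z\in\textrm{HC}(T)$ with $\overline{\rm dens}\,\mathcal{N}_{T}(z,B(0,\alpha))=c(T)$ and $\overline{\rm dens}\,\mathcal{N}_{S_{n}}(\pi_{n}z,B(0,\alpha))=c(S_{n})$ simultaneously for every $n\ge 1$ and every rational $\alpha>0$.

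The hard part is to reconcile these two densities. The trivial inclusion $\mathcal{N}_{T}(z,B(0,\alpha))\subseteq\mathcal{N}_{S_{n}}(\pi_{n}z,B(0,\alpha))$ only recovers $c(T)\le c(S_{n})$, which we already have. To invert this, write $\|T^{i}z\|^{p}=\|\pi_{n}T^{i}z\|^{p}+\|R_{n}^{i}(z-\pi_{n}z)\|^{p}$ and try to show that for the typical $z$ chosen above, given $\eta>0$ and $\alpha>0$, the tail $\|R_{n}^{i}(z-\pi_{n}z)\|^{p}$ is smaller than $\eta$ on a subset of $\mathcal{N}_{S_{n}}(\pi_{n}z,B(0,\alpha))$ of full upper density once $n$ is large enough (using that $\sum_{k>n}\|z_{k}\|^{p}\to 0$ and that this tail contribution is controlled along the density of ``good" indices). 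This would imply that $\mathcal{N}_{S_{n}}(\pi_{n}z,B(0,\alpha))$ and $\mathcal{N}_{T}(z,B(0,(\alpha^{p}+\eta)^{1/p}))$ differ by a set of arbitrarily small upper density as $n\to\infty$, forcing $c(S_{n})\le c(T)+o(1)$ and thus $c_\infty\le c(T)$. This tail-control step, rather than the iterated application of Lemma~\ref{Lemma 101}, is the genuinely delicate ingredient.
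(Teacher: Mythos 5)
Your iterated use of Lemma~\ref{Lemma 101} correctly yields the easy chain $c(T)\le\inf_n c(S_n)\le\inf_n c(T_n)$ (with $S_n:=T_1\oplus\cdots\oplus T_n$) and the monotonicity of $(c(S_n))_n$; this part matches the paper. The problem is the tail-control step you propose for the reverse inequality $c(T)\ge\inf_n c(S_n)$, and it is a genuine gap, not merely a delicacy. Two independent things go wrong. First, the hypothesis ``$\sum_{k>n}\|z_k\|^p\to 0$'' controls the tail of $z$ at time $0$, whereas the quantity you must control is $\|R_n^i(z-\pi_n z)\|$, and the iterates $R_n^i$ can magnify a small tail arbitrarily; there is no a priori bound. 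Second, even granting that the tail orbit visits $B(0,\eta)$ along a set of positive upper density (this would already require a lower bound on $c(R_n)$, which Lemma~\ref{Lemma 101} bounds above by the possibly tiny $c(T_{n+1})$), nothing forces those good times to overlap $\mathcal N_{S_n}(\pi_n z, B(0,\alpha))$ in a set of full upper density: two subsets of $\N$ of upper densities $\delta_1$ and $\delta_2$ can intersect in a set of upper density anywhere between $\max(0,\delta_1+\delta_2-1)$ and $\min(\delta_1,\delta_2)$, so the intersection may well have density~$0$.

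The paper avoids all of this with a much lighter argument that uses only \emph{finitely supported} vectors, for which the tail is identically zero. Fix $c<\inf_n c(S_n)$. The set $\mathcal G_c:=\{x\in X\;;\;\overline{\textrm{dens}}\,\mathcal N_T(x,B(0,1))\ge c\}$ is $G_\delta$ in $X$, so to conclude $c(T)\ge c$ it is enough to show that $\mathcal G_c$ is dense. Given any non-empty open $U\subseteq X$, shrink it to a finitely supported basic open set $U_1\oplus\cdots\oplus U_n\oplus\{0\}\oplus\cdots$. Since $c<c(S_n)$, the comeager set of vectors in $X_1\oplus_{\ell_p}\cdots\oplus_{\ell_p} X_n$ realizing $c(S_n)$ meets $U_1\oplus\cdots\oplus U_n$, yielding $x_1\oplus\cdots\oplus x_n$ and a set $D\subseteq\N$ of upper density $\ge c$ with $(\sum_{i=1}^n\|T_i^k x_i\|^p)^{1/p}<1$ for $k\in D$. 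Setting $x:=x_1\oplus\cdots\oplus x_n\oplus 0\oplus\cdots\in U$, there is no tail contribution whatsoever in $\|T^kx\|$, so $\mathcal N_T(x,B(0,1))\supseteq D$ and $x\in\mathcal G_c\cap U$. This is the idea missing from your plan: one does not need a single globally typical $z$ (and a fortiori no tail control), only a dense supply of finitely supported witnesses, which then upgrades to comeagerness because the target set is $G_\delta$.
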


\begin{proof}[Proof of Lemma \ref{Lemma 102}]
It follows directly from Lemma \ref{Lemma 101} that  
\[
c(T)\le\inf_{n\ge 1}c(T_{1}\oplus\cdots\oplus T_{n})\le\inf_{n\ge 1}c(T_{n}),
\] so we 
only have to prove that 
\[
c(T)\ge\inf_{n\ge 1}c(T_{1}\oplus\cdots\oplus T_{n}).
\]
 If the 
infimum $\inf_{n\ge 1}c(T_{1}\oplus\cdots\oplus T_{n})$ is equal to $0$, there is nothing 
to prove. Suppose that $\inf_{n\ge 1}c(T_{1}\oplus\cdots\oplus T_{n})>0$ and consider a 
number $c$ such that $0\le c<\inf_{n\ge 1}c(T_{1}\oplus\cdots\oplus T_{n})$. Let $U$ be a 
non-empty open subset of $X$. There exist an integer $n\ge 1$ and, for every $1\le i\le 
n$, a non-empty open subset $U_{i}$ of $X_{i}$, such that $U$ contains the set 
$U_{1}\oplus\cdots\oplus U_{n}\oplus 0\oplus\cdots$. Since $c<\inf_{n\ge 
1}c(T_{1}\oplus\cdots\oplus T_{n})$, there exist a vector $x=x_{1}\oplus\cdots\oplus 
x_{n}\in U_{1}\oplus\cdots\oplus U_{n}$ and a subset $D$ of $\N$ with 
$\overline{\textrm{dens}}\,D\ge c$ such that $\bigl (\sum_{i=1}^{n}\Vert T_{i}^{k}x_{i}\Vert^{p} 
\bigr)^{1/p}<1$ for every $k\in D$. Setting $x=x_{1}\oplus\cdots\oplus 
x_{n}\oplus 0\oplus\cdots$, we deduce that $x$ is a vector of $U$ which satisfies
$\overline{\textrm{dens}}\,\mathcal{N}_{T}(x,B_{X}(0,1))\ge c$. The set of vectors 
$x\in X$ such that $\overline{\textrm{dens}}\,\mathcal{N}_{T}(x,B_{X}(0,1))\ge c$ is thus 
dense in $X$, and the usual argument shows that this set is in fact comeager in $X$. It follows that $c(T)\ge c$, from which we deduce that 
$c(T)\ge \inf_{n\ge 1}c(T_{1}\oplus\cdots\oplus T_{n})$. This concludes the proof of 
Lemma \ref{Lemma 101}.
\end{proof}

\begin{proof}[Proof of Theorem \ref{Theorem 100}]
Let $(T_{n})_{n\ge 1}$ be a sequence of operators satisfying the assumptions of Theorem 
\ref{Theorem 54}. Denoting for each $n\ge 1$ by $(\tau_{n}^{(k)})_{k\ge 1}$, 
$(\delta _{n}^{(k)})_{k\ge 1}$, and $(\Delta _{n}^{(k)})_{k\ge 1}$ the sequences of 
parameters associated to the operator $T_{n}$, we suppose that 
\[
0<\limsup_{k\to\infty}\dfrac{\delta _{n}^{(k)}}{\Delta _{n}^{(k)}}\le 2^{-n}
\]
for every $n\ge 1$. By Theorem \ref{Theorem 54}, all the operators $T_{n}$ are 
frequently hypercyclic. On the other hand, it follows from 
(\ref{Inegalite 103}) that
\smallskip
\begin{align*}
 c(T_{n})&\le\limsup_{k\to\infty}\biggl (2\dfrac{\delta _{n}^{(k)}}{\Delta _{n}^{(k)}} 
\,\cdot\,\dfrac{1}{1-\dfrac{\delta _{n}^{(k)}}{\Delta _{n}^{(k)}}+\dfrac{1}{\Delta 
_{n}^{(k)}}}+2\dfrac{\delta _{n}^{(k)}}{\Delta _{n}^{(k)}}\biggr)\\
&\le 2\cdot 2^{-n}\,\cdot\,\dfrac{1}{1-2^{-n}}+2.2^{-n}
\le 6\cdot 2^{-n}\quad\textrm{ for every } n\ge 1,
\end{align*}
\smallskip
 so that $c(T_{n})$ tends to $0$ as $n$ tends to infinity. By Lemma 
\ref{Lemma 102}, the operator $T=\bigoplus_{\ell_{p}}T_{n}$ (which is hypercyclic) satisfies 
$c(T)=0$. Hence $T$ is not $\mathcal{U}$-frequently hypercyclic.
\end{proof}

\subsection{Operators of \cpdt: UFHC does not imply FHC}\label{Subsection 4.5}
In this subsection, we impose the following restrictions on the parameters 
$v$, $w$, and $b$ of a \cpt\ operator $\tvw$: 
for every $\gk$,
\[
v^{(k)}=2^{-\tau^{(k)}} \quad \text{and}\quad 
w^{(k)}_i=
\begin{cases}
  2 & \quad\text{if}\ \ 1\le i\le \delta ^{(k)}\\
 1 & \quad\text{if}\ \ \delta^{(k)}<i<\Delta^{(k)}-3\delta^{(k)}\\
 1/2 & \quad\text{if}\ \  \Delta^{(k)}-3\delta^{(k)}\le i 
<\Delta^{(k)}-2\delta^{(k)}\\
 2 & \quad\text{if}\ \ \Delta^{(k)}-2\delta^{(k)}\le i<\Delta^{(k)}-\delta^{(k)}\\
 1& \quad\text{if}\ \ \Delta^{(k)}-\delta^{(k)}\le i<\Delta^{(k)}
\end{cases}
\]
where $(\tau ^{(k)})_{\gk}$ and $(\delta ^{(k)})_{\gk}$ are two strictly 
increasing sequences of integers, satisfying $4\delta ^{k}<\Delta 
^{(k)}$ for every $\gk$. We call operators satisfying these conditions 
\emph{operators of \cpdt}. Observe that we still have with this definition
\[
\prod_{i=1}^{\Delta ^{(k)}-1}w_{i}^{(k)}=2^{\,\delta ^{(k)}}\quad\hbox{for 
every $\gk$.}
\]
\par\smallskip
This choice of weights is motivated by the differences between the assumptions of Theorem \ref{Theorem 39} 
and those of Theorem \ref{Theorem 41}.
If we assume for simplicity that $\delta^{(k)}=2\tau^{(k)}$ for every $k\ge 1$ and if we consider the vector 
$x:=2^{-\frac{\delta^{(k)}}{2}}e_{b_{2^{k-1}}+\Delta^{(k)}-2\delta^{(k)}}$, we observe that 
$T^{2\delta^{(k)}}x$ is close to $e_0$ and that the orbit of $x$  follows the orbit of $e_0$ during 
an interval of time proportional to $\delta^{(k)}$. In view of Theorem~\ref{Theorem 39},  it thus seems likely
that $T$ will be $\mathcal{U}$-frequently hypercyclic. However, if 
$\lim_{k\to\infty }\frac{\delta  ^{(k)}}{\Delta ^{(k)}}=0$, it also seems plausible that  
$T$ will not satisfy the assumptions of Theorem~\ref{Theorem 41}, and hence will possibly 
not be frequently hypercyclic.
\par\smallskip

\subsubsection{How to be \emph{\textrm{FHC}} or \emph{\textrm{UFHC}}}
In this subsection, we present some sufficient conditions for an operator of \cpdt\ to be 
$\mathcal{U}$-frequently hypercyclic.
\begin{theorem}\label{Theorem 56}
 Let $T=\tvw$ be an operator of \cpdt\ on $\ell_{p}(\N)$. 
  \begin{enumerate}
  \item [\emph{(1)}] If\ \ $\ds\limsup_{k\to\infty}\
  \bigl( \delta ^{(k)}-\tau ^{(k)}\bigr)=\infty  $, then $T$ is chaotic.
  \item[\emph{(2)}] If\ \  $\ds\limsup_{k\to\infty }\ \dfrac{\tau ^{(k)}}{
  \delta ^{(k)}}<1$, then $T$ is chaotic and $\mathcal{U}$-frequently 
hypercyclic.
\item[\emph{(3)}] If\ \  $\ds\limsup_{k\to\infty }\ \dfrac{\delta ^{(k)}-
\tau ^{(k)}}{\Delta ^{(k)}}>0$, then $T$ is chaotic and frequently 
hypercyclic.
 \end{enumerate}
\end{theorem}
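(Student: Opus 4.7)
The three statements are proved by applying, respectively, Proposition~\ref{Proposition 45}, Theorem~\ref{Theorem 46}, and Theorem~\ref{Theorem 47} to the operator $T$. The structure of the argument parallels the proof of Theorem~\ref{Theorem 52}, with the real content concentrated in a handful of weight-product computations specific to the \cpdt\ template.

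The key preliminary identity is
\[
\prod_{i=1}^{\Delta^{(k)}-1}|w_i^{(k)}|=2^{\delta^{(k)}},
\]
which follows immediately from the definition of \cpdt: the three non-trivial blocks of length $\delta^{(k)}$ carrying weights $2$, $1/2$, $2$ contribute factors $2^{\delta^{(k)}}$, $2^{-\delta^{(k)}}$, $2^{\delta^{(k)}}$ respectively, while the two blocks of $1$'s contribute nothing. Consequently $|v^{(k)}|\prod_{i=1}^{\Delta^{(k)}-1}|w_i^{(k)}|=2^{\delta^{(k)}-\tau^{(k)}}$. Under assumption~(1) this tends to infinity along a subsequence of $k$'s, and since $\varphi$ maps $[2^{k-1},2^k)$ bijectively onto $[0,2^{k-1})$, the condition $\limsup_{N\in\varphi^{-1}(n)}|v_N|\prod_{j=b_N+1}^{b_{N+1}-1}|w_j|=\infty$ of Proposition~\ref{Proposition 45} holds for every fixed $n\ge 0$, giving part~(1).

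For parts~(2) and~(3) I always choose $m:=2\delta^{(k)}$, which is admissible because $4\delta^{(k)}<\Delta^{(k)}$. Since the weights in the two rightmost blocks of length $\delta^{(k)}$ equal $2$ and $1$,
\[
|v^{(k)}|\prod_{i=\Delta^{(k)}-m}^{\Delta^{(k)}-1}|w_i^{(k)}|=2^{-\tau^{(k)}}\cdot 2^{\delta^{(k)}}\cdot 1^{\delta^{(k)}}=2^{\delta^{(k)}-\tau^{(k)}},
\]
which exceeds $C$ for $k$ large enough in both settings, so the first condition of Theorem~\ref{Theorem 46} (respectively condition~\eqref{Equation 14} of Theorem~\ref{Theorem 47}) is satisfied. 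Moreover, as long as $m'\le\delta^{(k)}$, the initial weights $w_1^{(k)},\ldots,w_{m'}^{(k)}$ are all equal to $2$, and the preliminary identity collapses to
\[
|v^{(k)}|\prod_{i=m'+1}^{\Delta^{(k)}-1}|w_i^{(k)}|=2^{\delta^{(k)}-m'-\tau^{(k)}}.\qquad (*)
\]
In the setting of~(2), fix $\eta>0$ such that $\tau^{(k)}\le (1-\eta)\delta^{(k)}$ along a subsequence, and set $\alpha:=\eta/4$. Then $\alpha m=\frac{\eta}{2}\delta^{(k)}\le \delta^{(k)}$, so $(*)$ applies for every $0\le m'\le\alpha m$, and the exponent there is at least $\delta^{(k)}\bigl(1-\tfrac{\eta}{2}-(1-\eta)\bigr)=\tfrac{\eta}{2}\delta^{(k)}\to\infty$, which gives the second condition of Theorem~\ref{Theorem 46}. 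In the setting of~(3), set $\beta:=\limsup_{k\to\infty}(\delta^{(k)}-\tau^{(k)})/\Delta^{(k)}>0$ and choose $\alpha:=\beta/2$. Along a suitable subsequence one has $(\delta^{(k)}-\tau^{(k)})/\Delta^{(k)}\ge 3\beta/4$, whence $\alpha\Delta^{(k)}<\delta^{(k)}-\tau^{(k)}\le\delta^{(k)}$; thus $(*)$ is still valid throughout $0\le m'\le\alpha\Delta^{(k)}$, and the exponent is at least $(\beta/4)\Delta^{(k)}\to\infty$, giving condition~\eqref{Equation 15} of Theorem~\ref{Theorem 47}.

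The only mildly delicate point in the plan is to ensure that the range $[0,\alpha m]$ (respectively $[0,\alpha\Delta^{(k)}]$) stays within $[0,\delta^{(k)}]$, where the collapse formula $(*)$ holds; past the threshold $\delta^{(k)}$ the subsequent weights equal to $1$, $1/2$, and then $2$ take over, and $|v^{(k)}|\prod_{i=m'+1}^{\Delta^{(k)}-1}|w_i^{(k)}|$ drops as low as $2^{-\tau^{(k)}}$, which would completely destroy the required lower bound. The specific choices $\alpha=\eta/4$ in part~(2) and $\alpha=\beta/2$ in part~(3) are made precisely to avoid this leakage.
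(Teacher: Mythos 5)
Your proof is correct and follows the same route as the paper: reduce everything to Proposition~\ref{Proposition 45}, Theorem~\ref{Theorem 46}, and Theorem~\ref{Theorem 47} respectively, always choosing $m=2\delta^{(k)}$, and exploit the telescoping identity $\prod_{i=1}^{\Delta^{(k)}-1}|w_i^{(k)}|=2^{\delta^{(k)}}$ together with the ``collapse'' formula $(*)$ valid when $m'\le\delta^{(k)}$. The only cosmetic difference is that for part~(3) the paper (by its cross-reference to Theorem~\ref{Theorem 52}) implicitly takes $m=\Delta^{(k)}-1$ rather than $m=2\delta^{(k)}$, but as you note, in Theorem~\ref{Theorem 47} the second condition is indexed by $\alpha\Delta^{(k)}$ rather than $\alpha m$, so any admissible $m$ with $\prod_{i=\Delta^{(k)}-m}^{\Delta^{(k)}-1}|w_i^{(k)}|\ge 2^{\delta^{(k)}}$ works, and $m=2\delta^{(k)}$ is a perfectly good choice.
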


\begin{proof}
 The proofs of assertions (1) and (3) are completely similar to the ones 
given in the proof of Theorem \ref{Theorem 52}. As for (2), it is a 
consequence of Theorem \ref{Theorem 46}. Let us fix
\[0<\alpha <\frac{1}{3}-\dfrac{1}{3}\,\limsup_{k\to\infty }\dfrac
{\tau ^{(k)}}{\delta ^{(k)}}\cdot
\]
Fix also $C\ge 1$, and an integer $k_{0}\ge 1$. There exists an integer $k\ge 
k_{0}$ such that 
\[
\dfrac{\tau^{(k)}}{\delta ^{(k)}}<1-3\alpha \quad\textrm{and}\quad 
\alpha\, 
\delta ^{(k)}>\log_{2}C.
\]
We then have
\[
 |v^{(k)}|\;\;\cdot\prod_{i=\Delta ^{(k)}-2\delta ^{(k)}}^{\Delta ^{(k)}-1}
 |w_{i}^{(k)}|=2^{\,\delta ^{(k)}-\tau ^{(k)}}>2^{\,3\alpha \delta 
^{(k)}}>C
\]
{and, for every $0\le m'\le 2\alpha \delta ^{(k)}$:}
\[ |v^{(k)}|\ \cdot\!\prod_{i=m'+1}^{\Delta ^{(k)}-1}
 |w_{i}^{(k)}|\ge 2^{\,(1-2\alpha )\delta ^{(k)}-\tau 
^{(k)}}>2^{\,\alpha\delta^{(k)}  }>C
\]
Applying Theorem \ref{Theorem 46} with $m=2\,\delta ^{(k)}$, it follows that $T$ is 
$\mathcal{U}$-frequently hypercyclic.
\end{proof}
\par We see from this proof that the new structure of the weights 
$(w_{j}^{(k)})_{1\le j<\Delta ^{(k)}}$, $\gk$, compared with the case of 
operators of \cput, allows us to provide different conditions for 
$\mathcal{U}$-frequent hypercyclicity and frequent hypercyclicity of 
operators of \cpdt. Our next result gives, under some additional 
assumptions, a \emph{necessary and sufficient} condition for frequent 
hypercyclicity of operators of \cpdt\ which, combined with (2) of Theorem~\ref{Theorem 56}, will ultimately allow us to construct 
$\mathcal{U}$-frequently hypercyclic operators of \cpdt\ which are not 
frequently hypercyclic.

\begin{theorem}\label{Theorem 57}
 Let $T=\tvw$ be an operator of \cpdt\ on $\ell_{p}(\N)$, and for every $k\geq 1$, set $\gamma_k:=2^{\,\delta ^{(k-1)}-\tau 
^{(k)}}\bigl(\Delta ^{(k)} 
\bigr)^{1-\frac1{p}}$.
 Suppose that the sequence $(\gamma_k)_{k\ge 1}$ is non-increasing and that it satisfies the following three conditions:
 \[
\sum_{\gk}2^{k}\gamma_k^{1/2}\le 1,\quad 
\quad \limsup_{k\to\infty }\dfrac{\tau ^{(k)}}{\delta 
^{(k)}}<1,\quad\textrm{and}
\quad \lim_{k\to\infty }\dfrac{\delta ^{(k)}}{\delta 
^{(k+1)}}=0.
\]
\par\smallskip \noindent
Then $T$ is frequently hypercyclic {if and only if}\ \  
$\limsup\limits_{k\to\infty }\ {\delta ^{(k)}}/{\Delta 
^{(k)}}>0.$
\end{theorem}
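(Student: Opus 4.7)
The plan is to handle the two directions separately.

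The sufficiency follows immediately from Theorem~\ref{Theorem 56}(3). Since $\limsup_k \tau^{(k)}/\delta^{(k)} < 1$, one can pick $\rho > 0$ with $\delta^{(k)} - \tau^{(k)} \ge \rho\, \delta^{(k)}$ for $k$ large enough; combining this with $\limsup_k \delta^{(k)}/\Delta^{(k)} > 0$ yields $\limsup_k (\delta^{(k)} - \tau^{(k)})/\Delta^{(k)} > 0$, and Theorem~\ref{Theorem 56}(3) immediately gives the frequent hypercyclicity of $T$.

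For the converse, assuming $\delta^{(k)}/\Delta^{(k)} \to 0$, the strategy is to apply Theorem~\ref{Theorem 49} with condition~(C'), along the lines of the proof of Theorem~\ref{Theorem 53}. Given an arbitrary hypercyclic vector $x$, I will take the same quantities as there: $C := 1/4$, $\beta_l := 4\gamma_k$ and $X_l := \bigl\|\sum_{i=b_l}^{b_{l+1}-1}\bigl(\prod_{s=i+1}^{b_{l+1}-1} w_s\bigr)\, x_i\, e_i\bigr\|$ for $l \in [2^{k-1}, 2^k)$. The hypotheses that $(\gamma_k)$ is non-increasing and $\sum_k 2^k \gamma_k^{1/2} \le 1$ guarantee that $(\beta_l)$ is non-increasing with $\sum_l \sqrt{\beta_l} \le 1$. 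The only structural modification compared to the \cput\ case is the new choice $N_l := \delta^{(k)}$ for $l \in [2^{k-1}, 2^k)$; the larger value $N_l = \Delta^{(k)} - \delta^{(k)}$ used in Theorem~\ref{Theorem 53} is no longer admissible here, because it would reach into the block where $w_i^{(k)} = 2$, destroying condition~(3).

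Conditions (1), (2), (3) of Theorem~\ref{Theorem 49} will be verified using Fact~\ref{Proposition 50}. A direct inspection of the five weight regimes of a \cpdt\ operator shows that the products $\prod_{s=i+1}^{b_{l+1}-1}|w_s|$ remain $\ge 1$ for every $i \in [b_l, b_{l+1})$ (the factors $2^{\delta^{(k)}}$ contributed by the two blocks of $2$'s always dominate or cancel the factor $2^{-\delta^{(k)}}$ contributed by the block of $1/2$'s), hence $\|P_l x\| \le X_l$; and with $N_l = \delta^{(k)}$ the relevant supremum in Fact~\ref{Proposition 50}(2) is exactly $1$, since the last $\delta^{(k)}$ weights in each block are equal to $1$, so (3) holds. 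For condition~(C') I will apply Fact~\ref{Proposition 51} with $k_0 := \delta^{(k)}$ and $k_1 := \Delta^{(k)} - 3\delta^{(k)}$: the weights $w_i^{(k)}$ are identically $1$ on $(k_0, k_1)$, and the product $\prod_{s=\delta^{(k)}+1}^{\Delta^{(k)}-1}|w_s^{(k)}|$ equals $1$ because the $\delta^{(k)}$ factors $1/2$ and the $\delta^{(k)}$ factors $2$ cancel exactly. Since $b_{l+1} - b_l - (k_1 - k_0) = 4\delta^{(k)}$ and $\min \varphi^{-1}(l) \in [2^k, 2^{k+1})$ (so that $N_{\min \varphi^{-1}(l)} = \delta^{(k+1)}$), Fact~\ref{Proposition 51} yields
$$\inf_{J \ge \delta^{(k+1)}} \frac{\#\{0 \le j \le J : \|P_l T^j P_l x\| \ge X_l/2\}}{J+1} \ge 1 - 8\delta^{(k)}\left(\frac{1}{\delta^{(k+1)}+1} + \frac{1}{\Delta^{(k)}}\right),$$
and the joint vanishing of $\delta^{(k)}/\delta^{(k+1)}$ and $\delta^{(k)}/\Delta^{(k)}$ forces this lower bound to tend to $1$. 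This establishes (C'), so Theorem~\ref{Theorem 49} rules out frequent hypercyclicity.

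The main obstacle is not conceptual but computational: one must carefully track the five weight zones of a \cpdt\ operator and verify the two exact cancellations above (the uniform lower bound $\prod \ge 1$ needed for $\|P_l x\| \le X_l$, and the equality $\prod_{s=k_0+1}^{\Delta^{(k)}-1}|w_s^{(k)}| = 1$ needed for Fact~\ref{Proposition 51}), while simultaneously checking that the smaller window $N_l = \delta^{(k)}$ is still compatible with (3). The new hypothesis $\delta^{(k)}/\delta^{(k+1)} \to 0$ — absent from Theorem~\ref{Theorem 53} — is precisely what compensates for this smaller window, allowing (C') to be established by looking one level ahead rather than relying on the size $\Delta^{(k)}$ itself.
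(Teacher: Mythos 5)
Your proof is correct and follows essentially the same route as the paper: the forward direction via Theorem~\ref{Theorem 56}(3), and the converse via Theorem~\ref{Theorem 49}(C') with the same choices $C=1/4$, $\beta_l=4\gamma_k$, $X_l$, and $N_l=\delta^{(k)}$, invoking Facts~\ref{Proposition 50} and~\ref{Proposition 51} with $k_0=\delta^{(k)}$, $k_1=\Delta^{(k)}-3\delta^{(k)}$ and then using $\delta^{(k)}/\delta^{(k+1)}\to 0$ together with $\min\varphi^{-1}(l)\in[2^k,2^{k+1})$. Your explicit checks of the two weight cancellations (that all partial products $\prod_{s=i+1}^{b_{l+1}-1}|w_s|\ge 1$ and that $\prod_{s=\delta^{(k)}+1}^{\Delta^{(k)}-1}|w^{(k)}_s|=1$) are correct and spell out details the paper leaves to the reader.
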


\begin{proof}
Since $\limsup_{k\to\infty }\frac{\tau ^{(k)}}{\delta 
^{(k)}}<1$, 
the condition $\limsup_{k\to\infty }\frac{\delta^{(k)}}{\Delta 
^{(k)}}>0$ is equivalent to
\[
\ds\limsup_{k\to\infty }\ \dfrac{\delta^{(k)}-\tau ^{(k)}}{\Delta 
^{(k)}}>0;
\]
and by assertion (3) of Theorem \ref{Theorem 
56}, this condition implies that $T$ is frequently hypercyclic. 
\par\smallskip
The proof of the converse assertion proceeds exactly as in the proof of the implication 
$\textrm{(1)}\Longrightarrow\textrm{(3)}$ in Theorem \ref{Theorem 53}. 
Suppose that $\limsup_{k\to\infty }\ \frac{\delta^{(k)}}{\Delta 
^{(k)}}=0$, and let $x\in\ell_{p}(\N)$ be a hypercyclic vector for $T$. 
Our aim is to prove that $x$ cannot be a frequently hypercyclic vector for 
$T$, using Theorem \ref{Theorem 49}. We set 
\[
\beta _{l}:=4\,\gamma_k\quad\textrm{for every}\ l\in[2^{k-1},2^{k}),\ \gk.
\]
This sequence 
$(\beta _{l})_{l\ge 1}$ is non-increasing and satisfies
$\sum_{l\ge 1}\sqrt{\beta _{l}}\le 1$. Setting
\[
X_{l}:=\Bigl\| \sum_{i=b_{l}}^{b_{l+1}-1}\Bigl(\prod_{s=i+1}
^{b_{l+1}-1}w_{s}\Bigr)\,x_{i}e_{i}\Bigr\|\quad{\rm for}\quad  l\in[2^{k-1},2^{k}),\ \gk,
\]
we have $\Vert P_{l}\,x\Vert \le X_{l}$.
\par\smallskip
Also, by Fact \ref{Proposition 50}, we have for every\ $l\ge 0$, every $0\le n<l$, and every $1\le N\le \Delta ^{(k)}$,  
where $l\in[2^{k-1},2^{k})$, that
\begin{align*}
 \sup_{j\ge 0}\ \Vert P_{n}T^{\,j}P_{l}\,x\Vert &\le\dfrac{1}{4}\,\beta 
_{l}X_{l}\quad
 \end{align*}
 and
 \begin{align*}
 \sup_{0\le j\le N}\ \Vert P_{n}T^{\,j}P_{l}\,x\Vert &\le \dfrac{1}{4}\,\beta 
_{l}\,\cdot
 \Bigl(\prod_{i=\Delta ^{(k)}-N+1}^{\Delta ^{(k)}-1}\!\!\!|w_{i}^{(k)}|\  
\Bigr)\,\cdot\,
 \Vert P_{l}\,x\Vert .
\end{align*}
At this point, we diverge from the proof of 
Theorem \ref{Theorem 53}, and set $N_{l}:=\delta ^{(k)}$ for every 
$l\in[2^{k-1},2^{k})$ and every $\gk$. We then have
$\ds\sup_{0\le j\le N_{l}}\ \Vert P_{n}T^{\,j}P_{l}\,x\Vert \le 
\dfrac{1}{4}\,\beta 
_{l}\,
 \Vert P_{l}\,x\Vert $ for every $l\ge 0$ and every $0\le n<l$. In order to check 
that assumption (C') of Theorem \ref{Theorem 49} holds true, we use again 
Fact \ref{Proposition 51}. We have $w_{i}^{(k)}=1$ for every
$i\in(\delta ^{(k)},\Delta ^{(k)}-3\delta ^{(k)})$ and 
\[
\prod_{i=\delta ^{(k)}+1}^{\Delta ^{(k)}-1}w_{i}^{(k)}=1.
\]
It follows from Fact \ref{Proposition 51} that 
\[
\dfrac{1}{J+1}\ \#\,\bigl\{0\le j\le J\,;\,\Vert P_{l}T^{\,j}T_{l}\,x\Vert \ge 
X_{l}/2\bigr\}\ge 1-8\delta ^{(k)}\Bigl(\dfrac{1}{J+1}+\dfrac{1}
{\Delta ^{(k)}}\Bigr)
\]
for every $J\ge 0$, every $\gk$, and every $l\in[2^{k-1},2^{k})$. Now, we 
have $\min(\varphi ^{-1}(l))=2^{k}$ for every $l\in[2^{k-1},2^{k})$, so 
that $N_{\min(\varphi ^{-1}(l))}=\delta ^{(k+1)}$. 
\par\smallskip
For every $\gk$, we have
\[
 \inf_{J\ge \delta ^{(k+1)}}\ 
\dfrac{1}{J+1}\ \#\,\Bigl\{0\le j\le J\,;\,\Vert P_{l}T^{\,j}T_{l}\,x\Vert \ge 
X_{l}/2\Bigr\}\ge 1-\dfrac{8\delta ^{(k)}}{\delta ^{(k+1)}}-\dfrac{8
\delta ^{(k)}}{\Delta ^{(k)}}\cdot
\]
{Hence}
\begin{align*}
\liminf_{l\to\infty }\, 
\inf_{J\ge \delta ^{(k+1)}}\ 
\dfrac{1}{J+1}\ \#\,\Bigl\{0\le j\le J\,;\,&\Vert P_{l}T^{\,j}T_{l}\,x\Vert \ge
X_{l}/2\Bigr\}\\
&\ge \liminf_{k\to\infty }\,\Bigl( 1-\dfrac{8\delta 
^{(k)}}{\delta ^{(k+1)}}-\dfrac{8
\delta ^{(k)}}{\Delta ^{(k)}}\Bigr)=1,
\end{align*}
since $\lim_{k\to\infty }\,\frac{\delta ^{(k)}}{\delta 
^{(k+1)}}=0$ and 
$\limsup_{k\to\infty }\,\frac{\delta ^{(k)}}{\Delta 
^{(k)}}=0$. Theorem \ref{Theorem 49} thus implies that $T$ is not frequently 
hypercyclic
\end{proof}
\par\smallskip

\subsubsection{\emph{\textrm{UFHC}} but not \emph{\textrm{FHC}}}
As a direct consequence of Theorems \ref{Theorem 56} and \ref{Theorem 57}, 
we now obtain

\begin{theorem}\label{Theorem 58}
 Let $T=\tvw$ be an operator of \cpdt\ on $\ell_{p}(\N)$. For every $k\geq 1$, set $\gamma_k:=2^{\,\delta ^{(k-1)}-\tau 
^{(k)}}\bigl(\Delta ^{(k)} 
\bigr)^{1-\frac1{p}}.$ 
 Suppose that the sequence $(\gamma_k)_{k\ge 1}$ is non-increasing, and that the following conditions are satisfied:
 \[
\sum_{\gk}2^{k}\gamma_k^{1/2}\le 1,\quad 
\limsup_{k\to\infty }\dfrac{\tau ^{(k)}}{\delta 
^{(k)}}<1,
\quad  \lim_{k\to\infty }\dfrac{\delta ^{(k)}}{\delta 
^{(k+1)}}=0\quad\textrm{and}\ \lim_{k\to\infty }\dfrac{\delta 
 ^{(k)}}{\Delta ^{(k)}}=0.
\]
Then $T$ is $\mathcal{U}$-frequently hypercyclic but not frequently 
hypercyclic.
\end{theorem}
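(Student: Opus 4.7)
The plan is to deduce Theorem~\ref{Theorem 58} as an essentially immediate consequence of Theorems~\ref{Theorem 56} and~\ref{Theorem 57}, which have already been established. The statement is purely conditional (the hypotheses on the parameters are assumed), so the work consists only of matching the assumptions correctly; the delicate analysis has already been carried out in Theorems~\ref{Theorem 46}, \ref{Theorem 49} and in the proofs of Theorems~\ref{Theorem 56}, \ref{Theorem 57}.

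First I would handle the $\mathcal{U}$-frequent hypercyclicity. Since $T$ is an operator of \cpdt, and since the assumption $\limsup_{k\to\infty}\tau^{(k)}/\delta^{(k)}<1$ of Theorem~\ref{Theorem 58} is exactly the hypothesis of part~(2) of Theorem~\ref{Theorem 56}, one directly gets that $T$ is chaotic and $\mathcal{U}$-frequently hypercyclic. Note that none of the other hypotheses of Theorem~\ref{Theorem 58} (monotonicity of $(\gamma_k)$, summability of $2^k\gamma_k^{1/2}$, the two ratio conditions) is needed for this step.

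Next I would deal with the failure of frequent hypercyclicity. The four structural hypotheses placed on the parameters in Theorem~\ref{Theorem 58}, namely that $(\gamma_k)_{k\ge 1}$ is non-increasing, that $\sum_{k\ge 1}2^k\gamma_k^{1/2}\le 1$, that $\limsup_{k\to\infty}\tau^{(k)}/\delta^{(k)}<1$ and that $\lim_{k\to\infty}\delta^{(k)}/\delta^{(k+1)}=0$, are precisely the standing hypotheses of Theorem~\ref{Theorem 57}. Under these, Theorem~\ref{Theorem 57} furnishes an \emph{equivalence}: $T$ is frequently hypercyclic if and only if $\limsup_{k\to\infty}\delta^{(k)}/\Delta^{(k)}>0$. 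Since the last hypothesis of Theorem~\ref{Theorem 58} is the stronger condition $\lim_{k\to\infty}\delta^{(k)}/\Delta^{(k)}=0$, which in particular forces $\limsup_{k\to\infty}\delta^{(k)}/\Delta^{(k)}=0$, the equivalence in Theorem~\ref{Theorem 57} gives that $T$ is \emph{not} frequently hypercyclic. Combining the two steps, $T$ is $\mathcal{U}$-frequently hypercyclic but not frequently hypercyclic, as desired.

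There is no genuine obstacle in this proof: the entire content has been moved to Theorems~\ref{Theorem 56} and~\ref{Theorem 57}, and the statement of Theorem~\ref{Theorem 58} is designed so that the hypotheses align perfectly with the two applications. The only thing worth adding is that the set of admissible parameters is nonempty: choosing, in the spirit of Example~\ref{Example 55}, $\tau^{(k)}=2^{Ck}$, $\delta^{(k)}=2\cdot 2^{Ck}$ and $\Delta^{(k)}=k\cdot 2^{2Ck}$ for a sufficiently large integer $C$, one checks that all the hypotheses of Theorem~\ref{Theorem 58} are satisfied, so that concrete examples of chaotic, $\mathcal{U}$-frequently hypercyclic, non-frequently hypercyclic Hilbert space operators do exist within the \cpdt\ family.
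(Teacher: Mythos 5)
Your argument for Theorem \ref{Theorem 58} is correct and is exactly the paper's route: part (2) of Theorem \ref{Theorem 56} supplies $\mathcal U$-frequent hypercyclicity from $\limsup_{k}\tau^{(k)}/\delta^{(k)}<1$, and Theorem \ref{Theorem 57}, applied with the remaining hypotheses, excludes frequent hypercyclicity because $\lim_{k}\delta^{(k)}/\Delta^{(k)}=0$ forces $\limsup_{k}\delta^{(k)}/\Delta^{(k)}=0$. One peripheral slip: your proposed concrete parameters $\tau^{(k)}=2^{Ck}$, $\delta^{(k)}=2\cdot 2^{Ck}$, $\Delta^{(k)}=k\cdot 2^{2Ck}$ do \emph{not} satisfy the hypothesis $\lim_{k\to\infty}\delta^{(k)}/\delta^{(k+1)}=0$, since $\delta^{(k)}/\delta^{(k+1)}=2^{-C}$ is constant; to make the example work one needs super-geometric growth of $\delta^{(k)}$, as in the paper's Example \ref{Example 59} where $\delta^{(k)}=2^{Ck^2+1}$ gives $\delta^{(k)}/\delta^{(k+1)}=2^{-C(2k+1)}\to 0$. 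This does not affect the proof of Theorem \ref{Theorem 58} itself, which is conditional.
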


Here is a concrete example, the proof of which is left to the reader.

\begin{example}\label{Example 59}
 If we consider the operator of \cpdt\ associated to the parameters 
\[
\tau ^{(k)}=2^{Ck^2},\quad \delta ^{(k)}=2^{Ck^2+1}\quad {\rm and}\quad \Delta 
^{(k)}=2^{2Ck^2+4}, \quad k\ge 1,
\]
where $C$ is a sufficiently large integer, then $T$ is 
$\mathcal{U}$-frequently hypercyclic but not frequently hypercyclic.
\end{example}
\par\smallskip

\subsection{Operators of \ccut: chaos plus mixing do not imply UFHC}\label{ex mixing} In this subsection, we introduce yet another class of \cct\ operators $T=\tvw$ on $\ell_{p}(\N)$.
\par\smallskip
We consider increasing sequences $(a_k)_{k\ge 1}$, $(f_k)_{k\ge 1}$, $(\delta^{(k)})_{k\ge 1}$, $(\tau^{(k)})_{k\ge 1}$ and $(\Delta^{(k)})_{k\ge 1}$ of integers
such that $0\le a_k\le f_k<\Delta^{(k)}-4\delta^{(k)}$ for every $k\ge 1$, and we denote by $(J_k)_{k\ge 0}$ the partition of $\N$ into consecutive 
finite intervals defined as follows: $J_0=\{0\}$ and $\#J_k=(f_k-a_k)(\sum_{i=0}^{k-1}\#J_{i})$ for every $k\geq 1$.
\par\smallskip
We then require that:
\begin{enumerate}
 \item[-] $\varphi (n)=\left\lfloor\frac{n-\min J_k}{f_k-a_k}\right\rfloor$ for every $n\in J_k$;
 \item[-] the blocks $[b_{n},b_{n+1})$, $n\in J_k$,
all have the same size $\Delta^{(k)}$;
\item[-] the sequence $v$ is given by
\[ v_{n}={2^{-\tau^{(k)}}}\qquad\hbox{for every 
$n\in J_k$};
\]
\item[-] If $j$ belongs to the interval $ [b_n,b_{n+1})$ with $n\in J_k$ and $(n-\min J_k) \mod (f_k-a_k)=l$, then the weight $w_j$ is given by
\[
w_j=
\begin{cases}
  2 & \quad\text{if}\ \ b_n< j\le b_n+\delta^{(k)}\\
 1 & \quad\text{if}\ \ b_n+\delta^{(k)}<j<b_{n+1}-a_k-l-2\delta^{(k)}\\
 1/2 & \quad\text{if}\ \  b_{n+1}-a_k-l-2\delta^{(k)}\le j <b_{n+1}-a_k-l-\delta^{(k)}\\
 2 & \quad\text{if}\ \ b_{n+1}-a_k-l-\delta^{(k)}\le j<b_{n+1}-a_k-l\\
 1& \quad\text{if}\ \ b_{n+1}-a_k-l\le j<b_{n+1}.
\end{cases}
\]
\end{enumerate}

If these conditions are met, we call $T=\tvw$  an \emph{operator of \ccut}.
\par\smallskip
Note that the family of \ccut\  operators contain that of \cpdt\ operators. Indeed, if we consider the sequences $(a_{k})_{k\ge 1}$ and $(f_{k})_{k\ge 1}$ given by $a_k=\delta^{(k)}$ and
$f_k=a_k+1$ for every $k\ge 1$, we get back the definition of \cpdt\ operators. 
The specificity of these new operators lies in the fact that for every $k\ge 1$ and every $n\in \bigcup_{k'<k}J_{k'}$, the set $J_k$ contains $f_k-a_k$ integers $m$ for which 
$\varphi(m)=n$, and, for each of these integers, the central block of weights $(1/2,\dots,1/2,2,\cdots,2)$ is translated in a similar way.
\par\smallskip

\subsubsection{How to be topologically mixing}
By using Proposition~\ref{Proposition mixing 0}, we can show that under suitable assumptions on 
the sequences $(a_k)_{k\ge 1}$ and $(f_k)_{k\ge 1}$, an operator of \ccut\ can be topologically mixing.

\begin{proposition}\label{prop C mix}
Let $T$ be an operator of \ccut\ on $\ell_{p}(\N)$. Suppose that the following two conditions hold true: $\lim_{k\to\infty} (\delta^{(k)}-\tau^{(k)})=\infty$, and the set $$\bigcup_{k\ge 1}\, [a_k+\delta^{(k)},f_k+\delta^{(k)})$$ is cofinite. Then $T$ is topologically mixing.
\end{proposition}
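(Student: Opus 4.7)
The plan is to apply Proposition \ref{Proposition mixing 0}, which gives a sufficient condition for an operator of \cct\ to be topologically mixing in terms of the cofiniteness of certain sets $S_{\eta,n}$. So I fix $\eta>0$ and $n\ge 0$ and aim to show that $S_{\eta,n}$ is cofinite.

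The first step is a direct computation of the ``global" product $|v_m|\prod_{i=b_m+1}^{b_{m+1}-1}|w_i|$ for $m\in J_k$. Examining the weights in a \ccut\ block (for offset $l:=(m-\min J_k)\mod (f_k-a_k)$), I count $\delta^{(k)}$ weights equal to $2$ at the beginning of the block, $\delta^{(k)}$ weights equal to $1/2$, followed by $\delta^{(k)}$ weights equal to $2$, the other weights being equal to $1$. All the $2$'s and $1/2$'s cancel out except for one block of $\delta^{(k)}$ weights equal to $2$, so the product telescopes to $2^{\delta^{(k)}}$, giving $|v_m|\prod_{i=b_m+1}^{b_{m+1}-1}|w_i|=2^{\delta^{(k)}-\tau^{(k)}}$. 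By the first hypothesis, this quantity exceeds $1/\eta$ for all $k$ greater than some integer $K(\eta)$, so $\varphi^{-1}(n)\cap J_k\subseteq \mathcal N_{\eta,n}$ for such $k$.

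Next, I need to understand, for each such $m$, the set of integers $s$ satisfying $|v_m|\prod_{i=b_{m+1}-s}^{b_{m+1}-1}|w_i|>1/\eta$. Looking at the tail of a block corresponding to offset $l$: for $s\le a_k+l$ the tail product is $1$, and for $a_k+l<s\le a_k+l+\delta^{(k)}$ the tail product is $2^{s-a_k-l}$. In particular, the specific choice $s=a_k+l+\delta^{(k)}$ produces a tail product equal to $2^{\delta^{(k)}}$, so $|v_m|\prod_{i=b_{m+1}-s}^{b_{m+1}-1}|w_i|=2^{\delta^{(k)}-\tau^{(k)}}>1/\eta$ for $k\ge K(\eta)$. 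Moreover, since $\varphi(J_k)=[0,\sum_{i<k}\#J_i)$ and this set exhausts $\N$, there exists an integer $K'=K'(\eta,n)\ge K(\eta)$ such that for every $k\ge K'$, each $l\in[0,f_k-a_k)$ corresponds to exactly one $m\in\varphi^{-1}(n)\cap J_k\subseteq\mathcal N_{\eta,n}$. Consequently, for every $k\ge K'$, the interval $[a_k+\delta^{(k)},f_k+\delta^{(k)})$ is contained in $S_{\eta,n}$.

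To conclude, the second hypothesis states that $\bigcup_{k\ge 1}[a_k+\delta^{(k)},f_k+\delta^{(k)})$ is cofinite. Since each of the finitely many discarded intervals $[a_k+\delta^{(k)},f_k+\delta^{(k)})$ for $k<K'$ is itself finite, the set $S_{\eta,n}\supseteq\bigcup_{k\ge K'}[a_k+\delta^{(k)},f_k+\delta^{(k)})$ remains cofinite. The hypothesis of Proposition \ref{Proposition mixing 0} is thus satisfied, and $T$ is topologically mixing. No step looks like a genuine obstacle here: the main content is the telescoping computation of the tail products, which is dictated by the specific choice of weights in the definition of operators of \ccut, and the matching between the set of admissible integers $s=a_k+l+\delta^{(k)}$ and the intervals appearing in the second hypothesis.
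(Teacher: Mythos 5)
Your proof is correct and follows essentially the same route as the paper's: apply Proposition \ref{Proposition mixing 0}, compute the full block product $|v_m|\prod_{i=b_m+1}^{b_{m+1}-1}|w_i|=2^{\delta^{(k)}-\tau^{(k)}}$ to see that $\varphi^{-1}(n)\cap J_k\subseteq\mathcal N_{\eta,n}$ for $k$ large, then use the tail product at $s=a_k+l+\delta^{(k)}$ (which also equals $2^{\delta^{(k)}}$) to show $[a_k+\delta^{(k)},f_k+\delta^{(k)})\subseteq S_{\eta,n}$ for $k$ large, and conclude by cofiniteness of the union. You are slightly more explicit than the paper about the need to take $k$ large enough that $n$ lies in $\varphi(J_k)=[0,\sum_{i<k}\#J_i)$ — i.e.\ that each offset $l$ actually corresponds to some $m\in\varphi^{-1}(n)\cap J_k$ — but this is a minor presentational difference, not a distinct argument.
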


\begin{proof}
With the notations of Proposition \ref{Proposition mixing 0}, we have to show that the set $S_{\eta,n}$ is cofinite. First of all, observe that the set $\mathcal{N}_{\eta,n}$ contains, for every $k\ge 1$ such that $2^{\delta^{(k)}-\tau^{(k)}}>{1}/{\eta}$, the set $\varphi^{-1}(n)\cap J_k$. Since $\delta^{(k)}-\tau^{(k)}$ tends to infinity as $k$ tends to infinity, there exists an integer  $k_0\ge 1$ such that $\mathcal{N}_{\eta,n}$ contains the set
\[\bigcup_{k\ge k_0}\,(\varphi^{-1}(n)\cap J_k).\] 
Recalling that
\[
S_{\eta,n}=\bigcup_{m\in \mathcal{N}_{\eta,n}}\Big\{s\in [1,b_{m+1}-b_m);\; |v_{m}|\prod_{i=b_{m+1}-s}^{b_{m+1}-1}|w_{i}|>\frac{1}{\eta}\Big\},
\]
we deduce that $S_{\eta,n}$ contains the set
\[
 \bigcup_{k\ge k_0}\bigcup_{m\in \varphi^{-1}(n)\cap J_k}\Big\{s\in [1,b_{m+1}-b_m);\; |v_{m}|\prod_{i=b_{m+1}-s}^{b_{m+1}-1}|w_{i}|\ge 2^{\delta^{(k)}-\tau^{(k)}}\Big\},
\]
which in its turn contains the set
\[
\bigcup_{k\ge k_0}\bigcup_{m\in \varphi^{-1}(n)\cap J_k}\Big\{s\in [1,b_{m+1}-b_m);\; \prod_{i=b_{m+1}-s}^{b_{m+1}-1}|w_{i}|= 2^{\delta^{(k)}}\Big\}
\]
(recall that $v_{m}=2^{-\tau^{(k)}}$ for every integer $m$ belonging to $J_{k}$).
Fix an integer $k\ge k_{0}$, and let $m$ be an integer of the form $m=\min J_k +n(f_k-a_k)+l$, where $0\le l< f_k-a_k$. Then $m$ belongs to $J_{k}$, $\varphi(m)=n$ and $\prod_{i=b_{m+1}-a_k-l-\delta^{(k)}}^{b_{m+1}-1}|w_{i}|=2^{\delta^{(k)}}$. It follows that  $S_{\eta,n}$ contains every integer of the form $a_k+l+\delta^{(k)}$, where $0\le l< f_k-a_k$, \mbox{\it i.e.} that 
\[
 \bigcup_{k\ge k_0}\,[a_k+\delta^{(k)},f_k+\delta^{(k)})\subseteq S_{\eta,n}.
\]
Since the set on the left hand side is cofinite by assumption,  the desired result follows from Proposition~\ref{Proposition mixing 0}.
\end{proof}
\par\smallskip

\subsubsection{How not to be \emph{\textrm{UFHC}}} 
We now give some conditions ensuring that a \ccut\ operator fails to be $\mathcal U$-frequently hypercyclic. 
This will rely on the following more general version of Fact~\ref{Proposition 51}.

\begin{fact}\label{Proposition 51 gen}
 Let $T$ be an operator of \cct\ on $\ell_{p}(\N)$,
  and let $x\in
 \ell_p(\N)$.  
 Fix $l\ge 0$, and define
 \[
X_{l}:=\Bigl\|\sum_{k=b_{l}}^{b_{l+1}-1}\Bigl(\prod_{s=k+1}^{b_{l+1}-1}
w_{s}\Bigr)\,x_{k}e_{k}\Bigr\|.
\]
 Suppose that there exist three integers $0\le 
k_{0}<k_{1}<k_2\le b_{l+1}-b_{l}$ such that 
\[
|w_{b_{l}+k}|=1\quad\textrm{for every}\ 
k\in(k_{0},k_{1})\cup (k_2,b_{l+1}-b_{l})\quad\textrm{and}\ 
\ds\prod_{s=b_l+k_{0}+1}^{b_{l+1}-1}|w_{s}|=1.
\]
Then we have for every $J\ge 0$
\begin{align*}
 &\dfrac{1}{J+1}\ \#\Bigl\{0\le j\le J\,;\, \|P_{l}T^{\,j}P_{l}\,x\|\ge 
X_{l}/2\Bigr\}\\
 &\hspace{3cm}\ge 
1-4\bigl(k_2-k_1+k_0\bigr)\,\cdot\,\Bigl( \frac{1}{J+1}+\frac{1}{b_{
l+1}-b_{l}}\Bigr)\cdot
\end{align*}
\end{fact}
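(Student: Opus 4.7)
The plan is to follow the scheme of the proof of Fact~\ref{Proposition 51}, replacing the single ``good interval'' $[k_0,k_1)$ by the union of two cyclic intervals, and then keeping careful track of how this doubling affects the combinatorial estimate on the bad iterates.

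First, I would fix $j\ge 0$ and $k\in[b_l,b_{l+1})$, and set $n := j+k-b_l \mod (b_{l+1}-b_l)$. Exactly as in the proof of Fact~\ref{Proposition 51}, one computes
\[
P_lT^{\,j}e_k \;=\; \pm\,\Big(\prod_{s=k+1}^{b_{l+1}-1}w_s\Big)\Big(\prod_{s=b_l+n+1}^{b_{l+1}-1}w_s\Big)^{-1}e_{b_l+n}.
\]
The crucial observation is that $\prod_{s=b_l+n+1}^{b_{l+1}-1}|w_s|=1$ whenever $n$ lies in the \emph{good set}
\[
G := [k_0,k_1)\cup[k_2,b_{l+1}-b_l).
\]
Indeed, for $n\in[k_2,b_{l+1}-b_l)$ the indices involved lie in $(k_2,b_{l+1}-b_l)$, where $|w_{b_l+k}|=1$; while for $n\in[k_0,k_1)$ one uses the hypothesis $\prod_{s=b_l+k_0+1}^{b_{l+1}-1}|w_s|=1$ together with $|w_{b_l+k}|=1$ on $(k_0,k_1)$ and $(k_2,b_{l+1}-b_l)$ to conclude that the product over the block $[k_1,k_2]$ is also $1$, hence the product from $b_l+n+1$ to $b_{l+1}-1$ equals $1$. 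Thus there exists $\zeta_{j,k}\in\T$ with $P_lT^{\,j}e_k=\zeta_{j,k}\bigl(\prod_{s=k+1}^{b_{l+1}-1}w_s\bigr)e_{b_l+n}$ for every $k\in[b_l,b_{l+1})\setminus I_j$, where
\[
I_j := \{k\in[b_l,b_{l+1})\,;\,(j+k-b_l\mod(b_{l+1}-b_l))\notin G\}.
\]

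Next, exactly as in Fact~\ref{Proposition 51}, the bijection $k\mapsto b_l+n_k$ on $[b_l,b_{l+1})$ allows me to rewrite
\[
\|P_lT^{\,j}P_lx\|\ge \Bigl\|\sum_{k\notin I_j}x_k\Bigl(\prod_{s=k+1}^{b_{l+1}-1}w_s\Bigr)e_k\Bigr\|\ge X_l-\Bigl\|\sum_{k\in I_j}x_k\Bigl(\prod_{s=k+1}^{b_{l+1}-1}w_s\Bigr)e_k\Bigr\|.
\]
If the right-most term is bounded above by $X_l/2$ for every $j\ge 0$, then the desired lower bound is trivial (it is $1$). Otherwise, one picks $j_0$ witnessing the opposite inequality and observes that for every $j$ with $I_j\cap I_{j_0}=\emptyset$ we have $\|P_lT^{\,j}P_lx\|\ge X_l/2$, so that
\[
\frac{\#\{0\le j\le J\,;\,\|P_lT^{\,j}P_lx\|\ge X_l/2\}}{J+1}\;\ge\;1-\frac{\#\{0\le j\le J\,;\,I_j\cap I_{j_0}\ne\emptyset\}}{J+1}.
\]

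The main obstacle, and the place where the statement differs from Fact~\ref{Proposition 51}, is the combinatorial estimate of $\#\{0\le j\le J\,;\,I_j\cap I_{j_0}\ne\emptyset\}$. Setting $i_j=j\mod(b_{l+1}-b_l)$, the set $I_j$ depends only on $i_j$, and as a subset of $\Z/(b_{l+1}-b_l)\Z$ it is the translate of a fixed set $B$ by $i_j$, where $B$ is the complement of $G$ in $[0,b_{l+1}-b_l)$. Here $B$ is the union of the two cyclic intervals $[0,k_0)$ and $[k_1,k_2)$ (of lengths $k_0$ and $k_2-k_1$, so $\#B=k_0+k_2-k_1$). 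The intersection $I_j\cap I_{j_0}$ is non-empty exactly when $i_j-i_{j_0}$ belongs to the difference set $B-B$; and since $B$ is a union of two cyclic intervals $B_1,B_2$, the set $B-B=(B_1-B_1)\cup(B_1-B_2)\cup(B_2-B_1)\cup(B_2-B_2)$ is a union of at most four cyclic intervals, of total cardinality at most $4\#B=4(k_0+k_2-k_1)$. This yields
\[
\#\{0\le j\le J\,;\,I_j\cap I_{j_0}\ne\emptyset\}\;\le\;4(k_0+k_2-k_1)\Bigl(\Bigl\lfloor\tfrac{J}{b_{l+1}-b_l}\Bigr\rfloor+1\Bigr),
\]
and dividing by $J+1$ and using $(\lfloor J/M\rfloor+1)/(J+1)\le 1/M+1/(J+1)$ gives the claimed bound. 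The extra factor $2$ compared to Fact~\ref{Proposition 51} is precisely what reflects the passage from a single good interval to a pair of good intervals.
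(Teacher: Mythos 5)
Your proof is correct and follows the same overall scheme as the paper's: replace the single good interval $[k_0,k_1)$ of Fact~\ref{Proposition 51} by $G=[k_0,k_1)\cup[k_2,b_{l+1}-b_l)$, redefine $I_j$ accordingly, check that the product of weights starting from any $n\in G$ equals $1$, reduce to counting $\{j\le J : I_j\cap I_{j_0}\ne\emptyset\}$, and bound that count. The only place where you diverge from the paper is in the final combinatorial estimate: the paper writes out $I_j$ explicitly in four cases according to where $i_j$ falls relative to $k_0,k_1,k_2$, and then asserts the bound $4(k_2-k_1+k_0)\bigl(\lfloor J/(b_{l+1}-b_l)\rfloor+1\bigr)$ from ``this particular structure,'' whereas you observe that $I_j\cap I_{j_0}\ne\emptyset$ is equivalent to $i_j-i_{j_0}$ lying in the cyclic difference set $B-B$, with $B=[0,k_0)\cup[k_1,k_2)$, and that $B-B$ is a union of at most four cyclic intervals of total cardinality at most $4\#B=4(k_0+k_2-k_1)$. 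This is a cleaner and more conceptual way to obtain the same bound; the paper's four-case description of $I_j$ is implicitly computing the same difference set. Both routes give identical constants, so the result is the same.
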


\begin{proof}
The proof is similar to the proof of Fact~\ref{Proposition 51} except that the sets $I_{j}$,
$j\ge 0$, have to be defined as follows:
\[
I_j:=\Bigl\{k\in [b_{l},b_{l+1})\,;\,j+k-b_{l} \mod (b_{l+1}-b_l)\ 
\textrm{does not belong to}\ 
[k_{0},k_{1})\cup[k_2,b_{l+1}-b_l)\Bigr\}.
\]
We can then deduce that there exists an integer $j_0$ such that for every $J\ge 0$:
\begin{align}
 &\dfrac{1}{J+1}\ \#\Bigl\{0\le j\le J\,;\,
\|P_{l}T^{\,j}P_{l}\,x\|\ge X_{l}/2\Bigr\}\notag\\&
\hspace{5cm}\ge 1-\dfrac{1}{J+1}\ 
\#\Bigl\{0\le j\le J\,;\,
I_{j}\cap I_{j_{0}}\ne\emptyset\Bigr\}.
\end{align}
Now, we remark that if we set $i_{j}:=j\mod(b_{l+1}-b_{j})$ for every $j\ge 
0$, we have
\[
I_{j}=
\begin{cases}
 \ \mathopen[b_{l},b_l+k_{0}-i_{j})\cup \mathopen[b_{l}+k_{1}-i_{j},b_l+k_2-i_j)\cup \mathopen[b_{l+1}-i_{j},b_{l+1}\mathclose)
 &\hspace*{-0.7cm} \text{if}\ 0\le 
i_{j}<k_{0}\\
\ \mathopen[b_{l}+k_{1}-i_{j},b_{l}+k_{2}-i_{j})\cup \mathopen[b_{l+1}-i_{j},b_{l+1}+k_{0}-i_{j})& 
\hspace*{-2.1cm}\text{if}\ k_{0}\le 
i_{j}\le k_{1}\\
\ \mathopen[b_{l},b_{l}+k_2-i_j)\cup\mathopen[b_{l+1}-i_{j},b_{l+1}+k_{0}-i_{j})\cup 
\mathopen[b_{l+1}+k_{1}-i_{j},b_{l+1}\mathclose)& \text{if}\ 
k_{1}<i_{j}<k_2\\
\ \mathopen[b_{l+1}-i_j,b_{l+1}+k_{0}-i_{j})\cup 
\mathopen[b_{l+1}+k_{1}-i_{j},b_{l+1}+k_{2}-i_{j}\mathclose)&\hspace*{-1.3cm} \text{if}\ 
k_{2}<i_{j}<b_{l+1}-b_{l}.
\end{cases}
\]
This particular structure of the sets $I_{j}$ implies that
\[
\#\ \Bigl\{0\le j\le J\,;\, I_{j}\cap I_{j_{0}}\ne \emptyset\Bigr\}\le
4(k_2-k_1+k_0)\,\cdot\,\biggl(\,\biggl\lfloor\, \dfrac
{J}{b_{l+1}-b_{l}}\biggr\rfloor+1\, 
\biggr),
\]
and this yields the desired result.
\end{proof}

\begin{proposition}\label{prop C Ufhc}
Let $T$ be an operator of \ccut\ on $\ell_p(\N)$, and  define as usual a sequence $(\gamma_{k})_{k\ge 1}$ by setting $\gamma_k=2^{\,\delta ^{(k-1)}-\tau 
^{(k)}}\bigl(\Delta ^{(k)} 
\bigr)^{1-\frac1{p}}$ for every $k\ge 1$. 
 Suppose that the sequence $(\gamma_k)_{k\ge 1}$ is non-increasing, and that the following two conditions are satisfied:
 \[
2\,\sum_{\gk}\,\#J_k\,\gamma_k^{1/2}\le 1
\quad\textrm{and}\quad \lim_{k\to\infty }\dfrac{\delta^{(k)}}{a_k}=0.
\]
\par\smallskip \noindent
Then $T$ is not $\mathcal{U}$-frequently hypercyclic.
\end{proposition}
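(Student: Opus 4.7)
The plan is to apply Theorem~\ref{Theorem 49} in order to show that no hypercyclic vector $x\in\ell_p(\N)$ can be $\mathcal U$-frequently hypercyclic for $T$, following the template used in the proofs of Theorems~\ref{Theorem 53} and~\ref{Theorem 57} but adapted to the more flexible weight structure of \ccut\ operators. Given an arbitrary $x\in\textrm{HC}(T)$, I will exhibit a constant $C>0$ (which will be $C=1/4$) together with sequences $(\beta_l)_{l\ge 1}$, $(X_l)_{l\ge 0}$, and $(N_l)_{l\ge 1}$ satisfying assumptions (1), (2), (3), and (C) of Theorem~\ref{Theorem 49}.

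First I would set $\beta_l:=4\gamma_k$ for every $l\in J_k$ and every $k\ge 1$. The monotonicity of $(\gamma_k)$ makes $(\beta_l)$ non-increasing, and the hypothesis $2\sum_{k\ge 1}\#J_k\,\gamma_k^{1/2}\le 1$ gives $\sum_{l\ge 1}\sqrt{\beta_l}\le 1$. I would then define, as usual,
\[X_l:=\Bigl\|\sum_{i=b_l}^{b_{l+1}-1}\Bigl(\prod_{s=i+1}^{b_{l+1}-1}w_s\Bigr)\, x_i e_i\Bigr\|,\]
so that $\|P_l x\|\le X_l$. For $n\in J_k$ the definition of $\varphi$ places $\varphi(n)$ in some $J_{k'}$ with $k'<k$, hence the supremum of the partial products of weights along $[b_{\varphi(n)},b_{\varphi(n)+1})$ is at most $2^{\delta^{(k')}}\le 2^{\delta^{(k-1)}}$. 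Thus the hypothesis of Fact~\ref{Proposition 50} is met with $C_n:=2^{\delta^{(k-1)}-\tau^{(k)}}$, and since $C_n(\Delta^{(k)})^{1-1/p}=\gamma_k=\beta_l/4$, the first assertion of Fact~\ref{Proposition 50} immediately yields condition (2) of Theorem~\ref{Theorem 49}. For condition (3), set $N_l:=a_k$ when $l\in J_k$; the hypothesis $\delta^{(k)}/a_k\to 0$ together with $\delta^{(k)}\to\infty$ forces $a_k\to\infty$, so $(N_l)$ is eventually non-decreasing and tends to infinity. For any such $l$, with residue $l_0:=(l-\min J_k)\bmod (f_k-a_k)$, the \ccut\ weight pattern ensures $w_{b_l+j}=1$ on $[\Delta^{(k)}-a_k-l_0,\Delta^{(k)})$, a range which contains $[\Delta^{(k)}-N_l,\Delta^{(k)})$, so the supremum appearing in the second assertion of Fact~\ref{Proposition 50} equals $1$ and condition (3) follows.

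The main obstacle will be the verification of condition (C), for which I will invoke the generalized Fact~\ref{Proposition 51 gen} with the choices $k_0:=\delta^{(k)}$, $k_1:=\Delta^{(k)}-a_k-l_0-2\delta^{(k)}$, and $k_2:=\Delta^{(k)}-a_k-l_0$. The \ccut\ weights are designed precisely so that $|w_{b_l+j}|=1$ on $(k_0,k_1)\cup(k_2,\Delta^{(k)})$, and the product $\prod_{s=b_l+k_0+1}^{b_{l+1}-1}|w_s|$ telescopes to $1$ because the two central blocks contribute $(1/2)^{\delta^{(k)}}\cdot 2^{\delta^{(k)}}=1$ while all surrounding weights equal $1$. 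Since $k_2-k_1+k_0=3\delta^{(k)}$, Fact~\ref{Proposition 51 gen} then delivers
\[\frac{1}{J+1}\,\#\bigl\{0\le j\le J\;:\;\|P_l T^j P_l x\|\ge X_l/2\bigr\}\ge 1-12\,\delta^{(k)}\Bigl(\frac{1}{J+1}+\frac{1}{\Delta^{(k)}}\Bigr)\]
for every $J\ge 0$ and every $l\in J_k$. Taking the infimum over $J\ge N_l=a_k$ and then letting $l\to\infty$ (equivalently $k\to\infty$), the hypothesis $\delta^{(k)}/a_k\to 0$, combined with $\delta^{(k)}/\Delta^{(k)}<\delta^{(k)}/a_k$ (itself a consequence of the standing requirement $\Delta^{(k)}>a_k+4\delta^{(k)}$ built into the definition of \ccut), forces both error terms to zero and establishes condition (C). Theorem~\ref{Theorem 49} then concludes that $T$ is not $\mathcal U$-frequently hypercyclic.
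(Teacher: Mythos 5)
Your argument is correct and follows the same route as the paper's proof: identical choices of $\beta_l:=4\gamma_k$, $X_l$, and $N_l:=a_k$ for $l\in J_k$, the same application of Fact~\ref{Proposition 50} via the bound $C_n=2^{\delta^{(k-1)}-\tau^{(k)}}$, and the same invocation of Fact~\ref{Proposition 51 gen} with cutoffs $k_0=\delta^{(k)}$, $k_1=\Delta^{(k)}-a_k-l_0-2\delta^{(k)}$, $k_2=\Delta^{(k)}-a_k-l_0$, giving the error term $12\delta^{(k)}(\frac{1}{J+1}+\frac{1}{\Delta^{(k)}})$. All the details check out.
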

\begin{proof}
Let $x\in\ell_p(\N)$ be a hypercyclic vector for $T$. 
Our aim is to prove that $x$ cannot be a $\mathcal{U}$-frequently hypercyclic vector for 
$T$, using Theorem \ref{Theorem 49}. We set 
\[
\beta _{l}:=4\,\gamma_k\quad\textrm{for every}\ l\in J_k\textrm{ and every }  k\ge 1.
\]
This sequence 
$(\beta _{l})_{l\ge 1}$ is non-increasing and satisfies
$\sum_{l\ge 1}\sqrt{\beta _{l}}\le 1$. Setting
\[
X_{l}:=\Bigl\| \sum_{k=b_{l}}^{b_{l+1}-1}\Bigl(\prod_{j=k+1}
^{b_{l+1}-1}w_{j}\Bigr)\,x_{k}e_{k}\Bigr\|\quad \textrm{ for every }l\in I_k\textrm{ and every } \gk,
\]
we have $\Vert P_{l}\,x\Vert \le X_{l}$ and, by Fact \ref{Proposition 50},
\[ \sup_{j\ge 0}\ \Vert P_{n}T^{\,j}P_{l}\,x\Vert \le\dfrac{1}{4}\,\beta 
_{l}X_{l}
\]
for every $0\le n<l$, and
\[
 \sup_{0\le j\le N}\ \Vert P_{n}T^{\,j}P_{l}\,x\Vert \le \dfrac{1}{4}\,\beta 
_{l}\,\Bigl(\ \sup_{b_{l+1}-N\le 
k<b_{l+1}}\prod_{s=k+1}^{b_{l+1}-1}|w_s|\Bigr)\,
 \Vert P_{l}\,x\Vert 
\]
for every $1\le N\le \Delta ^{(k)}$  and $0\le n<l$ with $l$ belonging to $J_{k}$.
Setting $N_{l}:=a_{k}$ for every 
$l\in J_k$ and every $\gk$, we deduce that 
\[\ds\sup_{0\le j\le N_{l}}\ \Vert P_{n}T^{\,j}P_{l}\,x\Vert \le 
\dfrac{1}{4}\,\beta 
_{l}\,
 \Vert P_{l}\,x\Vert \] for every $l\ge 0$ and every $0\le n<l$. In order to check 
that assumption (C) of Theorem~\ref{Theorem 49} holds true, we use 
Fact \ref{Proposition 51 gen}. 
Let $n$ belong to $J_k$, and define $r=(n-\min J_k)\mod (f_k-a_k)$. We have $w_{b_n+j}=1$ for every
$j\in(\delta ^{(k)},\Delta^{(k)}-a_k-r-2\delta^{(k)})\cup(\Delta^{(k)}-a_k-r,\Delta^{(k)})$ 
and 
\[
\prod_{j=b_n+\delta ^{(k)}+1}^{b_{n+1}-1}w_{j}=1.
\]
It follows from Fact \ref{Proposition 51 gen} that 
\[
\dfrac{1}{J+1}\ \#\,\bigl\{0\le j\le J\,;\,\Vert P_{l}T^{\,j}T_{l}\,x\Vert \ge 
X_{l}/2\bigr\}\ge 1-12\delta^{(k)}\Bigl(\dfrac{1}{J+1}+\dfrac{1}
{\Delta ^{(k)}}\Bigr)
\]
for every $J\ge 0$, every $\gk$, and every $l\in J_k$.
For every $\gk$, we thus have
\[
 \inf_{J\ge a_k}\ 
\dfrac{1}{J+1}\ \#\,\Bigl\{0\le j\le J\,;\,\Vert P_{l}T^{\,j}T_{l}\,x\Vert \ge 
X_{l}/2\Bigr\}\ge 1-\dfrac{12\delta ^{(k)}}{a_k}-\dfrac{12
\delta^{(k)}}{\Delta ^{(k)}}\cdot
\]
Since $\lim_{k\to\infty }\,\frac{\delta ^{(k)}}{\Delta 
^{(k)}}\le \lim_{k\to\infty }\,\frac{\delta ^{(k)}}{a_k}=0$, it follows that
\begin{align*}
\liminf_{l\to\infty }\, 
\inf_{J\ge N_l}\ 
\dfrac{1}{J+1}\ \#\,\Bigl\{0\le j\le J\,;\,&\Vert P_{l}T^{\,j}T_{l}\,x\Vert \ge
X_{l}/2\Bigr\}\\
&\ge \liminf_{k\to\infty }\,\Bigl( 1-\dfrac{12\delta ^{(k)}}{a_k}-\dfrac{12
\delta^{(k)}}{\Delta ^{(k)}}\Bigr)=1.
\end{align*}
By Theorem \ref{Theorem 49}, $T$ is not $\mathcal{U}$-frequently 
hypercyclic.
\end{proof}
\par\smallskip

\subsubsection{Chaotic and mixing operators which are not \emph{\textrm{UFHC}}}
From Proposition \ref{prop C mix} and~\ref{prop C Ufhc}, 
we immediately deduce
\begin{theorem}\label{ex mix}
Let $T$ be an operator of \ccut\  on $\ell_p(\N)$, and  for every $k\geq 1$, set $\gamma_k=2^{\,\delta ^{(k-1)}-\tau 
^{(k)}}\bigl(\Delta ^{(k)} 
\bigr)^{1-\frac1{p}}$. 
 Suppose that the sequence $(\gamma_k)_{k\ge 1}$ is non-increasing and that the following three conditions are met:
 \[
2\,\sum_{\gk}\,\#J_k\,\gamma_k^{1/2}\le 1,
\quad \lim_{k\to\infty }\dfrac{\delta^{(k)}}{a_k}=0\quad
\ \textrm{ and }\quad \lim_{k\to\infty} (\delta^{(k)}-\tau^{(k)})=\infty.\]
If the set \[\bigcup_{k\ge 1} [a_k+\delta^{(k)},f_k+\delta^{(k)})\] is cofinite, then $T$ is chaotic and topologically mixing but not $\mathcal{U}$-frequently hypercyclic.
\end{theorem}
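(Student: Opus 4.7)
The plan is to assemble the three assertions (chaos, topological mixing, failure of $\mathcal U$-frequent hypercyclicity) as direct corollaries of results already proved earlier in the section; in essence, this theorem is simply the synthesis of Propositions~\ref{prop C mix} and~\ref{prop C Ufhc}, together with the general chaoticity criterion for \cct\ operators from Proposition~\ref{Proposition 45}. No new core argument is needed: each of the three listed conclusions matches the hypothesis list of one of the already established propositions, modulo one short computation.

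First I would observe that the two hypotheses $\lim_{k\to\infty}(\delta^{(k)}-\tau^{(k)})=\infty$ and the cofiniteness of $\bigcup_{k\ge 1}[a_k+\delta^{(k)},f_k+\delta^{(k)})$ are precisely those required by Proposition~\ref{prop C mix}, so $T$ is topologically mixing, hence hypercyclic. For chaos, I would carry out the short telescoping computation on the \ccut\ weights: by the very definition of the $w_j$ on a block $[b_n,b_{n+1})$ with $n\in J_k$, the ascending block of $\delta^{(k)}$ weights equal to $2$ near $b_n$ is cancelled by the later descending block of $\delta^{(k)}$ weights equal to $1/2$, while the second ascending block of $\delta^{(k)}$ weights equal to $2$ and the trailing weights equal to $1$ contribute a net factor $2^{\delta^{(k)}}$. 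Hence $\prod_{j=b_N+1}^{b_{N+1}-1}|w_j|=2^{\delta^{(k)}}$ for every $N\in J_k$, and since $v_N=2^{-\tau^{(k)}}$ and every preimage $\varphi^{-1}(n)$ meets infinitely many $J_k$, we obtain
\[
\limsup_{N\in\varphi^{-1}(n)} |v_N|\prod_{j=b_N+1}^{b_{N+1}-1}|w_j|\;=\;\limsup_{k\to\infty}2^{\delta^{(k)}-\tau^{(k)}}\;=\;\infty
\]
for every $n\ge 0$. Proposition~\ref{Proposition 45} then yields that $T$ is chaotic (one could equivalently combine topological mixing with Fact~\ref{Proposition 44}, but the direct route is just as quick).

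Finally, the remaining hypotheses --- that $(\gamma_k)_{k\ge 1}$ is non-increasing, that $2\sum_{k\ge 1}\#J_k\,\gamma_k^{1/2}\le 1$, and that $\lim_k \delta^{(k)}/a_k=0$ --- are exactly those of Proposition~\ref{prop C Ufhc}, which thereby provides that $T$ is not $\mathcal U$-frequently hypercyclic. The main obstacle has therefore already been handled in the earlier Proposition~\ref{prop C Ufhc} (itself relying on Theorem~\ref{Theorem 49}, Lemma~\ref{Theorem 48}, and the variant Fact~\ref{Proposition 51 gen} of the key combinatorial estimate of Fact~\ref{Proposition 51}): the genuinely delicate work is the refined counting that shows, for every hypercyclic $x$, the existence of a neighborhood of $0$ whose complement is visited by the orbit of $x$ with lower density converging to $1$. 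Once that machinery is in place, the present theorem requires only the telescoping identity above and a verification that the three hypothesis lists match; there is no further difficulty to overcome.
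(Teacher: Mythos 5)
Your proof is correct and takes essentially the same approach as the paper, which simply cites Propositions~\ref{prop C mix} and~\ref{prop C Ufhc} and leaves the chaos assertion implicit (any \cct\ operator has dense periodic points by Fact~\ref{Proposition 44}, so topological mixing already forces chaos). Your explicit telescoping computation $\prod_{j=b_N+1}^{b_{N+1}-1}|w_j|=2^{\delta^{(k)}}$ for $N\in J_k$ and the resulting appeal to Proposition~\ref{Proposition 45} is a valid and equally short alternative route to chaos, as you note.
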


Here is a concrete example of such a chaotic and topologically mixing \op\ which is not $\mathcal{U}$-frequently hypercyclic.

\begin{example}
Let $C$ be a positive integer. Consider the operator of \ccut\ $T$ on $\ell_{p}(\N)$ associated to the parameters 
\[\Delta^{(k)}=2^{2Ck^2+5},\quad \delta^{(k)}=2^{Ck^2+1},\quad \tau^{(k)}=2^{Ck^2},\quad a_k=k\delta^{(k)}\quad{\rm and}\quad f_k=\frac{1}{2}\Delta^{(k)},\quad k\ge 1.\]
If $C$ is sufficiently large, then $T$ is chaotic and topologically mixing but not $\mathcal{U}$-frequently hypercyclic.
\end{example}

\begin{proof} The operator $T$ is well-defined since 
\[\Delta^{(k)}-4\delta^{(k)}=2^{2Ck^2+5}-2^{Ck^2+3}> 2^{2Ck^2+4}=f_k\quad \text{and}\quad f_k= 2^{2Ck^2+4}\ge k2^{Ck^2+1}=a_k\]
for every $k\ge 1$. Moreover, the set \[\bigcup_{k\ge 1} [a_k+\delta^{(k)},f_k+\delta^{(k)})\] is cofinite since $f_{k}+\delta^{(k)}\ge a_{k+1}+\delta^{(k+1)}$ 
if $k$ is large enough. Indeed, 
\[f_{k}+\delta_{k}- a_{k+1}-\delta^{(k+1)}\ge f_k-2a_{k+1}=2^{2Ck^2+4}-k2^{C(k+1)^2+2},\] and the quantity on the right hand side tends to infinity as $k$ tends to infinity.
Finally, since $\#J_k\le \Delta^{(k)}k\,.\,\#J_{k-1}$ and $\#J_0=1$, we have $\#J_k\le \prod_{j=1}^{k}j\Delta^{(j)}\le 2^{8Ck^3}$ for every $k\geq 1$.
It follows that the remaining assumptions of Theorem~\ref{ex mix} are also satisfied if $C$ is sufficiently large.
\end{proof}

\begin{remark}
The  operators constructed in \cite{Me} (which are chaotic and not $\mathcal{U}$-fre\-quently hypercyclic) are never topologically mixing. Indeed, 
the parameters in this construction satisfy the following three conditions:
\par\smallskip
\begin{itemize}
\item[-] the quantity ${\delta_n}/{(b_{n+1}-b_n)}$ tends to $0$ as $n$ tends to infinity;
\par\smallskip
\item[-] for every $n\ge 1$, $b_{n+1}-b_n$ is a multiple of $2(b_n-b_{n-1})$;
\par\smallskip
\item[-] for every $n\ge 1$, $2^{\delta_{n-1}-\tau_n}(b_{n+1}-b_n)\le {2^{-2(n+1)}}$.
\end{itemize}
\par\smallskip\noindent
Therefore, for any such operator $T$, there exists an integer $n_0$ such that 
\[\delta_n<\frac{1}{2}(b_{n+1}-b_n)\quad\textrm{ for every } n\ge n_0.\] It follows that for every integer $n\ge n_0$ such that $1$ does not belong to the set $\bigcup_{j\ge 0} \varphi^j(n)$, 
$b_{n+1}-b_n$ does not belong to $ \mathcal N_T(B(0,1),B(3e_{b_1},1))$.
Indeed,  we have for any $k\ge 0$
\par\smallskip
\begin{itemize}
\item[-] $T^{b_{n+1}-b_n}e_k=e_k$ if $k<b_n$;
\par\smallskip
\item[-] $P_1T^{b_{n+1}-b_n}e_k=0$ if $k$ belongs to $ \mathopen[b_n,b_{n+1}\mathclose[$ since $1$ does not belong to $\bigcup_{j\ge 0} \varphi^j(n)$;
\par\smallskip
\item[-] $\|P_1T^{b_{n+1}-b_n}P_l x\|\le {2^{-2(l+1)}}\|P_lx\|$ if $l>n$, by Fact~\ref{Proposition 50}.
\end{itemize}
\par\smallskip\noindent
Hence, if $x$ is any vector of the unit ball $ B(0,1)$, the vector $y:=T^{b_{n+1}-b_n}x$ satisfies
\[|y_{b_1}|\le |x_{b_1}|+\sum_{l>n}\|P_1T^{b_{n+1}-b_n}P_l x\|< 1+\sum_{l>n}\frac{1}{2^{2(l+1)}}\le  2.\]
Thus $y$ does not belong to the ball $B(3e_{b_1},1)$. Since there are infinitely many integers $n\geq n_0$ such that $\bigcup_{j\ge 0} \varphi^j(n)$ does not contain $1$, this shows that $T$ is indeed not topologically mixing.
\end{remark}
\par\smallskip

\subsection{\cct\ operators with few eigenvalues} In this subsection, we exhibit a
class of \cct\ operators having only countably many unimodular eigenvalues. 
This provides further examples of \hy\ \ops\ on $\ell_{p}(\N)$ with only countably many unimodular \eva s, and such that the associated unimodular eigenvectors span the space (as mentioned previously, the question of the existence of such \ops\ was raised by Flytzanis in \cite{Fl}).
\par\smallskip
The general idea of the forthcoming construction is the following: if $T=\tvw$ is an operator of \cct\ on $\ell_{p}(\N)$
and if the sequence $v=(v_n)_{n\ge 1}$ decreases extremely fast, then the unimodular eigenvalues of $T$ must be roots of unity. This is not such a surprising statement
if one considers what happens in  the ``degenerate" case where the sequence $v$ is identically equal to $0$: indeed, in this case the operator $T$ has the form $T=\bigoplus_{n\geq 0} C_n$ where the operators $C_n$, $n\ge 0,$
are finite dimensional cyclic operators satisfying $C_n^{2(b_{n+1}-b_n)}=I$. Thus any eigenvalue $\lambda$ of $T$ must satisfy $\lambda^{2(b_{n+1}-b_n)}=1$ for some integer
$n\geq 0$.
\par\smallskip
Before starting our construction, we determine the spectrum of the \ops\ of \cct\ on $\ell_{p}(\N)$ which we consider here:

\begin{fact}\label{spectre}
Let $T$ be a hypercyclic \cct\ \op\ on $\ell_{p}(\N)$ such that
\[
\lim_{n\to\infty }\prod_{j=b_n+1}^{b_{n+1}-1} 
|w_{j}|=\infty.
\] 
Then the spectrum of $T$ is the closed disk $\overline D(0,R)$, where \[
R:=\limsup_{N\to\infty}\,\left( \sup_{\genfrac{}{}{0pt}{1}{n\ge 0
}{b_{n+1}-b_{n}>N}}
\,\sup_{b_{n}\le k<b_{n+1}-N}\,(w_{k+1}w_{k+2}\cdots w_{k+N})\right)^{1/N}.
 \]
If $T$ is either a $C_{+,1}$-type or a $C_{+,2}$-type \op\ on $\ell_{p}(\N)$, the spectrum of $T$ is thus the closed disk $\overline D(0,2)$.
\end{fact}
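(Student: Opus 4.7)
The plan is to decompose $T$ as $T = S + K$, where $S = \bigoplus_{n\ge 0} C_{w,b,n}$ is the block-diagonal part along the decomposition $\ell_{p}(\N) = \bigoplus_{n\ge 0} E_{n}$ with $E_{n}=\textrm{span}[e_{k}\,;\,b_{n}\le k<b_{n+1}]$ (write $L_n:=b_{n+1}-b_n$), and $K = R_{v,b}$ is the nuclear, in particular compact, operator appearing in the proof of Fact~\ref{welldefined}. By Weyl's theorem on the invariance of the essential spectrum under compact perturbations, $\sigma_{\mathrm{ess}}(T)=\sigma_{\mathrm{ess}}(S)$, so it suffices to compute $\sigma_{\mathrm{ess}}(S)$ and then to rule out isolated eigenvalues of $T$ lying outside $\overline D(0,R)$.

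For the lower inclusion $\overline D(0,R)\subseteq\sigma(T)$, I would fix $\lambda$ with $|\lambda|<R$ and $\varepsilon>0$. The $\limsup$ defining $R$ provides arbitrarily large $N$, block indices $n$ that can be taken arbitrarily large, and starting positions $k$ with $b_{n}\le k$ and $k+N<b_{n+1}$ such that $(w_{k+1}\cdots w_{k+N})^{1/N}>c$ for some fixed $c\in(|\lambda|,R)$. I would then build a unit vector $u\in E_{n}$ with $\|(C_{w,b,n}-\lambda)u\|<\varepsilon$, using the forced recursion $\alpha_{k+j+1}=(w_{k+j+1}/\lambda)\alpha_{k+j}$ dictated by the eigenvalue equation inside the block. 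Placing these approximate eigenvectors in distinct blocks yields $\lambda\in\sigma_{\mathrm{ess}}(S)=\sigma_{\mathrm{ess}}(T)$, and taking closure gives the desired inclusion. The \emph{main technical obstacle} is the construction of $u$: the two residual terms $-\lambda\alpha_{k}\,e_{k}$ and $w_{k+N+1}\alpha_{k+N}\,e_{k+N+1}$ cannot both be made negligible with a naive window, since the right-boundary residual is of the same order as the $\ell_p$-norm of $u$ itself. I plan to overcome this by extending the window to cover the \emph{entire} block and exploiting the cyclic identity $C_{w,b,n}\,e_{b_{n+1}-1}=-(1/d_{n})\,e_{b_{n}}$ together with the standing hypothesis $d_{n}:=\prod_{j=b_n+1}^{b_{n+1}-1}|w_j|\to\infty$: once $u$ is supported on the whole block $E_n$, the residual concentrates at the single point $e_{b_{n}}$ with coefficient proportional to $(\lambda^{L_{n}}+1)/\lambda^{L_{n}-1}$, while $\|u\|_{p}$ is boosted by the full product $d_{n}/|\lambda|^{L_{n}-1}$.

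For the reverse inclusion $\sigma(T)\subseteq\overline D(0,R)$, I would first establish $r(S)=R$. The identity $\|S^{m}\|=\sup_{n}\|C_{w,b,n}^{m}\|$, together with $C_{w,b,n}^{L_{n}}=-I|_{E_{n}}$ (so $m\mapsto\|C_{w,b,n}^m\|$ depends only on $m\bmod L_{n}$) and the explicit formula $\|C_{w,b,n}^{m}e_{b_{n}+j}\|=|w_{b_{n}+j+1}\cdots w_{b_{n}+j+m}|$ for $j+m<L_{n}$, gives the lower bound $\|S^{m}\|^{1/m}\ge\sup_{n,k}(w_{k+1}\cdots w_{k+m})^{1/m}$, whence $r(S)\ge R$. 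The matching upper bound uses that $d_n\to\infty$ forces $L_n\to\infty$, so that for fixed $m$ only finitely many blocks satisfy $L_{n}\le m$ and their contribution to $\sup_{n}\|C_{w,b,n}^{m}\|$ is bounded by a constant, while the remaining blocks $L_n>m$ contribute at most $c^{m}$ for any $c>R$ and $m$ large. Hence $r(S)=R$, and $\sigma_{\mathrm{ess}}(T)=\sigma_{\mathrm{ess}}(S)\subseteq\sigma(S)\subseteq\overline D(0,R)$. To eliminate possible isolated eigenvalues of $T$ outside this disk, I would turn the equation $Tx=\lambda x$ with $|\lambda|>R$ into the coupled system
\[
\frac{\lambda^{L_{n}}+1}{\lambda^{L_{n}-1}}\,y_{n}\;=\;\sum_{m\in\varphi^{-1}(n)}\frac{v_{m}\,d_{m}}{\lambda^{L_{m}-1}}\,y_{m},\qquad n\ge 0,
\]
on the base coordinates $y_{n}:=x_{b_{n}}$, with the forward recursion $x_{b_{n}+j}=(w_{b_{n}+1}\cdots w_{b_{n}+j})/\lambda^{j}\cdot y_{n}$ determining $x$ from the $y$'s; the summability of $(v_n)$ combined with the decay $d_{m}/|\lambda|^{L_{m}-1}\to 0$ (a consequence of $|\lambda|>R$ and the definition of $R$) will force $y\equiv 0$ and hence $x=0$.

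For the \cput\ and \cpdt\ cases, the weights satisfy $|w_{j}|\le 2$, giving $R\le 2$, and each block contains an initial segment of $\delta^{(k)}$ consecutive weights all equal to $2$ with $\delta^{(k)}\to\infty$, giving $R\ge 2$; hence $R=2$ and $\sigma(T)=\overline D(0,2)$.
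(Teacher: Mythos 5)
The overall strategy — compact perturbation and Fredholm theory — is the right one, but there is a genuine gap in the lower-bound step, and you end up doing more work than the paper needs to.

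For the inclusion $\overline D(0,R)\subseteq\sigma(T)$, your plan to build an approximate eigenvector $u$ supported on the full block $E_n$ does not survive scrutiny when $1<|\lambda|<R$ (which is exactly the interesting range, since $R\geq 2$ in the concrete cases). As you correctly compute, the only residual is at $e_{b_n}$, and one finds
\[
\frac{\|(C_{w,b,n}-\lambda)u\|}{\|u\|}\;\le\;\frac{|\lambda^{L_n}+1|}{d_n},
\qquad L_n:=b_{n+1}-b_n,\quad d_n:=\prod_{j=b_n+1}^{b_{n+1}-1}|w_j|.
\]
For $|\lambda|\le 1$ this tends to $0$ because $d_n\to\infty$, but for $|\lambda|>1$ the numerator $\sim|\lambda|^{L_n}$ can dwarf $d_n$; the hypothesis only gives $d_n\to\infty$, with no rate, and in the \cput\ and \cpdt\ examples $d_n=2^{\delta^{(k)}}$ while $L_n=\Delta^{(k)}\gg\delta^{(k)}$, so $|\lambda|^{L_n}/d_n\to\infty$. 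The cyclic wraparound thus reintroduces exactly the residual you thought you had eliminated. The paper sidesteps this entirely by observing that $d_n\to\infty$ makes the \emph{wraparound itself} a compact operator: the block-diagonal part $C$ is a compact perturbation of the truncated forward weighted shift $S$ (which sends $e_{b_{n+1}-1}$ to $0$), and for $S$ the standard Shields analysis applies — the right end of each block truncates, there is no wraparound residual, and $\sigma(S)=\overline D(0,R)$ with only $0$ as eigenvalue. This second compact perturbation is the piece missing from your argument.

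For the reverse inclusion, your plan to directly analyse the eigenvalue equation on the base coordinates $y_n=x_{b_n}$ can be pushed through, but the paper's route is considerably shorter and exploits the hypercyclicity hypothesis in a way you did not use: since $T$ is hypercyclic, $T^*$ has no eigenvalues at all; combined with the Fredholm alternative (index $0$ off $\sigma_{\mathrm{ess}}(S^*)$), this gives $\sigma(T^*)\subseteq\sigma(S^*)$ in one line, hence $\sigma(T)\subseteq\overline D(0,R)$, with no need to estimate resolvents or solve recursions. The converse inclusion $\overline D(0,R)\subseteq\sigma(T)$ then also comes for free from the same Fredholm argument applied to $S$ (whose only eigenvalue is $0$), together with closedness of $\sigma(T)$. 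Your upper bound for $r(C)$ via $\|C^m\|^{1/m}$ is in fact correct, but it requires a careful estimate of the wraparound contributions $\|C^m e_{b_n+j}\|$ for $j+m\ge L_n$ (bounded by $2^{m-1}/d_n$), which your sketch glosses over; in any case, once you know $\sigma_{\mathrm{ess}}(T)=\sigma_{\mathrm{ess}}(S)$, the spectral radius computation is unnecessary.
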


\begin{proof}
Using the notation employed in the proof of Fact \ref{welldefined}, we observe that $T$ is a compact perturbation of the direct sum \op\ $C:=\bigoplus_{n\geq 0} C_{w,\,b,\,n}$ on $\ell_{p}(\N)$. Also,  the assumption of Fact \ref{spectre} implies that $C$ is itself a compact perturbation of the forward weighted shift $S$ on $\ell_{p}(\N)$ defined by 
\[
S e_k=
\begin{cases}
 w_{k+1}\,e_{k+1} & \textrm{if}\ k\in [b_{n},b_{n+1}-1),\; n\geq 0\\
0 & \textrm{if}\ k=b_{n}-1,\ \gn.
\end{cases}
\]
 So $T^*$ is a compact perturbation of $S^*$, and it then follows from the Fredholm alternative that if $\lambda\in\C$ is any element of $\sigma(T^{*})\setminus\sigma(S^{*})$, then $\lambda$ is an eigenvalue of $T^{*}$. But as $T$ is hypercyclic, its adjoint has no eigenvalue, and it follows that $\sigma(T^{*})$ is contained
in $\sigma(S^{*})$. Hence, $\sigma(T)$ is contained in $\sigma(S)$. Conversely, the same argument shows that any $\lambda\in\sigma(S)\setminus\sigma(T)$ is an eigenvalue of $S$. But the only eigenvalue of $S$ is $0$, so that 
$\sigma(S)$ is contained in $\sigma(T)\cup\{0\}$.
Now, it is well-known that the $\sigma(S)$ is the closed disk $\overline D(0,R)$ (see for instance \cite{Shields}), so that $\sigma(T)\subseteq \overline D(0,R) \subseteq \sigma(T)\cup\{0\}$. Since $\sigma (T)$ is closed, it follows that $\sigma(T)= \overline D(0,R)$, and this concludes the proof of Fact \ref{spectre}.
\end{proof}

We now come back to our construction of \cct\ \ops\ with few unimodular \eva s.
In order to simplify the expressions involved in the results which we are about to state, 
we adopt the following notation: if $T=\tvw$ is an operator of \cct, we set 
\[ \Delta b_n:=b_{n+1}-b_n\qquad\hbox{for every $n\geq 0$}.
\]
Also, we will say that an increasing sequence of positive integers $(n(m))_{m\geq 0}$ is a \emph{$\varphi$-sequence} if 
\[ n(m)=\varphi(n(m+1))\quad\hbox{for all $m\geq 0$}.
\]

\begin{theorem}\label{vp} Let $T=T_{v,w,\varphi,b}$ be an operator of \cct\ on $\ell_{p}(\N)$. 
  Assume that for every $\varphi$-sequence $(n(m))_{m\ge 0}$, we have 
\begin{equation}\label{grostruc}
\limsup_{m\to \infty} 
|v_{n(m)}|\,\cdot\, 2^{n(m)}(\Delta b_{n(m)})^m\left(\prod_{j=1}^{m-1}\frac{|v_{n(j)}|}{\Delta b_{n(j)}}\, 
\prod_{\nu=b_{n(j)}+1}^{b_{n(j)+1}-1}|w_{\nu}|\right)<\infty.
\end{equation}
Then each unimodular eigenvalue of $T$ is a root of unity. More precisely, any such eigenvalue $\lambda$ must satisfy 
$\lambda^{\Delta b_n}=1$ for some integer $n\geq 0$.
\end{theorem}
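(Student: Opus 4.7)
I would suppose $Tx=\lambda x$ with $|\lambda|=1$ and $x\in\ell_p(\N)\setminus\{0\}$, and derive a contradiction under the hypothesis that $\lambda^{\Delta b_n}\neq 1$ for every $n\ge 0$, where $\Delta b_n:=b_{n+1}-b_n$. The first step is to translate the eigenvalue equation into coordinates: the relation $Te_k=w_{k+1}e_{k+1}$ inside the block $[b_n,b_{n+1})$ forces
\[
x_{b_n+j}=\lambda^{-j}\Bigl(\prod_{\nu=b_n+1}^{b_n+j}w_\nu\Bigr)x_{b_n},\qquad 0\le j<\Delta b_n,
\]
so $P_n x$ is entirely determined by the single scalar $x_{b_n}$. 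Equating the $b_n$-th coordinate of $Tx$ with $\lambda x_{b_n}$ (collecting the ``return'' contribution coming from $Te_{b_{n+1}-1}=v_n e_{b_{\varphi(n)}}-W_n^{-1}e_{b_n}$ with the ``jump'' contributions coming from every $Te_{b_{m+1}-1}$ with $\varphi(m)=n$) and multiplying by $\lambda^{\Delta b_n-1}$ yields the fundamental recursion
\begin{equation}\label{plan-master}
(\lambda^{\Delta b_n}+1)\,x_{b_n}\;=\;\sum_{m\in\varphi^{-1}(n)}v_m\,\lambda^{\Delta b_n-\Delta b_m}\,W_m\,x_{b_m},
\end{equation}
where $W_m:=\prod_{\nu=b_m+1}^{b_{m+1}-1}w_\nu$.

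A first easy observation is that $\lambda^{\Delta b_n}\neq -1$ for every $n$ as well. Indeed, if $\lambda^{\Delta b_n}=-1$, pick any $m\in\varphi^{-1}(n)$ (such an $m$ exists since $\varphi^{-1}(n)$ is infinite); by the definition of a \cct\ operator, $\Delta b_m$ is a multiple of $2\Delta b_n$, hence $\lambda^{\Delta b_m}=(\lambda^{\Delta b_n})^{2k}=1$, contradicting our assumption. Consequently every coefficient $(\lambda^{\Delta b_n}+1)$ in \eqref{plan-master} is non-zero, and one may solve it for $x_{b_n}$ and substitute repeatedly.

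Iterating \eqref{plan-master} exactly $m$ times starting from an arbitrary $n_0$ produces an identity
\[
\bigl(\lambda^{\Delta b_{n_0}}+1\bigr)x_{b_{n_0}}\;=\;\sum_{(m_1,\dots,m_m)}\lambda^{\Delta b_{n_0}-\Delta b_{m_m}}\,\frac{\prod_{j=1}^{m}v_{m_j}W_{m_j}}{\prod_{j=1}^{m-1}(\lambda^{\Delta b_{m_j}}+1)}\,x_{b_{m_m}},
\]
where the sum runs over sequences $n_0=m_0,m_1,\dots,m_m$ with $\varphi(m_{j+1})=m_j$. Each such sequence is a $\varphi$-sequence and is entirely determined by its terminal index $n(m):=m_m$, so the sum can be reorganized by grouping by $n(m)$. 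Taking moduli and combining the trivial bound $|\lambda^{\Delta b}+1|\le 2$ with a Diophantine lower bound on $|\lambda^{\Delta b_{n(j)}}+1|$ along the $\varphi$-sequence --- available because $\Delta b_{n(j+1)}/\Delta b_{n(j)}$ is an even integer, which forces $\lambda^{\Delta b_{n(j+1)}}$ to be close to $1$ whenever $\lambda^{\Delta b_{n(j)}}$ is close to $-1$, preventing consecutive smallness --- the contribution of each $\varphi$-sequence is controlled by exactly the quantity appearing in hypothesis \eqref{grostruc}; the factor $2^{n(m)}$ accounts for the aggregate of side branches emanating from the sum over $\varphi^{-1}(m_j)$, while the factor $(\Delta b_{n(m)})^m$ handles the Diophantine step.

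Combining the uniform bound supplied by \eqref{grostruc} (which holds for every $\varphi$-sequence) with the $\ell_p$-decay $|x_{b_m}|\le\|P_m x\|\to 0$ then forces the right-hand side of the iterated identity to tend to $0$ as $m\to\infty$, whence $x_{b_{n_0}}=0$. Since $n_0$ is arbitrary and each $P_n x$ is proportional to $x_{b_n}$, this yields $x=0$, the desired contradiction. The main obstacle will be the careful bookkeeping in the iteration step: \eqref{plan-master} sums over the full infinite set $\varphi^{-1}(n)$, so after $m$ unfoldings one faces an enormous tree of descendant paths, and the delicate point is to recognize the precise product in \eqref{grostruc} as the natural per-$\varphi$-sequence bound, which requires both the Diophantine estimate above and the combinatorial identification of paths with their endpoints.
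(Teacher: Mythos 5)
Your initial reductions match the paper's: the coordinate identity $x_{b_n+j}=\lambda^{-j}\bigl(\prod_{\nu=b_n+1}^{b_n+j}w_\nu\bigr)x_{b_n}$ inside a block, the master recursion $(\lambda^{\Delta b_n}+1)x_{b_n}=\sum_{m\in\varphi^{-1}(n)}v_m\lambda^{\Delta b_n-\Delta b_m}W_m x_{b_m}$, the observation that $\lambda^{\Delta b_n}\neq -1$ (because $\Delta b_m$ is an even multiple of $\Delta b_n$ for any $m\in\varphi^{-1}(n)$), and the need for a Diophantine lower bound along a $\varphi$-sequence (Fact~\ref{pm1}). After that you take a genuinely different route, and it has a gap.

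The paper argues by a \emph{lower} bound along a single greedily-chosen $\varphi$-sequence: since $\sum_{l>n}2^{n-l}=1$, taking moduli in the master recursion produces, for each $n$, at least one $l\in\varphi^{-1}(n)$ with $|v_l|\bigl(\prod_{\nu=b_l+1}^{b_{l+1}-1}|w_\nu|\bigr)|x_{b_l}|\ge 2^{n-l}\,|\lambda^{\Delta b_n}+1|\,|x_{b_n}|$. Iterating yields \emph{one} $\varphi$-sequence along which $|x_{b_{n(m)}}|$ is bounded below on infinitely many $m$ (via Fact~\ref{pm1} together with \eqref{grostruc}), contradicting $x\in c_0(\N)$. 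The factor $2^{n(m)}$ in \eqref{grostruc} is exactly the cumulative cost $2^{n(0)-n(m)}$ of this greedy pick; it has nothing to do with aggregating side branches.

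Your plan is instead an \emph{upper} bound via full expansion: iterate the recursion $m$ times, write $(\lambda^{\Delta b_{n_0}}+1)x_{b_{n_0}}$ as a sum over all descendant chains of length $m$, and argue that the sum of moduli tends to $0$. This leaves three unresolved problems. First, the $m$-fold substitution divides by $\prod_{j=1}^{m-1}(\lambda^{\Delta b_{m_j}}+1)$, whose factors can be arbitrarily small; the expanded series therefore has no a priori reason to converge absolutely, so the iterated identity is not justified. Second, even if it did converge, the sum runs over the infinite set $\varphi^{-m}(n_0)$ of terminal indices; hypothesis \eqref{grostruc} gives only a per-$\varphi$-sequence $\limsup$, which together with $|x_{b_{n(m)}}|\to 0$ makes each single term small but does not control the infinite sum. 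Your suggestion that $2^{n(m)}$ "accounts for the aggregate of side branches" cannot be right: it is a \emph{growing} factor in \eqref{grostruc}, which makes summability harder, not easier. Third, Fact~\ref{pm1} supplies the Diophantine lower bound only along infinitely many levels $m$ which depend on the fixed $\varphi$-sequence; at a fixed expansion level $m$ you would need the bound uniformly over all chains of that length, which the lemma does not provide. These are not bookkeeping details: the greedy single-branch selection, and the lower-bound contradiction with $x\in c_0(\N)$, are what make the argument close, and the expansion-and-summation scheme you propose does not reproduce that mechanism.
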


\smallskip
The condition appearing in Theorem \ref{vp} may look a bit strange since it involves \emph{all $\varphi$-sequences 
$(n(m))_{m\geq 0}$} (but it will show up naturally in the {proof} of Theorem~\ref{vp}).
However, its general meaning is clear: $v_n$ should go to $0$ quite fast as $n$ goes to infinity. Here is a consequence 
of Theorem \ref{vp} that makes it rather transparent. Recall that if $T$ is a \cput\ or a \cpdt\ operator on $\ell_{p}(\N)$, then $v_n=2^{-\tau^{(k)}}$ for every 
$n\in [2^{k-1},2^k)$ and every $k\ge 1$.

\begin{corollary}\label{corovp} Let $T$ be a \cput\ or a \cpdt\ operator on $\ell_{p}(\N)$. Assume that  $\Delta^{(k)}/\Delta^{(k-1)}$ tends to infinity and 
$k \log \Delta^{(k)}=O\bigl(\Delta^{(k-1)} \bigr)$ as $k$ tends to infinity. If
\begin{equation}\label{moinsgrostruc}
\lim_{k\to\infty} 
2^{-\tau^{(k)}}\, M^{\Delta^{(k-1)}}=0\quad\hbox{for every $M>0$},
\end{equation}
then all the unimodular eigenvalues of $T$ must be roots of unity.
\end{corollary}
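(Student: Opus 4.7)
The plan is to verify condition~(\ref{grostruc}) of Theorem~\ref{vp} for every $\varphi$-sequence and then invoke that theorem. For \cput\ and \cpdt\ operators, the map $\varphi$ sends $[2^{k-1},2^k)$ onto $[0,2^{k-1})$, so any $\varphi$-sequence $(n(m))_{m\ge 0}$ (which must consist of \emph{positive} integers) has the following structure: there exists a strictly increasing sequence $(k_m)_{m\ge 0}$ with $k_0\ge 1$ such that $n(m)\in [2^{k_m-1},2^{k_m})$ for every $m$. In particular $k_m\ge m+1$ and $n(m)<2^{k_m}$. Moreover, in both types we have $v_{n(j)}=2^{-\tau^{(k_j)}}$, $\Delta b_{n(j)}=\Delta^{(k_j)}$, and the product of weights inside the $j$-th block equals $2^{\delta^{(k_j)}}$. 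Plugging these values in, the quantity $A_m$ whose $\limsup$ we must control becomes
\[
A_m \;=\; 2^{-\tau^{(k_m)} + n(m) + \sum_{j=1}^{m-1}(\delta^{(k_j)} - \tau^{(k_j)})} \cdot \frac{(\Delta^{(k_m)})^m}{\prod_{j=1}^{m-1} \Delta^{(k_j)}}.
\]

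Next, I would extract two elementary consequences of the growth hypotheses on $(\Delta^{(k)})$. First, from $\Delta^{(k)}/\Delta^{(k-1)}\to\infty$, iteration yields $\Delta^{(k)}\geq M^{k-K_M}\Delta^{(K_M)}$ for every $M$ and every large $k$; taking $M=4$ gives $2^{k}=O(\Delta^{(k-1)})$, and a geometric-series argument gives $\sum_{i\leq k}\Delta^{(i)}\leq 2\Delta^{(k)}$ for $k$ large. Second, combining $k\log\Delta^{(k)}=O(\Delta^{(k-1)})$ with $m\leq k_m$ yields
\[
(\Delta^{(k_m)})^m \;\leq\; (\Delta^{(k_m)})^{k_m} \;\leq\; 2^{C_1\,\Delta^{(k_m-1)}}
\]
for some constant $C_1>0$. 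Using $n(m)<2^{k_m}=O(\Delta^{(k_m-1)})$, together with $\sum_{j=1}^{m-1}(\delta^{(k_j)}-\tau^{(k_j)})\leq \sum_{j=1}^{m-1}\Delta^{(k_j)}\leq 2\Delta^{(k_{m-1})}\leq 2\Delta^{(k_m-1)}$, and dropping the harmless factor $\prod_{j<m}\Delta^{(k_j)}\geq 1$ in the denominator, we obtain
\[
A_m \;\leq\; 2^{-\tau^{(k_m)} + C_2\,\Delta^{(k_m-1)}}
\]
for some constant $C_2>0$ and all $m$ sufficiently large.

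Since $k_m\to\infty$, hypothesis~(\ref{moinsgrostruc}) applied with $M=2^{C_2}$ then forces $A_m\to 0$, so $\limsup_m A_m=0<\infty$ and Theorem~\ref{vp} applies. Every unimodular eigenvalue $\lambda$ of $T$ therefore satisfies $\lambda^{\Delta b_n}=1$ for some $n\ge 0$, hence is a root of unity. The main point requiring care is the bound on the \emph{doubly exponential}-looking factor $2^{n(m)}$: the naive estimate $n(m)<2^{k_m}$ must be refined to $n(m)=O(\Delta^{(k_m-1)})$, which is precisely where the super-geometric growth of $(\Delta^{(k)})$ enters decisively. Once this is in place, all the remaining exponents reduce to multiples of $\Delta^{(k_m-1)}$ and are dominated by $\tau^{(k_m)}$ via the hypothesis on the decay of the $v^{(k)}$.
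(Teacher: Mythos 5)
Your proof is correct and takes essentially the same route as the paper's: you identify the $\varphi$-sequence with its associated strictly increasing index sequence $(k_m)$, note $m=O(k_m)$, and use the two growth hypotheses on $(\Delta^{(k)})$ exactly as the paper does to dominate every exponentiated factor of the quantity in Theorem~\ref{vp} by a single $M^{\Delta^{(k_m-1)}}$, after which hypothesis~(\ref{moinsgrostruc}) finishes. The paper organizes the book-keeping slightly differently (it isolates the bounded partial products $\prod_{j}v_{n(j)}/\Delta b_{n(j)}$ and bounds the weight products $\prod_j\prod_\nu|w_\nu|$ separately, rather than combining them into $\sum_j(\delta^{(k_j)}-\tau^{(k_j)})$ as you do), but this is only cosmetic.
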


\begin{proof}[Proof of Corollary \ref{corovp}] Let $(n(m))_{m\geq 0}$ be a $\varphi$-sequence, and for each $m\geq 0$, let us denote by $\alpha_m$ 
the quantity appearing in (\ref{grostruc}), namely 
\[\alpha_m=v_{n(m)}\,.\, 2^{n(m)}(\Delta b_{n(m)})^m\left(\prod_{j=1}^{m-1}\frac{v_{n(j)}}{\Delta b_{n(j)}}\, 
\prod_{\nu=b_{n(j)}+1}^{b_{n(j)+1}-1}w_{\nu}\right).
\]
For any $m\geq 0$, let us denote by $k_m$ the unique positive integer such that $n(m)$ belongs to the interval $ [2^{k_m-1},2^{k_m})$. Then $m\leq k_m$ because $(n(m))$ 
is a $\varphi$-sequence.
\par\smallskip
Observe first that the partial products \[\prod_{j=0}^{m-1} \frac{v_{n(j)}}{\Delta b_{n(j)}},\quad m\ge 1\] remain bounded 
(in fact, they tend quickly to $0$). Also, since the sequence
$w$ is bounded, there exists a constant $A>1$ such that 
\[
\prod_{j=1}^{m-1}\prod_{\nu=b_{n(j)}+1}^{b_{n(j)+1}-1}\vert w_{\nu}\vert\leq A^{\sum\limits_{j=1}^{m-1} \Delta^{(k_j)}}\leq A^{\sum\limits_{i=0}^{k_m-1} \Delta^{(i)}}\quad\textrm{ for every }m\ge 1.
\]
Since $\Delta^{(k)}/\Delta^{(k-1)}$ tends to infinity as $k$ tends to infinity, it follows that there exists a positive constant $B$ such that   \[\prod_{j=1}^{m-1}\prod_{\nu=b_{n(j)}+1}^{b_{n(j)+1}-1}\vert w_{\nu}\vert\leq B^{\Delta^{(k_m-1)}} \quad \textrm{for every } m\ge 1.\]
 Moreover, there also exists a positive constant $C$ such that $2^{n(m)}\leq 2^{2^{k_m}}\leq C^{\Delta^{(k_m-1)}}$ for every $m\ge 1$ (again because $\Delta^{(k)}/\Delta^{(k-1)}$ tends to infinity). Lastly, we have
\[\bigr(\Delta b_{n(m)}\bigr)^m=\bigl(\Delta^{(k_m)}\bigr)^m\leq  \bigl(\Delta^{(k_m)}\bigr)^{k_m}\quad\textrm{ for every }m\ge 1,\] and since $k \log \Delta^{(k)}=O\bigl(\Delta^{(k-1)} \bigr)$ the quantity
$\bigl(\Delta^{(k_m)}\bigr)^{k_m}$ is dominated by 
$D^{\Delta^{(k_m-1)}}$ for some positive constant $D$. Putting things together, and remembering that $v_{n(m)}=2^{-\tau_{k_m}}$ for every $m\ge 1$, we 
obtain that there exists a positive constant $M$ such that
\[\alpha_m\leq 2^{-\tau^{(k_m)}}\, M^{\Delta^{(k_m-1)}}\quad\textrm{ for every }m\ge 1.\]
By (\ref{moinsgrostruc}), this concludes the proof of Corollary \ref{corovp}.
\end{proof}

\begin{remark} Condition (\ref{moinsgrostruc}) is compatible with those appearing for example in 
Theorems \ref{Theorem 53}, \ref{Theorem 54} or \ref{Theorem 58}. 
\end{remark}

\begin{proof}[Proof of Theorem \ref{vp}] We start the proof with the following fact.

\begin{fact}\label{pm1} Fix $\lambda\in\T$, and let $(p_m)_{m\geq 1}$ be a sequence of positive integers tending to infinity such that 
$p_{m+1}$ is a multiple of $p_m$ for every $m\geq 1$. Assume that 
$\lambda^{p_m}\neq\pm1$ for every $m\geq 1$. Then there exist infinitely many integers $m\geq 1$ such that 
\[\bigl\vert \lambda^{p_j}\pm1\bigr\vert\geq \frac{p_j}{p_{m+1}}\quad\textrm{ for every }1\leq j\leq m.
\]
\end{fact}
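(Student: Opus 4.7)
The plan is to reduce the statement to a Diophantine-approximation problem about the orbit of $2\theta$ under multiplication by the $p_m$'s, where $\lambda=e^{2\pi i\theta}$. Set $\beta:=2\theta$ and let $\|\cdot\|$ denote the distance to the nearest integer. Using the elementary inequality $|\sin(\pi x)|\geq 2\|x\|$, a direct calculation gives
\[
\min\bigl(|\lambda^{p_j}-1|,\,|\lambda^{p_j}+1|\bigr)\geq 2\,\|p_j\beta\|.
\]
Setting $u_j:=\|p_j\beta\|$, the hypothesis $\lambda^{p_m}\neq\pm 1$ becomes $u_m>0$ for every $m\geq 1$, and it suffices to exhibit infinitely many $m$ such that $u_j\geq p_j/(2p_{m+1})$ for every $1\leq j\leq m$.

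Next I would introduce $b_j:=p_j/(2u_j)$ and rephrase the target as $b_j\leq p_{m+1}$ for every $j\leq m$. Since each ratio $q_m:=p_{m+1}/p_m$ is a positive integer, the standard bound
\[
u_{m+1}\leq q_m\,u_m
\]
yields $b_{m+1}\geq b_m$, so the sequence $(b_j)_{j\geq 1}$ is non-decreasing. Consequently the target collapses to showing that $u_m\geq 1/(2q_m)$ for infinitely many $m$.

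To obtain this I would argue by contradiction: suppose that $u_m<1/(2q_m)$ for every $m\geq m_0$. The key observation — which I expect to be the main subtlety — is that under this smallness condition, the inequality $u_{m+1}\leq q_m u_m$ becomes an \emph{equality}. Indeed, writing $p_m\beta=K_m+\varepsilon_m$ with $K_m\in\Z$ and $|\varepsilon_m|=u_m$, the bound $q_m|\varepsilon_m|<1/2$ ensures that the nearest integer to $p_{m+1}\beta=q_m K_m+q_m\varepsilon_m$ is still $q_m K_m$, so $u_{m+1}=q_m u_m$ exactly. Iterating from $m_0$ yields
\[
u_m=\frac{p_m}{p_{m_0}}\,u_{m_0}\qquad\text{for every }m\geq m_0.
\]
Since $u_{m_0}>0$ and $p_m\to\infty$, this forces $u_m\to\infty$, contradicting the trivial bound $u_m\leq 1/2$. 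Hence $b_m\leq p_{m+1}$ holds for infinitely many $m$, which gives the stated conclusion. The whole argument hinges on the exactness of the recursion $u_{m+1}=q_m u_m$ in the regime $q_m u_m<1/2$: the mere inequality $u_{m+1}\leq q_m u_m$ only provides an upper bound on $u_m$ (which is vacuous) and would not produce the required growth to infinity.
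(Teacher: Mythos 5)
Your proof is correct and follows essentially the same route as the paper: both reduce the claim to the Diophantine statement that $\|p_m\beta\|\geq 1/(2q_m)$ for infinitely many $m$ (equivalently, $\mathrm{dist}(\theta,\pi p_m^{-1}\Z)\geq \pi/(2p_{m+1})$) and argue by contradiction via the rigidity created by the divisibility $p_m\mid p_{m+1}$ in the small-error regime. The paper expresses this rigidity as constancy of the nearest rational approximation to $\theta$ (so the shrinking error bound forces $\theta$ to equal it, contradicting $\lambda^{p_{m_0}}\neq\pm 1$), while you express it as exactness of the recursion $u_{m+1}=q_m u_m$ (so $u_m$ grows proportionally to $p_m$ and diverges, contradicting $u_m\leq 1/2$) --- the same observation read from opposite ends.
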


\begin{proof}[Proof of Fact \ref{pm1}] Writing $\lambda$ as $\lambda=e^{i\theta}$ with $\theta\in\R\setminus \pi\Z$, we have for every $j\geq 1$
\[\bigl\vert \lambda^{p_j}\pm1\bigr\vert\geq\vert\sin(p_j\theta)\vert=\sin\,\bigl({\rm dist}\, (p_j\theta,\pi\Z)\bigr)
\geq\frac2\pi\, {\rm dist}\, (p_j\theta,\pi\Z)=\frac{2p_j}{\pi}\,{\rm dist}\,\Bigl(\theta, \frac{\pi}{p_j}\Z\Bigr).
\]
Since $p_m$ is a multiple of $p_j$ for every $j\leq m$ and every $m\ge 1$, it follows that 
\[ \bigl\vert \lambda^{p_j}\pm1\bigr\vert\geq \frac{2p_j}\pi\, {\rm dist}\, \Bigl(\theta, \frac{\pi}{p_m}\Z\Bigr)\quad\textrm{ for every }m\ge 1 \textrm{ and every }1\leq j\leq m.
\]
So it suffices to show that there exist infinitely many integers $m\geq 1$ such that
\[{\rm dist}\, \Bigl(\theta, \frac{\pi}{p_m}\Z\Bigr)\geq \frac{\pi}{2p_{m+1}}\cdot
\]
Towards a contradiction assume that there exists an integer $m_0$ and, for every $m\ge m_{0}$, an integer $k_m$ such that 
\[ \left\vert \theta-\frac{k_m\pi}{p_{m}}\right\vert<\frac\pi{2p_{m+1}}\quad\textrm{ for every }m\geq m_0.
\]
We then have 
\[\left\vert \frac{k_m\pi}{p_{m}}-\frac{k_{m+1}\pi}{p_{m+1}}\right\vert<\frac{\pi}{2p_{m+1}}+\frac\pi{2p_{m+2}}\leq \frac{\pi}{p_{m+1}},
\] 
and since 
$p_{m+1}$ is a multiple of $p_m$, it follows that 
\[ \frac{k_m\pi}{p_m}=\frac{k_{m+1}\pi}{p_{m+1}}\quad \hbox{for every $m\geq m_0$}.
\]
Since $p_m$ tends to infinity as $m$ tends to infinity, we conclude that $\theta=\frac{k_{m_0}\pi}{p_{m_0}}$, so that $\lambda^{p_{m_0}}=\pm1$, which stands in contradiction with our initial assumption.
\end{proof}

Let us now show that every unimodular eigenvalue $\lambda$ of $T$ satisfies $\lambda^{\Delta b_n}=1$ for 
some integer $n\ge 0$. Let $\lambda\in \mathbb{T}$ be a unimodular eigenvalue of $T$, and towards a contradiction, assume 
that $\lambda^{\Delta b_{n}}\ne 1$ for every $n\ge 0$. Note that since $\Delta b_{n+1}$ is an even multiple 
of $\Delta b_{n}$, we also have 
$\lambda^{\Delta b_{n}}\ne - 1$ for every $n\ge 0$. 
\par\smallskip
Let $x\in\ell_p(\N)\setminus\{ 0\}$ be an eigenvector of $T$ associated to the \eva\ $\lambda$. The equation $Tx=\lambda x$ yields that for every $n\ge 0$ and every $ k\in \mathopen[b_n,b_{n+1})$, we have
\begin{equation}\label{utile}
x_k= \frac{\prod_{\nu=b_n+1}^{k}w_{\nu}}{\lambda^{k-b_n}}x_{b_n}
\end{equation}
and
\begin{align}
\lambda x_{b_n}&=-\frac{1}{\prod_{\nu=b_n+1}^{b_{n+1}-1}w_{\nu}}x_{b_{n+1}-1}
+\sum_{l\in\varphi^{-1}(n)}v_lx_{b_{l+1}-1}\notag \\
&=-\frac{1}{\lambda^{\Delta b_{n}-1}}x_{b_{n}}
+\sum_{l\in\varphi^{-1}(n)}v_l\, \frac{\prod_{\nu=b_l+1}^{b_{l+1}-1}w_{\nu}}{\lambda^{\Delta b_l-1}}\, x_{b_{l}}.
\end{align}
Since $\sum_{l>n} 2^{n-l}=1$, we deduce from the last identity that 
for every $n\ge 0$, there exists $l>n$ with $\varphi(l)=n$ such that
\[|v_l|\prod_{\nu=b_l+1}^{b_{l+1}-1}|w_{\nu}| \vert x_{b_{l}}\vert\ge 
{2^{n-l}}\left|\lambda +\frac{1}{\lambda^{\Delta b_{n}-1}}\right|\,|x_{b_n}|=2^{n-l}|\lambda^{\Delta b_{n}}+1|\,|x_{b_n}|.\]
Thus, if we set 
\[\varepsilon_n=|1+\lambda^{\Delta b_n}|,
\]
the following property holds true: for every $n\geq 0$, there exists $l>n$ with $\varphi(l)=n$ such that
\[|x_{b_{l}}|\ge \frac{2^{n-l}\varepsilon_n}{|v_l|\prod_{\nu=b_l+1}^{b_{l+1}-1}|w_{\nu}|}\, |x_{b_n}|.\]
Let us choose an integer $n(0)$ such that $x_{b_{n(0)}}\ne 0$ (since $x$ is non-zero, such an integer does exist by (\ref{utile})). The argument given just above allows us to construct a $\varphi$-sequence $(n(m))_{m\ge 0}$ such that 
\[|x_{b_{n(m+1)}}|\ge \frac{2^{n(m)-n(m+1)}\varepsilon_{n(m)}}{|v_{n(m+1)}|
\prod_{\nu=b_{n(m+1)}+1}^{b_{n(m+1)+1}-1}|w_{\nu}|}\, |x_{b_{n(m)}}|\quad\textrm{ for every }m\ge 0.\]
We then have 
\[|x_{b_{n(m)}}|\ge \frac{2^{n(0)-n(m)}\left(\prod_{j=0}^{m-1}\varepsilon_{n(j)} \right)}
{\Big(\prod_{j=1}^{m}|v_{n(j)}|\Big)\Big(\prod_{j=1}^{m}\prod_{\nu=b_{n(j)}+1}^{b_{n(j)+1}-1}|w_{\nu}|\Big)}\, |x_{b_{n(0)}}|\quad\textrm{ for every }m\ge 1,\]
and hence (by (\ref{utile}) again)
\[ |x_{b_{n(m)+1}-1}|=\frac
{2^{n(0)-n(m)}\left(\prod_{j=0}^{m-1}\varepsilon_{n(j)} \right)}
{\Big(\prod_{j=1}^{m}|v_{n(j)}|\Big)\Big(\prod_{j=1}^{m-1}\prod_{\nu=b_{n(j)}+1}^{b_{n(j)+1}-1}|w_{\nu}|\Big)}\, |x_{b_{n(0)}}|.\]
If we now set 
\[\alpha_m:=\frac{\Big(\prod_{j=1}^{m}|v_{n(j)}|\Big)\Big(\prod_{j=1}^{m-1}\prod_{\nu=b_{n(j)}+1}^{b_{n(j)+1}-1}|w_{\nu}|\Big)}{2^{n(0)-n(m)}
\left(\prod_{j=0}^{m-1}\varepsilon_{n(j)} \right)},
\]
it suffices to prove that $\alpha_m$ does not tend to infinity as $m$ tends to infinity in order to obtain a contradiction, since this would imply that $x$ does not belong to $ c_0(\N)$. In view of 
our assumption (\ref{grostruc}), this will be verified if we are able to show that 
\begin{equation}\label{derniertruc} \prod_{j=0}^{m-1} \varepsilon_{n(j)}\ge (\Delta b_{n(m)})^{-m}\prod_{j=0}^{m-1} \Delta b_{n(j)}\quad\hbox{for infinitely many $m\geq 1$.}
\end{equation}
At this point, we apply Fact \ref{pm1}, considering the sequence $(p_{m})_{m\ge 1}$ defined by setting $p_m:=\Delta b_{n(m-1)}$ for every $m\geq 1$. The conclusion is that there exist infinitely many integers 
$m\geq 1$ such that 
\[ \varepsilon_j=\bigl\vert 1+\lambda^{\Delta b_{n(j)}}\bigr\vert\geq \frac{\Delta b_{n(j)}}{\Delta b_{n(m)}}\quad\textrm{ for every }0\leq j\leq m-1,
\]
which gives (\ref{derniertruc}). Theorem \ref{vp} is proved.
\end{proof}

\begin{remark}\label{formulevp}
Assume that $b_1=1$. If $\mathbf n=(n(m))_{m\ge 0}$ is a $\varphi$-sequence such that $n(0)=0$, the proof of Theorem \ref{vp} shows that if $\lambda\in\T$ and if we set 
\begin{align*}
x_{\mathbf n,\lambda}:=e_0+\sum_{m=1}^\infty 
\frac{\prod_{l=1}^{m}(1+\lambda^{\Delta b_{n(l-1)}})}{\Big(\prod_{l=1}^m v_{n(l)}\Big)\Big(\prod_{l=1}^{m-1}\prod_{\nu=b_{n(l)}+1}^{b_{n(l)+1}-1}w_{\nu}\Big)} \sum_{j=b_{n(m)}}^{b_{n(m)+1}-1} 
\frac{\lambda^{b_{n(k)+1}-1-j}}{\prod_{\nu=j+1}^{b_{n(k)+1}-1}w_{\nu}} e_j,
\end{align*}
then $x_{\mathbf n,\lambda}$ is an eigenvector of $T=\tvw$ associated to the eigenvalue $\lambda$, provided that the series defining $x_{\mathbf n,\lambda}$ is convergent. This has the following two interesting consequences.
\par\smallskip
\begin{enumerate}
\item[\rm (1)]  An operator of \cct\ $T$ may quite well have a unimodular eigenvalue $\lambda$ such that the associated eigenspace $\ker(T-\lambda)$ is not one-dimensional. 
This happens in particular if $\lambda^{\Delta b_{n}}=-1$ for some integer $n$ and if there exists $m\ne n$ such that $\Delta b_{m}=\Delta b_{n}$.
\par\smallskip
\item[\rm (2)] If we assume that $v_{n(l)}=\Big(\prod_{\nu=b_{n(l-1)}+1}^{b_{n(l-1)+1}-1}w_{\nu}\Big)^{-1}$, we obtain the following expression for $x_{\mathbf n,\lambda}$:
\[x_{\mathbf n,\lambda}=e_0+
\sum_{m=1}^{\infty} \frac{\prod_{l=1}^{m}(1+\lambda^{\Delta b_{n(l-1)}})}{v_{n(1)}} \sum_{j=b_{n(m)}}^{b_{n(m)+1}-1} 
\frac{\lambda^{b_{n(m)+1}-1-j}}{\prod_{\nu=j+1}^{b_{n(m)+1}-1}w_{\nu}} e_j.\]
This vector is well-defined for some unimodular numbers $\lambda$ which are not roots of unity, provided that
the sequence $(\Delta b_{n(l-1)})_{l\ge 1}$ is suitably chosen. Therefore, operators of \cct\ can have 
unimodular eigenvalues which are not roots of unity. 
\end{enumerate}
\end{remark}
\par\smallskip

\subsection{\cput\ operators can be ergodic} To conclude this section, we show that \cput\ operators can also have lots of eigenvalues, to the point of being even 
mixing in the Gaussian sense.

\begin{theorem}\label{thsmx} Let $T=\tvw$ be a \cput\ operator on $\ell_2(\N)$. Suppose that $b_1=1$, and that the series
\begin{equation}\label{C+mix}\sum_{m=1}^\infty 4^{m-\sum\limits_{k=1}^{m} (\delta^{(k-1)}-\tau^{(k)} )}\,\Delta^{(m)}
\end{equation}
is convergent.
Then $T$ is mixing in the Gaussian sense.
\end{theorem}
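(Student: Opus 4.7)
The plan is to apply the characterization behind Fact~\ref{holo}, namely the criterion from \cite[Th.~3.29]{BG2}: $T\in\gergh$ if and only if, for every Borel set $D\subseteq\T$ of Lebesgue measure zero, the family $\{x\in\mathcal E(T):\lambda(x)\notin D\}$ spans a dense subspace of $\ell_2(\N)$. I will build these eigenvectors from the explicit formula of Remark~\ref{formulevp} and check perfect spanning by a Vandermonde-type induction. The whole plan hinges on the observation that the summability condition \eqref{C+mix} is exactly what bounding the $\ell_2$-norm of the $m$-th term of that formula yields on the unit circle.

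First, I would establish convergence of the eigenvector series on $\T$. Any $\varphi$-sequence $\mathbf n=(n(m))_{m\ge 0}$ with $n(0)=0$ is encoded by a strictly increasing sequence $1\le k_1<k_2<\cdots$ via $n(m)=2^{k_1-1}+\cdots+2^{k_m-1}$. Plugging the \cput\ values $v_{n(l)}=2^{-\tau^{(k_l)}}$ and $\prod_{\nu=b_{n(l)}+1}^{b_{n(l)+1}-1}w_{\nu}=2^{\delta^{(k_l)}}$ into Remark~\ref{formulevp} gives a decomposition $x_{\mathbf n,\lambda}=e_0+\sum_{m\ge 1}c_m(\lambda)\xi_m(\lambda)$ with $c_m(\lambda)\xi_m(\lambda)\in E_{n(m)}$. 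Using $|1+\lambda^p|\le 2$ on $\T$ together with the explicit \cput\ weight structure (which yields $\|\xi_m(\lambda)\|^2\le\Delta^{(k_m)}$ for $|\lambda|=1$), a direct computation gives
\[ \|c_m(\lambda)\xi_m(\lambda)\|^2\ \le\ 4^{\,m+\tau^{(k_m)}+\sum_{l=1}^{m-1}(\tau^{(k_l)}-\delta^{(k_l)})}\,\Delta^{(k_m)}. \]
With the convention $\delta^{(0)}:=0$, the right-hand side for the greedy sequence $k_l=l$ is exactly the $m$-th term of the series in \eqref{C+mix}. For any $\varphi$-sequence whose tail is eventually greedy (that is, $k_{l+1}=k_l+1$ for $l$ large), a direct comparison shows the bound differs from its greedy counterpart by a multiplicative constant depending on the finite prefix; hence the series converges, uniformly in $\lambda\in\T$. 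Consequently $\lambda\mapsto x_{\mathbf n,\lambda}$ is continuous from $\T$ to $\ell_2(\N)$, and Remark~\ref{formulevp} guarantees $Tx_{\mathbf n,\lambda}=\lambda x_{\mathbf n,\lambda}$.

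For the second step, fix a Borel set $D\subseteq\T$ of Lebesgue measure zero, and suppose $y\in\ell_2(\N)$ is orthogonal to every $x_{\mathbf n,\lambda}$ with $\lambda\in\T\setminus D$. Since $\lambda\mapsto\langle y,x_{\mathbf n,\lambda}\rangle$ is continuous on $\T$ and vanishes on the dense set $\T\setminus D$, it vanishes identically on $\T$. I would then prove by induction on $n\ge 0$ that $P_ny=0$. The base case $n=0$ is handled using any $\mathbf n$ and $\lambda=-1$, which truncates the series at the very first step and yields $y_0=0$. For the inductive step, write $n=2^{k_M-1}+\cdots+2^{k_1-1}$ in binary and pick the $\varphi$-sequence $\mathbf n^n$ passing through $n$ at step $M$ with greedy tail. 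Evaluate the vanishing identity at any $\lambda\in\T$ with $\lambda^{\Delta^{(k_M)}}=-1$. Since each $\Delta^{(k_{l+1})}$ is an even multiple of $\Delta^{(k_l)}$, one checks that $c_{M+1}(\lambda)=0$ (truncating the series), whereas $c_l(\lambda)\ne 0$ for all $l\le M$. The induction hypothesis kills the contributions from the blocks $E_{n^n(l)}$ with $l<M$ (as $n^n(l)<n$), so the identity collapses to
\[ c_M(\lambda)\sum_{j=b_n}^{b_{n+1}-1}\overline{y_j}\,\frac{\lambda^{b_{n+1}-1-j}}{\prod_{\nu=j+1}^{b_{n+1}-1}w_{\nu}}\ =\ 0. \]
This holds at the $\Delta^{(k_M)}$ distinct values of $\lambda$ in question, so the polynomial of degree at most $\Delta^{(k_M)}-1$ inside the bracket has too many roots and must vanish identically, forcing $y_j=0$ for every $j\in[b_n,b_{n+1})$, i.e., $P_ny=0$.

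Combining the two steps, $y=0$, so the unimodular eigenvectors of $T$ are perfectly spanning and $T\in\gsmxh$ by \cite[Th.~3.29]{BG2}. The main technical hurdle is the convergence bound in Step~1, which rests on the identification $m-\sum_{k=1}^m(\delta^{(k-1)}-\tau^{(k)})=m+\tau^{(m)}+\sum_{l=1}^{m-1}(\tau^{(l)}-\delta^{(l)})$ making the hypothesis \eqref{C+mix} fit exactly the $\ell_2$-estimate on the $m$-th term; once this is available, the Vandermonde induction for spanning is essentially formal.
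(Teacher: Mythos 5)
Your proposal is correct and follows essentially the same route as the paper: both build eigenvector fields $\lambda\mapsto x_{\mathbf n,\lambda}$ from the formula of Remark~\ref{formulevp} along $\varphi$-sequences with eventually consecutive tail (the paper's ``good'' $\varphi$-sequences), both derive uniform-in-$\lambda$ convergence from condition~\eqref{C+mix}, and both establish spanning by choosing $\lambda$ with $\lambda^{\Delta^{(k_M)}}=-1$ to truncate the series and invoking a Vandermonde/root-counting argument on each block. The organizational differences — induction on $n$ versus block-by-block within a single $\varphi$-sequence, and citing \cite[Th.~3.29]{BG2} directly versus \cite{BM2} — are cosmetic; one minor remark is that your bound on $\|c_m(\lambda)\xi_m(\lambda)\|^2$ is an overestimate of the sharp one by $4^{2\tau^{(k_1)}}$ (the exact exponent carries an extra $-2\tau^{(k_1)}$), but since this only loosens a valid upper bound it does not affect the conclusion.
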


\begin{proof} Since $T$ is a \cpt\ operator, we know that the map $\varphi$ is given by $\varphi (n)=n-2^{k-1}$ if $2^{k-1}\leq n<2^k$, $k\ge 1$. From this it follows that a sequence 
$\mathbf n=(n(m))_{m\geq 0}$ with $n(0)=0$ is a $\varphi$-sequence if and only if there exists an increasing sequence of positive integers $(k_m)_{m\geq 1}$ such that 
\[ n(m)=2^{k_1-1}+\cdots + 2^{k_m-1}\quad\textrm{ for every }m\ge 1.
\]
More precisely, $k_m$ is for each $m\ge 1$ the unique positive integer such that $2^{k_m-1}\leq n(m)<2^{k_m}$. In this case, we say that $\mathbf n$ is the $\varphi$-sequence associated to the sequence $(k_{m})_{m\ge 1}$.
\par\smallskip
With these notations, the formula of Remark \ref{formulevp} defining $x_{\mathbf n,\lambda}$ for a $\varphi$-sequence $\mathbf{n}$ and an element $\lambda$ of $\T$ can be rewritten as follows:
\begin{align*}
x_{\mathbf n,\lambda}&=e_0+\sum_{m=1}^\infty \frac{\prod_{l=1}^{m}(1+\lambda^{\Delta^{(k_{l-1})}})}{2^{-\sum\limits_{l=1}^{m}\tau^{(k_l)}+\sum\limits_{l=1}^{m-1} \delta^{(k_l)}} } \sum_{j=b_{n(m)}}^{b_{n(m)+1}-1} 
\frac{\lambda^{b_{n(m)+1}-1-j}}{\max\bigl(1, 2^{\delta^{(k_m)}-(j-b_{n(m)})}\bigr)}\, e_j\\
&=e_0+\sum_{m=1}^\infty \frac{\prod_{l=1}^{m}(1+\lambda^{\Delta^{(k_{l-1})}})}{2^{\tau^{(k_1)}+\sum\limits_{l=2}^{m}\left(\delta^{(k_{l-1})}-\tau^{(k_l)}\right)} } \sum_{j=b_{n(m)}}^{b_{n(m)+1}-1} 
\frac{\lambda^{b_{n(m)+1}-1-j}}{\max\bigl(1, 2^{\delta^{(k_m)}-(j-b_{n(m)})}\bigr)}\, e_j.
\end{align*}

We will say that a $\varphi$-sequence $\mathbf n$ with $n(0)=0$ is a \emph{good}
$\varphi$-sequence if $x_{\mathbf n,\lambda}$ is well-defined for every $\lambda\in\T$ and if the map $\lambda\mapsto x_{\mathbf n,\lambda}$ is continuous from $\T$ into $\ell_{2}(\N)$.

\begin{fact}\label{welldefined bis} For any integer $n\geq 1$, there exists a good $\varphi$-sequence $\mathbf n$ passing through $n$, \mbox{\it i.e.} such that $n(r)=n$ for some integer $r\geq 1$.
\end{fact}

\begin{proof}[Proof of Fact \ref{welldefined bis}] Write $n$ as $n=2^{k_1-1}+\cdots +2^{k_r-1}$ with $1\leq k_1<\cdots <k_r$, and set $k_{r+i}:=k_r+i$ for every $i\geq 1$. Then the $\varphi$-sequence $\mathbf n$ associated to 
$(k_m)_{m\geq 1}$ passes through $n$. Moreover, since $k_{r+i}:=k_r+i$ for every $i\geq 1$, it follows from the assumption that the series (\ref{C+mix}) is convergent that the series
\[\sum_{m\geq 1} 4^{m-\sum\limits_{l=1}^{m}( \delta^{(k_{l-1})}-\tau^{(k_l)})}\Delta^{(k_m)}
\]
is convergent as well.
 Since $\Bigl\vert\prod_{l=1}^m (1+\lambda^{\Delta^{(k_l)}})\Bigr\vert\leq 2^m$ for every $m\ge 1$ and every $\lambda\in\T$, this implies that $\mathbf n$ is a good $\varphi$-sequence.
\end{proof}

\begin{fact}\label{orthog} Let $\mathbf n$ be a good $\varphi$-sequence. If $y=\sum_{j\geq 0} y_j e_j$ is a vector of $\ell_{2}(\N)$ which is orthogonal to $x_{\mathbf n,\lambda}$ for every $\lambda\in\T$, then \[y_0=0\quad\textrm{ and }\quad
y_j=0 \quad\textrm{ for every } j\in\bigcup_{m\geq 1} [b_{n(m)}, b_{n(m)+1}).\]
\end{fact}
\begin{proof}[Proof of Fact \ref{orthog}] Let $(k_m)_{m\geq 1}$ be the increasing sequence of integers associated to $\mathbf n$, and set $k_0:=0$. 
By assumption and in view of the definition of $x_{\mathbf n,\lambda}$, we have 
\begin{equation}\label{encore une}
 y_0+\sum_{m=1}^\infty\,\prod_{l=1}^m \bigl(1+\lambda^{\Delta^{(k_{l-1})}}\bigr)\sum_{b_{n(m)}\leq j<b_{n(m)+1}} c_{m,j} \lambda^{b_{n(m)+1}-1-j} y_j=0
\end{equation}
for every $\lambda\in\T$,
where the $c_{m,j}$ are non-zero scalars which do not depend on $\lambda$.\par\smallskip
Equality (\ref{encore une}) applied to
$\lambda:=-1$ yields that $y_0=0$. Indeed, since $\Delta^{(k_0)}=b_1-b_0=1$, we have $(1+\lambda^{\Delta^{(k_0)}})=0$, and the factor $(1+\lambda^{\Delta^{(k_0)}})$ appears in each of the terms $\prod_{l=1}^m \bigl(1+\lambda^{\Delta^{(k_{l-1})}}\bigr)$. 
Now, take $\lambda$ such that $\lambda^{\Delta^{(k_1)}}=-1$. Then $\prod_{l=1}^m (1+\lambda^{\Delta^{(k_{l-1})}})=0$ for every $m\geq 2$. Moreover, $(1+\lambda^{\Delta^{(k_0)}})$ is non-zero because $\Delta^{(k_1)}$ is an 
even multiple of $\Delta^{(k_0)}$. So we get 
\begin{equation}\label{encore encore une}
 \sum_{b_{n(1)}\leq j<b_{n(1)+1}} c_{1,j} \lambda^{b_{n(1)+1}-1-j} y_j=0
\end{equation}
for every $\lambda$ such that $\lambda^{\Delta^{(k_1)}}=-1$. Denoting by $\Lambda_1$ the set of all such elements $\lambda$, and recalling that $n(1)=2^{k_{1}-1}$, we can rewrite (\ref{encore encore une})  as
\[ \sum_{s=0}^{\Delta^{(k_1)}-1}z_{s} \lambda^s =0 \quad\textrm{ for every } \lambda\in \Lambda_1,
\]
where $z_s=c_{1, b_{n(1)+1}-1-s}y_{b_{n(1)+1}-1-s}$ for every $0\le s<\Delta^{(k_1)}$. Since $\Lambda_1$ has cardinality $\Delta^{(k_1)}$, it follows that $z_s=0$ for every $0\le s<\Delta^{(k_1)}$, and hence that $y_j=0$ for every
$b_{n(1)}\leq j<b_{n(1)+1} $.
\par\smallskip
Continuing in this way, we obtain that for every $m\ge 1$, $y_j=0$ for every $j$ belonging to the interval $ [b_{n(m)}, b_{n(m)+1})$, which proves our claim.
\end{proof}

It is now easy to conclude the proof of Theorem \ref{thsmx}. Let us denote by $\mathbf G$ the set of all good $\varphi$-sequences, and set $E_{\mathbf n}(\lambda):=x_{\mathbf n,\lambda}$ for every
$\mathbf n\in \mathbf G$ and every $\lambda\in\T$. Then the maps $E_{\mathbf n}:\T\rightarrow \ell_{2}(\N)$, $\mathbf n\in\mathbf G$, are continuous 
eigenvector fields for $T$ defined on $\T$.  Facts~\ref{welldefined bis} and~\ref{orthog}
 then imply that $\overline{\rm span}\,\bigl\{ E_{\mathbf n}(\lambda);\; \mathbf n\in\mathbf G\,,\,\lambda\in\T\bigr\}=\ell_{2}(\N)$. By \cite{BM2}, it follows that $T$ is strongly mixing in the Gaussian sense.
\end{proof}

\begin{example} If we  consider the \cput\ \op\ $T$ on $\ell_{2}(\N)$ associated to the data $\delta^{(k)}=2k$, $\tau^{(k)}=\frac12 \delta^{(k)}$ and $\Delta^{(k)}=2^{k+1}$, $k\ge 1$, then $T$ is  mixing in the Gaussian sense.
\end{example}

\begin{remark} Condition (\ref{C+mix}) is incompatible with the ``additional assumptions" of Theorem \ref{Theorem 53}. Indeed, the latter imply that $\gamma_{k}$ tends to zero as $k$ tends to infinity, and hence that $\tau^{(k)}-\delta^{(k-1)}$ tends to infinity.
\end{remark}
\par\smallskip

\section{{A few questions}}\label{QUESTIONS}

We conclude the paper with a short list of questions. Most of them have already been stated in the paper, at least implicitly. Some of them will perhaps be found interesting and not hopelessly intractable.

\begin{question} For which Banach spaces $X$ with separable dual do the ``typicality" results proved here for Hilbert spaces hold true?
\end{question}

\par\smallskip
\begin{question} Do there exist ergodic operators on $\h$ without eigenvalues?
\end{question}

\par\smallskip
\begin{question} Do there exist at least Hilbert space operators admitting non-trivial invariant measures but no eigenvalues?
\end{question}

\par\smallskip
\begin{question} Let $M>1$. What is the descriptive complexity of $\textrm{ERG}_M(\h)$ and $\textrm{INV}_M(\h)$, with respect to $\texttt{SOT}$ and/or $\texttt{SOT}^*$? 
In particular, are these sets Borel in $\bmh$?
\end{question}

\par\smallskip
\begin{question} Let $M>1$. Is $\textrm{UFHC}_M(\h)$ a true $\mathbf\Pi_4^0$ set in $(\bmh,\sote)$? Is $\textrm{UFHC}_M(\h)\cap\textrm{CH}_M(\h)$ a true difference of $\mathbf\Sigma_3^0$ sets?
\end{question}

\par\smallskip
\begin{question} Let $M>1$. Is $\textrm{FHC}_M(\h)$ Borel in $\bmh$?
\end{question}

\par\smallskip
\begin{question}\label{independence} Let $T\in\bh$. Assume that $T$ is hypercyclic, and that there exists a sequence $(u_i)_{i\ge 1}$ of unimodular eigenvectors with rationally independent 
eigenvalues spanning a dense linear subspace of $\h$ . Is $T$ ergodic, or at least frequently hypercyclic? $\mathcal U$-frequently hypercyclic?
\end{question}

\par\smallskip
\begin{question}  
Let $\pmb\lambda$ be a sequence of distinct unimodular complex numbers tending to $1$, and let 
$\pmb\omega$ be a bounded sequence of positive numbers. If the operator 
$T_{\pmb{\lambda}, \pmb{\omega}}=D_{\pmb\lambda}+B_{\pmb\omega}$ acting on $\ell_2(\N)$ is hypercyclic,  
is it necessarily ergodic, or at least frequently hypercyclic? $\mathcal U$-frequently hypercyclic?
\end{question}

\par\smallskip
\begin{question} When exactly is an operator of the form $T_{\la,\pmb{\omega}}=D_{\pmb\lambda}+B_{\pmb\omega}$ hypercyclic?
\end{question}

\par\smallskip
\begin{question} Let $M>1$. Is the class of chaotic operators belonging to $\ttmh$ comeager in $\ttmh$?
\end{question}

\par\smallskip
\begin{question} Let $T$ be a Banach space operator. Assume that $T$ has a dense set of uniformly recurrent points. Does it follow that $T$ has a non-zero periodic point?
\end{question}

\par\smallskip
\begin{question} Let $T$ be a hypercyclic operator. Assume that $c(T)>0$ and that $T$ admits an invariant measure with full support. Does it follow that $T$ is $\mathcal U$-frequently hypercyclic?
\end{question}

\par\smallskip
\begin{question} Let $X$ be a Banach space, and let $T\in\mathfrak B(X)$ have the OSP. Are the ergodic measures for $T$ dense in the space of all $T$-invariant measures?
\end{question}

\par\smallskip
\begin{question} On which Banach spaces is it possible to construct frequently hypercyclic operators which are not ergodic and/or $\mathcal U$-frequently hypercyclic
operators which are not frequently hypercyclic?
\end{question}

\end{document}